\def\l@section{\@tocline{1}{0pt}{1pc}{}{}}
\def\l@subsection{\@tocline{2}{0pt}{1pc}{4.6em}{}}
\def\l@subsubsection{\@tocline{3}{0pt}{1pc}{7.6em}{}}
\renewcommand{\tocsection}[3]{%
  \indentlabel{\@ifnotempty{#2}{\makebox[2.3em][l]{%
    \ignorespaces#1 #2\hfill}}}#3}
\renewcommand{\tocsubsection}[3]{%
  \indentlabel{\@ifnotempty{#2}{\hspace*{2.3em}\makebox[2.3em][l]{%
    \ignorespaces#1 #2\hfill}}}#3}
\renewcommand{\tocsubsubsection}[3]{%
  \indentlabel{\@ifnotempty{#2}{\hspace*{4.6em}\makebox[3em][l]{%
    \ignorespaces#1 #2\hfill}}}#3}
\newcounter{my_enumerate_counter}
\newcommand{\pushcounter}{\setcounter{my_enumerate_counter}{\value{enumi}}}
\newcommand{\popcounter}{\setcounter{enumi}{\value{my_enumerate_counter}}}
\DeclareMathOperator{\SF}{\mathcal{SF}}
\DeclareMathOperator{\SFn}{\mathcal{SF}_n}
\DeclareMathOperator{\SFns}{\mathcal{SF}_{ns}}
\DeclareMathOperator{\eq}{eq}
\DeclareMathOperator{\sr}{sr}
\DeclareMathOperator{\tsr}{tsr}
\DeclareMathOperator{\rr}{rr}
\newcommand{\bfD}{\mathbf D} 
\newcommand{\bfE}{\mathbf E} 
\newcommand{\bfT}{T}
\newcommand{\bbPT}{\mathbb P_{\bfT}}
\newcommand{\bbPTS}{\mathbb P_{\bfT,\Sigma}}
\newcommand{\fST}{{\mathfrak S}_{\bfT}}
\newcommand{\fSST}{{\mathfrak S}_{\Sigma,\bfT}}
\newcommand{\fTT}{{\mathfrak T}_{\Sigma,\bfT}} 
\newcommand{\fT}{{\mathfrak T}_{\bfT}}
\newcommand{\fG}{\mathfrak G}
\renewcommand{\fTT}{{\mathfrak T}_{\bfT}} 
\newcommand{\fSTf}{{\mathfrak S}_{0,\bfT}}
\newcommand{\bbPTf}{\mathbb P_{0,\bfT}}
\newcommand{\bfF}{\mathbf F} 
\DeclareMathOperator{\Ad}{Ad}
\DeclareMathOperator{\RC}{rc}
\DeclareMathOperator{\UNP}{unp}
\DeclareMathOperator{\AUNP}{aunp}
\DeclareMathOperator{\drank}{dr}
\DeclareMathOperator{\nuc}{nuc}
\DeclareMathOperator{\nucdim}{nucdim}
\DeclareMathOperator{\dKK}{d_{KK}}
\DeclareMathOperator{\id}{id}
\newcommand{\cU}{\mathcal U}
\newcommand{\cO}{\mathcal O}
\newcommand{\bbD}{{\mathbb D}}
\newcommand{\bbZ}{{\mathbb Z}}
\newcommand{\bbT}{\mathbb T}
\newcommand{\bbN}{{\mathbb N}}
\newcommand{\bbC}{\mathbb C}
\newcommand{\bbQ}{\mathbb Q}
\newcommand{\bbR}{\mathbb R}
\newcommand{\cR}{{\mathcal R}}
\newcommand{\cW}{{\mathcal W}}
\newcommand{\calL}{\mathcal L}
\newcommand{\calLp}{\mathcal L^+}
\newcommand{\cX}{{\mathcal X}}
\newcommand{\cZ}{{\mathcal Z}}
\newcommand{\cE}{{\mathcal E}}
\newcommand{\fM}{\mathfrak M}
\newcommand{\fC}{\mathfrak C}
\newcommand{\rs}{\restriction}
\newcommand{\cC}{\mathcal C}
\newcommand{\cF}{\mathcal F}
\newcommand{\cB}{\mathcal B}
\newcommand{\cK}{\mathcal K}
\newcommand{\cQ}{\mathcal Q}
\newcommand{\calD}{\mathcal D}
\newcommand{\e}{\varepsilon}
\newtheorem{thm}{Theorem}[section]
\newtheorem{theorem}[thm]{Theorem}
\newtheorem{coro}[thm]{Corollary}
\newtheorem{corollary}[thm]{Corollary}
\newtheorem{conjecture}[thm]{Conjecture}
\newtheorem{question}[thm]{Question}
\newtheorem*{question*}{Question}
\newtheorem{claim}[thm]{Claim}
\newtheorem{quest}{Question}
\newtheorem{lemma}[thm]{Lemma}
\newtheorem{prop}[thm]{Proposition}
\newtheorem{proposition}[thm]{Proposition}
\theoremstyle{definition}
\newtheorem{remark}[thm]{Remark}
\newtheorem{definition}[thm]{Definition}
\newtheorem{problem}[thm]{Problem}
\newtheorem{example}[thm]{Example}
\DeclareMathOperator{\dist}{dist}
\DeclareMathOperator{\rc}{rc}
\DeclareMathOperator{\Cu}{Cu}
\DeclareMathOperator{\Diag}{Diag}
\DeclareMathOperator{\Elem}{Elem}
\newcommand{\cP}{\mathcal P} 
\DeclareMathOperator{\Th}{Th}
\DeclareMathOperator{\Thminus}{Th^{--}}
\DeclareMathOperator{\ThA}{Th_\forall}
\DeclareMathOperator{\Mod}{Mod}
\DeclareMathOperator{\diag}{diag}
\newcommand{\ASA}{\prod_n M_n(\bbC)/\bigoplus_n M_n(\bbC)}
\newcommand{\aea}{$\forall\exists$-axio\-ma\-ti\-za\-ble}
\newcommand{\udt}{definable by uniform families of formulas} 
\newcommand{\udut}{definable by uniform families of existential formulas} 
\newcommand{\snus}{separable, nuclear, unital and simple}
\DeclareMathOperator{\Sym}{Sym}
\DeclareMathOperator{\sgn}{sgn}
\DeclareMathOperator{\Sp}{sp}
\newcommand{\cstar}{$\mathrm{C}^*$}
\newcommand{\cst}{\mathrm{C}^*}
\newcommand{\cS}{\mathcal S}
\DeclareMathOperator{\dom}{dom}
\newcommand{\rng}{\mbox{rng}}
\newcommand{\ccS}{\mathfrak S}
\newcommand{\ccF}{\mathfrak F} 
\newcommand{\ccW}{\mathfrak W}
\newcommand{\ccR}{\mathfrak R}
\newcommand{\Met}{\mbox{Met}}
\DeclareMathOperator{\vphiALn}{\varphi_{AL,n}}
\newcommand{\dotminus}{ 
\buildrel\textstyle\ .\over{\hbox{ 
\vrule height3pt depth0pt width0pt}{\smash-} 
}\ }
 \numberwithin{equation}{section}
 \DeclareMathOperator{\piMVN}{\pi_{MvN}}
 \DeclareMathOperator{\Ell}{Ell}
\renewcommand{\phi}{\varphi}
\numberwithin{section}{chapter}
\numberwithin{equation}{chapter}
\begin{document}

\frontmatter

\title{Model theory of \cstar-algebras}

\author[I.\ Farah et al.]{I.\ Farah, B.\ Hart, M.\ Lupini, L.\ Robert, A.\ Tikuisis, \\A.\ Vignati and W.\ Winter}

%
%
%
%
%
%

\subjclass{46L05, 46L35, 03C20, 03C98, 03E15, 03C25}
\keywords{Model theory, continuous logic, nuclear \cstar-algebras, model theoretic forcing} 
\date{July 1, 2016}

\begin{abstract} 
A number of significant properties of  \cstar-algebras 
can be expressed in continuous logic, or at least in terms of definable (in 
a model-theoretic sense)  sets. 
Certain sets, such as  the set of projections or the unitary group,  are uniformly definable across all  
\cstar-algebras. On the other hand, the 
definability of some other sets, such as the connected component of the identity in the unitary group of a unital \cstar-algebra, or the set of elements that are Cuntz--Pedersen equivalent to $0$, depends on structural properties of the \cstar-algebra in question. 
Regularity properties required in the  Elliott programme for classification of nuclear \cstar-algebras 
imply  the definability of some of these sets. In fact any known pair of \snus{} \cstar-algebras
with the same Elliott invariant can be distinguished by their first-order theory. Although parts of the Elliott invariant of a classifiable (in the technical \cstar-algebraic sense) \cstar-algebra
can be reconstructed from its model-theoretic imaginaries,  
 the information provided by the theory is largely complementary  to the information provided by the Elliott invariant. 
We prove that all standard invariants
employed to verify non-isomorphism of pairs of \cstar-algebras indistinguishable by their 
K-theoretic invariants (the divisibility properties of the Cuntz semigroup,  
the radius of comparison, and the existence of finite or infinite projections) 
are invariants
of the theory of a \cstar-algebra. 

Many of our results are stated and proved for arbitrary metric structures. 
We present a self-contained treatment of the imaginaries (most importantly, definable sets and quotients) and  
 a  self-contained description of the Henkin construction of generic \cstar-algebras and other 
 metric structures.

Our results readily provide model-theoretic reformulations 
of a  number of outstanding questions  at the heart of the structure and classification theory of nuclear \cstar-algebras. 
The existence of counterexamples to the Toms--Winter conjecture and some 
other outstanding conjectures 
can be reformulated in terms of the existence of a model of a certain theory that 
omits a certain sequence of types.  
Thus the existence of a counterexample is equivalent to the assertion that   
the set of counterexamples is generic. 
  Finding interesting examples of \cstar-algebras is in some cases reduced to finding interesting examples 
 of theories of \cstar-algebras. This follows from one of our main technical results, a proof that 
 some non-elementary (in model-theoretic sense) classes of \cstar-algebras are `\udt.'
 This was known for UHF and AF algebras, and  we extend the result to 
  \cstar-algebras that are nuclear, have nuclear dimension $\leq n$, 
 decomposition rank $\leq n$, are simple, are quasidiagonal, Popa algebras, and are tracially AF. 
   We show that some theories of \cstar-algebras do not have nuclear models. 
As an application of the model-theoretic vantage point, we give a proof that
various properties of \cstar-algebras are preserved by small perturbations in the Kadison--Kastler distance. 
\end{abstract} 

\maketitle

\tableofcontents


\mainmatter

\chapter{Introduction}
Around 1990 George Elliott proposed a bold conjecture that a certain subcategory~$\cC$ of separable nuclear \cstar-algebras could be classified by $K$-theoretic invariants $\Ell$ which themselves form a category. 
Moreover there would be an isomorphism of categories between $\cC$ represented on the top and the category of invariants.

\begin{tikzpicture}
 \matrix[row sep=1cm,column sep=2cm] {
&\node  (A1) {$A$};
& \node (A2) {$B$};
\\
&\node  (B1) {$\Ell(A)$};
& \node (B2) {$\Ell(B)$};
\\
}; 
\draw (A1) edge[->] node [above] {$\Phi$} (A2) ;
\draw (A1) edge[->]   (B1) ;
\draw (A2) edge[->]    (B2) ;
\draw (B1) edge[->] node [above] {$\Ell(\Phi)$} (B2) ;
\end{tikzpicture}

In this memoir we take a look at  the Elliott programme and nuclear  \cstar-algebras in general  through a model-theoretic lens.
Being axiomatizable, 
the category of \cstar-algebras is equivalent to a category of models of a metric theory  (\cite{FaHaSh:Model2}).  
Neither this theory nor the language in which it is expressed are uniquely determined, 
as one can freely add predicates for arbitrary definable\footnote{In this introduction 
and elsewhere the term `definable' is used in its technical model-theoretic sense.}
  sets (such as the set of all projections or the norm on $M_n(A)$) 
to the structure without affecting an equivalence of categories.
The clean way to package all definable data is as follows.
Among all categories of metric models equivalent to the category of \cstar-algebras there is a universal 
category. The metric structure in this category associated to a given \cstar-algebra $A$ is denoted $A^{\eq}$, 
and it contains information about a variety of  objects associated with $A$, 
such as the cone of completely positive maps between $A$ and finite-dimensional algebras.

In its initial form,  first stated  in \cite{elliott1993classification}, 
the Elliott conjecture  (restated in model-theoretic language)  asserted  that the map sending a
\cstar-algebra $A$ into a discrete object in $A^{\eq}$ is an equivalence  
of categories. 
This was confirmed (fifteen years before the conjecture was posed) in    \cite{Ell:On} 
in the case when $\cC$ is the class of inductive limits of finite-dimensional \cstar-algebras (known as 
\emph{AF algebras})\index{C@\cstar-algebra!approximately finite (AF)}  
and the invariant is $D_0(A)$, the abelianization of the set of projections in $A$ with a partially defined addition. 
This local semigroup was later replaced by the 
ordered group $K_0(A)$. 
Although  $K_0(A)$ is not necessarily  in $A^{\eq}$,  
its fragments, consisting of each quotient of the set of projections in $M_n(A)$, for $n\in \bbN$, are. 
Every AF algebra $A$ has two important regularity properties. 
It has \emph{real rank zero},\index{real rank zero} 
meaning that the invertible self-adjoint elements are dense  in $A_{\mathrm{sa}}$, the space of self-adjoint elements of~$A$.\index{A@$A_{\mathrm{sa}}$}
It also has \emph{stable rank one},\index{stable rank one} 
meaning that the invertible  elements are dense  in $A$. 
Both of these properties are axiomatizable. 

Inductive limits of tensor products of $\mathrm{C}(\bbT)$, the algebra of continuous functions on the circle, and 
finite-dimensional algebras are called \emph{AT algebras}. They always have stable rank one but $\mathrm{C}(\bbT)$ itself does not have
real rank zero. 
In~\cite{elliott1993classification} Elliott classified separable AT algebras of real rank zero using the functors $K_0$ and $K_1$. 
Like $K_0(A)$, $K_1(A)$ is an abelian group which  typically does not belong  to~$A^{\eq}$.  
However,  in the case of AT algebras of real rank zero  it is 
a  quotient of a quantifier-free definable set (the unitary group) by a 
definable relation (see Proposition~\ref{P.K1.sr}).

The same pair of invariants, $K_0(A)$ and $K_1(A)$, were used in the classification of
simple, separable, nuclear and purely infinite \cstar-algebras (nowadays known as Kirchberg algebras)\index{C@\cstar-algebra!Kirchberg} 
by Kirchberg and Phillips (see \cite[Proposition 2.5.1]{Ror:Classification}, \cite{Phi:Classification}).\footnote{This result has an additional assumption, more about that later.} In this case both $K$-groups belong to~$A^{\eq}$.  

The construction of isomorphisms in classification results typically uses
the \emph{Elliott intertwining argument}
to lift  an isomorphism between Elliott invariants $\Ell(A)$ and $\Ell(B)$ 
to an  isomorphism between 
\cstar-algebras $A$ and $B$. In the first step, $A$ and $B$ are presented as limits of a sequence of building blocks:
$A=\lim_n A_n$, $B=\lim_n B_n$.
(If $A$ and $B$ don't come with such sequences, one might use $A_n=A$ and $B_n=B$ for all $n$.) 
In the second step 
one finds partial  lifts of $\Phi$ and $\Phi^{-1}$ between the building blocks such that the triangles in the
following diagram 
approximately commute. 

\begin{tikzpicture}
 \matrix[row sep=1cm,column sep=1cm] {
\node  (A1) {$A_1$};
& \node (A2) {$A_2$};
& \node (A3) {$A_3$};
& \node (A4) {$A_4$};
& \node (Adots) {$\dots$};
& \node (A) {$A=\lim_n A_n$};
\\
\node  (B1) {$B_1$};
& \node (B2) {$B_2$};
& \node (B3) {$B_3$};
& \node (B4) {$B_4$};
& \node (Bdots) {$\dots$};
& \node (B) {$B=\lim_n B_n$};
\\
};
\draw (A1) edge[->] node [left] {$\Phi_1$} (B1) ;
\draw (A2) edge[->] node  [left] {$\Phi_2$} (B2) ;
\draw (A3) edge[->] node [left] {$\Phi_3$} (B3);
\draw (A4) edge[->] node [left] {$\Phi_4$} (B4);
\draw (B1) edge[->] node [left] {$\Psi_1$} (A2);
\draw (B2) edge[->] node [left] {$\Psi_2$} (A3);
\draw (B3) edge[->] node [left] {$\Psi_3$}  (A4);
\draw (B4) edge[->] (Adots);
\draw (A1) edge[->] (A2);
\draw (A2) edge[->] (A3);
\draw (A3) edge[->] (A4);
\draw (A4) edge[->] (Adots);
\draw (B1) edge[->] (B2);
\draw (B2) edge[->] (B3);
\draw (B3) edge[->] (B4);
\draw (B4) edge[->] (Bdots);

\end{tikzpicture}

These maps converge to an isomorphism from $A$ onto $B$ and its inverse. 
A full classification result would also assert that every morphism between the invariants 
lifts to a morphism between $A$ and $B$.

Stably finite algebras comprise 
a vast class of \cstar-algebras not covered by the Kirchberg--Philips result. 
By a deep result of Haagerup all such exact algebras have a tracial state.
The space $T(A)$\index{T@$T(A)$} of all tracial states of $A$, up to affine homeomorphism, 
is  included in the Elliott invariant. 
The space $T(A)$ is  a weak$^*$-compact, convex set (and even a Choquet simplex)  
and it does not belong to $A^{\eq}$ unless it is finite-dimensional (and even its 
finite-dimensionality does not guarantee membership in $A^{\eq}$).\footnote{The spaces 
$T(A)$ up to affine homeomorphism are not even classifiable by countable structures, 
but that is a different story.}
By a classical result of Cuntz and Pedersen (\cite{CuPe:Equivalence})
$T(A)$ is isomorphic to 
the state space  of the  (real, ordered) Banach space $A_{\mathrm{sa}}/A_0$. 
Here $A_0$ is the closed linear span of self-adjoint commutators $[a,a^*]=aa^*-a^*a$.  
While $A_{\mathrm{sa}}$ is always definable,  $A_0$
 is definable in well-behaved (e.g., exact and $\cZ$-stable)  \cstar-algebras 
 but there are examples of \cstar-algebras (and even monotracial \cstar-algebras) in 
 which~$A_0$ is not definable. 
Other well-studied problems in classification and structure of \cstar-algebras are closely related 
to the study of definability of distinguished subsets in \cstar-algebras, such as the 
connected component of the identity in the unitary group.

The Elliott conjecture was refuted in 2004. In  \cite{Ror:Simple} 
R\o rdam constructed  a simple, nuclear \cstar-algebra that 
was neither tracial nor a Kirchberg algebra, since it contains both a finite and an infinite projection. 
An even  more devastating counterexample was constructed by Toms (\cite{To:On}) who produced
two stably finite, nonisomorphic algebras that cannot be distinguished by any functor continuous with respect to inductive limits and  
homotopy-invariance.\footnote{Looking ahead, the theory of a \cstar-algebra is neither a functor nor continuous with respect to inductive limits.}
Prior to these developments, Jiang and Su (\cite{JiangSu}) constructed a  stably finite,  infinite-dimensional \cstar-algebra $\cZ$ with the same invariant as $\bbC$, and it has turned out that $\cZ$\index{C@\cstar-algebra!Jiang--Su algebra, $\cZ$}\index{Z@$\cZ$}
 plays a positive role in the Elliott programme. 
In its current form, the Elliott programme aims to classify \snus{}
algebras that absorb $\cZ$ tensorially (such algebras are also said to be 
\emph{$\cZ$-stable}).\index{Z@$\cZ$-stable} 
$\cZ$-stable algebras  are axiomatizable among separable \cstar-algebras. 
In addition to the regularity properties that make $\cZ$-stable algebras amenable to $K$-theoretic classification, 
these algebras have an abundance of definable sets.  
Notably,  Cuntz--Pedersen equivalence is  definable by a 
result of Ozawa  \cite[Theorem 6]{Oza:Dixmier} (see also \cite[Corollary~4.3]{NgRobert}).

In the following pages we develop the model theory of nuclear \cstar-algebras and their invariants. 
Nuclear \cstar-algebras are a generalization of AF algebras. 
This class consists of \cstar-algebras satisfying 
the \emph{completely positive approximation property}\index{CPAP (completely positive approximation property)}
  (CPAP),\footnote{This is a theorem, not a definition.} which means that the identity map approximately factors through finite-dimensional \cstar-algebras via completely positive contractive (c.p.c.) maps:
 
 \begin{center}
   \begin{tikzpicture}
 \matrix[row sep=.6cm,column sep=.6cm] {
\node  (A1) {$A$};
&& \node (A2) {$A$};&;
\\
&\node  (B1) {$F$};
\\
};
\draw (A1) edge[->] node [above] {$\id$}  (A2);
\draw (A1) edge[->] node [left] {$\phi$} (B1) ;
\draw (B1) edge[->] node [right] {$\psi$} (A2);
\end{tikzpicture}
 \end{center}

At least in the separable case they can also be written as inductive limits of finite-dimensional \cstar-algebras

   \begin{tikzpicture}
 \matrix[row sep=.6cm,column sep=.6cm] {
\node  (A1) {$A$};
&& \node (A2) {$A$};
&& \node (A3) {$A$};
&& \node (Adots) {$\dots$};
& \node (A) {$A$};
\\
&\node  (B1) {$F_1$};
&& \node (B2) {$F_2$};
&& \node (B3) {$F_3$};
\\
};
\draw (A1) edge[->] node [left] {$\phi_1$} (B1) ;
\draw (A2) edge[->] node  [left] {$\phi_2$} (B2) ;
\draw (A3) edge[->] node [left] {$\phi_3$} (B3);
\draw (B1) edge[->] node [right] {$\psi_1$} (A2);
\draw (B2) edge[->] node [right] {$\psi_2$} (A3);
\draw (B3) edge[->] node [right] {$\psi_3$}  (Adots);
\draw (B3) edge[->] (Adots);
\draw (A1) edge[->] node [above] {$\id$}  (A2);
\draw (A2) edge[->]  node [above] {$\id$}(A3);
\draw (A3) edge[->] node [above] {$\id$} (Adots);

\end{tikzpicture}

\noindent 
and approximately commuting triangles which also keep track of the
multiplicative structure in a suitable way. Using generalized limits as defined 
in \cite{BlaKir} in this way one obtains the class of quasidiagonal nuclear algebras. 
If the limit is taken in the category of operator systems with order units, and 
endowed with a (unique) multiplicative structure only a posteriori, then 
the limits are exactly the nuclear \cstar-algebras. 

It is worth noting that  for every finite-dimensional \cstar-algebra $F$ 
the set of all c.p.c.\  maps from $A$ into $F$, as well as the set of all c.p.c.\  maps from 
$F$ to $A$, belongs to $A^{\eq}$. 
Variations of CPAP in 
which stronger conditions than complete positivity are imposed on the maps $\psi_n$  
are used  
to define important subclasses of nuclear \cstar-algebras, such as algebras with finite nuclear dimension or finite 
decomposition rank. None of these classes is elementary as the above diagram does not survive taking ultrapowers. 
In spite of this, we shall  give a characterization  of 
these classes of algebras in model-theoretic terms. Our results extend those of  
 \cite{Mitacs2012} where it was demonstrated that AF algebras and UHF algebras are exactly the algebras that 
omit a certain sequence of existential types. 

Our motivation for this  work is manifold; see also \cite{FarahICM} for more information on the interactions of logic and \cstar-algebras.
\smallskip

\paragraph{\bf Constructing novel examples of nuclear \cstar-algebras.}
This is our most ambitious and long-term goal. Our results readily provide model-theoretic reformulations 
of a  number of outstanding questions  at the heart of the structure and classification theory of nuclear \cstar-algebras. 
The existence of counterexamples to the Toms--Winter conjecture and some 
other outstanding conjectures about the structure of nuclear \cstar-algebras 
can be reformulated in terms of the existence of a model of a certain theory that 
omits a certain sequence of types.  
Thus the existence of a counterexample is equivalent to the assertion that   
the set of counterexamples is generic.

While at present we do not have examples of the desired kind, we hope our methods will contribute to a systematic study of how they could possibly arise.

\smallskip
\paragraph{\bf Model theory of \cstar-algebras.} The theory of every infinite-dim\-ensional \cstar-algebra is unstable, in a model theoretical sense, (\cite{FaHaSh:Model2}) 
and the only theory of an infinite-dimensional \cstar-algebra with elimination of quantifiers is the theory of complex-valued continuous functions on Cantor space (\cite{eagle2015quantifier}). 
In spite (or perhaps because of) this unruliness, \cstar-algebras have an abundance of interesting definable sets. 
Certain sets, such as  the set of projections or the unitary group,  are uniformly definable across all  
\cstar-algebras. On the other hand, 
definability of some other sets, such as the connected component of the identity in the unitary group, or the set of elements that are Cuntz--Pedersen equivalent to $0$, depends on structural properties of the \cstar-algebra in question. 
Not surprisingly, regularity properties required by the Elliott conjecture (see \cite{EllTo:Regularity}) 
imply  the definability of these sets. The most prominent regularity property---tensorial absorption of the 
Jiang--Su algebra $\cZ$---is axiomatizable.

\smallskip
\paragraph{\bf Developing the toolbox.} We present a self-contained presentation of imaginaries\index{imaginaries} and $A^{\eq}$
in the general setting of the logic of metric structures (not necessarily \cstar-algebras). We prove the appropriate version 
of the Beth definability theorem
and translate some of the 
standard preservation and axiomatizability results from discrete first-order model theory into  the metric context. 

\smallskip
\paragraph{\bf The theory as an invariant.} 
The Elliott classification programme (when successful) provides an equivalence between the category of \cstar-algebras and the category of Elliott invariants. 
Parts of the Elliott invariant $\Ell(A)$ of a classifiable\footnote{The usage of term  `classifiable' should not be confused 
with that from model theory, as in \cite{Sh:Classification}.  
See  \cite[\S 2.5]{Ror:Classification} for a technical definition of `classifiable \cstar-algebra.'}  
\cstar-algebra $A$ 
can be reconstructed from $A^{\eq}$, while $\Ell(A)$ provides a template for reconstructing $A$.\footnote{A very  general classification 
theorem of this sort was proved in \cite{Ell:Towards}.}

The theory of a \cstar-algebra is neither a functor nor continuous with respect to inductive limits. 
On the other hand, it is a Borel map from the Borel space of separable \cstar-algebras into the dual space of 
the normed vector space of sentences. It provides a smooth invariant which cannot even distinguish all separable 
AF algebras up to isomorphism. 
Although there is some overlap, the information provided by the theory is largely complementary  to the information provided by the Elliott invariant.  To the best of our knowledge, 
all known  \snus{} algebras are classified by their theory together with the Elliott invariant.  We now give two examples.  

The peculiarity of R\o rdam's example of a simple \cstar-algebra with an infinite and a finite projection (\cite{Ror:Simple})
is concealed from its Elliott invariant, but it is evident to its theory (Proposition~\ref{P.definable.projection}).  
Also,  numerical values of the radius of comparison (the invariant used by Toms to distinguish between infinitely many  
\snus{} algebras with the same Elliott invariant in \cite{To:Infinite}) can be read off from the theory of  \cstar-algebras (Theorem~\ref{P.rc}). 
A very optimistic interpretation of these observations is the following:

\begin{quest} \label{Q1} 
Assume $A$ and $B$ are  two elementarily equivalent, simple, separable, nuclear, unital \cstar-algebras
with the same Elliott invariant. Are $A$ and $B$ 
necessarily isomorphic? 
\end{quest}

The answer to this question is likely negative (see Example~\ref{Ex.Counterexample} for a non-nuclear example) 
but it seems that proving this would require a novel construction of nuclear \cstar-algebras.

\smallskip
\paragraph{\bf Byproducts.} 
Our results provide  a uniform  proof that 
various classes of \cstar-subalgebras of $B(H)$  are stable under small perturbations  in the Kadison--Kastler metric (see \cite{christensen2012perturbations},   
Corollary \ref{C.dkk.1} and  Corollary \ref{C.dkk.2}). 
We also show that these classes of algebras are Borel in the standard Borel space 
of \cstar-algebras (see \cite{Kec:C*}), 
 answering a question of  Kechris (see Corollary~\ref{C.Borel}).

This work also precipitated  the study of omitting types in the logic of metric structures 
 (\cite{FaMa:Omitting}). Although in general the set of (partial) types omissible in a model 
 of some theory is highly complex (more precisely,~$\mathbf\Sigma^1_2$), types relevant to \cstar-algebras
 belong to a class 
 for which there is a natural omitting types theorem.

\smallskip
\paragraph{\bf Acknowledgments.} We would like to thank I.\ Ben Yaacov, 
B.\ Blackadar, G.\ Cherlin, C.\ Eckhardt, 
G.A.\ Elliott, D.\ Enders, I.\ Goldbring, I.\ Hirshberg, 
M.\ Magidor, R.\ Nest, N.\ C.\ Phillips, D.\ Sherman, T.\ Sinclair, G.\ Sza\-bo, T.\ Tsankov, and S.\ White for valuable discussions. 
A part of this work grew out of the M\"unster workshop on classification and model theory held in July  2014.  We would also like to thank the Fields Institute for their hospitality during the creation of this document.

This work was partially supported by the Deutsche Forschungsgemeinschaft through SFB 878 and by the NSERC. M.\ Lupini is supported by NSF Grant DMS-1600186. A.\ Tikuisis is supported by EPSRC first grant EP/N00874X/1. A.\ Vignati was supported by a Susan Mann Dissertation Scholarship and by the NSERC.

\newpage
\tableofcontents
\newpage

\chapter{Continuous model theory}
\section{Preliminaries}\label{S.Preliminaries} 
In this section we present the formalities of the logic that we will use to study \cstar-algebras and related structures in a model theoretic manner.  Let us begin by looking at \cstar-algebras themselves and seeing how they are viewed.  Fix a \cstar-algebra $A$.  We first consider $\cS^A$, the collection of balls $B_n = \{ a \in A \colon \| a \| \leq n \}$ for $n \in \bbN$.  This is a collection of bounded complete metric spaces with the metric on each ball given by $\|x - y\|$.  We let $\cF^A$ be the collection of functions obtained by considering the ordinary algebraic operations on $A$ restricted to the balls in $\cS^A$ i.e., we have the restrictions of $+, \cdot, {}^*$ as well as scalar multiplication by any $\lambda \in \bbC$.  For good measure, we also have the constants $0$ and $1$ (if the algebra is unital)  as $0$-ary functions and standard inclusion maps from $B_m$ to $B_n$ when $m < n$.  It is important to notice that all of these functions are uniformly continuous on their domains.  It is clear that any $A$ yields a structure consisting of $\cS^A$ and $\cF^A$ as described.  A little less clear (the details are worked out in Proposition 3.2 of \cite{FaHaSh:Model2}, see also Example~\ref{Ex.cstaralg}) is that there are axioms that one can write down such that if one has a pair $(\cS,\cF)$ satisfying those axioms then there is a unique \cstar-algebra from which it came.  

In general, a \emph{metric structure}\index{metric structure} is a structure $\langle \cS,\cF,\cR \rangle$ which consists of
\begin{enumerate}[leftmargin=*]
\item $\cS$, the \emph{sorts},\index{sort} which is an indexed family of metric spaces $(S,d_S)$ where $d_S$ is a complete, bounded metric on $S$.
\item $\cF$, the \emph{functions},\index{function} is a set of uniformly continuous functions such that for $f \in \cF$, $\dom(f)$ is a finite product of sorts and $\rng(f)$ is a sort. This includes the possibility that the domain is an empty product and in this case, $f$ is a constant in some sort.
\item $\cR$, the \emph{relations},\index{relation} is a set of uniformly continuous functions such that for $R \in \cR$, $\dom(R)$ is a finite product of sorts and $\rng(R)$ is a bounded interval in $\bbR$.
\pushcounter
\end{enumerate}
Clearly \cstar-algebras as described above fit this format. In the case of \cstar-algebras, there are no relations (or one can think that the metrics on the sorts are relations).  A non-trivial example of a relation would be a \cstar-algebra together with a trace.  The formalism would again be as in the first paragraph: for a \cstar-algebra $A$ together with a trace $\tau$ the structure would be as described above with the addition of relations arising from the trace restricted to the balls (formally you would need to add both the real and imaginary part of the trace restricted to each ball as a relation since we want our relations to be real-valued but we won't labour this point now).

To capture the notion of a metric structure precisely, we need to introduce the syntax of continuous logic.  This begins with a formalism that mimics the essential elements of a metric structure.  A language~$\calL$ is a formal object $\langle \ccS, \ccF, \ccR \rangle$ which contains the following data:\index{language}
\begin{enumerate}[leftmargin=*]
\popcounter
\item $\ccS$ is the set of sorts; for each sort $S \in \ccS$, there is a symbol $d_S$ meant to be interpreted as a metric together with a positive number $M_S$ meant to be the bound on $d_S$;
\item $\ccF$ is the set of function symbols; for each function symbol\index{function symbol} $f \in \ccF$, we formally specify $\dom(f)$ as a sequence $(S_1,\ldots,S_n)$ from $\ccS$ and $\rng(f) = S$ for some $S \in \ccS$.  We will want $f$ to be interpreted as a uniformly continuous function.  To this effect we additionally specify, as part of the language, functions $\delta^f_i:\bbR^+ \rightarrow \bbR^+$, for $i \leq n$. These 
functions are  called \emph{uniform continuity moduli}.\index{uniform continuity modulus}
\item $\ccR$ is the set of relation symbols; for each relation symbol\index{relation symbol} $R \in \ccR$ we formally specify $\dom(R)$ as a sequence $(S_1,\ldots,S_n)$ from $\ccS$ and $\rng(R) = K_R$ for some compact interval $K_R$ in $\bbR$.  As with function symbols, we additionally specify, as part of the language, functions $\delta^f_i:\bbR^+ \rightarrow \bbR^+$, for $i \leq n$, called \emph{uniform continuity moduli}.
\pushcounter
\end{enumerate}
In order to express formulas in this language we will need some auxiliary logical notions.  For each sort $S \in \ccS$, we have infinitely many variables~$x^S_i$ for which we will almost always omit the superscript.  \emph{Terms} in the language $\calL$ are defined inductively:\index{term}
\begin{enumerate}[leftmargin=*]
\popcounter
\item\label{I.Terms.1} A variable $x^S_i$ is a term with domain and range $S$;
\item\label{I.Terms.2}  If $f \in \ccF$, $\dom(f) = (S_1,\ldots,S_n)$ and $\tau_1,\ldots,\tau_n$ are terms with $\rng(\tau_i) = S_i$ then $f(\tau_1,\ldots,\tau_n)$ is a term with range the same as $f$ and domain determined by the $\tau_i$'s.
\pushcounter
\end{enumerate}
All terms inherit uniform continuity moduli inductively by their construction.

In the case of \cstar-algebras, the language contains sorts for each ball~$B_n$ and a function symbol for all the functions specified at the beginning of the section.  In the case of a \cstar-algebra with a trace there are additional relation symbols corresponding to the restriction of the trace to each ball.  In both cases, terms are formed in the usual manner and correspond formally to $^*$-polynomials in many variables over $\bbC$ restricted to a product of balls.

Now in order to say anything about metric structures in some language $\calL$, we need to define the collection of basic formulas of $\calL$.  As with relation symbols, formulas will have domains and uniform continuity moduli which we also define inductively along the way.

\begin{definition}\label{formula}
\begin{enumerate}[leftmargin=*]
\popcounter
\item \index{formula}
If $R$ is a relation symbol in $\calL$ (possibly a metric symbol) with domain $(S_1,\ldots,S_n)$ and $\tau_1,\ldots,\tau_n$ are terms with ranges $S_1,\ldots,S_n$ respectively then $R(\tau_1,\ldots,\tau_n)$ is a formula.  Both the domain and uniform continuity moduli of $R(\tau_1,\ldots,\tau_n)$ can be determined naturally from $R$ and $\tau_1,\ldots,\tau_n$.
These are the \emph{atomic formulas}.\index{formula!atomic}
\item (Connectives) If $f:\bbR^n \rightarrow \bbR$ is a continuous function and $\varphi_1,\ldots,\varphi_n$ are formulas then $f(\varphi_1,\ldots,\varphi_n)$ is a formula.  Again, the domain and uniform continuity moduli are determined naturally from $f$ and $\varphi_1,\ldots,\varphi_n$.
\item\label{I.Quantifiers}  (Quantifiers) If $\varphi$ is a formula and $x=x^S$ is a variable then both $\inf_{x\in S} \varphi$ and $\sup_{x\in S} \varphi$ are formulas.  The domain of both is the same as that of $\varphi$ except that the sort of $x$ is removed and the uniform continuity moduli of all other variables is the same as that of $\varphi$.
\end{enumerate}
\end{definition}

We denote the collection of all formulas of $\calL$ by $\ccF_\calL$;\index{F@$\ccF_\calL$}
 if we wish to highlight the free variables $\bar x = (x_1,\ldots,x_n)$ we will write $\ccF_\calL^{\bar x}$. In particular,  $\ccF_\calL=\bigcup_n \ccF_\calL^{(x_1,\ldots,x_n)}$.
  We will often omit the subscript if there is no confusion about which language we are using. Suppose $\varphi(x_1,\ldots,x_n)$ is a formula with free variables $x_1,\ldots, x_n$ and $M$, an $\calL$-structure.  Moreover if $m_1,\ldots,m_n$ are in $M$ and $m_i$ is in the sort associated with $x_i$ then the \emph{interpretation}\index{interpretation (of a formula), $\varphi^M$} of $\varphi$ in $M$ at $m_1,\ldots,m_n$, is written $\varphi^M(m_1,\ldots,m_n)$ and is the number defined naturally and inductively according to the construction of $\varphi$.  Quantification as in \eqref{I.Quantifiers} is interpreted as taking suprema and infima over the sort associated with the variable being quantified.
The following is straightforward (see \cite[Theorem 3.5]{BYBHU}). 

\begin{prop} If $\varphi(\bar x)$ is a formula with domain $(S_1,\ldots,S_n)$ then there is a constant 
$K<\infty$ such that if $M$ is an $\calL$-structure then
\begin{enumerate}
\item $\varphi^M:S_1(M)\times \ldots \times S_n(M) \rightarrow \bbR$ is uniformly continuous with uniform continuity modulus given by $\varphi$, i.e., for $\e>0$, if $\bar a=(a_1,\dots,a_n), \bar b=(b_1,\dots,b_n) \in S_1(M) \times \ldots \times S_n(M)$ and $d_{S_i(M)}(a_i,b_i)<\delta_i^\phi(\e)$ then $|\phi^M(\bar a) - \phi^M(\bar b)| < \e$, and
\item $\varphi^M$ is bounded by $K$. \qed
\end{enumerate}
\end{prop}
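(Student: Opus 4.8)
The plan is to prove both assertions simultaneously by induction on the complexity of the formula $\varphi$, following the inductive definition in Definition~\ref{formula}. Treating boundedness and uniform continuity together is essential rather than a convenience: at the connective step the boundedness of the subformulas is exactly what is needed to obtain uniform continuity, so the two statements cannot be separated. Before turning to formulas I would record the analogous statement for terms, proved by an easy induction on the construction of terms: in any structure $M$ a term $\tau$ with range sort $S$ defines a function $\tau^M$ taking values in $S(M)$, hence bounded by the declared metric bound $M_S$, and uniformly continuous with the modulus obtained by composing the declared moduli $\delta_i^f$ of the function symbols appearing in $\tau$. The point to keep in view throughout is that all these moduli are fixed syntactic data, so a single modulus works uniformly across every $\calL$-structure and at every tuple of arguments.

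For an atomic formula $\varphi = R(\tau_1,\dots,\tau_m)$, boundedness is immediate: $R$ takes values in the compact interval $K_R$, so one may take $K := \max\{|t| : t\in K_R\}$ (and a metric symbol $d_S$ is bounded by $M_S$). Uniform continuity follows by composing the declared moduli of $R$ with the term moduli from the previous step, and this composite is precisely the modulus the construction assigns to $\varphi$.

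The connective case $\varphi = f(\varphi_1,\dots,\varphi_m)$, with $f\colon\bbR^m\to\bbR$ continuous, is where the necessity of the simultaneous induction becomes visible. By the inductive hypothesis each $\varphi_i^M$ is bounded by some $K_i$ and uniformly continuous, with constants independent of $M$. Hence $\varphi^M$ takes values in the compact set $f\big(\prod_i[-K_i,K_i]\big)$, giving a bound $K$ that does not depend on $M$. Since $f$ is continuous it is uniformly continuous on the compact box $\prod_i[-K_i,K_i]$; precomposing its modulus there with the structure-independent moduli of the $\varphi_i$ yields a modulus for $\varphi$, again uniform over all $M$. This is the step that genuinely requires boundedness---$f$ need not be uniformly continuous on all of $\bbR^m$---and it is the reason the two parts of the proposition must be run through one induction.

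For the quantifier case, say $\varphi = \sup_{x\in S}\psi$, the bound $K$ obtained for $\psi$ already bounds $\varphi^M$, since a supremum of numbers in $[-K,K]$ remains in $[-K,K]$; the case of $\inf$ is identical. For uniform continuity in the free variables I would invoke the elementary inequality
\[
\Big|\,\sup_{x}\psi^M(\bar a,x)-\sup_{x}\psi^M(\bar b,x)\,\Big|\le \sup_{x}\big|\psi^M(\bar a,x)-\psi^M(\bar b,x)\big|,
\]
which expresses that the supremum operation is $1$-Lipschitz for the supremum metric (the analogous inequality holds for $\inf$). Because the inductive modulus $\delta_i^\psi$ bounds the variation in the $i$-th free variable uniformly in the quantified variable $x$, the right-hand side is below $\e$ as soon as $d_{S_i(M)}(a_i,b_i)<\delta_i^\psi(\e)$; thus $\varphi$ inherits the moduli of $\psi$ in its remaining variables, exactly as prescribed by the quantifier clause of Definition~\ref{formula}. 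The one thing to verify carefully is this uniformity in $x$: it is what guarantees that the declared modulus of $\psi$ does not secretly depend on the value of the quantified variable, and hence survives the passage to $\sup$ or $\inf$.
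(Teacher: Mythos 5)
Your proof is correct and is precisely the standard argument: the paper itself gives no proof, merely citing \cite[Theorem 3.5]{BYBHU}, and your simultaneous induction on the complexity of $\varphi$ (with the preliminary induction on terms, the use of boundedness to restrict the connective $f$ to a compact box, and the $1$-Lipschitz inequality for $\sup$ and $\inf$) is exactly the argument carried out in that reference. Nothing is missing.
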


We will almost always write $\|x\| \leq k$ for ``$x$ is in the sort corresponding to the ball of operator norm $\leq k$''.  Here are some examples of formulas in the language of \cstar-algebras together with their interpretations.
\begin{example}
\begin{enumerate}
\item All of $x$, $x^2$ and $x^*$ are terms in the language of \cstar-algebras.  If $d$ is the metric on the sort for the unit ball then $\varphi(x) := \max(d(x^2,x),d(x^*,x))$ is a formula with one free variable.  If $A$ is a \cstar-algebra then for $p \in B_1(A)$, $\varphi^A(p) = 0$ if and only if $p$ is a projection. We will often write $\| x - y \|$ for $d(x,y)$ when the metric structure under discussion is a \cstar-algebra, and likewise, $\|x\|$ for $d(x,0)$.
\item Let $\psi(x) := \inf_{\|y\| \leq 1} \| y^*y - x \|$.  $\psi(x)$ is a formula in the language of \cstar-algebras and it is reasonably easy to see (using that the set of positive elements is closed and equal to $\{y^*y \mid y \in A\}$) that for any \cstar-algebra $A$ and $a \in B_1(A)$, $\psi^A(a) = 0$ if and only if $a$ is positive.
\end{enumerate}
\end{example}

We will be concerned with particular subclasses of formulas.  
\begin{definition}
\begin{enumerate}
\item We say that a formula $\varphi$ 
is \emph{$\bbR^+$-valued}\index{formula!$\bbR^+$-valued} if in all interpretations, the value of $\varphi$ is greater than or equal to 0.  
\item We say that a formula $\varphi$ is \emph{$[0,1]$-valued}\index{formula!$[0,1]$-valued} if in all interpretations, the value of $\varphi$ lies in $[0,1]$.  
\end{enumerate}
\end{definition}
The simplest way of obtaining an $\bbR^+$-valued formula is by taking the absolute value of a formula.  It is clear that every formula is a difference of two $\bbR^+$-valued formulas.  Since evaluation is a linear functional, the value of all formulas is determined by the value on the $\bbR^+$-valued formulas.  The $[0,1]$-valued formulas are used in a similar way.  Since the range of any formula is bounded in all interpretations, we can compose with a linear function to obtain a formula which is $[0,1]$-valued.  Again, since evaluation is linear, the value of all formulas is determined by the values on $[0,1]$-valued formulas.

$\bbR^+$-valued formulas allow the interpretations of the quantifiers to approximate classical first order quantifiers.  For instance, if we are considering an $\bbR^+$-valued formula $\varphi(x,\bar y)$ and we know that in some structure $M$ and some $\bar a$ in $M$, $\sup_x \varphi(x,\bar a) = 0$ then we know that for all $b \in M$, $\varphi(b,\bar a) = 0$.  So $\sup$ acts like a universal quantifier.  On the other hand, $\inf$ only tells you about $\varphi$ approximately.  That is, if $\inf_x \varphi(x,\bar a) = 0$ in $M$, then we know that for every $\e > 0$ there is a $b \in M$ such that $\varphi^M(b,\bar a) < \e$.

The syntactic form of a formula often has semantic content.  There are several classes of formulas whose form we will consider.
\begin{definition}\label{def:complexity}
\begin{enumerate}
\item A \emph{quantifier-free} formula\index{formula!quantifier-free}
 is obtained by invoking only the first two clauses i.e., we do not use quantifiers.
\item A \emph{$\sup$-formula}\index{formula!$\sup$-formula}
 is a formula of the form $\sup_{\bar x} \varphi$ where $\varphi$ is an $\bbR^+$-valued quantifier-free formula.  We similarly define \emph{$\inf$-formulas}.\index{formula!$\inf$-formula}
 \item An \emph{$\forall\exists$-formula}\index{formula!$\forall\exists$-formula} is a  formula of the form $\sup_{\bar x}\inf_{\bar y}\varphi$ where $\varphi$ is an $\bbR^+$-valued quantifier-free formula.
\item A \emph{positive}\index{positive!formula}  formula is one formed using Definition \ref{formula} but in the second clause one only composes with increasing functions i.e., with functions $f:\bbR^n \rightarrow \bbR$ such that if $\bar r \leq \bar s$ coordinate-wise then $f(\bar r) \leq f(\bar s)$.
\end{enumerate}
$\sup$-formulas are also known as \emph{universal formulas}\index{formula!universal}
and 
$\inf$-formulas are also known as \emph{existential formulas}\index{existential!universal}. 
\end{definition}

Here is how these different syntactic forms get used in practice. 

\begin{prop} Suppose that $M \subseteq N$ are $\calL$-structures.
\begin{enumerate}
\item If $\varphi(\bar x)$ is a quantifier-free formula then for any $\bar a$ in $ M$, $\varphi^M(\bar a) = \varphi^N(\bar a)$.
\item If $\varphi(\bar x)$ is a $\sup$-formula then for any $\bar a$ in $ M$, if $\varphi^N(\bar a) = 0$ then $\varphi^M(\bar a) = 0$.
\item If $\varphi(\bar x)$ is an $\inf$-formula then for any $\bar a$ in $M$, if $\varphi^M(\bar a) = 0$ then $\varphi^N(\bar a) = 0$.

\end{enumerate}
\end{prop}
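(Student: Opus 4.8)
The plan is to prove (1) directly by induction on the syntactic construction of the formula, and then to read off (2) and (3) as short consequences, using in each case that the quantifier-free part is $\bbR^+$-valued. For (1), I would first recall what $M \subseteq N$ supplies: each sort of $M$ is a subset of the corresponding sort of $N$ carrying the same metric, and every function and relation symbol of $\calL$ is interpreted in $M$ as the restriction of its interpretation in $N$ to the relevant sorts of $M$. The heart of the argument is a preliminary induction on terms, showing that for every term $\tau$ and every tuple $\bar a$ from $M$ of matching sorts one has $\tau^M(\bar a) = \tau^N(\bar a)$. The variable case is immediate, and the case $\tau = f(\tau_1,\dots,\tau_n)$ follows because each $\tau_i^M(\bar a) = \tau_i^N(\bar a)$ lands in $M$ and $f^M$ agrees with $f^N$ there. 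Granting this, the atomic case $R(\tau_1,\dots,\tau_n)$ agrees in $M$ and $N$ because $R^M$ is the restriction of $R^N$ and the arguments $\tau_i^M(\bar a) = \tau_i^N(\bar a)$ coincide, while the connective step is trivial: if $\varphi = g(\varphi_1,\dots,\varphi_n)$ with $g$ continuous and each $\varphi_i^M(\bar a) = \varphi_i^N(\bar a)$ by inductive hypothesis, then $\varphi^M(\bar a) = g(\varphi_1^M(\bar a),\dots) = \varphi^N(\bar a)$. Since quantifier-free formulas invoke only the atomic and connective clauses, this settles (1).

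For (2), write $\varphi(\bar y) = \sup_{\bar x} \psi(\bar x,\bar y)$ with $\psi$ quantifier-free and $\bbR^+$-valued. If $\varphi^N(\bar a) = 0$, then since $\psi \geq 0$ the supremum being zero forces $\psi^N(\bar b,\bar a) = 0$ for every $\bar b$ in $N$, in particular for every $\bar b$ in $M$; by (1), $\psi^M(\bar b,\bar a) = \psi^N(\bar b,\bar a) = 0$ for all such $\bar b$, whence $\varphi^M(\bar a) = \sup_{\bar b \in M} \psi^M(\bar b,\bar a) = 0$. For (3), write $\varphi(\bar y) = \inf_{\bar x}\psi(\bar x,\bar y)$ similarly. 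If $\varphi^M(\bar a) = 0$, then for each $\e > 0$ there is $\bar b$ in $M$ with $\psi^M(\bar b,\bar a) < \e$; by (1) the same $\bar b$, viewed in $N$, satisfies $\psi^N(\bar b,\bar a) < \e$, so $\varphi^N(\bar a) = \inf_{\bar c \in N}\psi^N(\bar c,\bar a) < \e$. As $\e$ was arbitrary and $\psi \geq 0$, we conclude $\varphi^N(\bar a) = 0$.

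There is essentially no serious obstacle here; the only point demanding care is the preliminary induction on terms inside (1), since the agreement of atomic formulas (and hence of all quantifier-free formulas) rests on the fact that every subterm evaluates to an element of $M$, so that the restriction-compatibility of the function and relation symbols can legitimately be invoked. Once that bookkeeping is in place, (2) and (3) are one-line consequences of (1) together with the nonnegativity supplied by the $\bbR^+$-valued hypothesis.
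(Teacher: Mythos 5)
Your proof is correct and follows exactly the route the paper intends — the paper's own proof is just the remark that this is ``straightforward from the definitions,'' and your term induction, atomic/connective cases, and the use of $\bbR^+$-valuedness to pass between the $\sup$/$\inf$ over $M$ and over $N$ are precisely the details being elided. (The only possible shortening: for (2) and (3) one can note directly that $\varphi^M(\bar a)\leq\varphi^N(\bar a)$ for $\sup$-formulas and $\varphi^M(\bar a)\geq\varphi^N(\bar a)$ for $\inf$-formulas, and then conclude from nonnegativity, but your version is equivalent.)
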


\begin{proof}This is straightforward from the definitions. \end{proof}

A function $f:M \rightarrow N$ between two structures $M$ and $N$ 
is a \emph{homomorphism}\index{homomorphism} if  whenever $\varphi$ is an atomic and $\bar m$ in $M$ then $\varphi^M(\bar m) \geq \varphi^N(f(\bar m))$.  In the case of \cstar-algebras this gives the usual definition of $^*$-homomorphism.

\begin{prop}\label{P.Positive.Preservation}
If $f:M \rightarrow N$ is a surjective homomorphism and $\varphi(\bar x)$ is a positive formula then for any $\bar a$ in $M$, $\varphi^M(\bar a) \geq \varphi^N(f(\bar a))$.
\end{prop}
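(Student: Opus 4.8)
The plan is to argue by induction on the construction of the positive formula $\varphi$, establishing the statement simultaneously for all tuples $\bar a$ from $M$: namely, that $\varphi^M(\bar a) \geq \varphi^N(f(\bar a))$. Following the clauses of Definition~\ref{formula} as restricted in the definition of \emph{positive}, there are three inductive cases to treat, the atomic formulas, composition with increasing connectives, and the two quantifiers.

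The base case is immediate. When $\varphi$ is atomic, the inequality $\varphi^M(\bar a) \geq \varphi^N(f(\bar a))$ is exactly the defining property of a homomorphism, and I would point out that surjectivity plays no role here. For the connective step, I would take $\varphi = g(\varphi_1,\ldots,\varphi_n)$ with $g \colon \bbR^n \to \bbR$ increasing and each $\varphi_i$ covered by the inductive hypothesis. Since $\varphi_i^M(\bar a) \geq \varphi_i^N(f(\bar a))$ for every $i$, the tuple $(\varphi_i^M(\bar a))_i$ dominates $(\varphi_i^N(f(\bar a)))_i$ coordinatewise, so the monotonicity of $g$ transports the inequality to $\varphi$ itself.

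The quantifier steps carry the real content. For $\varphi(\bar x) = \inf_{y \in S} \psi(\bar x,y)$ with $\psi$ satisfying the inductive hypothesis, I would observe that for every $b \in S(M)$ one has $\psi^M(\bar a,b) \geq \psi^N(f(\bar a),f(b)) \geq \inf_{c \in S(N)} \psi^N(f(\bar a),c)$, and taking the infimum over $b \in S(M)$ closes the case; note that surjectivity is not used. For $\varphi(\bar x) = \sup_{y \in S} \psi(\bar x,y)$, this is the single step where surjectivity is essential: given any $c \in S(N)$, I would use surjectivity of $f$ on the sort $S$ to choose $b \in S(M)$ with $f(b)=c$, so that $\psi^N(f(\bar a),c) = \psi^N(f(\bar a),f(b)) \leq \psi^M(\bar a,b) \leq \varphi^M(\bar a)$, and then pass to the supremum over $c \in S(N)$.

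I expect the $\sup$-quantifier step to be the only genuine obstacle, and it is precisely there that the surjectivity hypothesis is forced: without it an element of $S(N)$ need not be the image of anything in $S(M)$, and the supremum computed in $N$ could strictly exceed the one computed in $M$. The remaining steps amount to a routine unwinding of the inductive definition of interpretation together with the monotonicity built into the positive connectives, so the write-up should be short once the inductive scheme is set up.
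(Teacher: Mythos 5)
Your proof is correct and follows essentially the same route as the paper's: induction on the construction of the formula, with the atomic case given by the definition of homomorphism, the connective case by monotonicity, and the quantifier cases handled via surjectivity. Your observation that surjectivity is only genuinely needed for the $\sup$ step (the paper invokes it for both quantifiers) is a small but accurate refinement.
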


\begin{proof} This is proved by induction on complexity of the formula, following Definition~\ref{formula}. 
The case for atomic formulas is just the definition. Since $f$ is a surjection, if the assertion is true for $\varphi$ then it is true for 
$\sup_{\bar x} \varphi$ and $\inf_{\bar x}\varphi$. Finally, if the assertion is true for $\varphi_1, \dots \varphi_n$ 
and $g\colon \bbR^n\to \bbR$ is such that $\bar r\geq \bar s$ coordinate-wise implies $g(\bar r)\geq g(\bar s)$, 
then the assertion is clearly true for $g(\varphi_1, \dots, \varphi_n)$.  This completes the induction. 
\end{proof} 

\section{Theories}

A formula with no free variables is called a \emph{sentence}\index{sentence} and the set of sentences of $\calL$ is 
denoted $\mbox{Sent}_\calL$ (that is, 
 denoting 
the 0-tuple by $()$, we have $\mbox{Sent}_\calL:= \ccF_\calL^{()}$). 
\index{Sent@$\mbox{Sent}_\calL$}  Given an $\calL$-structure $M$, the 
theory\index{theory}\index{Th@$\Th(M)$}
 of $M$ is the functional $\Th(M): \mbox{Sent}_\calL \rightarrow \bbR$ given by 
\[
\Th(M)(\varphi) := \varphi^M.
\]
  This functional is determined by its kernel and so, for any given $M$, it suffices to know those $\varphi$ such that $\varphi^M = 0$.  If we identify $\Th(M)$ with its kernel, we often write $\varphi \in \Th(M)$ to mean $\varphi^M = 0$.  Any set of sentences is called a theory. If $T$ is a set of sentences then we say $M$ \emph{satisfies}\index{satisfies} $T$, 
  \[
  M \models T,
  \]
   if $T \subseteq \Th(M)$ i.e., if for all $\varphi \in T$, $\varphi^M = 0$.
   The \emph{satisfaction relation $\models$}\index{satisfaction relation $\models$}
   can be used in a purely syntactical context. If $T$ is a theory and  
   a \emph{condition}\index{condition} (i.e., restriction in the language of $T$ 
    of the form $r \leq \varphi$) is given then 
   \[
   T\models r\leq \varphi
   \]
   stands for the assertion `If $M\models T$ then $M\models r\leq \varphi$.'
   
      A theory $T$ is said to be \emph{consistent}\index{theory!consistent} if there is some $M$ which satisfies $T$. Finally, we say two $\calL$-structures $A$ and $B$ are \emph{elementarily equivalent},\index{elementarily equivalent} $A \equiv B$, if $\Th(A) = \Th(B)$.  A theory $T$ is said to be \emph{complete}\index{theory!complete} if whenever $A$ and $B$ satisfy $T$ they are elementarily equivalent.

\begin{example}\label{Ex.cstaralg}
To get used to some of the formalism, let's explicitly write out some of the axioms for the theory of \cstar-algebras.  Much of this is taken from Section 3 of \cite{FaHaSh:Model2}.  To make things easier, we adopt the following conventions: if $\tau$ and $\sigma$ are terms in the language of \cstar-algebras then to say that the equation $\tau = \sigma$ holds in all \cstar-algebras is the same as saying that the sentence $\sup_{\|\bar x\| \leq k}\| \tau - \sigma \|$ evaluates to 0 for all $k$; formally this is infinitely many sentences to express the fact that this one equation holds.  In this way, one can explicitly write out the axioms which say that we have an algebra over $\mathbb{C}$ with an involution $^\ast$.  Moreover, the axioms contain the sentences $(xy)^* = y^*x^*$ and $(\lambda x)^* = \bar \lambda x^*$ for every $\lambda \in \mathbb{C}$ (remember that we have unary functions symbols for every complex scalar).

We need sentences that guarantee that the operator norm will behave correctly. This allows us to introduce some notation for inequalities.  We define the function $\dotminus$ by  
\[
r \dotminus s := \max\{r - s, 0\} = (r-s)_+.
\]
  Notice that $r \leq s$ if and only if $r \dotminus s = 0$.  For two formulas $\varphi$ and $\psi$, to know that in all \cstar-algebras, $\varphi \leq \psi$ is the same as knowing that $\sup_{\|\bar x\| \leq k}( \varphi \dotminus \psi)$ evaluates to 0 for all $k$.  So \cstar-algebras satisfy:
\begin{enumerate}
\item $\|xy\| \leq \|x\|\|y\|$, 
\item  for all $\lambda \in \mathbb{C}$, $\|(\lambda x)\| =  |\lambda| \|x\|$,
\item (the \cstar-identity) $\|x^*x\| = \|x\|^2$.
\pushcounter
\end{enumerate}
To anyone who has seen the axioms of a \cstar-algebra, this already looks like a complete list.
However, two more axioms are needed, 
because the sentences listed so far are still too weak to tell us that the formal unit ball $B_1$ (meant to correspond to the ball of operator norm 1) is in fact the unit ball, and likewise for the norm-$n$ balls $B_n$. 
In other words, we need to enforce a compatibility between the norm structure and the sorts $B_n$.
Here are sentences that make the two notions coincide: remember that we have inclusion maps $i_n$ from $B_1$ into $B_n$ that are isometries which preserve all the algebraic operations ($+,\cdot,*$ and scalar multiplication); these statements are easy to translate into sentences.  The additional sentences that we need are, for every $n$:
\begin{enumerate}
\popcounter
\item $\sup_{x \in B_1} \|x\| \leq 1$.
\item\label{inclusion} $\sup_{x \in B_n} \inf_{y \in B_1} (\|x - i_n(y)\| \dotminus (\|x\| \dotminus 1))$, for each $n \in \bbN$.
\end{enumerate}
To understand when this last sentence evaluates to 0, consider some element~$a$ of the ball of operator norm $n$ which in fact has operator norm less than or equal to 1.  This sentence will guarantee that there is something in the unit ball which maps onto $a$ via the inclusion $i_n$ i.e., everything of operator norm 1 will lie in the unit ball.  The axioms in~(\ref{inclusion}) are a series of $\forall\exists$-sentences whereas all the other axioms are universal.  It is sometimes desirable to have a universal axiomatization. This can be achieved by slightly increasing the language. See Section 3 of~\cite{FaHaSh:Model2} to see how this can be done.
\end{example}

\begin{example}\label{Ex.projectionless}
We provide another, at first glance random, sentence.
Let  $t(x) := (x + x^*)/2$, $f(y) := 1 - \sqrt{1 - 4y}$,  and let $\varphi$ be defined as  
\begin{multline*}
\displaystyle\sup_{\|x\| \leq 1} \min\{ \|t(x)^2 - t(x)\| \dotminus \frac{1}{8},\\
 \max\{ \|x\|, \|1-x\|\} \dotminus \frac{\|x - x^*\| + f(\|t(x)^2 - t(x)\|)}{2}\}. 
\end{multline*}
It is a nice exercise in functional calculus (see \S\ref{S.cfc})
to show that $A$ satisfies $\varphi$ if and only if $A$ is unital projectionless (see \S\ref{S.UP}).  Although it is often possible and fruitful to find a sentence to express a particular notion, we will see below a variety of semantic tools that can be used to determine if a class of structures can be captured by certain kinds of theories.

\end{example}

A theory is a rather coarse invariant.   The following was proved  in~\cite{Mitacs2012}.

\begin{lemma} \label{L.ee}
\begin{enumerate}[leftmargin=*]
\item\label{uhf-ee}  Unital separable UHF algebras 
  are isomorphic if and only if they are elementarily equivalent. 
\item \label{non-unit-ee}There are nonisomorphic separable nonunital 
UHF  algebras with the same theory.   
\item \label{af-ee}There are nonisomorphic unital separable AF algebras 
with the same theory. 
\item \label{kirch-ee}There are nonisomorphic unital Kirchberg algebras (satisfying the Universal Coefficient Theorem) with the same theory.
\end{enumerate}  
\end{lemma}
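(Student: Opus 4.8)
The plan for part~\ref{uhf-ee} is to read the supernatural number off the theory. One direction is free: isomorphic structures have equal theories. For the converse, recall Glimm's theorem that a unital UHF algebra $A$ is determined up to isomorphism by its supernatural number $\mathfrak n(A)$, and that $M_k$ embeds unitally in $A$ iff $k\mid\mathfrak n(A)$. I would encode unital embeddability of $M_k$ by the $\inf$-sentence
\[
\sigma_k := \inf_{\bar x}\ \max\Bigl(\max_{i,j,l,m}\|x_{ij}x_{lm}-\delta_{jl}x_{im}\|,\ \max_{i,j}\|x_{ij}^*-x_{ji}\|,\ \bigl\|\textstyle\sum_{i} x_{ii}-1\bigr\|\Bigr),
\]
with $\bar x=(x_{ij})_{1\le i,j\le k}$ ranging over the unit ball. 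Because the defining relations of $M_k$ are weakly stable, approximate solutions can be perturbed to exact matrix units summing to $1$, so $\sigma_k^A=0$ exactly when $A$ contains a unital copy of $M_k$; equivalently $\{k:\sigma_k^A=0\}=\{k:k\mid\mathfrak n(A)\}$. If $A\equiv B$ then $\sigma_k^A=\sigma_k^B$ for all $k$, hence $\mathfrak n(A)=\mathfrak n(B)$ and $A\cong B$ by Glimm. The only non-formal input is weak stability of the finite-dimensional relations.

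For part~\ref{non-unit-ee} the rigidity of part~\ref{uhf-ee} must break, and the culprit is precisely the clause $\sum_i x_{ii}=1$: without a unit the theory loses its only grip on global normalization. I would work with $K_0=\bbQ$ (the universal supernatural number) and take two simple nonunital AF algebras $A_s,A_{s'}$ with scales $(0,s]\cap\bbQ$ and $(0,s']\cap\bbQ$ respectively, where $s/s'$ is irrational. An isomorphism would induce a positive automorphism of $\bbQ$ carrying one scale onto the other, i.e.\ multiplication by a positive rational, which is impossible when $s/s'\notin\bbQ$; so by Elliott's classification of AF algebras $A_s\not\cong A_{s'}$. On the other hand the only datum distinguishing them is the real number $s$ up to rational rescaling---the supremum of the traces of projections---and this is exactly the kind of global quantity that, as noted in the introduction, need not be definable even in monotracial algebras. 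I would therefore establish $A_s\equiv A_{s'}$ either by exhibiting an isomorphism of ultrapowers or by a countable back-and-forth matching finite-dimensional subalgebras together with their approximate inclusions, which is available because the two algebras have identical local structure.

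Parts~\ref{af-ee} and~\ref{kirch-ee} I would prove uniformly by a Borel-complexity argument rather than by hand. As recorded in the introduction, $A\mapsto\Th(A)$ is a Borel map into a standard Borel space, and $A\equiv B$ is exactly the pullback of equality of theories, while $A\cong B$ implies $A\equiv B$. If, on a Borel family $\cC$, elementary equivalence coincided with isomorphism, then $\Th$ would be a Borel reduction of the isomorphism relation on $\cC$ to equality, forcing isomorphism on $\cC$ to be smooth. So it suffices to choose $\cC$ on which isomorphism is non-smooth. For part~\ref{af-ee} take the Effros--Shen algebras $A_\theta$ ($\theta\in(0,1)$ irrational), for which $A_\theta\cong A_{\theta'}$ iff $\theta,\theta'$ lie in the same $GL_2(\bbZ)$-orbit; this orbit relation is ergodic for Lebesgue measure, hence non-smooth. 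For part~\ref{kirch-ee} take UCT Kirchberg algebras with $K_1=0$ and $K_0$ a rank-one torsion-free group $G\le\bbQ$ with distinguished unit $1$; by Kirchberg--Phillips these are classified by $G$, and isomorphism of such groups (Baer types) is non-smooth. In each case smoothness fails, so $\equiv$ is strictly coarser than $\cong$, producing the required nonisomorphic pairs with equal theory.

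The main obstacle is part~\ref{non-unit-ee}. Parts~\ref{uhf-ee},~\ref{af-ee},~\ref{kirch-ee} reduce to cited structural facts (Glimm; Effros--Shen together with ergodicity; Kirchberg--Phillips together with non-smoothness of Baer types) once the Borel reduction principle is in place, whereas part~\ref{non-unit-ee} requires simultaneously producing two genuinely nonisomorphic nonunital algebras and certifying that the invariant separating them is invisible to first-order logic. Pinning down such an invariant---and rigorously proving its non-definability, e.g.\ via the ultrapower isomorphism or the back-and-forth---is the delicate step; the weak-stability input in part~\ref{uhf-ee} is the only other place where a genuine (if routine) perturbation argument is needed.
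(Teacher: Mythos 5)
Your part~\eqref{uhf-ee} is exactly the paper's argument (Glimm plus axiomatizability of ``contains a unital copy of $M_k(\bbC)$'' via weak stability of the matrix-unit relations), and your parts~\eqref{af-ee} and~\eqref{kirch-ee} use the same smoothness principle the paper relies on: $\Th$ is a Borel map into a standard Borel space, so it cannot reduce a non-smooth isomorphism relation to equality. Your route to~\eqref{af-ee} via Effros--Shen algebras and $GL_2(\bbZ)$-orbit equivalence is different from the paper's (which deduces~\eqref{af-ee} from~\eqref{non-unit-ee} by unitization) but is perfectly valid and self-contained.

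The genuine gap is part~\eqref{non-unit-ee}. The paper proves it by the very same smoothness argument you deploy for~\eqref{af-ee} and~\eqref{kirch-ee}: isomorphism of nonunital separable UHF algebras is Borel-equireducible with isomorphism of rank-one torsion-free abelian groups, which is not smooth, so some nonisomorphic pair must share a theory. You instead attempt a \emph{direct} construction --- two matroid algebras with $K_0=\bbQ$ and scales $(0,s]\cap\bbQ$, $(0,s']\cap\bbQ$ with $s/s'$ irrational --- and then assert elementary equivalence ``by a countable back-and-forth matching finite-dimensional subalgebras together with their approximate inclusions.'' This step does not go through. For separable metric structures, a back-and-forth system of partial isomorphisms defined on dense subsets produces an actual isomorphism (this is exactly the Elliott approximate-intertwining argument for AF algebras), so if your matching succeeded it would prove $A_s\cong A_{s'}$, contradicting what you just established. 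Conversely, ``identical local structure'' (the same finite-dimensional subalgebras with the same approximate inclusions) controls at most the universal/existential fragment of the theory, not formulas of arbitrary quantifier depth; and the alternative of exhibiting an isomorphism $A_s^{\cU}\cong A_{s'}^{\cU}$ for two \emph{specific} irrationals is not something you supply and is not known to be any easier than the original problem. The fix is simply to run your own Borel-reduction argument on the family $\{A_s\}$ (or on all nonunital UHF algebras): since the classifying invariant is a rank-one torsion-free group together with a scale and the resulting isomorphism relation is non-smooth, while $\Th$ is smooth, a pair as in~\eqref{non-unit-ee} exists --- without ever identifying it. Note that this existence proof is non-constructive, which is precisely why the paper does not attempt to name the pair.
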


\begin{proof} This was proved in \cite[Theorem 3(2)]{Mitacs2012} but we include the proof for the reader's convenience. 
By a result of Glimm, two unital separable UHF algebras $A$ and $B$ are isomorphic if and only if the following holds for every $n\in \bbN$:
 there is a unital embedding of $M_n(\bbC)$ into $A$ if and only if there is a unital embedding of $M_n(\bbC)$ into~$B$. 
Since for every fixed $n$ the existence of unital embedding of $M_n(\bbC)$ is axiomatizable 
(and even existentially axiomatizable, as well 
as co-axiomatizable; see \S\ref{S.Mn}), \eqref{uhf-ee} follows.

In \cite[Proposition~5.1]{FaToTo:Descriptive} 
it was proved that the computation of the theory of a \cstar-algebra 
is a Borel function from the Borel space of separable
\cstar-algebras into the Borel space of their theories. Therefore $\Th(A)$ is in the terminology of \cite{FaToTo:Descriptive} 
a smooth invariant. However, the isomorphism relation of nonunital separable UHF algebras 
  is (in a very natural way) Borel-equireducible with the isomorphism relation of rank one torsion-free abelian groups. 
  Since the latter relation is not smooth, \eqref{non-unit-ee} follows. 
 \eqref{af-ee} follows from \eqref{non-unit-ee} by taking unitizations.  \eqref{kirch-ee} follows by observing that the relation of isomorphism of (UCT) Kirchberg algebras is not smooth by the analogous result for abelian groups and the Kirchberg--Phillips classification theorem.
  \end{proof} 
  
\section{Ultraproducts}\label{S.Ultraproducts} 
The definition of the ultraproduct of metric structures is given in \cite[\S 5]{BYBHU} and for multi-sorted structures in
 \cite[\S 4.1]{FaHaSh:Model2}. We repeat it here for the convenience of the reader.
 
 Fix a set $I$, an ultrafilter $\cU$ on $I$ and a language $\calL$.  For each $i \in I$, fix an $\calL$-structure $M_i$.  We wish to define $M = \prod_\cU M_i$, 
 the \emph{ultraproduct}\index{ultraproduct} of the $M_i$'s with respect to $\cU$.
 
 For each sort $S \in \calL$, suppose that $d^S_i$ is the metric on $S(M_i)$.  Let 
 \[
 (S(M),d) := \prod_\cU (S(M_i),d^S_i)
 \]
  as a metric space ultraproduct. More precisely,  we take $\prod_I S(M_i)$ together with the pseudo-metric 
  \[
  d = \lim_{i \rightarrow \cU} d^S_i
  \]
   $S(M)$ is the quotient of $\prod_I S(M_i)$ by $d$.
 
 For each function symbol $f \in \calL$, we define $f^M$ coordinate-wise on the appropriate sorts.  The uniform continuity requirements on $f$ are used critically to see both that this is well-defined and that $f^M$ has the necessary continuity modulus.  Similarly, for each relation symbol $R \in \calL$, we define $R^M := \lim_{i \rightarrow \cU} R^{M_i}$.  That this is well-defined and uniformly continuous follows from the uniform continuity requirements of the language.
 
 In the case of \cstar-algebras, this definition is equivalent to the 
 standard definition of ultraproduct. Suppose $I$ is an index set, $\cU$ is an ultrafilter on $I$, 
  and $A_i$, for $i\in I$, are \cstar-algebras. Elements $a$ of  $\prod_{i\in I} A_i$ are norm-bounded 
  indexed families  $(a_i: i\in I)$. On $\prod_i A_i$ define  
  \[
\textstyle  c_{\cU}=\{a\in \prod_i A_i: \lim_{i\to \cU} \|a_i\|=0\}. 
  \]
This   is a two-sided, self-adjoint, norm-closed ideal, and the quotient algebra
\[
\prod_{\cU}   A_i:=\prod_i A_i/c_{\cU}
\]
is the ultraproduct associated to $\cU$.  To see that the \cstar-algebra definition coincides with the metric structure definition, it suffices to see that the unit ball of $\prod_\cU A_i$ is the ultraproduct with respect to $\cU$ of the unit balls of $A_i$ for $i \in I$.
If all algebras $A_i$ are equal to some~$A$ then the ultraproduct is 
called \emph{ultrapower}\index{ultrapower} and denoted $A^{\cU}$. 
One identifies $A$ with its diagonal image in the ultrapower and 
often considers the \emph{relative commutant}\index{relative commutant} of $A$ in its ultrapower, 
\[
A'\cap A^{\cU}:=\{b\in A^{\cU}: ab=ba\text{ for all }a\in A\}. 
\]
Ultraproducts and relative commutants 
play a key role in the study of the theory of \cstar-algebras.  Model theoretically this is aided by the following theorem:

 \begin{thm}[{\protect \L}o\'s' Theorem]\label{Los}\index{L@\L o\'s' Theorem}
Suppose $M_i$ are $\calL$-structures for all $i \in I$, $\cU$ is an ultrafilter on $I$, $\varphi(\bar x)$ is a formula and $\bar m$ in $M = \displaystyle\prod_\cU M_i$ then
$$
\varphi^M(\bar m) = \lim_{i \rightarrow \cU} \varphi^{M_i}(\bar m_i)
$$
\end{thm}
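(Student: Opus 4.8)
The plan is to prove the identity by induction on the complexity of $\varphi$, following the inductive construction of formulas in Definition~\ref{formula}. The induction is carried out with the quantified variables treated as additional free variables, so that at each stage the statement is proved for the formula with all of its free variables displayed. Throughout one uses that every quantity in sight is bounded uniformly in $i$: the metrics $d_S$ and the relation symbols are bounded by hypothesis, and by the earlier Proposition each subformula $\varphi_j$ admits a bound $K_j<\infty$ depending only on $\varphi_j$ and not on the structure. Consequently all of the sequences whose $\cU$-limits appear below are bounded, so those limits exist.

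Before treating formulas I would record the analogous statement for terms: by induction on the construction of a term $\tau$, its interpretation is computed coordinate-wise, i.e. if $\bar m=[\bar m_i]_\cU$ then $\tau^M(\bar m)=[\tau^{M_i}(\bar m_i)]_\cU$ as an element of the appropriate sort $S(M)$. This is immediate for a variable, and for $\tau=f(\tau_1,\dots,\tau_n)$ it is exactly the statement that $f^M$ is defined coordinate-wise (the uniform continuity of $f$ being what makes this well defined).

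With this in hand the base case of atomic formulas $\varphi=R(\tau_1,\dots,\tau_n)$---including the case where $R$ is a metric symbol $d_S$---follows by combining the term computation with the defining equation $R^M=\lim_{i\to\cU}R^{M_i}$:
\[
\varphi^M(\bar m)=R^M\big(\tau_1^M(\bar m),\dots,\tau_n^M(\bar m)\big)=\lim_{i\to\cU}R^{M_i}\big(\tau_1^{M_i}(\bar m_i),\dots,\tau_n^{M_i}(\bar m_i)\big)=\lim_{i\to\cU}\varphi^{M_i}(\bar m_i).
\]
For a connective $\varphi=g(\varphi_1,\dots,\varphi_n)$ with $g\colon\bbR^n\to\bbR$ continuous, the induction hypothesis gives $\varphi_j^M(\bar m)=\lim_{i\to\cU}\varphi_j^{M_i}(\bar m_i)$ for each $j$; since a continuous function commutes with $\cU$-limits of bounded real sequences,
\[
\varphi^M(\bar m)=g\Big(\lim_{i\to\cU}\varphi_1^{M_i}(\bar m_i),\dots,\lim_{i\to\cU}\varphi_n^{M_i}(\bar m_i)\Big)=\lim_{i\to\cU}g\big(\varphi_1^{M_i}(\bar m_i),\dots,\varphi_n^{M_i}(\bar m_i)\big).
\]

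The quantifier step is where the real content lies and is the part I expect to be the main obstacle. Consider $\varphi(\bar x)=\inf_{y\in S}\psi(y,\bar x)$, the $\sup$ case being entirely dual, and write $L:=\lim_{i\to\cU}\varphi^{M_i}(\bar m_i)=\lim_{i\to\cU}\inf_{b_i\in S(M_i)}\psi^{M_i}(b_i,\bar m_i)$. For the inequality $\varphi^M(\bar m)\geq L$, take an arbitrary $b=[b_i]_\cU\in S(M)$; the induction hypothesis applied to $\psi$ yields $\psi^M(b,\bar m)=\lim_{i\to\cU}\psi^{M_i}(b_i,\bar m_i)\geq\lim_{i\to\cU}\inf_{b_i'}\psi^{M_i}(b_i',\bar m_i)=L$, and taking the infimum over $b$ gives the bound. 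For the reverse inequality, fix $\e>0$ and, for each $i$, choose $b_i\in S(M_i)$ with $\psi^{M_i}(b_i,\bar m_i)\leq\inf_{b_i'}\psi^{M_i}(b_i',\bar m_i)+\e$; such a $b_i$ exists because the infimum is a genuine real infimum, even though it need not be attained. Setting $b:=[b_i]_\cU$, which is a well-defined element of $S(M)$ precisely because each $b_i$ was chosen inside $S(M_i)$, the induction hypothesis gives $\psi^M(b,\bar m)=\lim_{i\to\cU}\psi^{M_i}(b_i,\bar m_i)\leq L+\e$, whence $\varphi^M(\bar m)\leq L+\e$. Letting $\e\to0$ completes the induction. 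The delicate point throughout is this coordinate-wise selection of approximate witnesses together with the verification that the selected family defines a legitimate element of the ultraproduct sort; the uniform boundedness recorded at the outset is what guarantees that the relevant $\cU$-limits exist and respect the inequalities used above.
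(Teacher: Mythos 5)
Your proof is correct and follows exactly the route the paper indicates: induction on the complexity of $\varphi$ following Definition~\ref{formula}, with the coordinate-wise computation of terms, continuity of connectives commuting with $\cU$-limits of bounded sequences, and approximate witnesses for the quantifier step. The paper leaves all of these details to the reader ("a straightforward induction on the complexity of $\varphi$"), so your write-up is simply the filled-in version of the paper's one-line argument.
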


\begin{proof} This is proved by a straightforward induction on the complexity of $\varphi$, following Definition~\ref{formula}. 
\end{proof}

Suppose $\Sigma$ is a set of sentences.  
Then we say $\Sigma$ is \emph{satisfiable}\index{satisfiable} if it is \emph{consistent}\index{consistent} i.e., satisfied by some model. 
It is \emph{finitely satisfied}\index{finitely satisfied}  if every finite subset of $\Sigma$ is consistent.  The approximation of $\Sigma$ is the set of sentences $\{ |\varphi| \!\!\dotminus \e : \varphi \in \Sigma, \e > 0 \}$.  $\Sigma$ 
is  \emph{finitely approximately satisfiable}\index{finitely approximately satisfiable} if the approximation of $\Sigma$ is finitely satisfied (i.e., every finite subset of the  approximation of  $\Sigma$ is consistent).  As a consequence of \L o\'s' Theorem, we have the compactness theorem in its exact and approximate versions.

\begin{thm}[Compactness Theorem]\index{Compactness Theorem}
 The following statements are equivalent for a set of sentences~$\Sigma$ \label{T.Compactness}
\begin{enumerate}
\item $\Sigma$ is satisfiable.
\item $\Sigma$ is finitely satisfiable.
\item $\Sigma$ is finitely approximately satisfiable.
\end{enumerate}
\end{thm}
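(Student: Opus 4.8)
The plan is to prove the cycle of implications $(1)\Rightarrow(2)\Rightarrow(3)\Rightarrow(1)$, with the last implication carrying all the content and the first two being immediate unwindings of the definitions. For $(1)\Rightarrow(2)$, a model of $\Sigma$ is in particular a model of each finite subset of $\Sigma$, so each finite subset is consistent. For $(2)\Rightarrow(3)$, given a finite subset $\{|\varphi_1|\dotminus\e_1,\dots,|\varphi_k|\dotminus\e_k\}$ of the approximation of $\Sigma$, the set $\{\varphi_1,\dots,\varphi_k\}\subseteq\Sigma$ is consistent by hypothesis, and any $M$ with $\varphi_i^M=0$ for all $i$ satisfies $(|\varphi_i|\dotminus\e_i)^M=0$; thus $M$ witnesses the consistency of the given finite subset of the approximation.

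For the essential implication $(3)\Rightarrow(1)$, the plan is the standard ultraproduct argument, now available since \L o\'s' Theorem (Theorem~\ref{Los}) has been established. Let $I$ be the set of all finite subsets of the approximation of $\Sigma$, directed by inclusion. By $(3)$, for each $i\in I$ we may choose an $\calL$-structure $M_i$ with $M_i\models i$. For each $j\in I$ set $A_j=\{i\in I: j\subseteq i\}$; since the union of two finite subsets of the approximation is again a finite subset of the approximation, we have $A_j\cap A_{j'}=A_{j\cup j'}$, so the family $\{A_j: j\in I\}$ is closed under finite intersections and consists of nonempty sets (each $j$ lies in $A_j$), hence has the finite intersection property. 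Fix an ultrafilter $\cU$ on $I$ containing every $A_j$, and form $M=\prod_\cU M_i$.

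It then remains to check $M\models\Sigma$. Fix $\varphi\in\Sigma$; I will show $\varphi^M=0$ by establishing $|\varphi^M|\leq\e$ for every $\e>0$. Put $j=\{|\varphi|\dotminus\e\}\in I$, so that $A_j\in\cU$. For every $i\in A_j$ we have $|\varphi|\dotminus\e\in i$ and $M_i\models i$, whence $(|\varphi|\dotminus\e)^{M_i}=0$, i.e.\ $|\varphi^{M_i}|\leq\e$. By \L o\'s' Theorem, $\varphi^M=\lim_{i\to\cU}\varphi^{M_i}$, and since $\varphi^{M_i}\in[-\e,\e]$ on the set $A_j\in\cU$, the ultralimit lies in $[-\e,\e]$ as well (two disjoint sets cannot both belong to $\cU$); thus $|\varphi^M|\leq\e$. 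As $\e>0$ was arbitrary, $\varphi^M=0$, and as $\varphi\in\Sigma$ was arbitrary, $M\models\Sigma$.

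The main obstacle is conceptual rather than computational: one must recognize that the approximate version $(3)$, though formally weaker than $(2)$, already suffices, precisely because $\inf$-quantifiers and real-valued atomic formulas are satisfied only approximately, so the robust hypothesis is approximate consistency of finite fragments. Indexing by finite subsets of the approximation (rather than by finite subsets of $\Sigma$ paired with tolerances, which amounts to the same thing) is what allows a single ultralimit to absorb all tolerances $\e$ at once; once \L o\'s' Theorem is in hand the remaining verification is the routine ultralimit estimate above. I would also note the harmless convention that $I$ and each $M_i$ are nonempty, which holds as soon as the language admits any structure at all.
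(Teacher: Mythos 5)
Your proof is correct and takes essentially the same route as the paper's: the paper indexes the ultraproduct by pairs $(F,\e)$ with $F\subseteq\Sigma$ finite and $\e>0$, which, as you yourself note, amounts to the same thing as your indexing by finite subsets of the approximation, and both arguments then conclude by \L o\'s' Theorem. The only difference is that you spell out the two easy implications and the final ultralimit estimate, which the paper leaves implicit.
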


\begin{proof} Only the implication from (3) to (1) requires a proof. 
We reproduce this standard compactness argument. 
Assuming $\Sigma$ is finitely approximately satisfiable, let $I$ be the set 
of all pairs $(F,\e)$ such that $F\subseteq \Sigma$ is finite and $\e>0$. 
By (3) for each $(F,\e)$ we can fix $A_{F,\e}$ such that $|\varphi^A|<\e$ for all $\varphi\in F$. 
The family of all sets of the form 
\[
X_{(F,\e)}:=\{(G,\delta)\in I: F\subseteq G\text{ and } \delta<\e\}
\]
can be extended to an ultrafilter $\cU$ on $I$. By Theorem~\ref{Los}
the ultraproduct $\prod_{\cU} A_{(F,\e)}$ satisfies $\Sigma$. 
\end{proof} 

\subsection{Atomic and Elementary Diagrams}\label{S.Diagram}
The following definition is central.
\begin{definition}
Suppose that $M$ is a substructure of $N$ in some language $\calL$.  We say that $M$ is an \emph{elementary submodel}\index{elementary submodel} of $N$, write $M \prec N$, and say that $N$ is an \emph{elementary extension}\index{elementary extension} of $M$, if for every formula 
$\varphi \in \cF_{\calL}$ and every $\bar m$ in $M$, $\varphi^M(\bar m) = \varphi^N(\bar m)$.   
If $f:M \rightarrow N$ is an embedding of $M$ into $N$ and the image of $M$ is an elementary submodel of $M$ then we say that $f$ is an \emph{elementary embedding}.\index{elementary embedding}
\end{definition}

The notion of elementary submodel is much stronger than $A$ being a subalgebra of $B$. For example, it implies that the center of $A$ is equal to the intersection of 
the center of $B$ with $A$. By \L o\'s' theorem (Theorem~\ref{Los}), we have
\[
A\prec A^{\cU}
\]
for every $A$ and every ultrafilter $\cU$.  

Suppose that $M$ is an $\calL$-structure.  We introduce an expansion of the language, $\calL_M$, which has a constant $c_m$ for every $m \in M$.  In this language, we define the theory $\Elem(M)$,  
the \emph{elementary diagram}\index{elementary diagram} of~$M$, as
\[
\{\varphi(c_{\bar m}) \colon \bar m\text{ in }M,  \varphi\text{ is any formula, and } \varphi^M(\bar m) = 0 \}.
\]
Here $c_{\bar m}$ is the sequence of constants corresponding to $\bar m$.

Elementary extensions of $M$ correspond to $\calL_M$-structures satisfying $\Elem(M)$, as follows.
If $f:M \rightarrow N$ is an elementary map between two $\calL$-structures then~$f$ can be used to expand $N$ to an $\calL_M$-structure satisfying $\Elem(M)$ by defining $c^N_m := f(m)$ for all $m \in M$.  Conversely, if $N$ is a model of $\Elem(M)$ then the map $f:M \rightarrow N$ defined by $f(m) := c_m^N$ is an elementary embedding of $M$ into the reduct of $N$ to the language $\calL$ (the reduct is the structure obtained by retaining only the functions and relations from $\calL$).  We often say that this reduct satisfies $\Elem(M)$ by which we mean that it can be expanded to an $\calL_M$-structure which satisfies $\Elem(M)$.  It is typical to use $m$ for the constant $c_m$ when the meaning is clear.  With these provisos, we state the following result:

\begin{thm}\label{T.Elem} 
Suppose that $M$ and $N$ are $\calL$-structures. The following are equivalent:
\begin{enumerate}
\item There is an $\calL$-structure $N'$ such that $N \prec N'$ and $M \prec N'$.
\item There is an $\calL$-structure $N'$ such that $N \prec N'$ and $N'$ satisfies $\Elem(M)$.
\item For every formula $\varphi(\bar x)$, $\bar m$ in $M$ and $\e > 0$ with $\varphi^M(\bar m) = 0$ there is
$\bar n$ in $N$ such that $|\varphi^N(\bar n)| \leq \e$.
\end{enumerate}
\end{thm}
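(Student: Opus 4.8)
The plan is to establish (1) $\Leftrightarrow$ (2) essentially for free, to prove (1) $\Rightarrow$ (3) by a short argument that passes values through $N'$, and to obtain the substantial direction (3) $\Rightarrow$ (1) by a compactness argument on a joint elementary diagram. By the discussion preceding the statement, an $\calL$-structure satisfies $\Elem(M)$ precisely when it admits an elementary embedding of $M$; identifying $M$ with its image, conditions (1) and (2) are two phrasings of the same requirement on $N'$, so it suffices to prove (1) $\Leftrightarrow$ (3).

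For (1) $\Rightarrow$ (3), suppose $N'$ satisfies $N \prec N'$ and $M \prec N'$, and fix $\varphi(\bar x)$ and $\bar m$ in $M$ with $\varphi^M(\bar m) = 0$. Since $M \prec N'$ we have $\varphi^{N'}(\bar m) = 0$, and as $\bar m$ lies in $N'$ this forces the existential sentence $\inf_{\bar x} |\varphi(\bar x)|$ to take value $0$ in $N'$. Because $N \prec N'$, the same sentence takes value $0$ in $N$, which is exactly the assertion that for every $\e > 0$ there is $\bar n$ in $N$ with $|\varphi^N(\bar n)| \leq \e$. This is (3).

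For (3) $\Rightarrow$ (1), I would work in the language $\calL'$ obtained from $\calL$ by adjoining disjoint families of constant symbols $(c_m : m \in M)$ and $(d_n : n \in N)$, and show that the theory $T := \Elem(M) \cup \Elem(N)$ (formed using the $c$'s and $d$'s respectively) is consistent. Any model of $T$, taken as its reduct to $\calL$, is then a common elementary extension $N'$ of both $M$ and $N$, giving (1). By the Compactness Theorem (Theorem~\ref{T.Compactness}) it is enough to verify that $T$ is finitely approximately satisfiable, so fix finitely many conditions, say $|\varphi_k(c_{\bar m_k})| \dotminus \e_k$ coming from $\Elem(M)$ (so $\varphi_k^M(\bar m_k) = 0$) together with finitely many conditions from $\Elem(N)$. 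I would build an approximating model by expanding the $\calL$-structure $N$ itself: interpret each $d_n$ as $n$, which makes every $\Elem(N)$-condition evaluate to $0$; to interpret the $c$'s, let $\bar m$ enumerate the finitely many elements of $M$ occurring among the $\bar m_k$ and set $\Phi(\bar x) := \max_k |\varphi_k(\bar x)|$, so that $\Phi^M(\bar m) = 0$. Applying (3) to $\Phi$ with $\e := \min_k \e_k$ produces a tuple $\bar n'$ in $N$ with $\Phi^N(\bar n') \leq \e$; interpreting the $c_m$ as the corresponding entries of $\bar n'$ then yields $|\varphi_k^N| \leq \e_k$ for each $k$. This single expansion of $N$ satisfies all the chosen conditions, establishing finite approximate satisfiability.

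The main obstacle is the simultaneous treatment of the finitely many $\Elem(M)$-conditions: a naive application of (3) to each $\varphi_k$ separately would return different witnessing tuples in $N$ and could assign incompatible values to a constant $c_m$ shared between two conditions. Packaging all the relevant formulas into the single formula $\Phi = \max_k |\varphi_k|$ and applying (3) once is what forces a coherent interpretation of the constants, and it is the only place where hypothesis (3) is used. Everything else is bookkeeping in the combined language together with the standard correspondence between models of elementary diagrams and elementary extensions.
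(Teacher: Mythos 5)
Your proof is correct and follows essentially the same route as the paper: the key step in both is to combine the finitely many $\Elem(M)$-conditions into a single formula via $\max$, apply (3) once to obtain a coherent approximate interpretation of the constants inside $N$, and then pass to a limit by compactness/ultraproducts. The only cosmetic difference is that you carry $\Elem(N)$ explicitly in the joint theory, whereas the paper builds the limit directly as an ultraproduct of expansions of $N$ itself, so that its $\calL$-reduct is automatically an ultrapower of $N$ and hence an elementary extension.
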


We note that (1) is equivalent to 
 the assertion that $M$ and $N$ are elementarily equivalent by the Keisler--Shelah
 Theorem, \cite[Theorem~5.7]{BYBHU}; 
 this is not needed in the proof. It may also be worth noting that  conditions (2) and (3) are, unlike (1),  not obviously symmetric.

\begin{proof} 
(2) implies (1) is clear.   To see that (1) implies (3), note that from (3), $N'$ satisfies $\inf_{\bar x} |\varphi(\bar x)| = 0$ so the same holds in $N$.  To show that (3) implies (2), we follow a proof very similar to the proof of Theorem \ref{T.Compactness}.  Let $I$ be the set of pairs $(F,\e)$ where $F$ is a finite subset of $\Elem(M)$ and $\e > 0$.  The family of all sets of the form
\[
X_{(F,\e)} := \{ (G,\delta) \in I : F \subseteq G, \delta < \e \}
\] 
can be extended to an ultrafilter $\cU$ on $I$.  Using (3), for every $(F,\e) \in I$, we can expand $N$ to an $\calL_M$-structure $N_{(F,\e)}$ which satisfies 
\[
\max\{ \varphi^{N_{(F,\e)}}(\bar m) : \varphi(\bar m) \in F \} \leq \e
\]
Let $N' := \prod_\cU N_{(F,\e)}$; the reduct of $N'$ to $\calL$ is an elementary extension of~$N$ and by construction, $N'$ satisfies $\Elem(M)$.
\end{proof} 

The theory $\Diag(M)$, known as the \emph{atomic diagram}\index{atomic diagram}\index{Diag@$\Diag(M)$} of $M$, is similar to the elementary diagram of $M$.  It is again defined 
in the language~$\calL_M$ as
\[
\Diag(M) := 
\{\varphi(\bar m) : \bar m\text{ in }M,  \varphi\text{ is quantifier-free, and } \varphi^M(\bar m) = 0 \}.
\]
In the same way as for the elementary diagram, we have that $N$ satisfies $\Diag(M)$ if and only if there is an embedding of $M$ into $N$.
The following Theorem will be important later (\S\ref{S.Tensorial}) when we talk about $D$-stability, for a strongly self-absorbing $D$; it has a similar proof to Theorem \ref{T.Elem}

\begin{thm}\label{T.Diag} 
Suppose that $M$ and $N$ are $\calL$-structures. The following are equivalent:
\begin{enumerate}
\item There is an $\calL$-structure $N'$ such that $N \prec N'$ and $M$ embeds in $N'$.
\item There is an $\calL$-structure $N'$ such that $N \prec N'$ and $N'$ satisfies $\Diag(M)$.
\item For every quantifier-free formula $\varphi(\bar x)$, $\bar m$ in $M$ and $\e > 0$ with $\varphi^M(\bar m) = 0$ there is
$\bar n$ in $N$ such that $|\varphi^N(\bar n)| \leq \e$. 
\end{enumerate}
\end{thm}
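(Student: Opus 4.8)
The plan is to mirror the proof of Theorem~\ref{T.Elem} nearly verbatim, with two bookkeeping changes: every occurrence of ``formula'' is restricted to ``quantifier-free formula,'' and the hypothesis $M\prec N'$ is weakened to ``$M$ embeds into $N'$.'' The single preservation fact that must be swapped in is that an embedding---equivalently, an isomorphism onto a substructure---preserves the values of quantifier-free formulas exactly; this is the quantifier-free clause of the substructure-preservation proposition stated after Definition~\ref{def:complexity}, and it is precisely the level of generality that matches the quantifier-free restriction built into $\Diag(M)$ and into condition~(3). With this in hand, $(2)\Rightarrow(1)$ is immediate from the remark preceding the statement that a structure models $\Diag(M)$ if and only if $M$ embeds into it. For $(1)\Rightarrow(3)$, given a quantifier-free $\varphi(\bar x)$ and $\bar m$ in $M$ with $\varphi^M(\bar m)=0$, the embedding $j\colon M\to N'$ yields $\varphi^{N'}(j(\bar m))=0$, hence $\bigl(\inf_{\bar x}|\varphi(\bar x)|\bigr)^{N'}=0$; since $N\prec N'$ the same infimum vanishes in $N$, which unwinds to exactly condition~(3).

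The content lies in $(3)\Rightarrow(2)$, where I would reuse the ultraproduct construction from the proof of Theorem~\ref{T.Elem}. Let $I=\{(F,\e):F\subseteq\Diag(M)\text{ finite and }\e>0\}$, and extend the sets $X_{(F,\e)}=\{(G,\delta)\in I:F\subseteq G,\ \delta<\e\}$ to an ultrafilter $\cU$ on $I$. For each $(F,\e)$ I use (3) to expand $N$ to an $\calL_M$-structure $N_{(F,\e)}$ satisfying $\max\{|\varphi^{N_{(F,\e)}}(\bar m)|:\varphi(\bar m)\in F\}\le\e$, and then set $N':=\prod_\cU N_{(F,\e)}$. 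By {\L}o\'s' Theorem (Theorem~\ref{Los}) every $\varphi(\bar m)\in\Diag(M)$ satisfies $|\varphi^{N'}(\bar m)|=\lim_{(F,\e)\to\cU}|\varphi^{N_{(F,\e)}}(\bar m)|=0$, so $N'\models\Diag(M)$; and since each $N_{(F,\e)}$ has $\calL$-reduct $N$, the $\calL$-reduct of $N'$ is the ultrapower $N^\cU$, whence $N\prec N'$.

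I expect the only genuine obstacle to be the same one present in the elementary case: upgrading the single-formula hypothesis~(3) to the simultaneous approximation used to build $N_{(F,\e)}$. Writing the members of $F$ as $\varphi_1(\bar m),\dots,\varphi_k(\bar m)$ over one tuple $\bar m$ that lists all distinct constants occurring in $F$ (with a single variable per element of $M$, so that shared constants are interpreted consistently), the formula $\psi(\bar x):=\max_i|\varphi_i(\bar x)|$ is again quantifier-free and satisfies $\psi^M(\bar m)=0$; applying~(3) to $\psi$ furnishes $\bar n$ in $N$ with $\psi^N(\bar n)\le\e$, and interpreting the constants of $F$ by $\bar n$ produces the required $N_{(F,\e)}$. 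The legitimacy of this step rests solely on closure of the quantifier-free formulas under the continuous connective $\max$, so no idea beyond those already used for Theorem~\ref{T.Elem} is needed.
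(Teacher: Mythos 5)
Your proof is correct and is exactly the adaptation of the proof of Theorem~\ref{T.Elem} that the paper intends (the paper omits the proof, saying only that it is similar to that of Theorem~\ref{T.Elem}). The quantifier-free preservation under embeddings, the $\max$ trick to reduce a finite fragment of $\Diag(M)$ to a single condition, and the ultraproduct over the $(F,\e)$ index set are all the right moves and are carried out correctly.
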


\section{Elementary classes and preservation theorems}\label{S.Elementary}
For a theory $T$, $\Mod(T)$\index{Mod@$\Mod(T)$} is the class of all models of $T$. 
The \emph{theory}\index{theory} of a class $\cC$ of $\calL$-structures, denoted $\Th(\cC)$ is the intersection of theories of its elements.
We call a class $\cC$ of $\calL$-structures \emph{elementary}\index{elementary class}
(or \emph{axiomatizable})\index{axiomatizable} if $\cC = \Mod(T)$ for some theory~$T$.  
This is equivalent to being equal to $\Mod(\Th(\cC))$.
The following theorem is often helpful in determining if a class is elementary.  We say that $\cC$ is \emph{closed under ultraroots} if whenever $A^\cU \in \cC$ for some structure $A$ and ultrafilter $\cU$ then $A \in \cC$.  

\begin{thm} \label{T.Ax} For a class~$\cC$ of $\calL$-structures, the following are equivalent:
\begin{enumerate}[leftmargin=*]
\item $\cC$ is an elementary class.
\item $\cC$ is closed under isomorphisms, ultraproducts and elementary submodels.
\item $\cC$ is closed under isomorphisms, ultraproducts and ultraroots.
\end{enumerate}
\end{thm}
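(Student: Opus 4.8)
The plan is to establish the cycle of implications $(1)\Rightarrow(2)\Rightarrow(3)\Rightarrow(1)$, with essentially all of the content concentrated in the last step. Recall (as noted after the definition of elementary class) that a class is elementary if and only if it equals $\Mod(\Th(\cC))$, and that $\cC\subseteq\Mod(\Th(\cC))$ holds trivially; so the real task in $(3)\Rightarrow(1)$ is the reverse inclusion.

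For $(1)\Rightarrow(2)$, assume $\cC=\Mod(T)$. Closure under isomorphisms is immediate; closure under ultraproducts is a direct consequence of \L o\'s' Theorem (Theorem~\ref{Los}), since for a sentence $\varphi\in T$ one has $\varphi^{\prod_\cU M_i}=\lim_{i\to\cU}\varphi^{M_i}=0$; and closure under elementary submodels holds because an elementary submodel agrees with its elementary extension on every sentence. For $(2)\Rightarrow(3)$, isomorphisms and ultraproducts are common to both hypotheses, so I only need closure under ultraroots; but $A\prec A^{\cU}$ for every $A$ and every $\cU$, so if $A^{\cU}\in\cC$ then $A\in\cC$ by closure under elementary submodels. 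I expect no difficulty in either of these directions.

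The substance is $(3)\Rightarrow(1)$: I must show every $N\models\Th(\cC)$ lies in $\cC$. The key claim is that such an $N$ is elementarily equivalent to an ultraproduct of members of $\cC$. To prove it I run a compactness-style argument modeled on the proofs of Theorem~\ref{T.Compactness} and Theorem~\ref{T.Elem}: let $I$ be the set of pairs $(F,\e)$ with $F$ a finite set of sentences and $\e>0$, and for each such pair I seek $A_{(F,\e)}\in\cC$ with $|\varphi^{A_{(F,\e)}}-\varphi^N|<\e$ for all $\varphi\in F$. Extending the filter generated by the sets $\{(G,\delta)\in I:F\subseteq G,\ \delta<\e\}$ to an ultrafilter $\cU$ on $I$, \L o\'s' Theorem then gives $\varphi^{\prod_\cU A_{(F,\e)}}=\lim_{(F,\e)\to\cU}\varphi^{A_{(F,\e)}}=\varphi^N$ for every sentence $\varphi$, so that $\prod_\cU A_{(F,\e)}\equiv N$, and this ultraproduct lies in $\cC$ by closure under ultraproducts.

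The crux is producing $A_{(F,\e)}$, and this is exactly where the hypothesis $N\models\Th(\cC)$ enters. Writing $r_\varphi:=\varphi^N$, suppose no such $A$ exists for a fixed $(F,\e)$, i.e.\ $\max_{\varphi\in F}|\varphi^A-r_\varphi|\ge\e$ for all $A\in\cC$. Then the $\bbR^+$-valued sentence $\theta:=\e\dotminus\max_{\varphi\in F}|\varphi-r_\varphi|$ satisfies $\theta^A=0$ for every $A\in\cC$, that is $\theta\in\Th(\cC)$, whereas $\theta^N=\e>0$, contradicting $N\models\Th(\cC)$. Getting this approximate $\dotminus$-bookkeeping exactly right is the one genuinely metric point and the main obstacle. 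It then remains to convert elementary equivalence into membership: by the Keisler--Shelah Theorem (\cite[Theorem~5.7]{BYBHU}) there is an ultrafilter $\cV$ with $N^{\cV}\cong\big(\prod_\cU A_{(F,\e)}\big)^{\cV}$; the right-hand side is an ultrapower of a member of $\cC$, hence again an ultraproduct of members of $\cC$ and so in $\cC$, whence $N^{\cV}\in\cC$ by closure under isomorphisms and finally $N\in\cC$ by closure under ultraroots.
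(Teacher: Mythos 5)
Your proof is correct and follows essentially the same route as the paper's: a compactness-style ultraproduct construction producing a member of $\cC$ elementarily equivalent to the given model $N\models\Th(\cC)$ (with the same $\e\dotminus$ trick to extract the approximating algebras $A_{(F,\e)}$ from $\cC$), followed by the Keisler--Shelah theorem and closure under ultraroots. The only difference is that the paper runs the compactness argument through the elementary diagram of the given model so as to also establish the formally stronger implication $(2)\Rightarrow(1)$ directly, whereas you work only at the level of sentences, which suffices for $(3)\Rightarrow(1)$.
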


\begin{proof}
The directions (1) implies (2) implies (3) are straightforward so in principle we only need to show (3) implies (1).  Nevertheless we will give a proof of the apparently stronger implication (2) implies (1) to highlight the use of the elementary diagram.  Suppose that we have a class $\cC$. Set $T := \Th(\cC)$, that is, the set of $\calL$-sentences $\varphi$ such that $\varphi^M = 0$ for all $M \in \cC$.   Fix a model $M$ of $T$.  We need to produce $N \in \cC$ such that $M$ embeds elementarily into $N$.  To this end we fix a finite set $S$ in the elementary diagram of $M$ and $\e > 0$.  By considering the maximum of the formulas involved in $S$, we can assume that $S$ contains a single formula $\varphi(\bar m)$ where $\varphi^M(\bar m) = 0$.  Hence $M$ does not satisfy $\inf_{\bar x} |\varphi(\bar x)| \geq \e$, so that the sentence $\e\ \dotminus inf_{\bar x} |\varphi(\bar x)|$ is not in $T$.  This means that there must be some $N^S_\e \in \cC$ and $\bar n \in N^S_\e$ such that $|\varphi(\bar n)| < \e$ in $N^S_\e$.  Let's expand $N^S_\e$ to an $\calL_M$-structure by interpreting $c_{\bar m}$ as $\bar n$ and letting $c_{m'}$ for any other $m' \in M$ be anything we want.  As in the proof of Theorem \ref{T.Compactness}, let $\cU$ be a suitably chosen ultrafilter over $I$, the set of all pairs $(S,\e)$.  If $N = \prod_\cU N^S_\e$ then we can
define $\iota: M \rightarrow N$ by $\iota(m) := \langle c_m^{N^S_\e} : (S,\e) \in I \rangle/\cU$.  By construction and \L o\'s Theorem, we see that $N$ satisfies $\Elem(M)$ which is what we want.
 In order to see that (3) implies (1), repeat the previous proof and note that $M \equiv N$.  By the Keisler--Shelah theorem (see \cite[Theorem~5.7]{BYBHU}), 
 there is some ultrafilter $\cU$ such that $M^\cU \cong N^\cU$ and since $N^\cU \in \cC$, $M \in \cC$ by closure under ultraroots.
\end{proof} 

It should be noted that in Theorem \ref{T.Ax} one has to consider ultrafilters on arbitrary sets (\cite{shelah1992vive}). 
Here is a good example of how this theorem can be used to show a class of \cstar-algebras is elementary.

\begin{example}\label{Ex.rr0} 
The class of algebras of real rank zero is elementary.  A \cstar-algebra $A$ 
has \emph{real rank zero}\index{real rank zero} if the set of all invertible self-adjoint operators is dense among all self-adjoint operators; equivalently the set of self-adjoint elements with finite spectrum is dense among all self-adjoint operators. In \cite{brown1991c}, it is proved that $A$ is real rank zero if whenever $x,y \in A$ are two positive elements and $\e > 0$ such that $\|xy\| < \e^2$ then there is a projection $p$ such that 
\[
\|px\| < \e \mbox{ and } \|(1-p)y\| < \e.
\]  This statement can be written as a single  sentence (see \S\ref{S.rr0.revisited}) even though it seems to involve $\e$'s over which we cannot quantify.   
Even if one cannot see how to write the previous fact as a sentence, it is clear that any class satisfying this property is closed under ultraproducts.  Moreover, from the definition of real rank zero, if $A^\cU$ is real rank zero then so is $A$.  These two facts are enough to see that the class of real rank zero \cstar-algebras is elementary.  
\end{example}

\begin{definition} Let $\Psi$ be a set of sentences as in Definition~\ref{def:complexity}. A class $\cC$ of \cstar-algebras is said to be \emph{$\Psi$-axiomatizable} 
\index{axiomatizable!P@$\Psi$-axiomatizable} if there exists $\Phi\subseteq\Psi$ such that $A\in\cC$ if and only if $\varphi^A=0$ for all $\varphi\in\Phi$. If $\Psi$ is the set of all $\sup$-sentences, we say that $A$ is universally axiomatizable\index{axiomatizable!universally axiomatizable}.
We similarly define existentially axiomatizable (corresponding to $\inf$-sentences) and \aea.\index{axiomatizable!existentially axiomatizable}\index{axiomatizable!\aea}
\end{definition}

The following standard facts will be useful.

\begin{prop} \label{P.Ax}
\begin{enumerate}
\item A class $\cC$ is universally axiomatizable if and only if it is elementary and closed under taking substructures.  
\item A class $\cC$ is existentially axiomatizable if and only if it is elementary and closed under taking superstructures.  
\item 
A class $\cC$ is \aea{} if and only if it is elementary and closed under taking inductive limits. 
\end{enumerate}
\end{prop}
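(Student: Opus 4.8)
The plan is to prove each equivalence by checking that the stated closure property is precisely what the corresponding syntactic class of sentences preserves; the forward implications are routine preservation facts and the three converses carry the content. Elementarity in every forward direction is immediate, since a $\Psi$-axiomatizable class is cut out by a theory, so only the closure properties need verification. If $\cC$ is universally axiomatized by $\sup$-sentences, then closure under substructures is exactly the preservation proposition above: a $\sup$-sentence that vanishes in a structure vanishes in each of its substructures. Dually, an $\inf$-sentence that vanishes in $M$ vanishes in every superstructure, giving closure under superstructures in the existential case. For the $\forall\exists$-case I would check directly that a sentence $\sup_{\bar x}\inf_{\bar y}\psi(\bar x,\bar y)$, with $\psi$ quantifier-free and $\bbR^+$-valued, is preserved under inductive limits: given $\bar a$ in $M=\varinjlim M_n$ (along embeddings) and $\e>0$, the tuple $\bar a$ is approximated by a tuple $\bar a'$ from some $M_n\in\cC$; the hypothesis $\psi^{M_n}\!$-data supplies $\bar b\in M_n\subseteq M$ with $\psi(\bar a',\bar b)$ small, and uniform continuity of $\psi$ transfers this to $\psi(\bar a,\bar b)$. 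Letting $\e\to0$ and taking the supremum over $\bar a$ gives $M\in\cC$.

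For the converse of (1) I would follow the template of Theorem~\ref{T.Ax}. Let $T_\forall$ be the set of $\sup$-sentences vanishing on all of $\cC$; then $\cC\subseteq\Mod(T_\forall)$, and the task is the reverse inclusion. Fix $M\models T_\forall$. The key step is that $\Diag(M)\cup\Th(\cC)$ is consistent. If it were not, approximate compactness (Theorem~\ref{T.Compactness}) would provide a single quantifier-free $\bbR^+$-valued $\psi$ with $\psi^M(\bar m)=0$ and an $\e>0$ such that $\inf_{\bar x}\psi\ge\e$ holds throughout $\cC$. Then $\sup_{\bar x}(\e\dotminus\psi(\bar x))$ is a $\sup$-sentence vanishing on $\cC$, hence an element of $T_\forall$, yet its value at $M$ is at least $\e\dotminus\psi^M(\bar m)=\e>0$, contradicting $M\models T_\forall$. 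A model of $\Diag(M)\cup\Th(\cC)$, reduced to $\calL$, is a member of $\cC$ (as $\cC=\Mod(\Th(\cC))$) into which $M$ embeds, so closure under substructures yields $M\in\cC$.

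The converse of (2) is dual. With $T_\exists$ the $\inf$-sentences vanishing on $\cC$ and $M\models T_\exists$, I would show $\Th(\cC)\cup\Th_\forall(M)$ is consistent: otherwise compactness gives a $\sup$-sentence $\sigma=\sup_{\bar x}\rho(\bar x)$ with $\sigma^M=0$ and an $\e>0$ with $\sigma\ge\e$ everywhere on $\cC$, whence the $\inf$-sentence $\inf_{\bar x}(\e\dotminus\rho(\bar x))$ vanishes throughout $\cC$ (so lies in $T_\exists$) but equals $\e$ at $M$, a contradiction. A model $N$ of $\Th(\cC)\cup\Th_\forall(M)$ lies in $\cC$ and, satisfying the universal theory of $M$, embeds into some $M^\ast\equiv M$ (the standard universal-theory lemma, obtained from the diagram method of Theorem~\ref{T.Diag}). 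Closure under superstructures gives $M^\ast\in\cC$, and since $M\equiv M^\ast$ and $\cC=\Mod(\Th(\cC))$ we conclude $M\in\cC$.

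The main obstacle is the converse of (3), which is the continuous-logic form of the Chang--\L o\'s--Suszko theorem. Let $T_{\forall\exists}$ be the $\forall\exists$-sentences vanishing on $\cC$ and fix $M\models T_{\forall\exists}$. I would build an interleaved chain $M=M_0\subseteq N_0\subseteq M_1\subseteq N_1\subseteq\cdots$ in which each $N_i\models\Th(\cC)$, each inclusion $M_i\prec M_{i+1}$ is elementary, and the $N_i$ themselves form a chain; the $\forall\exists$-hypothesis on $M$ is exactly what is needed to begin and continue the construction so that alternate inclusions are elementary, by realizing the relevant existential data of $N_i$ inside an elementary extension of $M_i$ and conversely. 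The common union $M_\omega=\bigcup_i M_i=\bigcup_i N_i$ is then an inductive limit of the $N_i\in\cC$, hence lies in $\cC$ by hypothesis, while $M=M_0\prec M_\omega$ by the elementary chain, so closure under elementary submodels (Theorem~\ref{T.Ax}) gives $M\in\cC$. The delicate points, and where I expect the real work to lie, are the bookkeeping that makes alternate inclusions elementary while keeping each $N_i$ in $\cC$, and the metric subtleties: here an inductive limit is the completion of a chain, so witnesses must be produced up to arbitrary $\e$ and transported across the chain using uniform continuity, and the elementarity of the limit must be verified in the approximate sense underlying \L o\'s' Theorem (Theorem~\ref{Los}).
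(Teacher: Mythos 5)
Your proof is correct and follows essentially the same route as the paper's: the converse of (1) via consistency of $\Diag(M)\cup\Th(\cC)$ and the compactness theorem, and the converse of (3) via the interleaved chain $M_0\subseteq N_0\subseteq M_1\subseteq\cdots$ with $N_i\in\cC$ and $M_i\prec M_{i+1}$, whose common union is handled by closure under inductive limits together with the elementary chain argument. The only place you go beyond the text is (2), where the paper merely remarks that the argument is dual to (1) or deducible from it; your version---consistency of $\Th(\cC)\cup\Th_\forall(M)$, then embedding the resulting $N\in\cC$ into some $M^*\equiv M$ and invoking closure under superstructures and elementarity---is a correct way of carrying that out.
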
 

\begin{proof}
(1) If $A$ is a substructure of $B$  and $\phi$ is a universal sentence then clearly $\phi^A\leq \phi^B$. It is  then straightforward to see that if $\cC$ is universally axiomatizable 
then it is closed under taking substructures. 
In order to prove the converse suppose $\cC$ is an elementary class closed under 
taking substructures. 
Let 
\begin{align*}
T&:=\{\phi: B\models \phi\text{ for all $B\in \cC$}\},\text{ and} \\
T_\forall&:=\{\phi: \text{ $\phi$ is universal and $B\models \phi$ for all $B\in \cC$}\}.
\end{align*}
We shall prove that every model of $T_\forall$ is a substructure of a model of~$T$. 
Suppose $A\models T_\forall$. 
With the atomic diagram  as in  \S\ref{S.Diagram} consider the theory 
$T_1:=T\cup \Diag(A)$ in the language expanded by constants for elements of $A$. 
To see that  $T_1$ is consistent,  fix a  finite fragment $T_0$ of~$T_1$. 
Let $\phi_j(\bar a)$, for $1\leq j\leq n$, be all the sentences in the intersection of $T_0$ with $\Diag(A)$
(by adding dummy variables if necessary we may assume that these sentences use the same tuple $\bar a$ from A). 
Let $r$ be greater than the maximal value allowed for $\max_j \phi_j(\bar x)$, and define the sentence
\[
\psi:= \sup_{\bar x} ( r \dotminus \max_{j\leq n}\phi_j(\bar x))
\]
which evidently has value $r$ in $A$.
Let $\e>0$. As $A\models T_\forall$, there exists $B \in \cC$ such that $\psi^B >r-\e$ (or else, $\psi^B\dotminus (r-\e)$ would be a universal sentence satisfied by all $B \in \cC$ but not by $A$).
This means that there exists a tuple~$\bar b$ of the appropriate sort 
 such that $\phi_j(\bar b)^B<\e$. 
This shows that $T_1$ is finitely approximately satisfiable, and therefore by the 
Compactness Theorem (Theorem~\ref{T.Compactness}), consistent.
 If $C$ is a model of $T_1$ then, being a model of~$T$, it belongs to~$\cC$ and, being 
 a model of 
  $\Diag(A)$,  it contains an isomorphic copy of~$A$. 
  
  We have proved that    an arbitrary model of $T_\forall$ is a substructure of a model of $T$, 
  and therefore, since $\cC$ is closed under substructures, it is universally axiomatizable.

Clause (2) can be proved along the similar lines as (1)  or (with some  care) deduced from it.

We now sketch a proof of (3). 
It is straightforward to see that if $\cC$ is \aea{} then it is closed under inductive limits.  In the other direction, suppose that $\cC = \Mod(T)$ for some theory $T$.  Let $T_{\forall\exists}$ be the set of $\forall\exists$-sentences $\varphi$ such that $\varphi^M = 0$ for all $M \in \cC$.  We would like to see that any model of $T_{\forall\exists}$ is a model of $T$.

\begin{claim}
For any $M_0 \models T_{\forall\exists}$ there is $N \in \cC$ and $M_1$ such that $M_0 \subseteq N \subseteq M_1$ and $M_0 \prec M_1$.
\end{claim}

With this claim we finish the proof.  For any $M \models T_{\forall\exists}$, let $M_0 := M$ and construct a sequence $M_0 \subseteq N_0 \subseteq M_1 \subseteq N_1 \subseteq \ldots$ such that $N_i \in \cC$ and $M_i \prec M_{i+1}$ for all $i$.  Since $\cC$ is closed under inductive limits, $\overline{\bigcup N_i} \in \cC$.
Using induction on the length of a given formula, one can show that $M \prec \overline{\bigcup M_i}$.
Since $\overline{\bigcup M_i} = \overline{\bigcup N_i}$, it follows that $M \in \cC$.

It remains to prove the claim.  We construct $N\models T$ so that
$M_0 \subseteq N$ 
 and whenever $\varphi(\bar x)$ is a sup-formula then for all $\bar m$ in $M_0$,
$\varphi^{M_0}(\bar m) = \varphi^N(\bar m)$.  To this end, define $\Diag_\forall(M_0)$\index{Diag@$\Diag_\forall(M)$}, the
 \emph{sup-diagram}\index{sup-diagram} of $M_0$, 
 in the language $L_{M_0}$, to be
\[
\{\varphi(\bar m) : \varphi \text{ is a sup-formula}, \bar m\text{ in }M_0, \text{ and } \varphi^{M_0}(\bar m) = 0 \}.
\]

Notice that $\Diag(M_0) \subseteq \Diag_\forall(M_0)$ so if $T \cup \Diag_\forall(M_0)$ is consistent then we will be able to find the necessary $N$.  If not then (by Theorem \ref{T.Compactness}) after some rearranging we can find a sup-formula $\varphi$, $\bar m$ in $M_0$ and $\e > 0$ such that 
\[
T \models \varphi(\bar m) \geq \e \text{ but } \varphi^{M_0}(\bar m) = 0
\]
 With a little more rearranging this says that $T \models \sup_{\bar x} (\e \dotminus \varphi(\bar x))$ and since~$\varphi$ is a sup-formula, $\e \dotminus \varphi$ is an inf-formula.  This says that
$\sup_{\bar x} (\e \dotminus \varphi(\bar x))$ is in $T_{\forall\exists}$ which contradicts the fact that $M_0 \models T_{\forall\exists}$ and $\varphi^{M_0}(\bar m) = 0$.

So choose $N \models T \cup \Diag_\forall(M_0)$.  To construct $M_1$, we need to see that $\Elem(M_0) \cup \Diag(N)$ is consistent.  If not then for some quantifier-free formula $\varphi(\bar x,\bar y)$, $\bar n$ in $N$, $\bar m$ in $M_0$ and $\e > 0$ we have
\[
\Elem(M_0) \models \varphi(\bar n,\bar m) \geq \e \text{ but } \varphi^N(\bar n,\bar m) = 0
\]
But then $\Elem(M_0) \models \sup_{\bar x}(\e \dotminus \varphi(\bar x,\bar m))$ and since $N \models \Diag_\forall(M_0)$, this is a contradiction.
\end{proof}

There are corresponding results at the level of formulas called \emph{preservation theorems}. 
To say that a formula $\varphi$ is \emph{equivalent}\index{formula!equivalent} relative to a class~$\cC$ to a particular kind of formula from some set $\Phi$ ($\sup$, $\inf$ or positive formulas for instance) means that $\varphi$ can be uniformly approximated by formulas from $\Phi$; see \S\ref{S.DefinablePred} for a justification of this terminology.  That is, for every $\e$ there is a formula $\varphi_\e \in \Phi$ such that $\|\varphi - \varphi_\e\| < \e$ in all models in $\cC$.  The proofs of these results are routine generalizations of the preservation theorems from classical model theory; see for instance \cite[\S 6.5]{Hodg:Model}

\begin{prop}\label{prop:preservation} For 
a formula~$\varphi(\bar x)$,
relative to an elementary class $\cC$, 
\begin{enumerate}[leftmargin=*]
\item $\varphi$ is equivalent to a $\sup$-formula if and only if for all $M \subseteq N$ with $M, N \in \cC$ and $\bar m$ in $M$, $\varphi^M(\bar m) \leq \varphi^N(\bar m)$.
\item $\varphi$ is equivalent to a $\inf$-formula if and only if for all $M \subseteq N$ with $M, N \in \cC$ and $\bar m$ in $M$, $\varphi^M(\bar m) \geq \varphi^N(\bar m)$.
\item $\varphi$ is equivalent to a positive formula if and only if for all $M, N \in \cC$, surjective homomorphisms $f:M \rightarrow N$ and $\bar m$ in $M$,\\ $\varphi^M(\bar m) \geq \varphi^N(f(\bar m))$.
\end{enumerate}
\end{prop}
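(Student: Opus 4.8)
The plan is to treat each of the three equivalences by splitting off the ``syntactic $\Rightarrow$ semantic'' direction, which is immediate, from the converse, which carries all the content.

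\textbf{Forward directions.} If $\varphi$ is equivalent relative to $\cC$ to a $\sup$-formula $\psi=\sup_{\bar y}\theta$, then for $M\subseteq N$ in $\cC$ and $\bar m$ in $M$ we have $\psi^M(\bar m)=\sup_{\bar y\in M}\theta\le \sup_{\bar y\in N}\theta=\psi^N(\bar m)$, and feeding an $\e$-approximation into $\varphi^M(\bar m)\le \psi^M(\bar m)+\e\le \psi^N(\bar m)+\e\le \varphi^N(\bar m)+2\e$ and letting $\e\to 0$ gives (1). Part (2) is the dual computation with $\inf$, and the forward half of (3) is exactly Proposition~\ref{P.Positive.Preservation} together with the same $\e$-bookkeeping.

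\textbf{Converse, reduction step.} For the converse of (1) I would set $T:=\Th(\cC)$ and work inside the compact space $S_{\bar x}(T)$ of complete types in the variables $\bar x$, on which every formula induces a continuous real-valued function and the $\sup$-formulas induce a cone closed under $\max$, under adding positive constants, and under post-composition with increasing connectives (but \emph{not} under $\min$). Let $p\preceq q$ abbreviate ``$\psi(p)\le \psi(q)$ for every $\sup$-formula $\psi$.'' The key functional-analytic lemma, a one-sided continuous analogue of the Kakutani--Krein lattice form of Stone--Weierstrass, states that $\varphi$ lies in the uniform closure of the $\sup$-formulas (i.e.\ is equivalent to one relative to $\cC$) if and only if the function it induces on $S_{\bar x}(T)$ is nondecreasing for $\preceq$; compactness of $S_{\bar x}(T)$ is what upgrades pointwise domination to a single $\e$-close $\sup$-formula. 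It therefore suffices to verify this monotonicity from hypothesis~(1).

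\textbf{Converse, the amalgamation.} So suppose $p\preceq q$, realized by $\bar a$ in $M\models T$ and $\bar b$ in $N\models T$; I must show $\varphi^M(\bar a)\le \varphi^N(\bar b)$. After normalizing by a bound $C$, each $\inf$-formula $\chi$ has the form $C-\psi$ for a $\sup$-formula $\psi$, so $p\preceq q$ is equivalent to $\chi^N(\bar b)\le \chi^M(\bar a)$ for every $\inf$-formula $\chi$. This is precisely the consistency condition (in the style of Theorems~\ref{T.Elem} and~\ref{T.Diag}, and of the $\Diag_\forall$ construction in the proof of Proposition~\ref{P.Ax}) that makes $\Elem(N)\cup\Diag(M)$, with $\bar a$ identified to $\bar b$, satisfiable: every quantifier-free requirement of $\Diag(M)$ on $\bar a$, once its auxiliary variables are existentially closed, is an $\inf$-formula whose value at $\bar a$ dominates its value at $\bar b$, so witnesses are found in a sufficiently saturated $N'\succeq N$. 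This yields an embedding $h\colon M\to N'$ with $N\preceq N'$ and $h(\bar a)=\bar b$. Writing $M^*:=h(M)$, both $M^*\cong M$ and $N'\equiv N$ lie in the elementary class $\cC$, $M^*\subseteq N'$, and hypothesis~(1) at the tuple $\bar b$ gives $\varphi^M(\bar a)=\varphi^{M^*}(\bar b)\le \varphi^{N'}(\bar b)=\varphi^N(\bar b)$, as required.

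\textbf{Parts (2), (3), and the obstacle.} Part (2) is obtained by dualizing: reverse the preorder, swap the roles of $M$ and $N$, and amalgamate $N$ into an extension of $M$. Part~(3) runs the same scheme with the cone of positive formulas and its induced preorder; the forward half is Proposition~\ref{P.Positive.Preservation}, while the converse replaces $\Diag(M)$ by the positive atomic diagram and must produce a \emph{surjective} homomorphism between members of $\cC$ --- this is where the genuine extra care lies, as one builds the map into the closure of its image and uses closure of $\cC$ under the relevant construction to keep both endpoints inside $\cC$. The main obstacle throughout is the converse, and within it the two linked difficulties are the continuous Stone--Weierstrass lemma that converts $\preceq$-monotonicity into uniform syntactic approximation, and the verification, via the diagram amalgamation above, that hypotheses~(1)--(3) really deliver that monotonicity.
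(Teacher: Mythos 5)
Your forward directions and your treatment of parts (1) and (2) are correct, and they take a genuinely different route from the paper. The paper proves only part (3), by a purely syntactic compactness argument: it forms the set $\Delta$ of positive consequences of $T\cup\{\varphi\}$, shows that inconsistency of $T\cup\Delta\cup\{\varphi\geq\e\}$ for every $\e>0$ yields the uniform approximation, and refutes consistency via a separate amalgamation lemma; parts (1) and (2) are dismissed as routine. You instead work on the compact type space $S^T_{\bar x}$, reduce to showing that $\varphi$ is monotone for the preorder induced by the cone of $\sup$-formulas, convert monotonicity into uniform approximation by a lattice Stone--Weierstrass argument, and verify the monotonicity by a single diagram amalgamation ($\Elem(N)\cup\Diag(M)$ with $\bar a$ identified to $\bar b$), which is exactly the mechanism of Theorem~\ref{T.Diag}. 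Both routes are standard and interchangeable; yours has the merit of making (1) and (2) explicit where the paper does not. One small slip: the $\sup$-formulas \emph{are} closed under $\min$, since $\min(\sup_{\bar y}\theta_1,\sup_{\bar z}\theta_2)=\sup_{\bar y,\bar z}\min(\theta_1,\theta_2)$ after separating the variables; this only helps you, as it supplies the full lattice hypothesis for the Stone--Weierstrass step.

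The genuine gap is in part (3), which is precisely the part the paper works out. The amalgamation you need there is: if $\psi^N(\bar b)\leq\psi^M(\bar a)$ for every positive $\psi$ (equivalently, $\psi^N(\bar b)=0\Rightarrow\psi^M(\bar a)=0$), then there exist elementary extensions $M^*\succ M$ and $N^*\succ N$ and a \emph{surjective} homomorphism $N^*\to M^*$ carrying $\bar b$ to $\bar a$. Your proposal to ``build the map into the closure of its image'' does not deliver this: restricting the codomain of a homomorphism to its closed image produces a surjection onto some substructure, not onto an elementary extension of $M$, and without $M\prec M^*$ you cannot transfer the value $\varphi^{M}(\bar m)$ to the codomain. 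What is actually required is an alternating chain construction resting on two one-step extension facts --- for each $a\in N$, extend $M$ elementarily so that $a$ acquires an image (consistency of $\Elem(M)\cup t^+(a)$), and for each $a\in M$, extend $N$ elementarily so that $a$ acquires a preimage (consistency of $\Elem(N)\cup t^-(a)$) --- with the surjection obtained as the limit of the resulting back-and-forth. As written, your part (3) states where the difficulty lies but does not resolve it.
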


\begin{proof} We will sketch a proof of (3).  By introducing constants for the free variables in $\varphi$ (and working in a language which includes these constants), we can assume that $\varphi$ is a sentence and we would like to see that if its value decreases under surjective homomorphisms then it is equivalent to a positive sentence.
  We first prove a useful lemma:
\begin{lemma}
Suppose that $M, N \in \cC$ and whenever $\varphi$ is a positive sentence, if $\varphi^N = 0$ then $\varphi^M = 0$. Then there are $M^*,N^*$ such that $M \prec M^*$, $N \prec N^*$ and a surjective homomorphism $f:N^* \rightarrow M^*$.
\end{lemma}

The proof of the lemma is an argument involving increasing chains of models which relies on the following two facts: If $M,N$ are as in the hypothesis of the lemma then
\begin{enumerate}
\item   for every $a\in N$, there is $M_1$ such that $M \prec M_1$ and $b \in M_1$ such that for every positive formula $\varphi(x)$, if $\varphi^N(a) = 0$ then $\varphi^{M_1}(b) = 0$.
\item for every $a\in M$, there is $N_1$ such that $N \prec N_1$ and $b \in N_1$ such that for every positive formula $\varphi(x)$, if $\varphi^{N_1}(b) = 0$ then $\varphi^M(a) = 0$.
\end{enumerate}

To prove the first, let 
\[
t^+(a) = \{ \varphi(x): \varphi \text{ is positive}, \varphi^N(a) = 0\}.
\]
We will show that $\Elem(M) \cup t^+(a)$ is consistent.  Otherwise (using Theorem \ref{T.Compactness}) there is a positive formula $\varphi(x)$ and $\e > 0$ such that $\varphi^N(a) = 0$ but
$\Elem(M) \models \inf_x \varphi(x) \geq \e$.  The sentence $\inf_x \varphi(x)$ is a positive sentence which is 0 in $N$ but is not 0 in $M$, in contradiction to the hypothesis of the lemma.
Taking $M_1$ to be a model of $\Elem(M) \cup  t^+(a)$ we get the necessary elementary extension of~$M$.

Similarly, for the second fact, let 
\[
t^-(a) := \{ \varphi(x) \dotminus \e : \varphi \text{ positive}, \varphi^M(a) > \e > 0 \}.
\]
We wish to show that $\Elem(N) \cup t^-(a)$ is consistent.  Otherwise, we can produce a positive formula $\varphi(x)$ such that
$\Elem(N) \models \sup_x \varphi(x) \leq \e$ but $\varphi(a) > \e$.
The sentence $\sup_x \varphi(x) \dotminus \e$ is a positive, is 0 in $N$ but is not 0 in $M$, in contradiction to the hypothesis of the lemma.
Letting $N_1$ be a model of $\Elem(N) \cup t^-(a)$ produces the necessary elementary extension of $N$.

To prove the lemma, we go back and forth, growing $M$ so that every element of $N$ has an image, and growing $N$ so that every element of $M$ has a preimage.
The use of positive formulas encode whether some $b \in M$ can be the image of an element $a \in N$, and vice versa.
This concludes the proof of the lemma.

Now to prove the main proposition, suppose that $\varphi$ is a sentence whose value decreases under surjective homomorphisms between models from $\cC$.  Fix a theory $T$ such that $\cC = \Mod(T)$ and consider the set
\[
\Delta := \{ \psi : T \cup \varphi \models \psi, \psi \text{ positive}\}.
\]
Now suppose that for every $\e > 0$, $\Gamma_\e := T \cup \Delta \cup \{\varphi \geq \e\}$ is inconsistent. Then for every $\e > 0$ there is a positive sentence $\psi_\e$ such that modulo $T$,
$\varphi \models \psi_\e$ and $\psi_\e \models \varphi \leq \e$.  From this we conclude that $\varphi$ is uniformly approximated by positive sentences.

To show that the above conclusion holds, we now suppose for a contradiction that for some $\e$, $\Gamma_\e$ is consistent.  Let $M$ be a model of~$\Gamma_\e$.  As above, we 
define~$\Thminus(M)$\index{Th@$\Thminus(M)$} to be
\[
\{ \psi \geq \delta : \psi \text { positive}, \psi^M > \delta > 0 \}.
\]
We would like to show that $T \cup \Thminus(M) \cup \{\varphi\}$ is consistent.  Otherwise, there would be a positive sentence $\psi$ and $\delta > 0$ such that 
$T \cup \varphi \models \psi \leq \delta$ but $\psi^M > \delta$.  But the sentence $\psi \dotminus \delta$ is positive and in $\Delta$ which contradicts the choice of $M$.

Now, since we have established that $T \cup \Thminus(M) \cup \{\varphi\}$ is consistent, so choose a model $N$ of $T \cup \Thminus(M) \cup \{\varphi\}$.  If $\psi$ is a positive sentence and $\psi^N = 0$ then $\psi \geq \delta$ is not in $\Thminus(M)$ for any $\delta > 0$ and so we must have $\psi^M = 0$.  This puts us in the position of the hypothesis of the lemma and so we can produce $M^*, N^*$ and a surjective homomorphism $f:N^* \rightarrow M^*$ as in the conclusion of the lemma.  But $\varphi^N = \varphi^{N^*} = 0$ and $\varphi^M = \varphi^{M^*} \geq \e$ which contradicts $\varphi$ decreases under surjective homomorphisms.
\end{proof}

\section{Elementary classes of \cstar-algebras} \label{S.Axiomatizable}
We now list axiomatizable classes of \cstar-algebras. In many instances proofs of axiomatizability 
depend on more advanced material; the section in which the proof appears is listed immediately after the result. 
Some of the axiomatizations are given in the language of unital \cstar-algebras. 
As in classical model theory, the complement of an axiomatizable class is not always axiomatizable.  However, it is often useful to note
when it is and we will take this up in \S\ref{Co-elementarity}.

\begin{theorem}\label{Summary}
The following classes of \cstar-algebras
are universally axiomatizable.
\begin{enumerate}[leftmargin=*]
\item Abelian algebras (see \S\ref{S.Abelian}). 
\item $n$-subhomogeneous algebras, for every $n\geq 1$ (see \S\ref{S.SubHom}). 
\item Unital finite algebras  and unital stably finite algebras (see \S\ref{S.SF}). 
\item Unital tracial algebras (see \S\ref{S.tracial.0}). 
\item Unital algebras with a character (see \S\ref{S.character}). 
\item Projectionless algebras (see \S\ref{S.UP}).  
\item Unital projectionless algebras (see \S\ref{S.UP}).  
\item Unital algebras not containing a unital copy of $M_n(\bbC)$ for given $n\geq 2$ (see \S\ref{S.Mn}). 
\item MF algebras (see \S\ref{S.MF}). 
\pushcounter
\end{enumerate}

The following classes of \cstar-algebras
are existentially axiomatizable. 
\begin{enumerate}[leftmargin=*]
\popcounter
\item Nonabelian algebras (see \S\ref{S.nonabelian}). 
\item Algebras that are not $n$-subhomogeneous, for $n\geq 1$ (see \S\ref{S.not.SubHom}). 
\item Unital algebras containing a unital copy of $M_n(\bbC)$ for given $n\geq 2$ (see \S\ref{S.Mn}). 
\item Infinite algebras  (see \S\ref{S.Infinite}). 
\pushcounter
\end{enumerate}

The following classes of \cstar-algebras
are  \aea. 
\begin{enumerate}[leftmargin=*]
\popcounter
\item Algebras with real rank zero 
(see \S\ref{Ex.rr0}, \S\ref{S.rr0}, \S\ref{S.rr0.revisited},  and \S\ref{S.rr0.1}) and abelian algebras with real rank $\leq n$ (see \S\ref{S.rr0.1}). 
\item Unital algebras with stable rank $\leq n$ for $n\geq 1$ (see \S\ref{S.sr}). 
\item Simple, purely infinite algebras (see \S\ref{S.PI}). 
\item Unital \cstar-algebras with strict comparison of positive elements by traces or 2-quasitraces (see \S\ref{S.elfunsc}).
\item Unital \cstar-algebras with the $\bar k$-uniform strong Dixmier property for a fixed $\bar k$ (see Lemma~\ref{L.usDP}). 
\pushcounter
\end{enumerate}

The following classes of \cstar-algebras are also elementary. 
\begin{enumerate}[leftmargin=*]
\popcounter
\item \cstar-algebras with real rank greater than zero (see  \S\ref{S.rr0.1}). 
\item  Unital \cstar-algebras with stable  rank $>n$, for every $n\geq 1$ (see \S\ref{S.sr} and 
 \cite{farah2016axiomatizability}). 

\pushcounter
\end{enumerate}
\end{theorem}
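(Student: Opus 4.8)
The plan is to prove each clause by the same two-step template, rather than by ad hoc constructions. The engine is the pair of semantic characterizations already in hand: Proposition~\ref{P.Ax} reduces universal axiomatizability to ``elementary and closed under substructures,'' existential axiomatizability to ``elementary and closed under superstructures,'' and being \aea{} to ``elementary and closed under inductive limits''; and Theorem~\ref{T.Ax} reduces elementarity itself to closure under isomorphisms, ultraproducts and ultraroots. Thus for every class on the list it suffices to (i) verify the relevant closure properties directly from the $\cst$-algebraic definition, and (ii) supply whatever operator-algebraic reformulation is needed to see that the defining property is genuinely first-order (or approximately so). Step (i) is almost always routine --- closure under ultraproducts and ultraroots typically follows from \L o\'s' Theorem (Theorem~\ref{Los}) applied to a suitable sentence, and substructure/superstructure/inductive-limit closure is read off the definition --- so the real content is always step (ii).

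For the universal and existential families most items come in dual pairs, and I would treat each pair together. Abelian versus nonabelian is governed by the commutator term $\sup_{\|x\|,\|y\|\le 1}\|xy-yx\|$, a $\sup$-sentence whose failure witnesses nonabelianness; $n$-subhomogeneity versus its negation is governed by a standard polynomial identity (the degree-$2n$ standard polynomial vanishing), again a $\sup$-sentence; and unital containment of $M_n(\bbC)$ is the existential statement asserting the approximate existence of a system of matrix units, with its negation universal. For (unital) projectionless algebras I would use exactly the functional-calculus sentence of Example~\ref{Ex.projectionless}; projections pass to and from subalgebras, giving substructure closure. Finiteness and stable finiteness I would express through ``$v^*v=1 \Rightarrow vv^*=1$'' (and its amplifications over $M_k(A)$), an inequality of the form $\sup_v(\|vv^*-1\|\dotminus K\|v^*v-1\|)$; infiniteness is the dual existential statement about a proper isometry. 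The existence of a tracial state, and of a character, I would make universal by invoking their classical quantitative reformulations (the distance from $1$ to the closed span of self-adjoint commutators, respectively to the relevant closed ideal), each of which is a $\sup$-condition inherited by unital subalgebras.

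The \aea{} family is where the substance lies. For each of real rank zero, stable rank $\le n$, simple and purely infinite, strict comparison, and the uniform strong Dixmier property, I would (a) produce a sentence or uniform family of sentences capturing the property --- for real rank zero this is the Brown--Pedersen $\e$-characterization already quoted in Example~\ref{Ex.rr0}, recast as a single $\forall\exists$-sentence via a uniform modulus; for stable rank and pure infiniteness the analogous density and comparison statements; for strict comparison and the Dixmier property the reformulations developed in the later sections --- and (b) verify closure under inductive limits, which is the delicate point, since one must upgrade an approximate factorization through the building blocks to the limit. Elementarity then follows from closure under ultraproducts and ultraroots via Theorem~\ref{T.Ax}. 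For the merely elementary classes (real rank $>0$, stable rank $>n$) I would instead argue co-axiomatizability: show that the \emph{complement} of the corresponding \aea{} class is itself closed under ultraproducts and ultraroots, so that Theorem~\ref{T.Ax} applies to it directly; this closure is not automatic and needs its own verification.

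The hardest steps, and the ones I would isolate as separate lemmas, are the two where axiomatizability is not merely a closure computation. The first is MF: universal axiomatizability rests on the nontrivial characterization of MF algebras as exactly the $\cst$-algebras embeddable into a matrix ultraproduct $\prod_{\cU}M_{k(i)}(\bbC)$, from which closure under substructures is immediate but elementarity requires care. The second is the ``approximate'' $\forall\exists$ cases --- above all real rank zero --- where the genuine obstacle is producing an honest first-order sentence from an $\e$-statement (one must check that a single uniform continuity modulus works across all algebras) and then proving that this sentence is preserved under inductive limits. I expect the inductive-limit preservation, together with the MF embeddability characterization, to absorb most of the effort; the remaining items reduce, as above, to short closure verifications feeding into Proposition~\ref{P.Ax} and Theorem~\ref{T.Ax}.
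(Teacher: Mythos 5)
Your overall architecture --- reduce each clause to closure properties via Proposition~\ref{P.Ax} and Theorem~\ref{T.Ax}, and supply an explicit sentence where one is available --- is exactly the paper's, and your identification of MF and the $\forall\exists$ items as the substantive cases is right. But several clauses that you file under ``routine closure verification'' conceal the only nontrivial mathematics, and in one place your proposed sentence does not do the job as stated. For nonabelian algebras, ``failure of the commutator $\sup$-sentence'' is not yet an existential axiomatization and does not even give elementarity: an ultraproduct of nonabelian algebras would be abelian if $\sup_{x,y}\|[x,y]\|$ could tend to $0$ along the factors, so you need a uniform gap. The paper gets one from the fact that every nonabelian \cstar-algebra contains a nonzero nilpotent, yielding the sentence $1\dotminus\sup_x(\|x\|^2-\|x^2\|)$, which takes value $0$ on every nonabelian algebra and $1$ on every abelian one; some such dichotomy is the actual content, and your proposal omits it.

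The same pattern recurs in the $\forall\exists$ and co-elementary clauses. For stable rank $\leq n$ the naive density statement (``$(a_1,\dots,a_n)$ is approximated by tuples generating $A$ as a right ideal'') is not first-order, because the witnesses $x_i$ with $\sum_i x_ia_i=1$ carry no a priori norm bound; the paper's Lemma~\ref{L.stable-rank} --- a Hilbert-module Cauchy--Schwarz argument producing witnesses $v_i$ and $a$ with $\|a\|\le\sqrt n$ --- is precisely the device that makes the formula \eqref{Eq.SR} legitimate, and an analogous norm bound (Lemma~\ref{L.Cuntz}, witnesses of norm $\le\delta^{-1/2}$) underlies the sentence for purely infinite simple algebras. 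Likewise, for real rank greater than zero your plan to ``show the complement is closed under ultraproducts and ultraroots'' requires proving that real rank $>0$ passes to ultraproducts, and that is not a closure computation: the paper proves it by exhibiting a sentence $\alpha$ satisfying the dichotomy $\alpha^A\in\{0,1\}$, via an iterated orthogonal-projection argument inside hereditary subalgebras (Proposition~\ref{P.RR}(2)); the $n\ge 1$ case of stable rank $>n$ is likewise not elementary bookkeeping and is deferred to \cite{farah2016axiomatizability}. None of these gaps is fatal --- each is filled by a lemma in the cited subsections --- but they are where the proof lives, not in the closure verifications.
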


Tensorial absorption of distinguished \cstar-algebras (such as the algebra of compact operators, 
or strongly self-absorbing algebras) is an important property of \cstar-algebras. 
By the main result of \cite{Gha:SAW*} a nontrivial ultrapower is \emph{tensorially indecomposable},\index{tensorially indecomposable} 
i.e.,  it cannot be isomorphic to a nontrivial 
tensor product. (A different proof was given in \cite{kania2015c}.)
 Therefore every \cstar-algebra $A$ has an elementarily equivalent
\cstar-algebra, $A^{\cU}$, which does not tensorially absorb any infinite-dimensional \cstar-algebra (this is 
trivially true if $A$ is finite-dimensional itself).

We say that a class $\cC$ of \cstar-algebras is \emph{separably axiomatizable}\index{axiomatizable!separably axiomatizable} if 
there is an axiomatizable class $\cC_0$ such that for separable \cstar-algebras $A$ one 
has $A\in \cC$ if and only if $A\in \cC_0$. 
If $\cC_0$ is \aea{} we say that $A$ is separably \aea. 

\begin{theorem}\label{T.ssa}
The following classes of \cstar-algebras are separably \aea. 
\begin{enumerate}
\popcounter
\item $D$-stable algebras, where $D$ is a strongly self-absorbing algebra (see \S\ref{Ex.ssa-stable}) .
\item Approximately divisible algebras (see \S\ref{S.AD}). 
\item Stable algebras (see \S\ref{S.Stable}). 
\end{enumerate}
\end{theorem}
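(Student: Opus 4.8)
The plan is to treat each class by exhibiting an explicit $\forall\exists$-theory $\Phi$ whose models among separable \cstar-algebras are exactly the members of the class; since $\Mod(\Phi)$ is then \aea by definition (and, by Proposition~\ref{P.Ax}(3), elementary and closed under inductive limits), this shows the class is separably \aea. In each case the sentences in $\Phi$ encode a \emph{local} characterization of membership --- ``for every finite configuration of elements there is an approximate witness'' --- extracted from a known structural theorem; the only substantive point is that these local conditions, which a priori only detect the relative commutant $A'\cap A^{\cU}$, in fact characterize the global property for separable $A$.

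For (1), assume $A$ unital for definiteness, fix a sequence $(d_k)$ dense in the unit ball of $D$ with $d_0=1_D$, and record the \cstar-relations among each finite initial segment (arranging, as one may, that the sequence is approximately closed under products). For each $n,m\in\bbN$ and rational $\e>0$ let $\Phi$ contain the $\forall\exists$-sentence
\[
\sup_{\bar a \in (B_1)^m}\ \inf_{\bar b \in (B_2)^{n}}\ \max\Bigl(\,\max_{r}|r(\bar b)|,\ \|b_0 - 1\|,\ \max_{i\le n,\,j\le m}\|b_ia_j - a_jb_i\|\Bigr),
\]
where $r$ ranges over the finitely many relevant \cstar-relations among $d_0,\dots,d_n$ (to the prescribed tolerance). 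I would then argue that for separable $A$ one has $A\models\Phi$ if and only if $A$ is $D$-stable. The reverse direction is immediate from \L o\'s' Theorem applied to a unital embedding $D\to A'\cap A^{\cU}$. The forward direction is a reindexing/saturation argument (a relative-commutant refinement of Theorem~\ref{T.Diag}), combining the approximate witnesses for all $n,m,\e$ into a single sequence whose class in $A^{\cU}$ gives such a unital $^*$-homomorphism, followed by the Toms--Winter characterization of $D$-stability for separable algebras. Item (2) is the same argument verbatim with $D$ replaced by the finite-dimensional algebra $M_2(\bbC)\oplus M_3(\bbC)$, so that no dense sequence is needed and the $r$ are the finitely many matrix-unit relations, now invoking the Blackadar--Kumjian--R\o rdam characterization of approximate divisibility by approximately central unital copies of $M_2\oplus M_3$.

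For (3) I would instead use the Hjelmborg--R\o rdam criterion, which already has $\forall\exists$ form: a separable (indeed $\sigma$-unital) \cstar-algebra is stable if and only if for every positive $a$ in the unit ball and every $\e>0$ there is $b$ with $\|a-b^*b\|<\e$ and $\|ab\|<\e$ (the precise form is recorded in \S\ref{S.Stable}). Letting $a=x^*x$ range over positives as $x$ ranges over $B_1$, this is captured by the single $\forall\exists$-sentence
\[
\sup_{x\in B_1}\ \inf_{b\in B_2}\ \max\bigl(\|x^*x - b^*b\|,\ \|x^*x\,b\|\bigr),
\]
whose vanishing is equivalent to stability for separable $A$; the bound $2$ on $\|b\|$ suffices because $\|b^*b\|\approx\|x^*x\|\le 1$.

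The main obstacle will be the ``$A\models\Phi\Rightarrow A\in\cC$'' direction in (1) and (2). The logical packaging --- writing $\Phi$, checking it is $\forall\exists$, and the converse direction via \L o\'s' Theorem --- is routine, but passing from approximately central approximate copies to the genuine structural property rests on the deep \cstar-algebraic equivalences (Toms--Winter and Blackadar--Kumjian--R\o rdam) between $D$-stability / approximate divisibility and the existence of a unital embedding into $A'\cap A^{\cU}$. It is precisely this step that forces separability of $A$ --- hence these classes are only \emph{separably} \aea --- and care is needed in the reindexing argument so that a single sequence simultaneously realizes all of $D$'s relations exactly in the ultrapower and commutes with all of the countably many elements of a separable $A$.
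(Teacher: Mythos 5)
Your proposal is correct and follows essentially the same route as the paper: for (1) and (2) the paper likewise axiomatizes the existence of a unital copy of $D$ (resp.\ of $M_2(\bbC)$, $M_3(\bbC)$ or $M_2(\bbC)\oplus M_3(\bbC)$) in $A'\cap A^{\cU}$ by $\forall\exists$-sentences of the form $\sup_{\bar y}\inf_{\bar x}(\|[\bar y,\bar x]\|+\varphi(\bar x))$ with $\varphi$ ranging over the atomic diagram, then uses countable saturation of $A^{\cU}$ for the forward direction and Theorem~\ref{T.aihf} (resp.\ the result of \cite{blackadar1992approximately} quoted in \S\ref{S.AD}) to pass from the relative commutant to the global property, while (3) is exactly Proposition~\ref{T.HjR} via the Hjelmborg--R{\o}rdam sentence. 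The one caveat is in (2): the cited characterization yields an approximately central unital \emph{$^*$-homomorphic image} of $M_2(\bbC)\oplus M_3(\bbC)$ (equivalently, a unital copy of one of $M_2(\bbC)$, $M_3(\bbC)$, $M_2(\bbC)\oplus M_3(\bbC)$), not necessarily an injective copy, so you should either drop the norm normalizations from your matrix-unit relations or take the minimum of the three corresponding inf-formulas, as the paper implicitly does.
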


We now proceed with some of the more straightforward proofs of elementarity.  In order to improve the readability of many formulas in the language of \cstar-algebras,  we will write $\sup_x$ and $\inf_x$ when the intended meaning is to quantify over the unit ball.
 
\subsection{Abelian algebras} \label{S.Abelian} 
The class of abelian \cstar-algebras is clearly closed under ultraproducts and subalgebras and so is a universally axiomatizable class.  It is also clearly axiomatized by the single sentence, 
 $\sup_x \sup_y \|[x,y]\|$. 
 
 \subsection{Non-abelian algebras}\label{S.nonabelian}The class of non-abelian \cstar-algebras 
is axiomatizable by 
\[
1\dotminus\sup_x (\|x\|^2 - \|x^2\|). 
\]
This is because a \cstar-algebra is nonabelian if and only if it contains a nilpotent element (\cite[II.6.4.14]{Black:Operator}). 
If $a\neq 0$ is a nilpotent contraction and $m$ is minimal such that $a^m=0$, then $\frac 1{\|a\|^{m-1}} a^{m-1}$
satisfies the above sentence. 
On the other hand, every normal element satisfies $\|a^n\|=\|a\|^n$ and therefore 
the above sentence has value 1 in every abelian \cstar-algebra. 

\subsection{Real rank zero again}\label{S.rr0}
Real rank zero algebras were introduced in Example~\ref{Ex.rr0}. 
It is clear from the definition of real rank zero that the class of real rank zero algebras is closed under inductive limits.  Since it is an elementary class, it is \aea{}. See also \S\ref{S.rr0.revisited} for an explicit formula. 

\subsection{$n$-subhomogeneous} (Pointed out by Bruce Blackadar.)\label{S.SubHom}
 For $n\geq 1$ a \cstar-algebra $A$ 
 is \emph{$n$-homogeneous}\index{n@$n$-homogeneous}\index{C@\cstar-algebra!homogeneous}  if each  of its irreducible representations is $n$-dimensional. 
 It is \emph{$n$-subhomogeneous}\index{n@$n$-subhomogeneous}\index{C@\cstar-algebra!subhomogeneous}  if each  of its irreducible 
 representations is $k$-dimensional for some $k\leq n$. A \cstar-algebra is \emph{homogeneous} (subhomogeneous) if it is $n$-homogeneous ($n$-subhomogeneous) 
 for some $n$.\index{C@\cstar-algebra!homogeneous}  
 Therefore $A$ is $n$-subhomogeneous if and only if it is isomorphic to a closed, though not necessarily unital, subalgebra
 of a direct product  of copies of $M_n(\bbC)$. 
  
   The class of $n$-subhomogeneous 
  \cstar-algebras is axiomatized by a single universal sentence $\alpha$. 
  This is a consequence of the \emph{Amitsur--Levitzki theorem}, 
  stating that the $^*$-polynomial in $2n$ variables,
with $\Sym(2n)$ denoting the symmetric group on a $2n$-element set
and $\sgn(\sigma)$ denoting the parity of a permutation $\sigma$,
\[
\sum_{\sigma\in \Sym(2n)}(-1)^{\sgn(\sigma)}\prod_{j=1}^{2n} x_{\sigma(j)} . 
\]
  is identically 0 in $M_n(\bbC)$ 
  but not in $M_k(\bbC)$ for any larger $k$  (see also \cite[IV.1.4.5]{Black:Operator}). 
We claim that a \cstar-algebra being $n$-subhomogeneous is axiomatized by  
\[
\vphiALn:=\sup_{x} 
\bigg\|\sum_{\sigma\in \Sym(2n)}(-1)^{\sgn(\sigma)}\prod_{j=1}^{2n} x_{\sigma(j)} \bigg\|. 
\]
An algebra $A$ is not $n$-subhomogeneous if and only if it has   an irreducible representation 
on a Hilbert space of dimension $\geq n+1$. 
This is by the above equivalent to having a $2n$-tuple  $\bar a$ in 
 $A$ such that 
 \[
 \bigg\|\sum_{\sigma\in \Sym(2n)}(-1)^{\sgn(\sigma)}\prod_{j=1}^{2n} x_{\sigma(j)} \bigg\|=r>0
 \]

\subsection{Non-$n$-subhomogeneous algebras}\label{S.not.SubHom}
For every   $n\geq 1$ the class of algebras that are not $n$-subhomogeneous is 
existentially axiomatizable. 
Fix $m>n$. With the notation from \S\ref{S.SubHom} we have
that $\vphiALn^{M_m(\bbC)}=r_{m,n}>0$. By embedding $M_m(\bbC)$ into the top left corner of $M_k(\bbC)$ for $k>m$
we see that $r_{m,n}\leq r_{k,n}$. Therefore there exists $r>0$ such that $A$ is not $n$-subhomogeneous 
if and only if $\vphiALn^A\geq r$ and being not $n$-subhomogeneous is axiomatized by 
$r\dotminus \vphiALn$.

Alternatively,  one can prove that 
 $A$ is not $n$-subhomogeneous if and only if there is an element $x\in A$ 
 such that $\|x^k\|=1$ if $k\leq n$ and $x^{n+1}=0$. 
 That is, non-$n$-subhomogeneity is axiomatized by the following 
 sentence
\[
\varphi:=\inf_{x}( \|x^{n+1}\|+\max_{1\leq i\leq n}(1 -\|x^i\|))
\]

\subsection{Tracial \cstar-algebras} \label{S.tracial.0}The class of unital, tracial \cstar-algebras (that is, unital \cstar-algebras which have a trace) is universally axiomatizable in the language of unital \cstar-algebras. 
Since this class is closed under taking ultraproducts and subalgebras, 
this is a consequence of Theorem \ref{T.Ax} and Proposition \ref{P.Ax}. 
To see that an ultraproduct $\prod_{\cU} A_i$ 
 of tracial \cstar-algebras $A_i$ for $i\in J$ is tracial note that, if $\tau_i$ is a trace on $A_i$ for each $i$, then
 \[
 \tau(a):=\lim_{i\to \cU} \tau_i(a_i)
 \]
 (where $a:=(a_i)/\cU$) is a trace on the ultraproduct.
 Having a constant for the multiplicative unit assures that subalgebras are unital. 
 
 We give an explicit axiomatization in \S\ref{S.CP}.

\subsection{\cstar-algebras with a character}\label{S.character}
A \emph{character}\index{character}
 is a  $^*$-homomorph-ism from a \cstar-algebra $A$ into the complex numbers. 
Although our main preoccupation is with simple algebras, the existence of a character for the relative 
commutant of $A$ in its ultrapower is an important property (see e.g.,  \cite{kirchberg2014central}). 

 The class of unital \cstar-algebras with a character is universally axiomatizable in the language of unital \cstar-algebras. 
Just like in the case of \S\ref{S.tracial.0}, since this class is closed under taking ultraproducts and subalgebras, 
this is a consequence of Theorem \ref{T.Ax} and Proposition \ref{P.Ax}.   See \S\ref{S.CP} for an explicit axiomatization.

\section{Downward L\"owenheim-Skolem}
Elementary submodels
(see \S\ref{S.Diagram}) 
   provide separable counterexamples to statements to which nonseparable 
counterexamples exist (see \cite[II.8.5]{Black:Operator} for an overview). 
 A handy test for checking when $A$ is an elementary submodel of $B$ is the following:

\begin{thm}[Tarski--Vaught]\label{TV}\index{Tarski--Vaught test}
Suppose that $A$ is a submodel of $B$ and that for every $r$ and every formula $\varphi(x,\bar b)$ with $\bar b$ in $A$, if $(\inf_x \varphi(x,\bar b))^B < r$ then there is $a \in A$ such that $\varphi^B(a,\bar b) < r$. Then $A \prec B$.
\end{thm}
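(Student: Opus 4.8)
The plan is to prove, by induction on the complexity of $\varphi$ following Definition~\ref{formula}, the stronger statement that $\varphi^A(\bar a) = \varphi^B(\bar a)$ for every formula $\varphi(\bar x)$ and every tuple $\bar a$ in $A$. Specializing this to sentences (and absorbing the parameters $\bar b$ into the tuple) immediately gives $A \prec B$.

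For the base case I would use that $A$ is a submodel of $B$: the interpretations of all function symbols, hence of all terms, and of all relation symbols including the metrics, agree on tuples from $A$, so atomic formulas have equal values. The connective step is automatic, since if $\varphi = f(\varphi_1, \ldots, \varphi_n)$ with $f$ continuous and each $\varphi_i$ already known to be absolute between $A$ and $B$, then $\varphi^A(\bar a) = f(\varphi_1^A(\bar a), \ldots, \varphi_n^A(\bar a)) = f(\varphi_1^B(\bar a), \ldots, \varphi_n^B(\bar a)) = \varphi^B(\bar a)$.

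The heart of the argument is the quantifier step, and this is exactly where the Tarski--Vaught hypothesis is used. Suppose $\varphi(\bar x) = \inf_y \psi(y, \bar x)$ with $\psi$ satisfying the inductive hypothesis, and fix $\bar a$ in $A$. One inequality is free: since $A \subseteq B$ and $\psi^A(a', \bar a) = \psi^B(a', \bar a)$ for every $a' \in A$, taking the infimum over the smaller set $A$ gives $\varphi^A(\bar a) = \inf_{a' \in A}\psi^B(a', \bar a) \geq \inf_{b \in B}\psi^B(b, \bar a) = \varphi^B(\bar a)$. For the reverse inequality I would fix any $r > \varphi^B(\bar a)$, so that $(\inf_y \psi(y, \bar a))^B < r$; the Tarski--Vaught condition then yields some $a' \in A$ with $\psi^B(a', \bar a) < r$, whence $\psi^A(a', \bar a) < r$ by the inductive hypothesis and therefore $\varphi^A(\bar a) < r$. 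Letting $r \downarrow \varphi^B(\bar a)$ gives $\varphi^A(\bar a) \leq \varphi^B(\bar a)$, and equality follows.

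To handle the $\sup$ quantifier I would simply note that $\sup_y \psi = -\inf_y(-\psi)$, and that $-\psi$ is again a formula because multiplication by $-1$ is an admissible continuous connective; the $\sup$ case thus reduces to the $\inf$ case together with the connective step already treated. (Equivalently, one applies the Tarski--Vaught hypothesis to the formula $-\psi$ to transfer a near-witness of the supremum from $B$ to $A$.) This closes the induction. The one genuinely nontrivial point is the second inequality in the $\inf$ step---that the infimum over $A$ cannot strictly exceed the infimum over $B$---which is precisely what the Tarski--Vaught hypothesis supplies; everything else is formal bookkeeping over the inductive construction of formulas.
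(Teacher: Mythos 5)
Your proof is correct and follows essentially the same route as the paper's: induction on the construction of formulas, with atomic formulas handled by the submodel hypothesis, connectives handled trivially, the easy inequality in the quantifier step coming from $A \subseteq B$, the hard inequality from the Tarski--Vaught condition, and $\sup$ reduced to $\inf$ by duality. No gaps.
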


\begin{proof} The proof is by induction on the definition of formulas.  The cases of atomic formulas are automatic since $A$ is a submodel of $B$ and the case of connectives follows easily.  The only issue is quantifiers.  Since the quantifiers are dual to one another, it suffices to check only $\inf$.  So if our formula is $\inf_x \varphi(x,\bar y)$ and $\bar a$ in $A$ then by induction $\inf_x \varphi^A(x,\bar a) \geq \inf_x \varphi^B(x,\bar a)$.  But the operative condition guarantees that if $\inf_x \varphi^B(x,\bar a) < r$ then $\varphi^B(a,\bar a) < r$ for some $a \in A$ which means that by induction, $\varphi^A(a,\bar a) < r$ and so $\inf _x\varphi^A(x,\bar a) < r$.  So $\inf_x \varphi^A(x,\bar a) \leq \inf_x\varphi^B(x,\bar a)$ and we are done.
\end{proof}

A \emph{density character}\index{density character}
 of a metric structure is the minimal cardinality of a dense subset. In particular 
 a structure is separable if and only if its density character is at most $\aleph_0$. 

For a fixed theory $T$ in a language $\calL$ we define a seminorm on the set $\ccF^{\bar x}_{\calL}$ of formulas in the free variables $\bar x$, as follows:
\[
\| \varphi \|_T := \sup \{ |\varphi^M(\bar a)| \colon M \mbox{ satisfies $T, \bar a$ }\in M \}.
\]
Since polynomials with rational coefficients are dense among all continuous functions on any compact subset of $\mathbb{R}^n$  by the Stone--Weierstrass theorem, it is clear that the density character of the quotient of $\ccF^{\bar x}_{\calL}$ by this seminorm is at most $|\calL| + \aleph_0$ (remember that the range of any $\calL$-formula is bounded).  If $\calL$ has only countably many sorts and $\ccF^{\bar x}_{\calL}$ is separable for all $\bar x$ then we say that $\calL$ is separable. We will return to this quotient space in \S\ref{S.DefinablePred}.
The following theorem is of general importance;
its use in the context of \cstar-algebras was noticed by Blackadar  (\cite{blackadar1978weak}).

\begin{thm}[Downward L\"owenheim--Skolem]\label{DLS}
Suppose that $\calL$ is separable and $X$ is a subset of an $\calL$-structure $B$ of density character $\lambda$.  Then there is $A \prec B$ with $X \subseteq A$ such that $A$ has density character $\lambda \geq \aleph_0$.
In particular, if $X\subseteq B$ and $X$ is separable, then 
there exists a separable $A$ such that $X\subseteq A$ and $A\prec B$.
\end{thm}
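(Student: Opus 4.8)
The plan is to construct $A$ as a substructure of $B$ built up by repeatedly adjoining approximate witnesses, and then to verify $A \prec B$ by means of the Tarski--Vaught test (Theorem~\ref{TV}). The hypothesis that $\calL$ is separable enters in two places. First, the space of formulas $\varphi(x,\bar y)$ in one distinguished variable $x$ and finitely many parameter variables $\bar y$ is separable in the seminorm $\|\cdot\|_{\Th(B)}$, so we may fix a countable family $\Phi$ of such formulas that is dense in this seminorm. Second, separability guarantees that the closed substructure of $B$ generated by a set of density character $\lambda$ again has density character at most $\lambda$: this is where one uses that each $\ccF^{\bar x}_\calL$ is separable, which bounds, up to uniform approximation, the terms that can occur, so that even in the presence of a large family of function symbols---for instance the continuum of scalar multiplications in the language of \cstar-algebras---the generated substructure stays small.

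First I would fix a dense subset $D_0 \subseteq X$ with $|D_0| \leq \lambda$, and then recursively define an increasing chain $D_0 \subseteq D_1 \subseteq \cdots$ of subsets of $B$, each of cardinality at most $\lambda$, as follows. Given $D_n$, for every formula $\varphi(x,\bar y) \in \Phi$, every finite tuple $\bar b$ from $D_n$, and every rational $q$ with $(\inf_x \varphi(x,\bar b))^B < q$, I would choose a witness $a \in B$ with $\varphi^B(a,\bar b) < q$ and place it into $D_{n+1}$. Since $\Phi$ is countable, there are at most $\lambda$ eligible tuples $\bar b$, and countably many rationals $q$, so at most $\lambda$ points are added and $|D_{n+1}| \leq \lambda$. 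Let $A$ be the closed substructure of $B$ generated by $\bigcup_n D_n$; by the remark above it has density character at most $\lambda$, it contains $X$ (as $D_0$ is dense in $X$ and $A$ is closed), and since $X \subseteq A$ its density character is exactly $\lambda \geq \aleph_0$.

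The crux is verifying the Tarski--Vaught condition for $A$: given a formula $\psi(x,\bar b)$ with $\bar b$ in $A$ and a real $r$ with $r_0 := (\inf_x \psi(x,\bar b))^B < r$, I must produce $a \in A$ with $\psi^B(a,\bar b) < r$. I would fix $\e > 0$ small relative to $r - r_0$. Using that $\bigcup_n D_n$ is dense in $A$ and that $\psi$ and $\inf_x \psi$ are uniformly continuous in their parameters with a modulus independent of the structure (the uniform continuity of formula interpretations established earlier), I would choose a tuple $\bar b'$ from some $D_n$ close enough to $\bar b$ that the relevant values change by less than $\e$; and using density of $\Phi$ I would choose $\varphi \in \Phi$ with $\|\psi - \varphi\|_{\Th(B)} < \e$. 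Combining these two approximations gives $(\inf_x \varphi(x,\bar b'))^B < r_0 + 2\e$, so for a suitable rational $q$ just above this the construction places into $D_{n+1}$ a point $a$ with $\varphi^B(a,\bar b') < q$. Reversing the two approximations bounds $\psi^B(a,\bar b)$ by $r_0$ plus a fixed finite multiple of $\e$; taking $\e$ small enough makes this less than $r$. Since $a \in A$, the Tarski--Vaught test applies and yields $A \prec B$.

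The main obstacle is precisely this last verification: witnesses were adjoined only for the countable family $\Phi$ and only for parameters drawn from the dense sets $D_n$, whereas the Tarski--Vaught test quantifies over all formulas and all parameters of $A$. Bridging this gap is exactly what the two uniform-approximation steps accomplish, and they rely essentially on two earlier facts---the structure-independent uniform continuity of formulas and the separability of the formula space---so the argument would break down for a non-separable language or without the uniform continuity moduli built into the syntax. The particular case in which $X$ is separable then follows at once by taking $\lambda = \aleph_0$.
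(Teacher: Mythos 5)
Your proof is correct and follows essentially the same route as the paper's: a closure argument iterating over a countable dense (in $\|\cdot\|_T$) family of formulas, adding approximate witnesses at each stage, and taking the closure of the union. The only difference is that you spell out the Tarski--Vaught verification and the density-character bound on the generated substructure, both of which the paper leaves implicit.
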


\begin{proof} The proof is a closure argument. 
Fix a countable dense set of formulas $\ccF_0$ of $\ccF^x_\calL$ with respect to the seminorm described above. 
Now construct a sequence $X=X_0\subseteq X_1\subseteq X_2\dots$ of subsets of 
$B$, all of the same cardinality, such that for every $n$, every  $\varphi(x,\bar y)\in \ccF_0$ and 
every $\bar b$ in $X_n$ of the appropriate sort, one has $a \in X_{n+1}$ such that   
\[
\varphi(a, \bar b)^B \leq \inf_x\varphi(x,\bar b)^B + \frac{1}{n}
\]
Since $\ccF_0$ is countable and we may assume that $X_0$ is of size $\lambda$, one can guarantee that all $X_n$ are of cardinality $\lambda$. 
With the sequence $X_n$ constructed in this way,  the closure of their union is an 
elementary submodel of $B$ as required. 
\end{proof}

 The following is an immediate consequence of Theorem~\ref{DLS}. 
 
 \begin{corollary} If $B$ is nonseparable then $B$ is the direct limit of $\{B_\lambda\}_{\lambda\in \Lambda}$, where $B_\lambda\prec B$ is separable and $\Lambda$ is some index set. If $B=A^\cU$ for a separable $A$ then each $B_\lambda$ can be chosen containing~$A$. \qed
 \end{corollary}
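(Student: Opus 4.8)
The plan is to take $\Lambda$ to be the collection of all separable elementary submodels of $B$, ordered by inclusion, and to verify directly that $B$ is the direct limit of this system. First I would record the elementary fact that makes the system well behaved: if $M\subseteq N$ are both elementary submodels of $B$, then $M\prec N$. Indeed, for any formula $\varphi$ and any tuple $\bar m$ in $M$ one has $\varphi^M(\bar m)=\varphi^B(\bar m)=\varphi^N(\bar m)$, using $M\prec B$ and (since $\bar m$ lies in $N$) $N\prec B$. Consequently the inclusion maps $B_\lambda\hookrightarrow B_\mu$ for $B_\lambda\subseteq B_\mu$ are elementary embeddings, so $\Lambda$ under inclusion is a genuine directed system of metric structures whose connecting maps are elementary.

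Next I would check the two properties needed to identify the direct limit. For \emph{directedness}, given separable $B_{\lambda_1},B_{\lambda_2}\prec B$, their union $B_{\lambda_1}\cup B_{\lambda_2}$ is again separable (a union of two separable sets is separable), so by the Downward L\"owenheim--Skolem theorem (Theorem~\ref{DLS}) there is a separable $B_\mu\prec B$ with $B_{\lambda_1}\cup B_{\lambda_2}\subseteq B_\mu$, and $B_\mu$ dominates both. For \emph{exhaustion}, given any $b\in B$, applying Theorem~\ref{DLS} to the separable set $X=\{b\}$ yields a separable elementary submodel of $B$ containing $b$; hence $\bigcup_{\lambda\in\Lambda}B_\lambda=B$. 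Since each $B_\lambda$ is a complete substructure sitting inside the complete structure $B$ and every element of $B$ already lies in some $B_\lambda$ (so no further closure is needed), the canonical elementary embeddings $B_\lambda\hookrightarrow B$ exhibit $B$ as the direct limit $\varinjlim_\lambda B_\lambda$.

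For the final assertion, suppose $B=A^{\cU}$ with $A$ separable; by \L o\'s' Theorem (Theorem~\ref{Los}) we have $A\prec A^{\cU}=B$. I would then simply restrict $\Lambda$ to those separable elementary submodels of $B$ that contain $A$. This subfamily is still directed (the union of two such is separable and contains $A$, and Theorem~\ref{DLS} produces a separable elementary submodel containing it) and still exhausts $B$, since for each $b\in B$ one applies Theorem~\ref{DLS} to the separable set $A\cup\{b\}$ to obtain a separable $B_\lambda\prec B$ with $A\cup\{b\}\subseteq B_\lambda$. The whole argument is a routine consequence of Downward L\"owenheim--Skolem; there is no serious obstacle, and the only points demanding a moment's attention are the verification of directedness and the observation that inclusions among elementary submodels are themselves elementary, which together are exactly what make the direct limit well defined.
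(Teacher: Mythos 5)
Your proof is correct and is exactly the argument the paper intends: the corollary is stated there as an immediate consequence of the Downward L\"owenheim--Skolem theorem with no written proof, and your verification (inclusions among elementary submodels of $B$ are elementary, directedness and exhaustion via Theorem~\ref{DLS}, and $A\prec A^{\cU}$ via \L o\'s for the last claim) supplies precisely the routine details being omitted.
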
 
 
 If in addition $B$ has density character $\aleph_1$ (the least uncountable cardinal) then 
 $\Lambda$ can be chosen to be $\aleph_1$ with its natural ordering and  
$B$ is an inductive limit of an increasing sequence of its separable elementary submodels. 
 Even better the proof of Theorem~\ref{DLS} shows that whenever $B$ is 
represented by a continuous increasing $\aleph_1$-sequence of its separable submodels then 
the set of indices corresponding to elementary submodels of $B$ is closed in the order topology and 
unbounded in $\aleph_1$. 
Ultrapowers of separable algebras associated with nonprincipal ultrafilters on $\bbN$ comprise a 
particularly important class of algebras which, assuming the Continuum Hypothesis, are of density character~$\aleph_1$.

\section{Tensorial absorption and elementary submodels} \label{S.Tensorial} 

\subsection{Strongly self-absorbing \cstar-algebras}
Strongly self-absorbing \cstar-algebras play a particularly important role in the Elliott classification programme. 
Two $^*$-homomorphisms $\Phi_1$ and $\Phi_2$ from $A$ into a unital \cstar-algebra  $B$ 
 are \emph{approximately unitarily equivalent}\index{approximately unitarily equivalent}  if there is a net of unitaries $u_\lambda$ in $B$, for $\lambda\in \Lambda$,
 such that  $\Ad u_\lambda\circ \Phi_2$ converges to $\Phi_1$ in the point-norm topology. 
 This is equivalent to stating that the set of conditions $\|xx^*-1\|=0$, $\|x^*x-1\|=0$
and $\|\Phi_1(a)-x\Phi_2(a)x^*\|=0$, for all $a\in A$,  is consistent.

A unital \cstar-algebra $A$ is \emph{strongly self-absorbing}\index{strongly self-absorbing} (\cite{ToWi:Strongly}) if $A\cong A\otimes A$ and 
the $^*$-homomorphism $a\mapsto a\otimes 1_A$ from $A$ into $A\otimes A$ is approximately unitarily equivalent 
to an isomorphism between $A$ and $A\otimes A$.
There are only a few kinds of known strongly self-absorbing \cstar-algebras (see \cite{ToWi:Strongly}): the 
Jiang--Su algebra  $\cZ$, UHF algebras of infinite type, Kirchberg algebras $\cO_\infty$, $\cO_\infty\otimes \mbox{UHF}$ (again 
for UHF algebras of infinite type), and $\cO_2$.    
These algebras 
have remarkable 
model-theoretic properties (see
\cite{FaHaRoTi:Relative} and, for the II$_1$ factor variant, \cite{Bro:Topological}).

 For unital \cstar-algebras $A$ and  $D$
we say that $A$ is \emph{$D$-stable}\index{D@$D$-stable} if $A \cong A \otimes D$, 
where for definiteness $\otimes$ denotes the minimal (also referred to as the spatial) tensor product.

\begin{lemma}\label{L.tensor} 
If $D=C^{\otimes \bbN}$ and $A$ is $D$-stable 
then $A\prec A\otimes C$ via the map sending $a$ to $a \otimes 1$.
\end{lemma}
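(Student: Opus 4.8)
The plan is to reduce the statement to the elementarity of a tensor shift, which can then be verified by a direct automorphism argument. Since $A$ is $D$-stable with $D=C^{\otimes\bbN}$, I would fix an isomorphism $\Theta\colon A\to A\otimes C^{\otimes\bbN}$ and set $P:=A\otimes C^{\otimes\bbN}$, regarding the copies of $C$ as indexed by $\{1,2,3,\dots\}$. A bijection $\bbN\sqcup\{\ast\}\cong\bbN$ sending $\ast$ to $1$ and $i$ to $i+1$ induces an isomorphism $\kappa\colon A\otimes C\to P$ (first apply $\Theta\otimes\id_C$, then reindex the tensor factors). Writing $s\colon C^{\otimes\bbN}\to C^{\otimes\bbN}$ for the unital embedding $c_1\otimes c_2\otimes\cdots\mapsto 1_C\otimes c_1\otimes c_2\otimes\cdots$ and $\mathrm{sh}:=\id_A\otimes s$, I would check that under these identifications $\iota$ becomes this \emph{tensor shift}: a short computation with the indexing conventions gives $\kappa\circ\iota=\mathrm{sh}\circ\Theta$. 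As $\Theta$ and $\kappa$ are isomorphisms, hence preserve all formula values, $\iota$ is elementary if and only if $\mathrm{sh}\colon P\to P$ is an elementary embedding, i.e. $\varphi^{P}(\mathrm{sh}\,\bar p)=\varphi^{P}(\bar p)$ for every formula $\varphi$ and every tuple $\bar p$ from $P$.

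The key step is then to prove that $\mathrm{sh}$ is elementary, and the point is that on finitely supported tuples the shift is implemented by an honest automorphism of $P$. Every finitely supported permutation $\sigma$ of $\bbN$ induces an automorphism $\alpha_\sigma=\id_A\otimes\beta_\sigma$ of $P$ by permuting the tensor factors of $C^{\otimes\bbN}$, and automorphisms preserve all formula values. Given a tuple $\bar p$ whose entries all lie in $A\otimes C^{\otimes\{1,\dots,N\}}\otimes 1$, I would take $\sigma$ to be the cycle $(1\,2\,\cdots\,N{+}1)$; then $\alpha_\sigma(\bar p)=\mathrm{sh}(\bar p)$, since the content in positions $1,\dots,N$ is moved up by one while the factor $1_C$ occupying position $N+1$ is rotated into position $1$. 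Hence $\varphi^{P}(\mathrm{sh}\,\bar p)=\varphi^{P}(\alpha_\sigma\bar p)=\varphi^{P}(\bar p)$ for such $\bar p$. For an arbitrary tuple $\bar p$ from $P$, I would approximate its entries by finitely supported elements (which are dense in the infinite tensor product), use that $\mathrm{sh}$ is contractive so that the images of the approximants converge to $\mathrm{sh}\,\bar p$, and pass to the limit using the uniform continuity of the interpretation $\varphi^{P}$ established earlier. This yields $\varphi^{P}(\mathrm{sh}\,\bar p)=\varphi^{P}(\bar p)$ in general, completing the argument.

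The main thing to get right, and the step where a naive approach goes wrong, is the reduction to the shift rather than trying to show directly that $\iota$, or the first-factor embedding $A\to A\otimes D$, is approximately unitarily equivalent to an isomorphism. The latter is false for general $C$ (for instance $C=\bbC^2$, where $D=C^{\otimes\bbN}$ is commutative), so any argument routed through approximate unitary equivalence to an isomorphism cannot work at this level of generality. Realizing $\iota$ as a tensor shift sidesteps this, because elementarity of the shift uses only the genuinely available symmetry of the infinite tensor power, namely the finitely supported permutation automorphisms, together with density and the uniform continuity of formula interpretations. I expect the only delicate bookkeeping to be the verification that $\kappa\circ\iota=\mathrm{sh}\circ\Theta$ and that the chosen cycle realizes the shift on finitely supported tuples; both are routine once the indexing conventions are fixed.
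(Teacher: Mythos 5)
Your proof is correct, but it is organized differently from the paper's. The paper reduces to showing $A\otimes D\prec A\otimes D\otimes C$ and then verifies only the Tarski--Vaught criterion: given a formula $\alpha(x,\bar y)$, parameters $\bar a$ in $A\otimes D$, and a witness $c$ in $A\otimes D\otimes C$ with $\alpha(c,\bar a)<r$, it approximates $\bar a$ and $c$ by finitely supported elements and applies a single \emph{flip} automorphism interchanging the outer copy of $C$ with an unused tensor factor inside $D$; this fixes the (approximated) parameters and relocates the witness into $A\otimes D$. You instead prove the stronger intermediate statement that the tensor shift $\mathrm{sh}=\id_A\otimes s$ on $A\otimes C^{\otimes\bbN}$ preserves the value of every formula at every tuple, by realizing $\mathrm{sh}$ on finitely supported tuples as the cycle automorphism $(1\,2\,\cdots\,N{+}1)$ and then passing to limits via density and the uniform continuity of $\varphi^{P}$; elementarity of $\iota$ then follows by conjugating through the isomorphisms $\Theta$ and $\kappa$. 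Both arguments rest on the same two pillars --- the permutation symmetry of the infinite tensor power and finite-support approximation --- but yours dispenses with the Tarski--Vaught test (and the induction on formulas it encapsulates) in favour of a direct verification that a single endomorphism is elementary, which is arguably cleaner and makes the role of the symmetry more transparent; the paper's version is more economical in that it only ever needs to move one witness, and a transposition suffices where you need a cycle. Your closing remark is also well taken: any route through approximate unitary equivalence of $a\mapsto a\otimes 1$ with an isomorphism would fail for, say, $C=\bbC^{2}$, and neither your argument nor the paper's uses it.
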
 

\begin{proof} Since $A \cong A \otimes D$, it suffices to show that $A \otimes D \prec A \otimes D \otimes C$ via the map sending $a \otimes d$ to $a \otimes d \otimes 1$.
We use the Tarski--Vaught test, Theorem \ref{TV}. 
Let $\alpha(x, \bar y)$ be a formula and let $\bar a$ be a parameter in $A \otimes D$. 
We need to check that if $(\inf_{x} \alpha(x, \bar a))^{A\otimes D \otimes C} < r$
then for some $b \in A \otimes D$, $\alpha^{A \otimes D \otimes C}(b,\bar a) < r$. 
 Choose
 $c\in A\otimes D \otimes C$ such that $\alpha(c,\bar a)< r$.   Let $\e := r - \alpha(c,\bar a)$ and pick $\delta$ to correspond to the continuity modulus of $\alpha$ associated to $\e/3$. By choosing suitable approximations, we can
 assume that there is an $n$ such that if we write $D$ as $C^{\otimes n} \otimes C \otimes C^{\otimes \bbN}$ then there is
\[
\bar a'\in A\otimes C^{\otimes n} \otimes 1 \otimes 1 \text{ with } \|\bar a - \bar a'\| < \delta
\]
 and 
 \[
 c' \in A \otimes C^{\otimes n} \otimes 1 \otimes 1 \otimes C \text{ with } \|c - c'\| < \delta. 
 \]

Let $\gamma$ be the automorphism of $A\otimes C^{\otimes n} \otimes C \otimes C^{\otimes \bbN} \otimes C$ which interchanges the two individual copies of $C$ and fixes $A$, $C^{\otimes n}$ and $C^{\otimes \bbN}$. Then $\gamma$ does not move $\bar a'$, $\gamma(c')$ is in the original copy of $A \otimes D$ and 
 $\alpha(\gamma(c'),\bar a') = \alpha(c',\bar a')$.   It follows by the choice of $\delta$ that $\alpha(\gamma(c'),\bar a) < r$.
 \end{proof} 

Since a strongly self-absorbing algebra $D$ satisfies 
 $D \cong D^{\otimes\bbN}$, we have the following. 
 
\begin{coro} \label{C.ssa}
If $D$ is strongly self-absorbing and $A$ is $D$-stable 
then $A\prec A\otimes D$ for all $A$ via the map sending $a$ to $a \otimes 1$ for all $a \in A$. 
\end{coro}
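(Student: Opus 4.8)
The plan is to deduce the corollary directly from Lemma~\ref{L.tensor} by observing that a strongly self-absorbing algebra is, up to the isomorphism built into its definition, an infinite tensor power of itself. Concretely, since $D$ is strongly self-absorbing we have $D \cong D^{\otimes \bbN}$, so setting $C := D$ in Lemma~\ref{L.tensor} already matches the hypothesis $D = C^{\otimes \bbN}$ once we identify $D$ with $D^{\otimes \bbN}$ via this isomorphism.

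First I would make precise the identification $D \cong D^{\otimes \bbN}$. The definition of strongly self-absorbing gives $D \cong D \otimes D$ with the first-factor embedding approximately unitarily equivalent to an isomorphism; iterating this (as is standard, and as recorded in \cite{ToWi:Strongly}) yields $D \cong D^{\otimes \bbN}$. Applying Lemma~\ref{L.tensor} with this $C = D$ and the ambient algebra $D^{\otimes \bbN}$ in place of $D$ gives that any $D^{\otimes \bbN}$-stable algebra $A$ satisfies $A \prec A \otimes D$ via $a \mapsto a \otimes 1$. Since $D \cong D^{\otimes \bbN}$, an algebra is $D$-stable precisely when it is $D^{\otimes \bbN}$-stable, so the hypothesis of the corollary delivers exactly what Lemma~\ref{L.tensor} consumes.

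The one point requiring a little care is that Lemma~\ref{L.tensor} is stated for $D$ literally of the form $C^{\otimes \bbN}$, whereas here $D$ is only isomorphic to $D^{\otimes \bbN}$. I would handle this by transporting the elementary embedding across the isomorphism: if $\Theta \colon D \to D^{\otimes \bbN}$ is the isomorphism, then $\id_A \otimes \Theta$ conjugates the embedding $A \to A \otimes D$ to the embedding $A \to A \otimes D^{\otimes \bbN}$, and elementarity is preserved under such isomorphisms of structures because the interpretation of every formula is invariant under $^*$-isomorphism. Thus the elementary embedding $A \prec A \otimes D^{\otimes \bbN}$ furnished by Lemma~\ref{L.tensor} pulls back to $A \prec A \otimes D$, with the map still being $a \mapsto a \otimes 1$ after the identification.

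The main (and essentially only) obstacle is bookkeeping with the isomorphism $D \cong D^{\otimes \bbN}$ and checking that it intertwines the two first-factor embeddings compatibly, so that the conclusion is genuinely about $a \mapsto a \otimes 1$ into $A \otimes D$ and not merely into some isomorphic copy. There is no new analytic content beyond Lemma~\ref{L.tensor}; the work is entirely in the algebraic identifications, which is why the corollary follows so quickly.
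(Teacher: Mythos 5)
Your proposal is correct and is exactly the paper's argument: the corollary is deduced from Lemma~\ref{L.tensor} by taking $C=D$ and invoking $D\cong D^{\otimes\bbN}$ for strongly self-absorbing $D$. The extra bookkeeping you supply about transporting the elementary embedding across the isomorphism is a reasonable elaboration of what the paper leaves implicit.
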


Here is another nice consequence of Lemma \ref{L.tensor}. 
In the following lemma $p$ ranges over the prime numbers. 

\begin{lemma} If $A=\bigotimes_p M_{p^{k(p)}}(\bbC)$ and 
 $B=\bigotimes_p M_{p^{l(p)}}(\bbC)$ then the following are equivalent
 \begin{enumerate}
 \item $A\prec A\otimes B$. 
 \item $A\equiv A\otimes B$. 
 \item For every $p$ such that $l(p)>0$ we have that $k(p)=\infty$; 
 that is, $B\cong A\otimes B$. 
\end{enumerate}
\end{lemma}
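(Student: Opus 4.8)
The plan is to establish the cycle $(1)\Rightarrow(2)\Rightarrow(3)\Rightarrow(1)$. The implication $(1)\Rightarrow(2)$ is immediate, since an elementary embedding agrees with the inclusion on all sentences: if $A\prec A\otimes B$ via $a\mapsto a\otimes 1$ then in particular $\Th(A)=\Th(A\otimes B)$, i.e.\ $A\equiv A\otimes B$.

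For $(2)\Rightarrow(3)$ I would use that both $A$ and $A\otimes B=\bigotimes_p M_{p^{k(p)+l(p)}}(\bbC)$ are separable unital UHF algebras. By part~\eqref{uhf-ee} of Lemma~\ref{L.ee}, elementary equivalence of UHF algebras coincides with isomorphism, so $A\equiv A\otimes B$ gives $A\cong A\otimes B$. Glimm's classification by supernatural numbers (equivalently, by which $M_n(\bbC)$ embed unitally, as in the proof of Lemma~\ref{L.ee}) then forces $k(p)=k(p)+l(p)$ at every prime, in the supernatural arithmetic where $\infty+n=\infty$. For a prime with $k(p)<\infty$ this means $l(p)=0$, i.e.\ $l(p)>0$ implies $k(p)=\infty$, which is~(3); this arithmetic condition is exactly the statement $A\cong A\otimes B$. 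One can also bypass Lemma~\ref{L.ee} entirely: were there a prime with $l(p)>0$ and $k(p)<\infty$, then $M_{p^{k(p)+1}}(\bbC)$ would embed unitally into $A\otimes B$ but not into $A$, contradicting $A\equiv A\otimes B$ since unital embeddability of a fixed matrix algebra is axiomatizable (\S\ref{S.Mn}).

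The crux is $(3)\Rightarrow(1)$, and the strategy is to arrange the hypotheses of Lemma~\ref{L.tensor}. Put $P:=\{p:l(p)>0\}$ and take $C:=B$. Since countably many tensor factors of $M_{p^{l(p)}}(\bbC)$ with $l(p)\geq 1$ give $p$-exponent $\infty$, we have
\[
D:=B^{\otimes\bbN}=\bigotimes_{p\in P}M_{p^{\infty}}(\bbC).
\]
By~(3), $k(p)=\infty$ for every $p\in P$, so $A=D\otimes A'$ with $A':=\bigotimes_{p\notin P}M_{p^{k(p)}}(\bbC)$; as $D\otimes D\cong D$ this yields $A\otimes D\cong (D\otimes D)\otimes A'\cong D\otimes A'\cong A$, i.e.\ $A$ is $D$-stable. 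Because $D=C^{\otimes\bbN}$, Lemma~\ref{L.tensor} now gives $A\prec A\otimes C=A\otimes B$ via $a\mapsto a\otimes 1$, closing the cycle.

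The main obstacle is really the bookkeeping that makes $(3)\Rightarrow(1)$ an instance of Lemma~\ref{L.tensor}: one must recognize that condition~(3) is exactly what is needed to realize $D=B^{\otimes\bbN}$ (a UHF algebra of infinite type, hence strongly self-absorbing) as a tensor factor of $A$ that $A$ absorbs. Note that Lemma~\ref{L.tensor} requires only $D=C^{\otimes\bbN}$ together with $D$-stability, not strong self-absorption, so once the decomposition $A=D\otimes A'$ is in place no further \cstar-algebraic input is needed. The supernatural-number computation in $(2)\Rightarrow(3)$ is routine but should be carried out in the extended arithmetic; it is worth recording that the arithmetic condition displayed in~(3) is equivalent to $A\cong A\otimes B$.
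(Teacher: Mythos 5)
Your proof is correct and takes essentially the same route as the paper: $(1)\Rightarrow(2)$ is immediate, $(2)\Rightarrow(3)$ is the contrapositive via the axiomatizability of containing a unital copy of $M_{p^{k(p)+1}}(\bbC)$ (your primary detour through Lemma~\ref{L.ee} and Glimm's classification is equivalent but unnecessary), and $(3)\Rightarrow(1)$ is exactly the intended application of Lemma~\ref{L.tensor}, which you spell out in more detail than the paper's ``easily follows.'' Your parenthetical observation that the arithmetic condition in (3) is equivalent to $A\cong A\otimes B$, rather than the $B\cong A\otimes B$ printed in the statement, is also correct.
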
 

\begin{proof} Clearly (1) implies (2). If (3) fails for some $p$ 
then in $A$ there is no unital copy of 
$M_{p^{k(p)+1}}(\bbC)$ 
but in $A\otimes B$ there is a unital copy of $M_{p^{k(p)+1}}(\bbC)$. 
As having a unital copy of $M_n(\bbC)$ is axiomatizable for every $n$ 
by \S\ref{S.Mn}, (2) fails as well. 

If (3) holds then the assertion easily follows from Lemma \ref{L.tensor}. 
\end{proof} 

Here is another useful lemma.
Recall that $M_n(A)$ is identified with $A\otimes  M_n(\bbC)$ and $A$
is identified with $A\otimes 1_n$. 

 \begin{lemma} If 
 $A\prec M_n(A)$ 
 then $A\prec M_{n^\infty}(A)$. 
 \end{lemma}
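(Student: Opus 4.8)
The plan is to realize $M_{n^\infty}(A)$ as the closure of an increasing union of the algebras $M_{n^k}(A)$ and to exhibit this union as an elementary chain over $A$. Write $M_{n^\infty}(A)=A\otimes M_n(\bbC)^{\otimes\bbN}$, so that $M_{n^\infty}(A)=\overline{\bigcup_k M_{n^k}(A)}$, where $M_{n^k}(A)=A\otimes M_n(\bbC)^{\otimes k}$ sits inside $M_{n^{k+1}}(A)$ via $x\mapsto x\otimes 1_n$ and the composite embedding $A\to M_{n^\infty}(A)$ is $a\mapsto a\otimes 1$. Thus it suffices to prove that
\[
A\prec M_n(A)\prec M_{n^2}(A)\prec\cdots
\]
is an elementary chain: once this is known, the elementary chain argument already used in the proof of Proposition~\ref{P.Ax}(3) (an induction on the length of a formula, equivalently iterated use of the Tarski--Vaught test, Theorem~\ref{TV}) gives $A\prec\overline{\bigcup_k M_{n^k}(A)}=M_{n^\infty}(A)$, which is the conclusion.

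The base case $A\prec M_n(A)$ is the hypothesis, so everything reduces to one step: amplification by a full matrix algebra preserves elementary inclusions, i.e.\ if $C\prec D$ then $M_m(C)\prec M_m(D)$ for every $m$. Granting this, I would apply it with $m=n^k$ to the hypothesis $A\prec M_n(A)$ to obtain
\[
M_{n^k}(A)\prec M_{n^k}\big(M_n(A)\big)\cong M_{n^{k+1}}(A),
\]
the inclusion being $M_{n^k}(-)$ applied to $a\mapsto a\otimes 1_n$. Tensoring identifies $M_{n^k}(M_n(A))$ with $M_{n^{k+1}}(A)$ compatibly with the maps out of $A$ (the two orderings of the tensor legs differ by an automorphism, which does not affect elementarity), so one obtains $M_{n^k}(A)\prec M_{n^{k+1}}(A)$ for every $k$, giving the desired chain.

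The main obstacle is therefore the amplification statement $C\prec D\Rightarrow M_m(C)\prec M_m(D)$, and I expect the real work to lie there. I would prove it via the Tarski--Vaught test applied to the entrywise inclusion $M_m(C)\subseteq M_m(D)$, using that the \cstar-structure on $M_m(\cdot)$ is \emph{interpretable in the base algebra uniformly}: an element of $M_m(C)$ is coded by its $m^2$ entries in $C$, the operations $+,\cdot,{}^*$ and scalar multiplication are quantifier-free definable functions of these entries, and---this is the delicate point---the operator norm $\|[c_{ij}]\|_{M_m(C)}$ is a definable predicate in the entries, by one and the same formula for every \cstar-algebra $C$ (cf.\ the treatment of $M_n(A)$ as a definable object and \S\ref{S.Mn}). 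Granting this uniform interpretation, any formula $\varphi(x,\bar y)$ over $M_m(D)$ translates into a formula $\tilde\varphi$ over $D$ in the entry variables; given $\bar b\in M_m(C)$ with $(\inf_x\varphi(x,\bar b))^{M_m(D)}<r$, the corresponding infimum of $\tilde\varphi$ over entry tuples is $<r$ in $D$, so by $C\prec D$ a witnessing entry tuple lies in $C$, yielding a witness $a\in M_m(C)$ with $\varphi^{M_m(D)}(a,\bar b)<r$. By Theorem~\ref{TV} this gives $M_m(C)\prec M_m(D)$. The uniform definability of the matrix norm is exactly the ingredient that upgrades matrix amplification from a mere functor to a uniform interpretation, and it is the one point that must be handled with care.
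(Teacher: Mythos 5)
Your proof is correct and follows essentially the same route as the paper's: the paper applies its Lemma~\ref{L.Mn} (that $C\prec D$ implies $F\otimes C\prec F\otimes D$ for finite-dimensional $F$, proved via the definability of the matrix norm, Lemma~\ref{L.Norm}) to get the elementary chain $M_{n^k}(A)\prec M_{n^{k+1}}(A)$, and then passes to the direct limit. The amplification step you flag as the main obstacle is exactly that lemma, and your Tarski--Vaught argument via the uniform interpretation of $M_m(\cdot)$ is the intended proof of it.
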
 
 
 \begin{proof} By Lemma \ref{L.Mn} we have that $M_{n^k}(A)\prec M_{n^{k+1}}(A)$
for all $k$. Therefore the direct limit $\lim_n M_{n^k}(A)$ (with unital connecting maps) 
is an elementary chain and $A$ is its elementary submodel. 
\end{proof} 

\begin{prop} If $A$ and $B$ are \cstar-algebras, then the space of all $\Phi\colon A\to B$
that are elementary embeddings is closed under point-norm limits
and unitary conjugacy. 
\end{prop}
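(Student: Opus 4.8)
The plan is to verify directly, in each of the two cases, the defining property of an elementary embedding: a map $\Phi\colon A\to B$ is an elementary embedding exactly when $\varphi^A(\bar a)=\varphi^B(\Phi(\bar a))$ for every formula $\varphi(\bar x)$ and every tuple $\bar a$ from $A$. That $\Phi$ is then automatically an isometric $^*$-homomorphism needs no separate argument, since applying this identity to the atomic formulas built from the metric and the algebraic operations recovers preservation of the structure, and applying it to $d(x,y)$ gives $\|a_1-a_2\|=\|\Phi(a_1)-\Phi(a_2)\|$, so $\Phi$ is isometric and hence injective. Thus in both cases it suffices to check that this identity is inherited.

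For unitary conjugacy, fix a unitary $u$ (in $B$ if $B$ is unital, and otherwise in its multiplier algebra or unitization) and consider $\Ad u\colon B\to B$, $b\mapsto ubu^*$. Since $B$ is an ideal in its unitization, $\Ad u$ restricts to a $^*$-automorphism of $B$, and every $^*$-automorphism preserves the value of every formula, i.e.\ $\varphi^B(\Ad u(\bar b))=\varphi^B(\bar b)$ for all $\varphi$ and $\bar b$; this is an immediate induction on the construction of formulas following Definition~\ref{formula} (equivalently, $\Ad u$ is an elementary embedding of $B$ onto itself). Consequently, if $\Phi$ is an elementary embedding then for every formula $\varphi$ and tuple $\bar a$ from $A$ we have $\varphi^B\big((\Ad u\circ\Phi)(\bar a)\big)=\varphi^B(\Phi(\bar a))=\varphi^A(\bar a)$, so $\Ad u\circ\Phi$ is again an elementary embedding.

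For point-norm limits, suppose a net $\Phi_\lambda$ of elementary embeddings converges to $\Phi$ in the point-norm topology. Fix a formula $\varphi(\bar x)$ and a tuple $\bar a=(a_1,\dots,a_n)$ from $A$, with $a_i$ in the ball $S_i$. Each $\Phi_\lambda$ is isometric (being an injective $^*$-homomorphism of \cstar-algebras), so $\|\Phi_\lambda(a_i)\|=\|a_i\|$, and passing to the limit gives $\|\Phi(a_i)\|=\|a_i\|$; hence all the tuples $\Phi_\lambda(\bar a)$ and $\Phi(\bar a)$ lie in the fixed product of balls $S_1(B)\times\cdots\times S_n(B)$. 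On this product $\varphi^B$ is uniformly continuous, with modulus supplied by $\varphi$ (the proposition following Definition~\ref{formula}). Since $\Phi_\lambda(a_i)\to\Phi(a_i)$ in norm for each $i$, this uniform continuity yields $\varphi^B(\Phi_\lambda(\bar a))\to\varphi^B(\Phi(\bar a))$. But $\varphi^B(\Phi_\lambda(\bar a))=\varphi^A(\bar a)$ for every $\lambda$, so this net is constant with limit $\varphi^A(\bar a)$; by uniqueness of limits in $\bbR$ we conclude $\varphi^A(\bar a)=\varphi^B(\Phi(\bar a))$. As $\varphi$ and $\bar a$ were arbitrary, $\Phi$ is an elementary embedding.

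I expect no genuinely hard step here; the only care needed is bookkeeping. The two points to get right are that the tuples $\Phi_\lambda(\bar a)$ stay inside one fixed product of balls, so that a single uniform continuity modulus of $\varphi$ governs all of them, and, in the nonunital case, the choice of the ambient algebra for the unitary $u$ together with the observation that $\Ad u$ nonetheless preserves $B$. Both reduce to the uniform continuity of formula interpretations and to the fact that conjugation by a unitary is a structure-preserving bijection of $B$.
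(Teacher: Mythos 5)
Your proof is correct and follows essentially the same route as the paper's: the paper dispatches unitary conjugacy without comment and proves the point-norm limit case by exactly your uniform-continuity argument, namely that $\varphi^B(\Phi_\lambda(\bar a))$ is the constant net $\varphi^A(\bar a)$ and converges to $\varphi^B(\Phi(\bar a))$ by the uniform continuity of $\varphi^B$. Your additional bookkeeping (fixing the product of balls, and treating $\Ad u$ via the unitization in the nonunital case) is sound and merely fills in details the paper leaves implicit.
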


\begin{proof} We only need to prove that a point-norm limit of elementary embeddings is 
an elementary embedding.   Fix a net $\Phi_\lambda\colon A\to B$ of
elementary embeddings and let $\Phi$ be their limit. 

If $\alpha(\bar x)$ is a formula and $\bar a$ is a tuple in $A$ we need to check 
(with the obvious interpretation of $\Phi(\bar a)$) that we have
 $\alpha(\bar a)^A=\alpha(\Phi(\bar a))^B$.  
 By the uniform continuity of $\alpha^A$, we 
 have that $\alpha(\Phi(\bar a))^B=\lim_\lambda\alpha(\Phi_\lambda(\bar a))^B=\alpha(\bar a)^A$ 
\end{proof} 

\begin{coro} Assume $D$ is a strongly self-absorbing algebra. Then every endomorphism of  $D$ is an elementary embedding. 
\end{coro}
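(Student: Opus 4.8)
The plan is to deduce the corollary from the Proposition immediately above, which states that elementary embeddings are closed under point-norm limits and under unitary conjugacy. The only real input is the standard structural fact that every unital endomorphism $\phi$ of a strongly self-absorbing $D$ is approximately unitarily equivalent to $\id_D$; write $\phi\approx\id_D$ for this. Granting it, the argument is formal: $\id_D$ is an elementary embedding, so by the unitary-conjugacy clause each map $\Ad u\circ\id_D=\Ad u$ (for $u\in D$ unitary) is an elementary embedding, and by the definition of approximate unitary equivalence there is a net $(u_\lambda)$ of unitaries in $D$ with $\Ad u_\lambda\to\phi$ in the point-norm topology. The point-norm-limit clause of the Proposition then yields that $\phi$ is an elementary embedding.

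It remains to indicate why $\phi\approx\id_D$. I would obtain this from the fact that $D$ has approximately inner half-flip, meaning that the two embeddings $\iota_0,\iota_1\colon D\to D\otimes D$, $\iota_0(d)=d\otimes 1$ and $\iota_1(d)=1\otimes d$, satisfy $\iota_0\approx\iota_1$. Applying the unital endomorphism $\id_D\otimes\phi$ of $D\otimes D$ to a net of unitaries witnessing $\iota_0\approx\iota_1$ (a unital homomorphism sends unitaries to unitaries and preserves point-norm convergence), and using $(\id_D\otimes\phi)\circ\iota_0=\iota_0$ together with $(\id_D\otimes\phi)\circ\iota_1=\iota_1\circ\phi$ (here $\phi(1)=1$ is used), gives $\iota_0\approx\iota_1\circ\phi$; transitivity with $\iota_0\approx\iota_1$ yields $\iota_1\approx\iota_1\circ\phi$. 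By the definition of strong self-absorption there is an isomorphism $\gamma\colon D\otimes D\to D$ with $\gamma\circ\iota_0\approx\id_D$, whence $\gamma\circ\iota_1\approx\id_D$ as well. Post-composing $\iota_1\approx\iota_1\circ\phi$ with $\gamma$ and pre-composing $\gamma\circ\iota_1\approx\id_D$ with $\phi$, transitivity gives $\id_D\approx\gamma\circ\iota_1\approx\gamma\circ\iota_1\circ\phi\approx\phi$.

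The main obstacle is the approximately inner half-flip of $D$: this is the single non-formal ingredient, it is not contained in the excerpt, and it is a genuine theorem about strongly self-absorbing algebras (\cite{ToWi:Strongly}). Everything else is routine: that isomorphisms preserve the value of every formula (so automorphisms are elementary embeddings), that approximate unitary equivalence is an equivalence relation preserved under pre- and post-composition with unital homomorphisms, and that the two closure clauses of the Proposition combine as above. One could instead simply cite the known fact that unital endomorphisms of strongly self-absorbing algebras are approximately unitarily equivalent to the identity and invoke the Proposition directly.
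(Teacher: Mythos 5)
Your argument is correct and is essentially the paper's: the paper simply cites the fact that every endomorphism of a strongly self-absorbing $D$ is approximately unitarily equivalent to the identity (\cite[Corollary 1.12]{ToWi:Strongly}) and invokes the preceding Proposition, exactly as in the last sentence of your proposal. Your additional derivation of that fact from the approximately inner half-flip is a correct (and standard) expansion of what the paper leaves to the citation.
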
 

\begin{proof} This is because all endomorphisms of $D$ are approximately unitarily 
equivalent to the identity (see e.g.,  \cite[Corollary 1.12]{ToWi:Strongly}). 
\end{proof}

\subsection{Stable algebras} \label{S.Stable} 
Recall that algebra $A$ is \emph{stable}\index{stable} if it tensorially absorbs the 
algebra of compact operators $\cK$, $A\otimes \cK\cong A$.\index{K@$\cK$} 
We show that the class of separable stable algebras is \aea.  
  This is an immediate consequence of a result of Hjelmborg and R\o rdam (\cite{HjRo:On}). 
 
 \begin{prop} \label{T.HjR} 
 For a separable \cstar-algebra $A$ the following are equivalent. 
\begin{enumerate} 
\item The map $a\mapsto \begin{pmatrix} a & 0 \\ 0 & 0 \end{pmatrix}$ 
is an elementary embedding of $A$ into $M_2(A)$. 
\item $A$ is stable. 
\item The theory of $A$ includes the sentence
\[
\sup_x \inf_y \|x^*x y^*y\|+ \|x^*x-yy^*\|. 
\]
\end{enumerate}
\end{prop}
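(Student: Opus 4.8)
The plan is to establish the cycle of implications $(2)\Rightarrow(1)\Rightarrow(3)\Rightarrow(2)$, so that the only genuinely new model-theoretic work is in the first two arrows, the last being a restatement of the Hjelmborg--R\o rdam criterion.

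First I would prove $(2)\Rightarrow(1)$, which is the heart of the matter and follows the flip argument in the proof of Lemma~\ref{L.tensor}. Using stability, identify $A$ with $A\otimes\cK$, where $\cK=\cK(H)$ for a separable infinite-dimensional Hilbert space $H$; then $M_2(A)=A\otimes\cK(H\oplus H)$ and the corner map $\iota$ becomes $z\mapsto z\otimes E_{11}$, i.e.\ the inclusion of $A\otimes\cK(H\oplus 0)$. To verify $\iota(A)\prec M_2(A)$ I would apply the Tarski--Vaught test (Theorem~\ref{TV}): given a formula $\varphi(x,\bar y)$, parameters $\iota(\bar z)$, and a witness $w$ in the unit ball of $M_2(A)$ with $\varphi^{M_2(A)}(w,\iota(\bar z))<r$, one must produce $c\in A$ with $\varphi^{M_2(A)}(\iota(c),\iota(\bar z))<r$. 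Since $\cK=\overline{\bigcup_n M_n}$, one may replace $\bar z$ and $w$ by approximants supported on $A\otimes\cK(H_0\oplus 0)$ and $A\otimes\cK(H_1\oplus H_1)$ respectively, with $H_0\subseteq H_1$ finite-dimensional. Because $H$ is infinite-dimensional there is room to choose a unitary $U\in B(H\oplus H)=M(\cK(H\oplus H))$ that fixes $H_0\oplus 0$ pointwise and carries $H_1\oplus H_1$ into $H\oplus 0$; then $\theta:=\Ad(1_A\otimes U)$ is an automorphism of $M_2(A)$ fixing the (approximate) parameters and moving the (approximate) witness into the corner $\iota(A)$. Invariance of $\varphi$ under $\theta$ together with the uniform continuity modulus of $\varphi$ then yields the required $c$, provided the approximations are taken fine enough relative to $r-\varphi^{M_2(A)}(w,\iota(\bar z))$. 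I expect this to be the main obstacle: arranging a multiplier unitary that \emph{simultaneously} fixes the parameters and rotates the witness into the corner is exactly where infinite-dimensionality of $H$ (equivalently, stability) is used, and the bookkeeping parallels Lemma~\ref{L.tensor} but must accommodate the non-unital corner embedding.

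Next I would prove $(1)\Rightarrow(3)$, which is short and explicit. Write $\psi(x):=\inf_y(\|x^*xy^*y\|+\|x^*x-yy^*\|)$, a formula in the single free variable $x$ ranging over the unit ball. For an arbitrary contraction $a\in A$, consider in $M_2(A)$ the element $y_0:=\left(\begin{smallmatrix}0 & |a|\\ 0 & 0\end{smallmatrix}\right)$, where $|a|=(a^*a)^{1/2}$; it is a contraction, and a direct computation gives $y_0y_0^*=\diag(a^*a,0)=\iota(a)^*\iota(a)$ and $y_0^*y_0=\diag(0,a^*a)$, so that $\iota(a)^*\iota(a)\,y_0^*y_0=0$ and $\iota(a)^*\iota(a)-y_0y_0^*=0$. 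Hence $\psi^{M_2(A)}(\iota(a))=0$, and since $\iota$ is elementary, $\psi^A(a)=\psi^{M_2(A)}(\iota(a))=0$. As $a$ was an arbitrary contraction this shows $\bigl(\sup_x\psi(x)\bigr)^A=0$, which is exactly~(3).

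Finally, $(3)\Rightarrow(2)$ is the Hjelmborg--R\o rdam characterisation of stability for $\sigma$-unital (in particular separable) algebras~\cite{HjRo:On}, and the remaining work is to match their criterion to the displayed sentence. The condition $\bigl(\sup_x\inf_y(\|x^*xy^*y\|+\|x^*x-yy^*\|)\bigr)^A=0$ says that for every positive contraction $a=x^*x$ and every $\e>0$ there is a contraction $y$ with $\|a-yy^*\|<\e$ and $\|a\,y^*y\|<\e$; setting $b=y^*$ and noting that $\|x^*xy^*y\|=\|y^*yx^*x\|$ (the two are adjoints of each other) recovers the Hjelmborg--R\o rdam condition that $b^*b$ approximate $a$ while $bb^*$ be approximately orthogonal to $a$. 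The only points needing care are routine: removing the normalisations (rescaling a general positive $a$ to a contraction and rescaling the witness, using that $\e$ is arbitrary) and arranging $y$ to lie in the unit ball (if $yy^*\approx a$ with $\|a\|\le 1$ then $\|y\|\approx 1$, so a functional-calculus truncation makes $y$ a contraction with negligible change to the two estimates). With these matched, \cite{HjRo:On} yields that $A$ is stable, completing the cycle.
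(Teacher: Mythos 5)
Your proposal is correct and follows essentially the same route as the paper: $(2)\Rightarrow(1)$ by the flip argument of Lemma~\ref{L.tensor} adapted to the corner embedding, $(1)\Rightarrow(3)$ via the explicit witness $\bigl(\begin{smallmatrix}0 & a^{1/2}\\ 0 & 0\end{smallmatrix}\bigr)$ pulled back by elementarity, and the Hjelmborg--R\o rdam theorem for the link between (3) and (2). The only cosmetic difference is that you arrange the implications in a cycle and thus invoke only one direction of \cite{HjRo:On}, whereas the paper cites the equivalence of (2) and (3) outright.
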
 

\begin{proof} 
The fact that (2) is equivalent to (3) is in \cite[Theorem~2.1]{HjRo:On}. 

Note that the sentence in (3) states that for every positive element $a=x^*x$ of norm $\leq 1$ 
there exists a positive 
element $b=yy^*$ and $y$ of norm $\leq 1$ such that both $ab$  and $a-y^*y$ are 
 as small as possible. 
 
(2) implies (1) is similar to the proof of Lemma \ref{L.tensor}. 
To see that (1) implies (3), pick $x \in A$. Then $a:=x^*x$ is mapped to 
$\begin{pmatrix} a & 0 \\ 0 & 0 \end{pmatrix}$, and so 
$y:=\begin{pmatrix} 0 & a^{1/2}  \\ 0 & 0 \end{pmatrix}$ witnesses 
the sentence from (3) in $M_2(A)$. 
Since $A$ is an elementary submodel, we can find an approximate copy of $y$ in $A$. 
\end{proof} 

\chapter{Definability and $A^{\eq}$}

\section{Expanding the definition of formula: definable predicates and functions} \label{S.DefinablePred}
In the present subsection we shall demonstrate that the language can be significantly enriched without losing desirable properties 
of the logic. We shall introduce definable predicates, definable functions,  and definable sets and show that allowing quantification over
definable sets is but a handy abbreviation. Our treatment is essentially the  one given in \cite[Definition 9.16]{BYBHU} except that we define
definability via quantification and not distance functions and
we do not require the underlying theory to be complete. 
Particularly important examples of definable sets are sets corresponding to approximation properties 
(see \S\ref{S.AP}).

\subsection{Definable predicates} \label{S.Definable.Predicates}
Fix a theory $T$ in a language $\calL$. Remember that we defined a seminorm on $\ccF^{\bar x}_{\calL}$ (see \S\ref{formula} for the definition of~$\ccF^{\bar x}_{\calL}$) by:
$$
\| \varphi \|_T := \sup \{ |\varphi^M(\bar a)| \colon M \mbox{ satisfies  $T, \bar a$  in }M \}.
$$
Let $\ccW^{\bar x}_{T}$ (or just $\ccW^{\bar x}$) be the Banach algebra obtained by quotienting and completing~$\ccF^{\bar x}_{T}$ with respect to $\| \cdot \|_T$.  On every model $M$ of $T$, every $P \in \ccW^{\bar x}_{T}$ 
is a uniform limit of formulas.
 More precisely, it 
can be interpreted as a uniformly continuous function $P^M$ such that there are formulas $\varphi_n \in \ccF^{\bar x}_{\calL}$ for which 
\[
\|P - \varphi_n\|_T \leq 1/n
\]
 for all $n$. 
 In \cite[Definition 9.1]{BYBHU}, such $P$ are called {\em definable predicates}\index{definable predicate}; when we wish to emphasize the uniformity across all models of $T$ or some elementary class $\cC$, we will say that $P$ is a definable predicate relative to $T$ or $\cC$. It follows from this definition that a definable predicate is uniformly continuous and every definable predicate is a uniform limit of functions given by formulas on every model of $T$. We will frequently make no distinction between definable predicates and formulas.

It will be useful to extend the terminology of quantifier-free, sup, inf, $\forall\exists$, and positive formulas 
(see Definition~\ref{def:complexity}) 
to definable predicates.\index{definable predicate!quantifier-free, sup, inf, $\forall\exists$,\dots}
  In all cases, the class of definable predicates consists of uniform limits from the corresponding class of formulas.

\section{Expanding the definition of formula: definable sets}\label{S.Definable}
One of the key elements of applying continuous logic to \cstar-algebras is recognizing which properties expressed by formulas can be quantified over.  This leads to the following definition 
(made relative to an elementary class~$\cC$) 
and its relationship with weakly stable relations.  We will use the notation $\Met$ for the category of bounded metric spaces with isometries as morphisms.

\begin{definition} \label{D.Assignment}
Suppose $\cC$ is an elementary class with theory $T$. Let $m\geq 1$  and $D_j$, for $1\leq j\leq m$, be sorts of the language of $\cC$.
A functor $X:\cC \rightarrow \Met$ is called a {\em uniform assignment}\index{uniform assignment of closed sets} relative to the class $\cC$ 
(or relative to the theory $T$) if in addition to being a functor it also satisfies the following: 
\begin{enumerate}
\item for each $A \in \cC$, $X(A)$ is a closed subset of $\prod_{j=1}^m D_j^A$, and
\item for each $f:A \rightarrow B$, $X(f) = f \restriction X(A)$.
\end{enumerate}
 
Such a functor $X$ is called a \emph{definable set}\index{definable set} if, for all formulas $\psi(\bar x, \bar y)$, the functions defined for all $A \in \cC$ 
by
\[ 
\sup_{\bar x \in X(A)} \psi^A(\bar x, \bar y) \quad \text{and} \quad \inf_{\bar x \in X(A)} \psi^A(\bar x, \bar y) 
\]
are definable predicates relative to $\cC$.
\end{definition}

An important example of a uniform assignment of closed sets relative to a class $\cC$ arises from the zero-sets of definable predicates.  If $\varphi(\bar x)$ is a definable predicate then, for $A \in \cC$, the zero-set of $\varphi$ in $A$ is\index{Z@$Z^A(\varphi)$}
\[
Z^A(\varphi) = \{ \bar a \in A : \varphi^A(\bar a) = 0 \}.
\]
Written without the superscript, $Z(\varphi)$ is the assignment of the zero-set of $\varphi$ to the structures in a given class.
Although for any definable predicate this is a uniform assignment, much of the work of continuous model theory goes into determining which such assignments are definable.  For instance, although the set of normal elements in a \cstar-algebra forms a zero-set of this form (the zero-set of the formula $\|xx^* - x^*x\|$) this assignment is not a definable set; see Proposition \ref{P.NormalNotDefinable}.

For a closed set $X$ in a structure $A$, we write, for the distance from $a\in A$   of the appropriate sort 
to $X$,\index{d@$d(\cdot ,X)$!distance to $X$}
 \[
 d(a,X):=\inf\{d(a,x) \colon x \in X\}.
 \]
To be clear, we will always use the sup metric on the finite product of sorts.  In particular, in the case of \cstar-algebras we will write $\|\bar a\|$ for the sup of $\|a_i\|$ when $\bar a$ is an $n$-tuple. An important equivalent definition of definable set is captured by the following Theorem.

\begin{thm}\label{T.def-dist} Given an elementary class $\cC$ and a uniform assignment $Z$,
$Z$ is a definable set if and only if $d(\bar x,Z(A))$ is a definable predicate relative to $\cC$.
\end{thm}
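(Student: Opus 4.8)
The forward direction is immediate: if $Z$ is a definable set, then applying the defining property with the particular formula $\psi(\bar x,\bar y):=d(\bar x,\bar y)$ (the metric, an atomic formula) shows that $\inf_{\bar x\in Z(A)}d(\bar x,\bar y)=d(\bar y,Z(A))$ is a definable predicate. So the entire content lies in the converse.

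For the converse, assume $d(\bar x,Z(A))$ is a definable predicate and fix an arbitrary formula $\psi(\bar x,\bar y)$; the goal is to show that $\sup_{\bar x\in Z(A)}\psi^A(\bar x,\bar y)$ and $\inf_{\bar x\in Z(A)}\psi^A(\bar x,\bar y)$ are definable predicates. The plan is to approximate these constrained quantifiers by unconstrained ones carrying a linear penalty on the distance to $Z$. Concretely, for each $n$ I would set
\[
G_n^A(\bar y):=\sup_{\bar x}\big(\psi^A(\bar x,\bar y)\dotminus n\,d(\bar x,Z(A))\big).
\]
Each $G_n$ is a definable predicate: $\psi$ is a formula and $d(\bar x,Z(A))$ is a definable predicate by hypothesis, so the combination $\psi\dotminus n\,d(\cdot,Z(A))$ is a definable predicate (definable predicates are closed under continuous connectives and scalar multiplication), and applying $\sup_{\bar x}$ preserves definability because $\sup$ is $1$-Lipschitz for $\|\cdot\|_T$ while the supremum of a formula is again a formula.

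The heart of the argument is the uniform estimate $0\le G_n^A(\bar y)-\sup_{\bar x\in Z(A)}\psi^A(\bar x,\bar y)\le\e$, holding for all $A\in\cC$ and all $\bar y$ once $n\ge N(\e)$. The lower bound is trivial, since restricting the supremum defining $G_n$ to $Z(A)$ kills the penalty. For the upper bound I would fix $\bar x$, write $t:=d(\bar x,Z(A))$, choose $\bar x'\in Z(A)$ within $t+\eta$ of $\bar x$, and use the uniform continuity modulus of $\psi$ to bound $\psi^A(\bar x,\bar y)\le\psi^A(\bar x',\bar y)+\omega(t+\eta)$, where $\omega$ denotes the modulus associated to $\delta^\psi$. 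Splitting into the cases $t<\delta^\psi(\e)$ (where $\omega(t+\eta)<\e$ for small $\eta$ and the penalty is harmless) and $t\ge\delta^\psi(\e)$ (where the penalty $-nt$ dominates the uniform bound $K$ on $|\psi|$ as soon as $n\ge 2K/\delta^\psi(\e)$) yields the estimate with $N(\e)=2K/\delta^\psi(\e)$. The crucial point, and the step I expect to demand the most care, is that both the continuity modulus $\delta^\psi$ and the bound $K$ are furnished by the earlier Proposition and are \emph{independent of $A$}; this is precisely what makes the convergence $G_n\to\sup_{\bar x\in Z(A)}\psi$ uniform over the whole class $\cC$, hence a genuine Cauchy convergence in $\|\cdot\|_T$. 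Since the space of definable predicates $\ccW^{\bar y}$ is complete, the limit $\sup_{\bar x\in Z(A)}\psi$ is a definable predicate. The inf case is symmetric, replacing $G_n$ by $H_n^A(\bar y):=\inf_{\bar x}\big(\psi^A(\bar x,\bar y)+n\,d(\bar x,Z(A))\big)$ and running the same modulus-of-continuity bookkeeping.
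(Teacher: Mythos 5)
Your proof is correct, and it rests on the same core device as the paper's: penalize the unconstrained quantifier by a function of the definable predicate $d(\cdot,Z(A))$, and exploit the fact that the uniform continuity modulus and the bound on $\psi$ are independent of $A\in\cC$. The difference is in the execution. The paper calibrates a \emph{single} penalty to the modulus itself: with $\alpha$ a continuity modulus for $\psi$ in the quantified variable (so $\alpha$ is continuous, $\alpha(0)=0$, and $|\psi(\bar x,\bar y)-\psi(\bar x',\bar y)|\le\alpha(d(\bar x,\bar x'))$ throughout $\cC$), it establishes the exact identity
\[
\inf_{\bar x\in Z(A)}\psi(\bar x,\bar y)=\inf_{\bar x}\bigl(\psi(\bar x,\bar y)+\alpha(d(\bar x,Z(A)))\bigr),
\]
so the constrained quantifier is literally one definable predicate and no limiting argument is needed. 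Your linear penalties $n\,d(\bar x,Z(A))$ only approximate, so you must additionally invoke completeness of $\ccW^{\bar y}$ under $\|\cdot\|_T$; that is perfectly sound (and the uniform-in-$A$ error estimate $N(\e)=2K/\delta^\psi(\e)$ is exactly the right bookkeeping), but it is an extra step that the paper's choice of penalty makes unnecessary. One small slip to repair: as written, $\psi\dotminus n\,d(\bar x,Z(A))$ truncates at $0$, so your $G_n$ converge to $\max\{\sup_{\bar x\in Z(A)}\psi,\,0\}$ rather than to the supremum itself when $\psi$ is negative on $Z(A)$; either use honest subtraction (a legitimate continuous connective) or first normalize $\psi$ to be $\bbR^+$-valued by adding a constant.
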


\begin{proof} This is proved  in \cite[Theorem~9.17]{BYBHU} 
so we only sketch the proof.  For the direction from left to right, 
$d(\bar x,Z(A))$ is given by $\inf_{\bar y \in Z(A)} d(\bar x,\bar y)$.  In the other direction, suppose $d(\bar x, Z(A))$ is a definable predicate. Fix a formula $\psi(\bar x,\bar y)$ and let  
 $\alpha$ be a modulus of continuity for $\psi$ in the variable $\bar y$. 
 That is $\alpha$ is a continuous function such that
\begin{enumerate}
\item $\lim_{t \rightarrow 0^+} \alpha(t) = 0$, and
\item for all $\bar x$, $|\psi(\bar x,\bar y) - \psi(\bar x,\bar z)| \leq \alpha(d(\bar y,\bar z))$ holds in $\cC$.
\end{enumerate}
Since $d(\bar y,Z(A))$ is a definable predicate, the 
 formula 
 \[
\inf_{\bar y} (\psi(\bar x,\bar y) + \alpha(d(\bar y,Z(A))))
\] 
is one as well. 

Fix $A\in \cC$. 
Clearly 
$\inf_{\bar y}(\psi(\bar x,\bar y) + \alpha(d(\bar y,Z(A))))\leq 
\inf_{\bar y \in Z(A)} \psi(\bar x,\bar y)$. 
In order to prove the converse inequality 
fix  $\bar x\in A$
and $r$ such that 
$\inf_{\bar y} (\psi(\bar x,\bar y) + \alpha(d(\bar y,Z(A))))<r$. 
If $\bar b\in A$ and $\bar c\in Z(A)$ are such that  
$\psi(\bar x,\bar b) + \alpha(d(\bar b,\bar c))<r$ then 
$\psi(\bar x,\bar c)\leq\psi(\bar x,\bar b)+\alpha(d(\bar b,\bar c))<r$.  
Since $\bar x\in A$ was arbitrary we have 
\[
\inf_{\bar y\in Z(A)}\psi(\bar x,\bar y)=\inf_{\bar y} (\psi(\bar x,\bar y) + \alpha(d(\bar y,Z(A)))).
\]
 The case of $\sup$ is handled similarly.
\end{proof}

Suppose we are given a uniform assignment $X$ which is a definable set relative to some elementary class $\cC$ with $m$ and $D_j$, for $j\leq m$, 
as in Definition~\ref{D.Assignment}. If we consider the formula $\psi(\bar x,\bar y) := \max_{i\leq m} \{d_j(x_j,y_j)\}$ where $d_j$ is the metric symbol on~$D_j$ then for each $A \in \cC$ we have that
\[
X(A) = \{ \bar a \in \prod_j D_j^A : \inf_{\bar x \in X(A)} \psi^A(\bar x, \bar a) = 0 \}.
\]
That is, $X(A)$ is the zero-set of a definable predicate.  The advantage of the definition of definable set given above is that we need not a priori recognize the definable set as a zero-set or even related to the formulas of the given language.
It should be noted that a definable set $X$ is often the zero-set of a natural definable predicate other than $d(\bar x, X(A))$. 
If this predicate is quantifier-free definable then we say that $X$ itself is \emph{quantifier-free definable}.\index{quantifier-free definable}

In most of our applications $\cC$ will be the class of all \cstar-algebras.  
By the definition, definable sets are bounded. In the case of \cstar-algebras we extend the definition to allow unbounded subsets as
follows (for  a general treatment of unbounded definable sets see \cite{Lut:PhD}). Let $\Met^*$ be the category of metric spaces (that are not necessarily bounded).

\begin{definition}
Let $\cC$ be an elementary class of \cstar-algebras, $m\geq 1$ and $n\geq 0$, 
and suppose we are given a functor $X:\cC \rightarrow \Met^*$ such that
\begin{enumerate}
 \item for each $A \in \cC$, $X(A)$ is a closed subset of $A^m\times \bbC^n$, and
 \item  for any $^*$-homomorphism $\Psi:A \rightarrow B$, $X(f) = \Psi \restriction X(A)$.
 \end{enumerate}
Then $X$ is called a \emph{definable set}\index{definable set!unbounded} if, for all formulas $\psi(\bar x, \bar y)$, the functions defined for all $A \in \cC$ and $n \in \bbN$ 
by 
\[ 
\sup_{\bar x \in X(A), \|\bar x\|\leq n} \psi^A(\bar x, \bar y) \quad \text{and} \quad \inf_{\bar x \in X(A),\|\bar x\|\leq n} \psi^A(\bar x, \bar y) 
\]
are definable predicates relative to $\cC$.
\end{definition}

One standard trick for recognizing that an assignment is a definable set is to see that it is the range of a term; if it is, 
then the assignment is definable since it can be quantified over.  
More generally, for \cstar-algebras, if the set is the image of a definable set under a continuous function $f$  (in the sense of functional calculus, see \S\ref{S.cfc}) then it is also definable. 
 This is clear in the case when $f$ is a $^*$-polynomial. In  the case when $f$ is an arbitrary continuous function the statement follows from the complex Stone--Weierstrass theorem.

Following \cite{Mitacs2012} we say what it means for 
a definable predicate to be weakly stable.
\begin{definition}
A definable predicate $\psi(\bar x)$ is \emph{weakly stable}\index{weakly stable} relative to an elementary class $\cC$  if for every  
$\e>0$ there exists $\delta>0$ such that
for every  $A\in \cC$ and every 
$\bar a\in A^n$ with   $|\psi(\bar a)|<\delta$ there exists $\bar b\in A^n$ 
such that $\|\bar a - \bar b\|<\e$ and $\psi(\bar b)=0$. 
\end{definition}
This notion corresponds to \emph{weakly stable relations}\index{weakly stable!relations} (\cite[Chapter 4]{Lor:Lifting}).  
Equivalently, there is a continuous increasing function $u:[0,\infty) \rightarrow [0,\infty)$ such that $u(0) = 0$ and for all $A \in \cC$,
$d(\bar a,Z(\psi)) \leq u(\psi(\bar a))$ for $\bar a \in A^n$.
(The proof that these definitions are equivalent is similar to the proof of Theorem~\ref{T.def-dist}.)

Just like in case of definable predicates, functions and sets, certain formulas can be weakly stable only when relativized to 
members of a specific class of \cstar-algebras. For example, several predicates were shown to be weakly stable 
only when relativized to the class of \cstar-algebras with stable rank 1 in \cite{eilers1998stability}.

In \cite[\S 2]{Mitacs2012} `weakly stable relations' were referred to as `stable relations' and definable relations
were referred to as `uniformly definable relations.' This granted, we summarize the relationship between weakly stable relations and definable sets.  The equivalence of (2) and (3) was proved as \cite[Lemma~2.1]{Mitacs2012}.

\begin{lemma} \label{L.weaklystable} For an elementary class $\cC$ and a uniform assignment of closed sets $Z$ relative to $\cC$, the following are equivalent:
\begin{enumerate}
\item $Z$ is a definable set.
\item $d(\bar x,Z(A))$ is a definable predicate relative to $\cC$.
\item $d(\bar x,Z(A))$ is a weakly stable predicate relative to $\cC$.
\item $Z(A)$ is the zero-set of a weakly stable predicate relative to $\cC$.
\end{enumerate}
\end{lemma}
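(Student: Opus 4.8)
The plan is to take (1)$\Leftrightarrow$(2) for free, since it is precisely the content of Theorem~\ref{T.def-dist}, and then to close the cycle (2)$\Rightarrow$(3)$\Rightarrow$(4)$\Rightarrow$(2), where only the last implication carries real content. The other two links are essentially formal, so the bulk of the work is in recovering definability of the distance predicate from a weakly stable predicate cutting out $Z$.

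First I would dispatch the easy links. Assuming (2), the function $\bar x \mapsto d(\bar x, Z(A))$ is a definable predicate whose zero-set is exactly $Z(A)$, because each $Z(A)$ is closed. It is weakly stable by inspection: given $\e>0$ put $\delta=\e$; if $d(\bar a,Z(A))<\delta$ then closedness of $Z(A)$ supplies $\bar b\in Z(A)$ with $\|\bar a-\bar b\|<\e$ and $d(\bar b,Z(A))=0$, which is (3). For (3)$\Rightarrow$(4) I would simply note that $Z(A)$ is the zero-set of the predicate $d(\bar x,Z(A))$, which is weakly stable by hypothesis, so $Z(A)$ is the zero-set of a weakly stable predicate.

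The heart of the argument is (4)$\Rightarrow$(2). Suppose $Z(A)=Z^A(\psi)$ for a weakly stable definable predicate $\psi$. Passing to the modulus form of weak stability, I would fix a continuous increasing $u\colon[0,\infty)\to[0,\infty)$ with $u(0)=0$ such that $d(\bar a,Z(A))\leq u(\psi(\bar a))$ in every $A\in\cC$; this modulus exists uniformly across the class by the equivalence recorded just after the definition of weak stability. The key claim is the identity
\[
d(\bar x, Z(A)) = \inf_{\bar y}\bigl(d(\bar x,\bar y) + u(\psi(\bar y))\bigr),
\]
valid in every $A\in\cC$. For ``$\leq$'', given $\eta>0$ choose $\bar y\in Z(A)$ with $d(\bar x,\bar y)<d(\bar x,Z(A))+\eta$; then $\psi(\bar y)=0$, so $u(\psi(\bar y))=0$ and the right-hand side is below $d(\bar x,Z(A))+\eta$, whence ``$\leq$'' on letting $\eta\to 0$. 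For ``$\geq$'', the triangle inequality together with weak stability gives, for every $\bar y$, $d(\bar x,\bar y)+u(\psi(\bar y))\geq d(\bar x,\bar y)+d(\bar y,Z(A))\geq d(\bar x,Z(A))$. Since the right-hand side of the identity is an $\inf$-quantification of the sum of the atomic predicate $d(\bar x,\bar y)$ and the definable predicate $u\circ\psi$, it is itself a definable predicate, and hence so is $d(\bar x,Z(A))$, which is (2).

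The main obstacle is the (4)$\Rightarrow$(2) step, and within it two points deserve care. The ``$\geq$'' direction is the only place weak stability is genuinely used (through $d(\bar y,Z(A))\leq u(\psi(\bar y))$), so I would present it as the crux. For the definability of the right-hand expression I would invoke closure of definable predicates under composition with continuous functions (so $u\circ\psi$ qualifies) and under $\inf$; boundedness causes no trouble because $\psi$ is bounded and the metric is bounded, so $u$ need only be controlled on a compact interval. In the unbounded \cstar-algebra formulation the same argument goes through verbatim with all quantifiers restricted to balls.
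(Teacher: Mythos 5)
Your proof is correct, and it follows the route the paper intends: (1)$\Leftrightarrow$(2) is exactly Theorem~\ref{T.def-dist}, the links (2)$\Rightarrow$(3)$\Rightarrow$(4) are the formal observations you give, and your identity $d(\bar x,Z(A))=\inf_{\bar y}\bigl(d(\bar x,\bar y)+u(\psi(\bar y))\bigr)$ is the same ``add a modulus and quantify'' trick used in the proof of Theorem~\ref{T.def-dist} and in the cited \cite[Lemma~2.1]{Mitacs2012}, which is all the paper offers in lieu of a written-out proof. No gaps; the only point worth flagging is that you correctly rely on the uniform modulus $u$ existing across all of $\cC$, which the paper asserts as the equivalent form of weak stability just before the lemma.
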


Here is a semantic approach to recognizing when you have a definable set.  It is a consequence of the Beth definability theorem and it is Corollary~\ref{C.sem.def} discussed in \S\ref{S.Beth} and so we will postpone the proof until then.

\begin{thm}\label{Th.sem.def}
Suppose that $\cC$ is an elementary class and $A \mapsto S^A$ is a uniform assignment of closed sets.  Then this assignment is a definable set if and only if for all ultrafilters $\cU$ on $I$, $A_i \in \cC$ for $i \in I$ and $A = \prod_\cU A_i$,  $S^A = \prod_\cU S^{A_i}$.
\end{thm}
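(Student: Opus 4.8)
The plan is to prove the two directions separately: the forward implication (definable $\Rightarrow$ commutes with ultraproducts) is a direct application of \L o\'s' Theorem, while the converse reduces to the metric Beth definability theorem established in \S\ref{S.Beth}. Throughout I write $P^A := d(\bar x, S^A)$ and use the standard fact that, for closed subsets of complete bounded metric spaces, $d(\bar a, \prod_{\cU} S^{A_i}) = \lim_{i\to\cU} d(\bar a_i, S^{A_i})$ for $\bar a = (\bar a_i)/\cU$.

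First, for the forward direction, suppose $S$ is a definable set. By Theorem~\ref{T.def-dist}, $P^A$ is a definable predicate, so for $A = \prod_{\cU} A_i$ \L o\'s' Theorem (Theorem~\ref{Los}) gives $d(\bar a, S^A) = P^A(\bar a) = \lim_{i\to\cU} d(\bar a_i, S^{A_i})$. Combined with the displayed metric-space identity, $\bar a \in S^A$ iff $\lim_{i\to\cU} d(\bar a_i, S^{A_i}) = 0$ iff $\bar a \in \prod_{\cU} S^{A_i}$, which is exactly $S^A = \prod_{\cU} S^{A_i}$.

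For the converse, I would expand the language $\calL$ of $\cC$ by a single relation symbol $P$ (with a $1$-Lipschitz modulus and the bound coming from the diameter of $\prod_j D_j$), and let $\cC'$ be the class of $\calL'$-structures $(A, P^A)$ with $A \in \cC$ and $P^A = d(\cdot, S^A)$. The first task is to check that $\cC'$ is elementary via Theorem~\ref{T.Ax}(3): closure under isomorphism is immediate from functoriality of $S$; closure under ultraproducts is exactly where the hypothesis enters, since the ultraproduct of $(A_i, d(\cdot, S^{A_i}))$ interprets $P$ as $\lim_{i\to\cU} d(\cdot, S^{A_i}) = d(\cdot, \prod_{\cU} S^{A_i}) = d(\cdot, S^{\prod_{\cU} A_i})$, using $S^{\prod_{\cU} A_i} = \prod_{\cU} S^{A_i}$; and closure under ultraroots follows by applying the hypothesis to a constant family, noting that for $\bar a$ in the diagonal $d(\bar a, \prod_{\cU} S^A) = d(\bar a, S^A)$. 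Thus $\cC' = \Mod(T')$ for $T' := \Th(\cC')$.

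Next, $P$ is implicitly defined by $T'$: because $\cC'$ is elementary, every model of $T'$ already lies in $\cC'$, so its interpretation of $P$ is forced to be $d(\cdot, S^A)$, and in particular any two expansions of a common $\calL$-reduct agree. The metric Beth definability theorem then produces an $\calL$-definable predicate $Q$ with $T' \models P = Q$; and since every $A \in \cC$ expands to $(A, d(\cdot, S^A)) \in \cC'$, we get $Q^A = d(\cdot, S^A)$ on all of $\cC$. Hence $d(\bar x, S^A)$ is a definable predicate and $S$ is a definable set by Theorem~\ref{T.def-dist}. The main obstacle is this converse direction, and specifically having Beth definability in hand; granting that, the real work is the verification that $\cC'$ is elementary, where the ultraproduct-commutation hypothesis is precisely what supplies closure under ultraproducts (and, through the constant-family case, under ultraroots).
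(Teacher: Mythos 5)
Your proof is correct and follows essentially the same route as the paper: the paper also handles the converse by expanding each $A\in\cC$ with the predicate $d(\cdot,S^A)$, verifying via Theorem~\ref{T.Ax} that the expanded class is elementary (using the ultraproduct hypothesis for closure under ultraproducts and ultraroots), and then invoking the Beth definability theorem (Corollary~\ref{C.sem.def}). The forward direction via Theorem~\ref{T.def-dist} and \L o\'s' theorem is the standard easy half, which the paper leaves implicit.
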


\begin{example} 
\label{Ex.1} 
Using Lemma \ref{L.weaklystable} and standard results (\cite[\S 4.1]{Lor:Lifting})
one can show that each of the following subsets is definable---and even 
quantifier-free-definable---in any \cstar-algebra $A$.  
\begin{enumerate}[leftmargin=*]
\item The set $A_{\mathrm{sa}}$ of all self-adjoint elements of $A$ is an unbounded definable set. 
 It is the zero-set of  $\alpha(x):=\|x-x^*\|$.  The set of self-adjoint elements is the range of $(a + a^*)/2$ in any \cstar-algebra.
\item The set of all positive elements in $A$ is an 
unbounded definable set. 
 It is the range of $aa^*$ in any \cstar-algebra.
 \item The unit ball of $A$, as the zero-set of $\|x\|\dotminus 1$. 
 
\item The unit sphere  of $A$, as the zero-set of $|\|x\|- 1|$. 
 \item \label{Ex.1.projection} The set $\cP(A)$\index{P@$\cP(A)$} of all projections of $A$,  as the 
 zero-set of 
 \[
 \pi (x):=\|x^2-x\|+\|x^*-x\|.
 \] 
 To see a variety of sets of projections which are also definable, see \S\ref{S.sets.proj}.
\item (In a unital \cstar-algebra.) The set of all isometries, as the zero-set of $\|x^*x-1\|$. 
\item  (In a unital \cstar-algebra.) The set $U(A)$\index{U@$U(A)$} of all unitaries 
as the zero-set of $\|x^*x-1\|+\|xx^*-1\|$.   
\item \label{Ex1.PI} The set of all partial isometries, as the zero-set of $\pi(xx^*)$ (equivalently, 
of $\pi(x^*x)$), where $\pi $ is the formula defining projections as in~(\ref{Ex.1.projection}). 
\item \label{L.alpha-n} (See \cite[Lemma~2.2]{Mitacs2012} or \cite[Theorem 4.9]{Loring:stable-rel}) 
For every $n\geq 2$, the set of all $n^2$-tuples $x_{ij}$ for $1\leq i,j\leq n$ that
are matrix units of a copy of $M_n(\bbC)$, as the zero-set of 
\[
\alpha_n(\bar x):=\max_{1\leq i,j,k,l\leq n}\|\delta_{kl}x_{ij}-x_{ik} x_{lj}\|+\|x_{ij}-x_{ji}^*\|+|\|x_{11}\|-1|. 
\]
\item \label{L.alpha-n-u} (See \cite[Lemma~2.2]{Mitacs2012} or \cite[Theorem 4.9]{Loring:stable-rel}) 
For every $n\geq 2$, the set of all $n^2$-tuples $x_{ij}$ for $1\leq i,j\leq n$ that
are matrix units of a unital copy of $M_n(\bbC)$, as the zero-set of 
\[
 \alpha_n^u(\bar x):=\max\left (\alpha_n(\bar x), \bigg\|1-\sum_{i\leq n} x_{ii}\bigg\|\right ). 
\]
\item \label{Ex.1.alpha-F} Suppose $F$ is the finite-dimensional algebra $\bigoplus_{k=1}^n M_{m_k}(\bbC)$.  Let $J$ be the set
of all triples $(i,j,m_k)$ where $0 \leq i,j \leq m_k$ and $1 \leq k \leq n$.  $J$ indexes the matrix units of $F$.
There is a weakly stable formula $\alpha_F$ in the tuple of variables $x^{m_k}_{i,j}$ for $(i,j,m_k)\in J$ whose zero-set contains tuples that represent the matrix units
of a copy of $F$.
 
\item \label{Ex.1.orthogonal} The set of pairs $(p,q)$ of orthogonal projections is the zero-set of 
the formula $\pi_\perp(x,y)=\pi(x)+\pi(y)+\|xy\|$ (with $\pi$ as in  \eqref{Ex.1.projection}). 
\pushcounter
\end{enumerate}
By the standard methods 
the following closed sets are also definable. However, 
 their definitions are $\inf$-formulas instead of  quantifier-free formulas. 
\begin{enumerate}[leftmargin=*]
\popcounter
 \item \label{Ex.1.MvN} The set of pairs $(p,q)$ of Murray--von Neumann equivalent 
projections is the zero-set of 
the formula $\piMVN(x,y):=\pi(x)+\pi(y)+\inf_z\|x-zz^*\|+\|y-z^*z\|$ 
(with $\pi$ as in  \eqref{Ex.1.projection}). 
\item \label{Ex.1.pi-m} For every $m\geq 1$ the set of projections $p$ such that
there are $m$  projections $p_j$, for $1\leq j\leq m$,  such that $p=p_1$ is Murray--von Neumann 
equivalent to each $p_j$ and all   $p_j$ are orthogonal. 

\item \label{Ex.1.beta-F} (See \cite[Lemma~2.3]{Mitacs2012}.) 
For every finite-dimensional algebra $F$, the set of $(m+1)$-tuples 
$\bar a,b$ such that $\bar a$ belongs to a copy of $F$ whose unit is $b$ 
is the zero-set of a formula $\beta_{F,m}$. 
Adopting the notation from (\ref{Ex.1.alpha-F}) and using tuples of variables
$x^{m_k}_{i,j}$ for $(i,j,m_k) \in J$
then $\beta_{F,m}(\bar y,z)$
reads as follows (using $\alpha_F$ as above)
\begin{gather*}
\inf_{\bar x}\left ( \alpha_F(\bar x)+   \inf_{\bar \lambda}
\max_{1\leq l\leq m} \left(\left\|y_l-\sum_{(i,j,m_k)\in J} \lambda^{l,m_k}_{i,j} x^{m_k}_{i,j}\right\|, 
\left\|z - \sum_{i\leq m_k, k\leq n} x^{m_k}_{ii}\right\|\right) \right )
 \end{gather*}
 where $\bar \lambda$ ranges over tuples formed by
 $\lambda^{l,m_k}_{i,j}$ in the unit disk~$\bbD$ for $(i,j,m_k) \in J$ and $l \leq m$. (See Remark \ref{rmk.numbers-as-scalars} for an explanation of this formula).
  
\item Similarly to \eqref{Ex.1.beta-F} (and more simply),  for every finite-dimensional algebra $F$ and every~$m$, 
the set of $m$-tuples  that belong
to a unital copy of $F$ is the zero-set of a formula $\beta_{F,m}^u$. 
\pushcounter
\end{enumerate}
\end{example} 

\begin{prop}\label{P.NormalNotDefinable}
The set of normal elements,
the zero-set of\\ $\|xx^*-x^*x\|$, 
is not 
definable in the class of all \cstar-algebras, but it is definable in the theory of  AF algebras (see \S\ref{S.Elementary}).
\end{prop}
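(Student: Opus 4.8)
The plan is to reduce both halves of the statement to a single uniform weak-stability estimate and then treat the two classes separately. Write $\eta(x):=\|xx^*-x^*x\|$ and let $\mathcal N(A)$ denote the zero-set of $\eta$ in $A$, i.e. the normal elements. By Lemma~\ref{L.weaklystable}, relative to an elementary class $\cC$ the assignment $\mathcal N$ is definable if and only if $\mathcal N(A)$ is the zero-set of a weakly stable predicate for every $A\in\cC$. A short compactness argument, using Theorem~\ref{Th.sem.def}, shows this is equivalent to the existence of one modulus: a continuous increasing $u$ with $u(0)=0$ such that $d(a,\mathcal N(A))\le u(\eta(a))$ for all $A\in\cC$ and all $a$. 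Indeed, if no such $u$ existed there would be contractions $a_k\in A_k$ (with $A_k\in\cC$) satisfying $\eta(a_k)\to 0$ but $d(a_k,\mathcal N(A_k))\ge\e$; then $(a_k)/\cU$ would be normal in $\prod_\cU A_k$ yet at distance $\ge\e$ from $\prod_\cU\mathcal N(A_k)$, contradicting definability via Theorem~\ref{Th.sem.def}.

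For the first assertion (where $\cC$ is the class of all \cstar-algebras) it therefore suffices to produce exactly such a bad sequence, that is, to show that normality is \emph{not} a weakly stable relation. This is classical: there exist contractions $a_k$ with $\|a_ka_k^*-a_k^*a_k\|\to 0$ that nonetheless remain a fixed distance from every normal element, the obstruction being a nonvanishing Fredholm index (Brown--Douglas--Fillmore theory; see also the discussion of stable relations in \cite{Lor:Lifting}). Constructing and estimating such a sequence is the only genuine content of this direction, and it is the main obstacle; everything else is the machinery above.

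For the second assertion I would verify the modulus $u$ on AF algebras using Lin's theorem. Given a contraction $a$ in an AF algebra $A=\overline{\bigcup_n F_n}$ with $\eta(a)$ small, first approximate $a$ by a contraction $a'$ lying in some finite-dimensional subalgebra $F=\bigoplus_i M_{n_i}(\bbC)$, so that $\eta(a')\le\eta(a)+o(1)$. Decomposing $a'=X'+iY'$ into its self-adjoint real and imaginary parts gives $\eta(a')=2\|[X',Y']\|$, which is small; Lin's theorem---crucially with a modulus independent of the matrix size---then produces commuting self-adjoints $X'',Y''\in F$ close to $X',Y'$, whence $b:=X''+iY''\in F\subseteq A$ is normal and close to $a$. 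This yields $d(a,\mathcal N(A))\le u(\eta(a))$ for a fixed $u$ that works for every AF algebra.

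Finally I would promote this estimate from AF algebras to the whole elementary class $\cC:=\Mod(\Th(\mathrm{AF}))$. The approximate form of the estimate, $\inf_{\|y\|\le 1}\max(\|x-y\|,\eta(y))\le u(\eta(x))$, is a single $\sup$--$\inf$ sentence, so it belongs to $\Th(\mathrm{AF})$ and hence holds in every $M\in\cC$. A standard bootstrapping---iterating this estimate and using that $\mathcal N(M)$ is closed---upgrades the resulting approximate normals to exact ones, producing the uniform modulus $u$ on all of $\cC$. By Lemma~\ref{L.weaklystable} the assignment $\mathcal N$ is then definable relative to $\cC$, i.e. in the theory of AF algebras, completing the proof.
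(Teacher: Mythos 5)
Your proposal is correct and follows essentially the same route as the paper: the negative direction rests on the Fredholm-index obstruction for almost-normal finite-rank perturbations of the unilateral shift, and the positive direction rests on Lin's theorem together with the weak-stability characterization of definability. The only cosmetic differences are that you pass through the almost-commuting self-adjoint formulation of Lin's theorem and spell out the transfer from finite-dimensional algebras to AF algebras and then to $\Mod(\Th(\mathrm{AF}))$ somewhat more explicitly than the paper does.
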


\begin{proof}
Let $H$ be an infinite-dimensional Hilbert space and let $s$ be the unilateral shift associated to a distinguished basis of $H$. 
We shall prove that the set of normal elements of $B(H)$ is not definable. 
 There is a sequence $a_n$ of operators in $B(H)$ of finite-rank perturbations of $s$ such that $\lim_n \|a_n a_n^*-a_n^*a_n\|=0$
 (see e.g.,  \cite{Lor:Lifting}). Since each $a_n$ has Fredholm index equal to $-1$ and norm $1$, no normal operator $b$ 
 satisfies $\|a_n-b\|<1$. 

We turn to  the second statement, that the set of normal elements is definable in a restricted class of \cstar-algebras. 
Let $T$ be the theory of AF algebras i.e., the set of sentences true in all AF algebras and let $\cC$ be the class of all models of $T$.

Lin proved that for every $\e>0$ there exists $\delta>0$ such that 
if $a$ is contained in a finite dimensional \cstar-algebra $F$, $\|a\| \leq 1$ and $\|aa^*-a^*a\|<\delta$ then there is a normal 
element $b$ satisfying $\|b\| \leq 1$ and $\|a-b\|<\e$  (\cite{lin1994almost},  see also \cite{FR}). In other words, the predicate $\|xx^*-x^*x\|$ is weakly stable relative to the 
class of finite dimensional \cstar-algebras. Lin's theorem clearly extends to AF algebras and by \L o\'s' theorem
to the class axiomatized by the theory of AF algebras.
\end{proof}

\section{Expanding the language: imaginaries}\index{imaginaries} 

It is convenient to introduce additional sorts to our language 
called \emph{imaginary sorts}\index{sort!imaginary} over which we are allowed to quantify.  This will simplify our ability to express various notions but will not increase the expressive power of the language we are working with as this expanded language will be a \emph{strongly conservative extension}: every model in the original language can be expanded uniquely to a model of the new language.  We do this in three steps:

\subsection*{Countable products}  If $\calL$ is a language and $\bar S = \langle S_i : i \in \bbN \rangle$ is a countable sequence of sorts, we introduce a new sort $U := U_{\bar S}$, a metric symbol $d_U$ and unary functions $\pi_i:U \rightarrow S_i$.  Now if $M$ is an $\calL$-structure, we expand $M$ by interpreting $U$ as $\displaystyle\prod_{i \in \bbN} S_i^M$.  The choice of metric $d_U$ on $U$ is not canonical but for definiteness, let 
\[
d_U(\bar m,\bar n) := \sum_{i \in \bbN} \frac{d_i(m_i,n_i)}{B_i2^i}
\]
where $B_i$ is a bound on the metric $d_i$.  The function symbols $\pi_i$ are interpreted as projections i.e., for $\bar m$ in $U$, $\pi_i(\bar m) := m_i$.  The point of this construction is seen by recording two statements, each of which can easily be translated into sentences of continuous logic.  First of all, if $\bar m, \bar n \in U$ then for any $k$, 
\[
d_U(\bar m,\bar n) \leq  \frac{1}{2^k} + \sum_{i \leq k} \frac{d_i(m_i,n_i)}{B_i2^i} 
\]
This sentence guarantees that any $\calL$-structure $M$, expanded to include the sort $U$, will at least satisfy that the map $\Pi$ sending $U$ into $\prod_i S_i$ via the projections will be an injection.  Additionally, if we have $a_i \in S_i^M$ for $i \leq k$ then there is some $\bar m \in U$ with $\pi_i(\bar m) = a_i$ for all $i \leq k$.  Again, this can be expressed as a sentence in continuous logic and for any structure that satisfies both this and the previous sentence, since our $\calL$-structures are complete, $\Pi$ is a bijection; that is, $U$ will be interpreted as the product of the sorts $S_i$.

\begin{example} One way that the product of sorts is helpful is in interpreting continuous functions, 
 for instance, in trying to understand what model theoretic information is available in $K_1(A)$ for a \cstar-algebra $A$.    We could add the sort $S$ which is the product of $A_1$ countably many times; we index this product by $\bbQ' =\bbQ \cap [0,1]$.  Let's temporarily adopt the notation that $\check A$ for a \cstar-algebra $A$ is the structure $A$ expanded by the sort $A_1^{\bbQ}$ and that $\cC$ is the elementary class of all $\check A$.  Suppose that we have a continuous function $f:[0,1] \rightarrow A_1$. $f$ could then be interpreted as the sequence with $q$-entry $f(q)$.  The following proposition shows that the collection of all continuous functions from $[0,1]$ into $S$ is not a definable set but the set of continuous functions which are $N$-Lipschitz is definable by Theorem \ref{Th.sem.def}.  
 
 \begin{prop}
 If $X:\cC \rightarrow \Met$ is defined so that $X(\check A)$ is the set of all $N$-Lipschitz functions from $[0,1]$ to $A_1$ as described above then~$X$ is definable.
\end{prop}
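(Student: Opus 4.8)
The plan is to apply the semantic criterion of Theorem~\ref{Th.sem.def}. First I would check that $X$ is a uniform assignment of closed sets: inside the product sort, $X(\check A)$ is the intersection over $q,q'\in\bbQ'$ of the closed conditions $\|x_q-x_{q'}\|\le N|q-q'|$, and since every $^*$-homomorphism is contractive it carries an $N$-Lipschitz family in the unit ball to an $N$-Lipschitz family in the unit ball, so $X$ is functorial with $X(\Phi)=\Phi\restriction X(\check A)$. Because $\cC$ is elementary, it then suffices to show that $X$ commutes with ultraproducts. So I would fix $A_i\in\cC$, an ultrafilter $\cU$, put $M:=\prod_\cU A_i$, and use that the product-sort axioms of the ``Countable products'' construction are preserved by \L o\'s' Theorem to identify the product sort of $\prod_\cU\check A_i$ with $(M_1)^{\bbQ'}$; the goal becomes $X(\check M)=\prod_\cU X(\check A_i)$ inside it.

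The inclusion $\prod_\cU X(\check A_i)\subseteq X(\check M)$ is immediate: if $(\bar x^{(i)})/\cU$ satisfies $\bar x^{(i)}\in X(\check A_i)$ for $\cU$-almost all $i$, then for each pair $q,q'$ we get $\|x_q-x_{q'}\|=\lim_{i\to\cU}\|x^{(i)}_q-x^{(i)}_{q'}\|\le N|q-q'|$, so the limit family is again $N$-Lipschitz. All the content is in the reverse inclusion, which is a lifting problem: given $\bar x\in X(\check M)$ I must produce representatives that are \emph{exactly} $N$-Lipschitz on $\bbQ'$, coordinate-by-coordinate, for $\cU$-almost all $i$. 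The obstacle, which I expect to be the crux, is that $\bbQ'$ imposes infinitely many Lipschitz constraints: choosing arbitrary representatives and invoking \L o\'s only makes each constraint hold \emph{approximately} on a finite block of coordinates, and an ultrafilter cannot enforce all of them simultaneously and exactly.

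I would resolve this by diagonalization plus a geometric correction. Fix an enumeration $\bbQ'=\{q_0,q_1,\dots\}$ and pick any representatives $x^{(i)}_q\in (A_i)_1$. For each $n$ choose $\delta_n\to 0$ and let $I_n\in\cU$ be the decreasing sets on which $\|x^{(i)}_{q_j}-x^{(i)}_{q_k}\|\le N|q_j-q_k|+\delta_n$ for all $j,k\le n$ (these lie in $\cU$ since the exact inequalities hold in $M$). On the level-$n$ indices $I_n\setminus I_{n+1}$ I correct the block $x^{(i)}_{q_0},\dots,x^{(i)}_{q_n}$ to an exactly $N$-Lipschitz tuple by shrinking toward $0$: set $z^{(i)}_{q_j}:=(1-t_n)x^{(i)}_{q_j}$ with $t_n:=\delta_n/(Nm_n+\delta_n)$ and $m_n:=\min_{j\ne k\le n}|q_j-q_k|>0$. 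Convexity keeps every value in the unit ball, each is moved by at most $\delta_n/(Nm_n)$, and since $m_n$ depends only on the fixed enumeration the correction error tends to $0$ once $\delta_n/(Nm_n)\to 0$. I then extend the corrected block to an exactly $N$-Lipschitz function on all of $\bbQ'$ by piecewise-linear interpolation of these values across $[0,1]$ (constant outside $[q_0,q_n]$), which again lands in the unit ball. Letting $z^{(i)}$ be this function on $I_0$ and any fixed element of $X(\check A_i)$ off $I_0$, we get $z^{(i)}\in X(\check A_i)$ $\cU$-almost everywhere; and because $z^{(i)}$ differs from $x^{(i)}$ by at most $\delta_{n(i)}/(Nm_{n(i)})$ on the first $n(i)+1$ coordinates while the $d_U$-tail is $O(2^{-n(i)})$, with $n(i)\to\infty$ along $\cU$, one obtains $\lim_{i\to\cU}d_U(z^{(i)},x^{(i)})=0$, i.e.\ $(z^{(i)})/\cU=\bar x$. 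Hence $\bar x\in\prod_\cU X(\check A_i)$, which finishes the reverse inclusion and, by Theorem~\ref{Th.sem.def}, shows that $X$ is definable.
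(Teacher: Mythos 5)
Your proof is correct and follows essentially the same route as the paper's: reduce to commuting with ultraproducts via Theorem~\ref{Th.sem.def}, use \L o\'s' theorem to get the finitely many Lipschitz constraints up to a small error on sets $I_n\in\cU$, stratify the index set by the maximal such level, repair each finite block exactly by shrinking toward $0$, and extend by piecewise-linear interpolation. The paper's only (cosmetic) differences are that it relaxes the Lipschitz constant multiplicatively ($\mu_n=N+1/n$), so the repair is just scaling by $N/\mu_n$ and the minimum-gap quantity $m_n$ is not needed, and that it explicitly treats the indices lying in every $I_n$, where no correction is required because the chosen representatives are already exactly $N$-Lipschitz --- a case your write-up leaves implicit.
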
 

\begin{proof}
Fix an enumeration $q_j$, for $j\in \bbN$, of $\bbQ'$
and consider the associated weighted sum metric as defined above.  
 It is clear that the set of $N$-Lipschitz functions with this metric forms a closed subset of $A_1^{\bbQ'}$ for any \cstar-algebra and that the ultralimit of $N$-Lipschitz functions is again $N$-Lipschitz.  It remains to show that if we fix a ultrafilter $\cU$ on $I$, \cstar-algebras $A_i$ for all $i \in I$ and an $N$-Lipschitz function $f$ from $[0,1]$ into the unit ball of $\prod_\cU A_i$ then $f$ is an ultralimit of $N$-Lipschitz functions $f_i\colon\bbQ'\to A_i$ for all $i \in I$.  Toward this end, write $\bbQ'$ as an increasing union of finite sets $F_n$ and for each $s \in \bbQ'$, fix a representative $I$-tuple $\bar a^s\in\prod A_i$ so that $f(s) = \bar a^s/\cU$.  For each $n$, choose $X_n \in \cU$ such that if $s,t \in F_n$ then for all $ i \in X_n$, $\|a^s_i - a^t_i \| \leq \mu_n |s - t|$ where $\mu_n := N + 1/n$.  There are two cases for any $i \in I$: If $i \in X_n$ for all $n$ then we can define $f_i(t) := a^t_i$ for all $t \in \bbQ'$. If there are only finitely many $n$ such that $i \in X_n$ choose $n$ maximally so that $i \in X_n$ and for every $t \in F_n$, let 
 \[
 f_i(t) := \frac{N}{\mu_n} a^t_i
 \]
 and complete $f_i$ to a path by connecting the defined points linearly.  One now checks that $f_i$ is $N$-Lipschitz for all $i$ and $\lim_{i \rightarrow \cU} f_i = f$. \end{proof}
\end{example}

\subsection*{Definable sets} 
We already saw in \S\ref{S.Definable} that one can allow quantification over definable sets so it will make sense, and be convenient, to allow definable sets themselves as additional imaginary sorts.  Formally then, if we are working with models of some theory $T$ and $T$ knows that the zero-set of $\varphi(\bar x)$ is a definable set then we will add a sort $U_\varphi$, a metric $d$ and function symbols $\rho_i:U_\varphi \rightarrow S_i$ where $S_i$ is the sort of the variable $x_i$.  The intended interpretation will be that $\rho := \langle \rho_i \rangle$ is an injection from $U_\varphi$ into $\prod_i S_i$ and that $\varphi(\rho(x)) = 0$ for all $x \in U_\varphi$. The metric $d$ will be induced by the restriction of the metric on $\prod_i S_i$ via $\rho$.  All of this is easily expressed by sentences in continuous logic.

\subsection*{Quotients}
This is the most interesting of the imaginary sorts that we can add.  Suppose that $\varphi(\bar x,\bar y)$ is a formula where the variables $\bar x$ and $\bar y$ are distinct tuples of variables coming from a product of sorts $\bar S = \prod_i S_i$.  Further assume that in all $\calL$-structures, this formula is interpreted as a pseudo-metric on $\bar S$.  In this case, we add a sort $U$ intended to be the quotient of $\bar S$ by $\varphi$, a metric $d$ for $U$ and the function $\pi:\bar S \rightarrow U$.  Any $\calL$-structure $M$ is expanded to this new language by interpreting $U$ as the quotient of $\bar S^M$ by $\varphi$, $d$ as the metric on $U$ induced by $\varphi$ and $\pi$ is the projection map. The relevant details of this construction are captured by the statements that the range of $\pi$ is dense in $U$ and that for all $a, b \in \bar S, \varphi(a,b) = d(\pi(a),\pi(b))$, both of which can be expressed in continuous logic.  Here are two ways in which this construction is used.

\begin{example} Suppose that $\rho(\bar x,\bar z)$ is any formula; let $\varphi(\bar x,\bar y)$ be the formula $\sup_{\bar z} | \rho(\bar x,\bar z) - \rho(\bar y,\bar z)|$.  It is easy to see that in all $\calL$-structures, $\varphi$ is a pseudo-metric and the associated quotient is often called the set of \emph{canonical parameters}\index{canonical parameters}  for the formula $\rho$.
\end{example}

\begin{example} \label{Ex.MvN} This example will be useful when we discuss $K$-theoretic invariants in \S\ref{S.elfun}.  Suppose that $A$ is a \cstar-algebra.  We saw that the projections in $A$ form a definable set and so could be treated as an imaginary sort.  On this sort, we can think of the formula $\varphi(x,y)$ whose value is the infimum over all partial isometries $u$ of $\|x -u^*u\| + \|y - uu^*\|$.  
 By \cite[Lemma 2.1.3]{RoLaLa:Introduction}
 for projections $p$ and $q$, if $\|p - q\| < 2$ then $p \sim q$. 
 Therefore $\varphi(p,q)=0$ if and only if $p\sim q$ and $\varphi(p,q)\in \{0,2\}$ 
 for all projections $p$ and $q$ and  
  we conclude that $\rho$ is a pseudo-metric. 
  It follows that the canonical parameters of $\rho$ are exactly the equivalence classes modulo Murray-von Neumann equivalence and of course the quotient sort has the discrete metric.  This allows us at least to talk about the first part of the construction of $K_0(A)$ in model theoretic terms. 
  
One should keep in mind that   
in case of \cstar-algebras in which the Murray--von Neumann semigroup 
   fails the  cancellation property (see \cite[p. 36]{RoLaLa:Introduction}) 
   the canonical parameters of $\rho$ do not correspond to the   
elements of the positive part of $K_0$. 
\end{example}

\subsection*{$M^{\eq}$ and $T^{\eq}$}
We now put all of these imaginary sorts together.  If~$T$ is any theory, not necessarily complete, we could iteratively add all the sorts generated in the three ways described above: take products of countable sorts, add definable sets known to be definable by the theory and quotient by definable pseudo-metrics.
 The full expanded language will be called $\calL^{\eq}$ to match the tradition in discrete first order logic and the theory will be $T^{\eq}$.  If we begin with an $\calL$-structure $M$ and then expand by adding all products, definable sets and quotients, we call the structure $M^{\eq}$.  We will rarely need the entirety of $T^{\eq}$ or $M^{\eq}$ but we will need to recognize instances when the concept we are dealing with is captured by imaginaries or is expressible in~$T^{\eq}$. In general, if $\calL'$ contains $\calL$ and $T'$ is a theory in $\calL'$ which contains $T$ then we say that $T'$ is a strongly conservative extension of $T$ if the forgetful functor from $\Mod(T')$ to $\Mod(T)$ is an equivalence of categories. We record here the main abstract facts about $T^{\eq}$---essentially, that $T^{\eq}$ is the universal strongly conservative extension of $T$.  A proof can be found in \cite{Albert-thesis}.
 
 \begin{theorem}\label{T.conceptual-completeness} 
 Suppose that $T$ is a theory in the language $\calL$.  Then~$T^{\eq}$ is a strongly conservative extension of $T$ via the forgetful functor $F$.  Moreover, if $T'$ is any strongly conservative extension of $T$ via the forgetful functor $F'$ then there is a functor $G:\Mod(T') \rightarrow \Mod(T^{eq})$ such that $F \circ G = F'$.
 \end{theorem}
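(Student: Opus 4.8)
The plan is to prove the two assertions in turn: first that the forgetful functor $F\colon\Mod(T^{\eq})\to\Mod(T)$ is an equivalence (strong conservativity), and then to extract the comparison functor $G$ formally from the fact that $F'$ is likewise an equivalence. For strong conservativity I would argue that each of the three single-step constructions used to build $\calL^{\eq}$ — adjoining a countable product sort $U_{\bar S}$, adjoining a sort $U_\varphi$ for a set that $T$ knows to be definable, and adjoining a quotient sort $\bar S/\varphi$ for a $T$-definable pseudometric $\varphi$ — is itself strongly conservative. The key local lemma in each case is the \emph{unique expansion property}: every $M\models T$ admits exactly one expansion to the enlarged language satisfying the finitely many accompanying axioms recorded in the text, up to a unique isomorphism restricting to the identity on the reduct. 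For the product sort this holds because the two displayed sentences force $\Pi=\langle\pi_i\rangle$ to be an isometric bijection of $U^M$ onto $\prod_i S_i^M$ with the chosen weighted metric; for $U_\varphi$ the axioms force $\rho$ to be an isometry of $U_\varphi^M$ onto the zero-set $Z^M(\varphi)$; and for the quotient, density of the range of $\pi$ together with $\varphi(a,b)=d(\pi(a),\pi(b))$ forces $U^M$ to be the completion of $\bar S^M/\varphi$. Unique expansion immediately yields that $F$ is essentially surjective, full and faithful: any homomorphism of reducts extends in exactly one way to the determined imaginary sorts, and that extension is again a homomorphism.

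Next I would record that strongly conservative extensions compose — a composite of equivalences is an equivalence — and that a directed union of such extensions is again one, since a model of the union is canonically amalgamated from the compatible expansions of its reducts. Because $\calL^{\eq}$ is obtained by iterating the three constructions and closing off (a set-sized, hence terminating, process), it follows that $F$ is an equivalence. Moreover the unique-expansion assignment is functorial and furnishes a strict section $E\colon\Mod(T)\to\Mod(T^{\eq})$ with $F\circ E=\id$ and $E\circ F\cong\id$.

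For the universal property, let $T'$ be any strongly conservative extension with forgetful functor $F'$. By hypothesis $F'$ is an equivalence, and the unique-expansion form of strong conservativity again supplies a strict section $E'\colon\Mod(T)\to\Mod(T')$ with $F'\circ E'=\id$. Setting $G:=E'\circ F$ gives a functor $\Mod(T^{\eq})\to\Mod(T')$ with $F'\circ G=F'\circ E'\circ F=F$, as required. If one wants the stronger conclusion that $G$ is an actual interpretation functor — the genuine content of conceptual completeness — I would instead invoke the metric Beth definability theorem of \S\ref{S.Beth}: each new sort of $T'$ is implicitly defined over $T$ precisely because $F'$ is an equivalence, hence explicitly definable as a product, a definable set, or a quotient by a definable pseudometric, and is therefore already interpreted inside $T^{\eq}$; assembling these interpretations produces $G$.

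The main obstacle lies entirely in the first part, and within it in the quotient step: one must verify that the quotient of a complete sort by a uniformly continuous (definable) pseudometric, as pinned down by the continuous-logic axioms, is complete and is \emph{uniquely} determined over the reduct. Completeness is automatic for products and follows from closedness for zero-sets, but for quotients it requires checking that Cauchy sequences in $\bar S^M/\varphi$ are captured by the density-of-range axiom together with the completeness built into the sort $U$, so that no spurious second expansion can arise. The bookkeeping for the transfinite iteration — ensuring the class of imaginaries forms a set and that functoriality is preserved along the tower — is routine but must be stated with care.
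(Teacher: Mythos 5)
The paper does not actually prove Theorem~\ref{T.conceptual-completeness}; it is stated with a \verb|\qed| and the proof is deferred to \cite{Albert-thesis}, so there is no in-text argument to measure you against. Your outline is the standard one and is essentially sound: reduce strong conservativity to a unique-expansion lemma for each of the three sort constructions, compose along the (set-sized) iteration, and manufacture $G$ from a strict section of $F'$. Three points deserve more care than you give them. First, the paper's formal definition of ``strongly conservative'' only says $F'$ is an equivalence of categories; to obtain a \emph{strict} section $E'$ with $F'\circ E'=\id$ (which your $G=E'\circ F$ needs for the on-the-nose identity $F'\circ G=F$ rather than a natural isomorphism) you must rectify, replacing each $N'\models T'$ by an isomorphic copy whose reduct is literally the given $M$ and using fullness and faithfulness of $F'$ to lift morphisms uniquely; this is routine but is precisely the ``unique expansion'' you invoke, so it should be argued rather than assumed. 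Second, your diagnosis of the quotient step is correct and is the genuine subtlety: the set-theoretic quotient of a complete sort by a definable pseudometric need not be complete (the paper's Proposition~\ref{L.tau-N} exists precisely because the $\|\cdot\|_{\tau,2}$-quotient of the unit ball is not complete), so $U$ must be interpreted as the completion, and the axioms (density of $\ran(\pi)$ plus $\varphi(a,b)=d(\pi(a),\pi(b))$) do pin that completion down uniquely. Third, functoriality of the expansion requires checking that the morphisms of $\Mod(T)$ descend to quotient sorts, i.e.\ preserve vanishing of the defining pseudometric; this is where definability of $\varphi$, as opposed to mere uniform continuity, actually enters, and it should be made explicit. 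Finally, you are right that the ``Moreover'' clause as literally stated is nearly vacuous once a strict section exists, and that the intended universal property---that $G$ is induced by an interpretation of the new sorts of $T'$ as imaginaries---is what requires the Beth definability theorem of \S\ref{S.Beth}; drawing that distinction is a genuine improvement on the bare statement.
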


Although we shall need only a special case of Theorem~\ref{T.conceptual-completeness}, the Beth Definability Theorem, 
proved in \S\ref{S.Beth}, we add a remark regarding scalars and our choice of language for \cstar-algebras to demonstrate the usefulness of Theorem \ref{T.conceptual-completeness}.  One could imagine adding a new sort $D$ to the language of \cstar-algebras meant to interpret the complex unit disk.  Additionally, we would add a function symbol $\lambda:D \times A_1 \rightarrow A_1$ whose intended meaning is $\lambda(r,a) = ra$.  If we consider the class of all pairs $(A,\bbD,\lambda)$ where $A$ is a \cstar-algebra, $\bbD$ is the unit disk and $\lambda$ is as given, then the theory of this class is a strongly conservative extension of the theory of \cstar-algebras.  It follows by Theorem \ref{T.conceptual-completeness} that $D$ and $\lambda$ can be found in~$T^{\eq}$.  Moreover, it is benign to act as if scalar multiplication in the sense presented here is part of the language of \cstar-algebras (see also Remark \ref{rmk.numbers-as-scalars}).  
We note that the unitization of $A$ is also in $A^{\eq}$ by the same argument or by leveraging off what we just said about understanding how scalars are present in $A^{\eq}$.

\
\section{The use of continuous functional calculus}\label{S.cfc}\index{continuous functional calculus}  
 If $C$ is a unital \cstar-algebra and $a\in C$, then the \emph{spectrum}\index{spectrum} of $a$ is 
defined as\index{Sp@$\Sp(a)$}
 \[
 \Sp(a)=\{\lambda\in \bbC\colon a-\lambda 1\text{ is not invertible}\}. 
 \]
If $A$ is not unital then $\Sp(a)$ is defined to be $\Sp(a)$ as computed in the unitization of $A$. 
The spectrum is always a nonempty compact subset of $\bbC$ and for a normal element $a$, 
the \cstar-algebra $\mathrm{C}^*(a,1)$ generated by $a$ and the identity is isomorphic to $\mathrm{C}(\Sp(a))$.  
The isomorphism sends the identity function on $\Sp(a)$ to $a$. The Stone--Weierstrass theorem implies that  
$^*$-polynomials in the variable $z$ are dense in $\mathrm{C}(\Sp(a))$ and    therefore the isomorphism is well-defined and 
automatically an isometry. 
In the not-necessarily-unital case we have  $\mathrm{C}_0(\Sp(a)\setminus \{0\})\cong \mathrm{C}^*(a)$. 

Therefore, for  a given normal element $a$ in a  \cstar-algebra $A$ 
 and any continuous function $f$ on $\Sp(a)$ (satisfying  $f(0)=0$, if $A$ is not unital) 
 we have a well-defined element $f(a)$ in $A$, uniformly approximated by polynomials. That is, the map $a \mapsto f(a)$ is a definable predicate at least when restricted to definable subsets of normal elements relative to the class of \cstar-algebras.
We can therefore use these definable predicates on definable classes of normal elements, such as unitaries, self-adjoints, or positive elements.\footnote{The question whether the class of all normal elements is definable, and in particular whether the continuous functional calculus on all normal elements can be added to the language, 
  is nontrivial (see 
Proposition~\ref{P.NormalNotDefinable})}
We will freely use expressions 
such as $|a|$,   $a^{1/2}$ for $a\geq 0$, or $\exp(ia)$,  $a_+$ and $a_-$ 
for a self-adjoint $a$.

Although we shall not need  multi-variable continuous functional calculus in the present document, the following is worth noting:
continuous functional calculus  can be extended to an $n$-tuple of commuting normal elements~$\bar a$. 
Then $\mathrm{C}^*(\bar a,1)$ is isomorphic to $\mathrm{C}(X)$ where $X$ is the
 \emph{joint spectrum}\index{joint spectrum} of $\bar a=(a_1,\dots, a_n)$ defined
 to be the set of all $n$-tuples 
 $(f(a_1), \dots , f(a_n))$
 when $f$ ranges over all 
 characters of $ \mathrm{C}^*(\bar a)$. 
  We need the following Lemma which is a good example of the use of continuous functional calculus.

\begin{lemma}\label{L.fc-tech-lemma}
Suppose that $a$ is a self-adjoint element of a \cstar-algebra $A$, $b \in A$ and $A \subset B(H)$ for some Hilbert space $H$.  Further assume that $b = va$ for some $v \in B(H)$.  Then for any continuous function $f$ with $f(0) = 0$ we have $vf(a) \in A$.  In particular, if $b = v|b|$ is the polar decomposition of $b$ in $B(H)$ then $vf(|b|) \in A$.
\end{lemma}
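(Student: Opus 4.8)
The plan is to reduce the statement to the case where $f$ is a polynomial, where it becomes a purely algebraic computation inside $B(H)$, and then to recover the general continuous $f$ by uniform approximation on the spectrum together with the norm-closedness of $A$.

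First I would record the key algebraic identity. If $p(t) = \sum_{k=1}^n c_k t^k$ is a polynomial with $p(0) = 0$, then, computing in $B(H)$ and using $va = b$,
\[
v\,p(a) = \sum_{k=1}^n c_k\, v a^k = \sum_{k=1}^n c_k\,(va)\,a^{k-1} = \sum_{k=1}^n c_k\, b\,a^{k-1}.
\]
Each summand $b\,a^{k-1}$ lies in $A$ (with the convention $a^0 := 1$, so that $b\,a^0 = b \in A$), and therefore $v\,p(a) \in A$. This is the heart of the matter: having $p$ vanish at the origin lets us peel off one factor of $a$ and absorb the ``bad'' operator $v \in B(H)$ into the ``good'' element $b = va \in A$, after which no reference to $B(H)$ is needed. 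Note that I keep the expression as a sum $\sum_k c_k\, b\,a^{k-1}$ rather than factoring it as $b\,r(a)$, since when $A$ is nonunital the polynomial $r$ would carry a constant term and $r(a)$ need not lie in $A$, whereas each term $b\,a^{k-1}$ does.

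Next I would approximate. Since $a$ is self-adjoint, $\Sp(a)$ is a compact subset of $\bbR$, and $f(0) = 0$. By the Weierstrass approximation theorem I can choose polynomials $q_n$ with $\sup_{\Sp(a)} |f - q_n| \to 0$; replacing $q_n$ by $p_n := q_n - q_n(0)$ and using $f(0)=0$ to bound $|q_n(0)| = |q_n(0) - f(0)| \le \sup_{\Sp(a)}|f - q_n|$, I obtain polynomials $p_n$ with $p_n(0) = 0$ and still $\sup_{\Sp(a)}|f - p_n| \to 0$. (When $0 \notin \Sp(a)$, which can happen only in the unital case, the same conclusion follows directly from the Stone--Weierstrass theorem, as the polynomials vanishing at the origin already separate the points of $\Sp(a)$ and vanish nowhere on it.) Because the continuous functional calculus is isometric (see this section), $\|f(a) - p_n(a)\| = \sup_{\Sp(a)}|f - p_n| \to 0$, so $\|v f(a) - v p_n(a)\| \le \|v\|\,\|f(a)-p_n(a)\| \to 0$. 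Each $v\,p_n(a) \in A$ by the previous step, and $A$ is norm-closed, whence $v f(a) \in A$. Finally, for the ``in particular'' clause I would observe that $|b| = (b^*b)^{1/2}$ is a self-adjoint element of $A$ (it is the value at the positive element $b^*b \in A$ of the continuous function $t \mapsto t^{1/2}$, which vanishes at $0$) and that $b = v|b|$ is the polar decomposition; applying the first part with $a := |b|$ then gives $v f(|b|) \in A$.

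The main obstacle is the bookkeeping in the approximation step, specifically arranging that the approximating polynomials vanish at the origin. This is precisely where the hypothesis $f(0)=0$ is used: it lets me subtract off the constant term of an ordinary Weierstrass approximant without spoiling the estimate, and it is the same hypothesis that guarantees $f(a)$ itself lies in $A$ when $A$ is nonunital (where $0 \in \Sp(a)$ necessarily). Everything else is routine once the factoring identity of the second paragraph is in hand.
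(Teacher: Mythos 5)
Your proof is correct and follows essentially the same route as the paper: approximate $f$ uniformly on $\Sp(a)$ by polynomials vanishing at $0$, use $va = b$ to absorb $v$ into $b \in A$, and conclude by norm-closedness of $A$. Your expansion of $v\,p_n(a)$ as $\sum_k c_k\, b\,a^{k-1}$ rather than the paper's factorization $b\,q_n(a)$ is a minor (and slightly more careful) variant that sidesteps the question of whether $q_n(a)$ lies in $A$ when $A$ is nonunital, but the argument is the same.
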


\begin{proof} Choose polynomials $p_n$ for $n \in \bbN$ with the property that $p_n(0) = 0$ for all $n$ and that $p_n$ tends to $f$ uniformly on the spectrum of $a$.  Write $p_n(x) = xq_n(x)$ and we have 
\[
vf(a) = \lim_{n \rightarrow \infty} vp_n(a) = \lim_{n \rightarrow \infty} vaq_n(a) = \lim_{n\rightarrow \infty} bq_n(a).
\]
Since $b \in A$ and $q_n(a) \in A$ for all $n$, we have $vf(a) \in A$.
\end{proof}

\begin{prop} \label{P.spectrum} 
The spectrum of an element in a unital \cstar-algebra~$A$ is a
quantifier-free definable set in $A^{\eq}$.
More precisely, there is a quantifier-free definable predicate $F\colon A_1\times \bbD \to [0,1]$ whose zero-set is  
\[\{(a, \lambda): \lambda\in \Sp(a)\}\] where $\bbD$ is the unit disk in $\bbC$.
This predicate is weakly stable.
\end{prop}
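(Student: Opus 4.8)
The plan is to exhibit an explicit quantifier-free predicate and then verify weak stability by an intrinsic functional-calculus perturbation. Write $b=a-\lambda 1$, which is a quantifier-free term in $(a,\lambda)$ once we regard the scalar $\lambda$ as living in the imaginary sort $\bbD$ with its scalar multiplication (this is legitimate in $A^{\eq}$; see the discussion of scalars following Theorem~\ref{T.conceptual-completeness}). The starting point is the elementary fact that $b$ is invertible if and only if both $b^*b$ and $bb^*$ are invertible: invertibility of $b^*b$ is equivalent to $b$ being bounded below (left invertible), and invertibility of $bb^*$ to $b$ being right invertible. Hence $\lambda\in\Sp(a)$ iff $b$ is non-invertible iff $\min\Sp(b^*b)=0$ or $\min\Sp(bb^*)=0$. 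Taking $g(t):=1\dotminus t$ and noting $\|b\|\le 2$, so that $\Sp(b^*b),\Sp(bb^*)\subseteq[0,4]$, I would set
\[
F(a,\lambda):=\min\bigl(1\dotminus\|g(b^*b)\|,\ 1\dotminus\|g(bb^*)\|\bigr).
\]
Since $g$ is decreasing, $\|g(b^*b)\|=g(\min\Sp(b^*b))=(1-\min\Sp(b^*b))_+$, so $1\dotminus\|g(b^*b)\|=\min(1,\min\Sp(b^*b))$, and likewise for $bb^*$; thus $F$ takes values in $[0,1]$ and $F(a,\lambda)=0$ exactly when $\lambda\in\Sp(a)$.

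For quantifier-free definability, note that $b^*b$ and $bb^*$ are quantifier-free terms in $(a,\lambda)$, and $g$ is continuous on $[0,4]$. By Stone--Weierstrass there are real polynomials $p_n\to g$ uniformly on $[0,4]$; since $\|b^*b\|\le 4$ uniformly across all \cstar-algebras and all $(a,\lambda)\in A_1\times\bbD$, the elements $p_n(b^*b)$ converge to $g(b^*b)$ in norm uniformly, so $g(b^*b)$ is a quantifier-free definable predicate (each $p_n(b^*b)$ is a $*$-polynomial). Composing with the norm and with $\min$ shows $F$ is quantifier-free.

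The main work, and the main obstacle, is weak stability. The subtlety is that for non-normal $b$ a small value of $\min\Sp(b^*b)$ (a small singular value) does \emph{not} force $\lambda$ to be near $\Sp(a)$; instead one must perturb $a$ so that $\lambda$ enters the spectrum, and the perturbing element has to be produced \emph{inside} $A$ rather than borrowed from an ambient $B(H)$. Suppose $F(a,\lambda)<\delta$; without loss of generality $\mu:=\min\Sp(b^*b)<\delta$. I would pick a continuous $h\colon[0,\infty)\to[0,1]$ equal to $1$ on $[0,\delta]$ and vanishing on $[2\delta,\infty)$, and set $e:=h(b^*b)\in A$. Then $e\ge 0$, and because $\mu<\delta$ lies in $\Sp(b^*b)$ with $h(\mu)=1$ we get $\|e\|=1$, so $1\in\Sp(e)$ and hence $1-e$ is non-invertible. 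Moreover $\|b^*b\,e\|=\sup_{t} t\,h(t)\le 2\delta$, whence $\|be\|^2=\|e\,b^*b\,e\|\le 2\delta$. Setting $b':=b(1-e)$ gives a non-invertible element (if $b(1-e)$ were invertible then the self-adjoint $1-e$ would be left invertible, hence invertible) with $\|b-b'\|=\|be\|\le\sqrt{2\delta}$. Then $a':=a-be=b'+\lambda 1$ satisfies $\lambda\in\Sp(a')$ and $\|a-a'\|\le\sqrt{2\delta}$; the case $\min\Sp(bb^*)<\delta$ is symmetric, using $e=h(bb^*)$ and $b'=(1-e)b$. A harmless rescaling $a'':=a'/\max(1,\|a'\|)$, $\lambda'':=\lambda/\max(1,\|a'\|)$ returns the witness to $A_1\times\bbD$ at the cost of at most doubling the perturbation, so $F(a,\lambda)<\delta$ yields $(a'',\lambda'')$ in the zero-set with $\|(a,\lambda)-(a'',\lambda'')\|\le 2\sqrt{2\delta}$. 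This is exactly weak stability, and together with Lemma~\ref{L.weaklystable} it upgrades the zero-set to a quantifier-free definable set in $A^{\eq}$. (Alternatively, definability alone would follow from Theorem~\ref{Th.sem.def} together with the fact that the spectrum commutes with ultraproducts, but that route yields neither the quantifier-free form nor the explicit modulus.)
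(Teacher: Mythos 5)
Your proposal is correct, and the endgame (quantifier-free definability via polynomial approximation of the functional calculus, plus the scalar sort $\bbD$ from the discussion after Theorem~\ref{T.conceptual-completeness}) matches the paper. Your detector is essentially the paper's: the paper uses $G(c)=\|c\|-\|(\|c\|\cdot 1-|c|)\|$, i.e.\ $\min\Sp(|c|)$, and sets $F(a,\lambda)=\min\{G(a-\lambda\cdot 1),G((a-\lambda\cdot 1)^*)\}$, while your $1\dotminus\|(1-b^*b)_+\|=\min(1,\min\Sp(b^*b))$ is the same quantity squared and capped at $1$ (and, unlike the paper's, literally lands in $[0,1]$ as the statement asks). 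Where you genuinely diverge is the weak-stability step. The paper perturbs via the polar decomposition $c=v|c|$ taken in $A^{**}$, replacing $c$ by $v(|c|-\e)_+$ and invoking Lemma~\ref{L.fc-tech-lemma} to see that this element lies back in $A$. You instead build the spectral cutoff $e=h(b^*b)$ by functional calculus inside $A$ and pass to $b(1-e)$ (resp.\ $(1-e)b$), observing that $1-e$ is a non-invertible self-adjoint element so the product cannot be invertible, and that $\|be\|\le\sqrt{2\delta}$. This buys you an argument that never leaves $A$ and avoids the bidual and Lemma~\ref{L.fc-tech-lemma} entirely, and it yields an explicit modulus of weak stability ($2\sqrt{2\delta}$ after your rescaling); the paper's route is shorter on the page but leans on the external fact that $v f(|c|)\in A$. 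Both are valid proofs of the proposition.
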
 

\begin{proof} 
This proposition (as well as its multi-variate version) 
was proved in case of abelian \cstar-algebras in     \cite[Proposition~5.25]{EaVi:Saturation}.  

Consider the formula on $A_1$ (see remarks immediately after the proof)
\[
G(a) = \|a\| - \| (\|a\|\cdot 1 - |a|) \|
\]  
which evaluates to 0 if and only if $a$ is not left-invertible.

The formula $G$ is weakly stable since if $G(a) < \epsilon$ for some $a \in A_1$ then $G(b) = 0$ where $b = v(|a|-\epsilon)_+$ and
$a = v|a|$ is the polar decomposition in $A^{**}$.  Note that $b \in A$ by Lemma \ref{L.fc-tech-lemma}.
So the formula $H(a) = \min\{G(a), G(a^*)\}$ evaluates to 0 if and only if $a$ is not invertible and is still weakly stable.

The desired formula is then  $F(a,\lambda) = H(a - \lambda\cdot 1)$ which evaluates to 0 if and only if  $\lambda$ is in the spectrum of $a$.  Remember that by the remarks immediately after Theorem \ref{T.conceptual-completeness}, $\bbD$ is a sort in $A^{\eq}$ and so the zero-set of $F$ is a definable set in $A^{\eq}$.

It is also relatively easy to see that $\{(a, \lambda) \in A_1 \times \bbD: \lambda\in \Sp(a)\}$ is a definable set by applying Theorem \ref{Th.sem.def} but this does not yield an explicit formula.
\end{proof} 

\begin{remark}\label{rmk.numbers-as-scalars}
The formula $G$ above is not formally a formula or definable predicate in the language of \cstar-algebras as we have defined it.  The problem is that when we write $\|a\|\cdot 1$ we are evaluating the predicate $\|a\|$, obtaining a number and then using the corresponding scalar to multiply by the identity.  This issue comes up from time to time  in what follows (and even in the definition of $F$ above) so we will address this here and refer back to this explanation whenever we use this device.

So suppose that $P(x)$ and $\varphi(x)$ are predicates and $\varphi$ takes only positive values.  We want to make sense of $\varphi(P(x)y)$ i.e., the formula $\varphi$ evaluated at $P(x)y$ where the number  $P(x)$ is being thought of as the corresponding scalar.  Consider the predicate 
\[
\psi_\lambda(x,y) = \max\{|P(x) - \lambda|, \varphi({f_\lambda}y)\}
\]
 where $\lambda$ is a number and $f_\lambda$ is the corresponding scalar - notice that this is a definable predicate in the language of \cstar-algebras.   Now consider the predicate 
 \[
 \theta(x,y) = \inf_{\lambda \in K} \psi_\lambda(x,y)
 \]
  where $K$ is a compact real interval containing the range of $P$.  Leaving aside for the moment a proof that this is actually a definable predicate, let's understand when it evaluates to 0. If $\theta(x,y) = 0$ then there is a Cauchy sequence $\lambda_n$ tending to $\lambda = P(x)$ and $\varphi(\lambda_n y)$ tends to $\varphi(\lambda y)$ which is what we want.  
Now back to the proof that  $\theta$ is a definable predicate.  For a finite subset of $K$, say~$J$, we define
\[
\theta_J(x,y) = \min_{j \in J} \psi_j (x,y)
\]
If one considers finite subsets of $K$, $K_n$ which are $1/n$-dense in $K$ then~$\theta_{K_n}$ tends to $\theta$ uniformly and so $\theta$ is a definable predicate.

It is this way that we understand that $G$ in Proposition \ref{P.spectrum} is a definable predicate in the language of \cstar-algebras.  See also the comments immediately after Theorem \ref{T.conceptual-completeness}.
\end{remark}

\section{Definability of traces} \label{S.Def.Tau.1} 
Assume $\cC$ is an axiomatizable class of \cstar-algebras. 
A \emph{definable trace}\index{trace!definable} relative to $\cC$ is a definable predicate  (in the sense of \S\ref{S.DefinablePred}) which is a trace on every $A \in \cC$.
This means that, if~$\tau$ denotes this definable predicate, then 
  for every $\e>0$ there exists a unary formula $\varphi_\e$ such that 
$\|\varphi_\e(a)-\tau(a)\|\leq \e$ for all $a\in A_1$ for every $A\in \cC$. 
If $A$ is a unital \cstar-algebra and $\tau$ a trace on $A$, we say that~$\tau$ is \emph{definable}  
if there is a definable trace~$\tau_0$ relative to the class of all \cstar-algebras elementarily equivalent to $A$, 
 $\cC:=\Mod(\Th(A))$, such that $\tau_0^A=\tau$. If $A$ is monotracial (i.e., has a 
unique trace)\index{C@\cstar-algebra!monotracial} 
and this trace is   definable, then the  ultraproduct of~$A$
 (as well as any other algebra elementarily equivalent to $A$) is also monotracial.

To every trace $\tau$ of $A$ one associates $N_{\tau,A}$,\index{N@$N_{\tau,A}$} 
the weak closure of the image of $A$ in the GNS representation 
of $A$ corresponding to $\tau$. This is a tracial von Neumann algebra. 
Tracial von Neumann algebras, and II$_1$ factors in particular, 
play an increasingly important role in the Elliott programme (see e.g., the introduction to \cite{SaWhWi:Nuclear}). 
When equipped  with a distinguished trace $\tau$
and the metric corresponding to the  
 $\ell_2$-norm, 
\[
\|a\|_{\tau,2}:=(\tau(a^*a))^{1/2},
\]
tracial von Neumann algebras also form an axiomatizable class (\cite[\S 3.2]{FaHaSh:Model2}). 
It is well-known that 
 $N_{\tau,A}$ is isomorphic to the algebra obtained from~$A$ by taking the $\|\cdot\|_{\tau,2}$-completion
 of each bounded ball of $A$. 

The following proposition and its consequence, Lemma~\ref{L.tau-McDuff}, 
will be used in \S\ref{S.Not-ee-nuclear}. 

\begin{prop}\label{L.tau-N}
Assume that  an elementary class of \cstar-algebras $\cC=\Mod(T)$ 
  has a definable trace $\tau$. Then there is a sort in $\calL^{\eq}$ such that 
  for every $A\in \cC$ the interpretation of this sort is  $N_{\tau^A,A}$. 
  In particular, if $A\in \cC$ 
 then $N_{\tau^A,A}$ and $N_{\tau^B,B}$ are elementarily equivalent 
whenever $B$ is elementarily equivalent to $A$.  
\end{prop}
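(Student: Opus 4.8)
The plan is to realize $N_{\tau^A,A}$ directly as an imaginary sort of the \emph{quotient} type, using the description recalled just before the statement: $N_{\tau,A}$ is the algebra obtained from $A$ by completing each operator-norm ball in the seminorm $\|\cdot\|_{\tau,2}$. For each $n\in\bbN$ consider, on the sort $B_n$, the predicate
\[
\rho_n(x,y):=\tau\big((x-y)^*(x-y)\big)^{1/2}.
\]
Because $\tau$ is a definable trace relative to $\cC$, and $\rho_n$ is built from $\tau$ by precomposing with the term $(x-y)^*(x-y)$ and postcomposing with the continuous function $t\mapsto t^{1/2}$, the predicate $\rho_n$ is definable relative to $T$; in every $A\in\cC$ it is interpreted as the GNS pseudometric $\|x-y\|_{\tau,2}$.

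First I would apply the quotient-by-a-definable-pseudometric construction to the sort $B_n$ with the pseudometric $\rho_n$, obtaining an imaginary sort $U_n$ whose interpretation in each $A\in\cC$ is the completion of $B_n^A$ in $\|\cdot\|_{\tau,2}$. Next I would verify that the \cstar-operations descend to the family $(U_n)_n$ as definable functions. The involution is $\|\cdot\|_{\tau,2}$-isometric because $\tau(xx^*)=\tau(x^*x)$; addition and scalar multiplication are contractive; the inclusions $B_n\hookrightarrow B_{n+1}$ are $\|\cdot\|_{\tau,2}$-isometric; and multiplication restricted to $B_n\times B_m$ satisfies
\[
\|ab-a'b'\|_{\tau,2}\le \|b\|\,\|a-a'\|_{\tau,2}+\|a'\|\,\|b-b'\|_{\tau,2},
\]
using the triangle inequality together with $\tau\big(b^*c^*cb\big)\le\|b\|^2\tau(c^*c)$, so it is $\|\cdot\|_{\tau,2}$-uniformly continuous on balls. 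Finally $\tau$ itself descends to a $\|\cdot\|_{\tau,2}$-continuous trace, since $|\tau(a)-\tau(b)|=|\tau(a-b)|\le\|a-b\|_{\tau,2}$. Each of these maps is the unique continuous extension to the completion of an honest definable map on the balls, hence is again definable in $\calL^{\eq}$. Thus $(U_n)_n$, equipped with these operations and with $\tau$, is by the recalled fact exactly $N_{\tau^A,A}$ as a tracial von Neumann algebra in the sense of \cite{FaHaSh:Model2}, and so is realized by a sort (more precisely a uniformly indexed family of sorts) of $\calL^{\eq}$.

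For the final clause, observe that if $B$ is elementarily equivalent to $A\in\cC$ then $B\models T$, so $B\in\cC$ and $N_{\tau^B,B}$ is interpreted by the same sort. Since $\calL^{\eq}$ is a genuine language and $T^{\eq}$ a strongly conservative extension of $T$ (Theorem \ref{T.conceptual-completeness}), the interpretation of each $U_n$ commutes with ultraproducts; this is precisely the assertion that the quotient sort is definable, and it follows from \L o\'s' Theorem (Theorem~\ref{Los}) together with the elementary fact that an ultraproduct of metric spaces coincides with the ultraproduct of their completions (cf.\ Theorem~\ref{Th.sem.def}). Hence \L o\'s' Theorem holds verbatim for $\calL^{\eq}$, and therefore $A\equiv_\calL B$ implies $A^{\eq}\equiv_{\calL^{\eq}}B^{\eq}$ (for instance via the metric Keisler--Shelah theorem, extending an $\calL$-isomorphism $A^{\cU}\cong B^{\cU}$ uniquely to the $\eq$-expansions). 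Restricting to sentences in the language of the sorts $U_n$, their operations, and $\tau$ yields $N_{\tau^A,A}\equiv N_{\tau^B,B}$. The hard part will be the descent of multiplication: it is the only operation that is not $\|\cdot\|_{\tau,2}$-contractive, and the displayed Lipschitz estimate on balls---where the trace property is essential---is exactly what makes the von Neumann algebra structure \emph{definable} rather than merely present; the verification that the imaginary sorts commute with ultraproducts, needed for the last clause, is then subsumed by the general theory of $\calL^{\eq}$.
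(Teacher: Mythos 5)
Your proof is correct, and it rests on the same analytic pivot as the paper's --- the identification (via the Kaplansky density theorem, i.e.\ the fact recalled just before the proposition) of each operator-norm ball $B_n(N_{\tau^A,A})$ with the $\|\cdot\|_{\tau^A,2}$-completion of $B_n(A)$ --- but you realize that completion inside $\calL^{\eq}$ by a different piece of the imaginaries machinery. The paper works inside the countable-product sort $\prod_{\bbN}B_1(A)$: it isolates the set $C^A$ of sequences that are $\|\cdot\|_{\tau,2}$-Cauchy with a prescribed rate, shows that $C^A$ and the limit pseudometric $\phi((x_i),(y_i))=\lim_i\|x_i-y_i\|_{\tau,2}$ are definable by appealing twice to the ultraproduct criterion of Theorem~\ref{Th.sem.def}, and only then quotients. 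You instead apply the quotient-by-a-definable-pseudometric construction directly to each ball sort $B_n$ with $\rho_n(x,y)=\tau((x-y)^*(x-y))^{1/2}$, which is a definable predicate for the sole reason that $\tau$ is; since quotient sorts are completed (the paper's construction only demands that the range of $\pi$ be dense), your $U_n$ is already the completion. This is more economical --- one imaginary construction instead of three, and no appeal to Theorem~\ref{Th.sem.def} --- and it makes explicit what the paper compresses into the single sentence ``the algebraic operations on this quotient are inherited from $A$'': the uniform-continuity estimates on balls, in particular $\|cb\|_{\tau,2}\le\|b\|\,\|c\|_{\tau,2}$ via the trace property, which are exactly what is needed for multiplication to descend to a definable map $U_n\times U_m\to U_{nm}$ whose continuous extension agrees with the multiplication of $N_{\tau^A,A}$. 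The one step you assert rather than prove --- that a definable map which is uniformly continuous for the pseudometrics induces a definable map on the quotient sorts --- is standard but is no more of a gap than the corresponding sentence in the paper's own proof; both arguments conclude identically, recovering the algebra from its balls and deducing the ``in particular'' clause from membership of the sort in $\calL^{\eq}$.
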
 

\begin{proof} 
Fix $A\in \cC$. 
The idea is to view 
points of the operator norm unit ball $B_1(N_{\tau^A,A})$ as limit points of $\|\cdot\|_{\tau^A,2}$-Cauchy sequences in~$A_1$ as follows. 
Let
\[ 
C^A:=\{(x_i) \in \prod_{\bbN} B_1(A) \mid \|x_i-x_j\|_{\tau^A,2} \leq \frac1i,\text{ for all }  j \geq i\}. 
\]
Then $C^A$ is a definable set. This can be proved directly, but it  clearly follows from  
 Theorem~\ref{Th.sem.def}. 

The predicate  
$\phi\colon (C^A)^2 \to \bbR$  defined by 
\[
\phi((x_i),(y_i)) := \lim_{i\to\infty} \|x_i-y_i\|_{\tau^A,2}
\]
 is also  definable by Theorem~\ref{Th.sem.def}.
Therefore, the quotient is in $A^{\eq}$. 
 The algebraic operations on this quotient are inherited from $A$. 
By the  Kaplansky Density Theorem (see \cite[Theorem~I.9.1.3]{Black:Operator}), 
this quotient can be identified with $B_1(N_{\tau^A,A})$. 
The entire algebra can now be recovered from its unit ball. 
\end{proof}

Proposition~\ref{L.tau-N} implies that if a trace $\tau$ of $A$ is definable then the theory of 
the corresponding tracial von Neumann algebra  is in $A^{\eq}$. We shall make use of some terminology from the theory of von Neumann algebras.   
A~II$_1$ factor~$N$ has \emph{property $\Gamma$}\index{property $\Gamma$} if the relative commutant of~$N$ in its tracial ultrapower is nontrivial. 
It is \emph{McDuff}\index{McDuff factor} if it tensorially absorbs the hyperfinite~II$_1$ factor $R$. 
By a result of McDuff in the separable case, 
this is equivalent to the assertion that the relative commutant of~$N$ in its
tracial ultrapower is nonabelian, and also to the assertion that $R$ is isomorphic to a subalgebra of this relative commutant.\footnote{This may be a good moment to point out that 
$R$ is the unique  strongly self-absorbing II$_1$ factor with separable predual.}  
Remarkably, until very recently these were the only known axiomatizable properties of II$_1$ factors (cf.\  \cite{boutonnet2015ii_1}).

\begin{lemma} \label{L.tau-McDuff} Assume $B$ is a unital 
\cstar-algebra with trace $\tau$.
Assume moreover  that $N_{B,\tau}$ is a factor which is  not McDuff. 
Then there is no nuclear \cstar-algebra with trace $(C,\sigma)$ which is 
elementarily equivalent to the  expanded structure $(B,\tau)$. 

In particular, if $\tau$ is a definable trace and $N_{B,\tau}$ is a factor which is not McDuff 
then $B$ is not elementarily equivalent to any nuclear \cstar-algebra.  
\end{lemma}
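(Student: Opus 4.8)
The plan is to move the whole problem to the level of the associated tracial von Neumann algebras, where the relevant dichotomy (McDuff versus not) is available and where nuclearity of the ambient \cstar-algebra forces hyperfiniteness. I would first dispose of the ``in particular'' clause by reducing it to the main statement. Suppose $\tau$ is definable relative to $\cC:=\Mod(\Th(B))$ (in the plain language of \cstar-algebras) and $C$ is a nuclear \cstar-algebra with $B\equiv C$. The definable predicate $\tau_0$ witnessing definability of $\tau$ then computes a trace $\sigma:=\tau_0^C$ on $C$, since $C\models\Th(B)$; because a definable predicate is a uniform limit of formulas uniformly across $\cC$, its values are determined by the $\calL$-theory, so $\Th(B)=\Th(C)$ upgrades to $(B,\tau)\equiv(C,\sigma)$ in the language expanded by a symbol for the trace. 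Thus it suffices to prove the main statement, in which the trace belongs to the language and is therefore a (trivially) definable predicate. I also note that since $\tau$ is a finite faithful normal trace, $N_{B,\tau}$ is either a matrix algebra (a degenerate case admitting nuclear models, e.g.\ itself, and not intended here) or a II$_1$ factor; so we may assume $N_{B,\tau}$ is a II$_1$ factor.

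Now assume toward a contradiction that $(C,\sigma)\equiv(B,\tau)$ with $C$ nuclear. Since the trace is a definable predicate in the expanded language, Proposition~\ref{L.tau-N} applies to the elementary class $\Mod(\Th(B,\tau))$: it provides a sort of $\calL^{\eq}$ interpreted in each model as its tracial von Neumann algebra, and its ``in particular'' clause gives directly that $N_{C,\sigma}\equiv N_{B,\tau}$ as tracial von Neumann algebras. The class of II$_1$ factors is axiomatizable among tracial von Neumann algebras (factoriality is elementary), and McDuff-ness is axiomatizable as recalled just before the lemma; hence both being a II$_1$ factor and failing to be McDuff are preserved under elementary equivalence. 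Therefore $N_{C,\sigma}$ is a II$_1$ factor that is \emph{not} McDuff.

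To reach the contradiction I would invoke the structure theory of nuclear \cstar-algebras: because $C$ is nuclear, the completely positive approximations of $\id_C$ pass to the GNS representation, so $N_{C,\sigma}=\pi_\sigma(C)''$ is injective (equivalently amenable/semidiscrete, by Connes). An injective II$_1$ factor is hyperfinite and hence McDuff: in the separable case this is Connes' identification with $R$ together with $R\cong R\,\bar\otimes\,R$, and one reduces to the separable case by applying the Downward L\"owenheim--Skolem Theorem (Theorem~\ref{DLS}). This makes $N_{C,\sigma}$ McDuff, contradicting the previous paragraph, and completes the argument.

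The main obstacle is the implication ``injective II$_1$ factor $\Rightarrow$ McDuff,'' particularly the non-separable predual case, because injectivity/hyperfiniteness is not known to be a first-order property and so cannot be transported across $\equiv$ the way McDuff-ness can. This is exactly why the argument is arranged to transport McDuff-ness \emph{from} $N_{C,\sigma}$ \emph{back to} $N_{B,\tau}$ rather than transporting injectivity in the opposite direction. Concretely, one must establish McDuff-ness of the specific algebra $N_{C,\sigma}$ before using the axiomatizability of McDuff to derive the contradiction; for this it is cleanest either to cite the general (all-cardinality) fact that injective II$_1$ factors are McDuff, or to reduce to separable predual via Theorem~\ref{DLS} and apply Connes' theorem, taking care that the reduction is used only to certify McDuff-ness of $N_{C,\sigma}$ itself (McDuff-ness being inherited by and from elementary submodels).
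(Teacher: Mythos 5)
Your proof follows essentially the same route as the paper's: reduce the ``in particular'' clause to the main statement via the definability of $\tau$, transfer elementary equivalence to the tracial von Neumann algebras by Proposition~\ref{L.tau-N}, use the axiomatizability of McDuff-ness to conclude $N_{C,\sigma}$ is a non-McDuff II$_1$ factor, and contradict this with Connes' theorems (nuclearity $\Rightarrow$ injectivity of the weak closure, and uniqueness of the injective II$_1$ factor with separable predual, which is McDuff). Your explicit attention to the non-separable case via Theorem~\ref{DLS} is a point the paper passes over silently, but it does not change the structure of the argument.
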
 

\begin{proof} 
Assume $(C,\sigma)$ is a tracial \cstar-algebra elementarily equivalent to $(B,\tau)$. 
Since being a McDuff factor is axiomatizable in the language of tracial von Neumann algebras (\cite[\S 3.2.3]{FaHaSh:Model3}), 
Lemma~\ref{L.tau-N} implies that $N_{C,\sigma }$ is a II$_1$ factor which is not McDuff. 
By two results of Connes (see e.g.,  \cite{Black:Operator}) the weak closure of a nuclear \cstar-algebra~$C$ in each of its representations is an injective factor
and  the hyperfinite~II$_{1}$ factor~$R$ is the unique injective factor with a separable predual. 
Since $R$ is McDuff,~$C$ is not nuclear. 

Now assume  $\tau$ is definable. By Lemma~\ref{L.tau-N} the defining formulas of $\tau$ define a trace $\sigma$ such that 
$(B, \tau)$ and $(C,\sigma)$ are elementarily equivalent. By the first part of the proof, $C$ is not nuclear. 
\end{proof} 

We have two tools to use to prove definability of traces, Cuntz--Pedersen equivalence (\S\ref{S.CP}) and
the uniform strong Dixmier property (\S\ref{S.Def.Tau.2}). 

\subsection{Definability of Cuntz--Pedersen equivalence} \label{S.CP}
The \emph{Cuntz--Ped\-er\-sen nullset}~$A_0$\index{Cuntz--Pedersen nullset} is the 
norm-closure of the linear span of self-adjoint commutators $[a,a^*]$.
Two self-adjoint elements are called \emph{Cuntz--Pedersen equivalent}\index{Cuntz--Pedersen equivalent} if their difference is in $A_0$.
The set $A_0$ is a closed subspace of the real Banach space of self-adjoint elements of $A$. 
By~\cite{CuPe:Equivalence}, the tracial simplex of $A$ is affinely homeomorphic to the dual unit sphere
of~$A/A_0$.  One consequence of this is that we can provide an explicit axiomatization of the class of unital tracial \cstar-algebras as promised in~\S\ref{S.tracial.0}.  If $A$ is a unital, tracial \cstar-algebra then $A \neq A_0$ and in fact,~$1_A$ is at distance 1 from $A_0$.  So the class is axiomatized, for instance, by the following set of sentences: for each $n \in \bbN$,
\[
\sup_{\bar x} \left(1\dotminus \bigg\| 1 - \sum_{i=1}^n [x_i,x_i^*]\bigg\| \right).
\]
Similarly, if we consider the class of unital \cstar-algebras with a character, an algebra $A$ is in this class if and only if the distance from $1_A$ to the ideal generated by the commutators is 1.  This shows that the class of 
unital \cstar-algebras with a character can be axiomatized by the following set of sentences: for each $n \in \bbN$,
\[
\sup_{\bar b,\bar c,\bar x,\bar y} \left(1\dotminus  \bigg\|1 - \sum_{i=1}^n b_i[x_i,y_i]c_i  \bigg\| \right).
\]

We now examine the conditions under which the Cuntz--Pedersen nullset $A_0$ is definable.
\begin{lemma} \label{L.CP-trace} 
If $A$  is monotracial then its unique trace is  definable if and only if $A_0$ is definable. 
\end{lemma}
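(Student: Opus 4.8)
The plan is to reduce both implications to one explicit computation of the distance to $A_0$, and then transfer algebraic identities through elementary equivalence. The computation I would establish first is that on every \emph{monotracial} unital \cstar-algebra $B$, with unique trace $\tau^B$, one has $d(x,A_0^B)=|\tau^B(x)|$ for self-adjoint $x$. Indeed, by Cuntz--Pedersen (\cite{CuPe:Equivalence}) the set $A_0^B$ consists of those self-adjoint elements annihilated by every trace, so in the monotracial case $A_0^B=\ker(\tau^B)\cap B_{\mathrm{sa}}$; since a tracial state is a norm-one functional on $B_{\mathrm{sa}}$ attaining its norm at $1$, the distance of $x$ to that kernel is $|\tau^B(x)|$ (lower bound $|\tau^B(x)|=|\tau^B(x-y)|\le\|x-y\|$ for $y\in A_0^B$; equality witnessed by $y:=x-\tau^B(x)\cdot 1$). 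I would also invoke the fact noted earlier that, when $A$ is monotracial and its trace is definable, every member of $\cC:=\Mod(\Th(A))$ is again monotracial, so this identity holds uniformly across $\cC$.

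For the forward direction, assume $\tau$ is definable and let $\tau_0$ be the witnessing definable predicate. Every $B\in\cC$ is then monotracial, so $d(x,A_0^B)=|\tau_0^B(x)|$ for self-adjoint $x$. As $|\tau_0|$ is again a definable predicate, $d(\,\cdot\,,A_0)$ is a definable predicate on the definable set $A_{\mathrm{sa}}$, whence $A_0$ is a definable subset of $A_{\mathrm{sa}}$ by Theorem~\ref{T.def-dist} (equivalently Lemma~\ref{L.weaklystable}); since $A_{\mathrm{sa}}$ is itself definable, $A_0$ is definable in $A$ as well.

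For the converse, assume $A_0$ is definable relative to $\cC$, so that $d(\,\cdot\,,A_0)$ is a definable predicate on $A_{\mathrm{sa}}$. The obstacle is that the distance recovers only $|\tau(x)|$ rather than $\tau(x)$; I would overcome this by splitting into positive and negative parts. Since $x\mapsto x_+$ and $x\mapsto x_-$ are continuous functional calculus operations vanishing at $0$, they are definable functions on $A_{\mathrm{sa}}$ (\S\ref{S.cfc}), and $x_\pm\ge 0$ gives $d(x_\pm,A_0)=\tau(x_\pm)$ on $A$. I therefore set
\[
\tau_0(x):=d(x_+,A_0)-d(x_-,A_0)\qquad(x\in A_{\mathrm{sa}}),
\]
a definable predicate, and extend it to all of $A$ by $\tau_0(a):=\tau_0\big(\tfrac{a+a^*}{2}\big)+i\,\tau_0\big(\tfrac{a-a^*}{2i}\big)$. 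On $A$ this equals $\tau$.

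It remains to check that $\tau_0$ is a trace on every $B\in\cC$, which makes it a definable trace and certifies that $\tau$ is definable. Each defining property---additivity and real-homogeneity on $B_{\mathrm{sa}}$, positivity on positive elements, the normalization $\tau_0(1)=1$, and traciality $\tau_0(ab)=\tau_0(ba)$---can be written as a sentence in the expanded language asserting that a suitable $\sup$ of the relevant defect vanishes, for example $\sup_{x,y}|\tau_0(x+y)-\tau_0(x)-\tau_0(y)|$ over balls of $A_{\mathrm{sa}}$, or $\sup_{x\ge 0}\big(0\dotminus\tau_0(x)\big)$. Each such sentence is a closed definable predicate whose value is $0$ in $A$, since $\tau_0^A=\tau$; being determined by $\Th(A)$, it is $0$ in every $B\equiv A$. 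Hence $\tau_0^B$ is a trace for each $B\in\cC$. The genuinely delicate point is precisely this recovery of the signed trace from the absolute-value data together with the verification that traciality survives the transfer; the rest is bookkeeping with definable predicates.
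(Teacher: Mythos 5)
Your proof is correct and follows the same route as the paper: both directions hinge on the identity $d(a,A_0)=|\tau(a)|$ for self-adjoint $a$ in a unital monotracial algebra, with the forward direction reading off $d(\,\cdot\,,A_0)$ from the definable trace and the converse reconstructing $\tau$ from the distance predicate. In fact your write-up is more careful than the paper's own proof on two points. First, the paper asserts that $\tau(a)$ equals the distance from $a$ to $A_0$ and then sets $\tau(x)=\alpha((x+x^*)/2)+i\,\alpha((x-x^*)/(2i))$ for $\alpha$ the distance predicate; as you observe, the distance only recovers $|\tau|$, so the paper's displayed formula is literally wrong unless one first splits into positive and negative parts, which is exactly the repair $\tau_0(x)=d(x_+,A_0)-d(x_-,A_0)$ you supply (legitimate, since $x\mapsto x_\pm$ is definable continuous functional calculus on $A_{\mathrm{sa}}$). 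Second, you verify that the resulting predicate is a trace on every model of $\Th(A)$ — which the paper's definition of ``definable trace'' requires and its proof leaves implicit — by transferring the vanishing of the defect sentences through elementary equivalence. Both additions are correct and close genuine gaps in the paper's terse argument rather than introducing any new difficulty.
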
 

\begin{proof}  
We know that if $\tau$ is the unique trace of $A$ and $a$ is self-adjoint then 
$\tau(a)$ is equal to the distance from $a$ to $A_0$ (or equivalently, to the norm of $a$ in the 
one-dimensional quotient space $A/A_0$).  This shows the direction from left to right immediately.

For the converse, let  $\alpha(x)$  be a formula
which measures the distance to the Cuntz--Pedersen nullset.
Then
\[
\tau(x) = \alpha((x+x^*)/2) + i\alpha((x-x^*)/(2i)).
\]
\end{proof} 

For $k\geq 1$ let 
\[
\alpha_k(a):=\inf_{x_j, y_j, 1\leq j\leq k} \bigg\|a-\sum_{i=1}^{k} [x_j, y_j]\bigg\|. 
\]
Note that $\dist(a,A_0)=\inf_k \alpha_k(a)=\lim_k \alpha_k(a)$. 

\begin{lemma} \label{L.CP.1} Assume $A$ is a unital \cstar-algebra. Then $A_0$ is definable 
if and only if there is a sequence $\bar k=(k(j): j\in \bbN)$ such that for every self-adjoint contraction $a$   
and every $j\geq 1$ we have 
\[
|\dist(a,A_0)-\alpha_{k(j)}(a) |\leq \frac 1j. 
\]
\end{lemma}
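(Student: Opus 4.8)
The plan is to reduce the statement to a uniform-approximation criterion for the predicate $\dist(\cdot,A_0)$ and then to prove the two directions, the forward one by a soft ultraproduct argument. By Theorem~\ref{T.def-dist} (adapted to the unbounded subspace $A_0$ via the homogeneity $\dist(\lambda a,A_0)=|\lambda|\dist(a,A_0)$ of the distance to the real subspace $A_0$), the assignment $B\mapsto B_0$ is a definable set relative to the relevant elementary class $\cC$ if and only if $a\mapsto \dist(a,A_0)$ is a definable predicate, and it is enough to control this predicate on self-adjoint contractions. First I would record the two facts that drive everything: each $\alpha_k$ is an $\inf$-formula, and the sequence $(\alpha_k)$ is pointwise decreasing (take the last commutator to be zero) with $\dist(a,A_0)=\inf_k\alpha_k(a)=\lim_k\alpha_k(a)$, the displayed Note preceding the lemma. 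In particular $\dist(a,A_0)\le \alpha_k(a)$ always, so the absolute value in the statement is just $\alpha_{k(j)}(a)-\dist(a,A_0)$.

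For the direction ($\Leftarrow$), suppose the modulus $\bar k=(k(j))$ exists. Then $\|\alpha_{k(j)}-\dist(\cdot,A_0)\|_{T}\le 1/j$, where the seminorm is the supremum over all $B\in\cC$ and all self-adjoint contractions $a\in B$. Thus $\dist(\cdot,A_0)$ is a uniform limit of the formulas $\alpha_{k(j)}$, hence a definable predicate, and by Theorem~\ref{T.def-dist} the set $A_0$ is definable.

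The hard direction is ($\Rightarrow$), and this is where I expect the only real work. Assume $A_0$ is definable, i.e. $\dist(\cdot,A_0)$ is a definable predicate, and suppose toward a contradiction that no modulus exists: fix $j$ for which every $k$ admits $A_k\in\cC$ and a self-adjoint contraction $a_k\in A_k$ with $\alpha_k(a_k)>\dist(a_k,A_0)+1/j$. I would pass to the ultraproduct $A=\prod_\cU A_k$ along a nonprincipal ultrafilter $\cU$ on $\bbN$, with $a=(a_k)/\cU$; then $a$ is a self-adjoint contraction and $A\in\cC$ since $\cC$ is closed under ultraproducts. The key is that the decreasing approximants and their limit behave asymmetrically under the ultraproduct. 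For each fixed $m$, monotonicity gives $\alpha_m^{A_k}(a_k)\ge\alpha_k^{A_k}(a_k)>\dist^{A_k}(a_k,A_0)+1/j$ for all $k\ge m$, so \L o\'s' Theorem (Theorem~\ref{Los}) applied to the formula $\alpha_m$ yields
\[
\alpha_m^{A}(a)=\lim_{k\to\cU}\alpha_m^{A_k}(a_k)\ge \lim_{k\to\cU}\dist^{A_k}(a_k,A_0)+\tfrac1j.
\]
Because $\dist(\cdot,A_0)$ is a definable predicate it obeys the conclusion of \L o\'s' Theorem as well: a definable predicate is a uniform limit of formulas, so $\dist^{A}(a,A_0)=\lim_{k\to\cU}\dist^{A_k}(a_k,A_0)$ follows from Theorem~\ref{Los} by an $\e/3$ argument (alternatively one invokes Theorem~\ref{Th.sem.def}, $A_0^A=\prod_\cU A_0^{A_k}$, together with the fact that distance to an ultraproduct of sets is the ultralimit of the distances). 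Substituting and taking the infimum over $m$ gives $\dist^{A}(a,A_0)=\inf_m\alpha_m^A(a)\ge \dist^{A}(a,A_0)+1/j$, a contradiction; hence the modulus $\bar k$ exists.

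The main obstacle is precisely this last step: it is the definability hypothesis, used through the fact that definable predicates commute with ultraproducts, that pins the monotone family $\alpha_m$—which always commutes with ultraproducts—to its limit \emph{uniformly}. Without definability the ultralimit of the distances could strictly exceed $\dist^A(a,A_0)$ and the argument would collapse. The remaining points are routine bookkeeping: checking that representatives $a_k$ may be taken self-adjoint and that $a=(a_k)/\cU$ lies in the unit ball (both pass to the ultraproduct), and the reduction from self-adjoint contractions to all self-adjoint elements by scaling.
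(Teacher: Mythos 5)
Your proof is correct and uses essentially the same mechanism as the paper: both directions hinge on the monotone approximants $\alpha_k$ and, for the forward direction, on an ultrapower argument exploiting that definable sets (equivalently, the predicate $\dist(\cdot,A_0)$) must commute with ultraproducts. The paper packages the contradiction slightly differently---it picks near-optimal $b_k\in A_0$ witnessing $\alpha_k(b_k)>\tfrac{1}{2j}$ and exhibits $(b_k)/\cU$ as an element of $\prod_\cU A_0\setminus (A^\cU)_0$ rather than deriving your numerical inequality $\dist^A(a,A_0)\ge \dist^A(a,A_0)+\tfrac1j$---but this is a presentational difference, not a different idea (and your version of the negated hypothesis, $\alpha_k(a_k)>\dist(a_k,A_0)+\tfrac1j$, is the correct one given that $\dist\le\alpha_k$ always).
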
 

\begin{proof} The converse implication follows from the definition. 
For the forward  implication, assume that a sequence $\bar k$ does not exist. 
This means that for some $j\geq 1$ and for every $k$ there exists a self-adjoint $a_k$ of norm $\leq 1$ such that 
$\dist(a_k,A_0)\geq 
\alpha_k(a)+\frac 1j$. 
Choose $b_k\in A_0$ such that $\|a_k-b_k\|<\dist(a_k,A_0)+1/(2j)$. Then for every 
$k$ we have  $\alpha_k(b_k)>\frac 1{2j}$
and in particular 
the element with the representing sequence $(b_k)$ in the ultrapower $A^{\cU}$ 
does not belong to $(A^{\cU})_0$.
Therefore the Cuntz--Pedersen nullset is not definable. 
\end{proof}

If a sequence $\bar k$ as in Lemma~\ref{L.CP.1} exists we say that the Cuntz--Pedersen nullset is $\bar k$-uniformly definable.

\begin{theorem} \label{T.tau-definable.0} 
 If a unital \cstar-algebra $A$ satisfies any of the following conditions then $A_0$ is definable. 
\begin{enumerate}
\item\label{I.tau-def.1}  $A$ is AF. 
\item\label{I.tau-def.2} $A$ has finite nuclear dimension (see \S\ref{S.dimnuc}).
\item\label{I.tau-def.3} $A$ is exact and $\cZ$-stable.
\item\label{I.tau-def.4} $A$ is an ultraproduct of algebras with uniformly bounded nuclear dimension. 
\item\label{I.tau-def.5} $A$ is an ultraproduct of exact $\cZ$-stable algebras. 
\item\label{I.tau-def.6} $A$ is $\mathrm{C}^*_r(F_\infty)$,  
the reduced group \cstar-algebra associated with the free group. 
\item\label{l.tau-def.7} $A$ has strict comparison of positive elements by traces (see \S\ref{S.elfunsc}).
\end{enumerate}
If $A$ is in addition monotracial, then its trace is definable. 
\end{theorem}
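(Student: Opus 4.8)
The plan is to prove definability of $A_0$ under each of the seven hypotheses and to deduce the statement about the trace only at the end: once $A_0$ is known to be definable and $A$ is monotracial, Lemma~\ref{L.CP-trace} delivers definability of the unique trace with no further argument, so the entire content lies in the definability of $A_0$. For this I would use Lemma~\ref{L.CP.1} and seek a single modulus $\bar k=(k(j):j\in\bbN)$, depending only on the ambient class (and on the numerical parameters of the hypothesis, such as a bound on the nuclear dimension), with $\dist(a,A_0)\le\alpha_{k(j)}(a)\le\dist(a,A_0)+\tfrac1j$ for every self-adjoint contraction $a$. The left inequality is automatic, so the only content is a uniform bound on the number of norm-bounded commutators needed to approximate an element of $A_0$ to within $1/j$. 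It is convenient to record this in the equivalent tail form
\[
\alpha_{k(j)}(a)\le\alpha_{k'}(a)+\tfrac1j\qquad(k'\ge k(j)),
\]
since each such inequality is a condition on the values of the definable predicates $\alpha_k$ and is therefore preserved under ultraproducts by {\L}o\'s' Theorem (Theorem~\ref{Los}); letting $k'\to\infty$ recovers the statement of Lemma~\ref{L.CP.1}.

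This transfer yields two reductions at once. First, (4) follows from (2) and (5) from (3): a modulus $\bar k$ that works uniformly for all algebras of nuclear dimension $\le n$ (respectively for all exact $\cZ$-stable algebras) satisfies the tail inequalities in each factor, hence in any ultraproduct of such algebras. Second, AF algebras have nuclear dimension zero, so (1) is the case $n=0$ of (2); alternatively, (1) admits a self-contained proof by reducing to finite-dimensional algebras, where a traceless self-adjoint contraction lies within $1/j$ of a sum of a number of commutators of contractions that is bounded independently of the matrix size, and then transferring to $\Mod(\Th(\mathrm{AF}))$ as above.

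It remains to produce the uniform modulus for (2), (3), (6) and (7). For (3)---and, by the same mechanism, for (2)---the analytic engine is a \emph{uniform} Dixmier-type averaging: Ozawa's theorem \cite[Theorem~6]{Oza:Dixmier} together with \cite[Corollary~4.3]{NgRobert} furnishes, for exact $\cZ$-stable algebras, a fixed number of averaging steps achieving a prescribed precision, and rewriting averages as commutators produces $\bar k$ (this is the route developed via the uniform strong Dixmier property in \S\ref{S.Def.Tau.2}). Condition (7), strict comparison of positive elements by traces, is fed into the same picture through \S\ref{S.elfunsc}. The single algebra $\mathrm{C}^*_r(F_\infty)$ in (6) is handled by a direct analysis of its canonical trace and the commutator structure of reduced words, yielding an explicit $\bar k$. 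In every case, definability of $A_0$ then follows from Lemma~\ref{L.CP.1}, or equivalently from the ultraproduct criterion of Theorem~\ref{Th.sem.def} via Lemma~\ref{L.weaklystable}.

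The step I expect to be the genuine obstacle is exactly this uniformity in (2) and (3). In any single algebra of the relevant type one trivially has $\alpha_k(a)\to\dist(a,A_0)$, but definability forces the \emph{same} rate $k(j)$ to serve every algebra in the class simultaneously; extracting a rate that depends only on the nuclear-dimension bound (respectively on exactness together with $\cZ$-stability), and not on the individual algebra, is where the quantitative Dixmier-averaging estimates of \cite{Oza:Dixmier} and \cite{NgRobert} do the real work. Everything else---the reductions among the seven conditions and the final passage from $A_0$ to the trace---is bookkeeping with {\L}o\'s' Theorem and Lemma~\ref{L.CP-trace}.
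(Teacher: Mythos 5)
Your overall architecture is the paper's: reduce everything to producing a uniform modulus $\bar k$ in the sense of Lemma~\ref{L.CP.1}, observe that the tail inequalities transfer through ultraproducts by {\L}o\'s' Theorem so that (4) follows from (2) and (5) from (3), and obtain the trace statement at the very end from Lemma~\ref{L.CP-trace}. Your reduction of (1) to the nuclear-dimension-zero case of (2) is a legitimate alternative to the paper's route (which instead invokes Fack's theorem that elements of $A_0$ in an AF algebra are sums of seven commutators), and your treatment of (3) via Ozawa's quantitative estimate is exactly what the paper does: from $\sum_j\|b_j\|\|c_j\|\le C$ and $\|a-\sum_{j\le R}[b_j,c_j]\|<CR^{-1/2}$ one rescales and repeats commutators to get $k(j)=j^2C^3$.

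There are, however, two concrete gaps. First, your mechanism for (2) is wrong: you propose running the same Dixmier-averaging argument ``by the same mechanism'' as (3), but Ozawa's theorem applies to exact $\cZ$-stable algebras, and finite nuclear dimension does not imply $\cZ$-stability in the generality of this theorem (no simplicity is assumed; $\mathrm{C}([0,1]^n)$ has finite nuclear dimension and is not $\cZ$-stable). The input the paper actually uses for (2) is Robert's theorem \cite[Theorem~1.1]{robert2013nuclear}: if $\nucdim(A)\le m$ then every norm-one element of $A_0$ is a limit of sums of $m+1$ commutators $[x,x^*]$ with $\|x\|\le\sqrt2$, which gives the uniform modulus $k(j)=2(m+1)$ directly --- and also repairs your reduction of (1). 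Second, for (6) ``a direct analysis of the commutator structure of reduced words'' is not a proof; the required fact is Ng's theorem \cite[Theorem~1.4]{Ng:Commutators} that every element of the Cuntz--Pedersen nullset of $\mathrm{C}^*_r(F_\infty)$ is a sum of three commutators, and without citing or proving some such bounded-commutator statement, case (6) is unestablished. (For (7) you gesture at the right source; the precise input is \cite[Corollary~4.3]{NgRobert} plus an induction producing $n(k)$ contractions whose commutator sum $2^{-k}$-approximates any norm-one element of $A_0$.) Your diagnosis that the whole difficulty is uniformity of the rate across the class is correct; the point is that each case rests on a specific published quantitative commutator estimate, and these must be matched to the correct hypotheses.
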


\begin{proof} 
\eqref{I.tau-def.1} 
is an immediate consequence of \cite[Theorem~3.1]{Fack1982} where it was shown that every element of $A_0$ in an AF algebra is a sum of seven commutators.  

\eqref{I.tau-def.2} 
  In \cite[Theorem~1.1]{robert2013nuclear} it was proved that if $\nucdim(A)\leq m$ then 
 every $a\in A_0$ of  norm $1$ is a limit of sums of $m+1$ commutators of the form $[x,x^*]$ for $\|x\|\leq \sqrt 2$. 
 Therefore in this case $A_0$ is $\bar k$-uniformly definable with $k(j):= 2 (m+1)$. 
  
 \eqref{I.tau-def.3} 
       \cite[Theorem 6]{Oza:Dixmier} implies that there exists  a universal constant~$C$ with the 
     following property: if $A$ is $\cZ$-stable and exact and  $a\in A_0$ has norm $\leq 1$  then for every $R\geq 1$ there are 
     $b_j$, $c_j$ for $j\leq R$  such that 
     \[
     \sum_{j=1}^R \|b_j\|\|c_j\|\leq C\qquad\text{and}\qquad 
         \bigg\|a-\sum_{j=1}^R [b_j, c_j]\bigg\|  < C R^{-1/2}.
     \] 
By replacing $C$ with $\lceil C\rceil$ we may assume $C$ is an integer. 
     Fix $j\geq 1$ and let $R:=C^2j^2$. With $b_j$ and $c_j$ as guaranteed by \cite[Theorem 6]{Oza:Dixmier}
 let          $b_j':=b_j C^{-1/2}$ and $c_j':=c_j C^{-1/2}$.   Repeating each of the commutators $[b_j',c_j']$ 
           $C$ times we obtain a sum of at most $jC$ commutators  $1/j$-approximating $a$. 
           Therefore with $k(j):=j^2C^3$ we have that $A_0$ is $\bar k$-uniformly definable.

\eqref{I.tau-def.4} 
 and \eqref{I.tau-def.5} 
 follow from the uniform estimates given in  \eqref{I.tau-def.2} 
 and \eqref{I.tau-def.3} 
and \L o\'s' theorem.

 \eqref{I.tau-def.6} 
 follows from  \cite[Theorem~1.4]{Ng:Commutators} where it was proved that 
 every element of 
 the Cuntz--Pedersen nullset in~$\mathrm{C}^*_r(F_\infty)$ 
 is the sum of three commutators.

  \eqref{l.tau-def.7} By  
    \cite[Corollary 4.3]{NgRobert} and straightforward induction 
    one can prove that 
  for every $k\geq 1$ there exists  $n(k)$ 
  such that for every $h\in A_0$ of norm $\leq 1$ 
  there are $x_i$ and $y_i$ for $i<n(k)$ of norm $\leq 1$
  such that $\|h-\sum_{i<n(k)} [x_i,y_i]\|<2^{-k}$. 
  See more information in \S\ref{S.elfunsc}.

If $A$ is in addition monotracial then its trace is definable by the above and  Lemma~\ref{L.CP-trace}. 
\end{proof}

\begin{prop} \label{P.non-def-trace} 
There exists  a simple, unital, infinite-dimensional, monotracial \cstar-algebra whose trace is not definable. 
 \end{prop}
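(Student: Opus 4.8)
The plan is to reduce the statement to a failure of a \emph{uniform} commutator bound and then to realise that failure in an algebra whose comparison theory is as pathological as possible.

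First I would invoke the two preceding lemmas to turn the problem into a concrete quantitative one. By Lemma~\ref{L.CP-trace}, for a monotracial $A$ the unique trace is definable if and only if the Cuntz--Pedersen nullset $A_0$ is definable, and by Lemma~\ref{L.CP.1} the latter holds if and only if there is a single sequence $\bar k=(k(j))$ with $\alpha_{k(j)}(a)\le \dist(a,A_0)+1/j$ for every self-adjoint contraction $a\in A$ and every $j$. Thus it suffices to construct a simple, unital, infinite-dimensional, monotracial \cstar-algebra $A$ for which \emph{no} such sequence exists; equivalently (negating Lemma~\ref{L.CP.1}), one for which there are a fixed $j_0$ and, for every $k$, a self-adjoint contraction $a_k\in A$ with
\[
\alpha_k(a_k) > \dist(a_k,A_0)+\tfrac1{j_0}.
\]
In words: inside the single algebra $A$ there must be a sequence of trace-kernel elements whose approximation by sums of commutators genuinely requires more and more commutators. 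Comparing with Theorem~\ref{T.tau-definable.0}, any such $A$ must simultaneously fail every hypothesis listed there: it cannot be AF, nor of finite nuclear dimension, nor exact and $\cZ$-stable, nor have strict comparison. This already tells me where to look.

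Next I would take $A$ to be a simple, unital, monotracial AH algebra of Villadsen--Toms type with infinite radius of comparison $\rc(A)=\infty$ (such algebras arise from Villadsen's and Toms's constructions, cf.\ \cite{To:Infinite}). These are infinite-dimensional and monotracial when built over suitable connected spaces, and their dramatic failure of comparison is exactly the feature I intend to convert into a commutator lower bound. The guiding heuristic is the contrapositive of the strict-comparison case: the proofs behind Theorem~\ref{T.tau-definable.0}\,(\ref{l.tau-def.7}) and behind \cite{Oza:Dixmier,NgRobert} produce a \emph{bounded} number of commutators precisely by using strict comparison to transport the positive part of a trace-zero element onto its negative part a bounded number of times; when comparison fails at scale $k$, this transport is obstructed and the commutator count must grow.

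The heart of the argument, and the step I expect to be the main obstacle, is the quantitative lower bound $\alpha_k(a_k)\ge \delta$. The difficulty is that no trace-like functional detects the number of commutators: every trace annihilates every commutator, so $\dist(\,\cdot\,,A_0)$ alone carries no information about commutator length. The bound must therefore come from the Cuntz semigroup. Concretely, I would choose $a_k=h_k^+-h_k^-$ (for instance a difference of positive elements, or of projections, of equal trace) so that $h_k^+$ and $h_k^-$ are Cuntz-incomparable by a margin governed by $\rc(A)$, and then show that any expression $\sum_{i=1}^k[x_i,y_i]$ with controlled norms that $1/j_0$-approximates $a_k$ would force a Cuntz comparison between suitable cut-downs of $h_k^+$ and $h_k^-$ that the radius of comparison forbids. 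Making this implication precise---and, in tandem, verifying that the Villadsen--Toms construction can be arranged so that simplicity, a unique trace, and the growth of $\rc$ all survive the inductive limit while the hard elements $a_k$ remain available---is the technical crux. Once it is in place, the negation of Lemma~\ref{L.CP.1} holds with $j_0$ fixed, so $A_0$ is not definable, and hence by Lemma~\ref{L.CP-trace} the trace of $A$ is not definable.
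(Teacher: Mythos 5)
Your reduction is exactly right and matches the paper's setup: by Lemma~\ref{L.CP-trace} and Lemma~\ref{L.CP.1}, it suffices to produce a simple, unital, infinite-dimensional, monotracial $A$ in which the commutator approximation numbers $\alpha_k$ are not uniformly controlled on $A_0$, i.e.\ for some fixed $j_0$ and every $k$ there is a self-adjoint contraction $a_k\in A_0$ with $\alpha_k(a_k)\geq 1/j_0$. The problem is that everything after that point is a plan rather than a proof. You yourself flag the quantitative lower bound $\alpha_k(a_k)\geq\delta$ as ``the technical crux'' and do not establish it; but that lower bound \emph{is} the entire content of the proposition. It is not known (and not argued in your proposal) that an arbitrary Villadsen--Toms algebra with $\rc(A)=\infty$ automatically contains such a sequence $(a_k)$: failure of strict comparison is a necessary feature of any example (by Theorem~\ref{T.tau-definable.0}), but your heuristic that bad comparison ``obstructs the transport'' effected by commutators is not a proof, and turning it into one requires a carefully tailored construction in which simplicity, uniqueness of the trace, and the commutator lower bounds are all arranged simultaneously along the inductive limit.

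The paper closes this gap by citation: Robert's Theorem~1.4 in \cite{robert2013nuclear} constructs precisely such an algebra --- simple, unital, infinite-dimensional, monotracial, with a norm-one element $a\in A_0$ satisfying $\alpha_n(a)=1$ for every $n$ --- after which non-definability is immediate (the ultrapower fails to be monotracial). So your outline points in the right direction and is consistent in spirit with how such examples are actually built, but as written it asserts the existence of the hard object instead of proving it. To repair the proposal you would either need to carry out the Cuntz-semigroup lower bound in detail for a specific construction, or simply invoke \cite[Theorem~1.4]{robert2013nuclear} as the paper does.
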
 

\begin{proof}   In \cite[Theorem 1.4]{robert2013nuclear} Robert constructed 
 a simple, unital, infinite-dimensional, monotracial 
 \cstar-algebra $A$ with the property that for every $n$ there exists a unit element $a\in A_0$ 
 such that $\alpha_n(a)=1$. The ultrapower of $A$ therefore does not have a unique trace, and the trace of $A$ is not definable.  
\end{proof}

The following was pointed out to the authors by Ilan Hirshberg. 

\begin{lemma} 
Every state definable with respect to some axiomatizable class $\cC$ is a trace. 
\end{lemma}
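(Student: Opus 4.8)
The plan is to reduce the trace identity to invariance of $\sigma$ under conjugation by unitaries, and to obtain that invariance for free from definability. Fix $A\in\cC$ and let $\sigma^A$ denote the interpretation of the definable predicate $\sigma$ on $A$; since $A\models\Th(\cC)$, $\sigma^A$ is a uniform limit of formula interpretations $\varphi_n^A$ on $A$. The first and only conceptually substantive step is to record that every $^*$-automorphism $\theta$ of $A$ preserves the value of each formula, hence of $\sigma$: that is, $\sigma^A(\theta(a))=\sigma^A(a)$ for all $a$. This is proved by a routine induction on the complexity of formulas---the atomic case is immediate because $\theta$ is a $^*$-isomorphism, and the $\inf$/$\sup$ clauses use that $\theta$ is a bijection---in the same spirit as Proposition~\ref{P.Positive.Preservation}, and then passes to the uniform limit $\sigma^A$.

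Next I would specialize to inner automorphisms. If $A$ is unital and $u\in A$ is a unitary, then $\Ad u\colon a\mapsto uau^*$ is a $^*$-automorphism of $A$, so the previous step yields $\sigma^A(uau^*)=\sigma^A(a)$ for all $a\in A$. In the nonunital case one takes $u$ in the unitization $\tilde A$, which is available in $A^{\eq}$; since $A$ is an ideal in $\tilde A$, the map $\Ad u$ still restricts to a $^*$-automorphism of $A$, and the same invariance holds with $uau^*\in A$.

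I would then derive the trace property by the standard computation. Applying the invariance to $a=xu$ gives, for every $x\in A$ and every unitary $u$,
\[
\sigma^A(xu)=\sigma^A\bigl(u(xu)u^*\bigr)=\sigma^A(ux),
\]
where $ux,xu\in A$ because $A$ is an ideal in $\tilde A$. Since every element $y$ of $\tilde A$ (in particular every $y\in A$) is a finite linear combination of unitaries, linearity of $\sigma^A$ upgrades this to $\sigma^A(yx)=\sigma^A(xy)$ for all $x,y\in A$; thus $\sigma^A$ is a trace. As $A\in\cC$ was arbitrary, $\sigma$ is a trace.

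I expect the main obstacle to be bookkeeping rather than conceptual: the content is entirely in the first step (definability forces automorphism invariance), while the rest is the classical equivalence between conjugation invariance and traciality. The only care needed is in the nonunital case, where one must pass to $\tilde A$ and check that $\Ad u$ preserves $A$ and the values of $\sigma^A$. Notably, no compactness, ultraproduct, or uniformity-across-$\cC$ argument is required: definability enters solely through invariance of definable predicates under isomorphisms of a single model.
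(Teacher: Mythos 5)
Your proof is correct and follows essentially the same route as the paper's: definability forces invariance under automorphisms, hence under $\Ad u$ for unitaries $u$, and the trace identity then follows because every element is a linear combination of unitaries. The extra care you take with the nonunital case (passing to the unitization) is a minor refinement the paper's terser argument leaves implicit.
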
 

\begin{proof} Suppose that $s$ is a definable state on some \cstar-algebra $A$. The assumption implies that $s(a)=s(\alpha(a))$ for every automorphism $\alpha$ of $A$. 
In particular, for every unitary $u$ we have $s(uau^*)=s(a)$. Since every element of $A$ is a linear combination of 
unitaries, $s(ab)=s(ba)$ for all $a$ and~$b$. 
\end{proof}

\section{Axiomatizability via definable sets} \label{S.Ax.Definable}
Armed with the expanded language (\S\ref{S.DefinablePred} and \S\ref{S.Definable}) we continue proving assertions on axiomatizability of classes of \cstar-algebras made in  \S\ref{S.Axiomatizable}. 

 \subsection{Projectionless and unital projectionless} \label{S.UP} Here is an improvement on our earlier axiomatization. Using the formula 
$\pi(x):=\|x^2-x\|+\|x^*-x\|$ introduced in Example \ref{Ex.1}(\ref{Ex.1.projection})
we have that 
\[
\varphi:=\sup_{x \text{ proj}} \min\{\|x\|,\|1 - x\|\}
\]
 satisfies $\varphi^A=0$ if and only if $A$ has no nontrivial projections (and $\varphi^A=1$ otherwise). 
 Therefore $\varphi$ is a universal formula axiomatizing unital projectionless algebras (cf.\ Example~\ref{Ex.projectionless}). 

A similar, and easier proof shows that being projectionless is axiomatizable. 

\subsection{Real rank zero revisited} \label{S.rr0.revisited} 
Since we can quantify over the set of  positive elements and over the set of 
projections (Example~\ref{Ex.1}), 
the axiomatization of real rank zero given in  Example~\ref{Ex.rr0}  can be succinctly written as follows: 
\[
\sup_{x,y\text{ self-adj}} \inf_{z \text{ proj}}
\max(\|zx\|, \|(1-z)y\|)^2 \dotminus \|xy\|
\]
\subsection{Infinite \cstar-algebras} \label{S.Infinite} A \cstar-algebra is \emph{infinite}\index{C@\cstar-algebra! infinite} if 
it has nonzero orthogonal projections $p$ and $q$ such that $p+q$ is Murray--von Neumann equivalent to $p$. 
In unital case this is equivalent to having a proper isometry. As in Example~\ref{Ex.1}, the standard methods 
show that the set of triples $(p,q,v)$ such that $p$ and $q$ are orthogonal projections, $q\neq 0$, 
$v^*v=p+q$, and $vv^*=p$ is definable. 
Therefore $A$ is infinite if and only if this definable set is nonempty in $A$, 
and being infinite is axiomatizable.  

\subsection{Finite and stably finite algebras}\label{S.SF} A \cstar-algebra $A$ 
is \emph{finite}\index{C@\cstar-algebra!finite} if it is not 
infinite (see \S\ref{S.Infinite}) and it is \emph{stably finite}\index{C@\cstar-algebra!stably finite} if $M_n(A)$ is finite for all $n$. 
Among the unital algebras, the class of finite \cstar-algebras is universally axiomatizable. It is clearly closed under subalgebras so it is enough to see that the class of finite \cstar-algebras is elementary.  Since
the set of isometries in a unital \cstar-algebra, $\{s\colon s^*s=1\}$, 
is quantifier-free definable (Example \ref{Ex.1}) by the weakly stable formula 
$\alpha(x):=\|x^*x-1\|$,
we have the fact that $A$ is finite if and only $\sup_{\alpha(x) = 0}\|xx^*-1\|$ 
is 0 in $A$. 

Note that a similar argument easily shows that the theory of unital 
finite \cstar-algebras form a clopen set.

 By the above and the fact that 
  the norm in $M_n(A)$ is definable  (Lemma \ref{L.Norm})
  $M_n(A)$ being finite is also axiomatized by a single universal sentence~$\beta_n$.  
  Therefore $A$ being stably finite is universally axiomatized by $\{\beta_n\colon n\geq 1\}$.

\section{Invertible and non-invertible elements} \label{S.invertible} 
Apart from proving Proposition~\ref{P.invertible} below, 
the present section introduces some of the ideas required in the analysis of  
stable rank  (\S\ref{S.sr})   and real rank  (\S\ref{S.rrn}).

\begin{prop} \label{P.invertible}
\begin{enumerate}
\item The set of non-invertible elements in the unit ball is quantifier-free definable in any unital \cstar-algebra. 
\item The closure of the set of invertible elements in the unit ball of any \cstar-algebra is a definable set.
\item  The closure of the set of self-adjoint, invertible elements in the unit ball of any \cstar-algebra is a definable set.  
\end{enumerate}
\end{prop}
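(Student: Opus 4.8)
For part (1), almost all of the work is already contained in the proof of Proposition~\ref{P.spectrum}. There the quantifier-free definable predicate $G(a)=\|a\|-\|(\|a\|\cdot 1-|a|)\|$ (scalar multiplication by the value $\|a\|$ being legitimate by Remark~\ref{rmk.numbers-as-scalars}) was shown to be weakly stable with $G(a)=0$ exactly when $a$ is not left-invertible, and $H(a):=\min\{G(a),G(a^*)\}$ was shown to be weakly stable with $H(a)=0$ exactly when $a$ is not invertible. Hence the non-invertible elements of the unit ball are precisely the zero-set of the weakly stable quantifier-free predicate $H\restriction B_1$, and Lemma~\ref{L.weaklystable} (equivalence of its clauses (1) and (4)) gives that this set is quantifier-free definable. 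It is useful to record that $G(a)=\min\Sp(|a|)$ is the least singular value, so that $H(b)\geq\delta$ says precisely that $b$ is invertible with $\|b^{-1}\|\leq 1/\delta$; this is the viewpoint I would use for the remaining parts.

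For parts (2) and (3) I would verify definability through the semantic criterion of Theorem~\ref{Th.sem.def}. Write $GL(A)$ for the invertible elements (computed in the unitization when $A$ is not unital) and $GL_{\mathrm{sa}}(A)$ for the self-adjoint invertibles, and set $X(A):=\overline{GL(A)}\cap B_1$ and $Y(A):=\overline{GL_{\mathrm{sa}}(A)}\cap B_1$. Each is a uniform assignment of closed sets: closedness is clear, and functoriality holds along unital embeddings since these preserve invertibility and hence its closure (for $Y$ one also intersects with the definable set of self-adjoints of Example~\ref{Ex.1}). By Theorem~\ref{Th.sem.def} it then suffices to show, for every ultraproduct $M=\prod_\cU A_i$, that $X(M)=\prod_\cU X(A_i)$, and likewise for $Y$.

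The analytic heart is a uniform-invertibility lemma proved by continuous functional calculus (\S\ref{S.cfc}). For $\delta>0$ put $GL_\delta(A):=\{b:H(b)\geq\delta\}$. Given an invertible $b$ with polar decomposition $b=u|b|$, replacing $|b|$ by $\max(|b|,\delta)$ yields $\tilde b:=u\max(|b|,\delta)\in GL_\delta(A)$ with $\|b-\tilde b\|\leq\delta$; for the self-adjoint case one instead applies the function equal to $t$ for $|t|\geq\delta$ and to $\delta\,\sgn(t)$ for $0<|t|<\delta$. Taking an infimum over invertible $b$ gives the uniform estimate
\[
d(a,GL_\delta(A))\ \leq\ d(a,GL(A))+\delta ,
\]
and the analogous estimate for $GL_{\mathrm{sa}}$. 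This is exactly what the ultraproduct commutation needs. For the inclusion $\prod_\cU X(A_i)\subseteq X(M)$, given $a=(a_i)/\cU$ with $a_i\in\overline{GL(A_i)}$ and $\e>0$, fix $\delta<\e/2$ and choose $b_i\in GL_\delta(A_i)$ with $\|a_i-b_i\|<\e$ (possible since $d(a_i,GL(A_i))=0$); then $H(b_i)\geq\delta$ uniformly, so $b=(b_i)/\cU$ is genuinely invertible in $M$ and witnesses $a\in\overline{GL(M)}$. The reverse inclusion $X(M)\subseteq\prod_\cU X(A_i)$ is a routine reindexing argument: approximate $a$ by invertibles $c^{(n)}\in GL(M)$, note that each $c^{(n)}$ has coordinates invertible $\cU$-almost everywhere, and diagonalize to obtain representatives lying in $\overline{GL(A_i)}$.

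The main obstacle is precisely the forward inclusion above: a naive approximation of each $a_i$ by an invertible $b_i$ with uncontrolled $\|b_i^{-1}\|$ produces an element of $M$ that need not be invertible, so the functional-calculus bumping lemma — upgrading approximation by invertibles to approximation by \emph{uniformly} invertibles — is indispensable. The remaining points I would check but not belabor are the functoriality and closedness of $X$ and $Y$, the minor norm rescaling needed to keep the witnessing elements inside $B_1$, and the convention that for non-unital $A$ the entire discussion takes place in the unitization. With the ultraproduct identities established, Theorem~\ref{Th.sem.def} yields that $X$ and $Y$ are definable sets, proving (2) and (3).
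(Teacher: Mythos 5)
Your argument is correct, and on parts (1) and (3) it coincides with the paper's: part (1) is exactly the reduction to the weakly stable predicate $H(a)=\min\{G(a),G(a^*)\}$ from Proposition~\ref{P.spectrum}, and part (3) is the same ultraproduct verification via Theorem~\ref{Th.sem.def}, with the same functional-calculus bump pushing the spectrum of an invertible self-adjoint element away from $0$ to get a uniform bound on the inverse (the paper's Lemma~\ref{L.invertible.3} uses precisely your $f$). The only genuine divergence is part (2): the paper does not go through the semantic criterion there but instead exhibits an explicit weakly stable predicate, $\varphi(a):=\inf_{u\text{ unit.}}\|a-u|a|\|$ (Lemma~\ref{L.invertible.2}), whose zero-set is the closure of the invertibles; weak stability is checked by the observation that if $\|a-u|a|\|<\e$ then $u(|a|+\e)$ is invertible, lies in the zero-set, and is within $2\e$ of $a$. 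Your bump $u\max(|b|,\delta)$ is the same analytic device, so the two proofs differ only in packaging: the paper's route yields a concrete $\inf$-formula (quantifying over the definable set of unitaries) that defines the set and hence carries syntactic information about how the distance predicate is approximated, while yours treats (2) and (3) uniformly through Theorem~\ref{Th.sem.def} at the cost of producing no explicit defining formula. Your housekeeping points (rescaling to stay in $B_1$, passing to the unitization in the non-unital case, and the fact that an invertible element of an ultraproduct has coordinatewise invertible representatives $\cU$-almost everywhere) are all handled correctly.
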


\begin{proof}
By Lemma~\ref{L.weaklystable} a set is definable if and only if it is a zero-set of a 
weakly stable predicate. 
Therefore (2)  is an 
immediate consequence of Lemma~\ref{L.invertible.2} below 
and (3) is an immediate consequence of Lemma~\ref{L.invertible.3} below. 

As in the proof of Proposition~\ref{P.spectrum} 
let 
\[
G(a) := \|a\| - \| (\|a\|\cdot 1 - |a|) \|\text{ and }H(a) :=\min\{G(a), G(a^*)\}
\]
As shown in Proposition~\ref{P.spectrum},  $H$ is weakly stable  and it 
has the set of non-invertible elements as 
its zero-set, hence (1) follows. 
\end{proof}

\begin{lemma} \label{L.invertible.2} 
In a unital \cstar-algebra, the predicate defined on the unit ball  
\[
\varphi(a):=\inf_{u \text{ unit.} }\|a- u|a|\|
\]
is weakly stable and its zero-set is equal to the closure of the invertible elements 
in the unit ball. 
\end{lemma}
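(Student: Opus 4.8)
The plan is to establish the two claims---weak stability and the identification of the zero-set---more or less independently, both resting on the single algebraic observation that $|u|a||=|a|$ whenever $u$ is a unitary. First I would check that $\varphi$ is a genuine definable predicate on the unit ball, so that statements about its zero-set make sense and so that the zero-set is closed. The map $a\mapsto |a|=(a^*a)^{1/2}$ is a definable function on the unit ball by continuous functional calculus (\S\ref{S.cfc}), the unitaries form a definable set (Example~\ref{Ex.1}), and therefore $\varphi(a)=\inf_{u}\|a-u|a|\|$ is an $\inf$-formula over a definable set. In particular $\varphi$ is uniformly continuous; I will use this to conclude that $\{a:\varphi(a)=0\}$ is closed. (As in the proof of Proposition~\ref{P.spectrum}, the apparent scalar manipulations are handled by Remark~\ref{rmk.numbers-as-scalars}, but here they reduce to honest functional calculus.)

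For weak stability I expect the argument to be immediate, in fact with $\delta=\e$. Given $\e>0$, suppose $a$ is in the unit ball with $\varphi(a)<\e$. By definition of the infimum there is a unitary $u$ with $\|a-u|a|\|<\e$; set $b:=u|a|$. Then $\|b\|=\||a|\|=\|a\|\le 1$, so $b$ is in the unit ball, and $\|a-b\|<\e$. The key point is that $|b|^2=(u|a|)^*u|a|=|a|\,u^*u\,|a|=|a|^2$, hence $|b|=|a|$; using the same $u$ as a witness gives $\varphi(b)\le\|b-u|b|\|=\|u|a|-u|a|\|=0$. Thus $b$ lies in the zero-set and is within $\e$ of $a$, which is exactly weak stability.

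Next I would identify the zero-set with the closure of the invertibles in the unit ball. For the inclusion of the closure into the zero-set: if $a$ is invertible then $|a|$ is invertible and $u:=a|a|^{-1}$ is a unitary with $u|a|=a$, so $\varphi(a)=0$; since $\varphi$ is continuous its zero-set is closed, and the inclusion follows. For the reverse inclusion, suppose $\varphi(a)=0$. For each $\eta>0$ pick a unitary $u$ with $\|a-u|a|\|<\eta$ and set $c_\eta:=\tfrac{1}{1+\eta}\,u(|a|+\eta 1)$. Then $c_\eta$ is invertible (a scalar multiple of a unitary times the invertible positive element $|a|+\eta 1$), and $\|c_\eta\|=\tfrac{\|a\|+\eta}{1+\eta}\le 1$, so $c_\eta$ is in the unit ball. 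The computation $\|u|a|-c_\eta\|=\big\||a|-\tfrac{|a|+\eta}{1+\eta}\big\|=\tfrac{\eta}{1+\eta}\,\big\||a|-1\big\|\le\eta$ (using $0\le|a|\le 1$) then gives $\|a-c_\eta\|<2\eta$, and letting $\eta\to 0$ places $a$ in the closure of the invertibles of the unit ball.

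I do not anticipate a genuine obstacle here; the only care required is bookkeeping. The substantive content is the identity $|u|a||=|a|$, which makes weak stability hold with $\delta=\e$ and drives the zero-set computation. The one place demanding attention is keeping all witnesses inside the unit ball---handled uniformly by the rescaling factor $\tfrac{1}{1+\eta}$, which behaves correctly whether or not $\|a\|=1$---together with the routine verification that $\varphi$ counts as a definable predicate despite the functional calculus hidden in $|a|$. Once this lemma is in place, combining it with Lemma~\ref{L.weaklystable} shows that the closure of the invertibles in the unit ball is a definable set, which is what is needed for Proposition~\ref{P.invertible}(2).
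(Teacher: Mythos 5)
Your proof is correct and follows essentially the same route as the paper's: polar decomposition $a=u|a|$ with $u$ unitary for invertible $a$, and the fact that $u(|a|+\eta)$ is invertible to pass back from the zero-set to the closure of the invertibles. The only differences are cosmetic --- you witness weak stability with $b=u|a|$ itself (via $|u|a||=|a|$, giving $\delta=\e$ rather than the paper's $2\e$) and you rescale by $\tfrac{1}{1+\eta}$ to keep the invertible approximant in the unit ball, a point the paper glosses over.
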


\begin{proof}  Since the set of unitaries is definable by Example \ref{Ex.1},
$\varphi$ is a definable predicate (Definition~\ref{D.Assignment}).

If $a$ is an invertible element, then $|a|=(a^*a)^{1/2}$ is also invertible
and $u=a|a|^{-1}$ is a unitary element such that $a=u|a|$.  
 Thus invertible elements in a \cstar-algebra have polar decompositions with the partial 
 isometry being a unitary. 
 Moreover, elements with a polar decomposition 
 with partial isometry as a unitary
 are in the closure of the invertibles because if $a=u|a|$ and $u$ is a unitary 
 then $u(|a|+\e)$ is invertible for all $\e>0$. Therefore  $\varphi(a)<\e$ implies 
 that for a unitary $u$ satisfying $\|a-u|a|\|<\e$ 
 we have that $b:=u(|a|+\e)$ is in the zero-set of $\varphi$ and within $2\e$ of $a$. 
 Since $\e>0$ was arbitrary, this proves 
  the weak stability of $\varphi$. 
\end{proof}

\begin{lemma} \label{L.invertible.3} 
In a unital \cstar-algebra, the closure of the set of invertible self-adjoint  elements of the unit ball is a definable set.
\end{lemma}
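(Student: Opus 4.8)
The plan is to produce a single weakly stable definable predicate whose zero-set, in every unital \cstar-algebra $A$, is exactly the closure of the self-adjoint invertible elements of the unit ball; the conclusion then follows at once from Lemma~\ref{L.weaklystable}. Mirroring the polar-decomposition idea of Lemma~\ref{L.invertible.2}, but replacing the unitary group by the set $\cS(A):=\{u : u=u^*,\ u^2=1\}$ of symmetries, I would set, for $a$ in the unit ball,
\[
\varphi(a):=\|a-a^*\|+\inf_{u\in \cS(A)}\big\|\,a-u|a|\,\big\|,
\]
where $|a|=(a^*a)^{1/2}$ is given by continuous functional calculus (\S\ref{S.cfc}). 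Since $\cS(A)$ is a definable set (it is the zero-set of the weakly stable predicate $\|x-x^*\|+\|x^2-1\|$: a near-symmetry is first symmetrized and then snapped onto $\{-1,1\}$ in its spectrum by functional calculus), $\varphi$ is manifestly a definable predicate, being an infimum over a definable set of a formula built using functional calculus. The term $\|a-a^*\|$ serves only to confine the zero-set to the self-adjoint elements, so below I work with self-adjoint $a$.

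First I would check that $\varphi$ vanishes on the closure. If $a=\lim_n a_n$ with each $a_n$ self-adjoint, invertible and of norm $\le 1$, write the polar decomposition $a_n=u_n|a_n|$ with $u_n=\Sp$-sign of $a_n$, a symmetry in $\cS(A)$. Because $t\mapsto\sqrt t$ is continuous on a fixed bounded interval, $|a_n|=(a_n^2)^{1/2}\to(a^2)^{1/2}=|a|$, and hence
\[
\varphi(a)\le \|a-u_n|a|\|\le \|a-a_n\|+\||a_n|-|a|\|\to 0,
\]
so $\varphi(a)=0$.

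The crux is weak stability, which simultaneously yields the converse inclusion. Suppose $a=a^*$, $\|a\|\le1$ and $\varepsilon:=\inf_{u\in\cS(A)}\|a-u|a|\|$ is small; fix $u\in\cS(A)$ with $\|a-u|a|\|<\varepsilon$. Since $a$ is self-adjoint and $u^*=u$, the element $u|a|$ lies within $\varepsilon$ of its adjoint $|a|u$, so $\|[u,|a|]\|<2\varepsilon$. Put $c:=u(|a|+2\varepsilon)$; as $u$ is invertible and $|a|+2\varepsilon\ge 2\varepsilon$, $c$ is invertible with $\|c^{-1}\|\le (2\varepsilon)^{-1}$, while $\|c-c^*\|=\|[u,|a|]\|<2\varepsilon$. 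Its self-adjoint part $a':=(c+c^*)/2$ satisfies $\|c-a'\|<\varepsilon<2\varepsilon\le \|c^{-1}\|^{-1}$, so by the norm perturbation bound for invertibility $a'$ is invertible; being self-adjoint it is an invertible self-adjoint element, and
\[
\|a-a'\|\le \|a-u|a|\|+\|u|a|-c\|+\|c-a'\|<\varepsilon+2\varepsilon+\varepsilon=4\varepsilon.
\]
After rescaling $a'$ by $\max(1,\|a'\|)^{-1}$ to return to the unit ball (an $O(\varepsilon)$ change), one obtains an invertible self-adjoint element of the unit ball within $O(\varepsilon)$ of $a$. This proves weak stability; letting $\varepsilon\to 0$ also shows that every zero of $\varphi$ lies in the closure, so $Z(\varphi)$ is precisely the set in question, and Lemma~\ref{L.weaklystable} gives that it is definable.

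The main obstacle is exactly this last step: the witnessing symmetry $u$ only approximately commutes with $|a|$, and one must resist repairing the commutator, which would run into the usual almost-commuting obstructions. The point is that the repair is unnecessary—passing to the self-adjoint part of the manifestly invertible $c$ and invoking the perturbation bound for invertibility produces the required nearby invertible self-adjoint element directly, and it is this that makes $\varphi$ weakly stable uniformly across all unital \cstar-algebras (note that the ambient distance to the set is genuinely algebra-dependent, e.g.\ it vanishes identically in real rank zero algebras but not in $\mathrm{C}(\bbT)$, so no naive distance formula can serve).
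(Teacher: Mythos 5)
Your proof is correct, but it takes a genuinely different route from the paper's. The paper proves this lemma semantically, via the ultraproduct characterization of definable sets (Theorem~\ref{Th.sem.def}): it verifies that the assignment $A\mapsto \overline{(A_{\mathrm{sa}}\cap A_1)^{-1}}$ commutes with ultraproducts, the crucial step being the replacement of an approximating invertible self-adjoint $b_i$ by $f(b_i)$, where $f$ pushes the spectrum off $0$ by $\e$, so that $\|b_i^{-1}\|\leq 1/\e$ uniformly in $i$. You instead extend the paper's own explicit treatment of the non-self-adjoint case (Lemma~\ref{L.invertible.2}) by replacing the unitary group with the definable set of symmetries and verifying weak stability directly; your passage from the near-polar-decomposition $u|a|$ to the invertible $c=u(|a|+2\e)$, and then to its self-adjoint part via the standard perturbation bound for invertibles, is sound, and the commutator estimate $\|[u,|a|]\|<2\e$ is exactly what makes it work. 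Note that both arguments ultimately rest on the same fact — that the approximating invertible can be chosen with inverse of norm controlled by the approximation parameter — realized in the paper by $f(b_i)$ and in your proof by $|a|+2\e$. What your version buys is an explicit existential ($\inf$-)definable predicate witnessing definability, which is more informative than the bare semantic conclusion; what the paper's version buys is brevity and freedom from the commutator bookkeeping and from the (routine but glossed-over) reduction to self-adjoint $a$, which in your argument requires the uniform continuity of $a\mapsto|a|$ on the unit ball to transfer the smallness of $\inf_{u}\|a-u|a|\|$ from $a$ to $(a+a^*)/2$.
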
 

\begin{proof} 
We prove this by using the ultraproduct characterization of definable sets, Theorem \ref{Th.sem.def}.  Fix unital \cstar-algebras $A_i$ for $i \in I$ and an ultrafilter $\cU$ on $I$.  Let $X_i$ be the closure of the set of  invertible self-adjoint elements of the unit ball in $A_i$ and $X$ be the same in $\prod_\cU A_i$.  We would like to show that $\prod_\cU X_i = X$.  We show the inclusion right to left first.  Fix $\bar a \in X$, $n \in \bbN$ and an invertible self-adjoint element $\bar a^n$ in the unit ball such that $\displaystyle\|\bar a - \bar a^n\| < \frac{1}{n}$.  Using \L o\' s' Theorem, we can obtain a decreasing sequence of sets $U_m \in \cU$ such that
\[
U_m = \{ i \in I : \| a_i - a_i^k \| < \frac{1}{k} \text{ and } a_i^k \text{ is invertible for all } k \leq m \}.
\]
Now we define an element $\bar b \in \prod_\cU A_i$ as follows: if $i \in U_n \setminus U_{n+1}$, let $b_i = a_i^n$.  If $i \in U_n$ for all $n$ then let $b_i = a_i$.  In all other cases, define $b_i$ to be any element of $X_i$.  It is clear that $\bar b$ is another representation of $\bar a$ in $\prod_\cU A_i$ and that $\bar b \in \prod_\cU X_i$.

%

To prove the other inclusion, suppose that $a_i \in X_i$ for all $i \in I$ and $\e > 0$.  Choose an invertible self-adjoint $b_i$ in the unit ball of $A_i$ such that $\|a_i - b_i\| < \e$.  Since $b_i$ is invertible, $0 \not\in\text{sp}(a)$ and so the function
\[
f(x) = \begin{cases}
		\max\{\e,x\} & \text{if } x > 0\\
		\min\{-\e,x\} & \text{if } x < 0
		\end{cases}.
\]
is continuous on $\text{sp}(a)$.  By replacing $b_i$ with $f(b_i)$, we can assume that $b_i^{-1}$ has norm at most $1/\e$.
  Then $\bar b = \langle b_i : i \in I \rangle$ is an invertible self-adjoint element in the unit ball of $\prod_\cU A_i$ and $\|\bar a - \bar b\| \leq \e$.  Since $\e$ was arbitrary, $\bar a \in X$.
%
%
%
%
\end{proof}

The proofs of the previous two lemmas demonstrate that
 for every invertible $a$ and $\e>0$ 
there exists invertible $b$ such that $\|a-b\|\leq \e$ and $\|b^{-1}\|\leq \e^{-1}$. 
The following multivariable variant will be needed in \S\ref{S.rrn}. 

\begin{lemma}\label{L.rrn}
Suppose that $A$ is an abelian unital \cstar-algebra, $\bar a \in A$ is an n-tuple of self-adjoint elements such that $\sum a^2_i$ is invertible.  Then for every $\e > 0$ there is an n-tuple of self-adjoint elements $\bar b \in A$ such that $\|\bar a - \bar b\| \leq \e$, $\sum b^2_i$ is invertible and the norm of this inverse is at most $\e^{-2}$.
\end{lemma}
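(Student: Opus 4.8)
The plan is to pass to the Gelfand picture. Since $A$ is abelian and unital, $A\cong \mathrm{C}(X)$ for a compact Hausdorff space $X$ (its maximal ideal space), with self-adjoint elements corresponding to real-valued continuous functions. Under this identification the tuple $\bar a$ becomes a continuous map $a=(a_1,\dots,a_n)\colon X\to \bbR^n$, and the hypothesis that $\sum_i a_i^2$ is invertible says precisely that the function $x\mapsto |a(x)|:=\left(\sum_i a_i(x)^2\right)^{1/2}$ never vanishes on $X$. By compactness of $X$ there is thus a constant $\delta>0$ with $|a(x)|\geq \delta$ for all $x$.

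The key step is to perturb $a$ by a radial retraction that pushes the values of small norm out to norm $\e$. Define $b\colon X\to \bbR^n$ by
\[
b(x):=\frac{\max\{|a(x)|,\e\}}{|a(x)|}\,a(x).
\]
Since $|a(x)|\geq \delta>0$ everywhere, this is a well-defined continuous map, so each coordinate $b_i$ lies in $\mathrm{C}(X)_{\mathrm{sa}}\cong A_{\mathrm{sa}}$. By construction $|b(x)|=\max\{|a(x)|,\e\}\geq \e$, and
\[
|a(x)-b(x)|=\bigl|\,|a(x)|-\max\{|a(x)|,\e\}\,\bigr|=\max\{0,\e-|a(x)|\}\leq \e
\]
for every $x$.

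It remains to read off the three required conclusions. As the paper uses the sup metric on finite products of sorts, $\|\bar a-\bar b\|=\max_i\sup_x|a_i(x)-b_i(x)|\leq \sup_x|a(x)-b(x)|\leq \e$. Next, $\sum_i b_i^2$ corresponds to $x\mapsto |b(x)|^2\geq \e^2$, so it is invertible, and its inverse corresponds to $x\mapsto |b(x)|^{-2}$, of norm $\sup_x|b(x)|^{-2}\leq \e^{-2}$, as required.

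I do not expect any genuine obstacle here; the only points needing care are that the retraction is continuous (guaranteed by the uniform lower bound $|a(x)|\geq\delta$ coming from invertibility together with compactness) and that the relevant metric on the product is the sup metric rather than the Euclidean one (harmless, since Euclidean distance dominates each coordinate distance). The same argument can be phrased entirely through continuous functional calculus without invoking $X$: put $h:=\sum_i a_i^2$, which is positive and invertible, apply $f(t):=\max\{1,\e t^{-1/2}\}$ (continuous on $\Sp(h)\subseteq[\delta^2,\|h\|]$), and set $b_i:=f(h)a_i$; then $\sum_i b_i^2=f(h)^2h=\max\{h,\e^2\}\geq \e^2$, while $\|a_i-b_i\|^2=\|(1-f(h))^2a_i^2\|\leq\|(1-f(h))^2h\|\leq\e^2$ using $a_i^2\leq h$ and commutativity.
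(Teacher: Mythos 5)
Your proof is correct and is essentially the paper's own argument: the paper likewise passes to $\mathrm{C}(X)$ via Gelfand--Naimark and defines $b_i(x)=\e a_i(x)h(x)^{-1}$ when $h(x)\leq\e$ and $b_i(x)=a_i(x)$ otherwise, which is exactly your radial retraction $b(x)=\frac{\max\{|a(x)|,\e\}}{|a(x)|}a(x)$ written coordinatewise. Your closing functional-calculus reformulation with $f(t)=\max\{1,\e t^{-1/2}\}$ is a nice spelled-out addition but not a different method.
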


\begin{proof} 
By the Gelfand--Naimark Theorem, since $A$ is unital, we can assume that $A$ is $C(X)$ for some compact Hausdorff space $X$.  Since the $a_i$'s are self-adjoint, as functions, they are real-valued.  The assumption that $\sum a_i^2$ is invertible means that $\sum a_i^2(x) > 0$ for all $x \in X$.  Fix $\e > 0$ and let $h(x) = \sqrt{\sum a_i^2(x)}$.  For $i \leq n$ define continuous functions on $X$ as follows:
\[
b_i(x) =
\begin{cases}
\e a_i(x) h(x)^{-1}& \text{if } h(x) \leq \e\\
a_i(x) & \text{if } h(x) > \e
\end{cases}
\]
It is easy to see that $\sum b_i^2 \geq \e^2$ and that $|a_i(x) - b_i(x)| \leq \e$ for all $x\in X$ and $i \leq n$.
\end{proof}

 \section{Stable rank} \label{S.sr} 
 \cstar-algebras are considered as noncommutative topological spaces and 
two noncommutative analogues of  dimension are stable rank and real rank, 
introduced in \cite{Rie:Dimension} and \cite{brown1991c} respectively 
(see~\cite[V.3]{Black:Operator}).

A unital \cstar-algebra  $A$ has \emph{stable rank $\leq n$}\index{stable rank} if and only if the set of tuples $(a_1,\dots, a_n)\in A^n$ which generate
an improper right ideal is dense in $A^n$.   If $A$ is not unital, we say that $A$ has stable rank $\leq n$ if its unitization does (recall that the unitization of $A$ lies in~$A^{\eq}$).
The \emph{stable rank} of $A$, denoted $\sr(A)$,\index{s@$\sr(A)$} is defined to be 
the least $n$ such that the stable rank of $A$ is $\leq n$
(stable rank was originally called \emph{topological stable rank} and denoted $\tsr(A)$ in \cite{Rie:Dimension}).    
It is not difficult to see that $\sr(\mathrm{C}(X))=\lfloor \dim(X)/2\rfloor+1$ 
for any compact Hausdorff space~$X$
(see~\cite[V.3.1.3]{Black:Operator}).

Results of the present subsection on the axiomatizability of  
stable rank are summarized in the following. 

\begin{proposition} \label{P.SR} 
\begin{enumerate}
\item \label{P.SR.1} 
For every $n\geq 1$, having stable rank $\leq n$  is axiomatizable. 
\item \label{P.SR.1.5} Having stable rank $>n$ is axiomatizable. 
\item \label{P.SR.2} Having stable rank $\leq n$ is \aea, but  
neither universally nor existentially axiomatizable. 
\end{enumerate}
\end{proposition}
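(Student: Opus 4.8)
\emph{Setup and part~(1).} I would treat (1) first, since it supplies a definable predicate used in all three parts, and postpone (2) as the hardest. Work throughout in the class of unital \cstar-algebras and, for a tuple $\bar a=(a_1,\dots,a_n)$, write $|\bar a|:=(\sum_i a_i^*a_i)^{1/2}$. Let $\mathrm{Lg}_n(A):=\{\bar a : \sum_i a_i^*a_i\ \text{is invertible}\}$ be the set of left-invertible tuples; by Rieffel's left--right symmetry of stable rank \cite{Rie:Dimension}, density of $\mathrm{Lg}_n(A)$ in $A^n$ is equivalent to $\sr(A)\le n$ as defined in \S\ref{S.sr}. Since $\mathrm{Lg}_n$ is invariant under scaling by nonzero scalars and $0\in\overline{\mathrm{Lg}_n(A)}$ (witnessed by $(\delta,0,\dots,0)$ as $\delta\to 0$), density in $A^n$ is equivalent to density in the unit ball. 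The plan for (1) is to imitate Lemma~\ref{L.invertible.2} and show that $\overline{\mathrm{Lg}_n(A)}$ is a definable set, uniformly in $A$, by exhibiting an explicit weakly stable predicate with this zero-set. Writing isometry columns as the definable set $\{\bar v : \sum_i v_i^*v_i=1\}$ (weakly stable by standard stability results, cf.\ Example~\ref{Ex.1} and \cite{Lor:Lifting}), put
\[
\psi(\bar a):=\inf_{\sum_i v_i^*v_i=1}\big\|\bar a-\bar v\,|\bar a|\big\|,
\]
where $\bar v\,|\bar a|:=(v_1|\bar a|,\dots,v_n|\bar a|)$. If $|\bar a|$ is invertible then $v_i:=a_i|\bar a|^{-1}$ is an isometry column with $\bar v\,|\bar a|=\bar a$, so $\psi$ vanishes on $\mathrm{Lg}_n(A)$, hence on its closure. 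Conversely, given $\psi(\bar a)<\delta$, a near-optimal $\bar v$ and $\bar b:=\bar v(|\bar a|+\delta)$ satisfy $\sum_i b_i^*b_i=(|\bar a|+\delta)^2\ge\delta^2$, so $\bar b\in\mathrm{Lg}_n(A)$ and $\|\bar a-\bar b\|\le 2\delta$. This identifies the zero-set of $\psi$ as $\overline{\mathrm{Lg}_n(A)}$ and shows $\psi$ is weakly stable with a linear modulus; by Lemma~\ref{L.weaklystable} the assignment $\overline{\mathrm{Lg}_n}$ is definable, and $\sr(A)\le n$ if and only if the $\forall\exists$-sentence $\sup_{\|\bar x\|\le 1}\psi(\bar x)$ evaluates to $0$.

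\emph{Part~(3).} The axiom just produced is a $\sup$ of an $\inf$-predicate, hence an $\forall\exists$-sentence, so the class is \aea{} (alternatively, combine Proposition~\ref{P.Ax}(3) with Rieffel's inequality $\sr(\varinjlim_i A_i)\le\liminf_i\sr(A_i)$ \cite{Rie:Dimension}). To see it is not universally axiomatizable I would, by Proposition~\ref{P.Ax}(1), exhibit a member containing a non-member as a unital subalgebra: with $K$ the Cantor set, $C(K)$ is a commutative AF algebra, so $\sr(C(K))=1$, while a continuous surjection $K\twoheadrightarrow[0,1]^{2n}$ induces a unital embedding $C([0,1]^{2n})\hookrightarrow C(K)$, and $\sr(C([0,1]^{2n}))=\lfloor 2n/2\rfloor+1=n+1>n$ by the formula in \S\ref{S.sr}. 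Dually, it is not existentially axiomatizable: by Proposition~\ref{P.Ax}(2) it suffices that the class is not closed under superstructures, and $\bbC$ (stable rank $1$) is a unital subalgebra of $C([0,1]^{2n})$ (stable rank $n+1$).

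\emph{Part~(2), and the main obstacle.} By (1), $\sigma_n:=\sup_{\|\bar x\|\le 1}\psi(\bar x)$ is a definable predicate with $\sr(A)\le n\iff\sigma_n^A=0$, so the class of algebras of stable rank $>n$ is $\{A:\sigma_n^A>0\}$, a countable union of the axiomatizable classes $\{A:\sigma_n^A\ge 1/k\}$. Axiomatizing this union requires a \emph{uniform gap}: an $r_n>0$, independent of $A$, with $\sigma_n^A>0\Rightarrow\sigma_n^A\ge r_n$; given such an $r_n$, the condition $r_n\dotminus\sigma_n$ axiomatizes $\sr>n$. Equivalently, using Theorem~\ref{T.Ax} together with (1), the class ``$\sr>n$'' is closed under ultraroots (this is closure of ``$\sr\le n$'' under ultrapowers, which follows from (1)), and the only remaining point is closure under ultraproducts; an ultraproduct of algebras carrying witness tuples at distance $\ge\e_i$ from $\overline{\mathrm{Lg}_n}$ can drop to stable rank $\le n$ precisely when $\lim_{i\to\cU}\e_i=0$, which the gap excludes. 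I expect this quantitative rigidity of stable rank to be the crux of the proof, and would establish it following \cite{farah2016axiomatizability}.
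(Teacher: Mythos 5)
Your proposal is correct and follows essentially the same route as the paper: part (1) via a weakly stable ``polar decomposition'' predicate (your exact isometry columns and the term $|\bar a|$ are interchangeable, by weak stability, with the paper's approximate version quantifying a positive $a$ of norm $\le\sqrt n$, cf.\ Lemma~\ref{L.stable-rank}(3)--(4)); part (3) via the $\forall\exists$ shape of that axiom together with commutative counterexamples $C(X)$ for cubes and Proposition~\ref{P.Ax}. For part (2) you correctly reduce the matter to a uniform gap for $\sigma_n$ and defer to \cite{farah2016axiomatizability}, which is exactly what the paper does (adding only that the $n=1$ case follows from R\o rdam's $0$--$1$ dichotomy, Lemma~\ref{L.SR.1}).
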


 
Since it is the most important case, and its proof is simpler and more enlightening, we first  
treat the case of stable rank one; a \cstar-algebra has stable rank one if and only if its invertible elements are dense. 

\begin{lemma}\label{L.SR.1} 
Both $\sr(A)=1$ and $\sr(A)>1$ 
are axiomatizable. 
\end{lemma}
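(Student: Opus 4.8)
The plan is to reduce both assertions to the definability of the closure of the invertible elements, established in Proposition~\ref{P.invertible}(2), and then to isolate the one genuinely \cstar-algebraic input that is needed. Recall that a unital \cstar-algebra has stable rank one exactly when its invertible elements are dense, equivalently when the closure $X(A)$ of the invertible elements of the unit ball equals the whole unit ball $A_1$. By Lemma~\ref{L.invertible.2} the predicate $\varphi(a):=\inf_{u\text{ unit.}}\|a-u|a|\|$ is weakly stable and $X(A)$ is precisely its zero-set, so $X$ is a definable set and $\sigma^A:=\sup_a\varphi(a)$ is (the interpretation of) a definable predicate without free variables. Thus $\sr(A)=1$ if and only if $\sigma^A=0$. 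For non-unital $A$ one replaces $A$ by its unitization, which lies in $A^{\eq}$, so it suffices to treat the unital case.

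First I would dispose of $\sr(A)=1$. Since $\sigma$ is a uniform limit of sentences $\chi_n$ with $\|\sigma-\chi_n\|_T\le 1/n$ (where $T$ is the theory of all \cstar-algebras), the class $\{A:\sigma^A=0\}$ is axiomatized by the scheme $\{\,|\chi_n|\dotminus\tfrac1n : n\in\bbN\,\}$; in fact, writing $\sigma=\sup_a\inf_u\|a-u|a|\|$ exhibits it as a single $\forall\exists$-sentence, so $\sr(A)=1$ is even $\forall\exists$-axiomatizable. Alternatively, and more in the spirit of Theorem~\ref{T.Ax}, one checks directly that $\{\sr=1\}$ is closed under isomorphisms, ultraproducts and ultraroots: by the semantic criterion for definable sets (Theorem~\ref{Th.sem.def}) one has $X(\prod_\cU A_i)=\prod_\cU X(A_i)$; since the unit ball of an ultraproduct is the ultraproduct of the unit balls, if $X(A_i)$ is the full unit ball of $A_i$ for $\cU$-almost all $i$ then $X(\prod_\cU A_i)$ is the full unit ball, while conversely the diagonal embedding $A\hookrightarrow A^\cU$ shows that $\sr(A^\cU)=1$ forces $\sr(A)=1$.

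The harder half is that the complement $\{\sr>1\}$ is also axiomatizable, and here the delicate point is that complements of elementary classes are not elementary in general. By Theorem~\ref{T.Ax} it is enough to show $\{\sr>1\}$ is closed under ultraproducts and ultraroots. Closure under ultraroots is immediate, being the contrapositive of the closure of $\{\sr=1\}$ under ultrapowers noted above. Closure under ultraproducts, on the other hand, is equivalent to a \emph{uniform gap}: there is a constant $r>0$, independent of $A$, such that for every unital \cstar-algebra $A$ one has $\sigma^A\in\{0\}\cup[r,\infty)$. Indeed, since $\sigma$ is a definable predicate, $\sigma^{\prod_\cU A_i}=\lim_{i\to\cU}\sigma^{A_i}$ by \L o\'s' Theorem (Theorem~\ref{Los}); granting the gap, $\sr(A)>1$ if and only if $\sigma^A\ge r$, i.e. if and only if $r\dotminus\sigma^A=0$, so $\{\sr>1\}$ is axiomatized by the single condition $r\dotminus\sigma$, and the value on any ultraproduct stays $\ge r$.

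The main obstacle is therefore the uniform gap. Soft model theory does not suffice: if it failed there would be algebras $A_i$ with $\sr(A_i)>1$ but $\sigma^{A_i}\to 0$, whose ultraproduct has stable rank one, and nothing in the logic alone rules this out, since (unlike for ultrapowers) stable rank one need not descend from an ultraproduct to its factors. What is needed is a quantitative perturbation statement: if every element of the unit ball lies within a small universal distance of an invertible, then the invertibles are actually dense. The way I would prove this is to exploit the polar decomposition underlying $\varphi$: an element $a\in A_1$ fails to lie in $X(A)$ precisely because its partial isometry $v$ from the polar decomposition in $A^{**}$ cannot be corrected to a unitary inside $A$, and the obstruction is an index/$K_1$-type defect between the projections $1-v^*v$ and $1-vv^*$; such a defect, once present, cannot be made arbitrarily small in norm, which is what yields the gap. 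This quantitative input—for stable rank one, and in the general form required for $\sr>n$—is exactly the content established in \cite{farah2016axiomatizability}, and it is the crux on which the axiomatizability of $\{\sr>1\}$ rests.
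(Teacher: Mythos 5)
Your strategy coincides with the paper's up to the final step: you use the same sentence $\theta:=\sup_x\inf_{u\text{ unit.}}\|x-u|x|\|$, the same appeal to Lemma~\ref{L.invertible.2} to identify $\sr(A)=1$ with $\theta^A=0$, and you correctly isolate the crux, namely a uniform gap in the possible values of $\theta$ across all unital \cstar-algebras (i.e., condition (3) of Proposition~\ref{P.el-co-el}). Where you diverge is in how that gap is obtained. The paper quotes R\o rdam's theorem \cite[Theorem~2.5]{rordam1988advances}, which says that the value of $\theta$ in \emph{any} \cstar-algebra is either $0$ or $1$; this sharp dichotomy settles the matter at once, with $\{\sr>1\}$ axiomatized by $1\dotminus\theta$. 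You instead sketch a $K_1$/index heuristic for why a gap should exist and defer the quantitative statement to \cite{farah2016axiomatizability}, which the paper invokes only for the general case $\sr>n$. As written, your heuristic is not a proof: the claim that an index-type defect ``cannot be made arbitrarily small in norm'' with a constant independent of the algebra and the element is precisely the nontrivial content, and the polar-decomposition picture does not by itself produce a uniform $r$. Deferring to \cite{farah2016axiomatizability} is a legitimate way to close the argument, but for $n=1$ R\o rdam's dichotomy is the more elementary and more precise input, and it is what the paper uses.
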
 

\begin{proof} 
It will suffice to find a sentence $\theta$ such that $\theta^A=0$ if $\sr(A)=1$
and $\theta^A=1$ if $\sr(A)>1$ (cf.  Proposition~\ref{P.el-co-el}). 
By Lemma~\ref{L.invertible.2}, 
has stable rank 1 if and only if 
  \[
\theta:= \sup_x\inf_{y \text{ unit.}} \|x-y|x|\|
 \]
 is zero in $A$. 
By \cite[Theorem~2.5]{rordam1988advances} (where $\alpha(a)$ was used to denote the distance 
of $a$ to the group of invertible elements), the value of  $\theta$ in any \cstar-algebra is either $0$ or $1$, 
and this completes the proof. 
\end{proof}

We proceed to   prove  that having 
stable rank $\leq n$ is  also axiomatizable for all $n\geq 2$.  

\begin{lemma}\label{L.stable-rank}
Let $A$ be a unital \cstar-algebra, $n\geq 1$,  and $(a_1,\dots,a_n)\in A^n$ a tuple of contractions. The following are equivalent:
\begin{enumerate}[leftmargin=*]
\item
$x_1a_1+\dots x_n a_n=1$ for some $x_i\in A$,
\item
$a_1^*a_1+\dots+a_n^*a_n$ is invertible  in $A$,
\item
there exists $(v_1,\dots,v_n)\in A^n$ and   $a\in A_+$ such that $a_i=v_ia$
for all~$i$ and $\sum_{i=1}^n v_i^*v_i=1$, $a$ is invertible, and $\|a\|\leq \sqrt n$, 

\item
there exist $(v_1,\dots,v_n)\in A^n$ and  $a\in A_+$ such that $a_i=v_ia$
for all~$i$, $\|\sum_{i=1}^n v_i^*v_i-1\|<1$, $a$ is invertible, and $\|a\|\leq \sqrt n$. 
\end{enumerate}
\end{lemma}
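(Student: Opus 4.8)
The plan is to establish all four equivalences through the cycle $(2)\Rightarrow(1)$, $(1)\Rightarrow(2)$, $(2)\Rightarrow(3)$, $(3)\Rightarrow(4)$, $(4)\Rightarrow(2)$. Throughout I write $b:=\sum_{i=1}^n a_i^*a_i\in A_+$, so that condition (2) says exactly that $b$ is invertible. The whole argument organizes itself around $b$ and its positive square root: the element $a$ occurring in (3) and (4) will always be taken to be $b^{1/2}$, and the role of the norm bound $\|a\|\leq\sqrt n$ is only to make the characterization usable for axiomatizability in \S\ref{S.sr}; it will hold automatically for this choice because the $a_i$ are contractions.

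The two implications relating (1) and (2) form the algebraic core. For $(2)\Rightarrow(1)$ I would simply set $x_i:=b^{-1}a_i^*$, so that $\sum_i x_i a_i = b^{-1}\sum_i a_i^*a_i = b^{-1}b = 1$. For the converse $(1)\Rightarrow(2)$ I would pass to rectangular matrices over $A$: write $C:=(a_1,\dots,a_n)^{\mathrm{T}}\in M_{n,1}(A)$ and $X:=(x_1,\dots,x_n)\in M_{1,n}(A)$, so that (1) reads $XC=1$ and (2) reads that $C^*C=b$ is invertible. Since $X^*X$ is a positive element of $M_n(A)$ we have $X^*X\leq \|X^*X\|\cdot 1_{M_n(A)}$, and compressing by $C$ preserves this order relation, giving
\[
1=(XC)^*(XC)=C^*(X^*X)C\leq \|X^*X\|\,C^*C=\|X^*X\|\,b.
\]
Hence $b\geq \|X^*X\|^{-1}\cdot 1>0$ (note $X\neq 0$ since $XC=1$), so $b$ is invertible. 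I expect this to be the only step calling for a genuine idea—the trick of compressing the trivial bound $X^*X\le\|X^*X\|\cdot 1$ by the column $C$—whereas everything else is direct computation.

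The remaining implications are explicit constructions. For $(2)\Rightarrow(3)$, with $b$ invertible I would put $a:=b^{1/2}$, which is positive and invertible by functional calculus, and $v_i:=a_i b^{-1/2}$; then $v_i a=a_i$ and $\sum_i v_i^*v_i=b^{-1/2}\bigl(\sum_i a_i^*a_i\bigr)b^{-1/2}=b^{-1/2}b\,b^{-1/2}=1$, while $\|a\|=\|b\|^{1/2}\leq\bigl(\sum_i\|a_i\|^2\bigr)^{1/2}\leq\sqrt n$. The implication $(3)\Rightarrow(4)$ is immediate since $\sum_i v_i^*v_i=1$ forces $\|\sum_i v_i^*v_i-1\|=0<1$. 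Finally, for $(4)\Rightarrow(2)$ set $w:=\sum_i v_i^*v_i$; the hypothesis $\|w-1\|<1$ makes $w$ invertible, and using $a=a^*$ together with $a_i=v_ia$ one computes $b=\sum_i a_i^*a_i=a\bigl(\sum_i v_i^*v_i\bigr)a=awa$, a product of invertibles, hence invertible. This closes the cycle. The one point to keep in mind is that invertibility is meant inside $A$, but since $A$ is a unital \cstar-algebra every inverse produced here (of $b$, of $b^{1/2}$, of $w$, and of $awa$) lies in $A$ automatically, so no passage to a larger algebra is required.
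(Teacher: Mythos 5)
Your proof is correct and follows essentially the same route as the paper: the key implication $(1)\Rightarrow(2)$ rests on the inequality $1=C^*(X^*X)C\leq\|X^*X\|\,C^*C$, which is exactly the instance of the Hilbert $A$-module Cauchy--Schwarz inequality the paper invokes (your matrix-compression argument just proves it directly), and $(2)\Rightarrow(3)\Rightarrow(4)$ are the same constructions. The only cosmetic difference is how you close the cycle --- you prove $(4)\Rightarrow(2)$ via $b=awa$ and add a direct $(2)\Rightarrow(1)$, whereas the paper goes $(4)\Rightarrow(1)$ --- but this changes nothing of substance.
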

\begin{proof}
(1)$\Rightarrow$(2): Let $\bar x$ and $\bar a$ be as in (1). 
Consider $A^n$ as a Hilbert $A$-module  with the inner product
\[
(\bar x| \bar y) =\sum_i x_i^* y_i. 
\]
By the 
  Cauchy--Schwarz inequality  (see \cite[Proposition~1.1]{lance1995hilbert})
  applied to $\bar a$ and $\bar x^*$ we have 
  \[
   \sum_i x_i a_i  \bigg(\sum_i x_i a_i\bigg )^*\leq \bigg\|\sum_i x_i x_i^* \bigg\|\sum_i a_i^* a_i. 
  \]
  Since the left-hand side is equal to 1, we conclude that $\sum_i a_i^* a_i$ is bounded below and therefore 
  invertible. 

(2)$\Rightarrow$(3): Assume (2) holds for $\bar a$. 
Let  $a:=(a_1^*a_1+\dots+a_n^*a_n)^{1/2}$ and 
$v_i=a_ia^{-1}$ for all $i$. Then $a\geq 0$ and it is invertible by (2). 
Clearly $\|a\|\leq \sqrt n$, and $\sum_{i=1}^n v_i^*v_i=a^{-1}(\sum_{i=1}^n a_i^*a_i)a^{-1}=1$. 

(3)$\Rightarrow$(4): Obvious.

(4)$\Rightarrow$(1): Assume $\bar a$, $\bar v$ and $a$ are as in (4).  Then 
$\sum_{i=1}^n v_i^* a_i=(\sum_{i=1}^n v_i^*v_i)a$ is a product of two invertibles and 
there is $x$ such that $x\sum_{i=1}^n v_i^* a_i=1$. Therefore $x_i=xv_i^*$ are as required in (1). 
\end{proof}

By continuity the following  is an immediate consequence of the equivalence of (1), (3) and (4) in Lemma~\ref{L.stable-rank}. 

\begin{lemma}\label{L.sr.3} 
Let $A$ be a unital \cstar-algebra and let $(a_1, \dots, a_n) \in A^n$ be a tuple of contractions. The following are equivalent:
\begin{enumerate}[leftmargin=*]
\item $(a_1, \dots, a_n)$ is a limit of a sequence of tuples $(b_1,\dots, b_n)$, such that $(b_1,\dots,  b_n)$ 
generates $A$ as a right ideal.
\item For every $\e > 0$, there exist contractions $v_1,\dots,v_n$ and  $a\geq 0$ of norm $\leq\sqrt{n}$ such that $\|a_i-v_ia\| < \e$ for all $i$ and $\|\sum_i v_i^*v_i - 1\| < \e$.
\item For every $\e > 0$, there exist contractions $v_1,\dots,v_n$ and  $a\geq 0$ of norm  $\leq\sqrt{n}$ such that $\|a_i-v_ia\| < \e$ for all $i$ and $\|\sum_i v_i^*v_i - 1\| <1$. \qed
\end{enumerate}
\end{lemma}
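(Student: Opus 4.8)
The plan is to prove the cycle of implications $(1)\Rightarrow(2)\Rightarrow(3)\Rightarrow(1)$, using Lemma~\ref{L.stable-rank} together with two elementary perturbation arguments: rescaling a tuple to make it contractive, and adding a small positive scalar to a positive element to make it invertible. Throughout I will use that a tuple $(c_1,\dots,c_n)$ generates $A$ as a right ideal precisely when $\sum_i x_i c_i=1$ for some $x_i\in A$, a property unaffected by scaling all the $c_i$ by a common positive scalar. This is the ``continuity'' referred to in the statement: conditions $(2)$ and $(3)$ are the approximate versions of the exact factorizations in conditions $(3)$ and $(4)$ of Lemma~\ref{L.stable-rank}, and invertibility of $\sum_i v_i^*v_i$ survives the passage from an exact to an approximate factorization.

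For $(1)\Rightarrow(2)$, fix $\e>0$ and choose a generating tuple $(b_1,\dots,b_n)$ with $\|a_i-b_i\|<\e/2$ for all $i$. Since the $a_i$ are contractions, dividing by $c:=\max(1,\max_i\|b_i\|)$ produces contractions $b_i':=b_i/c$ that still generate $A$; as $c-1\leq\max_i\|b_i-a_i\|<\e/2$ and $\|b_i\|(1-1/c)\leq c-1$, we get $\|a_i-b_i'\|<\e$. Applying the implication $(1)\Rightarrow(3)$ of Lemma~\ref{L.stable-rank} to the contraction tuple $(b_1',\dots,b_n')$ yields $v_1,\dots,v_n$ with $\sum_i v_i^*v_i=1$ (so each $v_i$ is a contraction) and an invertible $a\geq 0$ with $\|a\|\leq\sqrt n$ and $b_i'=v_ia$. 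Then $\|a_i-v_ia\|=\|a_i-b_i'\|<\e$ and $\|\sum_i v_i^*v_i-1\|=0<\e$, which is exactly $(2)$ for this $\e$. The implication $(2)\Rightarrow(3)$ is immediate: given $\e>0$, apply $(2)$ to $\min(\e,1/2)$ to obtain witnesses with $\|a_i-v_ia\|<\e$ and $\|\sum_i v_i^*v_i-1\|<1/2<1$.

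The only substantive step is $(3)\Rightarrow(1)$, and this is where the main obstacle lies: the element $a$ supplied by $(3)$ need not be invertible, so Lemma~\ref{L.stable-rank} does not apply to it directly. Given $\e>0$, take contractions $v_i$ and $a\geq 0$ with $\|a\|\leq\sqrt n$, $\|a_i-v_ia\|<\e$, and $\|\sum_i v_i^*v_i-1\|<1$, and for $\delta>0$ set $b_i:=v_i(a+\delta 1)$. Since $a\geq 0$, the element $a+\delta 1$ is invertible, and $\|a_i-b_i\|\leq\|a_i-v_ia\|+\|v_i\|\,\delta<\e+\delta$. Now $\sum_i v_i^*b_i=\bigl(\sum_i v_i^*v_i\bigr)(a+\delta 1)$ is a product of two invertibles (the first because $\|\sum_i v_i^*v_i-1\|<1$), hence invertible; choosing $y$ with $y\sum_i v_i^*b_i=1$ shows that $\sum_i (yv_i^*)b_i=1$, so $(b_1,\dots,b_n)$ generates $A$. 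This is precisely the computation proving $(4)\Rightarrow(1)$ in Lemma~\ref{L.stable-rank}; note that the norm bound $\|a\|\leq\sqrt n$ plays no role in that implication, so the slightly larger norm of $a+\delta 1$ is harmless. Letting $\e,\delta\to 0$ exhibits $(a_1,\dots,a_n)$ as a limit of generating tuples, which is $(1)$.

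I expect the contraction bookkeeping in $(1)\Rightarrow(2)$ to be purely routine, and the genuine point of the argument to be the perturbation $a\mapsto a+\delta 1$ in $(3)\Rightarrow(1)$: it is exactly what allows one to trade the failure of invertibility of $a$ in the approximate condition $(3)$ for a genuinely invertible factor, at the cost of an arbitrarily small perturbation of the tuple, thereby converting the approximate factorization of Lemma~\ref{L.stable-rank}$(4)$ back into membership in the closure of the generating tuples.
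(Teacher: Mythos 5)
Your proof is correct and follows exactly the route the paper intends: the paper dismisses this lemma as an ``immediate consequence by continuity'' of the equivalence of (1), (3) and (4) in Lemma~\ref{L.stable-rank}, and your argument simply supplies the two perturbation details (rescaling to contractions, and replacing $a$ by the invertible $a+\delta 1$) that make that continuity claim precise. Nothing further is needed.
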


\begin{proof}[Proof of Proposition~\ref{P.SR}] 
\eqref{P.SR.1}: 
 The equivalence of conditions (1) and (2) in Lemma~\ref{L.sr.3}   implies that  the formula 
\begin{equation}\label{Eq.SR}
\sup_{a_i}\inf_{\|a\|\leq \sqrt n} \inf_{v_i}\sum_{i=1}^n \|a_i-v_ia\|+\|1-\sum_{i=1}^n v_i^*v_i\|
\end{equation}
evaluates to 0 if and only if $A$ has stable rank at most $n$.
Therefore having $\sr(A)\leq n$ is axiomatizable for all $n$. 

\eqref{P.SR.1.5} was proved in \cite{farah2016axiomatizability}. The case $n=1$ follows from Lemma~\ref{L.SR.1}.


\eqref{P.SR.2}  (Cf.\ \cite[Fact 1.3]{eagle2015pseudoarc}.) 
The formula in \eqref{Eq.SR} is clearly $\forall\exists$. 
It remains to prove  that for every $n$ having stable rank $\leq n$ is neither universally nor existentially axiomatizable. 
 For a compact Hausdorff space $X$ 
  the stable rank of $\mathrm{C}(X)$  is equal to $\lceil \dim(X)/2\rceil +1$ (see \cite[V3.1.3]{Black:Operator})
    Since $[0,1]^{2m-1}$ is a continuous image of $[0,1]^{2n-1}$ for all $m\geq 1$ and all $n\geq 1$, we 
have an example of \cstar-algebra of stable rank $m$ with a subalgebra
of stable rank $n$ for all such $m$ and $n$. 
Since universally axiomatizable classes are closed under taking submodels and
 existentially axiomatizable classes are closed under taking supermodels 
   (Proposition~\ref{P.Ax}), 
we conclude that stable rank $\leq n$ is neither universally nor existentially axiomatizable for any $n\geq 1$. 
\end{proof} 

 \section{Real rank}\label{S.rrn}
A unital \cstar-algebra $A$ has real rank at most $n$ if and only if the set
of $(n+1)$-tuples of self-adjoint contractions $\bar g$ such that 
$\sum_i g_i^2$ is invertible is dense
(see \cite{brown1991c}, \cite[V.3.2]{Black:Operator}). 
The \emph{real rank}\index{real rank $\rr(A)$}  of~$A$, denoted~$\rr(A)$, is defined to be 
the least $n$ such that the real rank of $A$ is~$\leq n$.  
It is not difficult to see that $\rr(\mathrm{C}(X))=\dim(X)$ for any compact Hausdorff space $X$.

\begin{proposition} \label{P.RR} 
\begin{enumerate}
\item The class of abelian \cstar-algebras with
 real rank $\leq n$  is elementary.  In fact, it is $\forall\exists$-axiomatizable but neither universally or existentially axiomatizable.
\item The class of \cstar-algebras with real rank greater than zero is elementary. 

\end{enumerate}
\end{proposition}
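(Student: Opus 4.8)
The plan for (1) is to imitate the treatment of stable rank in Proposition~\ref{P.SR}, replacing the norm control on the inverse used there by Lemma~\ref{L.rrn}. For each $m\ge 1$ set
\[
\varphi_m:=\sup_{\bar g}\inf_{\bar h}\max\Big(\|\bar g-\bar h\|\dotminus\tfrac1m,\ \big\|\big(\tfrac1{4m^2}\cdot 1-\textstyle\sum_{i=0}^{n}h_i^2\big)_+\big\|\Big),
\]
where $\bar g=(g_0,\dots,g_n)$ and $\bar h=(h_0,\dots,h_n)$ range over the definable set of $(n+1)$-tuples of self-adjoint contractions and $(\cdot)_+$ is the positive part (continuous functional calculus, \S\ref{S.cfc}). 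The point is that $\|(\tfrac1{4m^2}\cdot1-\sum_i h_i^2)_+\|=0$ says exactly $\sum_i h_i^2\ge\tfrac1{4m^2}\cdot1$, i.e. $\sum_i h_i^2$ is invertible with $\|(\sum_i h_i^2)^{-1}\|\le 4m^2$; this is a quantifier-free definable predicate, so each $\varphi_m$ is a genuine $\forall\exists$-sentence. I would show that, for abelian $A$, $\rr(A)\le n$ iff $\varphi_m^A=0$ for all $m$. The converse direction holds for every $A$: if all $\varphi_m^A=0$ then every $(n+1)$-tuple of self-adjoint contractions is approximated arbitrarily well by tuples $\bar h$ with $\sum_i h_i^2$ invertible, which is precisely density, so $\rr(A)\le n$. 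The forward direction is where abelianness enters: given $\bar g$, density provides $\bar a$ with $\sum_i a_i^2$ invertible and $\|\bar g-\bar a\|<\tfrac1{2m}$, and then Lemma~\ref{L.rrn} (with $\e=\tfrac1{2m}$, after clipping entries back into the unit ball) yields $\bar h$ with $\|\bar a-\bar h\|\le\tfrac1{2m}$ and $\sum_i h_i^2\ge\tfrac1{4m^2}\cdot1$, so $\varphi_m^A=0$. Since abelian algebras are universally (hence $\forall\exists$-) axiomatizable (\S\ref{S.Abelian}), intersecting with $\{\varphi_m=0:m\ge1\}$ shows abelian real rank $\le n$ is $\forall\exists$-axiomatizable, in particular elementary.

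For the failure of universal and existential axiomatizability I would argue exactly as for stable rank, using $\rr(\mathrm C(X))=\dim(X)$. For all $m,k\ge1$ the cube $[0,1]^m$ is a continuous image of $[0,1]^k$ (a coordinate projection when $m\le k$, a Peano-type surjection composed with a projection when $m>k$), so $\mathrm C([0,1]^m)$ embeds unitally into $\mathrm C([0,1]^k)$ while $\rr(\mathrm C([0,1]^m))=m$ and $\rr(\mathrm C([0,1]^k))=k$. Taking $k=n$, $m=n+1$ exhibits an algebra of real rank $\le n$ containing a subalgebra of real rank $n+1$, so the class is not closed under substructures; taking $k=n+1$, $m=n$ exhibits one of real rank $\le n$ inside one of real rank $n+1$, so it is not closed under superstructures. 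By Proposition~\ref{P.Ax} it is therefore neither universally nor existentially axiomatizable.

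For (2) the natural route is to realize real rank $>0$ as the complement of the elementary class of real rank zero algebras (Example~\ref{Ex.rr0}) and to show this complement is again elementary. By Lemma~\ref{L.invertible.3} the assignment $A\mapsto X_A$, the closure of the invertible self-adjoint elements in the unit ball, is a definable set, so $\theta:=\sup_{a}\,d(a,X_A)$ (the $\sup$ over self-adjoint contractions) is a sentence, and by definition $\theta^A=0$ iff $\rr(A)=0$; note also $0\in X_A$, so $0\le\theta\le1$. The class of real rank $>0$ algebras is obviously closed under isomorphism, and it is closed under ultraroots because real rank zero, being elementary, is preserved by ultrapowers. By Theorem~\ref{T.Ax} the only remaining point is closure under ultraproducts. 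Since $X_A$ is definable, $\theta$ is a legitimate sentence and \L o\'s' Theorem gives $\theta^{\prod_\cU A_i}=\lim_{i\to\cU}\theta^{A_i}$; closure under ultraproducts then follows from, and is essentially equivalent to, a single \emph{uniform gap} statement: there is a universal $\delta>0$ with $\theta^A\ge\delta$ whenever $\rr(A)>0$. Granting the gap, $\theta$ witnesses co-elementarity in the sense of Proposition~\ref{P.el-co-el}, and $\delta\dotminus\theta$ axiomatizes real rank $>0$.

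The uniform gap is the main obstacle. I would first verify it for commutative algebras, where it is clean: for $A=\mathrm C(Y)$ the set $X_A$ is the closure of the nowhere-vanishing real functions, so on each connected component $d(f,X_A)=\min(\|f_+\|,\|f_-\|)$; if some component of $Y$ has more than one point, Urysohn's lemma produces a self-adjoint contraction attaining both $+1$ and $-1$ there, giving $\theta^A=1$, while otherwise $Y$ is totally disconnected, $A$ has real rank zero, and $\theta^A=0$. Thus $\theta^A\in\{0,1\}$ commutatively. The hard part is the non-commutative case: passing to the abelian subalgebra $\cst(a,1)$ generated by a witness goes the wrong way (it has fewer invertibles, so only increases the distance), and functional calculus applied to a single self-adjoint cannot amplify the distance, since any bijection of $[-1,1]$ has inverse of Lipschitz constant at least $1$. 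Producing, from an arbitrary algebra of real rank $\ge1$, a self-adjoint contraction at uniformly bounded-below distance from the invertible self-adjoints therefore requires a genuinely \cstar-algebraic spectral argument (or an appeal to a known dichotomy for the distance to invertible self-adjoints, in the spirit of R\o rdam's theorem invoked for $\sr$ in Lemma~\ref{L.SR.1}); I expect this step to absorb essentially all of the work.
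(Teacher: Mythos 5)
Your part (1) is correct and is in fact more explicit than the paper's argument. The paper proceeds semantically: it checks closure under elementary submodels directly, checks closure under ultraproducts using Lemma~\ref{L.rrn} (exactly where you use it), invokes Theorem~\ref{T.Ax} to get elementarity, and then obtains $\forall\exists$-axiomatizability only indirectly, from closure under unions of chains via Proposition~\ref{P.Ax}. You instead write down the $\forall\exists$-sentences $\varphi_m$ outright, with the inverse-norm bound from Lemma~\ref{L.rrn} converted into the quantifier-free condition $\|(\tfrac1{4m^2}\cdot 1-\sum_i h_i^2)_+\|=0$; this buys an explicit axiomatization where the paper only certifies existence of one. (One small point: the $b_i$ produced by Lemma~\ref{L.rrn} already lie in the unit ball when the $a_i$ do, so no clipping is needed.) The failure of universal and existential axiomatizability is argued identically in both places, via $\rr(\mathrm C(X))=\dim(X)$ and surjections between cubes.

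Part (2), however, has a genuine gap, and you have correctly located it but not closed it. Your reduction of everything to a uniform gap for $\theta=\sup_a d(a,X_A)$ is sound in outline, and your commutative dichotomy is fine, but the noncommutative gap statement is precisely the content of the proposition and is left as "a genuinely \cstar-algebraic spectral argument" that you do not supply. There is no obvious analogue of R\o rdam's $0$--$1$ theorem for the distance to the invertible self-adjoints, and nothing in the paper provides one; as written, your proof of (2) does not go through.

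The paper closes this gap by changing the sentence. It uses the Brown--Pedersen characterization that $A$ has real rank zero if and only if every hereditary subalgebra has an approximate unit of projections, shows that the set $\cX=\{(a,e):\|a\|\le1,\ 0\le e\le 1,\ ea=a\}$ is definable (a functional-calculus perturbation argument), and sets
\[
\alpha:=\sup_{(a,e)\in\cX}\ \inf_{p\ \mathrm{proj.}}\ \|ep-p\|+\|(1-p)a\|.
\]
The required dichotomy is then proved for \emph{this} sentence: if $\alpha^A=r<1$ one iterates inside the hereditary subalgebra $\overline{eAe}$, producing pairwise orthogonal projections $p_1,p_2,\dots$ with $\|(1-\sum_{j\le k}p_j)a\|<r^{-k}\cdot(\text{decaying error})$, whence $\alpha^A=0$. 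So $\alpha^A\in\{0\}\cup[1,\infty)$ on the nose, and $\alpha\dotminus 1$ (or $1\dotminus\alpha$ for the complement) axiomatizes the relevant class via Proposition~\ref{P.el-co-el}. If you want to complete your write-up, you should either prove a gap theorem for your $\theta$ or switch to the hereditary-subalgebra sentence, where the self-improvement argument is available.
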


A simple proof that having real rank equal to zero is axiomatizable  was given in \S\ref{Ex.rr0}.  The abelian case essentially follows from the results in  \cite[Theorem~2.2.2]{Bank}  and 
\cite{Rie:Dimension} but we include an elementary proof in order to ask a question about the general case after the proof.

\begin{proof}[Proof of Proposition~\ref{P.RR} (1)]

We check this semantically.  Suppose that $A \prec B$ and $B$ is a \cstar-algebra with real rank $\leq n$.  Fix an $(n+1)$-tuple of self-adjoint elements $\bar a \in A_1$. For any $\e > 0$, there is an $(n+1)$-tuple of self-adjoints $\bar b \in B_1$ such that $\|\bar a - \bar b\| < \e/2$ and $\sum b_i^2$ is invertible.  Suppose the norm of the inverse is at most $M$.  By elementarity  we can find an $(n+1)$-tuple of 
self-adjoints $\bar b' \in A_1$ such that $\|\bar a - \bar b'\| < \e$ and~$\sum {b'}_i^2$ is invertible. 
This shows that $A$ has real rank $\leq n$.

Now suppose that $A_i$ is abelian and has real rank $\leq n$ for all $i \in I$.  Fix an ultrafilter $\cU$ on $I$ and we wish to show that $\prod_\cU A_i$ has real rank $\leq n$. 
Suppose that $\bar a \in \prod_\cU A_i$ is an $(n+1)$-tuple of self-adjoint elements and $\e > 0$.  Choose a representing sequence $\langle \overline{ {}_{i}a }: i \in I \rangle$.  Since each $A_i$ has real rank $\leq n$, there is some $\overline{{}_{i}b} \in A_i$ such that $\|\overline{{}_{i}a} - \overline{ {}_{i}b}\| < \e/2$ and $\sum {}_{i}b_j^2$ is invertible.  By Lemma \ref{L.rrn}, there is some $\overline{ {}_{i}c} \in A_i$ such that $\|\overline{ {}_{i}c} - \overline{ {}_{i}b}\| < \e/2$, $\sum {}_{i}c_j^2$ is invertible and the norm of the inverse is bounded by $4/\e^2$.  If we let $\bar c$ be the element of $\prod_\cU A_i$ with representing sequence $\langle \overline{ {}_{i}c} : i \in I\rangle$ then we see that $\|\bar a - \bar c\| < \e$ and $\sum \bar c_j^2$ is invertible.  We conclude then that $\prod_\cU A_i$ has real rank $\leq n$ and so the class of \cstar-algebras with real rank $\leq n$ is elementary.

The property of having real rank $\leq n$ is clearly closed under unions of chains 
 and therefore by Proposition~\ref{P.Ax} axiomatizable if and only if it is 
 \aea.  It therefore suffices to show
 that the class of abelian \cstar-algebras of real rank $\leq n$ is neither universally axiomatizable nor existentially axiomatizable. 
 This is similar to the proof of Proposition~\ref{P.SR}(\ref{P.SR.2}). 
For abelian algebras $\mathrm{C}(X)$ the real rank is equal to the covering dimension of the spectrum $X$ (\cite[V.3.2.2]{Black:Operator}). 
Since $[0,1]^m$ is a continuous image of $[0,1]^n$ for all $m\geq 1$ and all $n\geq 1$, we 
have an example of \cstar-algebra of real rank $m$ with a subalgebra
of real rank $n$ for all such $m$ and $n$. 
Since universally axiomatizable classes are closed under taking submodels  (Proposition~\ref{P.Ax}), 
we conclude that real rank $\leq n$ is neither universally nor existentially axiomatizable for any $n > 0$. 

(2)  We prove that the  class of algebras of real rank greater than 0  is elementary.  From Theorem 2.6 of \cite{brown1991c}, one knows that $A$ has real rank zero if and only if every hereditary subalgebra has an approximate identity consisting of projections. 
\begin{lemma} The set 
\[
\cX:=\{(a,e): \|a\|\leq 1, \|e\|\leq 1, e\geq 0,\text{ and }ea=a\}
\]
is definable. 
\end{lemma}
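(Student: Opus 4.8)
The plan is to show that $\cX$ is the zero-set of a weakly stable predicate and then invoke Lemma~\ref{L.weaklystable}. Recall from Example~\ref{Ex.1} that the set $P$ of positive contractions is definable, so $d(e,P)$ is a weakly stable predicate. On the product of the unit ball with itself, $A_1\times A_1$, consider
\[
\Psi(a,e):=\max\{\,d(e,P),\ \|ea-a\|\,\},
\]
whose zero-set is exactly $\cX$. Thus it suffices to prove that $\Psi$ is weakly stable, uniformly over all \cstar-algebras. Given $\Psi(a,e)<\delta$, weak stability of $d(\cdot,P)$ first lets us replace $e$ by a genuine positive contraction at small cost, so the problem reduces to the following correction statement: there is a modulus $u$ with $u(0)=0$ such that whenever $a$ is a contraction, $e$ a positive contraction, and $\eta:=\|ea-a\|$, there is a pair $(a',e')\in\cX$ with $\|a-a'\|\le u(\eta)$ and $\|e-e'\|\le u(\eta)$.

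I would build $(a',e')$ by continuous functional calculus in $e$ (\S\ref{S.cfc}). Fix a parameter $\delta>0$ and choose continuous $f,k\colon[0,1]\to[0,1]$ with $f\equiv 0$ on $[0,1-\delta]$ and $f\equiv 1$ on $[1-\delta/2,1]$, and with $k(t)=t$ on $[0,1-2\delta]$ and $k\equiv 1$ on $[1-\delta,1]$, each monotone in between. Put $a':=f(e)a$ and $e':=k(e)$. Because $k(t)f(t)=f(t)$ for every $t\in[0,1]$ (wherever $f\neq0$ one has $t>1-\delta$, hence $k=1$), we get $e'a'=k(e)f(e)a=f(e)a=a'$, so $(a',e')\in\cX$; clearly $a'$ is a contraction and $e'$ a positive contraction. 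Moreover $\|e-e'\|=\sup_{t\in\Sp(e)}|t-k(t)|\le 2\delta$, and using the pointwise bound $1-f(t)\le\frac{2}{\delta}(1-t)$ together with $0\le 1-f(e)\le 1$ one obtains
\[
\|a-a'\|^2=\|a^*(1-f(e))^2a\|\le\|a^*(1-f(e))a\|\le\tfrac{2}{\delta}\,\|a^*(1-e)a\|\le\tfrac{2}{\delta}\,\eta .
\]
Choosing $\delta:=\eta^{1/2}$ makes both $\|e-e'\|$ and $\|a-a'\|$ tend to $0$ with $\eta$, which is the desired correction; combined with the preliminary replacement of $e$ this shows $\Psi$ is weakly stable, and Lemma~\ref{L.weaklystable} then yields that $\cX$ is definable.

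The main obstacle is precisely arranging $e'a'=a'$ \emph{exactly} while keeping both perturbations small. One cannot simply set $a'=a$ and let $e'$ be $1$ on the support of $a$: the support projection of $a$ is not available through continuous functional calculus unless $|a|$ has a spectral gap at $0$, which a norm-small perturbation of $a$ cannot in general create inside the algebra. The device that resolves this is the gap between the two cut-offs: $f$ is supported in $\{t>1-\delta\}$ while $k$ is already identically $1$ there, so the algebraic identity $kf=f$ holds on the nose and forces $e'a'=a'$, whereas the overlap region $[1-2\delta,1-\delta]$ only has to absorb an error of size $2\delta$ in $e$. (Alternatively, the same correction shows $\cX^{\prod_\cU A_i}=\prod_\cU\cX^{A_i}$ directly, so definability also follows from the semantic criterion, Theorem~\ref{Th.sem.def}.)
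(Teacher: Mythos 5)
Your proof is correct and follows essentially the same route as the paper's: two nested cut-off functions of $e$ whose product identity forces $e'a'=a'$ exactly, with the pointwise inequality $1-f(t)\lesssim\frac{1}{\delta}(1-t)$ converting the hypothesis $\|(1-e)a\|$ small into $\|a-a'\|$ small (the paper phrases this via the auxiliary fact that $0\le x\le y$ implies $\|xz\|\le\|yz\|$ and a spectral projection, you via the \cstar-identity and operator monotonicity, but it is the same estimate). The only cosmetic differences are that you make the modulus explicit by taking $\delta=\eta^{1/2}$ and conclude via weak stability and Lemma~\ref{L.weaklystable}, whereas the paper fixes $\delta<\e^2$ and invokes Theorem~\ref{Th.sem.def}.
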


\begin{proof}  
Fix sufficiently small $0<\delta<\e^2$. 
We shall prove that for all $0\leq e\leq 1$ and contraction $a$ 
such that $\|(1-e)a\|<\delta$ there are $0\leq e'\leq 1$ and 
a contraction $a'$ such that $\|a-a'\|<\e$, $\|e-e'\|\leq 2\e$ and $e'a=a'$. 
This is a continuous functional calculus argument. 
We shall use the following fact. 
If $0\leq x\leq y$, then $\|xz\|\leq \|yz\|$ for all $z$. 
This is because then $z^*x^2 z\leq z^* y^2z$ and 
\[
\|xz\|^2=\|z^*x^2z\| \leq \|z^*y^2z\|=\|yz\|^2. 
\]
Let $f\colon [0,1]\to [0,1]$ be such that $f(0)=0$, $f(t)=1$ for $1-2\e\leq t\leq 1$ and is linear on $[0,1-2\e]$. Let $g\colon [0,1]\to [0,1]$ be such that $g(t)=0$ for $0\leq t\leq 1-2\e$, $g(t)=1$
for $1-\e\leq t\leq 1$,  
and is linear on $[1-2\e,1-\e]$. In particular  $fg=g$ and $h:=\chi_{[0,1-\e]}$ satisfies 
$h\geq 1-g$ and $1-f\geq \e h$. 

Let $e':=f(e)$ and $a':=g(e)a$. Then $\|e-e'\|=\|(1-f)e\|\leq 2\e$ and 
 $e'a'=f(e)g(e)a=g(e)a=a'$.  It remains to check that $\|a-a'\|=\|(1-g)(e)a\|<\e$. Assume otherwise. 
 
Fix a faithful representation of our (hitherto unnamed) \cstar-algebra and let $p$ 
(in its weak closure)  be the spectral projection of $e$ corresponding to the interval $[0,1-\e]$. 
By the above we have $(1-e)\geq \e p\geq \e (1-g)(e)$ and therefore 
$\|(1-e)a\|\geq \e \|(1-g)(e)a\|\geq \e^2$. This contradicts our choice of $\e$ and $\delta$
and proves our claim. 

Since both $\{e: 0\leq e\leq 1\}$ and the unit ball are definable, 
by Theorem~\ref{Th.sem.def}, this implies definability of $\cX$.  
\end{proof}

Let  
\[
\alpha:= \sup_{(a,e)\in \cX} \inf_{p\text{ proj.}}\|ep-p\|+ \|(1-p)a\|. 
\]
Then $\alpha^A=0$ if and only if 
 in  every hereditary subalgebra $\overline{eAe}$ for every  $a\in \overline{eAe}$
 and every $\e>0$ there exists a projection $p\in \overline{eAe}$ satisfying $\|a-pa\|<\e$.
 (To get $p\in \overline{eAe}$ we use the definability of the set of projections.) 

 Since for every $n$, all  $a_1, \dots, a_n$ and every projection $p$ for all $j$ we have 
 \begin{multline*}
 \|(1-p)\sum_j (a_ja_j^*)^{1/2}\|^2=
 \|(1-p)\sum_j (a_ja_j^*)(1-p)\|\\
 \geq 
 \|(1-p)a_ja_j^*(1-p)\|=\|(1-p)a_j\|^2, 
 \end{multline*}
 in order to find an approximate unit consisting of projections it suffices to show that for every 
 element $a$ of the algebra and $\e > 0$ there is a projections $p$ such that $\|(1-p)a\| < \e$. 
 Therefore $\alpha^A=0$ implies that $A$ has an approximate unit consisting of projections. 
 
 We claim that $\alpha^A=r<1$ implies $\alpha^A=0$. Fix $\e>0$. 
 Fix $(a,e)\in \cX$. Since $\alpha^A=r$ 
 we can find a projection $p_1$ such that $\|p_1-e p_1\|<2^{-1}\e$
 and $a_1:=r(1-p_1)a$ has norm $<1$. 
 Let $e_1:=(1-p_1)e(1-p_1)$. Then $(a_1,e_1)\in \cX$ and we can find projection $p_2$ such that 
 $\|p_2-e_1 p_2\|<2^{-2}\e$ and $a_2:=r(1-p_2)a_1$ has norm $<1$. 
 Note that $p_1p_2=0$ and that $\|(1-p_1-p_2)a\|=r^{-2}a_2$ has norm $<r^{-2}$. 
 
 Proceeding in this manner we find $(a_j,e_j)\in \cX$ and pairwise orthogonal 
 projections $p_j$, for $j\in \bbN$, such that 
 $\|p_j -e p_j\|<2^{-j} \e$ and $\|(1-p_j)a\|<r^{-j}$. 
 For a large enough $n$ the projection $q=\sum_{j=1}^n p_j$ belongs to $\overline{eAe}$ 
 and satisfies $\|(1-q)a\|<\e$. 

Therefore $\alpha^A=0$ if $A$ has real rank zero and $\alpha^A=1$ if $A$ has real rank greater than zero. 
\end{proof} 

\begin{question}
Is the class of \cstar-algebras of real rank $\leq n$ elementary?  By the proof given above, this question is equivalent to asking if the word `abelian' can be removed from Lemma \ref{L.rrn}.
\end{question}

\section{Tensor products} \label{S.Tensor} 
 An old question of Feferman and Vaught was whether for `algebraic systems' 
 free products or 
 tensor products (where applicable) preserve elementary 
 equivalence, in the sense that $A_1\equiv A_2$ and $B_1\equiv B_2$ 
 implies $A_1\otimes B_1\equiv A_2\otimes B_2$ (\cite[footnote on p. 76]{feferman1959first}). 
 This question in case of tensor products of modules was answered in the negative 
 in \cite{Olin:Direct}. We shall consider a \cstar-algebraic version of this question. 
 
Does  tensoring with a fixed \cstar-algebra $C$
preserves elementary equivalence? More precisely, is it true that 
for every pair of elementarily equivalent \cstar-algebras $A$ and $B$ the algebras
$A\otimes C$ and $B\otimes C$ are elementarily equivalent? 
(Our convention is that $\otimes$ denotes the minimal tensor product.)

\begin{lemma} Suppose a \cstar-algebra $C$ is 
such that for all $A$ and $B$, if $A\prec B$ then $A\otimes C\prec B\otimes C$. 
Then  for all $A$ and $B$,  $A\equiv B$ implies $A\otimes C\equiv B\otimes C$. 
\end{lemma}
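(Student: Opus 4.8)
The plan is to pass from elementary equivalence to an isomorphism of ultrapowers via the Keisler--Shelah theorem, and then transport that isomorphism along $\otimes C$ using the hypothesis together with \L o\'s' Theorem.

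First I would fix \cstar-algebras $A$ and $B$ with $A \equiv B$. By the Keisler--Shelah theorem (\cite[Theorem~5.7]{BYBHU}, invoked in the same way as in the remark following Theorem~\ref{T.Elem}) there is an ultrafilter $\cU$, on some index set, such that $A^\cU \cong B^\cU$. Since the class of \cstar-algebras is elementary, $A^\cU$ and $B^\cU$ are genuine \cstar-algebras and the tensor products $A^\cU \otimes C$ and $B^\cU \otimes C$ make sense.

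Next I would combine \L o\'s' Theorem (Theorem~\ref{Los}) with the hypothesis. By \L o\'s' Theorem we have $A \prec A^\cU$ and $B \prec B^\cU$. Applying the hypothesis on $C$ to each of these inclusions yields
\[
A \otimes C \prec A^\cU \otimes C \quad\text{and}\quad B \otimes C \prec B^\cU \otimes C,
\]
and since an elementary submodel is in particular elementarily equivalent to its extension, this gives $A \otimes C \equiv A^\cU \otimes C$ and $B \otimes C \equiv B^\cU \otimes C$. Finally, a $^*$-isomorphism $A^\cU \cong B^\cU$ induces a $^*$-isomorphism $A^\cU \otimes C \cong B^\cU \otimes C$ (the minimal tensor product is functorial for isomorphisms), hence $A^\cU \otimes C \equiv B^\cU \otimes C$. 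Chaining these equivalences through transitivity of $\equiv$ gives
\[
A \otimes C \equiv A^\cU \otimes C \equiv B^\cU \otimes C \equiv B \otimes C,
\]
which is the desired conclusion.

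There is essentially no hard step: the whole argument is a routine chase once the right tools are in place. The one genuinely nontrivial ingredient is the Keisler--Shelah theorem, which converts the ``global'' relation $A \equiv B$ into a concrete isomorphism that $\otimes C$ can act on; without it one would instead have to build a common elementary extension of $A$ and $B$ by hand (which amounts to the same input). The only points needing a word of care are that the intermediate objects $A^\cU \otimes C$ are again \cstar-algebras, so that $\equiv$ is meaningful for them, and that $\otimes C$ preserves isomorphism; both are standard.
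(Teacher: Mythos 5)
Your proof is correct, but it takes a different route from the paper's. The paper constructs a \emph{common} elementary extension: it takes a sufficiently saturated model $D$ of $\Th(A)$, into which both $A$ and $B$ embed elementarily, applies the hypothesis twice to get $A\otimes C\prec D\otimes C$ and $B\otimes C\prec D\otimes C$, and concludes by transitivity of $\equiv$ through the single object $D\otimes C$. You instead convert $A\equiv B$ into a concrete isomorphism $A^{\cU}\cong B^{\cU}$ via Keisler--Shelah, push the hypothesis along the diagonal embeddings $A\prec A^{\cU}$ and $B\prec B^{\cU}$, and bridge the two sides using functoriality of the minimal tensor product under $^*$-isomorphism. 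The paper's argument is lighter on machinery (elementary amalgamation, essentially Theorem~\ref{T.Elem} plus compactness, rather than the full Keisler--Shelah theorem) and needs no statement about $\otimes C$ preserving isomorphisms; yours has the mild advantage of producing an explicit isomorphism of ultrapowers and only ever invoking the hypothesis for the canonical inclusions $A\prec A^{\cU}$, which the paper has already established hold for every $A$. Both arguments are complete; the extra ingredients you use (that $A^{\cU}\otimes C$ is again a \cstar-algebra and that a $^*$-isomorphism induces one on minimal tensor products) are standard and correctly flagged.
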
 

\begin{proof} Fix $A$ and $B$ which are elementarily equivalent. 
If $D$ is a sufficiently saturated model of theory of $A$, then both $A$ and $B$ are elementarily embeddable into $D$. 
Our assumption implies $A\otimes C\prec D\otimes C$ and $B\otimes C\prec D\otimes C$, and (1) follows. 
\end{proof}

We have both a positive result (Lemma \ref{L.Mn}) and a negative result (Proposition \ref{P.C01}) related to this question. 
  By the standard methods one sees that it suffices to consider the case when $C$ is separable. 
  This is because every \cstar-algebra is an inductive limit of separable \cstar-algebras which are elementary submodels.
 If $C$ is a nuclear \cstar-algebra  and  $A$ is a subalgebra of $B$ then we can canonically 
identify $A\otimes C$ with a subalgebra of $B\otimes C$. 
Our question has a positive answer when $C$ is finite-dimensional.

\begin{lemma} \label{L.Mn} For every finite-dimensional algebra $F$ and all $A$ and $B$ we have that 
$A\prec B$ implies $F\otimes A\prec F\otimes B$. 
\end{lemma}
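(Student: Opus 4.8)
The plan is to realize $F\otimes A$ as a structure \emph{interpreted} in $A$, uniformly across all \cstar-algebras, and then to push the elementarity of $A\prec B$ through this interpretation. Writing $F=\bigoplus_{k=1}^n M_{m_k}(\bbC)$, we have $F\otimes A\cong\bigoplus_{k=1}^n M_{m_k}(A)$, so a point of (a ball of) $F\otimes A$ is recorded by its $N:=\sum_k m_k^2$ matrix entries, each lying in a ball of $A$. The heart of the matter is already the single-block case $M_m(A)$; the direct sum only adds a maximum.

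First I would set up the interpretation of $F\otimes A$ in $A$. The algebraic operations $+,\cdot,{}^*$ and scalar multiplication on $F\otimes A$ are given by fixed $^*$-polynomials in the matrix entries, hence by terms in the language of $A$. The only datum that is not a term is the norm on $F\otimes A$, which is the maximum over the blocks of the operator norms on $M_{m_k}(A)$; but by Lemma~\ref{L.Norm} each such block norm is a definable predicate in $A$, uniformly across all \cstar-algebras, and so is their maximum. Consequently, for each radius $r$ the ball $\{\bar a\in A^{N}:\|\bar a\|_{F\otimes A}\leq r\}$ is a definable subset of the product of balls of $A$, and one may therefore quantify over it (\S\ref{S.Definable}).

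With this in place I would prove, by induction on the complexity of formulas following Definition~\ref{formula}, that to every formula $\varphi$ in the language of \cstar-algebras there is an associated definable predicate $\varphi^\sharp$ in the language of $A$ such that $\varphi^{F\otimes A}(\bar a)=(\varphi^\sharp)^A(\text{entries of }\bar a)$ for every \cstar-algebra $A$ and every tuple $\bar a$ from $F\otimes A$. Atomic formulas translate via the term descriptions of the operations together with the definable norm; connectives are immediate; and a quantifier $\inf_{x\in B_r(F\otimes A)}\varphi$ translates into the infimum of $\varphi^\sharp$ over the definable ball described above, which is again a definable predicate. The translation is entirely uniform in $A$.

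Granting the translation, the conclusion is immediate. If $A\prec B$ and $\bar a$ is a tuple from $F\otimes A$ with entries in $A\subseteq B$, then
\[
\varphi^{F\otimes A}(\bar a)=(\varphi^\sharp)^A(\text{entries})=(\varphi^\sharp)^B(\text{entries})=\varphi^{F\otimes B}(\bar a),
\]
where the middle equality is $A\prec B$ applied to the definable predicate $\varphi^\sharp$ (being a uniform limit of $\calL$-formulas, its value is preserved by elementary embeddings). Hence $F\otimes A\prec F\otimes B$. The one point requiring care, and the crux of the argument, is the uniform definability of the matrix norm (Lemma~\ref{L.Norm}); this is exactly what lets the quantifiers over balls of $F\otimes A$ translate into quantifiers over definable sets in $A$. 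Equivalently, one may phrase the whole argument by observing that $F\otimes A$ occurs as a sort of $A^{\eq}$ and that $A\prec B$ lifts canonically to $A^{\eq}\prec B^{\eq}$ by Theorem~\ref{T.conceptual-completeness}.
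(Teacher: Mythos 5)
Your proposal is correct and follows essentially the same route as the paper: the paper likewise reduces to the single-block case $M_n(A)$, invokes Lemma~\ref{L.Norm} for the definability of the matrix norm, and then appeals to the general theory of definability (or a straightforward induction on the complexity of formulas, which you have written out explicitly), noting that $M_n(A)$ lives in $A^{\eq}$. The only difference is that you spell out the interpretation and the inductive translation $\varphi\mapsto\varphi^\sharp$ in detail where the paper leaves it as an exercise.
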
 

\begin{proof} 
It suffices to prove the most interesting case, when $F$ is $M_n(\bbC)$ for some $n$. 
In this case we need to show that $M_n(A)\prec M_n(B)$. 

By Lemma \ref{L.Norm}, the norm in $M_n(A)$ is a definable predicate, and it belongs to $A^{\eq}$. 
The conclusion follows by general theory of definability
 (\S\ref{S.Definable}) or by a straightforward induction on complexity of formulas. 
\end{proof}

By   $U(A)$\index{U@$U_0(A)$} we  denote the unitary group of 
$A$ and by $U_0(A)$ we denote  the connected component of the identity in $U(A)$. 
Proposition \ref{P.C01} below is related to 
 the observation made in   \cite[Example 4.7]{Li:Ultraproducts} that there exists a 
simple \cstar-algebra $A$ such that $U(A)$ is path-connected but $U(A^{\cU})$ is not. 
Without the requirement that $A$ be simple already the algebra $B=\mathrm{C}([0,1])$ has this property. 
It is an exercise in topology to show that every unitary in $B$ is of the form $\exp(i a)$ for a self-adjoint $a$, 
hence $U(B)$ is path-connected. However, the unitary in $B^{\cU}$ whose representing sequence 
is $(t\mapsto \exp(i n t))_{n\in \bbN}$ does not belong to $U_0(B^{\cU})$.

 \begin{prop} \label{P.C01} 
 There are \cstar-algebras $A$ and $B$ such that $A\prec B$ but
 $\mathrm{C}([0,1])\otimes A$ and $\mathrm{C}([0,1])\otimes B$ are not elementarily equivalent. 
 \end{prop}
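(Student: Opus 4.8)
The plan is to take $B$ to be an ultrapower $A^{\cU}$ of a carefully chosen separable algebra $A$, so that $A\prec B$ holds automatically by \L o\'s' Theorem (Theorem~\ref{Los}), and then to separate $\mathrm{C}([0,1])\otimes A$ from $\mathrm{C}([0,1])\otimes A^{\cU}$ by means of an \emph{elementary} invariant. The natural choice is the stable rank, which is elementary by Proposition~\ref{P.SR}: indeed, by Lemma~\ref{L.SR.1} there is a single sentence $\theta$ whose value is $0$ when $\sr=1$ and $1$ when $\sr>1$. Thus it suffices to produce $A$ for which $\mathrm{C}([0,1])\otimes A$ has stable rank one while $\mathrm{C}([0,1])\otimes A^{\cU}$ does not; then $\theta$ takes different values on the two tensor products, witnessing $\mathrm{C}([0,1])\otimes A\not\equiv \mathrm{C}([0,1])\otimes A^{\cU}$ and hence the proposition (with $B:=A^{\cU}$).

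For the algebra $A$ I would use (a variant of) the simple \cstar-algebra of \cite[Example 4.7]{Li:Ultraproducts} quoted in the remark preceding this proposition: a separable, unital, simple algebra of stable rank one for which $U(A)=U_0(A)$ but $U(A^{\cU})\neq U_0(A^{\cU})$, the disconnectedness of $U(A^{\cU})$ stemming from the fact that $A$ has \emph{unbounded exponential length}. Note that $\sr(A^{\cU})=1$ as well, since stable rank $\le 1$ is elementary and passes to ultrapowers by \L o\'s' Theorem.

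The $A^{\cU}$-side is then the clean half. Fix a unitary $u\in U(A^{\cU})\setminus U_0(A^{\cU})$ and consider the contraction $f\in \mathrm{C}([0,1])\otimes A^{\cU}=\mathrm{C}([0,1],A^{\cU})$ given by $f(t)=(1-t)1+tu$. If there were an invertible $g\in\mathrm{C}([0,1],A^{\cU})$ with $\|f-g\|$ small, then $g(t)$ would be invertible for every $t$, so its polar part $g(t)|g(t)|^{-1}$ would be a unitary depending continuously on $t$; this yields a continuous path of unitaries from one close to $1$ (hence in $U_0(A^{\cU})$) to one close to $u$. Since $U_0(A^{\cU})$ is clopen in $U(A^{\cU})$, this forces $u\in U_0(A^{\cU})$, a contradiction. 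Hence $f$ is not a limit of invertibles and $\sr(\mathrm{C}([0,1])\otimes A^{\cU})\ge 2$.

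The hard part is the $A$-side, namely arranging that $\mathrm{C}([0,1])\otimes A$ genuinely has stable rank one. The difficulty is a real tension: absorbing the one extra dimension contributed by the interval (so that the tensor product keeps stable rank one) pulls toward real-rank-zero-like behaviour, which would \emph{bound} the exponential length and make $U(A^{\cU})$ connected again; whereas the disconnectedness of $U(A^{\cU})$ driving the previous paragraph requires \emph{unbounded} exponential length. Threading this needle is exactly what a construction of simple AH type with sufficiently slow dimension growth accomplishes, and the main work I would carry out is to verify, for the algebra of \cite{Li:Ultraproducts} (or a suitable modification of it), that tensoring with $\mathrm{C}([0,1])$ preserves stable rank one — e.g.\ through the dimension-growth estimates for the stable rank of such inductive limits. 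With both halves in hand, the sentence $\theta$ separates the theories of $\mathrm{C}([0,1])\otimes A$ and $\mathrm{C}([0,1])\otimes A^{\cU}$, completing the argument.
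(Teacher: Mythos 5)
Your overall strategy coincides with the paper's: take $B:=A^{\cU}$ so that $A\prec B$ is automatic, and separate the two tensor products by stable rank one, which is axiomatizable by Lemma~\ref{L.SR.1}. Your treatment of the $A^{\cU}$-side is correct and complete --- the path $f(t)=(1-t)1+tu$ together with the polar-decomposition argument and the fact that $U_0$ is clopen is a self-contained proof of the direction that the paper simply quotes (path-connectedness of $U(A^{\cU})$ is necessary for $\sr(\mathrm{C}([0,1])\otimes A^{\cU})=1$).

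The genuine gap is the half you yourself flag as ``the hard part'': you never establish that $\mathrm{C}([0,1])\otimes A$ has stable rank one, and the route you sketch (dimension-growth estimates for a specific AH construction) is not carried out. The paper closes this with a single cited equivalence \cite{AnBoPePe:Geometric}: for a \emph{simple} unital $A$, $\mathrm{C}([0,1])\otimes A$ has stable rank one \emph{if and only if} $U(A)$ is path-connected. This dissolves the ``tension'' you describe: the positive half needs only that $U(A)$ be path-connected, while the negative half needs that $U(A^{\cU})$ be disconnected, and these coexist precisely when every unitary of $A$ is a finite product of exponentials but with no uniform bound on the number of factors (unbounded exponential length). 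No real-rank-zero-like behaviour, and no delicate dimension-growth computation, is required. Concretely, the paper takes $A$ simple, nuclear, unital, separable with trivial $K_0$ and at least two traces agreeing on projections (such algebras exist by \cite{Vil:Range}); Phillips' theorem \cite[Theorem 3.1]{Phi:Exponential} then gives unitaries needing arbitrarily many exponential factors, whence $U(A^{\cU})$ is disconnected, while $U(A)$ is path-connected, whence $\sr(\mathrm{C}([0,1])\otimes A)=1$ by the cited equivalence. To repair your write-up you should either invoke that equivalence or supply an independent proof of the sufficiency direction for your chosen $A$; as it stands the claim $\sr(\mathrm{C}([0,1])\otimes A)=1$ is asserted but not proved.
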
 
 
 \begin{proof} We show that there exists $A$ such that $\mathrm{C}([0,1])\otimes A$ 
 has stable rank one but $\mathrm{C}([0,1])\otimes A^{\cU}$ does not. 
 This suffices since having stable rank one is axiomatizable (\S\ref{S.sr}). 
 
 If $A$ is simple then $\mathrm{C}([0,1])\otimes A$ has stable rank one if and only if $U(A)$ 
 is path-connected (\cite{AnBoPePe:Geometric}), and in general the assumption of 
 $U(A)$ being path connected is necessary 
 for the former algebra having stable rank one.  

In order to have $U(A^{\cU})$ not path connected it suffices that 
for every $n\in \bbN$ there exists a unitary $u\in U(A)$ such that $u$ cannot be written 
as a product of fewer than $n$ factors of the form $\exp(ia)$ for $a$ self-adjoint. 
This is because if $\|u-v\|<2$  then the spectrum of $uv^*$ has a gap and therefore 
$u=v\exp(ia)$ for a self-adjoint $a$. 
 In \cite[Theorem 3.1]{Phi:Exponential} N.C.\ Phillips has proved that any simple \cstar-algebra $A$ 
 with two distinct
 traces which agree on projections has this property. By \cite{Vil:Range} 
 there exist simple nuclear unital separable (and even classifiable) \cstar-algebras 
with trivial $K_0$ and many traces. These algebras clearly satisfy the assumptions of Phillips' theorem. 
 \end{proof} 
 
 In the case of \cstar-algebras one could talk in general about minimal, maximal, or other 
 tensor products. 
 Since for a nuclear \cstar-algebra (and in particular for an abelian \cstar-algebra) 
 $C$ and  arbitrary \cstar-algebra $B$ there is a unique tensor product 
 $B\otimes C$,   Proposition \ref{P.C01} implies a negative answer to all of these questions.

 \begin{coro} \label{C.C01} Tensor products in the category of \cstar-algebras, and even nuclear \cstar-algebras, 
 do not preserve elementary equivalence. In particular, there are \cstar-algebras $A\equiv B$  such that $A$ is nuclear, 
 but $A\otimes \mathrm{C}([0,1])\not\equiv B\otimes \mathrm{C}([0,1])$. \qed
\end{coro}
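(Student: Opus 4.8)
The plan is to read this corollary off directly from Proposition~\ref{P.C01}, whose proof already produces the required witnessing pair; the algebra constructed there happens to be nuclear, which is the only additional point to verify. First I would take $A$ to be the simple, nuclear, unital, separable \cstar-algebra furnished by the proof of Proposition~\ref{P.C01} (one possessing two distinct traces that agree on projections, obtained by combining the range-of-invariant construction of \cite{Vil:Range} with Phillips' theorem \cite{Phi:Exponential}), and set $B := A^{\cU}$ for a nonprincipal ultrafilter $\cU$ on $\bbN$. By \L o\'s' Theorem (Theorem~\ref{Los}) we have $A \prec A^{\cU} = B$, and in particular $A \equiv B$; moreover $A$ is nuclear by construction, which supplies the ``in particular'' clause of the statement.

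For the inequivalence I would invoke the conclusion of Proposition~\ref{P.C01}: $\mathrm{C}([0,1]) \otimes A$ has stable rank one, whereas $\mathrm{C}([0,1]) \otimes A^{\cU}$ does not. Since having stable rank one is axiomatizable (\S\ref{S.sr}, Lemma~\ref{L.SR.1}), these two algebras cannot be elementarily equivalent, so $A \otimes \mathrm{C}([0,1]) \not\equiv B \otimes \mathrm{C}([0,1])$, as required.

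Finally, to justify the first, more general assertion, I would note that $\mathrm{C}([0,1])$ is abelian and hence nuclear, so the minimal and maximal tensor products with $\mathrm{C}([0,1])$ coincide and the expressions $A \otimes \mathrm{C}([0,1])$ and $B \otimes \mathrm{C}([0,1])$ are unambiguous (this is precisely the content of the remark preceding the corollary). Thus the same pair $A, B$ witnesses the failure of preservation of elementary equivalence under tensoring both in the category of \cstar-algebras and under tensoring by a nuclear algebra, with the nuclear algebra $A$ itself as one of the two inputs. There is no genuine obstacle in this deduction: all the substantive work---the construction of $A$ and the verification that stable rank one is lost in passing to the ultrapower after tensoring---is carried out in Proposition~\ref{P.C01}, and the only extra observation needed is that the algebra extracted from that proof is nuclear, which is immediate from how it is built.
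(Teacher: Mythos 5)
Your proposal is correct and is essentially the paper's own argument: the corollary is stated with no separate proof precisely because it is read off from Proposition~\ref{P.C01} together with the remark that tensoring with the nuclear (abelian) algebra $\mathrm{C}([0,1])$ is unambiguous, and the witnessing algebra $A$ built there (simple, nuclear, unital, separable, via \cite{Vil:Range} and Phillips' theorem) is nuclear by construction, with $B=A^{\cU}$ and the inequivalence detected by the axiomatizable property of having stable rank one. Nothing is missing.
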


\begin{question} \label{Q.elem} 
Can one characterize \cstar-algebras $C$ such that tensoring with $C$ preserves elementary equivalence?  
In particular, what if $C$ is an (infinite-dimensional) AF algebra, UHF algebra, $\cK$ or $\cZ$? 
Is there any infinite-dimensional $C$ such that tensoring with $C$ preserves elementary equivalence? 
\end{question} 

The `obvious' proof of a positive answer to the AF case of  Question \ref{Q.elem}  does not work, as the following example shows. 

\begin{example} There are two discrete directed systems,  $(A_n\colon n\in \bbN)$ with   $f_{mn}\colon A_m\to A_n$ 
and $(B_n\colon n\in \bbN)$ with $g_{mn}\colon B_m\to B_n$ such that $A_n\prec B_n$ for all $n$ 
but $\lim_n A_n\not\equiv \lim_n B_n$. 

Let $A$ be a random graph. Partition the set of its vertices into infinitely many infinite pieces, $V_n$ for $n\in \bbN$. 
Now let $A_n$ be the subgraph of $A$ consisting of the induced graph on $\bigcup_{j\leq n} V_n$ together 
with $V_{n+1}$ taken as a set of isolated vertices. 
This $\lim_n A_n$ is $A$. 

Now let $v$ be a vertex not in $A$, and let $B_n$ be $A_n$ with $v$ added to its set of vertices. 
Then $A_n\prec B_n$ for all $n$, but $\lim_n B$ has an isolated vertex $v$ and $A$ does not. 
\end{example}

\section{$K_0(A)$ and $A^{\eq}$}
\label{S.K0} 

The $K_0$-group\index{K@$K_0$} 
of a unital \cstar-algebra $A$ is constructed in two stages (see \cite[Chapter 3]{RoLaLa:Introduction}). 
The first stage is to construct the Murray--von Neumann semigroup, as follows.
In the first 
stage one tensors $A$ with the algebra $\cK$ of compact operators on $H$. 
The resulting algebra $A\otimes \cK$ is the \emph{stabilization} of $A$\index{stabilization}.
The set $\cP(A\otimes \cK)$ of projections of the stabilization of $A$ is quotiented by Murray--von Neumann 
equivalence. (Projections $p$ and 
$q$ are \emph{Murray--von Neumann equivalent}\index{Murray--von Neumann equivalence} 
  if there exists $v$ such that $v^*v=p$ and $vv^*=q$; see also  Example~\ref{Ex.1} \eqref{Ex.1.MvN}). By  Example~\ref{Ex.MvN}, the obtained quotient denoted $V(A)$\index{V@$V(A)$} 
belongs to $(A\otimes \cK)^{\eq}$. 
This set is equipped with the addition defined as follows. 
Fix an isomorphism between $\cK$  and $M_2(\cK)$; this gives an isomorphism 
between $A\otimes \cK$ and $M_2(A\otimes \cK)$. For projections $p$ and $q$ one defines $[p]+[q]$ to be the 
equivalence class of block-diagonal $\diag(p,q)$.\index{diag@$\diag(p,q)$} This is a well-defined operation on $V(A)$ and it turns it into an abelian 
semigroup, the so-called \emph{Murray--von Neumann semigroup}.\index{Murray--von Neumann semigroup}  

Another way to describe $V(A)$ and its relationship with $A^{\eq}$ is to use the fact that any projection $p \in A \otimes \cK$ is actually Murray--von Neumann equivalent 
to a projection in $M_n(A)$ for some $n$.  Moreover, if $p \in M_m(A)$ and $q \in M_n(A)$ then $[p] + [q]$ is the class of the projection $r \in M_{n+m}(A)$ which has $p$ and $q$ on the diagonal.  $V(A)$ is then the natural inductive limit of $\cP(M_n(A))$ modulo Murray--von Neumann equivalence.  Each part of this inductive limit exists in $A^{\eq}$ although the limit itself may not.

In either presentation, $K_0(A)$ is the Groethendieck group associated with $V(A)$. It is considered as an ordered group with distinguished 
 order unit, with $(K_0(A), K_0^+(A), 1):=(K_0(A), \overline{V(A)}, [1_A])$ where $\overline{V(A)}$ is the image of $V(A)$ in $K_0(A)$.
In the following both $V(A)$ and $K_0(A)$ are considered as discrete structures.  
\begin{thm}\label{T.K0}  
Assume $A\prec B$ are unital \cstar-algebras. 
 \begin{enumerate}
\item Then 
$V(A)$ is a subsemigroup of $V(B)$ and $K_0(A)$ is a subgroup of  $K_0(B)$. 
\item  If in addition  $A\otimes \cK\prec B\otimes \cK$, then 
 $(V(A),+)\prec (V(B),+)$  and 
 $(K_0(A), K_0^+(A), 1)\prec 
(K_0(B), K_0^+(B), 1)$.  
\end{enumerate}
\end{thm}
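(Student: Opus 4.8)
The plan is to reduce everything to the fact that the Murray--von Neumann semigroup and its Grothendieck group are produced by \emph{definable} operations, and then to apply elementarity at the right level: finite matrix amplifications for part~(1), and the stabilization for part~(2). The extra hypothesis in part~(2) is exactly what lets us pass from ``substructure'' to ``elementary substructure.''

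For part~(1) I would first invoke Lemma~\ref{L.Mn} to get $M_n(A)\prec M_n(B)$ for every $n$. Being a projection is quantifier-free (Example~\ref{Ex.1}), so $\cP(M_n(A))=\cP(M_n(B))\cap M_n(A)$, and for projections $p,q\in M_n(A)$ the predicate $\piMVN(x,y)$ takes only the two values $0$ and $2$ (as in Example~\ref{Ex.MvN}); since $\piMVN$ is a definable predicate and $M_n(A)\prec M_n(B)$, its value at $(p,q)$ is the same whether computed in $M_n(A)$ or $M_n(B)$, so $p\sim q$ in $M_n(A)$ iff $p\sim q$ in $M_n(B)$. These absoluteness statements are compatible with the block inclusions $p\mapsto \diag(p,0)$, so passing to the colimit over $n$ yields an injective map $V(A)\to V(B)$ respecting $\diag$, i.e.\ an injective semigroup homomorphism; thus $V(A)$ is a subsemigroup of $V(B)$. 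For the $K_0$ statement I would use that, over a unital algebra, $[p]=[q]$ in $K_0$ iff $\diag(p,1_k)\sim\diag(q,1_k)$ for some $k$, where $1_k$ denotes $k$ copies of the unit. The crucial point is that the padding projection $1_k$ already lives over $A$: if $[p]=[q]$ in $K_0(B)$, the witnessing equivalence $\diag(p,1_k)\sim\diag(q,1_k)$ holds in some $M_N(B)$ between projections lying in $M_N(A)$, and absoluteness of $\sim$ pushes it down to $M_N(A)$, giving $[p]=[q]$ in $K_0(A)$. Hence $K_0(A)\to K_0(B)$ is injective and $K_0(A)$ is a subgroup of $K_0(B)$.

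For part~(2) I would work over the stabilization. By Example~\ref{Ex.MvN}, $V(A)$ is the quotient of the (definable) set of projections of $A\otimes\cK$ by the definable pseudometric $\piMVN$, hence a single imaginary sort of $(A\otimes\cK)^{\eq}$; the addition $+$ is the definable function induced by a fixed isomorphism $\cK\cong M_2(\cK)$ together with the block map $\diag$, and is part of the same interpretation. Thus $(V(A),+)$ is uniformly interpretable in $A\otimes\cK$. The Grothendieck group is a further definable quotient of $(V(A),+)^2$ by the relation cut out by $\inf_e d(a+d+e,\,c+b+e)$, the positive cone $K_0^+(A)$ is the definable image of $V(A)$, and the order unit is the class of the definable projection $1_A\otimes e_{11}$; so $(K_0(A),K_0^+(A),1)$ is also uniformly interpretable in $A\otimes\cK$. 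Since $T^{\eq}$ is a strongly conservative extension (Theorem~\ref{T.conceptual-completeness}), an elementary embedding $A\otimes\cK\prec B\otimes\cK$ lifts to an elementary embedding of $(A\otimes\cK)^{\eq}$ into $(B\otimes\cK)^{\eq}$ and restricts to an elementary embedding on each interpreted sort, yielding $(V(A),+)\prec(V(B),+)$ and $(K_0(A),K_0^+(A),1)\prec(K_0(B),K_0^+(B),1)$.

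The main obstacle I anticipate is twofold. In part~(1) the delicate point is the $K_0$ injectivity: one must observe that the stabilizing witness can be taken to be a multiple of the unit, so that it lives over $A$ and the finite-level elementarity of Lemma~\ref{L.Mn} applies; an arbitrary padding projection over $B$ would not obviously descend. In part~(2) the real work is verifying that every ingredient---the projection set of $A\otimes\cK$, the relation $\sim$, the addition, the Grothendieck equivalence with its existential quantifier over $V$, the cone, and the unit---is genuinely and uniformly definable across the elementary class, so that the interpretation is legitimate and elementarity transfers through it. Here the extra hypothesis $A\otimes\cK\prec B\otimes\cK$ (which does \emph{not} follow from $A\prec B$) is precisely what is needed, since a single existential quantifier in $(V,+)$ ranges over projections of unbounded size in the stabilization and so cannot be controlled by any fixed finite amplification $M_n(A)\prec M_n(B)$.
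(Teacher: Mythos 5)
Your proposal is correct and follows essentially the same route as the paper: finite matrix amplifications $M_n(A)\prec M_n(B)$ plus definability of Murray--von Neumann equivalence for part (1), and the interpretation of $(V(A),+)$ and its enveloping ordered group as imaginary sorts of $(A\otimes\cK)^{\eq}$ for part (2). The only difference is that you spell out details the paper leaves implicit, notably the observation that injectivity of $K_0(A)\to K_0(B)$ requires the stabilizing witness to be taken as a multiple of the unit so that it descends to $A$ --- a point the paper's two-sentence proof of (1) glosses over but which is genuinely needed.
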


 \begin{proof} By Lemma~\ref{L.Norm} we have $M_n(A)\prec M_n(B)$ for all $n$. 
 Since every projection in $A\otimes\cK$ is Murray--von Neumann equivalent to a projection in some $M_n(A)$, by the definability of Murray--von Neumann equivalence 
 (1)  follows.  

(2):
It is an immediate consequence of Example~\ref{Ex.MvN} and the fact that addition in the Murray--von Neumann 
semigroup is definable that $A \otimes \cK \prec B \otimes \cK$ implies that $(V(A),+)\prec (V(B),+)$.
Given a semigroup $S$, one can easily see that its enveloping ordered group is in $S^{\eq}$; from this, we see that $(K_0(A), K_0^+(A), 1)\prec 
(K_0(B), K_0^+(B), 1)$ is also implied.
\end{proof}

In general we do not know whether $A\prec B$ implies $K_0(A)\prec K_0(B)$. 

\begin{prop} \label{P.K0.eq}
If $A$ is a purely infinite, simple, unital \cstar-algebra then $K_0(A)$ considered as an ordered group with order unit
 is in $A^{\eq}$. 
In particular, if  $A\prec B$ then  
$K_0(A)$ is an elementary submodel of $K_0(B)$, considered as ordered groups with order unit. 
\end{prop}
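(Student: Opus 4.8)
The plan is to exploit the special structure of the Murray--von Neumann semigroup $V(A)$ for purely infinite simple unital $A$, where, in contrast to the general situation of Theorem~\ref{T.K0}, the whole of $V(A)$ is already realized by projections of $A$ itself. I would first recall Cuntz's structural facts for such an $A$: every nonzero projection is properly infinite; $1_A$ is properly infinite, so there are isometries $s_1,s_2\in A$ with $s_1s_1^*+s_2s_2^*\leq 1$; and for nonzero projections $p,q\in A$ one has $p\sim q$ (Murray--von Neumann equivalence in $A$) if and only if $[p]=[q]$ in $K_0(A)$. The first fact shows that every projection in $M_n(A)$, hence in $A\otimes\cK$, is equivalent to a projection in $A$ (compress along the embedding $M_n(A)\hookrightarrow A$ furnished by $n$ orthogonal isometries), so the natural map $\cP(A)/{\sim}\to V(A)$ is surjective; the second fact shows it is injective. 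Thus $V(A)\cong \cP(A)/{\sim}$ as abelian semigroups, the sum of $[p],[q]$ being computed by passing to orthogonal representatives $p'=s_1ps_1^*$, $q'=s_2qs_2^*$ and setting $[p]+[q]:=[p'+q']$.

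Next I would verify that $\cP(A)/{\sim}$, with this addition, is a discrete structure living in $A^{\eq}$. The set of projections is definable (Example~\ref{Ex.1}(\ref{Ex.1.projection})) and Murray--von Neumann equivalence is given by a definable pseudometric taking values in $\{0,2\}$ (Example~\ref{Ex.MvN}), so the quotient $\cP(A)/{\sim}$ is a sort of $A^{\eq}$ carrying the discrete metric. For the addition I would express its graph as a definable set: using the definable set of orthogonal pairs of projections (Example~\ref{Ex.1}(\ref{Ex.1.orthogonal})) together with definability of $\sim$, the relation ``$[r]=[p]+[q]$'' is definable, being witnessed by an orthogonal pair $(p',q')$ with $p'\sim p$, $q'\sim q$ and $p'+q'\sim r$; proper infiniteness guarantees such a pair always exists, so $+$ is total and well defined. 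Hence $(V(A),+)$ is an honest discrete abelian semigroup interpreted in $A^{\eq}$.

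Now $K_0(A)$ is the Grothendieck group of $V(A)$, and, as already noted in the proof of Theorem~\ref{T.K0}, the enveloping ordered group of a semigroup $S$ lies in $S^{\eq}$; applying this to $S=V(A)$ (itself in $A^{\eq}$) places $K_0(A)$ in $A^{\eq}$. Its positive cone $K_0^+(A)$ is the image of $V(A)$ and its order unit is the class of the definable constant $1_A$, so the whole structure $(K_0(A),K_0^+(A),[1_A])$ is interpreted in $A^{\eq}$ by a fixed family of imaginaries, uniformly across the elementary class $\cC$ of purely infinite simple unital algebras (this class is elementary; see \S\ref{S.PI} and Theorem~\ref{Summary}).

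For the ``in particular,'' if $A\prec B$ then $B\in\cC$ as well, and the decisive point---different from Theorem~\ref{T.K0}(2)---is that because $V(A)$ is realized inside $A$ rather than inside $A\otimes\cK$, no hypothesis such as $A\otimes\cK\prec B\otimes\cK$ is needed: the imaginary sorts defining $(K_0,K_0^+,[1])$ are built from uniform definable sets, countable products, and quotients by definable pseudometrics, and each of these constructions preserves elementarity, so $A\prec B$ already yields $A^{\eq}\prec B^{\eq}$ on the relevant sorts. Therefore $K_0(A)$ is an elementary submodel of $K_0(B)$ as ordered groups with order unit. The main obstacle I anticipate is not the formal Grothendieck-group step but the two semantic points feeding it: pinning down the isomorphism $V(A)\cong\cP(A)/{\sim}$ via Cuntz's theorem and proper infiniteness, and checking that the orthogonal-sum addition on the quotient sort is genuinely definable and total.
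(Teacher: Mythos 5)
Your proposal is correct and follows essentially the same route as the paper: Cuntz's theorem reducing $K_0(A)$ to Murray--von Neumann classes of projections of $A$ itself, the definability of projections, orthogonality and $\sim$ from Examples~\ref{Ex.1} and~\ref{Ex.MvN}, and the axiomatizability of purely infinite simple algebras (\S\ref{S.PI}) for the elementarity transfer. The only (harmless) difference is that you detour through $V(A)\cong\cP(A)/{\sim}$ and then apply the Grothendieck construction inside $A^{\eq}$, whereas the paper invokes Cuntz's sharper statement that for purely infinite simple $A$ the nonzero projection classes under orthogonal sum already constitute the group $K_0(A)$, so no enveloping-group step is needed.
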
 

\begin{proof} 
This is a consequence of a result of Cuntz, who proved that 
$K_0(A)$ is isomorphic to the quotient of $\cP(A)$  modulo the Murray--von Neumann 
equivalence and that the addition is defined in the natural way: $[p]=[q]+[q]$ if and only 
if there are partial isometries $v$ and $w$ such that $p=vv^*+ww^*$, $v^*v=q$ and $w^*w=q$. 
(see \cite[p. 188]{cuntz1981k}). 
By the proof of Theorem~\ref{T.K0}  
this shows that $K_0(A)$ belongs to $A^{\eq}$. 

Since being purely infinite and simple is axiomatizable (\S\ref{S.PI}), $A\prec B$ implies that $B$ is purely infinite, simple, and unital and 
the second claim is an immediate consequence of the above. 
\end{proof}

We don't know whether the additional assumption that $A\otimes \cK\prec B\otimes \cK$ 
 in (2) in Theorem~\ref{T.K0} is in general necessary 
 (cf.\ Question~\ref{Q.elem}); 
note however  the following. 

\begin{prop} For any \cstar-algebra $A$, $A\otimes\cK$ does not belong to~$A^{\eq}$. 
\end{prop}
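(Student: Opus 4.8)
The plan is to exploit the fact that every imaginary sort of $A^{\eq}$ commutes with ultraproducts, whereas the stabilization functor $B\mapsto B\otimes\cK$ does not. I read the assertion ``$A\otimes\cK\in A^{\eq}$'' as: there is a family of imaginary sorts of $\Th(A)^{\eq}$, with $^*$-algebra operations and norms given by definable predicates, whose interpretation in each $B\models\Th(A)$ is naturally $B\otimes\cK$; write $S$ for the sort interpreted as the unit ball $(B\otimes\cK)_1$. First I would record the key lemma: for every sort $S$ of $M^{\eq}$, every family $A_i\models\Th(A)$, and every ultrafilter $\cU$, the canonical map $\prod_\cU S^{A_i}\to S^{\prod_\cU A_i}$ is a bijective isometry (and a $^*$-isomorphism where operations are present). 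Granting this and the naturality above, a constant ultrapower yields
\[
(A^{\cU}\otimes\cK)_1 \;=\; S^{A^{\cU}} \;\cong\; \prod\nolimits_\cU S^{A} \;=\; \bigl((A\otimes\cK)^{\cU}\bigr)_1,
\]
so that $A^{\cU}\otimes\cK\cong(A\otimes\cK)^{\cU}$. The proof then reduces to showing this isomorphism is impossible.

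For the key lemma I would induct on the construction of sorts in $A^{\eq}$ (see \S\ref{S.DefinablePred}). Definable sets commute with ultraproducts by Theorem~\ref{Th.sem.def}; quotients by a definable pseudometric commute because the pseudometric, being a definable predicate, commutes with ultraproducts by \L o\'s' Theorem (Theorem~\ref{Los}). The only delicate case is a countable product $U=\prod_i S_i$ with the weighted metric $d_U=\sum_i d_i/(B_i2^i)$: here one must exchange a $\cU$-limit with a convergent series, which is justified by the uniform summability $\sum_i 2^{-i}<\infty$ (split off a tail, then use linearity of the $\cU$-limit on the finite part). This exchange shows the canonical map is a surjective isometry, completing the induction.

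Finally I would derive a contradiction from $A^{\cU}\otimes\cK\cong(A\otimes\cK)^{\cU}$. The crispest instance is $A=\bbC$, where $A^{\cU}\cong\bbC$ makes the left side the separable algebra $\cK$ and the right side the nonseparable algebra $\cK^{\cU}$; this already gives $\cK\notin\bbC^{\eq}$ outright, since $\bbC^{\cU}\cong\bbC$ forces $S^{\bbC}\cong(S^{\bbC})^{\cU}$ for every sort $S$ of $\bbC^{\eq}$. For general unital $A$ I would argue by $\sigma$-unitality: $A^{\cU}\otimes\cK$ is $\sigma$-unital, witnessed by the sequential approximate unit $1\otimes\sum_{i\le m}e_{ii}$, and $\sigma$-unitality passes through $^*$-isomorphisms, so the isomorphism would make $(A\otimes\cK)^{\cU}$ $\sigma$-unital; but $(A\otimes\cK)^{\cU}$ is a nonprincipal ultrapower, hence countably saturated, and a countably saturated $\sigma$-unital \cstar-algebra is unital (realize, against a sequential approximate unit $(e_n)$, an element $x$ with $xe_n=e_n$ for all $n$; then $x$ is a unit), whereas $(A\otimes\cK)^{\cU}$ is non-unital. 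This contradiction settles the unital case. The hard part is the general, non-unital case: there $A^{\cU}\otimes\cK$ and $(A\otimes\cK)^{\cU}$ are both non-$\sigma$-unital, so I would instead separate them by genuine stability---$A^{\cU}\otimes\cK$ is stable, while a nonprincipal ultrapower of a separable stable algebra satisfies the Hjelmborg--R\o rdam sentence of Proposition~\ref{T.HjR} by \L o\'s' Theorem yet fails to be stable. Establishing this last failure of stability uniformly in $A$ is the main obstacle, and it is precisely the separable-versus-nonseparable gap flagged after Proposition~\ref{T.HjR}.
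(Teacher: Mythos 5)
Your reduction is the same one the paper uses: every imaginary sort commutes with ultrapowers, so if $A\otimes\cK$ sat in $A^{\eq}$ one would get $A^{\cU}\otimes\cK\cong(A\otimes\cK)^{\cU}$, and the proposition follows once this isomorphism is refuted. Your verification of the commutation (definable sets via Theorem~\ref{Th.sem.def}, quotients via \L o\'s' theorem, countable products via the uniformly summable weights) is fine, and your refutation in the unital case is correct: $A^{\cU}\otimes\cK$ is $\sigma$-unital, a countably saturated $\sigma$-unital \cstar-algebra is unital, and $(A\otimes\cK)^{\cU}$ is countably saturated but non-unital, so the two cannot be isomorphic (for $A=\bbC$ the separability count is even quicker).

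The gap is exactly where you flag it: the proposition is asserted for \emph{any} \cstar-algebra $A$, and your argument does not close the non-unital case, where both sides fail to be $\sigma$-unital and you would need to show that $(A\otimes\cK)^{\cU}$ is not stable, or at least not of the form $B\otimes\cK$ for infinite-dimensional $\cK$. This cannot be extracted from Proposition~\ref{T.HjR} together with \L o\'s' theorem: as you observe, the Hjelmborg--R\o rdam sentence \emph{is} preserved by ultrapowers, so the ultrapower of a separable stable algebra satisfies the sentence while (this is the point) failing to be stable --- that failure is a nontrivial theorem, not something one can route around with the elementary tools you have assembled. The paper closes the argument in one line by citing Ghasemi's result (\cite{Gha:SAW*}, already quoted after Theorem~\ref{Summary}): a nontrivial ultrapower is tensorially indecomposable, i.e.\ never isomorphic to any nontrivial tensor product, in particular never to $A^{\cU}\otimes\cK$. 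If you import that result, your proof is complete in all cases (and your unital argument becomes an elementary special case of it); without it, the statement as given is not proved.
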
 

\begin{proof} This is a consequence of the fact that tensoring with $\cK$ does not commute with 
the operation of taking an ultrapower. As a matter of fact,  an ultrapower of a 
\cstar-algebra is never a nontrivial tensor product by \cite{Gha:SAW*}
(an analogous result for II$_1$ factors was proved in \cite{Li:Ultraproducts} using different methods). 
\end{proof}

\begin{lemma} \label{L.K0.AF} 
The property that a group $(G, G^+)$ is isomorphic to 
the~$K_0$ group  of some  AF algebra is axiomatizable
among ordered groups.
\end{lemma}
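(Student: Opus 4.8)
The plan is to identify the class in question with the class of \emph{dimension groups} and then exhibit an explicit axiomatization of the latter. By the Effros--Handelman--Shen theorem, an ordered abelian group $(G,G^+)$ is a dimension group---that is, an inductive limit of simplicial ordered groups $(\bbZ^{k},(\bbZ^+)^{k})$ along positive homomorphisms---if and only if it is \emph{unperforated} ($nx\in G^+$ implies $x\in G^+$, for every $n\geq 1$) and satisfies the \emph{Riesz interpolation property}. Combined with Elliott's classification of AF algebras by $K_0$ (\cite{Ell:On}), this says that $(G,G^+)$ is isomorphic to $K_0(A)$ for some AF algebra $A$ precisely when it is a countable dimension group, since $K_0$ of a finite-dimensional algebra $\bigoplus_k M_{m_k}(\bbC)$ is simplicial and $K_0$ commutes with inductive limits.

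First I would fix the language of ordered groups, with the group operation, its inverse, the constant $0$, and a predicate picking out $G^+$; these are treated as discrete structures (discrete metric), so that ordinary first-order axioms translate verbatim into conditions of continuous logic. The theory $T_{\mathrm{DG}}$ consists of: (i) the axioms of an abelian group together with the statements that $G^+$ is a positive cone, namely $G^++G^+\subseteq G^+$, $G^+\cap(-G^+)=\{0\}$, and directedness $\forall x\,\exists y,z\,(y,z\in G^+\wedge x=y-z)$; (ii) the unperforation scheme, one universal sentence $\forall x\,(nx\in G^+\to x\in G^+)$ for each $n\geq 1$; and (iii) a single $\forall\exists$-sentence expressing Riesz interpolation, $\forall a_1,a_2,b_1,b_2\,\big(\bigwedge_{i,j} a_i\leq b_j \to \exists c\,\bigwedge_{i,j}(a_i\leq c\leq b_j)\big)$, where $u\leq v$ abbreviates $v-u\in G^+$.

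Then I would verify the two directions. That $K_0(A)$ of an AF algebra is a model of $T_{\mathrm{DG}}$ follows because each $K_0(F_n)$ is simplicial, hence unperforated with interpolation, and both properties are preserved under inductive limits along positive maps; directedness holds for the same reason. Conversely, any model of $T_{\mathrm{DG}}$ is a dimension group by Effros--Handelman--Shen, and in the countable case it is realized as the $K_0$ of an AF algebra via \cite{Ell:On}. The purely model-theoretic content is then immediate, since $\Mod(T_{\mathrm{DG}})$ is by construction an elementary class; one can alternatively verify the hypotheses of Theorem~\ref{T.Ax} directly, as unperforation and interpolation are preserved under ultraproducts and reflected by ultraroots (they are $\forall$ and $\forall\exists$ conditions).

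The main obstacle is not the model theory but a matter of cardinality: a model of $T_{\mathrm{DG}}$ may be uncountable, whereas $K_0$ of a \emph{separable} AF algebra is countable, so literally the class ``$K_0$ of an AF algebra'' is not closed under ultraproducts. The clean formulation is therefore that the property is \emph{separably axiomatizable} in the sense defined before Theorem~\ref{T.ssa}: among countable ordered groups, $\Mod(T_{\mathrm{DG}})$ is exactly the class of $K_0$-groups of AF algebras. (If one instead reads ``AF'' as a local inductive limit of finite-dimensional algebras, allowing nonseparable algebras, then $K_0$ ranges over all dimension groups and the caveat disappears.) The only substantive external input is the Effros--Handelman--Shen characterization; everything else is the routine translation of its two hypotheses into first-order axioms.
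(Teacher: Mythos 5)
Your proof is correct and takes essentially the same route as the paper's: both reduce, via the Effros--Handelman--Shen theorem, to the statement that dimension groups are exactly the unperforated ordered groups with Riesz interpolation, and observe that these are first-order conditions on the (discrete) ordered group. Your explicit caveat about cardinality---that literal $K_0$-groups of separable AF algebras are countable, so the class is only \emph{separably} axiomatizable in the sense of the ultraproduct-closure criterion---is a legitimate point that the paper's proof passes over in silence, and your axioms at the group level are cleaner than the paper's closing digression about projections in $M_{2^n}(A)$.
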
 

\begin{proof} 
 An abelian ordered group  is isomorphic to $K_0(A)$ for an AF algebra if and only if it is
a dimension group.  
By \cite[Theorem~2.2]{EfHaShe}
an ordered group is a dimension group 
 if and only if it is   unperforated and satisfies the   
\emph{Riesz interpolation property}\index{Riesz interpolation property}  
 stating that if $a\leq c$, $a\leq d$, $b\leq c$ and $b\leq d$ then there is a 
 single element $e$ such that $a\leq e$, $b\leq e$, $e\leq c$ and $e\leq d$. 
 Moreover, a moment of introspection shows that if $A$ is AF and 
 $a,b,c,d$ are all projections in  $M_{2^n}(A)$ then $e$ can be chosen in $M_{2^n}(A)$ 
 as well. Therefore having Riesz property is (infinitely) axiomatizable, with one axiom 
 stating that $M_{2^n}(A)$  satisfies the Riesz property for every $n$. 
 \end{proof}

Related  model-theoretic results  on the relation between $A$ and $K_0(A)$ for AF algebras 
were independently obtained in  \cite{Scow:Some}.

\section{$K_1(A)$ and $A^{\eq}$} 
For the definition of $K_1(A)$ see \cite{RoLaLa:Introduction}. 
By   \cite[Example 4.7]{Li:Ultraproducts} (see also 
Proposition \ref{P.C01}) there is a \cstar-algebra $A$ such that $K_1(A)$ is trivial but 
its ultrapower $A^{\cU}$ has a nontrivial $K_1$. Therefore 
$A\prec B$ does not imply 
that $K_1(A)\prec K_1(B)$ in general.

\begin{prop}\label{P.K1.sr}  Assume   $A$ is a unital \cstar-algebra  of finite stable rank. 
\begin{enumerate} 
\item  If   $U_0(M_n(A))$ is definable for all $n$
then $K_1(A)$ belongs to $A^{\eq}$. 
\item If $A$ has real rank zero then $K_1(A)$ belongs to $A^{\eq}$.
\item If $B$ is such that $A\prec B$ then $K_1(A)$ is a subgroup of $K_1(B)$. 
\item If $B$ is such that $A\prec B$ and $U_0(M_n(B))$ is definable for all $n$ then $K_1(A)\prec K_1(B)$. 
\end{enumerate}
\end{prop}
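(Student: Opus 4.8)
The plan is to use the finite stable rank to compute $K_1$ at a single matrix level and then to realize it as a quotient imaginary sort, reducing (2) and (4) to (1). For part (1), set $m:=\sr(A)$ and fix $N\geq m$. By Rieffel's stability theorem for $K_1$ under stable rank (\cite{Rie:Dimension}) the natural map $U(M_N(A))/U_0(M_N(A))\to K_1(A)$ is a group isomorphism, the product on $U(M_N(A))$ descending to the (abelian) group operation by the Whitehead lemma. By Lemma~\ref{L.Norm} the norm on $M_N(A)=A\otimes M_N(\bbC)$ is definable, so $M_N(A)$ is a sort of $A^{\eq}$, and by Example~\ref{Ex.1} the unitary group $U(M_N(A))$ is a definable subset of it. Granting that $U_0(M_N(A))$ is definable, I would introduce
\[
\varphi(u,v):=\inf_{w\in U_0(M_N(A))}\|u-vw\|
\]
and verify the routine facts that $\varphi$ is a definable predicate (an infimum of a formula over a definable set), that it is a pseudometric, and that its zero-set is exactly $\{(u,v):u^{*}v\in U_0(M_N(A))\}$; here one uses that $U_0$ is a closed normal subgroup, so that cosets are closed and the relation is symmetric. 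The quotient construction for imaginary sorts then yields a sort whose underlying group, with operation induced by matrix multiplication $[u][v]:=[uv]$, is $K_1(A)$; the quotient metric is complete because $U(M_N(A))$ is complete and the cosets are closed. Thus $K_1(A)\in A^{\eq}$.

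For part (2) it suffices to deduce definability of $U_0(M_N(A))$ from real rank zero and then invoke (1). Real rank zero passes to $M_N(A)$, and by Lin's theorem that a real rank zero algebra has exponential rank at most $1+\e$ one has
\[
U_0(M_N(A))=\overline{\{\exp(ih): h=h^{*}\in M_N(A),\ \|h\|\leq \pi\}}.
\]
The right-hand side is the closure of the image of the definable set of self-adjoint elements of norm $\leq\pi$ under the continuous functional calculus map $h\mapsto\exp(ih)$, hence is a definable set by the remarks in \S\ref{S.Definable} (every single exponential has norm-$\leq\pi$ representative, and for real rank zero the exponentials are dense in $U_0$). Part (1) then applies.

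For part (3) I would first note that, stable rank being an elementary invariant (\S\ref{S.sr}), $\sr(B)=\sr(A)=m$, so $K_1(B)=U(M_N(B))/U_0(M_N(B))$ with the \emph{same} $N$. The inclusion $A\subseteq B$ induces a homomorphism $K_1(A)\to K_1(B)$, and the content is injectivity. If $u\in U(M_N(A))$ has $[u]=0$ in $K_1(B)$ then $u$ is a product of some finite number $k$ of exponentials in $M_N(B)$, so the $\inf$-formula
\[
\psi_k(x):=\inf_{\|h_j\|\leq\pi}\Big\|x-\exp(ih_1)\cdots\exp(ih_k)\Big\|,
\]
with the $h_j$ self-adjoint, satisfies $\psi_k^{M_N(B)}(u)=0$. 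Since $M_N(A)\prec M_N(B)$ by Lemma~\ref{L.Mn} and $\psi_k$ is a definable predicate, $\psi_k^{M_N(A)}(u)=0$; hence $u$ lies in the closure of the products of $k$ exponentials in $M_N(A)$, which is contained in $U_0(M_N(A))$. Thus $[u]=0$ in $K_1(A)$, proving injectivity.

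For part (4), since $A\equiv B$ we have $\Mod(\Th(B))=\Mod(\Th(A))$, so the hypothesis that $U_0(M_n(B))$ is definable makes $U_0(M_n(A))$ definable as well, and by (1) $K_1$ is realized by one and the same quotient imaginary sort over both algebras. Because $A\prec B$ yields $A^{\eq}\prec B^{\eq}$ (quantification over definable sets and over quotients by definable pseudometrics reduces to definable predicates, which are preserved by elementary extensions), the interpretation of the $K_1$-sort in $A$ is an elementary submodel of its interpretation in $B$, that is $K_1(A)\prec K_1(B)$. The main obstacle throughout is the definability of $U_0$ required in (1)–(2): everything hinges on a uniform, level-independent description of $U_0(M_N(A))$ — single exponentials in the real rank zero case, and boundedly many products of exponentials for the elementarity step in (3) — together with the verification that $\varphi$ is a genuine definable pseudometric whose quotient sort is precisely the group $K_1(A)$.
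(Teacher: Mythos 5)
Your proof follows essentially the same route as the paper's: Rieffel's theorems to identify $K_1(A)$ with $U(M_N(A))/U_0(M_N(A))$ at a single matrix level and realize it as a quotient imaginary sort; Lin's exponential-density result to get definability of $U_0$ under real rank zero; and the $\inf$-formula over products of $k$ exponentials together with $M_N(A)\prec M_N(B)$ to get $U_0(M_N(A))=U_0(M_N(B))\cap M_N(A)$ for injectivity in (3). The only correction needed is the index in (1): Rieffel's isomorphism $U(M_N(A))/U_0(M_N(A))\cong K_1(A)$ requires $N\geq \sr(A)+2$ (as the paper cites), not merely $N\geq \sr(A)$.
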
 

\begin{proof}  
(1) By \cite[Theorems 2.3 and 10.12]{Rie:Dimension}, for $n \geq \sr(A) + 2$, 
\[ K_1(A)\cong U(M_n(A))/U_0(M_n(A)). \]
The definability $U_0(M_n(A))$ implies that $K_1(A)$ is in $A^{\eq}$. 

(2) If $A$ has real rank zero then by \cite[Theorem~5]{lin1993exponential} 
the set of unitaries of the form $\exp(ia)$ for $a$ self-adjoint with finite spectrum is dense in $U_0(A)$. 
Therefore the set of unitaries of the form $\exp(ia)$ for $a$ self-adjoint of norm $\leq\pi$ is dense in $U_0(A)$, and $U_0(A)$ is definable. As this applies to $M_n(A)$ as well, 
the conclusion follows by (1).

(3) Since having stable rank $\leq n$ is axiomatizable by \S\ref{S.sr}, $A\prec B$ implies $B$ has finite stable rank. We shall prove that $U_0(M_n(A))=U_0(M_n(B))\cap M_n(A)$ for all $n$. 
For $u\in U(M_n(B))$ we have that $u\in U_0(M_n(B))$ if and only if 
there exists $k$ 
and  self-adjoint elements $a_1, \dots, a_k$ of norm $\leq 1$ such that 
$\|u -\prod_{j=1}^k \exp(i a_j \pi)\|$ is arbitrarily small. By the elementarity and Lemma~\ref{L.Norm}
for $u\in U(M_n(A))$ for any fixed $k$ 
such self-adjoint elements  exist in  $M_n(A)$ if and only if they exist in $M_n(B)$. 
Therefore $U_0(M_n(A))=U_0(M_n(B))\cap M_n(A)$ and 
by using \cite[Theorems 2.3 and 10.12]{Rie:Dimension}
as in (1) we conclude that  $K_1(A)$ is a subgroup of  $K_1(B)$. 

(4)  follows from (1) and (3). 
\end{proof}

The definability of  $U_0(A)$ depends on whether the exponential length of~$A$ is bounded (see \cite{Phi:Exponential}). 

\begin{prop} \label{P.K1.eq}
If $A$ is a purely infinite, simple, unital \cstar-algebra then $K_1(A)$ 
 is in $A^{\eq}$. 
In particular, if  $A\prec B$ then  
$K_1(A)$ is an elementary submodel of $K_1(B)$. 
\end{prop}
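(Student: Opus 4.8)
The plan is to follow the template of Proposition~\ref{P.K0.eq}, replacing the Murray--von Neumann semigroup of projections by the unitary group modulo its identity component, and replacing Cuntz's description of $K_0$ by his companion description of $K_1$. The crucial structural input is that for a purely infinite, simple, unital \cstar-algebra $A$ the natural map $U(A)/U_0(A)\to K_1(A)$ is a group isomorphism (Cuntz, \cite{cuntz1981k}); this realizes $K_1(A)$ already at the level of $A$ itself, so that---unlike in Proposition~\ref{P.K1.sr}---one never passes to matrix amplifications and never needs finite stable rank, which purely infinite algebras lack. This is exactly why Proposition~\ref{P.K1.sr} cannot be applied directly here.

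First I would record that $U(A)$ is a definable set (Example~\ref{Ex.1}) and that $U_0(A)$ is a definable set. For the latter I would invoke that every purely infinite simple \cstar-algebra has real rank zero (Zhang), an axiomatizable property lying in $\Th(A)$; the argument of Proposition~\ref{P.K1.sr}(2) then applies verbatim: by Lin's theorem the unitaries $\exp(ia)$ with $a=a^*$ and $\|a\|\le\pi$ are dense in $U_0(A)$, so $U_0(A)$ is the continuous--functional--calculus image of the self-adjoint ball of radius $\pi$, hence definable. Since pure infiniteness, simplicity and unitality are axiomatizable (\S\ref{S.PI}), all of this is uniform relative to $\cC:=\Mod(\Th(A))$.

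Next I would assemble the quotient, in analogy with Example~\ref{Ex.MvN}. Define $\psi(u,v):=d(uv^*,U_0(A))$ on $U(A)$; this is a definable predicate since $uv^*$ is a term and $d(\cdot,U_0(A))$ is a definable predicate. The key elementary observation is that for a unitary $w$ one has $d(w,U_0(A))\in\{0,2\}$: if $w\notin U_0(A)$ then for every $z\in U_0(A)$ the unitary $wz^*$ lies outside $U_0(A)$, whence $\|w-z\|=\|wz^*-1\|\ge 2$ (a unitary within distance $<2$ of $1$ has $-1$ outside its spectrum, hence is an exponential and lies in $U_0(A)$), while the reverse inequality is automatic. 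Using that $U_0(A)$ is a self-adjoint normal subgroup, it follows that $\psi$ is a $\{0,2\}$-valued pseudometric with $\psi(u,v)=0$ iff $u,v$ lie in the same coset of $U_0(A)$. The quotient of the definable set $U(A)$ by the definable pseudometric $\psi$ is therefore a sort of $A^{\eq}$; its underlying set is $U(A)/U_0(A)\cong K_1(A)$, and the group operations are induced by the terms $(u,v)\mapsto uv$ and $u\mapsto u^*$, which descend to definable functions on the quotient. This exhibits $K_1(A)$, as a discrete abelian group, in $A^{\eq}$.

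Finally, for the ``in particular'' clause: since purely infinite, simple, unital is axiomatizable, $A\prec B$ forces $B$ into the same class, so the very same sort of $(\cdot)^{\eq}$ computes $K_1$ for both; as $A\prec B$ yields $A^{\eq}\prec B^{\eq}$, the interpretations of this sort satisfy $K_1(A)\prec K_1(B)$. I expect the only genuinely delicate point to be the bookkeeping that the definition of $U_0(A)$, and hence of $\psi$ and of the quotient sort, is \emph{uniform} across $\cC$---given by formulas valid in every model of $\Th(A)$ rather than merely in $A$---which is what makes the quotient a single sort of $\calL^{\eq}$ and lets elementarity transfer. This uniformity is precisely what the axiomatizability of real rank zero (via pure infiniteness) guarantees.
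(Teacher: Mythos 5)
Your proof is correct and follows essentially the same route as the paper's: Cuntz's identification $K_1(A)\cong U(A)/U_0(A)$ for purely infinite, simple, unital $A$, definability of $U_0(A)$ as the (closure of the) set of exponentials $\exp(ia)$ with $a$ self-adjoint of bounded norm, and the quotient construction placing $K_1(A)$ in $A^{\eq}$, with the elementarity transfer coming from the axiomatizability of pure infiniteness and simplicity. The only divergence is the source of the definability of $U_0(A)$ --- the paper quotes Phillips' theorem that $U_0(A)=\{\exp(ia):a=a^*,\ \|a\|\leq 2\pi\}$ for purely infinite simple algebras directly, whereas you route through Zhang's real-rank-zero theorem and the argument of Proposition~\ref{P.K1.sr}(2); both are valid, and your explicit $\{0,2\}$-valued pseudometric is simply a worked-out version of what the paper leaves implicit.
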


\begin{proof} 
In  \cite[p. 188]{cuntz1981k} it was  proved that for purely infinite and simple $A$ 
one has $K_1(A)\cong U(A)/U_0(A)$.
Since Phillips  (\cite{Phi:Exponential}) has proved that 
for a purely infinite simple $A$ we have 
\[
U_0(A)=\{\exp(ia): a=a^* \text{ and }\|a\|\leq 2\pi\},
\]
 $U_0(A)$  is in this case  definable. As the addition is definable as well, 
we conclude that $K_1(A)$ belongs to $A^{\eq}$ when $A$ is purely infinite and simple. 
\end{proof}

The following counterexample to the version of Question~\ref{Q1} in which 
the algebras are not required to be nuclear
was pointed out by N.C. Phillips. 

\begin{example} \label{Ex.Counterexample} 
There are nonisomorphic, elementarily equivalent, simple, separable,  unital \cstar-algebras
with the same Elliott invariant. More precisely, there are 
 nonisomorphic,  simple, separable,  unital \cstar-algebras
with the same Elliott invariant and the same theory as the Calkin algebra 
$\cQ:=\cB(H)/\cK(H)$. 

We have  $K_1(\cQ)\cong \bbZ$ (\cite[Example~9.4.3]{RoLaLa:Introduction}
and $K_0(\cQ)=0$ (\cite[Corollary~6.4.2]{RoLaLa:Introduction}), 
and neither of these groups has a nontrivial elementary submodel.  
If $A$ is an elementary submodel of $\cQ$, then 
Proposition~\ref{P.K0.eq}
and 
Proposition~\ref{P.K1.eq} 
imply that $A$ has the same $K$-theory as the Calkin algebra. Since the Calkin algebra is purely infinite, 
it is traceless, and  all elementary submodels of $\cQ$ have the same Elliott invariant. 

It remains to prove that $\cQ$ has nonisomorphic elementary submodels. 
The space of all complete isometry classes of 
finite-dimensional operator systems
is equipped with a natural topology. 
By \cite[Proposition~2.6(a)]{JunPis}, this space is nonseparable  
while for every separable \cstar-algebra $A$, the space of operator systems completely isometric
to a subsystem of $A$ is separable. Therefore we can find an uncountable family of 
nonisomorphic and separable elementary submodels of $\cQ$, as required. 
\end{example} 

By adapting the proof of  
\cite[Theorem~4.3.11]{Phi:Classification}, one can vary the $K$-theory of 
the algebras in  Example~\ref{Ex.Counterexample}. More interestingly, 
in \cite[Theorem~4.3.8]{Phi:Classification} Phillips constructed an infinite 
family of  nonisomorphic, simple, separable, exact, 
 unital \cstar-algebras
with the same Elliott invariant. We do not know whether Phillips's examples are elementarily equivalent, 
or whether exact counterexamples to Question~\ref{Q1} can be found.

\section{Co-elementarity} \label{Co-elementarity}
In classical first-order logic, an elementary  property is finitely axiomatizable (i.e.,  axiomatizable by a finite theory~$T$) 
if and only its negation is axiomatizable. This is an easy consequence of the compactness theorem. 
In the logic of metric structures this equivalence is not true. We will say that a class is 
\emph{co-elementary}\index{co-elementary} (or \emph{co-axiomatizable})\index{co-axiomatizable} if its complement is elementary.

There is no implication in the logic of metric structures, but since $\min$, $\max$  acts as the disjunction,  
conjunction, respectively,  we have the following. 
 
\begin{lemma} \label{L.implication} 
Assume the property $P$ is both elementary and co-elementary, and the property $R$ is elementary. 
Then both the properties $P\Rightarrow R$ and $\lnot P\Rightarrow R$ are elementary. \qed
\end{lemma}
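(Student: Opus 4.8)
The plan is to argue semantically, using the characterization of elementary classes in Theorem~\ref{T.Ax}: a class is elementary exactly when it is closed under isomorphisms, ultraproducts, and ultraroots. First I would reduce the two assertions to a single one. Writing $\Mod(P)$ for the class of structures with property $P$, the property $P\Rightarrow R$ cuts out the class $\Mod(\lnot P)\cup \Mod(R)$, and $\lnot P\Rightarrow R$ cuts out $\Mod(P)\cup \Mod(R)$. Since $P$ is both elementary and co-elementary, so is $\lnot P$ (the complement of $\lnot P$ is $\Mod(P)$, which is elementary). Thus it suffices to prove the single claim: \emph{if $Q$ is a property that is both elementary and co-elementary and $R$ is elementary, then $\cC:=\Mod(Q)\cup \Mod(R)$ is elementary.} Applying it with $Q=\lnot P$ yields the first statement and with $Q=P$ the second.

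The heart of the argument, and the step I expect to be the main obstacle, is the two-valuedness of $Q$ under ultraproducts. Concretely I would establish, for $\calL$-structures $A_i$ and an ultrafilter $\cU$ on $I$,
\[
\textstyle\prod_\cU A_i\in \Mod(Q)\quad\iff\quad \{i\in I: A_i\in \Mod(Q)\}\in \cU.
\]
Fixing theories $T_Q$ and $T_{\lnot Q}$ axiomatizing $\Mod(Q)$ and its complement, this follows from \L o\'s' Theorem (Theorem~\ref{Los}) in both directions: if the index set $\{i: A_i\in\Mod(Q)\}$ lies in $\cU$ then every $\psi\in T_Q$ satisfies $\psi^{\prod_\cU A_i}=\lim_{i\to\cU}\psi^{A_i}=0$, so the ultraproduct models $T_Q$; and since $\{i: A_i\in\Mod(Q)\}$ and $\{i:A_i\in\Mod(\lnot Q)\}$ partition $I$, if the former is not in $\cU$ the latter is, and the same computation with $T_{\lnot Q}$ puts the ultraproduct in the complement of $\Mod(Q)$. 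This is precisely where co-elementarity of $Q$ is used, and it is exactly what a general union of two elementary classes lacks.

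With this fact in hand the three closure conditions fall out. Closure under ultraproducts: given $A_i\in\cC$ and $\cU$, set $S:=\{i:A_i\in\Mod(Q)\}$; if $S\in\cU$ then $\prod_\cU A_i\in\Mod(Q)\subseteq\cC$ by the displayed equivalence, while if $S\notin\cU$ then $I\setminus S\in\cU$, and for $i\notin S$ the hypothesis $A_i\in\cC$ forces $A_i\in\Mod(R)$, so $T_R$ vanishes on a member of $\cU$ and $\prod_\cU A_i\in\Mod(R)\subseteq\cC$ by \L o\'s' Theorem. Closure under ultraroots is easier: if $A^\cU\in\cC$ then $A^\cU\in\Mod(Q)$ or $A^\cU\in\Mod(R)$, and as each of $\Mod(Q)$, $\Mod(R)$ is elementary, hence closed under ultraroots (Theorem~\ref{T.Ax}), in either case $A\in\cC$. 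Closure under isomorphisms is immediate. Theorem~\ref{T.Ax} then gives that $\cC$ is elementary, completing both cases.

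The conceptual warning to keep in mind is that the union of two elementary classes is \emph{not} elementary in general, so one should resist the temptation to encode $Q$ by a single $\{0,1\}$-valued sentence and combine via $\min$; by the remark preceding the lemma, a property that is both elementary and co-elementary need not be finitely axiomatizable in the metric setting. The entire force of the co-elementarity hypothesis is concentrated in securing the reverse implication of the two-valuedness equivalence above, without which the ultraproduct split cannot be performed.
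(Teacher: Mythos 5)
Your argument is correct, but it takes a genuinely different route from the one the paper intends. The paper's (omitted) proof is syntactic: by Proposition~\ref{P.el-co-el}, a property that is both elementary and co-elementary is cut out by a \emph{single} sentence $\phi$ with $\phi^A=0$ on the class and $\phi^A=1$ on its complement, and then $P\Rightarrow R$ is axiomatized by the sentences $\min(1\dotminus\phi,\,|\psi_j|)$ where $\{\psi_j\}$ axiomatizes $R$ --- this is exactly what the phrase ``$\min$ acts as the disjunction'' in the sentence preceding the lemma is pointing at, and it works precisely because one of the two classes being united is the zero-set of a single two-valued sentence. You instead verify the closure conditions of Theorem~\ref{T.Ax} directly, with the key step being the two-valuedness of $Q$ under ultraproducts ($\prod_\cU A_i\in\Mod(Q)$ iff $\cU$-many $A_i$ are); that step is correct, correctly isolates where co-elementarity is used, and your reduction to the single claim about $\Mod(Q)\cup\Mod(R)$ is clean. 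The semantic route avoids writing down axioms but gives no explicit axiomatization; the syntactic route gives one. One caveat: your closing ``warning'' is mistaken. You claim one should resist encoding $Q$ by a single $\{0,1\}$-valued sentence because an elementary and co-elementary property ``need not be finitely axiomatizable'' in the metric setting; but Proposition~\ref{P.el-co-el} says exactly that such a property \emph{is} the zero-set of a single genuine sentence that is $1$ on the complement, so the $\min$-combination is available and is the paper's actual proof. What fails in metric logic (and what the remark before the lemma is about) is the \emph{converse}: a class axiomatized by a single sentence need not be co-elementary, since $\{A:\phi^A>0\}$ need not be closed under ultraproducts. This slip does not affect the validity of your argument, which never relies on the claim.
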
 

A useful characterization of properties that are both elementary and co-elementary is given in the following. 

\begin{prop} 
\label{P.el-co-el}
For a class $\cC$ of models of a separable language~$\calL$ the following 
are equivalent. 
\begin{enumerate}
\item \label{I.el-co-el.1} Both $\cC$ and its complement are elementary. 
\item \label{I.el-co-el.2} There is  a sentence $\phi$ such that $\cC=\{A\mid \phi^A=0\}$ 
and $\cC^{\complement}=\{A\mid \phi^A=1\}$. 
\item \label{I.el-co-el.3} There are sentence $\phi$ and $s<r$ in $\bbR$  such that 
\[
\cC=\{A\mid \phi^A\leq s\}\text{ 
and }\cC^{\complement}=\{A\mid \phi^A\geq r\}.
\] 
\item \label{I.el-co-el.4} The class $\cC$ is elementary, and for any sentence $\phi$ such that $\cC = \{A \mid \phi^A = 0\}$, there exists $r>0$ such that 
\[ 
\cC^{\complement}=\{A \mid \phi^A \geq r\}. 
\]
\end{enumerate}
In fact, in (2), $\phi$ is a sentence in the original sense of that term and not a uniform limit of such. 
\end{prop}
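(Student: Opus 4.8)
The plan is to run the core cycle $(1)\Rightarrow(3)\Rightarrow(2)\Rightarrow(1)$ and then attach $(4)$. Two of the links are routine. For $(2)\Rightarrow(1)$, if $\phi$ is as in $(2)$ then $\cC=\{A:\phi^A=0\}$ is cut out by the single sentence $\phi$, while $\cC^{\complement}=\{A:\phi^A=1\}=\{A:(1\dotminus\phi)^A=0\}$ is cut out by $1\dotminus\phi$ (here I use that $\phi$ is $\{0,1\}$-valued, so $1\dotminus\phi$ vanishes exactly on $\cC^{\complement}$); hence both classes are elementary. For $(3)\Rightarrow(2)$, take $\phi,s<r$ as in $(3)$ and compose with the continuous function $g$ that is $0$ on $(-\infty,s]$, is $1$ on $[r,\infty)$, and is linear in between. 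Then $g(\phi)$ is a genuine sentence (a connective applied to $\phi$), it vanishes exactly on $\{A:\phi^A\le s\}=\cC$, and it equals $1$ exactly on $\{A:\phi^A\ge r\}=\cC^{\complement}$. Since $g(\phi)$ is an honest sentence and not merely a uniform limit, this step also yields the closing remark about $(2)$.

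The heart of the matter is $(1)\Rightarrow(3)$, a separation-by-compactness argument. Put $T:=\Th(\cC)$ and $T':=\Th(\cC^{\complement})$, so that $\cC=\Mod(T)$ and $\cC^{\complement}=\Mod(T')$. No structure lies in both classes, so $T\cup T'$ has no model and, by the Compactness Theorem (Theorem~\ref{T.Compactness}), is not finitely approximately satisfiable: some finite subset of its approximation $\{|\chi|\dotminus\delta:\chi\in T\cup T',\ \delta>0\}$ is inconsistent. Separating the sentences that come from $T$ from those that come from $T'$, and taking $\e>0$ below all the thresholds that appear, we obtain $\varphi_1,\dots,\varphi_m\in T$ and $\psi_1,\dots,\psi_n\in T'$ for which no structure $A$ satisfies simultaneously $|\varphi_i|^A\le\e$ for all $i$ and $|\psi_j|^A\le\e$ for all $j$. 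Set $\Phi:=\max_i|\varphi_i|$ and $\Psi:=\max_j|\psi_j|$, genuine $\bbR^+$-valued sentences with $\Phi^A=0$ on $\cC$ and $\Psi^A=0$ on $\cC^{\complement}$; the inconsistency says precisely that $\max(\Phi^A,\Psi^A)>\e$ for every $A$. Consequently $\Psi>\e$ throughout $\cC$ and $\Phi>\e$ throughout $\cC^{\complement}$. The sentence $\phi:=\Phi\dotminus\Psi$ then vanishes on $\cC$ (where $\Phi=0$) and equals $\Phi>\e$ on $\cC^{\complement}$, so $\cC=\{A:\phi^A\le 0\}$ and $\cC^{\complement}=\{A:\phi^A\ge\e\}$, which is $(3)$ with $s=0<\e=r$.

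It remains to incorporate $(4)$, where I read $\phi$ as ranging over $\bbR^+$-valued definable predicates, in keeping with the convention of not distinguishing these from sentences (the $(2)$-remark is exactly what makes the distinction visible). For $(1)\Rightarrow(4)$, fix any such $\phi$ with $\cC=\{\phi=0\}$, so $\cC^{\complement}=\{\phi>0\}$; were $(4)$ to fail for $\phi$, we would have $\inf\{\phi^A:A\in\cC^{\complement}\}=0$, so pick $A_k\in\cC^{\complement}$ with $\phi^{A_k}\to 0$ and form $B:=\prod_{\cU}A_k$ for a nonprincipal $\cU$ on $\bbN$. As $\cC^{\complement}$ is elementary it is closed under ultraproducts (Theorem~\ref{T.Ax}), whence $B\in\cC^{\complement}$; yet by \L o\'s' Theorem (Theorem~\ref{Los}) $\phi^B=\lim_{\cU}\phi^{A_k}=0$, placing $B\in\cC$, a contradiction. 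For $(4)\Rightarrow(1)$, separability of $\calL$ lets us choose a countable set $\{\psi_k\}$ dense, in the universal seminorm, in $T=\Th(\cC)$, so that $\cC=\{A:P^A=0\}$ for the single definable predicate $P:=\sum_k 2^{-k}\min(|\psi_k|,1)$. Applying $(4)$ to $P$ yields $r>0$ with $\cC^{\complement}=\{P\ge r\}=\{A:(r\dotminus P)^A=0\}$; and the zero-set of a definable predicate is elementary, since for any genuine sentences $\chi_k\to r\dotminus P$ uniformly it is axiomatized by the genuine sentences $|\chi_k|\dotminus\tfrac1k$. Thus $\cC^{\complement}$ is elementary and $(1)$ holds.

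The main obstacle is the $(1)\Rightarrow(3)$ step: the point is not the structure-by-structure separation of $\cC$ from $\cC^{\complement}$ but the extraction of a single uniform gap $\e$, which is exactly what approximate compactness (as opposed to exact compactness) supplies. Separability is used only in $(4)\Rightarrow(1)$, to manufacture a single definable predicate cutting out $\cC$.
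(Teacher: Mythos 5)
Your proof is correct, but it routes the argument differently from the paper. The paper's cycle is $(1)\Rightarrow(4)\Rightarrow(3)\Rightarrow(2)\Rightarrow(1)$: after the two easy links it proves $(1)\Rightarrow(4)$ exactly as you do (a sequence $A_n\in\cC^{\complement}$ with $\phi^{A_n}\to 0$, an ultraproduct, and \L o\'s), and then gets $(3)$ \emph{from} $(4)$ by fixing a countable axiomatization $(\psi_k)$ of $\cC$ (this is where separability of $\calL$ enters), applying $(4)$ to the definable predicate $\sum_k 2^{-k}\psi_k$ to extract a gap $2r$, and truncating to the finite --- hence genuine --- sentence $\sum_{k\leq n}\psi_k$. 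You instead prove $(1)\Rightarrow(3)$ directly by the classical compactness separation argument applied to $\Th(\cC)\cup\Th(\cC^{\complement})$, producing $\phi=\Phi\dotminus\Psi$ with a uniform gap $\e$, and then attach $(4)$ separately. The trade-off: your route establishes $(1)\Leftrightarrow(2)\Leftrightarrow(3)$ with no separability hypothesis at all (compactness needs none), confining separability to the equivalence with $(4)$, whereas the paper's route is more economical because the $(4)\Rightarrow(3)$ step does double duty. Two small points, neither a gap: your $\max$'s in the separation step should be read with the convention $\max\emptyset=0$ to cover the degenerate cases $\cC=\emptyset$ or $\cC^{\complement}=\emptyset$; and your explicit insertion of $\min(|\psi_k|,1)$ in $(4)\Rightarrow(1)$ is actually more careful than the paper's $\sum_k 2^{-k}\psi_k$, which tacitly assumes the $\psi_k$ are $[0,1]$-valued.
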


\begin{proof} It is clear that 
\eqref{I.el-co-el.3} implies \eqref{I.el-co-el.2} and
\eqref{I.el-co-el.2} implies \eqref{I.el-co-el.1}.

\eqref{I.el-co-el.4} $\Rightarrow$ \eqref{I.el-co-el.3}:
Since $\mathcal C$ is axiomatizable, we can find a sequence $(\psi_k)_{k=1}^\infty$ of formulas which axiomatize $\mathcal C$.
Thus $\psi := \sum_{k=1}^\infty 2^{-k}\psi_k$ is a definable predicate which axiomatizes $\mathcal C$, so by (4), there exists some $r>0$ such that $\psi^A \geq 2r$ for all $A \not\in \mathcal C$.
Choosing $n$ such that $2^{-n+1} < r$, we see that for all $A \not\in \mathcal C$,
\[ \sum_{k=1}^n \psi_k^A \geq r. \]
Thus (3) holds with 
\[ \phi:=\sum_{k=1}^n \psi_k, \]
$s:=0$, and $r$ as already defined.

\eqref{I.el-co-el.1} $\Rightarrow$ \eqref{I.el-co-el.4}:
We assume that both $\mathcal C$ and its complement are axiomatizable. As in the proof that 
 (4) implies (3) above let $\phi$ be a definable predicate which axiomatizes $\cC$, and suppose for a contradiction that there does not exist $r>0$ such that $\phi^A\geq r$ for all $A \not\in \cC$.

Thus for each $n$, there exists $A_n$ such that $0<\phi^{A_n}<1/n$. 
Thus $A_n \not\in \cC$, so that for any nonprincipal ultrafilter $\cU$,
\[ A:=\prod_\cU A_n \not\in \cC. \]
However, by \L o\'s' theorem $\phi^A=0$, which means that $A \in \cC$, a contradiction. 
\end{proof} 

\subsection{Abelian algebras} 
In section \S\ref{S.Axiomatizable}, we saw that the property of being abelian was elementary and co-elementary.  Somewhat less trivially, we also saw that being $n$-subhomogeneous was elementary and co-elementary.

 \subsection{Infinite algebras} By \S\ref{S.Infinite} and \S\ref{S.SF} being infinite is elementary and co-elementary. 
 
 \subsection{Algebras containing a unital copy of $M_n(\bbC)$} 
  \label{S.Mn} 
Fix $n\geq 2$. 
In the class of unital \cstar-algebras, the class of algebras containing a unital copy of $M_n(\bbC)$ 
is elementary and co-elementary.

Let  $\alpha_n^u$ be the $n^2$-ary formula in Example~\ref{Ex.1}  \eqref{L.alpha-n-u}
whose zero-set is the set of all matrix units of unital copies of $M_n(\bbC)$. 
This formula is  weakly stable, and therefore there exists $\e>0$ such that in every unital \cstar-algebra $A$,
$\alpha^u_n(\bar a)^A\leq \e$ for some $\bar a$ implies the existence of $\bar b$ such that  $\alpha^u_n(\bar b)^A=0$.
Therefore having a unital copy of $M_n(\bbC)$ is axiomatized by 
$\phi:=\inf_{\bar x}\alpha^u_n(\bar x)$, and we see that this formula satisfies Proposition \ref{P.el-co-el} (3) (with $s:=0,r:=\e$).

 The class of algebras containing a (not necessarily unital) copy of $M_n(\bbC)$ for $n\geq 2$  
is also elementary and co-elementary. This is proved as above, using formula
$\alpha_n$ in place of $\alpha_n^u$ (see Example~\ref{Ex.1} \eqref{L.alpha-n}).

The argument from \S\ref{S.Mn} easily generalizes to show the following. 

 \begin{lemma} \label{L.PnotP} Assume that property $P$ is axiomatizable by $\inf_{\bar x}\varphi(\bar x)$ where $\varphi(\bar x)$ is weakly stable. 
 Then $\lnot P$ is also axiomatizable. \qed
 \end{lemma}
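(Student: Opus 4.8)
The plan is to run, in this general setting, exactly the argument used in \S\ref{S.Mn} for the concrete case $P=$ ``contains a unital copy of $M_n(\bbC)$''. Write $\phi:=\inf_{\bar x}\varphi(\bar x)$; since $\varphi$ is $\bbR^+$-valued (as are all the weakly stable predicates in question), $\phi^A\geq 0$ and, by hypothesis, $A\models P$ precisely when $\phi^A=0$. The whole point is to upgrade the merely \emph{approximate} information carried by this infimum into a genuine numerical gap, and weak stability is exactly the tool for this. First I would apply the definition of weak stability of $\varphi$ with, say, target distance $1$, to extract a single constant $\delta>0$, uniform over all \cstar-algebras, such that for every $A$: if $\varphi^A(\bar a)<\delta$ for some tuple $\bar a$, then $\varphi^A(\bar b)=0$ for some tuple $\bar b$.

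The heart of the proof is then the dichotomy
\[
\phi^A\in\{0\}\cup[\delta,\infty)
\]
for every \cstar-algebra $A$. To establish it I would argue by contradiction: suppose $0<\phi^A<\delta$. Because $\phi^A$ is the infimum of the values $\varphi^A(\bar a)$ and is strictly below $\delta$, there is a tuple $\bar a$ with $\varphi^A(\bar a)<\delta$; weak stability then produces $\bar b$ with $\varphi^A(\bar b)=0$, forcing $\phi^A=0$, a contradiction. Hence $\phi^A$ is either $0$ (exactly when $A\models P$) or at least $\delta$ (exactly when $A\models\lnot P$).

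With the dichotomy in hand the conclusion would be immediate. Setting $\cC:=\{A:A\models P\}$, we have
\[
\cC=\{A:\phi^A\leq 0\},\qquad \cC^{\complement}=\{A:\phi^A\geq\delta\},
\]
so $\phi$ together with $s:=0$ and $r:=\delta$ witnesses condition \eqref{I.el-co-el.3} of Proposition~\ref{P.el-co-el}; equivalently, $\lnot P$ is axiomatized by the single sentence $\delta\dotminus\phi=\delta\dotminus\inf_{\bar x}\varphi(\bar x)$, which evaluates to $0$ exactly on $\cC^{\complement}$ and to $\delta$ on $\cC$. Thus $\lnot P$ (indeed both $P$ and $\lnot P$) is elementary.

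As for difficulty, there is essentially no real obstacle beyond marshalling the definitions correctly; the single point deserving care is that weak stability supplies a \emph{uniform} $\delta$ valid across the entire class, which is precisely what separates the approximate condition $\phi^A=0$ from $\lnot P$ by a fixed margin and thereby collapses the possible values of $\phi^A$ into the two-piece set above. I would also verify the harmless conventions that $\inf_{\bar x}\varphi$ quantifies over the intended (unit-ball) sorts so that $\phi$ is a bona fide sentence, and that $\varphi\geq 0$ (replacing $\varphi$ by $|\varphi|$ if needed, which has the same zero-set and, since weak stability is phrased in terms of $|\varphi|$, is again weakly stable).
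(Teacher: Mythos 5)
Your proof is correct and is essentially the paper's own argument: the paper proves this lemma by noting that the argument of \S\ref{S.Mn} (use weak stability to produce a uniform $\delta>0$ forcing the dichotomy $\phi^A\in\{0\}\cup[\delta,\infty)$, then invoke Proposition~\ref{P.el-co-el}(3) with $s=0$, $r=\delta$) generalizes verbatim. Nothing is missing.
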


\subsection{Definability of sets of projections}\label{S.sets.proj}
It was noted in Example~\ref{Ex.1} that the set of all projections is definable. 
We now use Theorem~\ref{Th.sem.def} and the notion of co-elementarity to
show that  some distinguished sets of projections are also definable. 

\begin{lemma} \label{L.def.proj} Assume $P$ is a property of \cstar-algebras that is both elementary and co-elementary. 
Then the set of projections $q$ such that $qAq$ has property $P$ is definable. 
\end{lemma}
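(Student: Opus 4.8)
The plan is to reduce the condition ``$qAq$ has property $P$'' to the vanishing of a single formula evaluated at $q$, obtained by relativizing an axiom for $P$ to the corner $qAq$, and then to establish definability through the ultraproduct criterion of Theorem~\ref{Th.sem.def}; the crucial use of the hypothesis that $P$ is \emph{both} elementary and co-elementary will be a uniform gap in the values of that formula. First I would invoke Proposition~\ref{P.el-co-el}: since $P$ and its complement are both elementary, there is a genuine sentence $\phi$ (not merely a uniform limit, by the last clause of that proposition) with $\phi^C=0$ when $C$ has property $P$ and $\phi^C=1$ otherwise, so that $\phi^C\in\{0,1\}$ for every \cstar-algebra $C$. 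Every corner $qAq$ is unital with unit $q$, so I may work in the language of unital \cstar-algebras, the restriction of $P$ to unital algebras remaining elementary and co-elementary.

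The key construction is to relativize $\phi$ to the corner determined by a projection. For a projection $q\in A$, the corner $qAq=\{qaq:a\in A\}$ is a sub-\cstar-algebra whose unit is $q$, whose norm agrees with that of $A$, and whose ball of radius $n$ is exactly $\{qaq:a\in A,\ \|a\|\leq n\}$ (using $q^2=q=q^*$). I would define $\psi(q)$ by the syntactic substitution that replaces the unit symbol $1$ by the variable $q$ and each quantified variable $x$ by $qxq$ throughout $\phi$. A routine induction on the construction of terms and formulas (the quantifier step using that $\sup_x$, now ranging over $\{qxq:\|x\|\leq n\}$, sweeps out $(qAq)_n$) then gives
\[
\psi^A(q)=\phi^{qAq}
\]
for every projection $q\in A$; since $\phi$ is an honest formula, so is $\psi$. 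Hence the set in question is
\[
S^A=\{q\in\cP(A):\psi^A(q)=0\},
\]
and, by the previous paragraph, $\psi^A(q)\in\{0,1\}$ for every projection $q$.

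To finish I would apply Theorem~\ref{Th.sem.def}. That $S$ is a uniform assignment of closed sets is routine: $\cP(A)$ is closed (Example~\ref{Ex.1}) and $\psi^A$ is uniformly continuous, so $S^A$ is closed; and under an elementary embedding $f$ one has $\psi^B(f(q))=\psi^A(q)$, whence $f(S^A)\subseteq S^B$, giving functoriality. It then remains to verify the ultraproduct identity $S^{M}=\prod_\cU S^{A_i}$ for $M=\prod_\cU A_i$. Given $q=(q_i)_\cU\in\cP(M)$, I may take each $q_i\in\cP(A_i)$ because the projections form a definable set. By \L o\'s' theorem (Theorem~\ref{Los}), $\psi^{M}(q)=\lim_{i\to\cU}\psi^{A_i}(q_i)$, and since each $\psi^{A_i}(q_i)\in\{0,1\}$, this limit equals $0$ precisely when $\{i:q_i\in S^{A_i}\}\in\cU$, i.e.\ precisely when $q\in\prod_\cU S^{A_i}$. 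Thus $S^{M}=\prod_\cU S^{A_i}$, and $S$ is definable.

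The point I expect to be most delicate, and where co-elementarity is indispensable, is exactly this $\{0,1\}$-gap: were $P$ only elementary, the values $\psi^{A_i}(q_i)$ could tend to $0$ along $\cU$ while every $q_i\notin S^{A_i}$, so that $q\in S^{M}\setminus\prod_\cU S^{A_i}$ and the ultraproduct identity would fail. The uniform gap supplied by Proposition~\ref{P.el-co-el} is precisely what excludes this; equivalently, it is what makes the predicate $\pi(x)+\psi(x)$ (with $\pi$ the weakly stable predicate defining projections) weakly stable, should one prefer to conclude via Lemma~\ref{L.weaklystable} instead.
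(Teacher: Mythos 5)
Your proof is correct, but it reaches the key ultraproduct identity $S^{M}=\prod_\cU S^{A_i}$ by a genuinely different route than the paper. The paper argues purely semantically: it observes that $q(\prod_\cU A_i)q=\prod_\cU(q_iA_iq_i)$ and then applies closure under ultraproducts of the elementary class of algebras with $P$ (when $\cU$-many corners satisfy $P$) and of its elementary complement (in the opposite case); no defining sentence for $P$ is ever written down and Proposition~\ref{P.el-co-el} is not invoked. You instead extract the $\{0,1\}$-valued sentence $\phi$ from Proposition~\ref{P.el-co-el}, relativize it syntactically to corners so that $\psi^A(q)=\phi^{qAq}$, and apply \L o\'s' theorem together with the value gap. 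The two arguments are morally equivalent --- the gap in Proposition~\ref{P.el-co-el} is exactly the formula-level shadow of ``both $P$ and $\lnot P$ are closed under ultraproducts,'' and \L o\'s applied to $\psi$ is the formula-level shadow of the corner-of-ultraproduct identity --- but your version buys an explicit defining predicate $\psi$ for the set (and hence, as you note, an explicit weakly stable predicate $\pi+\psi$), at the cost of the relativization induction. That induction is routine but does require care in the multi-sorted setting: one must check that $\{qxq:\|x\|\leq n\}$ is precisely the $n$-ball of $qAq$ and that the unit symbol, if present in $\phi$, is replaced by $q$. Your diagnosis of exactly where co-elementarity is indispensable coincides with the paper's use of it.
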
 

\begin{proof} Choose a
projection $q$ in an  ultrapower $\prod_\cU A_i$. 
By the definability of the set of all projections (Example~\ref{Ex.1}), we can choose a representing sequence 
$(q_i)$ so that $X:=\{i: q_i$ is a projection$\}$ belongs to $\cU$. 
Since the property $P$ is elementary,   
if the set  $Y:=\{i\in X: q_iA_i q_i$ satisfies $P \}$ belongs to $\cU$ then  $q(\prod_\cU A_i )q=\prod_{\cU} (q_i A_i q_i)$ 
has property $P$. Similarly, since the negation of $P$ is elementary, 
if $X\setminus Y$ belongs to $\cU$ then $q(\prod_\cU A_i) q$ satisfies the negation of $P$. 
This shows that 
\[ \{q \in \prod_\cU A_i \mid q\text{ satisfies }P\} = \prod_\cU \{q_i \in A_i \mid q_i \text{ satisfies }P\}, \]
and therefore the conclusion follows by Theorem~\ref{Th.sem.def}. 
\end{proof}

A projection $p$ in a \cstar-algebra $A$ is \emph{abelian}\index{projection!abelian} if $pAp$ is abelian, 
\emph{finite}\index{projection!finite} if $pAp$ is finite (see \S\ref{S.SF}), and \emph{infinite}\index{projection!infinite} if $pAp$ 
is not finite. 

\begin{proposition} \label{P.definable.projection}Each of the sets of abelian, finite and infinite projections is definable. 
Therefore the class of \cstar-algebras having an abelian, respectively finite, or infinite projection, is elementary. 
\end{proposition}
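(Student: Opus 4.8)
The plan is to derive everything from Lemma~\ref{L.def.proj} together with the co-elementarity facts already recorded, and then to treat the ``$\exists$ a nonzero such projection'' clause by quantifying over the resulting definable sets. The real work has already been done in Lemma~\ref{L.def.proj}, so the proof is mainly a matter of checking that the three relevant corner-properties fall under its hypotheses and then extracting the final elementarity statement.

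First I would observe that each of the three properties in question is both elementary and co-elementary in the class of unital \cstar-algebras. Being abelian is universally axiomatizable (\S\ref{S.Abelian}) while being nonabelian is existentially axiomatizable (\S\ref{S.nonabelian}); being finite is universally axiomatizable among unital algebras (\S\ref{S.SF}) and being infinite is existentially axiomatizable (\S\ref{S.Infinite}). Hence each of ``abelian'', ``finite'', and ``infinite'' is simultaneously elementary and co-elementary, the last two being complements of one another within the unital algebras.

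Next, for a projection $p\in A$ the corner $pAp$ is a unital \cstar-algebra with unit $p$, and by definition $p$ is abelian (resp.\ finite, infinite) exactly when $pAp$ has the corresponding property. I would therefore apply Lemma~\ref{L.def.proj} three times, taking $P$ to be ``abelian'', ``finite'', and ``infinite'' respectively; each application is legitimate because the relevant $P$ is both elementary and co-elementary, and it yields that the set of abelian projections, the set of finite projections, and the set of infinite projections are each definable. This establishes the first assertion of the proposition.

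For the ``Therefore'' clause I would pass to nonemptiness up to the zero projection. Writing $X$ for, say, the definable set of abelian projections, I note that $\inf_{p\in X(A)}(1\dotminus\|p\|)$ is a definable predicate by Definition~\ref{D.Assignment}, being an infimum over a definable set of the formula $1\dotminus\|p\|$; it is thus a sentence whose zero-set is an elementary class, and the same recipe applies verbatim to the finite and infinite projection sets. The point I expect to require the most care is exactly this last step: since the zero projection lies in the abelian and finite projection sets, ``nonempty'' must be replaced by ``contains a norm-one element,'' and one has to verify that the discreteness $\{\|p\|:p\text{ a projection}\}=\{0,1\}$ forces the infimum to attain the value $0$ only when a genuine nonzero projection is present, rather than merely a sequence of projections with norms tending to $1$. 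Granting this, the sentence vanishes precisely on the algebras possessing a nonzero abelian (resp.\ finite, infinite) projection, so each of these classes is elementary.
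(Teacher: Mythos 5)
Your proof is correct and follows essentially the same route as the paper's: both rest on Lemma~\ref{L.def.proj} applied to the properties ``abelian'' and ``finite''/``infinite'', using the elementarity and co-elementarity established in \S\ref{S.Abelian}, \S\ref{S.nonabelian}, \S\ref{S.SF} and \S\ref{S.Infinite}. Your explicit treatment of the ``Therefore'' clause (quantifying $1\dotminus\|p\|$ over the definable set and using that projection norms lie in $\{0,1\}$) merely spells out a step the paper leaves implicit, and it is accurate.
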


\begin{proof} This follows by Lemma~\ref{L.def.proj} and 
the elementarity and co-elementarity of being abelian (\S\ref{S.Abelian} and 
\S\ref{S.nonabelian}) and  being finite (\S\ref{S.SF}  and  \S\ref{S.Infinite}). 
\end{proof} 

  \subsection{Stable rank one} \label{S.SR.1}
The class of algebras of stable rank one is  elementary and co-ele\-men\-ta\-ry
by Proposition~\ref{P.SR}. 

  \subsection{Real rank zero} \label{S.rr0.1}
The class of algebras of real rank  zero is  elementary and co-ele\-men\-ta\-ry
by Proposition~\ref{P.RR} (2).

  \subsection{Purely infinite simple \cstar-algebras }\label{S.PI} 
  A \cstar-algebra is \emph{purely infinite and simple}\index{C@\cstar-algebra!purely infinite and simple} if it has dimension greater than 1 and for every two nonzero positive elements $a$ and $b$ there is a sequence $x_n$, for $n\in \bbN$, such that $\|x_nax_n^*-b\|\to 0$ as $n\to \infty$. 
  (In other words, $a$ Cuntz-dominates $b$---see \S\ref{S.Cuntz}.)
  
  Being of dimension greater than 1 (i.e.,  not being isomorphic to $\bbC$) is clearly elementary and co-elementary.    
Consider the following sentence:
    \[
\alpha := \sup_{x\text{ pos.}} \sup_{y\text{ pos.}}  \min(\|x\| \dotminus 1/2, \inf_z\|2zxz^* - y\|)  
\]
 Roughly, $\alpha^A=0$ says that every positive element $a$ of norm at least 1/2 
 is Cuntz-above every positive contraction $b$  \emph{with the witnesses
 of Cuntz-ordering taken to be of norm $\leq \sqrt 2$}.
It is therefore clear that $\alpha^A=0$ implies $A$ is purely infinite and simple.
 
 
 On the other hand, assume $A$ is purely infinite and simple. Suppose $a\in A_+$ and $b\in A_+$ are such that $\|a\|\geq 1/2$ and $\|b\|\leq 1$ as in $\alpha$. 
 If $a$ is positive of norm $>1/2$   then $b\precsim (a-1/2)_+$. 
 By Lemma \ref{L.Cuntz} we can find $z$ such that $\|z\|\leq \sqrt 2$ 
 and $b=zaz^*$, and therefore $z/\sqrt 2$ witnesses that the given instance of $\alpha$ evaluates to 0. 
 
 Finally, suppose that $\alpha^A < 1$.  Fix $a \in A$ positive, a projection $p$ and, by working in $pAp$, we may assume without loss  of generality that $pap = a$ and $A$ is unital.  Then, if $\| a \| > 1/2$ and we choose $z$ witnessing that $\| 2zaz^* - 1\| < 1$, we see that $zaz^*$ is positive and invertible so in fact, $\alpha^A = 0$.  We conclude then that by considering the sentence $\alpha \dotminus 1$, one sees that being purely infinite and simple is co-elementary.

\section{Some non-elementary classes of \cstar-algebras} \label{S.non-elementary} 

 By Theorem \ref{T.Ax} every axiomatizable class of \cstar-algebras is closed under taking ultraproducts. 
 This is not a sufficient condition for the axiomatizability. The class of nonseparable \cstar-algebras is not axiomatizable due to
 the downward L\"owenheim--Skolem theorem (Theorem \ref{DLS}) and the class of non-nuclear \cstar-algebras is not axiomatizable
 because it not closed under ultraroots. However, both of these classes are easily seen to be closed under ultraproducts.

Modulo  \S\ref{S.SubHom}, the following is a reformulation of an unpublished result of Kirchberg.

\begin{prop} \label{P.nonax} For a \cstar-algebra $A$ the following are equivalent. 
\begin{enumerate}
\item \label{I.nonax.1} Every \cstar-algebra elementarily equivalent to $A$ is nuclear. 
\item \label{I.nonax.2} Every \cstar-algebra elementarily equivalent to $A$ is exact. 
 \item \label{I.nonax.3} $A$ is subhomogeneous. 
 \end{enumerate}
 \end{prop}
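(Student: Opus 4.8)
The plan is to establish the cycle $(2)\Rightarrow(3)\Rightarrow(1)\Rightarrow(2)$, with essentially all of the work concentrated in the first arrow. The implication $(1)\Rightarrow(2)$ is immediate, since every nuclear \cstar-algebra is exact; thus if all algebras elementarily equivalent to $A$ are nuclear, they are in particular exact. For $(3)\Rightarrow(1)$ I would invoke \S\ref{S.SubHom}: if $A$ is subhomogeneous then it is $n$-subhomogeneous for some $n$, so $\vphiALn^A=0$. Since $\vphiALn$ is an honest (universal) sentence, any $B\equiv A$ satisfies $\vphiALn^B=0$ and is therefore $n$-subhomogeneous as well. Every subhomogeneous \cstar-algebra is type I, hence nuclear, so $(1)$ follows.

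The substance of the proposition is $(2)\Rightarrow(3)$, which I would prove by contraposition: assuming $A$ is \emph{not} subhomogeneous, I must produce a non-exact \cstar-algebra $C$ with $C\equiv A$. By \S\ref{S.not.SubHom}, $A$ not subhomogeneous means $\vphiALn^A>0$ for every $n$; equivalently, $A$ admits irreducible representations of arbitrarily large (finite or infinite) dimension. Since $A\prec A^{\cU}\equiv A$ and exactness passes to \cstar-subalgebras, it would suffice to show that $A^{\cU}$ is non-exact for a suitable free ultrafilter $\cU$ on $\bbN$, and for that it is enough to exhibit a non-exact \cstar-subalgebra of $A^{\cU}$.

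When $A$ literally contains matrix subalgebras $M_{d_n}(\bbC)$ with $d_n\to\infty$ (for instance $A=\cK$ or an AF algebra), this is transparent: \L o\'s' Theorem (Theorem~\ref{Los}) places a copy of the matrix ultraproduct $\prod_{\cU}M_{d_n}(\bbC)$ inside $A^{\cU}$, and $\prod_{\cU}M_{d_n}(\bbC)$ is non-exact when $d_n\to\infty$, whence $A^{\cU}$ is non-exact. The main obstacle is that a non-subhomogeneous algebra need not contain, nor even approximately contain, any $M_2(\bbC)$: a projectionless example such as $\bigoplus_n C_0((0,1],M_n(\bbC))$ has $\inf_{\bar x}\alpha_2^A>0$, where $\alpha_2$ is the matrix-unit predicate of Example~\ref{Ex.1}, and this is an elementary property, so no algebra elementarily equivalent to it carries approximate matrix units of order $\geq 2$.

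Consequently the non-exact equivalent cannot, in general, be produced by displaying matrices, and the matrix-ultraproduct picture only illustrates a special case. For the general case I would fall back on the analytic content of Kirchberg's theorem: using the tensor-product characterization of exactness (equivalently, nuclear embeddability), one argues directly that the presence of irreducible representations of unbounded dimension forces $A^{\cU}$ to violate the defining exactness condition for an appropriately chosen short exact sequence $0\to J\to B\to B/J\to 0$. Verifying that unbounded irreducible dimension alone defeats exactness in the ultrapower, with no matricial structure available inside $A$ itself, is where essentially all of the difficulty resides, and it is exactly this step that I expect to require Kirchberg's (unpublished) argument rather than the soft model-theoretic tools assembled in \S\ref{S.SubHom} and \S\ref{S.not.SubHom}.
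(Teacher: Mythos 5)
Your reduction of the problem is sound up to the last step: $(1)\Rightarrow(2)$ and $(3)\Rightarrow(1)$ are handled exactly as in the paper, and you correctly identify both that the content lies in the contrapositive of $(2)\Rightarrow(3)$ and that the naive route fails because a non-subhomogeneous algebra (e.g.\ $\bigoplus_n C_0((0,1],M_n(\bbC))$) need not contain any copy of $M_2(\bbC)$, even approximately. But at precisely that point the proposal stops being a proof: "one argues directly that the presence of irreducible representations of unbounded dimension forces $A^{\cU}$ to violate the defining exactness condition" is not an argument, and deferring to an unspecified unpublished result of Kirchberg leaves the essential implication unestablished. This is a genuine gap, not a stylistic shortcut.

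The idea you are missing is that matrix algebras appear in a non-subhomogeneous algebra as \emph{subquotients} rather than subalgebras, and that exactness is closed under passing to both subalgebras and quotients, so subquotients suffice. Concretely: if $A$ is not $n$-subhomogeneous, then (as in \S\ref{S.not.SubHom}) $A$ has an irreducible representation of dimension $>n$, and from this one extracts a (not necessarily unital) subalgebra $B_n\subseteq A$ and an ideal $J_n\lhd B_n$ with $B_n/J_n\cong M_n(\bbC)$. Then $\prod_{\cU}B_n$ is a subalgebra of $A^{\cU}$ whose quotient $\prod_{\cU}B_n/\prod_{\cU}J_n$ is $\prod_{\cU}M_n(\bbC)$. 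The latter is not exact (Lemma~\ref{L.nonexact}: the full group algebra $\cst(F_n)$ is non-exact, embeds into $\prod_n M_n(\bbC)$, and by Lemma~\ref{L.prod-M-n} this embedding survives into the ultraproduct). Since exactness passes to quotients --- a deep but published theorem, not an unpublished one --- $\prod_{\cU}B_n$ is not exact, and since exactness passes to subalgebras, neither is $A^{\cU}$. No tensor-product computation or direct violation of a short exact sequence is needed once the subquotient trick is in hand; the only "hard analysis" imported is the quotient-permanence of exactness, which you can cite rather than reprove.
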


Our proof of Proposition~\ref{P.nonax} relies on two  lemmas. 

\begin{lemma} \label{L.prod-M-n} 
There exists an injective $^*$-homomorphism  
\[
\Phi\colon \prod_{n\in \bbN} M_n(\bbC)\to \ASA.
\] 
It can be chosen so that  for every nonprincipal ultrafilter $\cU$ on $\bbN$ the composition $\pi_{\cU}\circ \Phi$ is injective, with 
\[
\pi_{\cU}\colon \ASA\to \prod_{\cU} M_n(\bbC)
\]
denoting the quotient map. 
\end{lemma}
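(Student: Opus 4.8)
The plan is to factor $\Phi$ through the full product $\prod_m M_m(\bbC)$: first build a $*$-homomorphism $\Psi\colon \prod_n M_n(\bbC)\to\prod_m M_m(\bbC)$ into the product, then set $\Phi:=q\circ\Psi$, where $q\colon\prod_m M_m(\bbC)\to\ASA$ is the quotient map. The whole point is to choose $\Psi$ so that $\|\Psi(x)_m\|$ \emph{converges} (not merely clusters) to $\|x\|$ as $m\to\infty$; once this is arranged, every quotient that remembers the tail behaviour of a sequence---both $q$ and each $\pi_{\cU}$---is isometric on the image of $\Psi$, and injectivity is automatic.

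To achieve this I would let each output coordinate carry a growing block-diagonal copy of the input. Concretely, for $m\in\bbN$ let $r(m):=\max\{r: 1+2+\cdots+r\le m\}$, so that $r(m)\to\infty$, and define the $m$-th coordinate of $\Psi$ by
\[
\Psi\big((a_n)_n\big)_m := \diag(a_1,a_2,\dots,a_{r(m)})\oplus 0 \in M_m(\bbC),
\]
the block-diagonal matrix padded with zeros up to size $m$ (this fits because $1+\cdots+r(m)\le m$). Each coordinate map is a coordinate projection followed by a block-diagonal inclusion, hence a $*$-homomorphism, so $\Psi$ is a $*$-homomorphism. Since the norm of a block-diagonal matrix is the maximum of the norms of its blocks, $\|\Psi(x)_m\|=\max_{j\le r(m)}\|a_j\|$, which is nondecreasing in $m$ and tends to $\sup_n\|a_n\|=\|x\|$. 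Thus $\Psi$ is isometric and, crucially, $\lim_{m\to\infty}\|\Psi(x)_m\|=\|x\|$ for every $x$.

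With this in hand the two assertions fall out at once. For the first, $\|\Phi(x)\| = \dist(\Psi(x),\bigoplus_m M_m(\bbC)) = \limsup_m\|\Psi(x)_m\| = \|x\|$, so $\Phi$ is an isometric, in particular injective, $*$-homomorphism. For the second, fix a nonprincipal ultrafilter $\cU$ and recall that, because $\bigoplus_m M_m(\bbC)\subseteq c_{\cU}$, the map $\pi_{\cU}\colon\ASA\to\prod_{\cU}M_m(\bbC)$ is induced by $\Psi(x)\mapsto \Psi(x)/c_{\cU}$; hence $\|\pi_{\cU}(\Phi(x))\| = \lim_{m\to\cU}\|\Psi(x)_m\|$. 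Since the sequence $(\|\Psi(x)_m\|)_m$ converges to $\|x\|$, every ultralimit equals $\|x\|$, so $\pi_{\cU}\circ\Phi$ is again isometric, hence injective.

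I expect the only real subtlety to be the choice of $\Psi$. The naive attempts---the bare quotient map, or a map putting a single input coordinate into each output slot---are \emph{not} injective on the quotient and, worse, fail the ultrafilter clause, since an arbitrary nonprincipal $\cU$ can concentrate on indices where a fixed coordinate $a_{n_0}$ is absent, killing it in $\prod_{\cU}M_m(\bbC)$. Loading each output coordinate with \emph{all} of $a_1,\dots,a_{r(m)}$ is exactly what forces $\|\Psi(x)_m\|$ to converge rather than merely cluster, and that is what makes the conclusion hold uniformly over every nonprincipal $\cU$. The remaining verifications---that the block inclusions respect the $*$-operations and that the padded matrices genuinely fit into $M_m(\bbC)$---are routine.
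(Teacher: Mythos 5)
Your proposal is correct and is essentially the paper's own argument: the paper likewise defines $\Psi\colon\prod_n M_n(\bbC)\to\prod_k M_k(\bbC)$ by block-diagonally embedding the first $n$ coordinates into $M_k(\bbC)$ for $k$ in the interval $[n(n+1)/2,(n+1)(n+2)/2)$, and composes with the quotient map. Your bookkeeping via $r(m)$ is the same device, and your explicit observation that $\|\Psi(x)_m\|$ \emph{converges} (not merely clusters) to $\|x\|$ is exactly the point the paper leaves implicit in ``clearly satisfies the requirements.''
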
 

\begin{proof} For every $n$ the algebra $\prod_{j<n} M_j(\bbC)$ is clearly isomorphic to a (not necessarily unital) 
subalgebra of $M_k(\bbC)$ for all $k\geq n(n+1)/2$; let $\Psi_{n,k}$ denote this embedding. 
Let $J_n:=[n(n+1)/2, (n+1)(n+2)/2)$.  
Define $\Psi\colon  \prod_n M_n(\bbC)\to \prod_n M_n(\bbC)$ by 
\[
\Psi(\bar a)(k):=\Psi_{n,k}(\bar a\rs n)
\]
if $k\in J_n$. The composition of $\Psi$ with the quotient map $\pi\colon \prod_n M_n(\bbC)\to \ASA$ clearly satisfies the requirements. 
\end{proof} 

\begin{lemma} \label{L.nonexact} If $\cU$ is a nonprincipal ultrafilter on $\bbN$ then $\prod_{\cU} M_n(\bbC)$ is not exact. 
\end{lemma}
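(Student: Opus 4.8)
The plan is to deduce non-exactness from two ingredients: that exactness is inherited by C$^*$-subalgebras, and that Lemma~\ref{L.prod-M-n} exhibits $\prod_{\cU} M_n(\bbC)$ as an overalgebra of the full product $\prod_n M_n(\bbC)$. Indeed, $\pi_{\cU}\circ\Phi$ is an injective $^*$-homomorphism of $\prod_n M_n(\bbC)$ into $\prod_{\cU} M_n(\bbC)$, so $\prod_n M_n(\bbC)$ is (isomorphic to) a C$^*$-subalgebra of $\prod_{\cU} M_n(\bbC)$. Since a C$^*$-subalgebra of an exact C$^*$-algebra is again exact, it suffices to prove that $\prod_n M_n(\bbC)$ itself is not exact.

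To establish that, I would locate a residually finite-dimensional, non-exact C$^*$-algebra inside $\prod_n M_n(\bbC)$. The full group C$^*$-algebra $\mathrm{C}^*(F_\infty)$ of the free group is residually finite-dimensional (Choi), so it admits a separating sequence $(\pi_k)_{k\in\bbN}$ of finite-dimensional representations, say with $\dim \pi_k = d_k$. The map $a \mapsto (\pi_k(a))_k$ is then an injective $^*$-homomorphism of $\mathrm{C}^*(F_\infty)$ into $\prod_k M_{d_k}(\bbC)$, and embedding each $M_{d_k}(\bbC)$ into the top-left corner of a larger matrix algebra (exactly as in the proof of Lemma~\ref{L.prod-M-n}) yields an injective $^*$-homomorphism into $\prod_n M_n(\bbC)$. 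On the other hand $\mathrm{C}^*(F_\infty)$ is not exact, by a theorem of Kirchberg. Invoking once more the permanence of exactness under subalgebras, $\prod_n M_n(\bbC)$ is not exact, and hence neither is its subalgebra $\prod_{\cU} M_n(\bbC)$.

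The two permanence invocations, the padding argument, and the embedding supplied by Lemma~\ref{L.prod-M-n} are routine; the genuine content, and the main obstacle, is the existence of a residually finite-dimensional C$^*$-algebra that fails to be exact---concretely, the non-exactness of $\mathrm{C}^*(F_\infty)$ (equivalently, that $\prod_n M_n(\bbC)$ is not exact, a result of Wassermann). This is a deep fact about the minimal and maximal tensor norms which I would cite rather than reprove. One subtlety worth double-checking is that the group C$^*$-algebra used is simultaneously residually finite-dimensional and non-exact: the full $\mathrm{C}^*(F_\infty)$ secures both, whereas passing to the reduced algebra $\mathrm{C}^*_r(F_\infty)$ would destroy residual finite-dimensionality (it is simple and infinite-dimensional, hence has no nonzero finite-dimensional representations), and replacing $F_\infty$ by an amenable group would make the full group algebra nuclear, hence exact.
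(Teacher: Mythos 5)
Your proposal is correct and takes essentially the same route as the paper: both arguments rest on the non-exactness and residual finite-dimensionality of $\mathrm{C}^*(F_\infty)$, the embedding supplied by Lemma~\ref{L.prod-M-n}, and the permanence of exactness under passing to \cstar-subalgebras. The only cosmetic difference is that you first reduce to the non-exactness of the full product $\prod_n M_n(\bbC)$ (Wassermann's formulation), whereas the paper tracks the copy of $\mathrm{C}^*(F_\infty)$ directly into the ultraproduct.
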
 

\begin{proof} 
The full group algebra $\cst(F_n)$ is not exact by  \cite[Corollary~3.7.12]{BrOz:cstar}. 
It is  also isomorphic to a subalgebra of $\prod_n M_n(\bbC)$ by  \cite[Theorem 7.4.1]{BrOz:cstar}. 
 Lemma~\ref{L.prod-M-n} now implies that $\prod_{\cU}M_n(\bbC)$ has a subalgebra isomorphic to the nonexact algebra $\cst(F_n)$. 
Since  exactness passes to subalgebras (\cite[Proposition~10.2.3]{BrOz:cstar}), this completes the proof. 
 \end{proof} 

\begin{proof}[Proof of Proposition~\ref{P.nonax}]  Clearly \eqref{I.nonax.1} implies \eqref{I.nonax.2}. 
If \eqref{I.nonax.3} holds and $n$ is such that $A$ is $n$-subhomogeneous then 
by \S\ref{S.SubHom} every $B$ elementarily equivalent to~$A$ is $n$-subhomogeneous and therefore nuclear, 
hence \eqref{I.nonax.1} follows. 

Now assume \eqref{I.nonax.3} fails and $A$ is not $n$-subhomogeneous for any $n$. 
Let~$\cU$ be a nonprincipal ultrafilter on $\bbN$; we'll prove that $A^{\cU}$ (which is elementarily equivalent to $A$) is not exact, by proving more generally that if $A_n$ is not $n$-subhomogeneous then $\prod_{\cU} A_n$ is not exact.
First,~$A_n$ has an irreducible representation on a Hilbert space 
of dimension $>n$ and (as in   the proof of \S\ref{S.not.SubHom}) 
for every $n$ there exists a (not necessarily unital, even if $A_n$ is unital) 
subalgebra $B_n$ of $A_n$  and an ideal $J_n$ of $B_n$ such that $B_n/J_n\cong M_n(\bbC)$. 
Then $\prod_{\cU} A_n$ has  $\prod_{\cU} B_n$ 
as a subalgebra, and the latter has $\prod_{\cU}B_n/\prod_{\cU} J_n\cong \prod_{\cU} M_n(\bbC)$ as a quotient. 
This algebra is not exact by  Lemma~\ref{L.nonexact}. 
Since exactness passes to quotients (this is a deep result, see 
\cite[Corollary~9.4.3]{BrOz:cstar}), 
$\prod_{\cU} B_n$ is not  exact. Finally, since exactness passes to subalgebras by \cite[Proposition~10.2.3]{BrOz:cstar}, 
this completes the proof. 
\end{proof} 

\begin{remark} The anonymous referee suggests the following proof of Proposition~\ref{P.nonax} which relies on conditional expectation: if $A_n$ is not n-subhomogeneous for each $n \in \bbN$ then there is a u.c.p.\ embedding of $\prod_\cU M_n(\bbC)$ into $\prod_\cU A_n$ with conditional expectation (this is essentially Lemma 4.13 in \cite{goldbring-2015}). It is clear then, using the finite-dimensional approximation version of exactness, that exactness of $\prod_\cU A_n$ would confer exactness on $\prod_\cU M_n(\bbC)$.
\end{remark}
 
 The following immediate consequence of Proposition~\ref{P.nonax} 
  amounts to a strengthening of a result implicit in  \cite{FaHaSh:Model2}. 
  
 \begin{coro} \label{C.nonax} 
 Assume $\cC$ is a class of \cstar-algebras that contains an infinite-dimensional and simple algebra and 
 all algebras in $\cC$ are exact. Then $\cC$ is not axiomatizable. \qed
 \end{coro}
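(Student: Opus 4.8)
The plan is to deduce this immediately from Proposition~\ref{P.nonax} together with the closure of axiomatizable classes under ultrapowers (Theorem~\ref{T.Ax}). Suppose, toward a contradiction, that $\cC$ is axiomatizable, and fix $A \in \cC$ that is infinite-dimensional and simple.

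The first step is to observe that such an $A$ cannot be subhomogeneous. Indeed, since $A$ is simple, every nonzero irreducible representation has trivial kernel and is therefore faithful. If $A$ were $n$-subhomogeneous for some $n$, one could pick an irreducible representation $\pi$ on a Hilbert space $H_\pi$ with $\dim H_\pi \leq n$; faithfulness would then yield an embedding $A \hookrightarrow B(H_\pi) = M_{\dim H_\pi}(\bbC)$, forcing $A$ to be finite-dimensional and contradicting the choice of $A$. Hence condition \eqref{I.nonax.3} of Proposition~\ref{P.nonax} fails for $A$.

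By the equivalence of \eqref{I.nonax.2} and \eqref{I.nonax.3} in Proposition~\ref{P.nonax}, the failure of subhomogeneity means that some \cstar-algebra elementarily equivalent to $A$ is not exact; in fact the proof of that proposition exhibits such an algebra as the ultrapower $A^{\cU}$ for any nonprincipal ultrafilter $\cU$ on $\bbN$. By Theorem~\ref{T.Ax} the axiomatizable class $\cC$ is closed under ultraproducts, so $A^{\cU} \in \cC$. But $A^{\cU}$ is not exact, contradicting the hypothesis that every member of $\cC$ is exact. Therefore $\cC$ is not axiomatizable.

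The argument is short, and the only step requiring any care is the first one---verifying that an infinite-dimensional simple \cstar-algebra fails to be subhomogeneous---after which everything reduces to Proposition~\ref{P.nonax} and the standard closure properties of elementary classes. The genuine content all lies in Proposition~\ref{P.nonax} (and hence in Lemmas~\ref{L.prod-M-n} and \ref{L.nonexact}), which has already been established, so no further obstacle is anticipated.
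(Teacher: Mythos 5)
Your proposal is correct and is exactly the argument the paper intends when it calls the corollary an immediate consequence of Proposition~\ref{P.nonax}: an infinite-dimensional simple algebra cannot be subhomogeneous (a faithful irreducible representation of bounded dimension would force finite-dimensionality), so its ultrapower is non-exact by the proof of that proposition, contradicting closure of axiomatizable classes under ultrapowers. No gap.
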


   Some   classes of \cstar-algebras are non-elementary but have some semblance of elementarity.

  \begin{definition}
  We say that a property P is {\em local}\index{local} if whenever a structure $A$ has property P and $A \equiv B$ then $B$ has property P.
  \end{definition}

Item \eqref{I.S.simple} of the following theorem was  first observed in \cite{GeHa}.

  \begin{theorem}  \label{T.nonaxiomatizable}  None of the following classes of \cstar-algebras is axiomatizable:   
  \begin{enumerate}
  \item\label{I.nonax.1.x}  UHF, AF, AI, AT, nuclear, and exact \cstar-algebras. 
  \item \label{I.S.simple}  Simple \cstar-algebras 
  \item \label{I.S.Traceless} 
Traceless \cstar-algebras. 
  \item\label{I.S.characterless}  \cstar-algebras without a character. 
  \pushcounter
\end{enumerate}
The following classes of \cstar-algebras are local, but not elementary. 
\begin{enumerate}
\popcounter
\item\label{I.nonax.finite}    Finite-dimensional \cstar-algebras. 
\item\label{I.nonax.Pop}  Unital \cstar-algebras without a trace. 
\item \label{I.nonax.characterless} Unital \cstar-algebra without  a character. 
\end{enumerate}
  \end{theorem}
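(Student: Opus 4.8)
The plan is to reduce every non-axiomatizability assertion to a failure of one of the closure conditions in Theorem~\ref{T.Ax} (closure under ultraproducts or ultraroots), and to obtain the locality assertions in the second group by recognizing the relevant classes as complements of classes already shown to be axiomatizable. Statement \eqref{I.nonax.1.x} is immediate from Corollary~\ref{C.nonax}: each listed class consists of exact algebras (UHF $\Rightarrow$ AF $\Rightarrow$ nuclear $\Rightarrow$ exact, and AI, AT are nuclear as well) and contains an infinite-dimensional simple algebra. An infinite-type UHF algebra serves for UHF, AF, nuclear and exact; an irrational rotation algebra $A_\theta$ is a simple infinite-dimensional AT algebra; and a simple infinite-dimensional AI algebra exists. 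Thus Corollary~\ref{C.nonax} applies to each, and none is axiomatizable.

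For \eqref{I.S.simple} and \eqref{I.nonax.finite} I would use a single clean example, $\prod_{\cU}M_n(\bbC)$. Each $M_n(\bbC)$ is simple, but the limit trace $\tau(a):=\lim_{n\to\cU}\frac1n\mathrm{Tr}(a_n)$ is a trace on $\prod_{\cU}M_n(\bbC)$ which is not faithful: the element represented by a sequence of rank-one projections is nonzero yet satisfies $\tau(p^*p)=0$. Since $\tau$ is a trace, $\overline{\{a:\tau(a^*a)=0\}}$ is a self-adjoint closed two-sided ideal, nonzero and proper (it excludes $1$ because $\tau(1)=1$); hence $\prod_{\cU}M_n(\bbC)$ is not simple, and by Theorem~\ref{T.Ax} simplicity is not axiomatizable. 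The same ultraproduct settles \eqref{I.nonax.finite}: it is infinite-dimensional while each factor is finite-dimensional, so finite-dimensionality is not closed under ultraproducts and hence not elementary. Locality of \eqref{I.nonax.finite} follows because for each $d$ the condition $\dim_{\bbC}A\le d$ is elementary, being expressed by a sentence asserting that any $d+1$ contractions are linearly dependent (the normalized scalar coefficients ranging over a compact imaginary sort, available by the remarks after Theorem~\ref{T.conceptual-completeness}); so any $B\equiv A$ with $A$ of dimension $d$ again has dimension $\le d$.

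For the trace and character statements \eqref{I.S.Traceless}, \eqref{I.S.characterless}, \eqref{I.nonax.Pop}, \eqref{I.nonax.characterless} the organizing fact is that a unital algebra is traceless (respectively characterless) exactly when its unit lies in the closed span $A_0$ of self-adjoint commutators (respectively in the closed ideal generated by commutators); compare \S\ref{S.tracial.0}, \S\ref{S.character}, \S\ref{S.CP}. Locality of the unital cases \eqref{I.nonax.Pop} and \eqref{I.nonax.characterless} is then immediate: unital tracial algebras and unital algebras with a character form axiomatizable, hence local, classes, and the complement of a local class is local. To see that these complements are not themselves axiomatizable I would invoke Proposition~\ref{P.el-co-el}: if $\phi$ is the definable predicate axiomatizing, say, the unital tracial algebras, it suffices to exhibit unital traceless algebras $A_k$ with $\phi^{A_k}\to 0$, ruling out the gap required by Proposition~\ref{P.el-co-el}\eqref{I.el-co-el.4}. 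Equivalently, and this is the route I would also take for the non-unital statements \eqref{I.S.Traceless} and \eqref{I.S.characterless}, I would build a sequence of traceless (characterless) algebras whose ultraproduct acquires a trace (character) assembled along $\cU$, so that the class fails to be closed under ultraproducts by Theorem~\ref{T.Ax}.

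The hard part is exactly this construction: for each $k$ one needs a (unital) traceless algebra in which the unit is approximable by commutators but only using more than $k$ of them, equivalently a traceless algebra carrying a $1/k$-approximately tracial state (and the analogous statement with $A_0$ replaced by the closed commutator ideal for the character cases). Such algebras cannot be powers of a single traceless algebra, since a trace on $A^{\cU}$ restricts to one on the diagonal copy of $A$; the commutators needed must genuinely grow with $k$. This is the same phenomenon that drives the non-definability of traces (Proposition~\ref{P.non-def-trace}) and the commutator-length estimates of \S\ref{S.CP}, and I would produce the required witnesses from that circle of ideas (in particular Robert's construction in \cite{robert2013nuclear}). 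Once these examples are in hand, every remaining reduction is routine, all non-axiomatizability being witnessed either by $\prod_{\cU}M_n(\bbC)$, by Corollary~\ref{C.nonax}, or by the asymptotically tracial/character-admitting sequences just described.
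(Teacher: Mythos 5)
Your overall strategy coincides with the paper's: item \eqref{I.nonax.1.x} via Corollary~\ref{C.nonax}, the non-axiomatizability claims via failure of closure under ultraproducts (Theorem~\ref{T.Ax}), and the locality claims in the second group by general principles. In a few places your route is actually cleaner than the paper's. For \eqref{I.S.simple} you argue directly that $\prod_{\cU}M_n(\bbC)$ is not simple because the limit trace has a nontrivial proper kernel ideal; the paper instead subsumes \eqref{I.S.simple} under Corollary~\ref{C.nonax}, whose hypothesis that \emph{all} members of the class be exact is not literally met by the class of all simple \cstar-algebras, so your direct argument (which is the original Ge--Hadwin one) is preferable. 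For \eqref{I.nonax.finite} the paper proves locality by observing that the theory of a finite-dimensional algebra is categorical (compact unit ball), whereas you encode $\dim A\leq d$ by a sentence about approximate linear dependence of $(d+1)$-tuples; both work. For \eqref{I.nonax.Pop} your observation that the complement of an axiomatizable class is automatically local makes the paper's citation of Pop's theorem unnecessary.

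The genuine gap is in items \eqref{I.S.Traceless} and \eqref{I.S.characterless}. You correctly reduce these to producing a sequence of traceless (respectively characterless) algebras whose ultraproduct acquires a trace (respectively a character), and you correctly identify that the obstruction to be defeated is quantitative: one needs traceless algebras $B_k$ containing a norm-one element whose distance to sums of $k$ commutators stays bounded away from $0$. But you then declare this ``the hard part'' and defer it to ``Robert's construction in \cite{robert2013nuclear}.'' That construction (used in Proposition~\ref{P.non-def-trace}) produces a simple, unital, \emph{monotracial} algebra whose trace is not definable; it does not produce traceless algebras, and its ultraproduct trick goes in the opposite direction (a unique trace proliferating into many), so it cannot be used as stated. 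The paper's proof of \eqref{I.S.Traceless} rests on a different result, \cite[Example~4.11]{robert2015lie}: for every $n$ there is a \emph{traceless} $B_n$ with a contraction $a_n$ at distance $\geq 1$ from all linear combinations of $n$ commutators, whence $(a_n)/\cU\notin(\prod_\cU B_n)_0$ and the ultraproduct is tracial. Similarly \eqref{I.S.characterless} is not something one assembles from the commutator estimates in \S\ref{S.CP}; the paper imports \cite[Corollary~8.5]{RobRor:Divisibility} wholesale. So the skeleton of your argument is right, but the decisive input for \eqref{I.S.Traceless} and \eqref{I.S.characterless} is an external construction you have neither supplied nor correctly located, and without it those two items (and the non-elementarity halves of \eqref{I.nonax.Pop} and \eqref{I.nonax.characterless}, which depend on the same witnesses being unital) remain unproved.
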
 

\begin{proof}  
 \eqref{I.nonax.1.x} 
and \eqref{I.S.simple} are consequences of  Corollary~\ref{C.nonax} (take e.g. the CAR algebra).

\eqref{I.S.Traceless} 
For any algebra $A$, recall that the Cuntz--Pedersen
ideal  $A_0$ is the closure of the span of the set of commutators in $A$.  
  $A$ has a trace if and only if $A \neq A_0$ (\S\ref{S.Def.Tau.1}).
By \cite[Example~4.11]{robert2015lie}  for every $n$ there exist an algebra $B_n$ and $a_n \in B_n$ 
 such that $B_n$ has no trace, $a_n$ has norm at most 1, and $a_n$ has distance at least 1 from any  
linear combination of $n$ commutators.  
The element in  $\prod_{\cU}B_n$ with representing sequence~$(a_n)$ does not belong to 
$(\prod_{\cU} B_n)_0$, and therefore $\prod_{\cU} B_n$ has a trace.  
This shows that the class of \cstar-algebras without a trace is not closed under ultraproducts and that the class of tracial \cstar-algebras is not co-elementary.

 \eqref{I.S.characterless}
  follows immediately from the fact that there is a sequence $A_n$, for $n\in \bbN$, of 
 \cstar-algebras without a character such that $\prod_{\cU} A_n$ has a character
  \cite[Corollary~8.5]{RobRor:Divisibility}.   
  
\eqref{I.nonax.finite} 
Since for any finite-dimensional \cstar-algebra $F$  the unit ball of $F$  is compact, its theory is categorical. 
 
\eqref{I.nonax.Pop} 
By a result of Pop, \cite{Pop}, the property of a unital \cstar-algebra $A$ being without a trace is local.
\end{proof}

\chapter{Types}\label{S.Types}
In this section we introduce model-theoretic tools capable of handling  most important properties of \cstar-algebras
 (such as nuclearity---see Corollary~\ref{C.nonax}).

\section{Types: the definition} 
For a fixed theory $T$, a \emph{complete type}\index{type!complete} $p$ in the free variables $\bar x$ is a functional from $\ccW^{\bar x}$, the Banach algebra of definable predicates in the variables $\bar x$ (see \S\ref{S.Definable.Predicates}), to $\bbR$ such that for some $M$ satisfying $T$ and $\bar a$ in $ M$, for all $\varphi \in \ccW^{\bar x}$, $p(\varphi) = \varphi^M(\bar a)$.  
We say that such a tuple $\bar a$ realizes $p$.
A \emph{partial type}\index{type!partial} is a partial function arising by restricting a complete type to a subset of $\ccW^{\bar x}$; we may always assume that the domain of a partial type is a subspace of $\ccW^{\bar x}$
by extending it to the linear span of its domain.

Types are very special functionals on $\ccW^{\bar x}$.  The space of complete types in the variables $\bar x$ relative to a given theory $T$ will be denoted $S_T^{\bar x}$.  If we endow the dual space with the weak$^*$ topology then by considering ultraproducts, we see that the space of types is closed under weak$^*$-limits and is in fact compact. The weak$^*$ topology when restricted to the space of types is also known as the logic topology.  By the definition of the logic topology, for any definable predicate $\varphi$ the function $f_\varphi$ defined by $f_\varphi(p) = p(\varphi)$ is continuous.  Since the type space is compact and distinct types are separated by formulas, the Stone-Weierstrass theorem guarantees the following:
\begin{proposition}\label{P.type.SW}
Any continuous function $f:S_T^{\bar x} \rightarrow \bbR$ is of the form $f_\varphi$ for some definable predicate $\varphi$.
\end{proposition}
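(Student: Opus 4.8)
The plan is to realize the definable predicates as a closed subspace of $C(S^T_{\bar x})$ via the assignment $\varphi \mapsto f_\varphi$, and then to show that this subspace exhausts $C(S^T_{\bar x})$ by combining the Stone--Weierstrass theorem with completeness of $\ccW^{\bar x}$. First I would record that $\varphi \mapsto f_\varphi$ is isometric for the relevant norms. Indeed, for a formula $\varphi(\bar x)$ we have $f_\varphi(p) = p(\varphi) = \varphi^M(\bar a)$ whenever $\bar a$ in $M \models T$ realizes $p$, so
\[
\sup_{p \in S^T_{\bar x}} |f_\varphi(p)| = \sup\{|\varphi^M(\bar a)| \colon M \models T,\ \bar a \in M\} = \|\varphi\|_T.
\]
Each $p$ is therefore a functional of norm $\leq 1$ on $(\ccF^{\bar x}_{\calL}, \|\cdot\|_T)$, hence extends continuously to $\ccW^{\bar x}$; and the displayed identity extends by continuity to all $P \in \ccW^{\bar x}$, so $P \mapsto f_P$ is a linear isometry of $(\ccW^{\bar x}, \|\cdot\|_T)$ into $(C(S^T_{\bar x}), \|\cdot\|_\infty)$.

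Next I would verify that $\cA := \{ f_\varphi \colon \varphi \text{ a formula in } \bar x \}$ is a unital subalgebra of $C(S^T_{\bar x})$ that separates points. It is a subalgebra because formulas are closed under continuous connectives (the connectives clause of Definition~\ref{formula}): for formulas $\varphi,\psi$ and continuous $h\colon \bbR^2 \to \bbR$ one has $h(f_\varphi, f_\psi) = f_{h(\varphi,\psi)}$, which yields sums, products, and scalar multiples; the constant function $1$ arises as a nullary connective, so $\cA$ is unital. For separation, distinct complete types $p \neq q$ differ on some definable predicate $P$; writing $P$ as a $\|\cdot\|_T$-limit of formulas and using that each type is $\|\cdot\|_T$-continuous, they must already differ on some formula $\varphi$, whence $f_\varphi(p) \neq f_\varphi(q)$. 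In particular $S^T_{\bar x}$ is compact Hausdorff, so the real Stone--Weierstrass theorem applies and shows $\cA$ is dense in $C(S^T_{\bar x})$ for $\|\cdot\|_\infty$.

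Finally I would close off by completeness. Since $\ccW^{\bar x}$ is complete under $\|\cdot\|_T$ by construction, its image under the isometry $P \mapsto f_P$ is a complete, hence closed, subspace of $C(S^T_{\bar x})$; this closed subspace contains the dense set $\cA$ and therefore equals all of $C(S^T_{\bar x})$. Consequently every continuous $f\colon S^T_{\bar x} \to \bbR$ is of the form $f_\varphi$ for some definable predicate $\varphi \in \ccW^{\bar x}$. I expect the one genuinely load-bearing step to be this last move: Stone--Weierstrass only delivers \emph{uniform approximation} of $f$ by functions $f_{\varphi_n}$ coming from formulas, and what converts ``dense'' into ``exactly represented by a definable predicate'' is precisely the observation that $\|\cdot\|_\infty$ on the image matches $\|\cdot\|_T$ on the source, so a $\|\cdot\|_\infty$-Cauchy sequence of representatives is $\|\cdot\|_T$-Cauchy and converges in $\ccW^{\bar x}$. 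Everything else is a routine check that $\cA$ meets the hypotheses of Stone--Weierstrass.
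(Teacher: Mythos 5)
Your proof is correct and follows exactly the route the paper intends: the paper merely asserts that compactness of $S^T_{\bar x}$, separation of types by formulas, and the Stone--Weierstrass theorem yield the result, and your write-up supplies precisely those details. The closing observation that the isometry $P \mapsto f_P$ identifies $\|\cdot\|_\infty$-limits on the type space with $\|\cdot\|_T$-limits in $\ccW^{\bar x}$ is indeed the load-bearing step that converts density into exact representation by a definable predicate, and you have it right.
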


Types also satisfy various permanence properties arising from the formation of formulas.  For instance, if $\varphi_1,\ldots,\varphi_n$ are all definable predicates in the variables $\bar x$ and $f:\bbR^n \rightarrow \bbR$ is a continuous function then for any complete type $p$, $p(f(\varphi_1,\ldots,\varphi_n)) = f(p(\varphi_1),\ldots,p(\varphi_n))$.  Moreover, any complete type is continuous with respect to the norm on $\ccW^{\bar x}$. To summarize, we have the following. 

\begin{prop} 
Complete types are continuous Banach algebra homomorphisms from $\ccW^{\bar x}$ to $\bbR$. \qed
\end{prop}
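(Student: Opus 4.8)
The plan is to observe that a complete type $p \in S^T_{\bar x}$ is, by its very definition, the evaluation functional $P \mapsto P^M(\bar a)$ for some fixed $M \models T$ and tuple $\bar a$ in $M$, and then to check that evaluation preserves each piece of the Banach-algebra structure. First I would record that the algebra operations on $\ccW^{\bar x}$ are inherited from $\ccF^{\bar x}_{\calL}$: the sum $\varphi + \psi$, the product $\varphi\cdot\psi$, and the scalar multiple $\lambda\varphi$ are themselves formulas, obtained from $\varphi$ and $\psi$ by the connectives $(r,s)\mapsto r+s$, $(r,s)\mapsto rs$, and $r\mapsto\lambda r$, while the multiplicative unit is the constant formula $1$. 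By the clause defining the interpretation of a connective $f(\varphi_1,\dots,\varphi_n)$ in Definition~\ref{formula}, interpretation commutes with these operations, so on $\ccF^{\bar x}_{\calL}$ one has $(\varphi+\psi)^M(\bar a) = \varphi^M(\bar a)+\psi^M(\bar a)$, $(\varphi\psi)^M(\bar a)=\varphi^M(\bar a)\,\psi^M(\bar a)$, $(\lambda\varphi)^M(\bar a)=\lambda\,\varphi^M(\bar a)$, and $1^M(\bar a)=1$. This is exactly the permanence property for continuous connectives recorded in the paragraph preceding the statement, specialised to $f \in \{+,\cdot,\lambda\,\cdot\}$.

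Next I would treat continuity together with the passage from formulas to all of $\ccW^{\bar x}$. By definition $p$ is already given on the whole algebra by $P \mapsto P^M(\bar a)$, and I would check that it is the continuous extension of the evaluation functional on $\ccF^{\bar x}_{\calL}$. Continuity is immediate from the seminorm: for every $P \in \ccW^{\bar x}$,
\[
|p(P)| = |P^M(\bar a)| \le \sup\{\, |P^N(\bar b)| : N \models T,\ \bar b \in N \,\} = \|P\|_T,
\]
so $p$ is bounded of norm at most $1$, and exactly $1$ since $p(1)=1$. The same estimate applied to a product shows that $\|\cdot\|_T$ is submultiplicative, so multiplication on $\ccW^{\bar x}$ is jointly continuous. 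Given $P,Q \in \ccW^{\bar x}$, I would choose formulas $\varphi_n \to P$ and $\psi_n \to Q$ in $\|\cdot\|_T$; then $\varphi_n+\psi_n \to P+Q$ and $\varphi_n\psi_n \to PQ$, and applying the continuous functional $p$ to both sides — using the identities already established on formulas — yields $p(P+Q)=p(P)+p(Q)$ and $p(PQ)=p(P)\,p(Q)$, and likewise for scalar multiples.

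Putting these together, $p$ is a linear, multiplicative, unit-preserving, continuous map $\ccW^{\bar x}\to\bbR$, that is, a continuous Banach-algebra homomorphism. There is no genuine obstacle here: everything reduces to the inductive definition of the interpretation of connectives plus the defining seminorm estimate. The only point deserving a little care is the passage from formulas to arbitrary definable predicates, which is handled by density of $\ccF^{\bar x}_{\calL}$ in $\ccW^{\bar x}$ together with the continuity of $p$ and the joint continuity of the algebra operations.
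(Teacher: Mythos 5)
Your proof is correct and follows exactly the route the paper intends: the paper states this proposition with an immediate \qed, relying on the two observations in the preceding paragraph (that $p(f(\varphi_1,\dots,\varphi_n))=f(p(\varphi_1),\dots,p(\varphi_n))$ for continuous connectives $f$, and that $p$ is $\|\cdot\|_T$-continuous since $|P^M(\bar a)|\leq\|P\|_T$), which are precisely the facts you verify and then extend to $\ccW^{\bar x}$ by density. Your write-up simply makes explicit the details the paper leaves to the reader.
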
 

\subsection{Types as sets of conditions}

There are a number of different ways to view partial types which will be used later.  
As with complete theories, complete types are determined by their kernels and so we will often write $\varphi \in p$ to mean $p(\varphi) = 0$.  By assuming that the domain of a partial type is 
a subspace, it can also be identified with its kernel. The domain of a partial type is 
the linear span of its kernel and the scalars. 

Using this point of view, a partial type $q$ corresponds to a  non-empty weak$^*$-closed subspace of  $S_T^{\bar x}$: 
\[ 
X_q :=\{ p \in S_T^{\bar x} : q \subseteq p \}. 
\]
  In fact, any non-empty weak$^*$-closed subspace corresponds to a partial type.
For instance, suppose $r \leq s$ are in $\bbR$ and $\varphi \in \ccW^{\bar x}$. If we consider a basic closed set of the form $C := \{ p \in S_T^{\bar x} : r \leq p(\varphi) \leq s \}$ then $C = X_q$ where $q$ is the partial type determined by 
\[
q(r \dotminus \varphi) = q(\varphi \dotminus s) = 0.
\] We sometimes specify a partial type by giving a list of closed {\em conditions}\index{condition} i.e., restrictions of the form $r \leq \varphi \leq s$, which are meant to determine a closed subspace of~$S_T^{\bar x}$.  When giving a list of conditions, the main question which needs to be answered is whether the list of conditions can be completed to a type at all; this is often referred to as being consistent i.e., whether the basic closed sets determined by these conditions
have non-empty intersection.

\begin{definition}
A type $p$ is \emph{omitted}\index{type!omitted} 
in a model $A$ if no tuple in~$A$ realizes $p$. 
\end{definition}

For a complete type in a separable theory, there is an omitting types theorem (see e.g.,  \cite[Theorem~12.6]{BYBHU}). 
\begin{thm}
For a complete type $p$ in a separable complete theory~$T$, the following are equivalent:
\begin{enumerate}
\item $T$ has a model which omits $p$.
\item The zero-set of $p$ is not definable.
\end{enumerate}
\end{thm}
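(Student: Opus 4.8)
The plan is to prove the two implications separately, treating $(1)\Rightarrow(2)$ as elementary and reserving the real work for $(2)\Rightarrow(1)$. Throughout, I would identify the zero-set of $p$ with the uniform assignment $Z_p$ sending each $M\models T$ to the set of realizations $\{\bar a\in M : \varphi^M(\bar a)=0 \text{ for every }\varphi\in p\}$. Because $T$ and $p$ are complete, every $\psi\in\ccW^{\bar x}$ satisfies $\psi-p(\psi)\in\ker p$, so $Z_p(M)$ is exactly the set of tuples realizing $p$; it is closed and is carried into $Z_p(N)$ by every elementary embedding $M\to N$, so $Z_p$ is a genuine uniform assignment of closed sets relative to $\cC:=\Mod(T)$.

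The implication $(1)\Rightarrow(2)$ is immediate. If $M\models T$ omits $p$ then $Z_p(M)=\emptyset$, so $\bar a\mapsto d(\bar a,Z_p(M))$ is not a real-valued function on $M$; by Theorem~\ref{T.def-dist} a definable set has a real-valued distance predicate in every model, so $Z_p$ is not definable. Put differently: a definable set is nonempty in every model, whereas an omitted type has empty realization set somewhere.

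The implication $(2)\Rightarrow(1)$ is the substance, and I would route it through the classical omitting types theorem. Recall \cite[Theorem~12.6]{BYBHU}: a complete type over a complete separable theory $T$ is omitted in some (separable) model of $T$ if and only if it is not principal; the nontrivial direction is a countable Henkin/diagram construction in which, at each stage assigned to a tuple $\bar c$ of Henkin constants, the failure of principality at a fixed level $\epsilon_0>0$ is used to extend the partial elementary diagram so as to force $\bar c$ to remain $\epsilon_0$-far from every realization of $p$, consistency at each step coming from the Compactness Theorem (Theorem~\ref{T.Compactness}). Granting this, it suffices to prove the bridge that ``$Z_p$ not definable'' implies ``$p$ not principal'', equivalently its contrapositive: if $p$ is principal then $Z_p$ is definable. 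For this I would use the ultraproduct criterion, Theorem~\ref{Th.sem.def}: one always has $\prod_\cU Z_p(A_i)\subseteq Z_p(\prod_\cU A_i)$ by \L o\'s' Theorem, and for the reverse inclusion, given $\bar a$ realizing $p$ in $\prod_\cU A_i$, the formulas isolating a principal type (which vanish on realizations and whose zero-sets force $2^{-n}$-proximity to $p$) let one select, by the standard ultraproduct diagonalization, a representing sequence $(\bar a_i)$ with $\bar a_i\in Z_p(A_i)$ for $\cU$-many $i$; here one uses that principal types are realized in every model, so each $Z_p(A_i)\neq\emptyset$. Hence $Z_p(\prod_\cU A_i)=\prod_\cU Z_p(A_i)$ and $Z_p$ is definable. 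Alternatively, the same isolating formulas witness weak stability of $d(\bar x,Z_p(\cdot))$ and converge to it, giving definability via Lemma~\ref{L.weaklystable} and Theorem~\ref{T.def-dist}.

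The hard part will be the bridge between principality as used in \cite[Theorem~12.6]{BYBHU} and definability of $Z_p$ in the sense of \S\ref{S.Definable}. The delicate point is that the distance $d(\bar a,Z_p(M))$ computed in an arbitrary, possibly unsaturated, model $M$ may strictly exceed the distance from $\mathrm{tp}(\bar a)$ to $p$ measured in the type space $S^T_{\bar x}$; reconciling the two is precisely what the passage to ultraproducts in Theorem~\ref{Th.sem.def} (or to saturated models) accomplishes, and it is where weak stability (Lemma~\ref{L.weaklystable}) does the real work. Separability of $T$ is essential throughout—both to run the countable Henkin construction that produces the omitting model and to have a countable dense set of formulas available when approximating the distance predicate—so I would keep the quotation of \cite[Theorem~12.6]{BYBHU} confined to the Henkin construction itself and carry out the principality-versus-definability equivalence entirely with Theorem~\ref{T.def-dist}, Lemma~\ref{L.weaklystable}, and Theorem~\ref{Th.sem.def}.
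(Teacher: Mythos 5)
The paper does not actually prove this theorem: it is imported verbatim from \cite[Theorem~12.6]{BYBHU}, where ``the zero-set of $p$ is definable'' appears under the name ``$p$ is principal.'' Your proposal is therefore not competing with an argument in the paper; it is an unpacking of that citation, and its overall architecture --- the easy direction $(1)\Rightarrow(2)$ done by hand, the Henkin construction for $(2)\Rightarrow(1)$ delegated to \cite{BYBHU}, and a bridge identifying principality with definability of the uniform assignment $Z_p$ via the ultraproduct criterion of Theorem~\ref{Th.sem.def} --- is the right way to reconcile the two formulations.

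There is, however, one step in the bridge that would fail as written. In proving $Z_p(\prod_\cU A_i) \subseteq \prod_\cU Z_p(A_i)$ you pass from ``$\varphi_{2^{-n}}(\bar a_i)<\delta_n$ for $\cU$-many $i$'' to ``$\bar a_i$ is within $2^{-n}$ of a point of $Z_p(A_i)$,'' and you justify the existence of such a point by remarking that principal types are realized in every model, so $Z_p(A_i)\neq\emptyset$. Nonemptiness is not enough: the isolating formulas only give $d(\mathrm{tp}(\bar a_i),p)\leq 2^{-n}$ in the type-space metric, i.e.\ a realization of $p$ near $\bar a_i$ in some elementary extension of $A_i$, while an arbitrary element of $Z_p(A_i)$ may be far from $\bar a_i$. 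You flag exactly this discrepancy in your last paragraph, but the mechanism you invoke to resolve it --- ``the passage to ultraproducts'' --- does not do the job, since the inequality must be repaired inside each factor $A_i$ separately, before the ultraproduct is formed. The correct repair is the iterative argument at the heart of \cite[Theorem~12.6]{BYBHU}: using the isolating formulas at levels $2^{-k}\e$ one produces inside $A_i$ a Cauchy sequence $\bar y_1,\bar y_2,\dots$ with $d(\bar a_i,\bar y_1)\leq 2^{-n}+\e$, with $\sum_k d(\bar y_k,\bar y_{k+1})$ small, and with $d(\mathrm{tp}(\bar y_k),p)\to 0$; metric completeness of $A_i$ then yields a limit realizing $p$ within $2^{-n}+2\e$ of $\bar a_i$. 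With that lemma in hand (equivalently: for a principal type the in-model distance to $Z_p$ agrees with the type-space distance to $p$ in every model, not only saturated ones), your diagonalization goes through, the weak-stability route via Lemma~\ref{L.weaklystable} and Theorem~\ref{T.def-dist} also becomes available, and the rest of the proof is correct.
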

Unfortunately, there is no corresponding result for partial types (see~\cite{BY:Definability}) and the general situation is complicated. By  \cite[Theorem~1]{FaMa:Omitting} there is no simple general criterion for when a given 
type is omissible in a model of a given theory.

 \section{Beth definability} 
\label{S.Beth}
Here is another very useful tool in determining what is definable in a given setting; it is called the \emph{Beth definability theorem}.\index{Beth definability theorem}
\begin{thm}\label{Th.Beth}
Suppose that $T$ is a theory in a language $\calL$ and $\calL_0 \subseteq \calL$ is a sublanguage with the same sorts.  Assume that whenever $M \in \Mod(T)$ and $\sigma$ is an $\calL_0$-automorphism of $M\!\!\restriction_{\calL_0}$ then $\sigma$ is an $\calL$-automorphism of $M$.  Then every $\calL$-formula is $T$-equivalent to a definable predicate in $\calL_0$.  Moreover, $\{ M\!\!\restriction_{\calL_0} : M \in \Mod(T) \}$ is an $\calL_0$-elementary class.

\end{thm}

\begin{proof} Suppose that $\varphi(\bar x)$ is a formula in $\calL$.  We will prove that $\varphi$ induces a well-defined, continuous function on the space $\calL_0$-types in the variables $\bar x$ and use Proposition \ref{P.type.SW} to conclude that $\varphi$ is $T$-equivalent to a definable predicate in $\calL_0$.

To this end we 
expand $\calL$ by adding two tuples of constants $\bar c$ and $\bar d$
of the same sort as $\bar x$ in $\varphi(\bar x)$. 
For every $\e > 0$, consider the theory 
\begin{multline*}
\Gamma_\e := T \cup \{ | \varphi(\bar c) - \varphi(\bar d)| \geq \e \} \cup \\
 \{ |\psi(\bar c) - \psi(\bar d)| \leq \delta : \delta \in \mathbb{R}^+, \psi \text{ is an $\calL_0$-formula of the same sort as $\varphi$}\}.
\end{multline*}
Let us for a moment assume that the
theory $\Gamma_\e$ is inconsistent for every $\e > 0$. 
Then,  by compactness,  for every $\e > 0$ there are $\calL_0$-formulas $\psi_i$ for $i \leq k$ and a $\delta > 0$ such that if $A$ is any model of $T$ and $\bar a,\bar b \in A$ with $|\psi_i(\bar a) - \psi_i(\bar b)| < \delta$ then $|\varphi(\bar a) - \varphi(\bar b)| < \e$.  From this we conclude that $\varphi$ induces a well-defined continuous function on the $\calL_0$-type space and hence is $T$-equivalent to an $\calL_0$-definable predicate.

Toward a contradiction, suppose that $\Gamma_\e$ is consistent for some $\e > 0$ and $M$ satisfies $\Gamma_\e$.  In particular, we have $\bar a$ and $\bar b$ in $M$ with the same $\calL_0$-type.  For simplicity, assume that there is a saturated model $N \in \Mod(T)$ such that $M \prec N$.  Then $N\!\!\restriction_{\calL_0}$ is saturated as an $\calL_0$-structure and there is an automorphism $\sigma$ of $N\!\!\restriction_{\calL_0}$ sending $\bar a$ to $\bar b$.  By assumption this is an $\calL$-automorphism of $N$ which contradicts that $\bar a$ and $\bar b$ disagree on $\varphi$.  If you don't want to assume the existence of a saturated model, $N$ can be chosen to be a special model of $T$ and the same proof goes through.  The moreover clause follows immediately from Theorem \ref{T.Ax}.
\end{proof}

We include the following corollary to match the formalism of Theorem \ref{T.conceptual-completeness}.
\begin{corollary}
Suppose that $T$ is a theory in a language $\calL$ and $T \subseteq T'$ where $T'$ is a theory in a language $\calL'$ with no new sorts.  Further suppose that the forgetful functor $F:\Mod(T') \rightarrow \Mod(T)$ is an equivalence of categories.  Then every predicate in $\calL'$ is $T'$-equivalent to a definable predicate in $\calL$.
\end{corollary}

Here is a practical way to use the Beth definability theorem.  
First a bit of notation: if $A$ is metric structure and $P$ is a bounded, uniformly continuous, 
real-valued function on a product of sorts from $A$ then by the structure $(A,P)$ we will mean the structure in the language of $A$ together with one new predicate in the appropriate product of sorts, interpreted by $P$.  We say that $A$ has been expanded by $P$.

\begin{corollary}\label{C.sem.def}
Suppose that $\cC$ is an elementary class of structures in a language $\calL$ and, for every $A \in \cC$, the structure $A$ is expanded by a predicate $P^A$ which is uniformly continuous with uniform continuity modulus independent of our choice of $A$.  Let $\cC' := \{ (A,P^A) \colon A \in \cC \}$ be a class of structures for an expanded language $\calL'$ with a predicate for $P$.  If $\cC'$ is an elementary class for a theory $T'$ in the language $\calL'$ then $P$ is $T'$-equivalent to a definable predicate in $\calL$.
\end{corollary}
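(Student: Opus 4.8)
The plan is to deduce the statement from the Beth definability theorem (Theorem~\ref{Th.Beth}) applied to the predicate $P$. Set $T:=\Th(\cC)$ and $T':=\Th(\cC')$. Since $\cC$ and $\cC'$ are elementary we have $\Mod(T)=\cC$ and $\Mod(T')=\cC'$, and since whether a predicate is ``$T'$-equivalent to an $\calL$-definable predicate'' depends only on $\Mod(T')$, there is no loss in taking $T'$ to be this particular theory. The hypothesis that the uniform continuity modulus of $P^A$ is independent of $A$ is exactly what makes $P$ a legitimate relation symbol of the expanded language $\calL'$; in particular $\calL'$ has no new sorts, and $T\subseteq T'$, since every $\calL$-sentence true throughout $\cC$ is true throughout $\cC'$ (it only sees the reduct). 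The conclusion of Theorem~\ref{Th.Beth}, specialized to the symbol $P$, is precisely that $P$ is $T'$-equivalent to an $\calL$-definable predicate. Thus everything reduces to verifying the one nontrivial hypothesis of Theorem~\ref{Th.Beth}: that the forgetful functor $F\colon\Mod(T')\to\Mod(T)$ is an equivalence of categories.

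Three things must be checked. Essential surjectivity is immediate, as each $A\in\cC$ equals $F(A,P^A)$ with $(A,P^A)\in\cC'$. Faithfulness is immediate as well, since $F$ merely forgets the interpretation of $P$ and so is injective on each hom-set. The remaining input is \emph{uniqueness of the expansion}: if $(A,P)\in\cC'$ then, because $\cC'$ is by definition the set $\{(B,P^B):B\in\cC\}$, we must have $(A,P)=(B,P^B)$ for some $B\in\cC$, forcing $B=A$ and $P=P^A$. Hence every $A\in\cC$ has exactly one expansion lying in $\cC'$, namely $(A,P^A)$.

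The main obstacle is \emph{fullness}: every $\calL$-elementary map $f\colon A\to B$ between members of $\cC$ (after identifying $A$ with its image, an $\calL$-elementary substructure $A\prec_{\calL}B$) must lift to an $\calL'$-elementary map $(A,P^A)\to(B,P^B)$. The heart of this is to show $P$ is invariant under $\calL$-elementary maps. To do so I would fix an $\calL'$-elementary extension $(\bbM,P^{\bbM})\in\cC'$ of $(B,P^B)$ that is sufficiently saturated and strongly homogeneous; such an extension exists and lies in $\cC'$ because $\cC'$ is elementary, hence closed under elementary extensions and ultraproducts, and its $\calL$-reduct $\bbM$ is then correspondingly saturated and homogeneous. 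Now any $\calL$-automorphism $\sigma$ of $\bbM$ is an $\calL'$-isomorphism from $(\bbM,P^{\bbM})$ onto $(\bbM,P^{\bbM}\circ\sigma^{-1})$; the latter has $\calL$-reduct $\bbM\in\cC$ and, being isomorphic to a member of $\cC'$, lies in $\cC'$, so by uniqueness of the expansion $P^{\bbM}\circ\sigma^{-1}=P^{\bbM}$. Thus $P^{\bbM}$ is fixed by every $\calL$-automorphism of $\bbM$, and by homogeneity it depends only on the $\calL$-type of its argument. Using this, together with the continuity of $P$, a Tarski--Vaught argument (Theorem~\ref{TV}) carried out in $\calL'$ shows $(A,P^{\bbM}\restriction A)\prec_{\calL'}(\bbM,P^{\bbM})$: because $P^{\bbM}$ is type-determined, witnesses for $\calL'$-quantifiers can be found using only the $\calL$-elementarity of $A$ in $\bbM$. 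Uniqueness of the expansion then identifies $P^{\bbM}\restriction A$ with $P^A$, and the same reasoning applied to $B$ yields $A\prec_{\calL'}B$, which is fullness. At that point $F$ is an equivalence of categories and Theorem~\ref{Th.Beth} gives the result. I expect this invariance-plus-homogeneity step to be the crux, as it essentially re-derives, in this concrete expansion, the mechanism behind Beth's theorem.
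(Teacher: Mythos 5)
Your reduction to Theorem~\ref{Th.Beth} is reasonable in outline, and several of your steps are sound: the observation that $\cC'$ contains exactly one expansion of each $A\in\cC$ is correct and is in fact the key lever in the paper's own argument, and your automorphism trick does show that $P^{\bbM}$ is invariant under $\calL$-automorphisms of a saturated $\bbM$, hence constant on realizations of each $\calL$-type. But the step you yourself identify as the crux --- the Tarski--Vaught verification that $(A,P^{\bbM}\restriction A)\prec_{\calL'}(\bbM,P^{\bbM})$ --- has a genuine gap. Knowing that $P^{\bbM}$ is \emph{well-defined} as a function of the $\calL$-type is not enough to transfer witnesses from $\bbM$ down to $A$. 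What $A\prec_{\calL}\bbM$ gives you, for a witness $c\in\bbM$, is only an element $a\in A$ whose $\calL$-type approximates that of $c$ in the \emph{logic} topology (it matches $c$ on finitely many $\calL$-formulas up to $\e$); to conclude that $\calL'$-formula values at $a$ approximate those at $c$ you need the induced function on the type space $S^T_{\bar x}$ to be \emph{continuous} in the logic topology, not merely well-defined. That continuity is not automatic from metric uniform continuity plus type-determinedness; it is exactly what the compactness argument with $\Gamma_\e$ in the proof of Theorem~\ref{Th.Beth} establishes, and by Proposition~\ref{P.type.SW} it is essentially equivalent to the conclusion you are trying to prove. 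So the crux step presupposes the theorem.

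This is presumably why the paper does not verify that the forgetful functor is full. Its proof instead reruns the first half of the Beth argument: if $P$ were not definable, some $\Gamma_\e$ would be consistent, producing a single $M'\in\cC'$ with two tuples $\bar a,\bar b$ of the same $\calL$-type but with $|P(\bar a)-P(\bar b)|\geq\e$; Keisler--Shelah then yields an ultrafilter $\cU$ with $(M^{\cU},\bar a)\cong_{\calL}(M^{\cU},\bar b)$, and transporting $P$ along this isomorphism produces two distinct expansions of $M^{\cU}$ in $\cC'$, contradicting precisely the uniqueness of the expansion that you verified. If you want to salvage your route, you must supply the logic-topology continuity of $P$ on the type space before invoking Tarski--Vaught --- at which point Proposition~\ref{P.type.SW} already gives definability and the detour through fullness of $F$ becomes unnecessary.
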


\begin{proof} It suffices to see that if $\sigma$ is an automorphism of $A \in \cC$ then $\sigma$ preserves $P^A$.  But if $P^A$ differs from $\sigma(P^A)$ then $(A,\sigma(P^A))$ is a model of $T'$ which is not in $\cC'$.
\end{proof}

We give two examples of how the Beth definability theorem can be applied.  
First of all, we give a proof of Theorem \ref{Th.sem.def}.
Suppose we have an elementary class $\cC$, and $A \mapsto S^A$ is a uniform assignment of closed sets as in Theorem \ref{Th.sem.def}.  For $A \in \cC$, define a relation $R^A$ on $A$ by $R^A(\bar a) = d(\bar a,S^A)$.  These functions are uniformly continuous and so let $\cC' := \{ (A,R^A) : A \in \cC \}$.  It is straightforward to check that this is an elementary class using Theorem \ref{T.Ax}. In fact, by using the assumptions of 
 Theorem~\ref{Th.sem.def} one sees that $\cC$ is closed under ultraproducts and ultraroots.  
If $A \in \cC$ and $\sigma$ is an automorphism of $A$, then by the functoriality of the uniform assignment, $\sigma$ induces a permutation of $S^A$.  This implies that $\sigma$ preserves the predicate $R^A$.
 We conclude from Theorem \ref{Th.Beth} that $R^A$ is equivalent to a definable predicate in the language of $\cC$ which shows that our assignment is a definable set.
 
For a second example, we give a Beth definability argument of the following result of Lupini  which appears in \cite[Appendix C]{goldbring2014kirchberg}. 
\begin{lemma} \label{L.Norm} The function $F_n\colon A_1^{n^2}\to [0,\infty)$ given by 
(when $\bar a\in A^{n^2}$ is identified with an element of $M_n(A)$) 
\[
F_n(\bar a)=\|\bar a\|
\]
is defined by a sup-formula. Moreover, the unit ball of $M_n(A)$ is a definable set.
\end{lemma}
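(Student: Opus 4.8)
The plan is to deduce definability of the matrix norm from the Beth definability theorem (Theorem~\ref{Th.Beth}), in the packaged form of Corollary~\ref{C.sem.def}, and then to upgrade ``definable predicate'' to ``sup-formula'' using the preservation theorem. Let $\cC$ be the (elementary) class of all \cstar-algebras and, for $A \in \cC$, let $F_n^A \colon A_1^{n^2} \to [0,n]$ send an $n^2$-tuple $\bar a$ of contractions to the norm $\|\bar a\|$ of the corresponding element of $M_n(A)$. First I would record that this is well defined into a compact interval, since $\|[a_{ij}]\| \le (\sum_{ij}\|a_{ij}\|^2)^{1/2} \le n$ when each $a_{ij}\in A_1$, and that $F_n^A$ is $n^2$-Lipschitz for the sup-metric on $A_1^{n^2}$, with a modulus of uniform continuity independent of $A$. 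Thus $\cC' := \{(A, F_n^A) : A \in \cC\}$ is a legitimate expansion in a language $\calL'$ with one new predicate symbol.

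The crux is the standard fact that $M_n$ commutes with ultraproducts: the canonical map $\prod_\cU M_n(A_i) \to M_n(\prod_\cU A_i)$ is an isomorphism (because $M_n(\,\cdot\,) = (\,\cdot\,)\otimes M_n(\bbC)$ and $M_n(\bbC)$ is finite-dimensional), and the norm on an ultraproduct is the ultralimit of the norms. Consequently, for $\bar a = (\bar a_i)/\cU$ one has $F_n^{\prod_\cU A_i}(\bar a) = \lim_{i\to\cU} F_n^{A_i}(\bar a_i)$; in other words the new predicate obeys the conclusion of \L o\'s' Theorem. This shows that $\cC'$ is closed under isomorphisms, ultraproducts and ultraroots, hence elementary by Theorem~\ref{T.Ax}. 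Corollary~\ref{C.sem.def} then applies and yields that $F_n$ is $T'$-equivalent to a definable predicate in the language $\calL$ of \cstar-algebras.

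To see that $F_n$ is in fact a sup-formula, I would observe that whenever $A \subseteq B$ the inclusion $M_n(A) \hookrightarrow M_n(B)$ is an inclusion of \cstar-algebras and hence isometric, so $F_n^A(\bar a) = F_n^B(\bar a)$ for every $\bar a$ in $A$; in particular $F_n^A(\bar a) \le F_n^B(\bar a)$. Applying Proposition~\ref{prop:preservation}(1) relative to $\cC$ to the definable predicate $F_n$ produced above then gives that $F_n$ is equivalent to a sup-formula, as claimed.

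Finally, for the ``moreover'' clause, note that $\|[a_{ij}]\|\le 1$ forces $\|a_{ij}\|\le 1$ for all $i,j$, so the assignment $A \mapsto B_1(M_n(A))$, viewed inside $A_1^{n^2}$, is a uniform assignment of closed sets. Since the unit ball of an ultraproduct is the ultraproduct of the unit balls, this assignment satisfies $B_1(M_n(\prod_\cU A_i)) = \prod_\cU B_1(M_n(A_i))$, so it is a definable set by the semantic criterion of Theorem~\ref{Th.sem.def}. (Equivalently, it is the zero-set of the definable predicate $F_n \dotminus 1$, whose distance function is readily seen to be weakly stable via Lemma~\ref{L.weaklystable}.) The main obstacle---really the only point carrying genuine content---is the commutation of $M_n$ with ultraproducts, which simultaneously drives the elementarity of $\cC'$ (so that Beth definability applies) and the definability of the unit ball.
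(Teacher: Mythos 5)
Your proof is correct and follows essentially the same route as the paper: show that the expansion $(A,F_n^A)$ forms an elementary class via the commutation of $M_n$ with ultraproducts, invoke Beth definability (Corollary~\ref{C.sem.def}) to get a definable predicate, upgrade to a sup-formula by Proposition~\ref{prop:preservation}(1) using isometry of the inclusion $M_n(A)\hookrightarrow M_n(B)$, and obtain the unit ball as the zero-set of $F_n \dotminus 1$. You supply somewhat more detail than the paper (the explicit bound and Lipschitz modulus, and the weak-stability check for the unit ball), but the argument is the same.
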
 

\begin{proof} 
Assume that $\cC'$ is the class of all pairs of structures $(A,F_n^A)$ where $A$ is a \cstar-algebra.  Now if $(A_i,F_n^{A_i}) \in \cC'$ for all $i \in I$ and $\cU$ is an ultrafilter on $I$ then if $A = \prod_\cU A_i$ and $\bar a \in A_1^{n^2}$, as an operator, $\bar a$ is just the ultralimit of $\bar a_i$ as $i \rightarrow \cU$ and so $F_n^A(\bar a) = \lim_{i \rightarrow \cU} F_n^{A_i}(\bar a_i)$ which shows that $F_n$ is a definable predicate in the language of \cstar-algebras.  To see that this predicate is universal just involves a preservation theorem (Proposition \ref{prop:preservation} (1)).  The unit ball is a definable set as it is the zero-set of $F_n \dotminus 1$.
\end{proof} 

We remark that as in the case of scalars, discussed after Theorem \ref{T.conceptual-completeness}, by a very similar argument, $M_n(\bbC)$ lives in $A^{\eq}$ for any \cstar-algebra $A$ as does its action on $M_n(A)$.  We will use this in \S\ref{S.2nd-proof}.
  
    \section{Saturated models}  \label{S.Saturated}
    
Remember that when we say a formula has parameters in a set $B$ we mean that we are working in the language $\calL_B$ with constants for the elements of $B$. We say that a \cstar-algebra $A$ is \emph{countably saturated}\index{countably saturated} if every type $p$ relative to the theory of $A$ whose formulas have parameters over a separable subalgebra $A_0 \subseteq A$ is realized in $A$.  Important instances of countable saturation are $A^\cU$ when  $\cU$ is a non-principal ultrafilter on~$\bbN$ and $\prod_n A_n/\bigoplus_n A_n$
(the first result is folklore and the second was proved in \cite{farah2014rigidity}).  For more 
information about countable saturation and its use in operator algebra, see
\cite{FaHa:Countable},  \cite{farah2014rigidity}, or \cite{FaHaRoTi:Relative}.

\subsection{} \label{Ex.ssa-stable} 

A unital \cstar-algebra $A$ has  \emph{approximately inner half-flip}\index{approximately inner half-flip}
 if the
$^*$-homomorphisms from $A$ into $A\otimes A$, where $\otimes$ denotes the minimal tensor product,
defined by $a\mapsto a\otimes 1_A$ and $a\mapsto 1_A\otimes a$
are approximately unitarily equivalent. In \cite[Lemma 3.9]{KircPhi:Embedding} it  
was proved that algebras with approximately inner half-flip 
are nuclear, simple and have at most one trace. 
The following result is taken from \cite{Ror:Classification} and also appears in \cite{KircPhi:Embedding}.  We include the proof in order to highlight its model theoretic nature.

\begin{thm}\label{T.aihf}
If $A$ is a separable unital \cstar-algebra and $D$ has approximately inner half-flip then:
\begin{enumerate}[leftmargin=*]
\item If $D$ unitally embeds into $A' \cap A^\cU$ then $A \cong A \otimes D$, and the isomorphism is approximately unitarily equivalent to the map $id_A \otimes 1_D$; 
\item If $D$ is strongly self-absorbing then the converse holds, so that $D$ unitally embeds into $A' \cap A^{\cU}$ if and only if $A \cong A \otimes D$.
\end{enumerate}
\end{thm}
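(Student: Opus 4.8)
The plan is to derive both directions from Elliott's intertwining argument (as described in the introduction), packaged through the ultrapower, with the approximately inner half-flip of $D$ supplying the one nontrivial approximate unitary equivalence. Throughout I write $\Delta_C\colon C\to C^{\cU}$ for the diagonal embedding, and I use that $D$, having approximately inner half-flip, is nuclear (so that $A\otimes D$ is unambiguous) and, for part~(2), simple.

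For (1), let $\varphi\colon D\to A'\cap A^{\cU}$ be the given unital embedding and set $\iota=\id_A\otimes 1_D\colon A\to A\otimes D$. First I would form the unital $^*$-homomorphism $\Phi\colon A\otimes D\to A^{\cU}$ determined by $\Phi(a\otimes 1)=\Delta_A(a)$ and $\Phi(1\otimes d)=\varphi(d)$; this is well defined on the (unique) tensor product precisely because $\varphi(D)$ commutes with the diagonal copy of $A$, and by construction $\Phi\circ\iota=\Delta_A$. The goal is to show $\iota$ is approximately unitarily equivalent to an isomorphism, and for this I would invoke the standard one-sided intertwining criterion: if a unital $\delta\colon A\otimes D\to A^{\cU}$ satisfies $\delta\circ\iota=\Delta_A$ and $\iota^{\cU}\circ\delta$ is approximately unitarily equivalent to $\Delta_{A\otimes D}$, then $\iota$ is approximately unitarily equivalent to an isomorphism $A\cong A\otimes D$ which is itself approximately unitarily equivalent to $\id_A\otimes 1_D$. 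Taking $\delta=\Phi$, the first hypothesis is automatic.

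The hard part will be the second hypothesis, and this is exactly where the half-flip enters. Set $\Theta:=\iota^{\cU}\circ\Phi\colon A\otimes D\to (A\otimes D)^{\cU}$; a short computation gives $\Theta(a\otimes 1)=a\otimes 1=\Delta_{A\otimes D}(a\otimes 1)$ and $\Theta(1\otimes d)=[\,(\varphi_n(d)\otimes 1)_n\,]$, where $(\varphi_n(d))_n$ represents $\varphi(d)$. I would then observe that inside $(A\otimes D)^{\cU}$ the two unital copies of $D$ given by $d\mapsto\Delta_{A\otimes D}(1\otimes d)$ (living in the diagonal second factor) and $d\mapsto\Theta(1\otimes d)$ (living in the first factor) commute with one another and both commute with $A\otimes 1$, since $\varphi(d)\in A'\cap A^{\cU}$. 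Hence they assemble into a unital $^*$-homomorphism $\rho\colon D\otimes D\to(A\otimes D)^{\cU}$ whose range lies in the commutant of $A\otimes 1$. Transporting the half-flip unitaries of $D\otimes D$ through $\rho$ produces, for each finite set and tolerance, a unitary in $(A\otimes D)^{\cU}$ that fixes $A\otimes 1$ and carries $\Theta(1\otimes d)$ to within the tolerance of $\Delta_{A\otimes D}(1\otimes d)$; because $A\otimes D$ is generated by $A\otimes 1$ and $1\otimes D$ and both $\Theta,\Delta_{A\otimes D}$ are homomorphisms, this upgrades to approximate unitary equivalence of $\Theta$ and $\Delta_{A\otimes D}$, verifying the second hypothesis and completing~(1). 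The model-theoretic content is that countable saturation of the ultrapowers converts each such approximate unitary equivalence into the consistency of the system of conditions describing a conjugating unitary, exactly as in the reformulation of approximate unitary equivalence recorded in~\S\ref{S.Tensorial}.

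For (2), assume $D$ is strongly self-absorbing and $A\cong A\otimes D$. Since strong self-absorption gives $D\cong D^{\otimes\bbN}$, I would note $A\cong A\otimes D\cong A\otimes D^{\otimes\bbN}$. Letting $\kappa_n\colon D\to D^{\otimes\bbN}$ be the unital embedding onto the $n$-th tensor factor, the maps $\id_A\otimes\kappa_n\colon D\to A\otimes D^{\otimes\bbN}\cong A$ define a unital $^*$-homomorphism $D\to A^{\cU}$, $d\mapsto[\,((\id_A\otimes\kappa_n)(d))_n\,]$. For a fixed $a\in A$, approximating $a$ by an element supported on finitely many tensor factors shows that $a$ commutes with $(\id_A\otimes\kappa_n)(d)$ for all large $n$, so the image lands in $A'\cap A^{\cU}$; and since strongly self-absorbing algebras are simple, this homomorphism is injective, giving the desired unital embedding $D\hookrightarrow A'\cap A^{\cU}$. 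Combined with~(1) this yields the stated equivalence. I expect the only delicate points here to be the bookkeeping for the central approximation, with the genuine obstacle of the whole argument being the half-flip step in~(1).
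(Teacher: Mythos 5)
Your argument is correct. For part (1) you are taking essentially the paper's route: the heart of both proofs is the observation that the given copy of $D$ in $A'\cap A^{\cU}$ and the tensor-factor copy of $D$ commute inside the ultrapower of $A\otimes D$, hence generate a copy of $D\otimes D$ commuting with (the diagonal image of) $A$, so the approximately inner half-flip produces unitaries fixing $A$ and conjugating one copy of $D$ onto the other; the only difference is that you then invoke the one-sided approximate-intertwining criterion as a black box, whereas the paper unwinds it explicitly (choosing representing unitaries $v_n$, setting $w_n=v_1\cdots v_n$, and exhibiting the isomorphism as the point-norm limit of the inner automorphisms $\mathrm{Ad}\,w_n$, together with the surjectivity check). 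Your verification that $\iota^{\cU}\circ\Phi$ and $\Delta_{A\otimes D}$ are approximately unitarily equivalent does unroll to exactly the hypotheses of that criterion, so nothing is missing. For the converse direction of part (2), however, you take a genuinely different route: the paper deduces it from Lemma \ref{L.tensor} --- since $D\cong D^{\otimes\bbN}$, the map $a\mapsto a\otimes 1_D$ is an \emph{elementary} embedding $A\prec A\otimes D$, and then countable saturation of $A^{\cU}$ lets one realize $A\otimes D$ over the diagonal copy of $A$, placing $1\otimes D$ inside $A'\cap A^{\cU}$ --- whereas you give the classical operator-algebraic argument via the tail embeddings $\id_A\otimes\kappa_n\colon D\to A\otimes D^{\otimes\bbN}\cong A$, using a finite-support approximation to land in the relative commutant and simplicity of $D$ for injectivity. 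Both are sound; the paper's version buys consistency with the memoir's model-theoretic framing (and reuses a lemma already established), while yours is self-contained, avoids countable saturation entirely, and makes the asymptotic centrality completely explicit.
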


\begin{proof}
As the forward direction of (2) follows from (1), it is sufficient to prove (1) and the reverse implication in (2). 
We start with the latter.

If $D$ is strongly self-absorbing and $A \cong A \otimes D$ then by Lemma \ref{L.tensor}, the embedding $a \mapsto a \otimes 1_D$ is an elementary embedding and so we can assume that $A \prec A \otimes D \prec A^\cU$ which means $D$ embeds into $A' \cap A^\cU$.

We now prove (1). To do so, we introduce some notation. Suppose 
that $D$ has approximately inner half-flip and let  $B := A \otimes D_1$ where $D_1 \cong D$ via an isomorphism $\psi_1:D \rightarrow D_1$ and consider $A$ embedded into $B$ by $a \mapsto a \otimes 1_{D_1}$.  This induces an embedding from $A^\cU$ into $B^\cU$.  Fix a copy of $D$ embedded in $A' \cap A^\cU$,  
call it $D_2$, and an isomorphism $\psi_2:D \rightarrow D_2$.  
As subalgebras of $B^\cU$, $D_1$ and $D_2$ commute and so $\mathrm{C}^*(D_1,D_2) \cong D \otimes D$ (since $D$ is nuclear, this is necessarily 
the minimal tensor product).  Since $D$ has approximately inner half-flip, the maps $\psi_1$ and $\psi_2$ are approximately unitarily equivalent.  In $B^\cU$ this means that there is a unitary $u \in A' \cap B^\cU$ such that 
for all $d \in D$, $u^*\psi_1(d)u = \psi_2(d)$.

If $u = \langle u_n : n \in \bbN \rangle/\cU$ where the $u_n$'s are a representing sequence of unitaries, then 
\[
\lim_{n \rightarrow \cU} \| [u_n,a] \| = 0 \text{ for all } a \in A
\]
and since $u^*bu \in A^\cU$ for all $b \in B$, we have
\[
\lim_{n \rightarrow \cU} d(u^*_nbu_n,A) = 0 \text{ for all } b \in B.
\]
From these two facts, one deduces that if $a_1,\ldots,a_m \in A$ and $b_1,\ldots,b_n \in B$ and $\e > 0$ then there is a unitary $v$ and $c_j \in A$ for all $j \leq n$ such that for all $i \leq m$ and $j \leq n$
\[
\|v^*a_iv - a\| < \e \text{ and } \|v^*b_jv - c_j \| < \e.
\]
Now if we fix a dense subset $\langle a_n : n \in \bbN\rangle$ of $A$ and $\langle b_n : n \in \bbN \rangle$ of $B$, we can pick unitaries $\langle v_n : n \in \bbN \rangle$ and $a^j_m \in A$ for all $j \leq m$ and $m \in \bbN$ inductively such that
\begin{enumerate}
\item $\| v^*_n a_i v_n - a_i \| < 1/2^n$ for all $i \leq n$,
\item $\|v^*_n a^j_m v_n - a^j_m \| < 1/2^n$ for all $j \leq m < n$ and 
\item $\|w_n^* b_i w_n - a^i_n\| < 1/2^n$ for all $i \leq n$ where $w_n := v_1v_2\ldots v_n$.
\end{enumerate}
From this one concludes that for all $a_i$, the sequence $\langle w_n a_i w^*_n : n \in \bbN \rangle$ is a Cauchy sequence and, since the $a_i$'s are dense in $A$,  that
the map $\varphi(a) = \lim_{n \rightarrow \infty} w_n a w_n^*$ is a well-defined $^*$-homomorphism ($\varphi$ is a uniform limit of inner automorphisms).  Moreover, the sequence $\langle w^*_n b_i w_n : n \in \bbN \rangle$ is also Cauchy for all $i$ and by the choices of the $a^j_m$'s converges to something in $A$.  This means that $b_i$ is in the range of $\varphi$ for all $i$ and so $\varphi$ is surjective.  We conclude that $A \cong B$ and that $\varphi$ is approximately unitarily equivalent to $\mathrm{id}_A \otimes 1_{D_1}$ by construction.
\end{proof}

We now discuss several examples which relate axiomatizability and the relative commutant.  Suppose that $D$ is a separable, unital \cstar-algebra and $A$ is any separable \cstar-algebra.  $D$ unitally embeds into $A^{\cU} \cap A'$ if and only if the following set of conditions over $A$ is a type realized in $A^\cU$
(recall the definition of $\Diag(A)$ in \S\ref{S.Diagram})
\begin{align*}
\Sigma := \{ \varphi(\bar d) \leq 1/n \colon n \in \bbN, \bar d\text{ in }  D, \varphi(\bar d) \in \Diag(D) \} \\
\cup \{ \|[a,d]\| \leq 1/n \colon n \in \bbN, a \in A, d \in D\}.
\end{align*}
Since $A^\cU$ is countably saturated, this is equivalent to the assertion that $\Sigma$ is a type in the theory of $A$. This says that the property of $D$ embedding into  $A'\cap A^{\cU}$ is \aea{}  by sentences of the form:
$$\sup_{\bar y} \inf_{\bar x} (\| [\bar y,\bar x]\| + \varphi(\bar x))$$
where $\varphi(\bar x)$ range over the atomic diagram of $D$.

By Theorem \ref{T.aihf}, 
this has an immediate consequence for the axiomatizability of $D$-stability when $D$ is strongly self-absorbing
(see \S\ref{S.Tensorial} and Theorem~\ref{T.aihf}).  

\begin{corollary} Suppose $D$ is strongly self-absorbing. Then $D$-stability is \aea{} 
among separable \cstar-algebras. 
In particular, $\cZ$-stability is an elementary property among separable \cstar-algebras. \qed
\end{corollary}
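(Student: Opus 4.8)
The plan is to combine the $\forall\exists$-characterization of the relative-commutant embedding established in the paragraph preceding this corollary with the tensorial absorption criterion of Theorem~\ref{T.aihf}. Recall that a strongly self-absorbing algebra $D$ is separable, unital, and has approximately inner half-flip, so that Theorem~\ref{T.aihf}(2) applies and tells us that for a separable unital \cstar-algebra $A$ one has $A\cong A\otimes D$ if and only if $D$ embeds unitally into $A'\cap A^{\cU}$.

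First I would assemble the axiomatizing theory. Fixing a countable dense family of tuples $\bar d$ from $D$ (using separability of $D$), let $T_D$ be the set of all sentences of the form
\[
\sup_{\bar y}\inf_{\bar x}\bigl(\|[\bar y,\bar x]\|+\varphi(\bar x)\bigr)
\]
where $\varphi(\bar x)$ ranges over the atomic diagram $\Diag(D)$. Each such sentence is $\forall\exists$, so $T_D$ is a set of $\forall\exists$-sentences. The content of the preceding discussion is exactly that, for separable $A$, one has $A\models T_D$ if and only if the set of conditions
\begin{align*}
\Sigma := {}&\{\varphi(\bar d)\leq 1/n : n\in\bbN,\ \bar d\text{ in }D,\ \varphi(\bar d)\in\Diag(D)\}\\
&\cup\{\|[a,d]\|\leq 1/n : n\in\bbN,\ a\in A,\ d\in D\}
\end{align*}
is consistent with $\Th(A)$, and hence (by countable saturation of $A^{\cU}$) if and only if $D$ embeds unitally into $A'\cap A^{\cU}$.

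With this in hand the argument closes quickly. By Theorem~\ref{T.aihf}(2), for separable $A$ the embedding of $D$ into $A'\cap A^{\cU}$ is equivalent to $A\cong A\otimes D$, i.e.\ to $D$-stability. Therefore the class of separable $D$-stable algebras coincides with the class of separable models of $T_D$, and since $T_D$ consists of $\forall\exists$-sentences, $D$-stability is separably \aea{}. For the final assertion, the Jiang--Su algebra $\cZ$ is strongly self-absorbing, so $\cZ$-stability is separably \aea{}; being \aea{} it is in particular axiomatizable, whence $\cZ$-stability is elementary among separable \cstar-algebras.

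The step I expect to require the most care is the equivalence ``$A\models T_D\iff\Sigma$ is consistent with $\Th(A)$.'' The nontrivial direction is from $A\models T_D$ to consistency: satisfaction of the sup-inf sentences guarantees, for every finite fragment of $\Sigma$ and every $\e>0$, the existence in $A$ of a tuple approximately commuting with the prescribed parameters and approximately satisfying the relevant relations from $\Diag(D)$, which yields finite approximate satisfiability of $\Sigma$ over $\Th(A)$ and hence, via \L o\'s' theorem and countable saturation of $A^{\cU}$, an actual realization of $\Sigma$ in $A^{\cU}$; the converse direction is immediate from \L o\'s' theorem. A secondary point worth flagging is the passage from unital to arbitrary separable $A$ when invoking Theorem~\ref{T.aihf}: since $D$ is unital, the relative-commutant characterization above is available for all separable $A$, and the absorption equivalence extends to the non-unital case by the standard reduction.
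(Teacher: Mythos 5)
Your proof is correct and follows essentially the same route as the paper: the corollary there is stated with \qed precisely because it is the combination of the preceding observation (that ``$D$ unitally embeds into $A'\cap A^{\cU}$'' is captured by the $\forall\exists$-sentences $\sup_{\bar y}\inf_{\bar x}(\|[\bar y,\bar x]\|+\varphi(\bar x))$ with $\varphi$ ranging over $\Diag(D)$, via countable saturation of $A^{\cU}$) with Theorem~\ref{T.aihf}(2), which is exactly your argument. Your closing worry about non-unital $A$ is moot, since the paper defines $D$-stability only for unital algebras.
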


\subsection{} \label{Ex.MFstable}  
A frequently asked question about  massive \cstar-algebras such as ultrapowers or relative commutants 
of its distinguished separable subalgebra  is to characterize the separable \cstar-algebras that arise (up to isomorphism) as subalgebras.
Particular instances of this problem are the Connes Embedding Problem (see \cite{Oz:About})
and Kirchberg Embedding problem 
(see \cite{goldbring2014kirchberg}). In addition, classes 
of \cstar-algebras of central importance such as MF algebras 
(\S\ref{S.MF}, cf.\  also their more prominent relatives,  
quasidiagonal  algebras, \S\ref{S.QD}) are defined in terms of being a subalgebra of a fixed massive \cstar-algebra.

The set of universal sentences (i.e., universal formulas with no free variables; see Definition~\ref{def:complexity}) 
in the theory of $D$ is called the \emph{universal theory} of $D$\index{theory!universal}
 and denoted    $\ThA(D)$.\index{T@$\ThA(D)$}

\begin{lemma} \label{L.MF.1} If a \cstar-algebra $C$ is countably saturated then both
\begin{align*}
 \cC_1&=\{A\colon A
\text{ is separable and isomorphic to a subalgebra of }C\} \text{ and}\\
\cC_2&=\{A\colon A
\text{ is separable and isomorphic to a unital subalgebra of }C\}
\end{align*}
 are universally axiomatizable in the class of separable \cstar-algebras.
\end{lemma}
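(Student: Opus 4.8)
The plan is to exhibit, for each of the two classes, a \emph{single} universally axiomatizable class of (not necessarily separable) \cstar-algebras that agrees with it on separable algebras; the natural candidate is the class of models of the universal theory of $C$. Concretely, I would set $\cC_0 := \Mod(\ThA(C))$ for $\cC_1$, and for $\cC_2$ run the identical argument in the language of unital \cstar-algebras, so that ``subalgebra'' means ``unital subalgebra'' and $\ThA(C)$ is computed in that language. Since $\ThA(C)$ is by definition a set of $\sup$-sentences, $\cC_0$ is universally axiomatizable; so, by the definition of separable axiomatizability, it suffices to prove that for separable $A$ one has $A \models \ThA(C)$ if and only if $A$ embeds into $C$.

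One direction is immediate: if $A$ is a subalgebra of $C$ and $\varphi = \sup_{\bar x}\psi$ is a $\sup$-sentence, then $\varphi^A \le \varphi^C$ (quantifier-free formulas are absolute and the supremum is taken over a smaller set). Hence every $\varphi \in \ThA(C)$, being $\bbR^+$-valued and $0$ in $C$, is also $0$ in $A$, so $A \models \ThA(C)$.

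For the converse, which I expect to be the crux, suppose $A$ is separable with $A \models \ThA(C)$. Fix a countable dense sequence $(a_n)$ in $A$ and let $p(\bar x)$ be its quantifier-free type, i.e.\ the conditions $\psi(\bar x) = \psi^A(\bar a)$ for quantifier-free $\psi$. If $(c_n)$ realizes $p$ in $C$, then $a_n \mapsto c_n$ preserves the values of all terms and of $\|\cdot\|$, hence extends by density to an isometric ${}^*$-homomorphism $A \to C$; so it is enough to realize $p$ in $C$. By countable saturation of $C$ it suffices that $p$ be finitely approximately satisfiable in $C$, and for this I would establish the transfer estimate
\[
\Big(\inf_{\bar x}\psi(\bar x)\Big)^{C} \le \psi^{A}(\bar a)
\]
for every quantifier-free $\bbR^+$-valued $\psi$ and every $\bar a$ in $A$. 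Granting this, applying it to $\Psi := \max_j |\psi_j - r_j|$ (with $r_j := \psi_j^A(\bar a)$) gives $(\inf_{\bar x}\Psi)^C \le \Psi^A(\bar a) = 0$, which is exactly approximate realizability of the finite fragment $\{\psi_j(\bar x) = r_j\}$.

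The transfer estimate is where the hypothesis $A \models \ThA(C)$ is used, via a complementation trick. Writing $t := \psi^A(\bar a)$, suppose for contradiction that $(\inf_{\bar x}\psi)^C \ge t + \e$ for some $\e > 0$. Then $\psi^C(\bar c) \ge t + \e$ for all $\bar c$, so the $\sup$-sentence $\chi := \sup_{\bar x}\big((t+\e) \dotminus \psi(\bar x)\big)$ satisfies $\chi^C = 0$, i.e.\ $\chi \in \ThA(C)$; but $\chi^A \ge (t+\e) \dotminus \psi^A(\bar a) = \e > 0$, contradicting $A \models \ThA(C)$. Hence $(\inf_{\bar x}\psi)^C \le t$ as claimed. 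Thus the main work is this estimate together with the bookkeeping needed to realize the countable type $p$ (which I would do one variable at a time, using countable saturation over the separable set of parameters accumulated so far, the consistency at each stage following from the estimate above); the unital case requires only the parallel remark that working in the unital language forces the unit of $A$ onto the unit of $C$.
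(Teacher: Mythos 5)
Your proposal is correct and follows essentially the same route as the paper: both reduce the statement to showing that a separable $A$ embeds into $C$ iff $A\models\ThA(C)$, prove the converse by writing down the quantifier-free type of a countable dense sequence, verify its finite approximate satisfiability in $C$ via the contrapositive trick with $\sup_{\bar x}\bigl((t+\e)\dotminus\psi(\bar x)\bigr)$, and then invoke countable saturation to realize the type and extend $a_n\mapsto c_n$ to an embedding. The only cosmetic difference is in the unital case, where the paper keeps the original language and uses definability of the unit to force $c_1=1_C$ rather than passing to the unital language, but the two devices are interchangeable here.
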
 

\begin{proof} We prove that $A\in \cC_1$ if and only if $A\models \ThA(C)$. 
From Proposition~\ref{P.Ax}, we see that $A\in \cC_1$ implies $A\models\ThA(C)$. 
For the other direction, assume $A\models \ThA(C)$,
fix a dense sequence of the unit ball of $A$, $a_n$, for $n\in \bbN$, 
and let $\alpha_k(\bar x)$, for $k\in \bbN$, 
 be an enumeration of a $\|\cdot\|$-dense subset of 
 all quantifier-free formulas such that 
 $\alpha_k(\bar a)^A=0$. Since $A\models \ThA(C)$ we have that 
 $C\models \sup_{\bar x} \alpha_k(\bar x)$ for all $k$. 
 By countable saturation, we can find $c_n$, for $n\in \bbN$, such that 
 $\alpha_k(\bar c)^C=0$ for all $k$. This implies that the map $a_n\mapsto c_n$ 
 extends to a $^*$-isomorphism of $A$ into a separable subalgebra of $C$. 
 
 The proof of the second claim is very similar, with one small difference. 
 Using the above notation, assure that $a_1=1_A$ and proceed with the 
 same construction. Since the unit of a \cstar-algebra is definable, we must have~$c_1=1$. 
 \end{proof} 

Although the isomorphism type of an ultrapower and relative commutant may depend on the choice of the ultrafilter 
(\cite[Theorem~5.1]{FaHaSh:Model1}), this does not affect what separable subalgebras embed into them. 
This follows by the last lemma and the fact that by \L o\'s' Theorem (Theorem~\ref{Los})
the theory of $A^{\cU}$ does not depend on the choice of $\cU$.

\begin{corollary} For any algebra $B$  the class of separable algebras that embed into an ultrapower $B^{\cU}$ of $B$ 
does not depend on the choice of a nonprincipal ultrafilter  $\cU$. \qed
\end{corollary}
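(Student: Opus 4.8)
The plan is to combine Lemma~\ref{L.MF.1} with \L o\'s' Theorem, reducing the claim to the observation that the universal theory of an ultrapower of $B$ is independent of $\cU$. Fix a nonprincipal ultrafilter $\cU$ on $\bbN$. First I would record that $B^{\cU}$ is countably saturated: this is the folklore fact stated in \S\ref{S.Saturated}. Lemma~\ref{L.MF.1} then applies with $C:=B^{\cU}$ and yields a purely syntactic description of the class in question, namely that a separable \cstar-algebra $A$ embeds into $B^{\cU}$ if and only if $A\models \ThA(B^{\cU})$. Thus the class of separable algebras embedding into $B^{\cU}$ is exactly $\{A : A \text{ separable},\ A\models \ThA(B^{\cU})\}$, and everything comes down to showing that $\ThA(B^{\cU})$ does not depend on the choice of $\cU$.

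For this step I would invoke \L o\'s' Theorem (Theorem~\ref{Los}) in the form $B\prec B^{\cU}$, which was already noted after the definition of elementary submodel. Since $B$ and $B^{\cU}$ are elementarily equivalent, $\Th(B^{\cU})=\Th(B)$, and in particular the universal fragments coincide: $\ThA(B^{\cU})=\ThA(B)$ for every nonprincipal ultrafilter $\cU$ on $\bbN$. (Concretely, for a sentence $\varphi$ one has $\varphi^{B^{\cU}}=\lim_{i\to\cU}\varphi^{B}=\varphi^{B}$, since every factor of the ultrapower equals $B$.) Combining the two steps, the class of separable algebras embedding into $B^{\cU}$ equals $\{A : A \text{ separable},\ A\models \ThA(B)\}$, which is manifestly independent of $\cU$.

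I do not expect a serious obstacle here; the content is entirely in the two cited results, and the argument is a short splice. The only point requiring care is making sure the hypotheses of Lemma~\ref{L.MF.1} are genuinely met, i.e.\ that the relevant ultrapowers are countably saturated---this is why the statement is phrased for nonprincipal ultrafilters (on $\bbN$), for which countable saturation is available. If one wished to treat ultrafilters on arbitrary index sets, one would instead need the appropriate degree of saturation, but for the stated corollary the countable case suffices. I would close by remarking that the same reasoning handles the unital version, replacing $\cC_1$ by $\cC_2$ in Lemma~\ref{L.MF.1}.
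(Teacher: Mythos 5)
Your proof is correct and is essentially identical to the paper's own argument: the paper likewise derives the corollary from Lemma~\ref{L.MF.1} (applied to the countably saturated algebra $B^{\cU}$) together with the observation, via \L o\'s' Theorem, that $\Th(B^{\cU})$ — and hence $\ThA(B^{\cU})$ — is independent of $\cU$. Your closing caveat about ultrafilters on larger index sets is a reasonable extra precision but not needed for the statement as given.
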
 

In contrast, which algebras embed into an ultraproduct can depend on the ultrafilter, for example $\prod_{\cU}M_n(\bbC)$ may or may not have a unital copy of $M_2(\bbC)$, 
depending on whether or not $\cU$ concentrates on even natural numbers.

\section{MF algebras} 
\label{S.MF} 
The class of MF  (or `matricial field') 
algebras was introduced in  \cite{BlaKir} and the following lemma is well-known.  
 
\begin{lemma}\label{L.MF.def}  
The following are equivalent for every  separable \cstar-algebra $A$ 
($Q$ denotes the universal UHF algebra). 
\begin{enumerate}
\item $A$ is isomorphic to a subalgebra of $\prod_{\cU} M_n(\bbC)$. 
\item $A$ is isomorphic to a subalgebra of  $Q^{\cU}$.  
\item $A$ is isomorphic to a subalgebra of  $\ASA$. 
\item $A$ is isomorphic to a subalgebra of $\ell_\infty(Q)/c_0(Q)$. 
\end{enumerate}
\end{lemma}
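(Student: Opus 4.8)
The plan is to reduce all four conditions to a single statement about universal theories, and then to show that those universal theories coincide. First I would observe that each of the four target algebras is countably saturated: $\prod_{\cU}M_n(\bbC)$ and $Q^{\cU}$ are ultraproducts over the countable index set $\bbN$ with respect to a nonprincipal ultrafilter, hence countably saturated (as discussed in \S\ref{S.Saturated}), while $\ASA$ and $\ell_\infty(Q)/c_0(Q)=\prod_n Q/\bigoplus_n Q$ are reduced products of the form $\prod_n A_n/\bigoplus_n A_n$, which are countably saturated by \cite{farah2014rigidity}. Applying Lemma~\ref{L.MF.1} to each such algebra $C$ in turn, a separable $A$ embeds into $C$ if and only if $A\models\ThA(C)$. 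Thus it suffices to prove that the four algebras have the same universal theory; in fact I would show that every universal sentence takes the same value in all four, namely $\lim_n\sigma^{M_n(\bbC)}$, so that statement (1) is moreover seen to be independent of the choice of nonprincipal $\cU$.

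The key elementary observation is monotonicity: for $n\le m$ the corner embedding $M_n(\bbC)\hookrightarrow M_m(\bbC)$ is a (non-unital) inclusion, and a universal sentence $\sigma=\sup_{\bar x}\psi$ satisfies $\sigma^{A}\le\sigma^{B}$ whenever $A\subseteq B$, since the supremum over the unit ball of $A$ is at most that over the unit ball of $B$ and $\psi$ is quantifier-free. Hence $(\sigma^{M_n(\bbC)})_n$ is nondecreasing and $\lim_n\sigma^{M_n(\bbC)}=\sup_n\sigma^{M_n(\bbC)}$ exists. For the ultraproduct, \L o\'s' Theorem (Theorem~\ref{Los}) gives $\sigma^{\prod_{\cU}M_n(\bbC)}=\lim_{\cU}\sigma^{M_n(\bbC)}$, which by monotonicity equals $\lim_n\sigma^{M_n(\bbC)}$ independently of $\cU$. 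For $Q$ I would prove $\sigma^{Q}=\lim_n\sigma^{M_n(\bbC)}$ directly: the unital inclusions $M_n(\bbC)\hookrightarrow Q$ give $\sigma^{M_n(\bbC)}\le\sigma^{Q}$, while writing $Q$ as the inductive limit of a unital chain of matrix algebras and approximating any tuple in $Q$ by a tuple in a finite stage (using uniform continuity of $\psi$) gives the reverse inequality $\sigma^{Q}\le\sup_n\sigma^{M_n(\bbC)}$. Since $Q\prec Q^{\cU}$, also $\sigma^{Q^{\cU}}=\sigma^{Q}=\lim_n\sigma^{M_n(\bbC)}$.

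The main obstacle is the reduced products, because the value of a quantifier-free formula is a continuous connective $F$ applied to the norms of several $*$-polynomials, and in $\prod_n A_n/\bigoplus_n A_n$ the quotient norm of a class is the $\limsup$ of the norms of its representatives; as $F$ need not commute with $\limsup$, one cannot simply read off $\sigma^{\prod_n A_n/\bigoplus_n A_n}$ as $\limsup_n\sigma^{A_n}$. To get around this, for the inequality $\sigma^{\prod_n A_n/\bigoplus_n A_n}\ge\lim_n\sigma^{A_n}$ I would choose near-optimal contractions $\bar a^{(n)}$ at each stage, pass to a subsequence along which the countably many relevant $*$-polynomial norms all converge to genuine limits rather than mere $\limsup$s, and assemble a single representative by padding these tuples into the later matrix algebras via corner embeddings (which are isometric on $*$-polynomials); along such a representative $F$ commutes with the limit and the value is recovered. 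The reverse inequality is the same diagonalization applied to an arbitrary contractive representative. Carried out for $A_n=M_n(\bbC)$ this yields $\sigma^{\ASA}=\lim_n\sigma^{M_n(\bbC)}$, and for the constant sequence $A_n=Q$ (where a constant representative already converges, so the argument is easier) it yields $\sigma^{\ell_\infty(Q)/c_0(Q)}=\sigma^{Q}$. Combining the four computations shows all four universal theories agree, which by the first paragraph completes the proof.
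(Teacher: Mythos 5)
Your overall strategy is exactly the paper's: all four algebras are countably saturated, so by Lemma~\ref{L.MF.1} everything reduces to showing they have the same universal theory --- and the paper's own proof in fact stops at precisely that reduction, leaving the common-universal-theory claim as an unproved assertion. Your first two paragraphs (saturation of the four algebras, {\L}o\'s' theorem for the ultraproduct, inductive-limit approximation for $Q$, and $Q\prec Q^{\cU}$) are correct.

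The gap is in the direction $\sigma^{\ASA}\leq\lim_n\sigma^{M_n(\bbC)}$ (and likewise $\sigma^{\ell_\infty(Q)/c_0(Q)}\leq\sigma^{Q}$). Writing $\psi=F(\|p_1(\bar x)\|,\dots,\|p_k(\bar x)\|)$, the value $\psi^{\ASA}(\bar a)$ is $F$ applied to the quotient norms, i.e.\ to $\limsup_n\|p_i(\bar a^{(n)})\|$, and these $\limsup$s may be attained along pairwise disjoint subsequences (take $a=[1,0,1,0,\dots]$, $b=[0,1,0,1,\dots]$ with $p_1=x$, $p_2=y$). Changing the representative perturbs each coordinate only by $o(1)$, so no representative makes the coordinate norms converge to the quotient norms; passing to a subsequence along which they merely converge produces limits that can be strictly below the $\limsup$s, and since $F$ need not be monotone, $F$ of those limits says nothing about $\psi^{\ASA}(\bar a)$. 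Hence ``the same diagonalization applied to an arbitrary contractive representative'' does not establish the inequality --- and this inequality is essentially the nontrivial half of the Blackadar--Kirchberg equivalence, so it cannot be free. The standard repair is a direct-sum trick: choose large indices $n_1,\dots,n_k$ with $\|p_i(\bar a^{(n_i)})\|$ within $\e$ of the $i$-th $\limsup$ and with all earlier large deviations excluded, and form the block-diagonal tuple $\bar c=\bar a^{(n_1)}\oplus\dots\oplus\bar a^{(n_k)}$ in $M_{n_1+\dots+n_k}(\bbC)$; since $\|p(\bar c_1\oplus\bar c_2)\|=\max(\|p(\bar c_1)\|,\|p(\bar c_2)\|)$ for $*$-polynomials without constant term, all $k$ quotient norms are simultaneously realized up to $\e$ inside a single matrix algebra, giving $\psi^{\ASA}(\bar a)\leq\sup_n\sigma^{M_n(\bbC)}+\delta$. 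For $\ell_\infty(Q)/c_0(Q)$ use $Q\oplus Q\hookrightarrow M_2(Q)\cong Q$; note that non-constant representatives do occur, so your ``constant representative already converges'' remark does not cover the general element. Your $\geq$ direction, via near-optimal tuples, a convergent subsequence, and corner-embedding padding, is correct.
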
 

\begin{proof} 
The algebras $\ASA$ and $\ell_\infty(Q)/c_0(Q)$ are countably saturated by 
 \cite[Theorem 1]{farah2014rigidity}, and all ultraproducts associated by nonprincipal ultrafilters on $\bbN$ 
 are countably saturated (see \S\ref{S.Saturated}). 
By Lemma \ref{L.MF.1} it suffices to prove that all four algebras have the 
same universal theory. 

\end{proof} 

A separable \cstar-algebra is 
\emph{MF}\index{C@\cstar-algebra!MF}
if it satisfies any of the four equivalent 
conditions in Lemma~\ref{L.MF.def}. 
A prominent  subclass of MF algebras, the quasidiagonal (or QD) algebras, is introduced in~\S\ref{S.QD}. 

\begin{remark} All four massive algebras appearing in Lemma~\ref{L.MF.def} are nonisomorphic. 
This is least obvious for the two ultraproducts. 
We shall however see that $\prod_{\cU} M_n(\bbC)$ is not even elementarily equivalent to $Q$, or to 
any nuclear \cstar-algebra (Proposition~\ref{P.not-nuclear}). 
\end{remark} 

\section{Approximately divisible algebras} \label{S.AD} A unital \cstar-algebra $A$ is \emph{approximately divisible}\index{C@\cstar-algebra!approximately divisible} 
(\cite{blackadar1992approximately}) 
if for every finite subset $F$ of $A$
and every~$\e>0$ there is a unital finite-dimensional subalgebra $M$ of $A$ such that (i) each element of its unit ball is $\e$-commuting 
with each element of $F$ and (ii) no direct summand of $M$ is abelian. 
At the first sight this looks like an omitting types property. However, 
it was proved in~\cite{blackadar1992approximately} that one can always choose $M$ to be $M_2(\bbC)$, $M_3(\bbC)$ or
$M_2(\bbC)\oplus M_3(\bbC)$. 
Therefore the argument from \S\ref{Ex.ssa-stable} 
  also shows that being approximately divisible is \aea.

  \chapter{Approximation properties} 
\label{S.AP} 
A map $\varphi\colon A\to B$ is \emph{completely positive}\index{completely positive} if 
\[
\varphi\otimes \id_n\colon A\otimes M_n(\bbC)\to B\otimes M_n(\bbC)
\] 
is positive for all $n$. 
The acronym \emph{c.p.c.}\index{c.p.c.}\index{linear map!completely positive and contractive (c.p.c.)} stands for completely positive and 
contractive, where \emph{contractive}\index{contractive ($1$-Lipshitz)}	 means $1$-Lipschitz. 

\section{Nuclearity} 
\label{S.Nuclearity}
A \cstar-algebra $A$ is \emph{nuclear}\index{C@\cstar-algebra!nuclear} if for every tuple $\bar a$  in the unit ball $A_1$ and every $\e>0$
there are a finite-dimensional \cstar-algebra $F$ and  c.p.c.\  maps $\varphi\colon A\to F$ and
$\psi\colon F\to A$ such that the diagram   

\begin{center}
\begin{tikzpicture} 
\matrix[row sep=1cm, column sep=1cm]{
\node (A1){$A$}; &  &   \node (A2) {$A$}; \\
 & \node (F) {$F$};
\\
};
\path (A1) edge [->] node [above] {$\id$} (A2); 
\path (A1) edge [->]  node [below] {$\varphi$} (F);
\path (F) edge [->] node [below] {$\psi$} (A2); 
\end{tikzpicture} 
\end{center}
\noindent $\e$-commutes on $\bar a$.

For every $n$, define a function $\nuc\colon \bigsqcup_{n\geq 1} A^n \to \bbR$ by (recall that we write $\|\bar a-\bar b\|$ for $\max_i \|a_i-b_i\|$ whenever both 
$\bar a$ and $\bar b$ are $n$-tuples) 
\[
 \nuc(\overline a) := \inf_{F}\inf_{\varphi,\psi} \|(\psi\circ \varphi)(\bar a)-\bar a\| 
 \]
where $F$ ranges over all finite-dimensional \cstar-algebras, $\varphi$ ranges over all c.p.c.\  
maps $\varphi\colon F\to A$ and $\psi$ ranges over all c.p.c.\  maps $\psi\colon A\to F$. 
By \cite[Theorem 1.4]{HirKirWhi}, $A$ is nuclear if and only if $\nuc$ is identically $0$ on $\bigsqcup_{n\geq 1} A^n$.
The zero-set of this function is the set of all tuples that can be well-approximated by c.p.c.\  maps factoring through 
finite-dimensional \cstar-algebras. 
It essentially measures the distance of a tuple to this zero-set. 
As we shall see later, once $F$ is fixed, the expression 
\[
\inf_{\varphi,\psi} \|(\psi\circ \varphi)(\bar a)-\bar a\| 
 \]
can be expressed by a formula. An analogous remark applies to $\nucdim_m$ and $\drank_m$ defined in \S\ref{S.dimnuc} 
 and \S\ref{S.DecompositionRank} below.

\section{Completely positive contractive order zero maps}
\label{S.cpc} 
Maps of order zero were defined in \cite{Winter:covering} and \cite{WinterZacharias:NucDim} as 
the  completely positive maps that preserve orthogonality, in the sense that a c.p.c.\  map $\varphi$ has order zero 
if $ab=0$ 
implies $\varphi(a)\varphi(b)=0$ for all positive $a$ and $b$. 
A linear map  $\Phi\colon M_k(\bbC)\to B$ 
is \emph{c.p.c.\  order zero}\index{c.p.c.\ order zero} if and only if the 
 images of the matrix units, $b_{ij}$ for $i,j,m,n\leq k$,  satisfy the relations
 \[
b_{ij}=b_{ji}^*, \qquad b_{ij} b_{mn}=\delta_{jm} b_{ii} b_{in},\qquad
0\leq  b_{ii}\leq 1. 
\]
This is an immediate consequence of the structure theorem for c.p.c.\  order zero maps
given in \cite[Theorem~3.3]{WiZa:Completely}. 
These relations, and more generally the relations for a c.p.c.\ order zero map from any fixed finite dimensional \cstar-algebra, are weakly stable by \cite[Theorem 4.9]{Loring:stable-rel}.
 
\begin{lemma} \label{L.cpco0.1} 
For every finite-dimensional \cstar-algebra $F$ of the form $\bigoplus_{l\leq n} M_{k(l)}(\bbC)$, there is a weakly stable formula $\alpha_F(\bar x)$ with free variables $x_{ij}^l$ for $i,j\leq k(l)$ and $l \leq n$ such that
a linear map $\Phi\colon F\to B$ is c.p.c.\  order zero if and only if,
with $x^l_{ij}=\Phi(e^l_{ij})$ for $i,j\leq k(l)$ and $l \leq n$, we have $\alpha_F(\bar x)^B=0$. 
\end{lemma}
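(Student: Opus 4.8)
The plan is to reduce the statement to the already-cited combination of the structure theorem for c.p.c.\ order zero maps and the weak stability of the defining relations. First I would recall from \cite[Theorem~3.3]{WiZa:Completely} (as quoted in the excerpt for the single-block case $M_k(\bbC)$) that a linear map $\Phi\colon F\to B$, with $F=\bigoplus_{l\leq n} M_{k(l)}(\bbC)$ and matrix units $e^l_{ij}$, is c.p.c.\ order zero if and only if the images $x^l_{ij}:=\Phi(e^l_{ij})$ satisfy the relations
\[
x^l_{ij}=(x^l_{ji})^*,\qquad x^l_{ij}x^m_{pq}=\delta_{lm}\delta_{jp}\,x^l_{ii}x^l_{iq},\qquad 0\leq x^l_{ii}\leq 1,
\]
together with the contractivity constraint $\bigl\|\sum_{l\leq n}x^l_{ii}\bigr\|\leq 1$ expressing that the map is contractive across the direct summands. (The cross-block orthogonality $x^l_{ij}x^m_{pq}=0$ for $l\neq m$ is exactly the order-zero condition applied to the orthogonal summands of $F$.) The point is that these are finitely many $^*$-polynomial relations and order conditions in the finite tuple $\bar x=(x^l_{ij})$.

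Next I would package these relations into a single $\bbR^+$-valued quantifier-free formula. Using the positivity predicate $\psi(y)=\inf_{\|z\|\leq 1}\|z^*z-y\|$ from Example in \S\ref{S.Preliminaries} (whose zero-set is the positive cone) and the norm, I would set
\[
\alpha_F(\bar x):=\max\Bigl(\max_{l,i,j}\|x^l_{ij}-(x^l_{ji})^*\|,\ \max\|x^l_{ij}x^m_{pq}-\delta_{lm}\delta_{jp}x^l_{ii}x^l_{iq}\|,\ \max_{l,i}\bigl(\psi(x^l_{ii})+\|x^l_{ii}\|\dotminus 1\bigr),\ \textstyle\bigl\|\sum_l x^l_{ii}\bigr\|\dotminus 1\Bigr).
\]
By construction $\alpha_F(\bar x)^B=0$ holds precisely when all the displayed relations hold, which by the structure theorem is equivalent to $\Phi$ being c.p.c.\ order zero. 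This handles the ``if and only if'' content.

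The remaining, and genuinely substantive, point is weak stability of $\alpha_F$. Here I would invoke \cite[Theorem~4.9]{Loring:stable-rel} directly: as already noted in the paragraph preceding the lemma, the relations defining a c.p.c.\ order zero map out of an arbitrary fixed finite-dimensional \cstar-algebra are weakly stable. Concretely this gives, for every $\e>0$, a $\delta>0$ such that whenever $\alpha_F(\bar x)^B<\delta$ there is a genuine c.p.c.\ order zero map $\Phi'$ with matrix-unit images $\bar x'$ satisfying $\alpha_F(\bar x')^B=0$ and $\|\bar x-\bar x'\|<\e$; this is exactly the definition of weak stability from \S\ref{S.Definable}. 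The main obstacle, such as it is, lies not in the logic but in matching the exact form of the liftability statement in \cite{Loring:stable-rel} to the formula $\alpha_F$ I have written down—one must check that the particular presentation of the order-zero relations used there coincides, up to an equivalent finite set of relations, with the ones above, so that their weak stability transfers. Since passing between equivalent finite systems of $^*$-polynomial relations preserves weak stability (the distance estimates compose), this is routine, and I would simply cite \cite{Loring:stable-rel} for it rather than reprove the lifting.
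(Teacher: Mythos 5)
Your proposal is correct and follows essentially the same route as the paper: write down a quantifier-free ($^*$-polynomial) formula encoding the order-zero relations for the matrix units---self-adjointness, the multiplication rule $x^l_{ij}x^m_{pq}=\delta_{lm}\delta_{jp}x^l_{ii}x^l_{iq}$ (the paper records the cross-block orthogonality only on the diagonal entries $\|x^l_{ii}x^{l'}_{jj}\|$, which is equivalent), and contractivity of $\sum_l x^l_{ii}$---and then cite \cite[Theorem 4.9]{Loring:stable-rel} for weak stability. The only cosmetic differences are that you combine the terms with $\max$ rather than a sum and you add an explicit positivity term for the $x^l_{ii}$, which the paper's displayed formula leaves implicit.
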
 

\begin{proof} 
In the case when $F$ is $M_k(\bbC)$ for some $k$ by the above we can take $\alpha_F(\bar x)$ to be
\[
\sum_{i,j\leq k} \|x_{ij}-x_{ji}^*\|+\sum_{i,j,m,n\leq k} \|x_{ij}x_{mn}-\delta_{jm} x_{ii} x_{in}\| + (\|\sum_i x_{ii} \| \dotminus 1)
\]
The last term is added in order to guarantee that $\Phi$ is contractive.

For the general case, 
fix $F=\bigoplus_{l\leq n} M_{k(l)}(\bbC)$. 
Then let 
\[
\alpha_F(\bar x):=\sum_{l\leq n} \alpha_{M_{k(l)}}(\bar x_l) + \sum_{\begin{array}{c} \scriptstyle l < l', \\ \scriptstyle i\leq k(l), \\ \scriptstyle j \leq k(l') \end{array}} \|x_{ii}^lx_{jj}^{l'}\| 
\] 
where $\bar x_l$ is $(x_{ij}^l)_{i,j\leq k(l);\, l\leq n}$. 
Weak stability follows from \cite[Theorem 4.9]{Loring:stable-rel}.
\end{proof}

\section{Nuclear dimension} \label{S.dimnuc} 
 
 For $m\in \bbN$, a \cstar-algebra has \emph{nuclear dimension}\index{nuclear dimension}
  at most $m$ 
if for every 
 tuple $\bar a$ in  $A_1$ and every $\e>0$
there are  finite-dimensional \cstar-algebras $F_i$ for $0\leq i\leq m$ 
 and  c.p.c.\  maps $\varphi_i\colon A\to F_i$ and
$\psi_i\colon F_i\to A$ such that in addition $\psi_i$ has order zero and 
the diagram 

\begin{center}
\begin{tikzpicture} 
\matrix[row sep=1cm, column sep=1cm]{
\node (A1){$A$}; &  &   \node (A2) {$A$}; \\
 & \node (F) {$\bigoplus_{i\leq m} F_i$};
\\
};
\path (A1) edge [->] node [above] {$\id$} (A2); 
\path (A1) edge [->]  node [below] {$\varphi = \sum_{i \leq m} \varphi_{i}\qquad\qquad$} (F);
\path (F) edge [->] node [below] {\qquad\qquad$\qquad\psi = \sum_{i\leq m} \psi_i$} (A2); 
\end{tikzpicture} 
\end{center}
\noindent $\e$-commutes on $\bar a$. 

Fix $m$. 
We define a function 
$\nucdim_ m\colon \bigsqcup_{n\geq 1} A^n\to \bbR^+$\index{n@$\nucdim(A)$} 
by 
\[
\nucdim_m(\bar a)=\inf_{F}\inf_{\varphi,\psi} \|(\psi\circ \varphi)(\bar a)-\bar a\|
\]
where $F$ ranges over all finite-dimensional \cstar-algebras, $\varphi$ ranges over all c.p.c.\  
maps $\varphi\colon A\to F$ and $\psi$ ranges over all maps $\psi\colon F\to A$
such that $F$ is a direct sum of at most $(m+1)$ \cstar-algebras $F_i$ and the restriction of $\psi$ to each $F_i$ is c.p.c.\  of order zero. 
Note that $\psi$ itself is not required to be contractive (cf.\ the definition of decomposition rank below).  
A \cstar-algebra $A$ has nuclear dimension $\leq m$
 if and only if $\nucdim_m$ is identically zero on $\bigsqcup_{n\geq 1}A^n$.

\section{Decomposition rank} \label{S.DecompositionRank} 
A \cstar-algebra $A$ has \emph{decomposition rank}\index{decomposition rank}  at most $m$ 
if its nuclear dimension is at most $m$ \emph{and}  the map $\psi=\sum_{i\leq m} \psi_i$ 
as in \S\ref{S.dimnuc} can be chosen to be c.p.c.

Fix $m$. 
We can define a function $\drank_m\colon \bigsqcup_{n\geq 1} A^n\to \bbR^+$ 
by\index{dr@$\drank(A)$}
\[
\drank_m(\bar a):=\inf_{F}\inf_{\varphi,\psi} \|(\psi\circ \varphi)(\bar a)-\bar a\|
\]
where $F$ ranges over all finite-dimensional \cstar-algebras, $\varphi$ ranges over all c.p.c.\  
maps $\varphi\colon A\to F$ and $\psi$ ranges over all c.p.c.\  maps $\psi\colon F\to A$
such that $F$ is a direct sum of at most $m+1$ \cstar-algebras $F_i$ and the restriction of $\psi$ to each $F_i$ is of order zero. 

Then $A$ has \emph{decomposition rank $\leq m$} if and only if $\drank_m$ is identically zero on $\bigsqcup_{n\geq 1} A^n$. 

\section{Quasidiagonal algebras} \label{S.QD} A  \cstar-algebra $A$ 
is \emph{quasidiagonal}\index{C@\cstar-algebra!quasidiagonal (QD)}
 (frequently abbreviated as QD)
if for every finite $F\subset A$ and every $\e>0$ there are $n=n(F,\e)$ and a c.p.c.\  map $\phi=\phi_{F,\e}\colon A\to M_n(\bbC)$ 
such that 
\begin{enumerate}
\item $\|\phi(a)\|\geq \|a\|-\e$ for all $a\in F$, and 
\item $\|\phi(ab)-\phi(a)\phi(b)\|<\e$ for all $a$ and $b$ in $F$. 
\end{enumerate}
Such an $A$ is clearly isomorphic to a subalgebra of a suitable ultraproduct of matrix algebras. 
In the case when $A$ is separable it is also isomorphic to a subalgebra of $\ASA$ and is therefore MF (\S\ref{S.MF}). 
By the Choi--Effros lifting theorem (\cite[Chapter 7]{choi1976completely} or \cite{Arv:Notes}), quasidiagonality and being MF agree for separable nuclear \cstar-algebras. 
See \cite{brown2000quasidiagonal} and \cite[Chapter~7]{BrOz:cstar} for more information on QD algebras.

\section{Approximation properties and definability} \label{S.approx}
Let $F$ be the finite-dimensional algebra $\bigoplus_{k=1}^n M_{m_k}(\bbC)$.  
As in Example~\ref{Ex.1} \eqref{Ex.1.alpha-F}, let $J$ be the set
of all triples $(i,j,k)$ where $0 \leq i,j \leq m_k$ and $1 \leq k \leq n$. $J$ indexes the matrix units $e^{k}_{i,j}$ of $F$.
With this notation, we have

\begin{lemma} \label{L.def.cpc} 
Fix a finite-dimensional \cstar-algebra $F$ and index matrix units of its direct summands
as $\bar e = (e^{k}_{ij})_{(i,j,k) \in J}$ as above. For every \cstar-algebra $A$ the set 
\[
\{\bar a = (a^k_{ij})_{J} \colon \bar a=\psi(\bar e) \text{ for a c.p.c.\   map }\psi\colon F\to A\}
\]
is definable. 
\end{lemma}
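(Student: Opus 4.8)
The plan is to identify the set in question with the set of tuples whose associated Choi matrices are positive and whose ``diagonal sum'' is a contraction, and then to exhibit this set as the range of a term restricted to a definable set. Recall that a linear map $\psi\colon F\to A$ with $F=\bigoplus_{k=1}^n M_{m_k}(\bbC)$ is completely positive if and only if its restriction to each summand is, which in turn holds if and only if each Choi matrix $C_k:=(a^k_{ij})_{i,j}\in M_{m_k}(A)$ (where $a^k_{ij}=\psi(e^k_{ij})$) is positive. Since $F$ is unital, a completely positive $\psi$ satisfies $\|\psi\|=\|\psi(1_F)\|=\|\sum_{k}\sum_{i} a^k_{ii}\|$, so it is in addition contractive exactly when $\|\sum_{k,i} a^k_{ii}\|\leq 1$. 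Thus the set to be shown definable is
\[
S(A)=\bigl\{(a^k_{ij})_{(i,j,k)\in J}: C_k\geq 0\text{ for all }k\text{ and }\|\textstyle\sum_{k,i} a^k_{ii}\|\leq 1\bigr\},
\]
which is manifestly closed and, since $^*$-homomorphisms preserve positivity and are contractive, is a uniform assignment of closed sets.

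Next I would remove the positivity constraint by taking square roots. Writing each positive Choi matrix as $C_k=B_k^{*}B_k$ with $B_k:=C_k^{1/2}\in M_{m_k}(A)$, and collecting all entries of all the $B_k$ into a single tuple $\bar b=(b_\alpha)_{\alpha}\in A^{P}$ with $P=\sum_k m_k^2$, one computes $a^k_{ij}=\sum_{l} (B_k)_{li}^{*}(B_k)_{lj}$ and $\sum_{k,i} a^k_{ii}=\sum_{\alpha} b_\alpha^{*} b_\alpha$. Hence, letting $\Theta$ be the (degree-two) $^*$-polynomial map $\bar b\mapsto \bigl(\sum_l (B_k)_{li}^{*}(B_k)_{lj}\bigr)_{(i,j,k)\in J}$ and
\[
X(A):=\bigl\{\bar b\in A^{P}: \|\textstyle\sum_{\alpha} b_\alpha^{*} b_\alpha\|\leq 1\bigr\},
\]
the two computations show $S(A)=\Theta(X(A))$: any $\bar b\in X(A)$ produces positive Choi matrices $B_k^{*}B_k$ with diagonal sum of norm $\leq 1$, and conversely any element of $S(A)$ arises from the $\bar b$ built from the entries of the $C_k^{1/2}$.

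It remains to check that $X(A)$ is definable and to assemble the conclusion. Definability of the ``column ball'' $X$ follows from Theorem~\ref{Th.sem.def}: the inclusion $\prod_\cU X(A_\iota)\subseteq X(\prod_\cU A_\iota)$ is immediate, as the condition $\|\sum_\alpha b_\alpha^{*} b_\alpha\|\leq 1$ is preserved under ultralimits; for the reverse inclusion, given $\bar b\in X(\prod_\cU A_\iota)$ with representatives $b_\alpha(\iota)$ one rescales by $\lambda(\iota):=\max\{1,\|\sum_\alpha b_\alpha(\iota)^{*} b_\alpha(\iota)\|^{1/2}\}$, and since $\lim_{\iota\to\cU}\lambda(\iota)=1$ the rescaled tuple represents the same element of the ultraproduct while lying in each $X(A_\iota)$. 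Finally, for every formula $\chi$ one has $\sup_{\bar a\in S(A)}\chi^A(\bar a,\bar y)=\sup_{\bar b\in X(A)}\chi^A(\Theta(\bar b),\bar y)$ (and likewise for $\inf$); since $\chi(\Theta(\,\cdot\,),\bar y)$ is again a formula and $X$ is definable, these are definable predicates, and as $S(A)$ is closed it is a definable set. The one genuinely substantive step is the reduction carried out in the first two paragraphs: the square-root parametrization $C_k=B_k^{*}B_k$ is what converts the awkward positivity-plus-norm description into the range of a term over a plainly definable set, thereby sidestepping the usual need to perturb ultraproduct representatives into exact solutions of the defining relations.
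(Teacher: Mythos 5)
Your proof is correct and follows essentially the same route as the paper: both reduce complete positivity to positivity of the Choi matrices $[\psi(e^k_{ij})]$ in $M_{m_k}(A)$ and contractivity to $\|\psi(1_F)\|\leq 1$. The only difference is presentational — where the paper invokes $M_k(A)\in A^{\eq}$ (Lemma~\ref{L.Norm}) together with the definability of the positive cone, you make that step explicit by parametrizing positive Choi matrices as $C_k=B_k^*B_k$ and exhibiting the set as the range of a $^*$-polynomial term over the definable ball $\{\bar b:\|\sum_\alpha b_\alpha^*b_\alpha\|\leq 1\}$ (which is Lemma~\ref{L.c*c} of the paper, reproved via ultraproducts).
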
 

\begin{proof} 
If $F$ is $M_k(\bbC)$ for some $k$, then 
by \cite[Prop. 1.5.12]{BrOz:cstar}
a map 
$\psi\colon F\to A$ is completely positive if and only if $[\psi(e^k_{ij})]$ is positive in $M_k(A)$.  $\psi$ is contractive if and only if $\psi(1) \leq 1$ in $M_k(A)$. As $M_k(A)$ lies in $A^{\eq}$ by Lemma \ref{L.Norm} and being positive is definable, we are finished in this case.

Now suppose that $F=\bigoplus_{k=1}^n M_{m_k}(\bbC)$ and $\psi\colon F\to A$ is a function.
From the argument above, $\psi$ is completely positive if and only if
the restriction of $\psi$ to each $M_{m_k}(\bbC)$ is completely positive for each $k$, and these properties are definable separately.
In addition, $\psi$ is contractive if and only if $\psi(1)\leq 1$ which is a definable property in~$F(A)$. 
\end{proof} 

For the following two lemmas, we assume that $F = \bigoplus_{k \leq n} F_k$ is finite dimensional,   $J(k)$ indexes the matrix units  of $F_k$, and $\bar e$ stands for the matrix units of $F$.

\begin{lemma} \label{L.def.dimnuc} 
For every \cstar-algebra $A$, the set  
\begin{align*}
\{(a_j^k)_{k\leq n, j\in J(k)} \colon & \bar a=\psi(\bar e) \text{ for   }\psi\colon F\to A\\
&\text{ with $\psi\rs F_k$ c.p.c.\  order zero for all $k$}\}
\end{align*}
is definable. 
\end{lemma}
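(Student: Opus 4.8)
The plan is to realize this set as the zero-set of a weakly stable formula and then invoke Lemma~\ref{L.weaklystable}. The crucial observation is that, since the matrix units $\bar e$ form a linear basis of $F$, any tuple $\bar a = (a^k_j)_{k\leq n,\, j\in J(k)}$ in $A$ determines a \emph{unique} linear map $\psi\colon F\to A$ with $\psi(\bar e)=\bar a$, and its restriction $\psi\restriction F_k$ is the linear map $F_k=M_{m_k}(\bbC)\to A$ whose values on the matrix units of $F_k$ are exactly the sub-tuple $\bar a_k=(a^k_j)_{j\in J(k)}$. By Lemma~\ref{L.cpco0.1} applied to the single summand $F_k$, the map $\psi\restriction F_k$ is c.p.c.\ order zero if and only if $\alpha_{F_k}(\bar a_k)^A=0$, where $\alpha_{F_k}$ is the weakly stable formula provided there.

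First I would set
\[
\beta_F(\bar x):=\sum_{k\leq n}\alpha_{F_k}(\bar x_k),
\]
where $\bar x_k$ is the block of variables indexed by $J(k)$. The point to get right is that, unlike the formula $\alpha_F$ of Lemma~\ref{L.cpco0.1}, $\beta_F$ contains \emph{none} of the cross terms $\|x^l_{ii}x^{l'}_{jj}\|$ that force orthogonality between the images of distinct summands; this is exactly correct, because in the nuclear-dimension and decomposition-rank setting one requires only that each $\psi\restriction F_k$ be order zero while allowing the ranges of different summands to overlap. By the observation above, the set in question is precisely the zero-set $Z^A(\beta_F)$, and since each $\psi\restriction F_k$ is contractive we have $\|a^k_j\|\leq 1$ for all indices, so $Z(\beta_F)$ is a uniform assignment of closed, bounded sets.

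Next I would verify that $\beta_F$ is weakly stable. Given $\e>0$, use the weak stability of each $\alpha_{F_k}$ (which holds by \cite[Theorem 4.9]{Loring:stable-rel}, as recorded in Lemma~\ref{L.cpco0.1}) to choose $\delta_k>0$, and put $\delta:=\min_{k\leq n}\delta_k$. If $\beta_F(\bar a)^A<\delta$, then, all summands being nonnegative, $\alpha_{F_k}(\bar a_k)^A<\delta\leq\delta_k$ for each $k$, so there is $\bar b_k$ with $\|\bar a_k-\bar b_k\|<\e$ and $\alpha_{F_k}(\bar b_k)^A=0$. Because the blocks $\bar x_k$ are pairwise disjoint, perturbing one block leaves the value of $\alpha_{F_{k'}}$ unchanged for $k'\neq k$, whence $\bar b:=(\bar b_k)_{k\leq n}$ satisfies $\beta_F(\bar b)^A=0$ and $\|\bar a-\bar b\|<\e$ in the sup metric on the product of sorts; this is exactly weak stability of $\beta_F$.

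Finally, the equivalence of conditions (1) and (4) in Lemma~\ref{L.weaklystable} upgrades the fact that $Z^A(\beta_F)$ is the zero-set of a weakly stable predicate to the desired conclusion that the assignment is a definable set. I do not expect a genuine obstacle here: the weak stability of a sum over disjoint variable blocks is routine, and the only substantive point is the bookkeeping that separates ``$\psi\restriction F_k$ order zero for each $k$'' from ``$\psi$ order zero on all of $F$''—that is, the deliberate omission of the cross terms in $\beta_F$—after which everything reduces to the weak-stability machinery already developed.
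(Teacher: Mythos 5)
Your proof is correct and follows essentially the same route as the paper: the paper also realizes the set as the zero-set of $\sum_{k\leq n}\alpha_{F_k}(\bar x_k)$ (deliberately without the cross terms forcing orthogonality between summands) and observes that a sum of weakly stable formulas in disjoint variable blocks is weakly stable. Your write-up just makes explicit the bookkeeping that the paper compresses into one sentence.
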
 

\begin{proof} 
By Lemma \ref{L.cpco0.1} 
 the above set is the zero-set of a formula of the form $\sum_{k\leq n} \alpha_{k}(\bar x^k)$. Being a sum of $n+1$ weakly stable formulas with disjoint sets of variables, this formula is easily seen to be weakly stable as well. 
\end{proof} 

\begin{lemma} \label{L.def.dr} 
For every \cstar-algebra $A$, the set  
\begin{align*}
Y:=\{(a_j^k)_{k\leq n, j\in J(k)} \colon&  \bar a=\psi(\bar e) \text{ for a c.p.c.\  }\psi\colon F\to A\\
&\text{ with $\psi\rs F_k$ c.p.c.\  order zero for all $k$}\}
\end{align*}
is definable. 
\end{lemma}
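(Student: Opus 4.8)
The plan is to realize $Y$ as the zero-set of an explicit weakly stable formula and then invoke Lemma~\ref{L.weaklystable}, following closely the pattern of Lemma~\ref{L.def.dimnuc}. The only feature distinguishing decomposition rank from nuclear dimension is that the \emph{whole} map $\psi$, and not merely each restriction $\psi\rs F_k$, is required to be contractive. Once each $\psi\rs F_k$ is completely positive, so is $\psi$ (complete positivity decomposes over the direct summands of $F$), and for a completely positive map one has $\|\psi\|=\|\psi(1_F)\|$ with $\psi(1_F)=\sum_{k,i}a^k_{ii}\geq 0$; hence the extra constraint is exactly the quantifier-free condition $\|\sum_{k,i}x^k_{ii}\|\leq 1$. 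Writing $\beta(\bar x):=\sum_{k\leq n}\alpha_{F_k}(\bar x^k)$ for the weakly stable formula of Lemma~\ref{L.def.dimnuc}, I would therefore take
\[
\gamma(\bar x):=\beta(\bar x)+\Big(\Big\|\sum_{k\leq n}\sum_{i\leq m_k}x^k_{ii}\Big\|\dotminus 1\Big),
\]
and verify that $Y$ is precisely its zero-set.

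The substance of the argument is the weak stability of $\gamma$. First I would record that whenever $\beta(\bar b)=0$ the tuple $\bar b$ is the image of the matrix units under a map $\psi_b$ whose restriction to each $F_k$ is c.p.c.\ order zero, and that the order zero relations of Lemma~\ref{L.cpco0.1} force $\|b^k_{ij}\|\leq 1$ for all indices (indeed $\|b^k_{ij}\|^2=\|b^k_{ij}(b^k_{ij})^*\|=\|(b^k_{ii})^2\|\leq 1$). The key new observation is that scaling such a map by a scalar $t\in[0,1]$ preserves every defining relation: $t\psi_b$ remains completely positive, contractive and order zero on each summand, since $t\psi_b(a)\,t\psi_b(b)=t^2\psi_b(a)\psi_b(b)$ and $\|t\psi_b\|\leq\|\psi_b\|$. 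Hence, given $\bar b$ with $\beta(\bar b)=0$, if $\|\sum_{k,i}b^k_{ii}\|>1$ I would replace $\psi_b$ by $t\psi_b$ with $t:=\|\sum_{k,i}b^k_{ii}\|^{-1}$, landing in $Y$ at a cost of moving each coordinate by at most $(1-t)\|b^k_{ij}\|\leq 1-t$.

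Assembling the estimate, given $\e>0$ and writing $N:=\sum_k m_k$, I would apply weak stability of $\beta$ with tolerance $\e_1:=\e/(2(N+1))$ to obtain $\delta_1>0$, and set $\delta:=\min(\delta_1,\e/2)$. If $\gamma(\bar a)<\delta$ then $\beta(\bar a)<\delta_1$ yields $\bar b$ with $\beta(\bar b)=0$ and $\|\bar a-\bar b\|<\e_1$, while the second term gives $\|\sum_{k,i}a^k_{ii}\|<1+\delta$; consequently $\|\sum_{k,i}b^k_{ii}\|<1+\delta+N\e_1$, so the rescaling above moves $\bar b$ by less than $\delta+N\e_1$ and produces $\bar b'\in Y$ with $\|\bar a-\bar b'\|<\e_1+\delta+N\e_1=(N+1)\e_1+\delta\leq\e$. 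This establishes that $\gamma$ is weakly stable, and Lemma~\ref{L.weaklystable} then gives definability of $Y=Z(\gamma)$.

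The main obstacle I anticipate is purely bookkeeping: controlling the two-stage perturbation (the weak-stability correction of $\beta$, then the contractive rescaling) so that the combined displacement stays below $\e$, which forces the first-stage tolerance $\e_1$ to absorb the factor $N$ arising when one passes from the coordinatewise bound $\|b^k_{ij}\|\leq 1$ to the norm of $\sum_{k,i}x^k_{ii}$. If one prefers to avoid tracking explicit constants, the same conclusion follows from the semantic criterion Theorem~\ref{Th.sem.def}: one checks that any c.p.c.\ order-zero-on-summands map into an ultraproduct $\prod_\cU A_i$ factors through coordinatewise such maps, again using the rescaling trick to repair full contractivity on each coordinate.
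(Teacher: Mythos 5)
Your proof is correct, but it takes a different route from the paper's. The paper argues semantically: it fixes an ultraproduct $A=\prod_\cU A_i$ and a c.p.c.\ $\psi=\sum_k\psi_k$ with order zero summands, uses Lemma~\ref{L.def.dimnuc} to lift each $\psi_k$ to coordinatewise c.p.c.\ order zero maps $\psi^i_k$ with $\psi^i:=\sum_k\psi^i_k$ completely positive and $\lim_{i\to\cU}\psi^i=\psi$, and then repairs contractivity by scaling $\psi^i$ by $n_i/(n_i+1)$ where $n_i=\max\{n: \|\psi^i\|\leq 1+1/n\}$; the conclusion follows from Theorem~\ref{Th.sem.def}. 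You instead exhibit $Y$ as the zero-set of the explicit predicate $\gamma(\bar x)=\beta(\bar x)+\bigl(\|\sum_{k,i}x^k_{ii}\|\dotminus 1\bigr)$ and prove weak stability directly, invoking Lemma~\ref{L.weaklystable}. The two arguments share the essential new idea beyond Lemma~\ref{L.def.dimnuc}, namely that rescaling by a scalar in $[0,1]$ preserves complete positivity and the order zero relations on each summand while restoring contractivity; your identification of the contractivity constraint with the quantifier-free condition $\|\psi(1_F)\|\leq 1$ (via $\|\psi\|=\|\psi(1_F)\|$ for c.p.\ maps, and the fact that complete positivity of $\psi$ follows from that of its summands) is exactly what makes the syntactic version go through, and your coordinate bound $\|b^k_{ij}\|\leq 1$ from the order zero relations correctly controls the displacement. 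What your approach buys is an explicit weakly stable formula and an explicit modulus for the perturbation, which the paper's semantic proof deliberately avoids at the cost of not producing a formula; your closing remark that the semantic criterion would also work, again via the rescaling trick, is precisely the paper's proof.
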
 

\begin{proof} Let us do this semantically leveraging off the previous lemma.  It suffices to assume we have \cstar-algebras $A_i$ for $i \in I$, an ultrafilter $\cU$ on $I$ and $A = \prod_\cU A_i$.  Further assume that $\psi\colon F \rightarrow A$ is a c.p.c.\  map such that $\psi = \sum_k \psi_k$ where $\psi_k\restriction F_k$ is c.p.c.\  order zero.  From Lemma \ref{L.def.dimnuc} we can find c.p.c.\  order zero maps $\psi_k^i \colon F_k \rightarrow A_i$ such that $\lim_{i \rightarrow \cU} \psi_k^i = \psi_k$, $\psi^i = \sum_k \psi^i_k$ is completely positive and $\lim_{i \rightarrow\cU} \psi^i = \psi$.  We just need to tweak the $\psi^i$'s so they are contractive.  We know that $\|\psi\| \leq 1$ so we have $\lim_{i \rightarrow \cU} \| \psi^i \| \leq 1$.  Let 
\[
X_n := \{ i \in I \colon \|\psi^i\| \leq 1 + 1/n \}.
\]
  If $i \in X_n$ for all $n \in \bbN$, then $\psi_i$ is contractive and there is nothing to do.  If there is a maximum $n$ such that $i \in X_n$, let $n_i := \max\{n \colon i \in X_n \}$ and replace $\psi^i$ and all of its components by scaling them by $n_i/(n_i + 1)$.   $\cU$-often now we have that $\psi^i$ is contractive and all of the ultralimits still tend to the correct functions.  We conclude that $Y$ is definable.
\end{proof}

\section{Approximation properties and uniform families of formulas} 
 \label{Proof.T1} 
 In this section we state and prove one of our main results. 

\subsection{Uniform families of formulas}\label{Subsection: uniform types}
Suppose that $T$ is a theory and $\mathcal{F}$ is a countable set of $\bbR^+$-formulas in the free variables $\bar x$; that is, formulas that take values in $[0,\infty)$.  Moreover, assume there is a uniform continuity modulus $u$ such that for all $\varphi \in \mathcal{F}$
\[
T \models |\varphi(\bar x) - \varphi(\bar y)| \leq u(d(\bar x,\bar y)).
\]
We will call such an $\mathcal{F}$ 
a \emph{uniform family of formulas with respect to $T$}\index{uniform family of formulas}  and the function defined by 
\begin{equation}
\label{Eq.UnifFamilyInf}
p_{\mathcal{F}}(\bar x) := \inf_{\varphi \in \mathcal{F}} \varphi(\bar x)
\end{equation}
is uniformly continuous with uniform continuity modulus $u$ for all models of~$T$.
When $T$ is clear from the context we shall drop reference to it and say that  $\cF$ is a uniform family of formulas. 

\begin{definition}
A class $\fC$ of metric structures   is \emph{\udt}\index{definable by uniform families of formulas} if there are a theory $T$ and  uniform families of formulas~$\mathcal{F}_n$ for $n \in \bbN$ such that $A \in \fC$ if and only if
$A\models T$ and  $p^A_{\mathcal{F}_n} \equiv 0$ for all $n \in \bbN$. 
\end{definition}

The functions $p^A_{\mathcal{F}_n}$ above are particular instances of $\calL_{\omega_1,\omega}$-formulas in the sense of continuous logic.  See \cite{ben2009model}.  We will say more about the consequences of this in \S\ref{S.pert} and \S\ref{S.borel}.

Let's say a word about where we are headed. If $\mathcal{F}$ is a uniform family of formulas then for an algebra $A$ to satisfy $p^A_{\mathcal{F}} \equiv 0$ it must be the case that $p^A_{\mathcal{F}}  \geq 1/n$ is not satisfied in $A$ for all $n$.  Unrolling this, we see that either the set of formulas $\Sigma_n := \{\varphi(\bar x) \geq 1/n : \varphi \in \mathcal{F}\}$ is inconsistent with $T$ or this set represents a partial type which is omitted in $A$.  This more specialized notion was somewhat inaccurately named `uniformly definable by types' in an early version of the present manuscript. 
We will see in \S\ref{S.Henkin} that if $\mathcal{F}$ is a uniform family of formulas for a theory $T$ and $\Sigma_n$ is a partial type then there is a relatively simple criterion for determining if $\Sigma_n$ can be omitted for each $n$. 

Note that we can combine properties (even countably many) which are definable by uniform families of formulas: if $\fC_n$ is a class which is definable by uniform families of formulas for each $n \in \bbN$ then $\bigcap \fC_n$ is also definable by uniform families of formulas.

We record a lemma on which this criterion hinges. 

\begin{lemma} \label{L.udt}
If  $\cF$ is a uniform with respect to $T$ family of formulas and~$A$ is a model of $T$, 
then the zero-set of $p^A_{\cF}$ is a closed subset of $A$. 
\end{lemma}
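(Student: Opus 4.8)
The plan is to use the uniform continuity of $p^A_{\cF}$—which was already observed when the function $p_{\cF}$ was introduced in \eqref{Eq.UnifFamilyInf}—and then invoke the elementary fact that the zero-set of a continuous real-valued function is closed. So the work is really just to make the uniform continuity explicit and to note that a zero-set is a preimage of a point.

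First I would record the uniform continuity estimate. Fix $A \models T$ and let $u$ be the common uniform continuity modulus for the members of $\cF$, so that for each $\varphi \in \cF$ and all $\bar a, \bar b$ in the relevant product of sorts $\prod_j S_j^A$ one has $|\varphi^A(\bar a) - \varphi^A(\bar b)| \leq u(d(\bar a,\bar b))$. For every $\varphi \in \cF$ this gives $p^A_{\cF}(\bar a) \leq \varphi^A(\bar a) \leq \varphi^A(\bar b) + u(d(\bar a,\bar b))$; taking the infimum over $\varphi \in \cF$ on the right-hand side yields $p^A_{\cF}(\bar a) \leq p^A_{\cF}(\bar b) + u(d(\bar a,\bar b))$, and by symmetry $|p^A_{\cF}(\bar a) - p^A_{\cF}(\bar b)| \leq u(d(\bar a,\bar b))$. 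Since $u(t) \to 0$ as $t \to 0^+$, the key point is that the common modulus passes through the infimum, so $p^A_{\cF}$ is (uniformly) continuous on $\prod_j S_j^A$ equipped with the sup metric.

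The conclusion is then immediate: the zero-set of $p^A_{\cF}$ is
\[
\{ \bar a \in \textstyle\prod_j S_j^A : p^A_{\cF}(\bar a) = 0 \} = (p^A_{\cF})^{-1}(\{0\}),
\]
the preimage of the closed singleton $\{0\} \subseteq \bbR$ under a continuous function, hence closed. There is no genuine obstacle here; the only two things worth spelling out are the passage of the modulus $u$ through the infimum (done above) and the harmless standing assumption that $\cF$ is nonempty, which is what guarantees that $p^A_{\cF}$ is a well-defined $[0,\infty)$-valued function in the first place.
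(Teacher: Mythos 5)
Your proof is correct and follows the same route as the paper, which simply notes that the assumptions imply $p^A_{\cF}$ is continuous and leaves the rest implicit. You have just made explicit the two steps the paper compresses: passing the common modulus $u$ through the infimum, and observing that the zero-set is the preimage of the closed singleton $\{0\}$.
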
 
\begin{proof} The assumptions imply that  the interpretation of $p^A_{\cF}$ in $A$ is continuous. 
\end{proof}

\begin{thm} \label{T1} The following classes of \cstar-algebras
are \udt 
\begin{enumerate}
\item \label{T1.UHF} UHF algebras, but only among separable \cstar-algebras. 
\item \label{T1.AF} AF algebras, but only among separable \cstar-algebras. 
\item \label{T1.Nuclear} Nuclear \cstar-algebras. 
\item \label{T1.NucDim} For every $n$, \cstar-algebras of nuclear dimension $\leq n$ (see \S\ref{S.dimnuc}). 
\item \label{T1.DR} For every $n$, \cstar-algebras of decomposition rank $\leq n$ (see \S\ref{S.DecompositionRank}). 
\item \label{T1.Simple} Simple \cstar-algebras.
\item \label{T1.Popa} Popa algebras (see \S\ref{Subsection: Popa}).
\item \label{T1.TAF} Simple tracially AF algebras (see \S\ref{Subsection: TAF}). 
\item\label{T1.QD} Quasidiagonal algebras (see \S\ref{S.QD}). 
\end{enumerate} 
Moreover, in \eqref{T1.UHF}--\eqref{T1.Popa} each 
uniform family of formulas is composed of existential formulas.
\end{thm}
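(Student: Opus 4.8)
The plan is to treat all nine classes through one template and then supply the class-specific input. In each case I take the base theory $T$ to be the theory of all \cstar-algebras, noting explicitly that for \eqref{T1.UHF}, \eqref{T1.AF} and the tracial case \eqref{T1.TAF} one recovers the stated class only among separable algebras. I then produce, for each $n$, a countable uniform family $\mathcal{F}_n$ of $\bbR^+$-valued formulas in the $n$-tuple $\bar x$ so that $A\in\fC$ iff $A\models T$ and $\sup_{\bar x}p^A_{\mathcal{F}_n}(\bar x)=0$ for all $n$; since $p_{\mathcal{F}_n}=\inf_{\varphi\in\mathcal{F}_n}\varphi$, this unwinds to the $\forall\exists$-shaped assertion $\sup_{\bar x}\inf_{\varphi\in\mathcal{F}_n}\varphi^A(\bar x)=0$. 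Two bookkeeping points recur. Countability: finite-dimensional \cstar-algebras are indexed up to isomorphism by finite multisets of positive integers, so families indexed by them are countable. A common modulus: each $\varphi$ I write has the form $\inf\big(\cdots\|(\text{a contraction applied to }\bar x)-\bar x\|\cdots\big)$, hence is $2$-Lipschitz in $\bar x$ uniformly in the index, giving uniform continuity modulus $u(t)=2t$.

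For the approximation-property classes \eqref{T1.Nuclear}, \eqref{T1.NucDim}, \eqref{T1.DR} I use the defect functions $\nuc$, $\nucdim_m$, $\drank_m$ of \S\ref{S.Nuclearity}, \S\ref{S.dimnuc}, \S\ref{S.DecompositionRank}: the algebra lies in the class iff the defect vanishes on tuples of every length. The \emph{main obstacle}, already flagged in \S\ref{S.Nuclearity}, is to see that for a fixed finite-dimensional $F$ the inner infimum $\inf_{\varphi,\psi}\|(\psi\circ\varphi)(\bar a)-\bar a\|$ is a single existential formula. The factor $\psi\colon F\to A$ is harmless: by Lemmas \ref{L.def.cpc}, \ref{L.def.dimnuc}, \ref{L.def.dr} the tuples $\psi(\bar e)$ (with the order-zero constraint on each summand of $F$ for \eqref{T1.NucDim}, \eqref{T1.DR}) form a definable set, and $\psi$ acts on any element of $F$ as the corresponding linear combination of these images. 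The factor $\varphi\colon A\to F$ is the delicate one, since $A$ is infinite-dimensional. I quantify it away as follows: $\psi(\varphi(\bar a))$ depends on $\varphi$ only through the scalar tuple $\bar c=\varphi(\bar a)\in F^{|\bar x|}$, so it suffices to recognize, as a condition on $(\bar a,\bar c)$, that some c.p.c.\ $\varphi\colon A\to F$ carries $\bar a$ to $\bar c$. By Arveson's extension theorem this holds iff the assignment $a_i\mapsto c_i$ (together with $1$ and $a_i^*$) is c.p.c.\ on the operator system they span; and since the target is a fixed direct sum of matrix algebras $M_k(\bbC)$, complete positivity there reduces to finitely many $k$-positivity conditions, which are definable because positivity in the matrix amplifications is definable and these amplifications live in $A^{\eq}$ (Lemma~\ref{L.Norm}, Example~\ref{Ex.1}). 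Thus the graph $\{(\bar a,\bar c):\bar c=\varphi(\bar a)\text{ for some c.p.c. }\varphi\}$ is a definable set (checked, e.g., via the ultraproduct criterion of Theorem~\ref{Th.sem.def}), and the inner quantity becomes $\inf_{\bar c}\inf_{\psi}\|\psi(\bar c)-\bar a\|$ with $\bar c$ ranging over that definable set (Definition~\ref{D.Assignment}), i.e.\ an existential formula. Quasidiagonality \eqref{T1.QD} uses the same $\varphi\colon A\to F$ analysis for the almost-isometric, almost-multiplicative maps of \S\ref{S.QD}; here the clause $\|x_i\|\dotminus\|\varphi(x_i)\|$ forces the universal predicate $\|x_i\|$ (via Remark~\ref{rmk.numbers-as-scalars}) into the formula, which is why \eqref{T1.QD} is not claimed to be existential.

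For the remaining classes I use the ``sits inside a copy of $F$'' definable sets $\beta_{F,m}$, $\beta^u_{F,m}$ of Example~\ref{Ex.1}\eqref{Ex.1.beta-F}. For AF \eqref{T1.AF} and UHF \eqref{T1.UHF} the family asserting that $\bar x$ is approximately contained in some finite-dimensional (respectively full-matrix) subalgebra has infimum $0$ on every tuple exactly when the algebra is locally finite-dimensional; among separable algebras this is equivalent to being AF (respectively UHF, after adjoining simplicity/unitality), recovering the omitting-types description of \cite{Mitacs2012}, and separability is genuinely needed since the local condition does not force an inductive-limit decomposition in the nonseparable case. For simplicity \eqref{T1.Simple}, with $\bar x=(a,b)$ in the unit ball, let the family range over the number $k$ of terms and a multiplier bound $M$, with formula $\inf_{\|x_i\|,\|y_i\|\le M}\big\|\,\|a\|\,b-\sum_{i\le k}x_ia y_i\big\|$ (the scalar $\|a\|$ via Remark~\ref{rmk.numbers-as-scalars}); its infimum vanishes on all $(a,b)$ iff $\|a\|b\in\overline{AaA}$ always, i.e.\ iff $A$ is simple, and the unbounded range of $M$ across the family is exactly what lets this non-elementary property be captured. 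Popa algebras \eqref{T1.Popa} combine simplicity with an existential family expressing ``there is a finite-dimensional $B$ with unit $p$ nearly commuting with $\bar x$ and with $p\bar xp$ within $\e$ of $B$,'' built from $\beta_{F,m}$ and again existential. Tracially AF \eqref{T1.TAF} adds to the Popa-type family the smallness clause $1-p\precsim b$ for an arbitrary positive contraction $b$; this Cuntz-comparison clause needs a $\sup$ over $b$ and is not existential, which, together with \eqref{T1.QD}, explains the exclusion from the final assertion, whereas \eqref{T1.UHF}--\eqref{T1.Popa} are $\inf$-formulas throughout and hence existential.
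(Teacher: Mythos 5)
Your treatment of item \eqref{T1.Simple} has a genuine gap, and it is precisely the difficulty the theorem is designed to overcome. The family you propose, indexed by $(k,M)$ with formulas $\inf_{\|x_i\|,\|y_i\|\le M}\bigl\|\,\|a\|\,b-\sum_{i\le k}x_iay_i\bigr\|$, is \emph{not} a uniform family: the map $a\mapsto \sum_{i\le k}x_iay_i$ has Lipschitz constant of order $kM^2$, so as $M$ and $k$ range unboundedly there is no single uniform continuity modulus $u$ working for every formula in the family, which is exactly what Definition of a uniform family requires. Your remark that ``the unbounded range of $M$ \ldots is exactly what lets this non-elementary property be captured'' gets the situation backwards: the unbounded witnesses are what break uniformity, and the paper explicitly warns that the obvious sequence of types characterizing simplicity is not uniform and hence useless here. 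What is actually needed is Lemma~\ref{L.Simple}: using functional calculus on $(a-\tfrac12)_+$ one renormalizes the witnesses so that $\|x_1^*x_1+\cdots+x_k^*x_k\|\le 2$ with the bound independent of $k$, and then the matrix computation $\|X^*(a\otimes 1_i)X\|\le\|X^*X\|\,\|a\|$ in Proposition~\ref{P.Simple} shows each $\varphi_i$ is $4$-Lipschitz, giving a genuinely uniform family. Since your TAF family imports your simplicity family, the gap propagates to item \eqref{T1.TAF} as well. Your blanket claim that every formula you write is $2$-Lipschitz because it only applies contractions to $\bar x$ is contradicted by this very family.

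The remaining items are closer to the mark. For \eqref{T1.Nuclear}--\eqref{T1.DR} your reduction of the outgoing map $\varphi\colon A\to F$ to the definability of the graph $\{(\bar a,\bar c):\bar c=\varphi(\bar a)\text{ for some c.p.c.\ }\varphi\}$ is essentially the paper's Proposition~\ref{claim:elementaryclass2} and Theorem~\ref{T.Rn.def}, but the ultraproduct verification you defer to is where the real work lives: one must lift a positive functional on an ultraproduct operator system to a sequence of positive functionals on the fibers (Proposition~\ref{Prop-Ult-Pos}), which requires the norm computation and the Jordan decomposition argument, not merely the observation that positivity in matrix amplifications is definable. As written, that step is asserted rather than proved.
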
 

\begin{proof}
Proofs of  (\ref{T1.UHF}) and (\ref{T1.AF})  are contained in  \cite{Mitacs2012}. 
Proofs of \eqref{T1.Nuclear}--\ref{T1.DR} are given in~\S\ref{S.Beth-proof}. 
 Proofs of (\ref{T1.Simple}), (\ref{T1.Popa}), and (\ref{T1.TAF}) are in Proposition~\ref{P.Simple}, 
 \S \ref{Subsection: Popa.2}, and \S \ref{Subsection: TAF.2} respectively.
 \eqref{T1.QD} is proved in \S\ref{S.T1.QD}. 
\end{proof}

 The reason why in \eqref{T1.UHF} and \eqref{T1.AF} of Theorem \ref{T1} separability is needed is 
an historical accident. The proof shows that the `locally finite' and `locally matricial' (or \emph{matroid})\index{C@\cstar-algebra!matroid} algebras (i.e.,  ones in which every finite set can be approximately 
well approximated by a finite-dimensional subalgebra or a full matrix subalgebra)  of arbitrary density character are axiomatizable.  
 In \cite{FaKa:Nonseparable}, some nonseparable \cstar-algebras were constructed which are not UHF or even AF although every finite subset can be arbitrarily well approximated by elements of a matrix subalgebra.

\section{Nuclearity, nuclear dimension and decomposition rank: First proof}\label{S.Beth-proof}

The main goal of this section is to prove Theorem~\ref{T1}(\ref{T1.Nuclear})--(\ref{T1.DR}) but first we need some preliminaries on operator systems and c.p.c.\ maps.

\begin{definition}
If $A$ is a unital \cstar-algebra and $S \subseteq A$, then $S$ is an \emph{operator system} if it is a vector subspace of $A$ which contains the unit and is closed under adjoints.
\end{definition}
We are going to be concerned with c.p.c.\  maps from finite dimensional operator systems $S$ to $M_n(\bbC)$.  The following correspondence between linear maps from $S$ to $M_n(\bbC)$ and linear functions on $M_n(S)$ reduces such questions to questions about linear functionals.
Suppose that $S$ is an operator system inside some \cstar-algebra $B$ and $\varphi$ is a linear map from $S$ to~$M_n(\bbC)$.  Define a linear functional $s_\varphi$ on $M_n(S)$ as follows:
$$
s_\varphi(a) := \frac{1}{n} \sum_{1\leq i,j \leq n} \langle \varphi(a_{ij})e_j,e_i \rangle
$$
where the matrix $a$ has entries $a_{ij}$, the $e_i$'s are the standard basis for $\bbC^n$ and the inner product is the usual one on $\bbC^n$.  In the other direction, given a linear functional on $M_n(S)$, we define a linear map $\varphi_s$ from $S$ to $M_n(\bbC)$ as follows: if $a \in S$ then the $(i,j)$ entry of $\varphi_s(a)$ is determined by
$$\langle \varphi_s(a)e_j,e_i \rangle = ns(a\otimes e_{ij})$$ where $e_{ij}$ is $(i,j)$ matrix unit. The following proposition summarizes the relationship between these two maps (see \cite[Proposition~1.5.12]{BrOz:cstar} or \cite[Theorem 4.10]{Paul:Completely}). 

\begin{prop}\label{Prop-OpSys}
Suppose $S$ is an operator system in a \cstar-algebra $B$, $s$ is a linear functional on $M_n(S)$ and $\varphi$ is a linear map from $S$ to $M_n(\bbC)$. Then we have the following. 
\begin{enumerate}
\item $s = s_{\varphi_s}$
\item $\varphi = \varphi_{s_\varphi}$
\item $\varphi$ is a c.p.\ map if and only if $s_\varphi$ is positive.
\end{enumerate}
\end{prop}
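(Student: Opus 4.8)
The plan is to treat the two displayed assignments $\varphi\mapsto s_\varphi$ and $s\mapsto\varphi_s$ as mutually inverse linear maps, dispose of (1) and (2) by a direct unwinding of the definitions, and then prove (3) by isolating a single positivity identity that drives both implications.

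For (1) and (2) I would simply compute. Writing a general element of $M_n(S)$ as $a=\sum_{i,j}a_{ij}\otimes e_{ij}$ with $a_{ij}\in S$, linearity gives $s(a)=\sum_{ij}s(a_{ij}\otimes e_{ij})$. Substituting the defining relation $\langle\varphi_s(b)e_j,e_i\rangle=n\,s(b\otimes e_{ij})$ into the definition of $s_{\varphi_s}$, the factors $n$ and $1/n$ cancel and one recovers $s_{\varphi_s}(a)=\sum_{ij}s(a_{ij}\otimes e_{ij})=s(a)$, which is (1). For (2) I would evaluate on matrix units: since $(a\otimes e_{ij})_{kl}=a\,\delta_{ki}\delta_{lj}$, the definition of $s_\varphi$ gives $s_\varphi(a\otimes e_{ij})=\tfrac1n\langle\varphi(a)e_j,e_i\rangle$, so $\langle\varphi_{s_\varphi}(a)e_j,e_i\rangle=n\,s_\varphi(a\otimes e_{ij})=\langle\varphi(a)e_j,e_i\rangle$ for all $i,j$, whence $\varphi_{s_\varphi}=\varphi$. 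Both assignments are visibly linear, so this already exhibits the correspondence as a linear bijection.

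The heart is (3). Fixing a faithful representation $S\subseteq B\subseteq B(H)$, I would establish the single identity
\[
\langle\varphi_m(a)\eta,\eta\rangle=n\,s_\varphi\bigl(W_\eta^{*}\,a\,W_\eta\bigr)
\]
valid for every $m$, every $a\in M_m(S)$, and every $\eta\in\bbC^{n}\otimes\bbC^{m}$, where $\varphi_m=\varphi\otimes\id_m$ is the amplification and $W_\eta$ is the rectangular scalar matrix whose entries are the coordinates of $\eta$ (so that $W_\eta^{*}aW_\eta$ again lies in $M_n(S)$). This is a bookkeeping computation: expanding both sides into matrix coefficients $\langle\varphi(a_{pq})e_k,e_{k'}\rangle$ and matching indices, the contraction produced by the entries of $W_\eta$ on the right reproduces exactly the inner product on the left. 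Granting the identity, both directions follow. If $s_\varphi$ is positive, then for $a\geq 0$ in $M_m(S)$ the congruence $W_\eta^{*}aW_\eta$ is a positive element of $M_n(S)$, so the right-hand side is $\geq 0$; as $\eta$ is arbitrary this forces $\varphi_m(a)\geq 0$, and since $m$ and $a$ are arbitrary, $\varphi$ is completely positive. Conversely, specializing to $m=n$ and the maximally entangled vector $\xi=\tfrac{1}{\sqrt n}\sum_k e_k\otimes e_k$ makes $W_\xi$ a scalar multiple of the identity and yields $s_\varphi(a)=\langle\varphi_n(a)\xi,\xi\rangle$ for all $a\in M_n(S)$; if $\varphi$ is completely positive, then $\varphi_n(a)\geq 0$ for $a\geq 0$, so $s_\varphi$ is positive.

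I expect the main obstacle to be purely clerical: pinning down the index conventions in the identity (which tensor factor carries the range of $\varphi$, the placement of conjugates in $W_\eta$, and the normalizing constant $n$), together with the routine checks that $W_\eta^{*}aW_\eta$ stays inside $M_n(S)$ and that congruence by a scalar matrix preserves positivity in $M_n(B)$. Once the identity is fixed with the correct constant, no further analytic input is required, and the equivalence (3) is immediate from it.
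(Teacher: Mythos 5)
Your proof is correct: (1) and (2) are indeed immediate from cancelling the factors $n$ and $1/n$, and the congruence identity $\langle\varphi_m(a)\eta,\eta\rangle=n\,s_\varphi(W_\eta^{*}aW_\eta)$ (with $W_\eta^{*}aW_\eta=(a^{1/2}W_\eta)^{*}(a^{1/2}W_\eta)\geq 0$ staying in $M_n(S)$, and the maximally entangled vector giving $s_\varphi(a)=\langle\varphi_n(a)\xi,\xi\rangle$) delivers both directions of (3). The paper itself gives no proof, deferring to \cite[Proposition~1.5.12]{BrOz:cstar} and \cite[Theorem 4.10]{Paul:Completely}, and your argument is essentially the standard one found there.
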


We also record the following proposition.

\begin{prop}\label{Prop-Ult-Pos}
\begin{enumerate}
\item Suppose that we have a sequence of operator systems $S_i$ and linear functionals $\psi_i:S_i \to \bbC$, such that for every $\bar s\in \prod_\cU S_i$,
\[ \varphi(\bar s) := \lim_{i\to\cU} \psi_i(s_i) \]
is defined, and defines a positive functional on $\prod_\cU S_i$.
Then there exists a sequence of positive functionals $\varphi_i$ on $S_i$ such that $\varphi(\bar s) = \lim_{i\to\cU} \varphi_i(s_i)$.

\item Suppose that $A = \prod_\cU A_i$ for an ultrafilter $\cU$ on an index set $I$, $S$ is a finite-dimensional operator system in $A$ and $\varphi$ is a positive linear functional on $S$.  Then there are finite-dimensional operator systems $S_i$ in $A_i$ and positive linear functionals $\varphi_i$ on $S_i$ such that 
\[
S = 
\prod_\cU S_i \text{ and } \varphi = \lim_{i \rightarrow \cU} \varphi_i.
\]
\end{enumerate}
\end{prop}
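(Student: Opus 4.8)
The plan is to treat the two parts in turn, proving (1) by a quantitative stability argument for positivity of functionals and then deducing (2) from (1).

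For part (1), I would first observe that the hypothesis forces the $\psi_i$ to be uniformly bounded along $\cU$: if $\lim_{i\to\cU}\|\psi_i\|=\infty$, then choosing $s_i$ in the unit ball of $S_i$ with $|\psi_i(s_i)|\ge \|\psi_i\|-1$ produces an $\bar s\in\prod_\cU S_i$ for which $\lim_{i\to\cU}\psi_i(s_i)$ is not finite, contradicting that $\varphi(\bar s)$ is defined. So, after discarding a set outside $\cU$ and replacing the corresponding $\psi_i$ by $0$, I may assume $\|\psi_i\|\le M$ for all $i$. Using that the unit ball of $\prod_\cU S_i$ is the ultraproduct of the unit balls, a short computation gives $\|\varphi\|=\lim_{i\to\cU}\|\psi_i\|$; since $\varphi$ is positive it attains its norm at the unit, so $\varphi(\mathbf 1)=\|\varphi\|=\lim_{i\to\cU}\psi_i(\mathbf 1_i)$ as well (writing $\mathbf 1_i$ for the unit of $S_i$). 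Hence $\delta_i:=\|\psi_i\|-\mathrm{Re}\,\psi_i(\mathbf 1_i)\to 0$ along $\cU$.

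The heart of (1) is then the following stability lemma: there is a modulus $\eta(\delta)\to 0$ such that any functional $f$ on a unital \cstar-algebra $B$ with $\|f\|\le 1$ and $\mathrm{Re}\,f(\mathbf 1)\ge 1-\delta$ lies within $\eta(\delta)$ of a state. I would prove this via the polar decomposition $f=|f|(w\,\cdot\,)$ of the normal extension of $f$ to $B^{**}$, where $|f|$ is positive with $\||f|\|=\|f\|$ and $w$ is a partial isometry: the Cauchy--Schwarz inequality for the positive functional $|f|$ gives $\|f-|f|\|^2\le |f|\bigl((w-\mathbf 1)(w-\mathbf 1)^*\bigr)\le 2\||f|\|-2\mathrm{Re}\,|f|(w)\le 2\delta$, and normalizing $|f|$ to a state costs a further $\delta$, so $\eta(\delta)=\sqrt{2\delta}+\delta$ works. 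To finish (1), extend each $\psi_i$ to $B_i\supseteq S_i$ by Hahn--Banach preserving the norm, apply the lemma to $\psi_i/\|\psi_i\|$ (the case $\varphi(\mathbf 1)=0$ forcing $\varphi=0$ is trivial) to obtain states $g_i$ on $B_i$ with $\|\psi_i-\|\psi_i\|\,g_i\|\to 0$ along $\cU$, and set $\varphi_i:=\|\psi_i\|\,g_i\rs S_i$, a positive functional on $S_i$. Then $\lim_{i\to\cU}\varphi_i(s_i)=\lim_{i\to\cU}\psi_i(s_i)=\varphi(\bar s)$ for every $\bar s$.

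For part (2) I would reduce to (1). Since $S$ is finite-dimensional and closed under adjoints, fix self-adjoint $a^{(1)},\dots,a^{(d)}$ so that $\{\mathbf 1,a^{(1)},\dots,a^{(d)}\}$ is a basis of $S$, lift each $a^{(k)}$ to a self-adjoint representing sequence $(a^{(k)}_i)$ in $A_i$, and put $S_i:=\mathrm{span}\{\mathbf 1_i,a^{(1)}_i,\dots,a^{(d)}_i\}$, an operator system with $\prod_\cU S_i=S$ (the dimension count forces equality). A normalization argument shows that for $\cU$-many $i$ these generators form a basis of bounded condition number: by \L o\'s' Theorem (Theorem~\ref{Los}) the map $(\lambda,\mu)\mapsto\|\sum_k\lambda_k a^{(k)}_i+\mu\mathbf 1_i\|$ converges along $\cU$ to the corresponding norm on $S$, which is bounded below on the unit sphere, and a finite-net argument upgrades this to a uniform lower bound $\cU$-often. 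Defining $\psi_i$ on $S_i$ by $\psi_i(\mathbf 1_i):=\varphi(\mathbf 1)$ and $\psi_i(a^{(k)}_i):=\varphi(a^{(k)})$, this boundedness makes the $\psi_i$ uniformly bounded and gives $\lim_{i\to\cU}\psi_i(s_i)=\varphi(\bar s)$ for every $\bar s$ (the value being independent of the choice of representative once $\|\psi_i\|$ is bounded); part (1) then supplies positive $\varphi_i$ with $\varphi=\lim_{i\to\cU}\varphi_i$.

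The main obstacle is the quantitative stability lemma underlying (1): promoting the purely asymptotic positivity of the ultralimit $\varphi$ to genuine positivity of approximants on the factors requires an honest modulus, and the cleanest route I see is through the polar decomposition of functionals in the predual of $B^{**}$ together with the Cauchy--Schwarz inequality for positive functionals. As a self-contained alternative, part (2) can be proved bypassing (1): separate the target moment vector $(\varphi(\mathbf 1),\varphi(a^{(1)}),\dots,\varphi(a^{(d)}))$ from the compact convex moment sets $V_i$ of positive functionals on $S_i$ by Hahn--Banach, and use that $\sup\{\psi(h):\psi\ge 0,\ \|\psi\|\le 1\}=\|h_+\|$ for self-adjoint $h$ (by the Krein extension theorem) together with the fact that $\|(\,\cdot\,)_+\|$ and the norm pass to ultraproduct limits; this forces the separating margin to collapse, a contradiction.
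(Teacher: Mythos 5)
Your argument is correct, and part (2) follows essentially the same route as the paper: lift a self-adjoint basis of $S$ to define $S_i$, prescribe $\psi_i$ on the lifted basis by the values of $\varphi$, and invoke part (1); your extra care about a uniform lower bound for the lifted basis (via \L o\'s) is exactly the verification the paper leaves implicit. Part (1), however, is proved by a genuinely different mechanism. The paper passes to the Jordan decomposition $\psi_i=(\psi_i)_+-(\psi_i)_-$ with $\|\psi_i\|=\|(\psi_i)_+\|+\|(\psi_i)_-\|$, observes that the ultralimits $\alpha_\pm$ of the two parts satisfy $\|\varphi\|=\|\alpha_+\|+\|\alpha_-\|$, and concludes from uniqueness of the Jordan decomposition that $\alpha_+=\varphi$, so $\varphi_i:=(\psi_i)_+$ works; the unit of the operator system plays no role. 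You instead exploit the unit: from $\|\varphi\|=\varphi(\mathbf 1)=\lim_\cU\mathrm{Re}\,\psi_i(\mathbf 1_i)=\lim_\cU\|\psi_i\|$ you reduce to the quantitative stability statement that a functional of norm $\leq 1$ with $\mathrm{Re}\,f(\mathbf 1)\geq 1-\delta$ is within $\sqrt{2\delta}+\delta$ of a state, proved by polar decomposition in $B^{**}$ and Cauchy--Schwarz. Both are valid; your version buys an explicit modulus (a weak-stability statement for positivity in the sense of \S\ref{S.Definable}, potentially reusable elsewhere), at the cost of a Hahn--Banach extension off the operator system and of using unitality essentially, whereas the paper's Jordan-decomposition argument is shorter and would survive in a non-unital setting. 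Your alternative separation argument for (2) is a plausible sketch but is not needed given that the main route is complete.
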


\begin{proof}
For (1), since $\varphi$ is positive, it is bounded.
Let us first verify that $\lim_{i\to\cU} \|\psi_i\| = \|\varphi\|$.

If not, then there exists $\gamma>\|\varphi\|$ and some set $J \in \cU$ such that $\|\psi_j\| > \gamma$ for all $j \in J$.
Thus there exists a contraction $x_j \in S_j$ such that $|\psi_i(x_j)| \geq \gamma$ for $j \in J$.
But then $(x_i)$ (let $x_i = 0$ for $i \not\in J$) defines a contraction  in $\prod_\cU S_i$ and $|\varphi((x_i))| \geq \gamma$, which is a contradiction.

We may now assume without loss of generality that $\|\psi_i\| \leq \|\varphi\|$ for each $i$.
Thus it has a Jordan decomposition, $\psi_i=(\psi_i)_+-(\psi_i)_-$, with $\|\psi_i\| = \|(\psi_i)_+\| + \|(\psi_i)_-\|$.
With $\alpha_+$, $\alpha_-$ the functionals on $\prod_\cU S_i$ induced by $((\psi_i)_+)_i$ and $((\psi_i)_-)_i$, we see (by the same argument as above) that $\|\varphi\| = \|\alpha_+\|+\|\alpha_-\|$.
By uniqueness of Jordan decomposition, it follows that $\alpha_+ = \varphi_+ = \varphi$ (since $\varphi$ is positive).
Therefore the sequence $\phi_i:=(\psi_i)_+$ is as required.

For (2), since $S$ is finite dimensional, it has a finite basis $x_1,\dots,x_n$ consisting of self-adjoint elements.
Lifting each $x_p$ to a sequence $(x_{p,i})_{i=1}^\infty$, this enables us to define $S_i := \mathrm{span}\{x_{1,i},\dots,x_{n,i}\}$, and it easily follows that $S = \prod_\cU S_i$.

To obtain $\varphi_i$, by (1) it suffices to lift $\varphi$ to any sequence of linear maps $\psi_i:S_i \to \bbC$.
Note that for $\cU$-almost all $i$, $x_{1,i},\dots,x_{n,i}$ is linearly independent; for such $i$, we simply define $\psi_i$ by
\[ \psi_i(x_{p,i}) := \varphi(x_p). \]
(For the other $i$, we can define $\psi_i:=0$.)
\end{proof}

We now define three predicates in order to state Theorem \ref{T.Rn.def}. We first define the predicate $R_n$. Fix $k,n \in \bbN$
and a \cstar-algebra $A$. 
For $\bar a \in A_1^k$, we define
\[
R_n^A(\bar a) := \inf_{\varphi,\psi} \| \bar a - (\psi\circ\varphi)(\bar a)) \|
\]
where $\varphi$ ranges over c.p.c.\ maps from $A$ to $M_n(\bbC)$ and $\psi$ ranges over c.p.c.\ maps from $M_n(\bbC)$ to $A$. 

For the other two predicates $U_{\bar F}$ and $V_{\bar F}$, fix $k,n \in \bbN$, a finite-dimensional algebra $F = \oplus_{i = 1}^k F_i$ where $\bar F$ is the sequence of $F_i$'s, 
and a \cstar-algebra $A$. For $\bar a \in A_1^k$, we define
\[
U_{\bar F}^A(\bar a) := \inf_{\varphi,\psi} \| \bar a - (\psi\circ\varphi)(\bar a)) \|
\]
where $\varphi = \sum_{i = 1}^k \varphi_i$ and $\varphi_i$ ranges over c.p.c.\ maps from $A$ to $F_i$.  Moreover, $\psi = \sum_{i=1}^k \psi_i$ and $\psi_i$ ranges over order zero c.p.c.\ maps from $F_i$ to $A$.  

$V^A_{\bar F}$ is defined as $U^A_{\bar F}$ 
with the additional requirement that $\psi$ is also a c.p.c.\ map. 

Notice that all of $R^A_n$, $U^A_{\bar F}$ and $V^A_{\bar F}$ are 2-Lipschitz and have bounded range. Hence $(A, R^A_n), (A,U^A_{\bar F})$ and $(A,V^A_{\bar F})$ are metric structures in a language which adds a new $k$-ary predicate symbol to the language of \cstar-algebras.

\begin{theorem}\label{T.Rn.def}
The following expansions of the class of \cstar-algebras are elementary:  
\begin{enumerate}
\item $\cC_{n,k}$ of all structures $(A,R^A_n)$,
\item $\cC_{\bar F,k}$ of all structures $(A,U^A_{\bar F})$ for a fixed $\bar F$, and
\item $\cC'_{\bar F,k}$ of all structures $(A,V^A_{\bar F})$ for a fixed $\bar F$.
\end{enumerate}
As a consequence, each predicate
$R_n$, $U_{\bar F}$ and $V_{\bar F}$ are defined by an existential definable predicate in the language of \cstar-algebras.
\end{theorem}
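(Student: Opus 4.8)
The plan is to prove all three parts simultaneously using the Beth definability machinery, specifically Corollary~\ref{C.sem.def}. For each of the three predicates $R_n$, $U_{\bar F}$, $V_{\bar F}$, I would form the expanded class (say $\cC_{n,k}$ for the case of $R_n$) and verify that it is elementary by checking closure under isomorphisms, ultraproducts, and ultraroots via Theorem~\ref{T.Ax}. Closure under isomorphisms is immediate. The substantive content is showing that the predicate commutes with ultraproducts: if $A = \prod_\cU A_i$ and $\bar a \in A_1^k$ with representing sequence $(\bar a_i)$, then $R_n^A(\bar a) = \lim_{i\to\cU} R_n^{A_i}(\bar a_i)$, and similarly for $U_{\bar F}$ and $V_{\bar F}$. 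Once this is established, the forgetful functor from the expanded class back to the class of \cstar-algebras is an equivalence of categories (each \cstar-algebra has exactly one interpretation of the new predicate), so Corollary~\ref{C.sem.def} yields that each predicate is $T'$-equivalent to a definable predicate in the language of \cstar-algebras. The existential form then follows from the preservation theorem Proposition~\ref{prop:preservation}(2) together with the observation that these predicates can only \emph{decrease} when passing to a superalgebra (a factorization through $M_n(\bbC)$ or through $F$ witnessing a small value in $A$ remains available, after composing with the inclusion, as a witness in any $B \supseteq A$).

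The key step, and the main obstacle, is the ultraproduct identity. The inequality $R_n^A(\bar a) \leq \lim_{i\to\cU} R_n^{A_i}(\bar a_i)$ is routine: given c.p.c.\ maps $\varphi_i \colon A_i \to M_n(\bbC)$ and $\psi_i \colon M_n(\bbC) \to A_i$ nearly witnessing the infimum in each $A_i$, one takes $\psi := \prod_\cU \psi_i$ (which is c.p.c.\ since being c.p.c.\ is an ultralimit-stable condition) and for $\varphi$ one uses that $M_n(\bbC)$ is finite-dimensional so the compositions $\varphi_i$ assemble, via the ultralimit of their values on a basis, into a c.p.c.\ map from $A$. The harder direction is $R_n^A(\bar a) \geq \lim_{i\to\cU} R_n^{A_i}(\bar a_i)$, which requires taking a c.p.c.\ pair $(\varphi,\psi)$ on the ultraproduct and \emph{disintegrating} it into c.p.c.\ pairs on the factors. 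This is exactly where Proposition~\ref{Prop-Ult-Pos} and the dictionary of Proposition~\ref{Prop-OpSys} are needed: a c.p.c.\ map $\varphi \colon A \to M_n(\bbC)$ corresponds via $s_\varphi$ to a positive linear functional on $M_n(S)$ for a suitable finite-dimensional operator system $S \subseteq A$, and Proposition~\ref{Prop-Ult-Pos}(2) lets me lift this functional to positive functionals $\varphi_i$ on operator systems $S_i \subseteq A_i$ with $S = \prod_\cU S_i$ and $\varphi = \lim_{i\to\cU}\varphi_i$. Translating back through Proposition~\ref{Prop-OpSys} produces the required c.p.c.\ maps on the factors.

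For $U_{\bar F}$ and $V_{\bar F}$ the argument is parallel, but the disintegration must respect the extra structural constraints on $\psi$: namely that $\psi$ restricted to each summand $F_i$ is c.p.c.\ order zero (for both $U_{\bar F}$ and $V_{\bar F}$) and, in the case of $V_{\bar F}$, that the full map $\psi$ is itself contractive. Here I would invoke the definability results already established in \S\ref{S.approx}: Lemma~\ref{L.def.cpc} handles the c.p.c.\ constraint on the outgoing maps $\varphi_i$, Lemma~\ref{L.def.dimnuc} handles the order-zero-on-each-summand constraint (using weak stability of the formulas $\alpha_F$ from Lemma~\ref{L.cpco0.1}), and Lemma~\ref{L.def.dr} handles the additional global contractivity required for $V_{\bar F}$. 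Because these constraint sets are definable (equivalently, realized by zero-sets that commute with ultraproducts by Theorem~\ref{Th.sem.def}), the disintegrated maps $\psi_i$ can be chosen to satisfy the constraints $\cU$-often, with the small corrections absorbed into $\e$; in particular the contractivity tweak for $V_{\bar F}$ is precisely the scaling argument carried out in the proof of Lemma~\ref{L.def.dr}.

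Once the three ultraproduct identities are in hand, the remaining bookkeeping is uniform across the cases. I would note that all three predicates are $2$-Lipschitz with uniformly bounded range (stated just before the theorem), so Corollary~\ref{C.sem.def} applies verbatim with a uniform continuity modulus independent of $A$, yielding definability of each predicate. Finally, to upgrade from \emph{definable predicate} to \emph{existential definable predicate}, I would verify the monotonicity hypothesis of Proposition~\ref{prop:preservation}(2): for $M \subseteq N$ in the class of \cstar-algebras and $\bar a$ in $M$, any near-optimal factorization witnessing a small value of the predicate in $M$ is also a valid factorization in $N$ (compose the recovery map with the inclusion $M \hookrightarrow N$), so the value does not increase, i.e.\ $\varphi^N(\bar a) \leq \varphi^M(\bar a)$ for each of $\varphi \in \{R_n, U_{\bar F}, V_{\bar F}\}$. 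This is the condition equivalent to being equivalent to an $\inf$-formula, completing the proof.
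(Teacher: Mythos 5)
Your proposal follows essentially the same route as the paper: a semantic proof of elementarity via closure under ultraproducts and ultraroots, with the return map $\psi$ disintegrated using the definability lemmas (Lemma~\ref{L.def.cpc}, Lemma~\ref{L.def.dimnuc}, Lemma~\ref{L.def.dr}) and the downward map $\varphi$ disintegrated through the operator-system dictionary of Proposition~\ref{Prop-OpSys} and the lifting of positive functionals in Proposition~\ref{Prop-Ult-Pos}, followed by Beth definability (Corollary~\ref{C.sem.def}) to conclude. The one point where your justification is incomplete is the monotonicity needed for the existential form: for $M \subseteq N$ and $\bar a$ in $M$, composing the recovery map $\psi$ with the inclusion $M \hookrightarrow N$ handles only half of the factorization, since the downward map $\varphi\colon M \to M_n(\bbC)$ is not a c.p.c.\ map defined on $N$ and so is not by itself a legitimate witness for $R_n^N(\bar a)$. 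You need to extend $\varphi$ from $M$ to a c.p.c.\ map on all of $N$, which is exactly the Arveson extension theorem (c.p.c.\ maps into $M_n(\bbC)$ extend from operator subsystems); this is the ingredient the paper cites for this step, and with it your argument closes.
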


\begin{proof}
The listed consequences are immediate from the elementarity of the given classes and Beth definability. In fact, by the Arveson extension theorem (\cite{Arv:Notes}), all of these predicates are existentially defined. 

We tackle the first part of the proof semantically by showing that each class listed is closed under ultraproducts and ultraroots.  We first concentrate on  $\cC_{n,k}$. It suffices to show that if we have \cstar-algebras $A_i$ for $i \in I$, an ultrafilter $\cU$ on $I$ and if $A = \prod_\cU A_i$ then
\[
R^A_n = \lim_{i \rightarrow \cU} R^{A_i}_n.
\]
We first show that $R^A_n$ is less than or equal to $\displaystyle \lim_{i \rightarrow \cU} R^{A_i}_n$.  Towards this end, suppose that $\displaystyle \lim_{i \rightarrow \cU} R^{A_i}_n < d$.  Fix $\bar a \in A_1^k$. This inequality is witnessed on some $J \in \cU$ by c.p.c.\ maps $\varphi_j:A_j \rightarrow M_n(\bbC)$ and c.p.c.\ maps $\psi_j:M_n(\bbC) \rightarrow A_j$ for $j \in J$ such that 
\[
\|\bar a_j - (\psi_j\circ\varphi_j)(\bar a_j))\| < d
\] 
for all $j \in J$.  This implies that $\| \bar a - (\psi\circ\varphi)(\bar a)) \| < d$ where $\varphi = \prod_\cU \varphi_i$ and $\psi = \prod_\cU \psi_i$ (define $\psi_i$ and $\varphi_i$ to be identically zero for $i \not\in J$).  We conclude that $R^A_n < \displaystyle \lim_{i \rightarrow \cU} R^{A_i}_n$.

We now prove the other direction of the inequality. Fix $\bar a \in A_1^k$ and suppose that $R_n^A(\bar a) < d$.  This means that we can find c.p.c.\ maps $\varphi:A \rightarrow M_n(\bbC)$ and $\psi:M_n(\bbC) \rightarrow A$ such that
\[
\| \bar a - (\psi\circ\varphi)(\bar a)) \| < d.
\]
We concentrate first on $\psi$.  By Lemma \ref{L.def.cpc}, the $n^2$-tuples that are images of the matrix units of c.p.c.\ maps from $M_n(\bbC)$ to $A$ form a definable set.  It follows then that there are c.p.c.\ maps $\psi_i$ from $M_n(\bbC)$ to $A_i$ for all $i \in I$ so that $\psi = \prod_\cU \psi_i$.

%

%
%
%
%
We now need to deal with $\varphi$. 
%
 We can assume that $\bar a$ is the basis of a finite dimensional operator system $S$.  We would like to show that $\varphi\!\!\restriction_S = \prod_\cU \varphi_i\!\!\restriction_{S_i}$ for operator systems $S_i$ in $A_i$ with $\bar a_i$ a basis for $S_i$, $\bar a = (\bar a_i)$ and c.p.c.\ maps $\varphi_i:A_i \rightarrow M_n(\bbC)$ for $i \in I$.
 Using Proposition \ref{Prop-OpSys}, we can reduce the question to whether $s_\varphi$, a positive linear functional on $M_n(S)$, is an ultraproduct of positive linear functionals $s_i$ on $M_n(S_i)$ for operator systems $S_i$ in $A_i$.  But this is the situation handled by Proposition \ref{Prop-Ult-Pos}.  So we conclude that
 \[
 \lim_{i \rightarrow \cU} \| \bar a_i - (\psi_i\circ\varphi_i)(\bar a_i))\| < d
 \]
 which finishes the proof of the first part of the Theorem.
 
 We now turn our attention to $\cC_{\bar F,k}$ for a fixed $\bar F$, the class of structures of the form $(A,U^A_{\bar F})$ for \cstar-algebras $A$.  The proof is very similar to the case of $\cC_{n,k}$ so we will be brief.  The definition of $U_{\bar F}$ involves quantification over maps $\varphi$ and $\psi$.  When considering $\psi$, the proof given above goes through exactly as written substituting Lemma \ref{L.def.dimnuc} for Lemma \ref{L.def.cpc}.  
 
 Now when considering the $\varphi$ mentioned in the definition of $U^A_{\bar F,k}$, let's fix $F = \oplus_i F_i$ and c.p.c.\ maps $\varphi_i:A \rightarrow F$ such that $\varphi = \sum_i \varphi_i$.  It is then clear that we need only demonstrate what to do with a single c.p.c.\ map $\varphi_i$ into a finite-dimensional \cstar-algebra $F_i$.  If we now write $F_i = \oplus_j M_{n_i^j}(\bbC)$ then $\varphi_i$ can be expressed as $\sum_j \varphi^j_i$ where each $\varphi^j_i$ is a c.p.c.\ map from $A$ to $M_{n_i^j}(\bbC)$.  This returns us to the case we handled when we considered $\cC_{n,k}$.
 
 For the class $\cC'_{\bar F,k}$, we substitute Lemma \ref{L.def.dr} for Lemma \ref{L.def.dimnuc} for the treatment of $\psi$.  The case of $\varphi$ is unchanged and this finishes the proof.
\end{proof}

We now give the proof of Theorem~\ref{T1}(\ref{T1.Nuclear})--(\ref{T1.DR}).

\begin{proof}  To do this we introduce certain uniform families of formulas. In the case of nuclearity  (\ref{T1.Nuclear}), for each $k$ we can consider the set of formulas $\mathcal{F}_k := \{ R_n(\bar x) : n \in \bbN \}$ where $\bar x$ is an $k$-tuple of variables.  Requiring that $\inf_{\theta\in \cF_k} \theta(\bar a) \equiv 0$ for all $\bar a$ in $A_1^k$ and all $k$ is the same as saying that the function $\text{nuc}$ introduced in \S\ref{S.Nuclearity} is identically 0 in $A$ which in turn means that $A$ is nuclear. 

For nuclear dimension (\ref{T1.NucDim}) and decomposition rank (\ref{T1.DR}), we need to modify the set of formulas.  
Let $\mathcal{F}^k_m$ is the set of all formulas $U_{\bar F}(\bar x)$ from Theorem \ref{T.Rn.def} where $\bar x$ is an $k$-tuple and $\bar F$ is an $(m+1)$-tuple of finite-dimensional algebras.  
An algebra $A$ has nuclear dimension at most $m$ if for all $k$, 
\[
\inf_{\theta \in \mathcal{F}^k_m} \theta^A(\bar x) \equiv 0.
\]
The case of decomposition rank is handled similarly by using $V_{\bar F}$ from Theorem \ref{T.Rn.def} in place of $U_{\bar F}$. 
\end{proof}

The advantage of using the semantic approach in proofs like the proof of Theorem \ref{T.Rn.def} is that such proofs are usually shorter and they highlight the underlying facts from operator algebra.  The disadvantage is that they do not  usually provide explicit formulas for various notions although we can often use preservation theorems to derive qualitative information.
In fact, the last part of the proof of Theorem \ref{T.Rn.def} above we actually prove slightly more which we record here. Fix $F$, a finite dimensional \cstar-algebra.
\begin{prop}\label{claim:elementaryclass2} Let 
$\cC_F$ be the class of all triples $(A,B,R)$ where $A$ is a \cstar-algebra,  $B \cong F$ and $R(\bar a,\bar b)$ is the $(2k)$-ary relation on 
$A_1^k\times B_1^k$ defined by
\[
\inf_\varphi \| \varphi(\bar a) - \bar b \|
\] where $\varphi$ is a c.p.c.\  map from $A$ to $B$.
Then $\cC_F$ is an elementary class and the zero-set of $R$ is a definable set.
 \end{prop}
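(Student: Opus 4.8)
The plan is to follow the semantic strategy of Theorem~\ref{T.Rn.def}, of which this proposition is essentially a repackaging: rather than pre-composing the factorization maps back into $A$, one keeps the target tuple $\bar b$ as an explicit argument. First I would record the standing facts that make the setup manageable. Since $F$ is finite dimensional its unit ball is compact, so any ultraproduct $\prod_\cU B_i$ of copies $B_i\cong F$ is again isomorphic to $F$; moreover $F$ (hence $B$) lives in $A^{\eq}$ by the remark following Lemma~\ref{L.Norm}, so treating $(A,B,R^A)$ as a single metric structure in an expanded language causes no difficulty. I would also note that $R^A$ is $2$-Lipschitz with bounded range, so the expansions $(A,B,R^A)$ share a uniform continuity modulus independent of $A$, which is precisely the hypothesis needed to invoke Corollary~\ref{C.sem.def}.

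The heart of the argument is the \L o\'s-type identity
\[
R^{\prod_\cU A_i}(\bar a,\bar b)=\lim_{i\to\cU}R^{A_i}(\bar a_i,\bar b_i),
\]
established by two inequalities exactly as in Theorem~\ref{T.Rn.def}. For ``$\leq$'', given c.p.c.\ maps $\varphi_i\colon A_i\to B$ witnessing $R^{A_i}<d$ on a set in $\cU$, the coordinatewise map $\varphi=\prod_\cU\varphi_i\colon\prod_\cU A_i\to\prod_\cU B\cong B$ is c.p.c.\ and satisfies $\|\varphi(\bar a)-\bar b\|=\lim_{i\to\cU}\|\varphi_i(\bar a_i)-\bar b_i\|\leq d$. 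For ``$\geq$'', given a c.p.c.\ map $\varphi\colon\prod_\cU A_i\to B$ with $\|\varphi(\bar a)-\bar b\|<d$, I would write $F=\bigoplus_j M_{n_j}(\bbC)$ to reduce to the case $B=M_n(\bbC)$, restrict $\varphi$ to the finite dimensional operator system $S$ spanned by $\bar a$ and the unit, pass via Proposition~\ref{Prop-OpSys} to the positive functional $s_\varphi$ on $M_n(S)$, lift it by Proposition~\ref{Prop-Ult-Pos}(2) to positive functionals on $M_n(S_i)$ for operator systems $S_i\subseteq A_i$ with $S=\prod_\cU S_i$, convert back to c.p.c.\ maps $S_i\to M_n(\bbC)$, and extend these to all of $A_i$ by the Arveson extension theorem (\cite{Arv:Notes}). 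The resulting $\varphi_i$ satisfy $\varphi=\prod_\cU\varphi_i$ on $S$, whence $\lim_{i\to\cU}\|\varphi_i(\bar a_i)-\bar b_i\|<d$.

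From this identity both closure properties follow at once. The ultraproduct of structures in $\cC_F$ carries the relation $\lim_\cU R^{A_i}=R^{\prod_\cU A_i}$ and a $B$-component isomorphic to $F$, so it lies in $\cC_F$; and for ultraroots, evaluating the identity on a constant family and on diagonal tuples shows that if $(A,B,R)^\cU\in\cC_F$ then $R=R^A$ already in $A$. Hence $\cC_F$ is closed under isomorphism, ultraproducts and ultraroots, so by Theorem~\ref{T.Ax} it is elementary, say $\cC_F=\Mod(T')$. The forgetful functor to the class of \cstar-algebras is then an equivalence of categories, so Corollary~\ref{C.sem.def} yields that $R$ is $T'$-equivalent to a definable predicate in the language of \cstar-algebras (existential, by the use of Arveson extension). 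Finally, to see that the zero-set of $R$ is a definable set I would apply Theorem~\ref{Th.sem.def} to the assignment $A\mapsto Z^A(R)$: the same ultralimit identity, together with a reselection-of-representatives argument as in the proof of Lemma~\ref{L.invertible.3}, gives $Z^{\prod_\cU A_i}(R)=\prod_\cU Z^{A_i}(R)$.

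The main obstacle is the ``$\geq$'' half of the \L o\'s identity, namely decomposing a single c.p.c.\ map out of the ultraproduct into an ultraproduct of c.p.c.\ maps. This is where all the genuine operator-algebraic content sits, and it is exactly the point at which the operator-system/positive-functional correspondence (Propositions~\ref{Prop-OpSys} and~\ref{Prop-Ult-Pos}) and the Arveson extension theorem are needed; the remaining steps are formal consequences of the identity together with the abstract machinery of \S\ref{S.Elementary} and \S\ref{S.Definable}.
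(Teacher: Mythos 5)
Your proposal is correct and follows essentially the same route as the paper: the Proposition is extracted from the semantic proof of Theorem~\ref{T.Rn.def}, where the \L o\'s-type identity $R^{\prod_\cU A_i}=\lim_{i\to\cU}R^{A_i}$ is proved exactly as you describe (coordinatewise ultraproducts for one inequality; restriction to a finite-dimensional operator system, Propositions~\ref{Prop-OpSys} and~\ref{Prop-Ult-Pos}, and Arveson extension for the other), and then Theorem~\ref{T.Ax} and Beth definability (Corollary~\ref{C.sem.def}) finish the argument. The only addition worth noting is that the paper also supplies, in \S\ref{S.2nd-proof}, a second proof giving $R$ by explicit formulas via excision of pure states, but that is presented as a supplement rather than the proof of this Proposition.
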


It is the definability of this predicate $R$ which allowed us to define the necessary predicates in Theorem \ref{T.Rn.def}.    In the next section we will give a second proof of Theorem \ref{T1}(\ref{T1.Nuclear}) via excision of states by showing that the predicate $R$ from the Proposition can be explicitly given by formulas.

\section{Nuclearity, nuclear dimension and decomposition rank: Second proof}\label{S.2nd-proof}

We shall show that the predicates $\nuc$, $\nucdim_m$ and $\drank_m$   from 
$ \bigsqcup_{n\geq 1} A^n $ to $\bbR$ 
(see \S\ref{S.Nuclearity}, \S\ref{S.dimnuc}  and \S\ref{S.DecompositionRank}, respectively) 
are definable for every $m\geq 1$ by providing explicit formulas. 

By  $P(A)$\index{P@$P(A)$} we denote the set of all pure states on $A$. 
We record some well known facts about pure states.

\begin{lemma} \label{L.App.Pure} 
For every positive element $f\in A$ there exists a pure state $s$ of $A$ such that $s(f)=\|f\|$. 
\end{lemma}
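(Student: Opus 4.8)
The plan is to prove this standard fact about pure states using the GNS construction together with the Hahn--Banach / Krein--Milman machinery, or more directly by appealing to the structure of the state space as a weak$^*$-compact convex set. First I would reduce to the unital case by passing to the unitization $\tilde A$ (which lies in $A^{\eq}$, as noted after Theorem~\ref{T.conceptual-completeness}): a pure state on $A$ extends to a pure state on $\tilde A$, and conversely a pure state on $\tilde A$ that does not factor through the quotient $\tilde A/A \cong \bbC$ restricts to a pure state on $A$. So it suffices to produce, for a positive $f \in A$ of norm $\|f\|$, a pure state $s$ on $\tilde A$ with $s(f) = \|f\|$ and then check it restricts appropriately.

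The key steps in the unital case are as follows. First I would observe that the set
\[
K_f := \{ s \in S(\tilde A) : s(f) = \|f\| \}
\]
where $S(\tilde A)$ denotes the full state space, is nonempty: since $f$ is positive, $\|f\| \in \Sp(f)$, so there is a character $\chi$ on the abelian \cstar-algebra $\mathrm{C}^*(f,1) \cong \mathrm{C}(\Sp(f))$ with $\chi(f) = \|f\|$ (evaluation at the point $\|f\| \in \Sp(f)$, using the continuous functional calculus from \S\ref{S.cfc}), and by Hahn--Banach this extends to a state $s_0$ on $\tilde A$ with $s_0(f) = \|f\|$, necessarily of norm one since $s_0(1)=1$. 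Second, I would note that $K_f$ is a weak$^*$-closed face of $S(\tilde A)$: it is closed because $s \mapsto s(f)$ is weak$^*$-continuous, and it is a face because if $s = \tfrac12(s_1 + s_2)$ with $s_1, s_2$ states and $s(f) = \|f\|$, then since $s_i(f) \le \|f\|$ for each $i$, both must equal $\|f\|$, so $s_1, s_2 \in K_f$. Third, $K_f$ is weak$^*$-compact (a closed subset of the weak$^*$-compact set $S(\tilde A)$), convex, and nonempty, so by the Krein--Milman theorem it has an extreme point $s$.

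The main point, which I expect to be the only genuinely substantive step, is that an extreme point of a closed face of the state space is an extreme point of the whole state space, i.e.\ a pure state. This is the standard ``face lemma'': if $s$ is extreme in the face $K_f$ and $s = \tfrac12(t_1 + t_2)$ with $t_1, t_2 \in S(\tilde A)$, then because $K_f$ is a face and $s \in K_f$ we get $t_1, t_2 \in K_f$, whence extremality of $s$ within $K_f$ forces $t_1 = t_2 = s$; thus $s$ is extreme in $S(\tilde A)$, i.e.\ $s \in P(\tilde A)$, and $s(f) = \|f\|$. Finally I would verify that this pure state $s$ restricts to a pure state of $A$: since $f \in A$ and $s(f) = \|f\| > 0$ (the case $f = 0$ being trivial), $s$ does not factor through $\tilde A / A$, so $s\restriction_A$ is a nonzero positive functional of norm one, and purity is preserved under this restriction by the correspondence between pure states of $A$ and pure states of $\tilde A$ not vanishing on $A$. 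This completes the argument; no step requires more than the elementary theory of the state space, so the only care needed is in invoking the face lemma correctly.
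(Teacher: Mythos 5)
Your proof is correct and follows essentially the same route as the paper, which simply observes that $\{s : s(f)=\|f\|\}$ is a non-empty weak$^*$-closed face of the state space and that any extreme point of such a face is a pure state. You have merely filled in the details the paper leaves implicit (non-emptiness via functional calculus and Hahn--Banach, the face property, Krein--Milman, and the reduction to the unital case), all of which are handled correctly.
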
 

\begin{proof} Since $\{s: s(f)=\|f\|\}$ is a non-empty face of the state space of $A$, any of its extreme points 
is a pure state as required. 
\end{proof} 

\begin{lemma} \label{L.App.State} 
If $s$ is a state and $s(f)=\|f\|=1$ for a positive $f$ then $s(faf)=s(fa)=s(af)=s(a)$ for all $a\in A$. 
\end{lemma}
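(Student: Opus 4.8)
The plan is to reduce everything to the Cauchy--Schwarz inequality for the positive sesquilinear form $(b,c)\mapsto s(b^*c)$ attached to the state $s$, namely $|s(b^*c)|^2\le s(b^*b)\,s(c^*c)$. First I would pass to the unitization: if $A$ is nonunital, extend $s$ to a state on $\tilde A$ (this is possible and leaves all relevant values unchanged, since the elements $a$, $f$, $fa$, $af$, $faf$ all lie in $A$), so that the symbol $1-f$ makes sense and $s$ is positive on $\tilde A$.

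The crucial observation is that $s\big((1-f)^2\big)=0$. Indeed, since $f\ge 0$ and $\|f\|=1$ we have $0\le f\le 1$, hence $f^2\le f$, and therefore $(1-f)^2=1-2f+f^2\le 1-f$. Applying the positive functional $s$ and using $s(f)=1$ gives $0\le s\big((1-f)^2\big)\le s(1-f)=1-s(f)=0$, so indeed $s\big((1-f)^2\big)=0$.

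Next I would feed this into Cauchy--Schwarz. Taking $b:=1-f$ (which is self-adjoint) and $c:=a$ yields $|s((1-f)a)|^2\le s\big((1-f)^2\big)\,s(a^*a)=0$, hence $s(fa)=s(a)$ for every $a\in A$. Taking instead $b:=a^*$ and $c:=1-f$ gives $|s(a(1-f))|^2\le s(aa^*)\,s\big((1-f)^2\big)=0$, hence $s(af)=s(a)$. Finally, applying the first identity with $a$ replaced by $af$ gives $s(faf)=s(af)$, and combining with $s(af)=s(a)$ yields $s(faf)=s(af)=s(a)$, which is the remaining equality.

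There is essentially no serious obstacle here; the only points requiring a little care are the reduction to the unital case and the fact that Cauchy--Schwarz is available for an arbitrary state (positivity of $s$ on $\tilde A$ makes $(b,c)\mapsto s(b^*c)$ a positive semidefinite form, giving the inequality directly). Everything else rests on the elementary operator inequality $f^2\le f$ valid for $0\le f\le 1$.
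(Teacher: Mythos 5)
Your proof is correct and follows essentially the same route as the paper: establish $s\bigl((1-f)^2\bigr)=0$ from the operator inequality $(1-f)^2\leq 1-f$, then kill $s((1-f)a)$ and $s(a(1-f))$ via Cauchy--Schwarz. Your write-up is in fact slightly more careful than the paper's (explicit unitization, explicit derivation of the $s(faf)$ equality), but the underlying argument is identical.
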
 

\begin{proof} This is a consequence of the Cauchy--Schwarz inequality. Since $(1-f)^2\leq 1-f$, we have $s((1-f)^2)=0$. 
Also,  
\[
s((1-f)a)\leq s((1-f)^2)s(a^*a)=0
\]
and therefore $s(a)=s(fa)$. The equality $s(a)=s(af)$ is proved by taking the adjoint. 
\end{proof} 

The following was proved in \cite[Proposition~2.2]{AkeAndPed}. 

\begin{lemma}[Excision of pure states]\label{L.App.Excision} 
If $A$ is a \cstar-algebra and $s$ is a state in the closure of $P(A)$
then for every finite set $G\subset A$ and every $\e>0$ there exists a positive element $a\in A$ such that $\|a\|=s(a)=1$ 
and $\|s(x)a^2-a xa\|<\e$ for all $x\in G$. \qed
\end{lemma}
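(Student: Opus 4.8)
The plan is to turn the norm estimate $\|s(x)a^2-axa\|<\e$ into a statement about compressions of \emph{arbitrary} states, and then to build the excising element out of the pure states approximating $s$. First I would unitize (pass to $\tilde A$ and extend $s$), so that for each $x$ I may write
\[
s(x)a^2 - axa = a\bigl(s(x)1 - x\bigr)a =: a\,y_x\,a .
\]
Replacing each element of $G$ by its real and imaginary parts, I may assume $G$ consists of self-adjoint contractions; then each $y_x$ is self-adjoint with $s(y_x)=0$, and $a y_x a$ is self-adjoint, so that
\[
\|a y_x a\| = \sup\{\,|\phi(a y_x a)| : \phi \text{ a state of } \tilde A\,\}.
\]
For a state $\phi$ with $\phi(a^2)>0$ the functional $\phi_a := \phi(a(\cdot)a)/\phi(a^2)$ is again a state, and $\phi(a y_x a)=\phi(a^2)\,\phi_a(y_x)$. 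Thus the problem becomes: find a positive $a$ with $\|a\|=s(a)=1$ whose compression $\phi_a$ agrees with $s$ on $G$ to within $\e$ for \emph{every} state $\phi$; equivalently, $a$ must force every state, after compression by $a$, to look like $s$ on $G$.

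For the estimate I would first treat the case that $s$ is itself pure, and then approximate. If $s\in P(\tilde A)$, the GNS triple $(\pi,H,\xi)$ is irreducible, and the rank-one projection $P$ onto $\xi$ satisfies the \emph{exact} excision identity $P\pi(x)P=s(x)P$. By the Kadison transitivity theorem I would produce a positive contraction $a\in A$ with $\pi(a)\xi=\xi$ (hence $s(a)=\|a\|=1$) and with $\pi(a)$ approximating $P$ on the finite-dimensional subspace spanned by $\xi$ and the vectors $\pi(x)\xi$ for $x\in G$; this makes $\|\pi(a)y_x\pi(a)\|$ small in the representation $\pi$. For a general $s\in\overline{P(\tilde A)}$ I would choose $t\in P(\tilde A)$ with $|t(x)-s(x)|<\e/2$ for all $x\in G$; since $\|a^2\|\le 1$, an element excising $t$ to within $\e/2$ also excises $s$ to within $\e$, so the estimate transfers.

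The hard part will be upgrading single-representation control to control of the genuine norm $\|a y_x a\|=\sup_\phi|\phi(a y_x a)|$, which sees all representations simultaneously, \emph{and} doing so while keeping the exact normalization $s(a)=1$ for one and the same $a$ (the transitivity construction naturally normalizes against the approximating pure state $t$, not against $s$). Here I would follow the mechanism of \cite{AkeAndPed}: rather than demanding $\pi(a)\approx P$ for a fixed $a$, one steepens the spectral concentration of $a$ at the eigenvalue $1$ by replacing $a$ with $g(a)$ for suitable continuous $g$ with $g(1)=1$, and iterates, so that the compressed states $\phi_a$ are squeezed toward $s$ uniformly in $\phi$. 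Lemma~\ref{L.App.State} is the tool that keeps this honest: whenever $\|a\|=s(a)=1$ it pins $s(axa)=s(x)$, and together with Lemma~\ref{L.App.Pure} (every positive element peaks at a pure state) this forces the relevant suprema to collapse. It is precisely at this step that the hypothesis $s\in\overline{P(\tilde A)}$, as opposed to $s$ being an arbitrary state, is indispensable.

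Finally I would carry the construction out inside the set $K:=\{a\in \tilde A_+ : \|a\|\le 1,\ s(a)=1\}$, which is nonempty since it contains the unit and in which $s(a)=1$ automatically forces $\|a\|=1$; the content of the previous paragraph is then that $\inf_{a\in K}\max_{x\in G}\|a y_x a\|=0$, and choosing $a\in K$ with $\max_{x\in G}\|a y_x a\|<\e$ yields the required positive element and completes the proof. I expect the reconciliation of the exact normalization $s(a)=1$ with the norm estimate to be the genuinely delicate core; everything preceding it is bookkeeping plus a standard transitivity argument.
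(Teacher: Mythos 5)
First, be aware that the paper does not prove this lemma at all: it is quoted from \cite[Proposition~2.2]{AkeAndPed} and stamped with \emph{qed}, so there is no argument in the text to compare yours against. Judged on its own, your sketch gets the bookkeeping right (unitize, pass to $y_x=s(x)1-x$, reduce to self-adjoint contractions, transfer the estimate from an approximating pure state $t$ since $\|a^2\|\leq 1$ makes the error additive), and it correctly identifies where the difficulty lies --- but at that point it stops being a proof. Kadison transitivity produces a positive contraction $a$ with $\pi(a)$ close to the rank-one projection $P$ on the finite-dimensional subspace spanned by $\xi$ and the $\pi(x)\xi$; this controls finitely many vector states of the single representation $\pi$, whereas $\|ay_xa\|$ is a supremum over \emph{all} states, most of which live in representations disjoint from $\pi$ and are untouched by the construction. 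The proposed repair by iterated spectral steepening does not close this gap: in $A^{**}$ the powers $g(a)^n$ decrease to the spectral projection $q=\chi_{\{1\}}(a)$, and $a^ny_xa^n\to qy_xq$, which vanishes only if $q$ equals the support projection $p$ of $s$; transitivity gives $q\geq p$ with no reason for equality. The actual mechanism of \cite{AkeAndPed} is different in kind: one uses that the support projection $p$ of a \emph{pure} state is a minimal closed projection of $A^{**}$ satisfying the exact identity $pxp=s(x)p$, approximates $p$ by a decreasing net $(a_\lambda)\subset A_+$ from the algebra itself, and then invokes a weak$^*$-compactness (normality) argument to upgrade $\sigma$-weak convergence of $a_\lambda y_xa_\lambda$ to norm convergence. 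Lemmas~\ref{L.App.Pure} and~\ref{L.App.State} play no role in that proof; in this paper they are used downstream, in Lemma~\ref{lem:NuclAlphaFormula}, not to establish excision.

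Second, the normalization you call ``the genuinely delicate core'' is not delicate but unattainable for non-pure $s$, so no proof of the statement as literally written exists. Take $A=\mathrm{C}^*_r(F_2)$ and $s=\tau$ its canonical trace: $A$ is simple, unital and infinite-dimensional, so by Glimm's lemma $\tau$ is a weak$^*$ limit of pure states; yet if $a\geq 0$, $\|a\|=1$ and $\tau(a)=1$, then $\tau(1-a)=0$ with $1-a\geq 0$ and $\tau$ faithful force $a=1$, for which $\|axa-\tau(x)a^2\|=\|x-\tau(x)1\|$ is not small on a generating unitary. The correct conclusion for $s$ in the weak$^*$ closure of $P(A)$ keeps $\|a\|=1$ and the estimate but only gives $t(a)=1$ for the approximating pure state $t$; the equality $s(a)=1$ is available exactly when $s$ is pure, which is the only case the paper uses (for $s\in P(M_\ell(A))$ in Lemma~\ref{lem:NuclAlphaFormula}). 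You should therefore either prove the weaker, correct statement or restrict the lemma to pure states before attempting the argument.
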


Let $\ell,m \in \bbN$ and let $A$ be a \cstar-algebra.
Define $\Gamma_m(A,M_\ell(\bbC))$ to be the set of all c.p.c.\  maps $A \to M_\ell(\bbC)$ of the form
\[ 
 x \mapsto \frac1m \sum_{i=1}^m (s_i(x \otimes e_{j,k}))_{j,k}, 
\]
where $s_1,\dots,s_m \in P(M_\ell(A))$.
More generally, if $F$ is a finite dimensional algebra, define $\Gamma_m(A,F)$ to be the set of all c.p.c.\  maps $\psi:A \to F$ such that $\pi_{F'} \circ \psi \in \Gamma_m(A,F')$ for every simple direct summand $F'$ of $F$, where $\pi_{F'}:F \to F'$ denotes the projection of $F$ onto~$F'$.
 Define $\alpha_{F,m}:A^n \times F^n \to \bbR$ by
\[ 
\alpha_{F,m}(\bar a, \bar b) := \inf_{\psi \in \Gamma_m(A,F)} \max_{i\leq n} \|b_i - \psi(a_i)\|. 
\]
We will write $\alpha_{\ell,m}$ for $\alpha_{M_\ell(\bbC),m}$.
It should be noted that in many interesting cases we can assume $m=1$. This is because,
if $A$ is simple, then by Glimm's lemma, pure states are weak$^*$-dense in the state space of~$A$
(see~\cite[Lemma 1.4.11]{BrOz:cstar}). 

\begin{lemma}
\label{lem:NuclAlphaFormula}
Suppose $A$ is a \cstar-algebra.
Let $\bar a \in A^n$.
Then for every $\ell$ and $\bar b\in M_\ell^n(\bbC)$ we have that 
$\alpha_{\ell,1}(\bar a, \bar b)$ is equal to
\[
\theta(\bar a,\bar b) := \inf_{\begin{array}{c}\scriptstyle f \in M_\ell(A)_+ \\ 
\scriptstyle \|f\| = 1\end{array}} \max_{i\leq n} \|f^2 \otimes b_i - (f \otimes 1_\ell)(\sum_{j,k} a_i \otimes e_{j,k} \otimes e_{j,k})(f \otimes 1_\ell)\|, 
\]
where the norm is taken in $M_{\ell^2}(A)$.  Moreover, $\alpha_{\ell,1}$ is a definable set as is $\alpha_{\ell,m}(\bar a,\bar y)$.
\end{lemma}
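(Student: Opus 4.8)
The plan is to prove the lemma in three stages: first establish the identity $\alpha_{\ell,1}(\bar a,\bar b)=\theta(\bar a,\bar b)$ pointwise, then deduce definability of $\alpha_{\ell,1}$, and finally bootstrap to $\alpha_{\ell,m}$. For the first stage I would exploit the correspondence between c.p.c.\ maps $\varphi\colon A\to M_\ell(\bbC)$ and positive functionals $s_\varphi$ on $M_\ell(A)$ recorded in Proposition~\ref{Prop-OpSys}, specializing to pure states. The key point is that a map in $\Gamma_1(A,M_\ell(\bbC))$ is precisely $x\mapsto (s(x\otimes e_{j,k}))_{j,k}$ for a single pure state $s\in P(M_\ell(A))$. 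So $\alpha_{\ell,1}(\bar a,\bar b)=\inf_{s\in P(M_\ell(A))}\max_i\|b_i-(s(a_i\otimes e_{j,k}))_{j,k}\|$. I would rewrite the matrix $(s(a_i\otimes e_{j,k}))_{j,k}\in M_\ell(\bbC)$ as a single norm of a tensor expression over $M_{\ell^2}(A)$, using the standard identification of $M_\ell(\bbC)$ with the appropriate corner via the matrix units $e_{j,k}\otimes e_{j,k}$, so that the pure state $s$ only enters through its value on one positive element.

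The heart of the argument---and the step I expect to be the main obstacle---is translating the infimum over pure states into the infimum over norm-one positive elements $f\in M_\ell(A)_+$ that appears in $\theta$. This is exactly where the excision machinery enters. For the inequality $\theta\le\alpha_{\ell,1}$, given a pure state $s$ I would apply Excision of Pure States (Lemma~\ref{L.App.Excision}) to the finite set $G=\{a_i\otimes e_{j,k}\}$ to produce a positive contraction $f$ with $\|f\|=s(f)=1$ and $\|s(x)f^2-fxf\|<\e$ for all $x\in G$; substituting $x=a_i\otimes e_{j,k}$ and assembling over $j,k$ shows $f$ witnesses $\theta(\bar a,\bar b)\le\alpha_{\ell,1}(\bar a,\bar b)+\e$, and I let $\e\to0$. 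For the reverse inequality $\alpha_{\ell,1}\le\theta$, given $f$ with $\|f\|=1$ I would use Lemma~\ref{L.App.Pure} to choose a pure state $s$ with $s(f^2)=\|f^2\|=1$ (equivalently $s(f)=1$), and then Lemma~\ref{L.App.State} gives $s(fxf)=s(x)$ for all $x$; comparing $s$ applied to the two sides inside the norm defining $\theta$ recovers $(s(a_i\otimes e_{j,k}))_{j,k}$, so the map determined by $s$ witnesses $\alpha_{\ell,1}(\bar a,\bar b)\le\theta(\bar a,\bar b)$. Care is needed to pass between $f$ and $f^2$ and to keep track of the contractivity/complete-positivity bookkeeping, but Lemmas~\ref{L.App.Pure}, \ref{L.App.State}, and \ref{L.App.Excision} are designed precisely to handle these two directions.

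Once the identity $\alpha_{\ell,1}=\theta$ is established, definability of $\alpha_{\ell,1}$ is immediate: the expression $\theta(\bar a,\bar b)$ is an infimum over the definable set of norm-one positive elements of $M_\ell(A)$ (the set of positive elements is definable by Example~\ref{Ex.1}, the norm on $M_\ell(A)$ is a definable predicate by Lemma~\ref{L.Norm}, and $M_\ell(\bbC)$ together with its action on $M_\ell(A)$ lives in $A^{\eq}$ as remarked after Lemma~\ref{L.Norm}) of a quantifier-free definable predicate. Hence $\theta$, and therefore $\alpha_{\ell,1}$, is given by an $\inf$-formula, so its zero-set is a definable set.

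For the general $\alpha_{\ell,m}$, I would reduce to the case $m=1$ by observing that a map in $\Gamma_m(A,M_\ell(\bbC))$ is an average $\frac1m\sum_{i=1}^m$ of maps arising from single pure states $s_1,\dots,s_m$. Thus $\alpha_{\ell,m}(\bar a,\bar b)$ is an infimum over $m$-tuples of pure states, and running the excision argument simultaneously for each $s_i$ (applying Lemma~\ref{L.App.Excision} to obtain contractions $f_1,\dots,f_m$) converts it into an infimum over $m$-tuples of norm-one positive elements of a quantifier-free definable predicate. The quantifier block $\inf_{f_1,\dots,f_m}$ is still a finite string of infima over a definable set, so $\alpha_{\ell,m}(\bar a,\bar y)$ is again an $\inf$-definable predicate, completing the proof.
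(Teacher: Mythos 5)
Your proof of the identity $\alpha_{\ell,1}(\bar a,\bar b)=\theta(\bar a,\bar b)$ takes the same route as the paper's: excision of pure states (Lemma~\ref{L.App.Excision}) for the inequality $\theta\le\alpha_{\ell,1}$, and Lemmas~\ref{L.App.Pure} and~\ref{L.App.State} to pass from a norm-one positive $f$ back to a pure state for the converse. That part is correct.

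The gap is in the ``Moreover'' clause. You conclude that the zero-set of $\alpha_{\ell,1}$ is a definable set because $\theta$ is an $\inf$ over a definable set of a quantifier-free definable predicate. That inference is invalid: a set being the zero-set of a definable predicate does not make it a definable set --- the set of normal elements is the zero-set of the quantifier-free formula $\|xx^*-x^*x\|$ and is not definable (Proposition~\ref{P.NormalNotDefinable}). By Lemma~\ref{L.weaklystable}, what is actually needed is weak stability: whenever $\alpha_{\ell,1}(\bar a,\bar b)$ is small, the pair $(\bar a,\bar b)$ must be within a controlled distance of an exact zero. This is precisely what the paper extracts from the direction $\alpha_{\ell,1}\le\theta$: if $\theta(\bar a,\bar b)<\delta$, then the pure state $s$ obtained from the witnessing $f$ (via Lemmas~\ref{L.App.Pure} and~\ref{L.App.State}) produces a map $\psi\in\Gamma_1(A,M_\ell(\bbC))$ with $\alpha_{\ell,1}(\bar a,\psi(\bar a))=0$ and $\|\psi(\bar a)-\bar b\|<\delta$, so $\bar b$ can be moved by less than $\delta$ (with $\bar a$ fixed) into the zero-set. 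Your argument assembles all the ingredients for this observation but never makes it, so as written the definability claim --- and its extension to $\alpha_{\ell,m}$, which relies on the same inference --- is unsupported. The fix is a one-line addendum to your second paragraph recording that the constructed $\psi(\bar a)$ is an exact witness within $\delta$ of $\bar b$.
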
 

We remark that the formula $\alpha_{M_\ell,1}$ is a formula where the parameters come from the structure $(A,M_\ell(\bbC))$ which we have mentioned before is part of $A^{\eq}$. 

\begin{proof}  We prove the two-way inequality.

Let $f \in  M_\ell(A)_+$ be such that $\|f\| = 1$, and suppose that
\[ 
\bigg \|f^2 \otimes b_i - (f \otimes 1_\ell)(\sum_{j,k} a \otimes e_{j,k} \otimes e_{j,k})(f \otimes 1_\ell)\bigg\| < \delta. 
\]
By Lemma~\ref{L.App.Pure} there exists pure state $s$ of $M_\ell(A)$ such that $s(f)=1$.
Lemma~\ref{L.App.State}
 implies that $s(fbf)=s(bf)=s(b)$ for all $b\in A \otimes M_\ell$ and in particular we have $s(f^2)=1$. 
Define $\psi $ in 
$\Gamma_1(A,M_\ell(\bbC))$ by (cf.\ Proposition~\ref{Prop-OpSys}) 
\[ 
\psi(x) := (s(x \otimes e_{j,k}))_{j,k}. 
\]
Then
\begin{align*}
 \|b_i - \psi(a_i)\| &= \|b_i - \sum_{j,k} s(a_i \otimes e_{j,k}) \otimes e_{j,k}\| \\
&= \|s(f^2) \otimes b_i - \sum_{j,k} s(f(a_i \otimes e_{j,k})f) \otimes e_{j,k}\| \\
&\leq \|f^2 \otimes b_i - \sum_{j,k} (f(a_i \otimes e_{j,k})f) \otimes e_{j,k}\| \\
& < \delta,
\end{align*}
where on the third line we use the fact that states are completely contractive.  We note that this calculation demonstrates that if $\theta(\bar a,\bar b) < \delta$ then $\alpha_{\ell,1}(\bar a,\psi(\bar a)) = 0$ and $\|\psi(\bar a) - \bar b\| < \delta$.  This proves that $\alpha_{\ell,m}$ and $\alpha_{\ell,m}(\bar a,\bar y)$ are definable sets once we prove the other direction of the inequality.

Now we prove the converse inequality. 
Let $\psi \in \Gamma_1(A,M_\ell(\bbC))$ be such that $\|b_i - \psi(a_i)\| < \delta$ for all $i$, and let $\e > 0$.
Let $s \in P(M_\ell(A))$ such that $\psi(x) = (s(x \otimes e_{j,k}))_{j,k}$.
By excision (Lemma~\ref{L.App.Excision}), we may find $f \in M_\ell(A)_+$ such that
\[ 
 s(a_i \otimes e_{j,k})f^2 \approx_\eta f(a_i \otimes e_{j,k})f 
\]
for all $i,j,k$, where $\eta := \e/\ell^2$ so that
\[ 
f^2 \otimes (s(a_i \otimes e_{j,k}))_{j,k} \approx_\e (f \otimes 1_\ell)(\sum_{j,k} a_i \otimes e_{j,k} \otimes e_{j,k})(f \otimes 1_\ell). \]
Then it follows that
\begin{align*}
&\|f^2 \otimes b_i - (f \otimes 1_\ell)(\sum_{j,k} a \otimes e_{j,k} \otimes e_{j,k})(f \otimes 1_\ell)\| \\
&\quad \leq 
\|f^2 \otimes (b_i - s(a_i \otimes e_{j,k})_{j,k})\| + \\
&\qquad \|f^2 \otimes (s(a_i \otimes e_{j,k})_{j,k} - (f \otimes 1_\ell)(\sum_{j,k} a_i \otimes e_{j,k} \otimes e_{j,k})(f \otimes 1_\ell)\| \\
&\quad < \delta + \e,
\end{align*}
as required.  
\end{proof}

We will need $\alpha_{l,m}$ for arbitrary $m$; the proof that they can be expressed in continuous logic follows immediately from the previous lemma. We record the case for an arbitrary $F$ for completeness; when reading the formula below remember Remark \ref{rmk.numbers-as-scalars} and the fact that we are working in $(A,M_\ell(\bbC))$.

\begin{lemma}  
\label{lem:NuclAlphaFormula.m}
Fix a \cstar-algebra $A$ and a finite-dimensional algebra\\ $F = M_{\ell_1}(\bbC) \oplus \cdots \oplus M_{\ell_r}(\bbC)$.
Let $\bar a \in A^n$ and $\bar b \in F^n$.
Then we have that 
\begin{align*}
 \alpha_{F,m}(\bar a, \bar b) = \inf_{\bar c^{(1)},\dots,\bar c^{(m)} \in F^n} \max_{i\leq n} \bigg\|b_i - \frac1m \sum_j c^{(j)}_i\bigg\| + \\
+\frac1m \sum_{j=1}^m \max_{k \leq r}\  \alpha_{M_{\ell_k},1}(\bar a, \pi_{M_{\ell_k}}(\bar c^{(j)})),
\end{align*}
where $\pi_{M_{\ell_k}}:F^n \to M_{\ell_k}^n$ denotes the projection from $F^n$ onto $M^n_{\ell_k}$. Moreover, $\alpha_{F,m}$ is a definable set as is $\alpha_{F,m}(\bar a,\bar y)$.\qed
\end{lemma}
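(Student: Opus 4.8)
The plan is to establish the displayed identity as a two-sided inequality and then read off definability from the formula together with Lemma~\ref{lem:NuclAlphaFormula}. Throughout I write $F = \bigoplus_{k=1}^r M_{\ell_k}(\bbC)$ and exploit the fact, immediate from the definition of $\Gamma_m(A,F)$, that membership is detected blockwise: $\psi \in \Gamma_m(A,F)$ if and only if $\psi$ is c.p.c.\ and $\pi_{M_{\ell_k}} \circ \psi \in \Gamma_m(A,M_{\ell_k}(\bbC))$ for each $k$. I will also repeatedly use that for maps into a direct sum, complete positivity, contractivity and the norm are all governed by the individual blocks; in particular $\|x\| = \max_k \|\pi_{M_{\ell_k}}(x)\|$ for $x \in F$, and a block-diagonal assembly of c.p.c.\ maps is again c.p.c.

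First I would prove that the right-hand side is $\leq \alpha_{F,m}(\bar a,\bar b)$. Given any $\psi \in \Gamma_m(A,F)$, each $\pi_{M_{\ell_k}} \circ \psi$ has the form $x \mapsto \frac1m \sum_{j=1}^m (s_{k,j}(x \otimes e_{p,q}))_{p,q}$ for pure states $s_{k,j} \in P(M_{\ell_k}(A))$. Let $\psi_{k,j} \in \Gamma_1(A,M_{\ell_k}(\bbC))$ be the single-state map determined by $s_{k,j}$, assemble $\phi^{(j)} := \bigoplus_k \psi_{k,j}$, and put $\bar c^{(j)} := \phi^{(j)}(\bar a)$. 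Then $\pi_{M_{\ell_k}}(\bar c^{(j)}) = \psi_{k,j}(\bar a)$, so $\alpha_{M_{\ell_k},1}(\bar a, \pi_{M_{\ell_k}}(\bar c^{(j)})) = 0$, witnessed by $\psi_{k,j}$ itself, and the second summand vanishes; moreover $\frac1m\sum_j c^{(j)}_i = \psi(a_i)$ by construction, since the $k$-block of $\frac1m\sum_j\phi^{(j)}(a_i)$ is $\frac1m\sum_j\psi_{k,j}(a_i) = \pi_{M_{\ell_k}}(\psi(a_i))$. Hence the value of the right-hand expression at this $\bar c^{(j)}$ is exactly $\max_i\|b_i - \psi(a_i)\|$, and taking the infimum over $\psi$ yields the inequality.

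For the reverse inequality I would fix arbitrary tuples $\bar c^{(1)},\dots,\bar c^{(m)} \in F^n$ and $\e > 0$. For each $j,k$ choose $\psi_{k,j} \in \Gamma_1(A,M_{\ell_k}(\bbC))$ with $\max_i\|\pi_{M_{\ell_k}}(c^{(j)}_i) - \psi_{k,j}(a_i)\| \leq \alpha_{M_{\ell_k},1}(\bar a, \pi_{M_{\ell_k}}(\bar c^{(j)})) + \e$, assemble $\phi^{(j)} := \bigoplus_k \psi_{k,j}$, and set $\psi := \frac1m\sum_j \phi^{(j)}$; the blockwise criterion shows $\psi \in \Gamma_m(A,F)$ and that $\psi$ is c.p.c.\ as a convex combination of c.p.c.\ maps. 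Using $\|x\| = \max_k\|\pi_{M_{\ell_k}}(x)\|$ and interchanging the maxima over $i$ and $k$, I get $\max_i\|\psi(a_i) - \frac1m\sum_j c^{(j)}_i\| \leq \frac1m\sum_j \max_k \alpha_{M_{\ell_k},1}(\bar a,\pi_{M_{\ell_k}}(\bar c^{(j)})) + \e$, so by the triangle inequality $\alpha_{F,m}(\bar a,\bar b) \leq \max_i\|b_i - \psi(a_i)\|$ is at most the right-hand expression evaluated at $\bar c^{(j)}$ plus $\e$. Letting $\e \to 0$ and taking the infimum over the $\bar c^{(j)}$ completes the identity.

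Finally, for definability: by Lemma~\ref{lem:NuclAlphaFormula} each $\alpha_{M_{\ell_k},1}$ is a definable predicate, $F$ and $F^n$ live in $A^{\eq}$ as noted after Lemma~\ref{L.Norm}, so one may quantify over $\bar c^{(1)},\dots,\bar c^{(m)} \in F^n$, and the projections $\pi_{M_{\ell_k}}$ are term-definable. The right-hand side is then built from definable predicates by the continuous connectives $\max$, $\sum$ and $\|\cdot\|$ together with an infimum over a definable set, hence is itself a definable predicate; running the same argument with $\bar b$ replaced by free variables $\bar y$ gives definability of $\alpha_{F,m}(\bar a,\bar y)$. The main obstacle I anticipate is purely bookkeeping, namely keeping straight the interaction of the $m$-fold averaging with the $r$-fold block decomposition and checking at each reassembly that the maps produced are genuinely c.p.c.\ and land in the correct $\Gamma$-class; all the analytic content, in particular excision, is already packaged in Lemma~\ref{lem:NuclAlphaFormula}.
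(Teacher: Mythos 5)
Your argument is correct, and it is exactly the intended one: the paper states Lemma~\ref{lem:NuclAlphaFormula.m} without proof, deeming it an immediate consequence of Lemma~\ref{lem:NuclAlphaFormula}, and your blockwise decomposition of a $\Gamma_m(A,F)$-map into averages of single-state maps (together with the reassembly in the reverse direction and the observation that weak stability transfers through the resulting formula) is precisely the bookkeeping the authors leave to the reader.
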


We now reconsider the proofs of Theorem \ref{T1}(\ref{T1.Nuclear})--(\ref{T1.DR}) given at the end of \S \ref{S.Beth-proof}.  For any fixed $\ell$ and \cstar-algebra $A$, we wish to be able to quantify over c.p.c.\ maps from $A$ to $M_\ell(\bbC)$ and from $M_\ell(\bbC)$ to $A$.  \S \ref{S.approx} provides us with the tools to do the latter.  We previously gave a Beth definability proof that we could do the former.  
We now give a more syntactic proof. We know from Proposition \ref{claim:elementaryclass2} that in the structure $(A,M_\ell(\bbC),R)$ where for $\bar a \in A$ and $\bar b \in M_\ell(\bbC)$
\[
R(\bar a,\bar b) = \inf_\varphi \|\varphi(\bar a)-\bar b\|
\] 
where $\varphi$ ranges over c.p.c.\ maps from $A$ to $M_\ell(\bbC)$ is a definable set and moreover, for any $\bar a \in A$, $R(\bar a,\bar y)$ is also definable. Since c.p.c.\ maps from $A \to F$ correspond to contractive positive linear functionals on $A \otimes F$ 
(see Proposition~\ref{Prop-OpSys})
 and each of these is, by the Krein--Milman theorem, 
a weak$^*$-limit of convex combinations of pure states and zero, it follows that
the point-norm closure of the set 
$ \bigcup_m  \Gamma_m(A,F)$ 
coincides with the set of all c.p.c.\  maps from $A$ into $F$.  
  However, based on the previous two lemmas, we see that 
\[
R(\bar a,\bar b) = \lim_{m \rightarrow \infty} \alpha_{\ell,m}(\bar a,\bar b)
\]
which gives an explicit expression for $R$.

\section{Simple \cstar-algebras} \label{Subsection: simple}
We prove Theorem~\ref{T1} \eqref{T1.Simple}, 
that simple \cstar-algebras are characterized as algebras that are \udt. 
Since an algebra $A$ is simple if and only if for all $a$ and $b$ such that $\|a\|=\|b\|=1$ and every $\e>0$ there exists 
$n$ and $x_j$, $y_j$, for $j\leq n$ in the unit ball of $A$ satisfying $\|b-\sum_{j\leq n} x_j a y_j\|\leq \e$, 
it is easy to see that being simple is characterized by omitting  a sequence of types,
although the obvious sequence of types is not uniform, and this is not a useful property (see \cite{FaMa:Omitting}).
 We need to work harder in order to show that simplicity is \udt. 

\begin{lemma}
\label{L.Simple}
Let $A$ be a \cstar-algebra.
Then $A$ is simple if and only if, for every $a,b \in A_+$ and $\e > 0$, if $\|a\|=1$ and $\|b\| \leq 1$ then there exist $k$ and $x_1,\dots,x_k \in A$ such that
$ \|x_1^*x_1 + \cdots + x_k^*x_k\|  \leq 2 $
and
\[ \|x_1^*ax_1 + \cdots + x_k^*ax_k - b\| < \e. \]
\end{lemma}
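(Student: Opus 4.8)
The statement is a characterization of simplicity in terms of a uniform, norm-controlled version of the defining property of simple algebras. My plan is to prove both directions, with the forward direction (simplicity implies the uniform condition) being the substantive one. The key gain over the naive formulation is twofold: the witnesses are controlled ($\|\sum x_i^* x_i\| \leq 2$) and we average $a$ with a \emph{two-sided} sum $\sum x_i^* a x_i$ rather than the one-sided $\sum x_i a y_i$. The natural bridge between these formulations is the theory of Cuntz comparison: the displayed inequality $\|\sum x_i^* a x_i - b\| < \e$ is essentially the statement that $b$ is approximately dominated by $a$ in the sense witnessed by norm-controlled elements, which is exactly the kind of assertion the paper's Cuntz machinery (Lemma~\ref{L.Cuntz}, cited in \S\ref{S.PI}) is designed to produce with explicit norm bounds such as $\sqrt 2$.

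\textbf{The easy direction.} First I would prove that the uniform condition implies simplicity. Suppose $A$ satisfies the condition and let $I$ be a nonzero closed two-sided ideal; I must show $I = A$. Pick a nonzero positive $a \in I$, rescaled so $\|a\| = 1$, and let $b \in A_+$ be arbitrary with $\|b\| \leq 1$. The condition yields, for each $\e > 0$, elements $x_1,\dots,x_k$ with $\|\sum x_j^* a x_j - b\| < \e$. Since $a \in I$ and $I$ is a two-sided ideal, each $x_j^* a x_j \in I$, hence $\sum x_j^* a x_j \in I$, so $b$ lies within $\e$ of $I$. As $\e$ was arbitrary and $I$ is closed, $b \in I$. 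Since positive elements span $A$, we get $I = A$, so $A$ is simple.

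\textbf{The hard direction.} For simplicity implies the uniform condition, I would start from the standard fact that in a simple algebra, for positive $a,b$ with $\|a\|=1$ and $\|b\|\le 1$ and any $\e>0$, one has $b \precsim a$ in the Cuntz sense, and moreover $b$ lies in the closed ideal generated by $a$, which equals $A$. The first step is to pass from one-sided expressions $\sum x_j a y_j$ to symmetric two-sided expressions $\sum x_j^* a x_j$: given an approximate identity $\sum x_j a y_j \approx b$, one replaces $a$ by $a^{1/2}$ where necessary and symmetrizes, using that $b$ is positive so it suffices to approximate $b = (b^{1/2})^2$ and absorb the $b^{1/2}$ factors into the $x_j$. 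The genuinely delicate point — and the step I expect to be the main obstacle — is securing the uniform norm bound $\|\sum x_j^* x_j\| \leq 2$. This cannot come for free from an arbitrary approximation; it requires invoking a Cuntz-comparison lemma with explicit control on the norm of the witnessing elements (the paper's Lemma~\ref{L.Cuntz}, which produces witnesses of norm at most $\sqrt 2$ for the relation $b \precsim (a-\delta)_+$). Concretely, I would fix a small $\delta$, use that $\|a\|=1$ to arrange $b \precsim (a-\delta)_+$ by simplicity, apply the controlled Cuntz lemma to obtain a single $z$ with $\|z\|\le\sqrt 2$ and $b \approx z^* a z$, and then either take $k=1$ with $x_1 = z$ (noting $\|x_1^* x_1\| = \|z\|^2 \leq 2$) or, if a single contraction does not suffice to absorb the perturbation, partition into finitely many pieces whose squared norms still sum to at most $2$. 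The bookkeeping to keep the total norm under $2$ while driving the approximation below $\e$ is where the care lies; the constant $2$ (rather than $1$) is precisely the slack that makes this achievable, matching the $\sqrt 2$ appearing in the Cuntz lemma.
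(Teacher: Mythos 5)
Your easy direction is fine. The hard direction, however, rests on a false premise: you assert as a ``standard fact'' that in a simple \cstar-algebra one has $b\precsim a$ for all positive $a,b$ with $\|a\|=1$, $\|b\|\le 1$. That is not a consequence of simplicity --- it is (essentially) the \emph{definition} of a purely infinite simple \cstar-algebra (see \S\ref{S.PI} of this paper). In a general simple algebra Cuntz domination by a single element fails badly: in $M_n(\bbC)$, take $a$ a rank-one projection and $b=1_n$; then $x^*ax$ has rank at most one for any single $x$, so no $k=1$ witness (nor any bounded perturbation of one) can approximate $b$. What simplicity actually gives is only that $b$ lies in the closed two-sided ideal generated by $(a-\tfrac12)_+$, i.e.\ $b\approx \sum_{i=1}^k y_i^*(a-\tfrac12)_+y_i$ for some finite $k$ and \emph{a priori uncontrolled} $y_i$. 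Your plan to invoke Lemma~\ref{L.Cuntz} to produce a single $z$ with $\|z\|\le\sqrt2$ and $b\approx z^*az$ therefore collapses at the first step, and the fallback of ``partitioning into finitely many pieces'' does not repair it, because the obstruction is not the size of the perturbation but the failure of the comparison relation itself.

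The actual difficulty --- which your proposal correctly identifies but does not resolve --- is converting the uncontrolled sum $\sum y_i^*(a-\tfrac12)_+y_i$ into a sum $\sum x_i^*ax_i$ with $\|\sum x_i^*x_i\|\le 2$. The paper does this with a functional calculus trick: choose $f\in C_0((0,1],[0,2])$ with $f(t)t=g_{1/2}(t)$ (so $t\,(t-\tfrac12)_+f(t)=(t-\tfrac12)_+$) and set $x_i:=((a-\tfrac12)_+f(a))^{1/2}y_i$. This makes $\sum x_i^*ax_i=\sum y_i^*(a-\tfrac12)_+y_i$ exactly, while the column-operator computation gives $\sum x_i^*x_i = Y^*(f(a)\otimes 1_k)Y \le 2Y^*Y = 2\sum y_i^*(a-\tfrac12)_+y_i \approx_{2\e} 2(b-\e)_+$, so the norm bound $\le 2$ comes from $\|b\|\le 1$, not from controlling any individual $x_i$. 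In other words, the constant $2$ is the $\|f\|_\infty$ of the functional-calculus correction, not the $(\sqrt 2)^2$ from Lemma~\ref{L.Cuntz}. You would need to replace your Cuntz-comparison step with an argument of this kind.
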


\begin{proof}
It is quite clear that the condition stated in this lemma does imply that $A$ is simple.
Therefore, let us assume that $A$ is simple and show the converse.

Let $a,b \in A_+$ and $\e > 0$ be given, with $\|a\| = 1$ and $\|b\| \leq 1$.
Since $A$ is simple, 
 for some $k \in \bbN$ and some $y_1,\dots,y_k \in A$,
\begin{equation}
\label{eq:SimpleyDef}
{ \|b- y_1^*(a-{\textstyle\frac12})_+y_1 + \cdots + y_k^*(a-{\textstyle\frac12})_+y_k\|<\e}.
\end{equation}
Let $g_{1/2} \in C_0((0,1],[0,1])$ be the function which is $0$ on $[0,1/4]$, $1$ on $[1/2,1]$ and linear in between.
Let $f \in C_0((0,1],[0,2])$ be a function such that $f(t)t = g_{1/2}(t)$. 
For $i=1,\dots,k$, define
\[ 
{x_i := ((a-\textstyle\frac12)_+f(a))^{1/2}y_i.} 
\]
Since $t(t-1/2)_+f(t)=(t-1/2)_+$, 
we have 
\[
x_1^*ax_1 + \cdots + x_i^*ax_i = 
y_1^*(a-\tfrac12)_+y_1 + \cdots + y_i^*(a-\tfrac12)_+y_i \]
so that   \eqref{eq:SimpleyDef} implies
$
 \|x_1^*ax_1 + \cdots + x_i^*ax_i - b\| < \e$. 
Setting
\[
 Y := \left(\begin{array}{c} (a-\textstyle\frac12)_+^{1/2}y_1 \\ \vdots \\ (a-\frac12)_+^{1/2}y_i \end{array} \right), 
 \]
we have
\begin{align*}
x_1^*x_1 + \cdots + x_i^*x_i &= Y^*(f(a) \otimes 1_i)Y \\
&\leq 2Y^*Y \\
&= {2(y_1^*(a-\textstyle\frac12)_+y_1 + \cdots + y_i^*(a-\frac12)_+y_i)} \\
&\approx_{2\e} 2(b-\e)_+,
\end{align*}
and therefore (assuming $\e<1$, as we may), $\|x_1^*x_1 + \cdots + x_i^*x_i\| \leq 2\|(b-\e)_+\| +2\e \leq 2$.
\end{proof}

We need the following small lemma before the next Proposition.
\begin{lemma}\label{L.c*c}
The set $\{\bar c\in A^n: \sum_i c_i^*c_i\leq 1\}$ is definable for every~$n$. 
\end{lemma}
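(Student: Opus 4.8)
The plan is to exhibit the set as the zero-set of an explicit weakly stable quantifier-free predicate and then invoke Lemma~\ref{L.weaklystable}. First I would observe that $h:=\sum_{i=1}^n c_i^* c_i$ is positive, so the defining condition $h\leq 1$ is equivalent to $\|h\|\leq 1$. Consequently the assignment in question is precisely the zero-set of
\[
\psi(\bar c):=\Big\|\sum_{i=1}^n c_i^* c_i\Big\| \dotminus 1,
\]
which is a genuine quantifier-free formula in the language of \cstar-algebras (the norm of a fixed $^*$-polynomial). In particular it is a uniform assignment of closed sets, and it is bounded, since $\|c_i\|^2=\|c_i^*c_i\|\leq\|h\|\leq 1$ whenever $\psi(\bar c)=0$.

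By Lemma~\ref{L.weaklystable} it then suffices to prove that $\psi$ is weakly stable relative to the class of all \cstar-algebras. For this I would use a scaling argument: given $\bar c$ with $\psi(\bar c)<\delta$, i.e.\ $\|h\|<1+\delta$, set $t:=\max\{1,\|h\|^{1/2}\}$ and $c_i':=t^{-1}c_i$. Then $\sum_i (c_i')^*c_i'=t^{-2}h$ has norm at most $1$, so $\bar c'$ lies in the zero-set of $\psi$, while
\[
\|c_i-c_i'\|=(1-t^{-1})\|c_i\|\leq (1+\delta)^{1/2}-1,
\]
using $\|c_i\|\leq\|h\|^{1/2}<(1+\delta)^{1/2}$. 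Hence $\|\bar c-\bar c'\|\to 0$ as $\delta\to 0$, and given $\e>0$ one chooses $\delta$ with $(1+\delta)^{1/2}-1<\e$; this modulus is independent of $A$ and of $\bar c$, giving weak stability.

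The only point requiring care, and where I would be most attentive, is the uniform control of the individual coordinates: one needs the operator inequality $c_i^*c_i\leq \sum_j c_j^*c_j=h$, and hence $\|c_i\|\leq\|h\|^{1/2}$, in order to bound the displacement of each $c_i$ purely in terms of $\|h\|$. This is exactly what keeps the modulus of weak stability from depending on the ambient algebra.

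Alternatively, the same conclusion follows semantically from Theorem~\ref{Th.sem.def}. The assignment respects $^*$-homomorphisms (if $\|h\|\leq 1$ then $\|f(h)\|\leq\|h\|\leq 1$), and for $A=\prod_\cU A_i$ one checks directly that $S^A=\prod_\cU S^{A_i}$: the inclusion $\supseteq$ is immediate by \L o\'s' theorem, and for $\subseteq$ one rescales an arbitrary representing sequence $(c_j)_i$ by the factor $\max\{1,\|\sum_j (c_j)_i^*(c_j)_i\|^{1/2}\}$, which tends to $1$ along $\cU$ and therefore does not change the represented tuple.
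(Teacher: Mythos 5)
Your proof is correct and follows essentially the same route as the paper: the paper's predicate $\|(\sum_i c_i^*c_i-1)_+\|$ coincides with your $\|\sum_i c_i^*c_i\|\dotminus 1$ since the sum is positive, and both arguments establish weak stability by rescaling (the paper uses $b_i=a_i/(1+\e)$ where you use $c_i'=t^{-1}c_i$) before invoking the equivalence of definability with being the zero-set of a weakly stable predicate. The coordinatewise bound $\|c_i\|\leq\|h\|^{1/2}$ that you flag is indeed the point that makes the modulus uniform, and it is implicitly used in the paper's estimate as well.
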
 

\begin{proof} 
With  $a_+$ denoting the positive part of a self-adjoint operator $a$ we have 
$a_+=f(a)$ where $f(t):=(t+|t|)/2$. Let 
 \[
 \varphi(\bar c):=\bigg\| (\sum_i c_i^*c_i-1)_+\bigg\|
 \]
 Now if $\varphi(\bar a)\leq \e$ then $b_i=a_i/(1+\e)$ satisfies $\|\bar b-\bar a\|\leq \e$
  and $\varphi(\bar b)=0$. By \cite[Lemma 2.10]{BYBHU} one can find a continuous function $f$ such that 
  \[
  \text{d}(\bar a, \{\bar c: \sum_i c_i^*c_i\leq 1\})=f(\varphi(\bar a))
  \] and so, by Theorem \ref{T.def-dist}, the zero-set of $\varphi$ is definable.
\end{proof} 

\begin{prop} \label{P.Simple} Simple algebras are \udt. 
\end{prop}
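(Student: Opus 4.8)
The plan is to exhibit a single uniform family of existential formulas $\mathcal F$, in two free variables $a,b$ ranging over suitable definable sorts, whose associated predicate $p_{\mathcal F}=\inf_{\varphi\in\mathcal F}\varphi$ vanishes identically exactly on the simple \cstar-algebras, and then to invoke Lemma~\ref{L.Simple}. Concretely, I would let $a$ range over the set $S_a$ of positive elements of norm $1$ and $b$ over the set $S_b$ of positive contractions. Both are definable sets: $S_b$ is the image of the unit ball under the term $y\mapsto y^*y$, while $S_a$ is the image of the unit sphere (Example~\ref{Ex.1}(4)) under the same term, so both are definable by the remark in \S\ref{S.Definable} that the image of a definable set under a continuous function is definable. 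By \S\ref{S.Definable} we may therefore use $a,b$ as variables ranging over these sorts.

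For each $k\in\bbN$ set $D_k:=\{\bar x\in A^k:\sum_{i\le k}x_i^*x_i\le 2\}$, which is definable (a rescaling of the set in Lemma~\ref{L.c*c}), and define $\psi_k(a,b):=\inf_{\bar x\in D_k}\bigl\|\sum_{i=1}^k x_i^*ax_i-b\bigr\|$. Since $D_k$ is quantifier-free definable, quantifying the quantifier-free formula $\|\sum_i x_i^*ax_i-b\|$ over $\bar x\in D_k$ yields an existential definable predicate (by Definition~\ref{D.Assignment} together with Theorem~\ref{T.def-dist}); this accounts for the ``moreover'' clause of Theorem~\ref{T1}. I would then set $\mathcal F:=\{\psi_k:k\in\bbN\}$ and $\mathcal F_n:=\mathcal F$ for all $n$, with $T$ the theory of \cstar-algebras.

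Two verifications remain. First, $\mathcal F$ is a uniform family: each $\psi_k$ is $\bbR^+$-valued, and for $\bar x\in D_k$ one has $\|\sum_i x_i^*(a-a')x_i\|\le\|a-a'\|\,\|\sum_i x_i^*x_i\|\le 2\|a-a'\|$, whence $|\psi_k(a,b)-\psi_k(a',b')|\le 2\|a-a'\|+\|b-b'\|$ uniformly in $k$ and in the ambient algebra, so $u(t):=2t$ is a common continuity modulus. Second, for $a\in S_a$ and $b\in S_b$ the condition $p_{\mathcal F}(a,b)=\inf_k\psi_k(a,b)=0$ unwinds to: for every $\e>0$ there are $k$ and $x_1,\dots,x_k$ with $\|\sum_i x_i^*x_i\|\le 2$ and $\|\sum_i x_i^*ax_i-b\|<\e$. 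Hence $p^A_{\mathcal F}\equiv 0$ is precisely the criterion of Lemma~\ref{L.Simple}, so it holds if and only if $A$ is simple, completing the verification that the class is \udt.

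The main obstacle is the bookkeeping around normalization. Lemma~\ref{L.Simple} controls only positive $a$ of norm exactly $1$, and the fixed bound $2$ on $\sum x_i^*x_i$ genuinely fails for $a$ of small norm (and is vacuous at $a=0$); one cannot repair this by letting the bound grow with $k$, since then the Lipschitz constant of $\psi_k$ would grow with $k$ and destroy the common modulus demanded of a uniform family. This is exactly why the free variables must be restricted to the definable sets $S_a$ and $S_b$ rather than to the full unit ball---a restriction that the density/closedness argument behind Lemma~\ref{L.udt} forbids one from encoding by a penalty term inside the formulas. Confirming that $S_a$ and $S_b$ are definable and that the quantification over $D_k$ produces an honest existential definable predicate is the only genuinely technical point, and both follow from the machinery already assembled in \S\ref{S.Definable}.
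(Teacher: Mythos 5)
Your proof is correct and follows essentially the same route as the paper's: the same formulas $\inf_{\bar x,\,\|\sum_i x_i^*x_i\|\le 2}\|\sum_i x_i^*ax_i-b\|$ quantified over the definable set from Lemma~\ref{L.c*c}, the same appeal to Lemma~\ref{L.Simple}, and the same uniformity check (your $2$-Lipschitz bound via $-\|c\|\sum_i x_i^*x_i\le\sum_i x_i^*cx_i\le\|c\|\sum_i x_i^*x_i$ is in fact slightly sharper than the paper's $4$-Lipschitz estimate obtained by splitting into positive and negative parts). Your explicit restriction of the free variables to the definable sorts of positive norm-one elements and positive contractions addresses a normalization point the paper passes over silently, and it is handled correctly.
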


\begin{proof}
The set
\[ 
\{\bar x\in A^i \mid \|x_1^*x_1 + \cdots + x_i^*x_i\| \leq 2\} 
\]
is definable by Lemma~\ref{L.c*c}
and homogeneity and we can therefore quantify over it. 
Let $\varphi_i(a,b)$ denote the formula
\[ 
\varphi_i(a,b) := \inf_{\bar x\in A^i, \|\sum x_j^*x_j\|\leq 2} \|x_1^*ax_1 + \cdots + x_i^*ax_i - b\|. 
\]
We now prove that $\varphi_i$ is $4$-Lipschitz.
To show this, it suffices to show that, if $a \in A$ is self-adjoint then for $x_1,\dots,x_i \in A_+$ satisfying $\|x_1^*x_1 + \cdots x_i^*x_i\| \leq 2$, we have
\[  \|x_1^*ax_1 + \cdots + x_i^*ax_i\| \leq 4\|a\|. \]
By decomposing $a$ into positive and negative parts, it suffices to show the same inequality, but with $2$ in place of $4$, under the assumption that $a\geq 0$.
Setting
\[
 X := \left(\begin{array}{c} x_1 \\ \vdots \\ x_i \end{array} \right), 
 \]
we have
\begin{align*}
\|x_1^*ax_1 + \cdots + x_i^*ax_i\| &= \|X^*(a \otimes 1_i)X\| \\
&= \|(a^{1/2} \otimes 1_i)XX^*(a^{1/2} \otimes 1_i)\| \\
&\leq \|XX^*\|\,\|a \otimes 1_i\| \\
&= \|X^*X\|\,\|a \otimes 1_i\| \\
&= \|x_1^*x_1 + \cdots + x_i^*x_i\| \|a\| \\
&\leq 2\|a\|,
\end{align*}
as required.

Now consider the single set of formulas $\mathcal{F}$ consisting of $\varphi_i(x,y)$ for $i \in \bbN$.  Lemma \ref{L.Simple} shows that $A$ is simple if and if 
\[
\inf_{i \in \bbN} \varphi_i(a,b) = 0
\]
 for all $a,b \in A$.
\end{proof}

\section{Popa algebras}\label{Subsection: Popa}
\label{Subsection: Popa.2} 

Popa algebras are defined via an internal finite-dimen\-sio\-nal approximation property
which ensures an abundance of projections. They  
were introduced by Popa in \cite{popa1997local} to link \cstar-algebraic quasidiagonality to properties of the  hyperfinite~II$_{1}$ factor $R$.  
A \cstar-algebra is a Popa algebra\index{C@\cstar-algebra!Popa}
 (see \cite[Definition 1.2]{brown2006invariant}, \cite[Definition 2.4.1]{Watson:Structure}) if
 for every $n\in \bbN$ and every finite subset $G$
 of $A$ there is a finite-dimensional subalgebra $F$ of $A$ such that,
denoting by $p$ the unit of $F$,

\begin{enumerate}
\item $\left\Vert px-xp\right\Vert <\frac{1}{n}$ for every $x\in G$, and

\item for every $x\in G$ there is $y\in F$ such that $\left\Vert
pxp-y\right\Vert <\frac{1}{n}$.
\end{enumerate}

Popa algebras are always quasidiagonal, and every simple, unital, and  quasidiagonal algebra of real rank zero is a 
Popa algebra (\cite[Theorem 1.2]{popa1997local}).

We prove Theorem~\ref{T1} \eqref{T1.Popa}, that Popa algebras (\S\ref{Subsection: Popa}) are 
\udt. 

Fix a finite-dimensional \cstar-algebra $F$. By Example~\ref{Ex.1} \eqref{Ex.1.beta-F}, for every $m$  the formula
\[ \alpha_{F,m}(\bar y,z) := \inf_{\phi:F \to A} d(\bar y, \phi(F)) + \|z-\phi(1)\| \]
is weakly stable, and it is evidently $1$-Lipschitz.

For  $m\in\bbN$  consider the set of formulas $\mathcal{F}_{m}$ 
consisting  of all
\begin{equation}\label{Eq.Popa} 
\gamma_{F,m}:=\inf_{z} \max \{ \beta_{F,m}(z\bar yz, z), \max_{j\leq m} \|[z,y_j]\|\}
\end{equation}
where $F$ ranges over all finite-dimensional algebras. 
The formula $\gamma_{F,m}$ in \eqref{Eq.Popa} is still 1-Lipschitz and hence formulas in $\mathcal{F}_{m}$ form a uniform family of formulas. 
By the above, $A$ is a Popa algebra if and only if $\inf_{\varphi \in \mathcal{F}_{m}} \varphi(\bar a) = 0$ for every $m \in \bbN$ and $\bar a$ in $A$.

\section{Simple tracially AF algebras}\label{Subsection: TAF}
\label{Subsection: TAF.2}

A major development in the Elliott programme 
 was  Lin's introduction  (inspired by  Popa algebras, \S\ref{Subsection: Popa}, and also by Kirchberg algebras) 
 of the class of 
 \emph{tracially AF}\index{C@\cstar-algebra!tracially AF (TAF)} 
 (TAF) \cstar-algebras in 
  \cite{lin2001tracially}.      Lin verified, under the additional assumption of the Universal Coefficient Theorem (UCT), 
  that the Elliott conjecture holds for these algebras (\cite{lin_classification_2004}).
 
This modifies Popa's concept, with approximation by finite-di\-men\-sion\-al \cstar-algebras  relaxed and given 
a new twist: finite-dimensional approximation is now achieved not in norm but in terms of `internal  excision'; the additional feature of Lin's notion is it that the excising subalgebras are large in a suitable sense. 
 The class of TAF algebras vastly generalizes the class of AF algebras.  By results of Winter, 
 TAF algebras include all simple \cstar-algebras with real rank zero and finite decomposition rank (the latter condition can be relaxed to locally finite decomposition rank in the presence of $\mathcal{Z}$-stability); cf.\ \cite{winter_classification_2006,winter_simple_2007}.

Lin's original definition of tracially AF algebras appears in  \cite[Definition~2.1]{lin2001tracially}. In the case of simple \cstar-algebras the definition can be simplified  (see \cite[Definition~3.1]{winter_classification_2006}). 
An element $a$ of a \cstar-algebra is \emph{full}\index{full (element of a \cstar-algebra)} if the ideal it generates is improper \cite[II.5.3.10]{Black:Operator}. Lin's definition implies that for every positive element $a\in A$ the hereditary subalgebra $\overline{aAa}$ contains a projection. In case $A$ is simple, every nonzero element is full and therefore every nonzero hereditary subalgebra includes one of the form $qAq$ for some nonzero projection $q$. 
(The latter property is known to be equivalent to the algebra having 
real rank zero, see \cite[Theorem~3.4]{lin2001tracially} but we shall not need this fact.)

\begin{definition}
A unital  simple \cstar-algebra $A$ is tracially AF (TAF) if for any $\varepsilon >0$, nonzero positive element $b\in A_+$, $k\in \mathbb{N}$, finite subset $G$ of
the unit ball of $A$,
there is a finite
dimensional \cstar-algebra $F\subset A$ such that, denoting by $p$ the unit of $%
F $, 
\begin{enumerate}
\item\label{I.TAF.1} $p$ commutes up to $\varepsilon $ with every element of $G$,

\item\label{I.TAF.2}  for every $x\in G$ there is 
$y\in F$ such that $\left\Vert pxp-y\right\Vert <\varepsilon $,

\item \label{I.TAF.5} $1-p$ is Murray--von Neumann equivalent to a projection in the hereditary subalgebra $\overline{bAb}$ generated by $b$.
\pushcounter
\end{enumerate}
\end{definition}

We prove Theorem~\ref{T1} \eqref{T1.TAF}, that simple tracially AF  algebras  are 
\udt. 
Observe that quantification in \eqref{I.TAF.1}  and \eqref{I.TAF.2}  is over definable sets by Example~\ref{Ex.1} \eqref{Ex.1.projection} and 
\eqref{Ex.1.alpha-F}, respectively. 
Also note that, if $g\in C_0((0,1])$ is the function which is $0$ at $0$, $1$ on $[1/2,1]$, and linear on $[0,1/2]$, then
\[ \overline{(b-1/2)_+A(b-1/2)_+} \subseteq \{a \in A \mid g(b)a=ag(b)=a\} \subseteq \overline{bAb}, \]
and therefore the definition of TAF remains the same if the condition \eqref{I.TAF.5} is replaced by
\begin{enumerate}
\popcounter 
\item \label{I.TAF.6} There exists a partial isometry $v$ such that $vv^*=1-p$ and $g(b)v^*v = v^*v$,
\pushcounter
\end{enumerate}
or even (using weak stability of projections and of Murray--von Neumann equivalence), for some fixed sufficiently small $\eta$,
\begin{enumerate}
\popcounter 
\item \label{I.TAF.7} There exists a partial isometry $v$ such that $\|vv^*-(1-p)\|<\eta$ and $\|g(b)v^*v- v^*v\|<\eta$.
\pushcounter
\end{enumerate}

As in the previous section, for a finite-dimensional \cstar-algebra $F$, by Example~\ref{Ex.1} \eqref{Ex.1.beta-F}, the formula
\[ \alpha_{F,m}(\bar y,z) := \inf_{\phi:F \to A} d(\bar y, \phi(F)) + \|z-\phi(1)\| \]
is weakly stable (for all $m$), and evidently $1$-Lipschitz.

Consider the $(m+1)$-ary formula  $\theta_{F,m}(b,\bar a)$ in the variables
$b, \bar a$,\footnote{Here we denote variables by symbols normally used for constants in order to increase the readability.}
  defined to be the infimum over all projections~$z$ of the maximum of the following
\begin{enumerate}
\popcounter
\item $\alpha_{F,m}(z\bar a z, z)$,
\item $\max_{j\leq m} \|[z,a_j]\|$,
\item $\inf_{v\text{ partial isometry}} \max(\|z-vv^*\|, \|(1-g(b))v^*v\|)$,
\end{enumerate}

Since each of the components of $\theta_{F,m}$ is 1-Lipschitz in $\bar a$, all
 formulas $\theta_{F,m}$ are 1-Lipschitz in $\bar a$.
For the variable $b$, there is a uniform continuity modulus $\delta$ such that if $d(b,b')<\delta(\e)$ then $d(g(b),g(b'))<\e$; thus, each $\theta_{F,m}$ has $\delta$ as its uniform continuity modulus for the first variable.
Therefore for every $m\in \bbN$, the set $\Sigma^m$ of formulas in the free variables $\bar x$ and $y$,
 consisting of the  formulas $\theta_{F,m}(y, \bar x)$, where $F$ ranges over all finite-dimensional algebras,  is uniform. 

If a simple \cstar-algebra $A$ is TAF then 
for every positive element $b$ of norm $1$, every $m\in \bbN$, and every $\bar a \in A^m$,
\[ \inf_F \theta_{F,m}(b,\bar a) = 0. \]
On the other hand, if $A$ is simple and
\[ \inf_F \theta_{F,m}(b,\bar a) = 0 \]
for every positive element $b$ of norm $1$, every $m\in \bbN$, and every $\bar a \in A^m$, then using (\ref{I.TAF.7}), we see that $A$ is TAF.

We can therefore use the uniform sets of formulas $\Sigma^m$ for all $m \in \bbN$ 
together with the uniform set of formulas shown in Proposition~\ref{P.Simple} to 
characterize simplicity to show that being simple and TAF is \udt.

\section{Quasidiagonality}\label{S.T1.QD} 
We give a proof of Theorem \ref{T1}~(\ref{T1.QD}) in the style of \S\ref{S.Beth-proof}. 
Fix $k,n \in \bbN$.  For a \cstar-algebra $A$, define $Q^A_n(\bar a)$ for $\bar a \in A_1^k$ 
as
\[
\inf_\phi\max\{\{\|a_j\|-\|\phi(a_j)\|: j\leq k\}\cup \{ \|\phi(a_ia_j)-\phi(a_i)\phi(a_j)\|: i, j\leq k\}\}
\]
where $\phi$ ranges over all c.p.c.\  maps from $A$ into $M_n(\bbC)$. 
Notice that $Q^A_n$ is 1-Lipschitz.  
By Proposition~\ref{claim:elementaryclass2} we can quantify over c.p.c.\ maps from the set of $k$-tuples into $M_n(\bbC)$ and so $Q_n$ is a definable predicate.
$A$ is \emph{quasidiagonal} if for every $k\geq 1$, $\bar a \in A^k$,  and $\e > 0$ there is an $n$ such that $Q_n^A(\bar a) < \e$ so it is clear that being quasidiagonal is \udt.

\section{An application: Preservation by quotients} 
Theorem \ref{T1} (or rather its proof), together with some straightforward calculations, provides a uniform proof of some preservation results.

A uniform family of formulas is positive if each formula in the family is positive 
(see Definition~\ref{def:complexity}).

\begin{lemma} \label{L.decreasing.2} Assume that $\Sigma$
 is a 
 family of positive  formulas and that $\inf_{\varphi \in \Sigma} \varphi^A(\bar a) = 0$ for every  $\bar a$ in $A$. 
 Then the same is true for every homomorphic image of $A$.
 \end{lemma}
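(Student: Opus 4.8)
The plan is to exploit Proposition~\ref{P.Positive.Preservation}, which states that positive formulas decrease under surjective homomorphisms, and combine it with the definition of $p_\Sigma$ as an infimum over $\Sigma$. Let $f\colon A \to B$ be a surjective homomorphism (any homomorphic image of $A$ arises this way after replacing $B$ by the image $f(A)$, which is itself a \cstar-algebra since $^*$-homomorphisms between \cstar-algebras have closed range). Fix a tuple $\bar b$ in $B$ of the appropriate sort. The goal is to show $\inf_{\varphi \in \Sigma} \varphi^B(\bar b) = 0$.

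The key step is to lift $\bar b$ to $A$. By surjectivity of $f$ we may choose $\bar a$ in $A$ with $f(\bar a) = \bar b$. By hypothesis $\inf_{\varphi \in \Sigma} \varphi^A(\bar a) = 0$, so for every $\e > 0$ there is some $\varphi \in \Sigma$ with $\varphi^A(\bar a) < \e$. Since each $\varphi \in \Sigma$ is a positive formula, Proposition~\ref{P.Positive.Preservation} gives
\[
\varphi^B(\bar b) = \varphi^B(f(\bar a)) \leq \varphi^A(\bar a) < \e.
\]
Therefore $\inf_{\varphi \in \Sigma} \varphi^B(\bar b) \leq \varphi^B(\bar b) < \e$, and since $\e > 0$ was arbitrary we conclude $\inf_{\varphi \in \Sigma} \varphi^B(\bar b) = 0$. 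As $\bar b$ was an arbitrary tuple in $B$, this is exactly the assertion that $p_\Sigma^B \equiv 0$, i.e.\ the conclusion holds for the homomorphic image.

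I do not expect any serious obstacle here; the statement is essentially a one-line consequence of positivity plus the interchange of the infimum with the inequality $\varphi^B(f(\bar a)) \leq \varphi^A(\bar a)$. The only points needing a word of care are (i) noting that a homomorphic image can be taken to be a genuine surjective image onto a \cstar-algebra so that Proposition~\ref{P.Positive.Preservation} applies, and (ii) checking that the uniform continuity modulus $u$ shared by the family $\Sigma$ plays no role in the argument beyond guaranteeing that $p_\Sigma$ is well defined and continuous (which is already recorded in Lemma~\ref{L.udt}). The lifting of $\bar b$ need not be isometric or canonical in any way, since we only use that \emph{some} preimage exists and that the positive formula value cannot increase under $f$.
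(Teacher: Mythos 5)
Your proof is correct and takes essentially the same route as the paper: both arguments reduce the lemma to Proposition~\ref{P.Positive.Preservation} (positive formulas decrease under surjective homomorphisms) and then pass the inequality through the infimum over $\Sigma$. Your version just spells out the lifting of $\bar b$ to a preimage $\bar a$ explicitly, which the paper leaves implicit.
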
 
 
 \begin{proof} 
 The salient point of this lemma is that for a positive formula~$\varphi(\bar x)$ 
 and a surjective homomorphism $\Phi\colon A\to B$ one has 
 \[
 \varphi^A(\bar a)\geq\varphi^B(\Phi(\bar a))
 \]  by Proposition~\ref{P.Positive.Preservation}. 
In particular we have 
\[
\inf_{\varphi\in \Sigma}\varphi^A(\bar a)\geq \inf_{\varphi\in \Sigma}\varphi^B(\Phi(\bar a))
\]and the conclusion follows. 
  \end{proof} 
  
  This indicates a strategy for proving that certain classes of \cstar-algebras are closed under quotients.  If the class is characterized by a uniform family of positive formulas then by the previous lemma, that class is closed under quotients.  As an example, we note
 
 \begin{lemma} \label{L.decreasing.1} The uniform families of formulas that characterize being a Popa algebra
 are positive. 
\end{lemma}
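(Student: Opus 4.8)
The plan is to prove Lemma~\ref{L.decreasing.1} by a structural inspection of the formulas making up the families $\mathcal{F}_m$ from \eqref{Eq.Popa}, verifying that each is positive in the sense of Definition~\ref{def:complexity}, i.e.\ assembled from atomic formulas using only quantifiers and connectives given by coordinatewise nondecreasing functions $\bbR^n\to\bbR$. The governing observation, which I would state first, is that the class of positive formulas is closed under exactly the operations that occur in $\gamma_{F,m}$: the connectives $\max$, $\min$ and addition are all coordinatewise nondecreasing, and the quantifiers $\inf$ and $\sup$ preserve positivity. This is precisely the inductive bookkeeping already carried out in the proof of Proposition~\ref{P.Positive.Preservation}, so I can invoke it directly and thereby reduce the whole statement to checking the atomic building blocks of $\gamma_{F,m}$.

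Recall that $\gamma_{F,m}=\inf_{z}\max\{\alpha_{F,m}(z\bar y z,z),\ \max_{j\leq m}\|[z,y_j]\|\}$. The inner term $\max_{j\leq m}\|[z,y_j]\|$ is a maximum of norms of $\cst$-polynomials, hence a positive combination of atomic formulas. The substitution of the terms $z y_l z$ for the free variables $y_l$ is harmless, since terms enter a formula only inside atomic subformulas and do not affect which connectives are applied; so positivity of $\alpha_{F,m}(z\bar y z,z)$ follows from positivity of $\alpha_{F,m}(\bar y,z)$, and the outer $\max$ and $\inf_{z}$ then keep the whole expression positive. Thus everything reduces to the single predicate $\alpha_{F,m}(\bar y,z)=\inf_{\phi\colon F\to A}\big(d(\bar y,\phi(F))+\|z-\phi(1_F)\|\big)$, and verifying its positivity is the main step.

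I expect this last step to be the crux, precisely because the naive way to express ``$\bar y$ lies in a copy of $F$ with unit $z$'' uses a normalization such as $\bigl|\,\|x_{11}\|-1\,\bigr|$ (cf.\ Example~\ref{Ex.1}), and $t\mapsto|t-1|$ is \emph{not} nondecreasing. The resolution I would carry out is to use the homomorphism formulation as written: quantify over all $\cst$-homomorphisms $\phi\colon F\to A$, writing $\inf_{\phi}$ as an infimum over the tuples $\bar x$ of images of the matrix units of $F$. That set is the zero-set of a weakly stable formula (Example~\ref{Ex.1}, Lemma~\ref{L.cpco0.1}), and—crucially—the relations cutting it out, namely $\|x_{ij}x_{kl}-\delta_{jk}x_{il}\|$ and $\|x_{ij}-x_{ji}^{*}\|$, are norms of $\cst$-polynomials combined by $\max$, so they form a \emph{positive} weakly stable predicate $\rho_F$ with no $|\,\cdot-1\,|$ normalization. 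Using Theorem~\ref{T.def-dist} I would then replace the quantification ``over the definable set'' by $\inf_{\bar x}\big(\rho_F(\bar x)+\cdots\big)$; the containment term $d(\bar y,\phi(F))=\inf_{\bar\lambda}\max_l\|y_l-\sum_{ij}\lambda_{ij}x_{ij}\|$ is an infimum of norms, and $\|z-\phi(1_F)\|=\|z-\sum_i x_{ii}\|$ is atomic. Summing these positive pieces and applying the two infima $\inf_{\bar x}$ and $\inf_{\bar\lambda}$ yields a positive formula, so $\alpha_{F,m}$, and hence each $\gamma_{F,m}$, is positive. This establishes that every $\mathcal{F}_m$ is a positive uniform family, which is the assertion of the lemma and which, together with Lemma~\ref{L.decreasing.2}, gives closure of the Popa class under quotients.
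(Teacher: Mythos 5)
Your overall strategy is the same as the paper's: the paper's entire proof consists of displaying $\gamma_{F,m}=\inf_{z}\max\{\beta_{F,m}(z\bar yz,z),\ \max_{j\leq m}\|[z,y_j]\|\}$ and declaring it positive by inspection, which is exactly the structural bookkeeping of your first two paragraphs (quantifiers and the connectives $\max$, $+$ preserve positivity; substituting terms is harmless). Where you diverge is the treatment of the normalization, and that is where your argument has a genuine gap. You are right that the term $|\|x_{11}\|-1|$ inside $\alpha_F$ (Example~\ref{Ex.1}\eqref{L.alpha-n}) is a composition with the non-monotone function $t\mapsto|t-1|$, so $\beta_{F,m}$ is not literally positive as written — a point the paper glosses over. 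But your repair, replacing $\alpha_F$ by the predicate $\rho_F$ cut out only by $\|x_{ij}x_{kl}-\delta_{jk}x_{il}\|$ and $\|x_{ij}-x_{ji}^*\|$, enlarges its zero-set to the images of the matrix units under \emph{arbitrary} $^*$-homomorphisms $F\to A$, including the zero map. In your modified $\gamma_{F,m}$ the degenerate witness $z=0$, $\bar x=0$, $\bar\lambda=0$ then makes every constituent vanish: $z\bar yz=0$ is at distance $0$ from the span of $\bar x=0$, $\|z-\sum_i x_{ii}\|=0$, and $\|[z,y_j]\|=0$. Hence your $\gamma_{F,m}$ is identically $0$ on every unital \cstar-algebra — in particular on $\mathrm{C}([0,1])$, which is not a Popa algebra — so the families you build, while positive, no longer characterize being a Popa algebra, and you have proved the lemma for the wrong families.

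The failure is not incidental, and the normalization cannot be traded away: a positive formula must decrease under every surjective homomorphism (Proposition~\ref{P.Positive.Preservation}), whereas a quotient can kill a direct summand of an embedded copy of $F$. For instance, under $\bbC\oplus M_2(\bbC)\twoheadrightarrow M_2(\bbC)$ with $F=\bbC\oplus M_2(\bbC)$, the normalized $\gamma_{F,m}$ evaluates to $0$ in the domain (take $z=1$ and the copy $F$ itself) but is bounded below by a positive constant in $M_2(\bbC)$, since no nondegenerate copy of $\bbC\oplus M_2(\bbC)$ embeds there; so even the paper's own $\gamma_{F,m}$ fails the quotient-monotonicity test when a summand of $F$ dies, and the ``clearly positive'' verdict is more delicate than it looks. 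What is actually true semantically is that the Popa property passes to quotients by exchanging $F$ for the direct sum of its \emph{surviving} summands, i.e.\ the monotonicity operates at the level of the whole family $\{\gamma_{F,m}\}_F$ (so that $\inf_F\gamma_{F,m}$ decreases under surjections), not formula by formula. Any correct repair must keep a nondegeneracy clause pinning down a genuine copy of $F$ and then argue at the family level; your version, which deletes the clause, trivializes the characterization, and merely restricting $z$ to range over projections would not help, since $0$ is a projection.
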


\begin{proof} 
The formula \eqref{Eq.Popa} in \S\ref{Subsection: Popa.2}, 
\[
\gamma_{F,m}:=\inf_{z} \max \{ \beta_{F,m}(z\bar yz, z), \max_{j\leq m} \|[z,y_j]\|\}, 
\]
is clearly positive. 
\end{proof}

\begin{coro} \label{C.Quotient} The class of Popa algebras is preserved under quotients. \qed
\end{coro}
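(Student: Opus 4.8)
The plan is to deduce Corollary~\ref{C.Quotient} directly from the two preceding lemmas, which together do essentially all the work. Recall that by Theorem~\ref{T1}~\eqref{T1.Popa} (proved in \S\ref{Subsection: Popa.2}) a \cstar-algebra $A$ is a Popa algebra if and only if $A$ satisfies the ambient theory of \cstar-algebras and, for every $m\in\bbN$, one has $\inf_{\varphi\in\mathcal F_m}\varphi^A(\bar a)=0$ for all tuples $\bar a$ in $A$, where each $\mathcal F_m$ is the uniform family consisting of the formulas $\gamma_{F,m}$ from~\eqref{Eq.Popa}. So the class of Popa algebras is precisely the class of \cstar-algebras on which each of these uniform families has infimum identically zero.

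First I would invoke Lemma~\ref{L.decreasing.1}, which asserts that each family $\mathcal F_m$ consists of positive formulas (the formula $\gamma_{F,m}=\inf_z\max\{\beta_{F,m}(z\bar yz,z),\max_{j\leq m}\|[z,y_j]\|\}$ is built from norms and an infimum, hence is manifestly positive). Then I would apply Lemma~\ref{L.decreasing.2}: if $A$ is a Popa algebra and $\Phi\colon A\to B$ is a surjective $^*$-homomorphism, then for each fixed $m$ the positivity of the family $\mathcal F_m$ combined with Proposition~\ref{P.Positive.Preservation} gives
\[
\inf_{\varphi\in\mathcal F_m}\varphi^A(\bar a)\;\geq\;\inf_{\varphi\in\mathcal F_m}\varphi^B(\Phi(\bar a))
\]
for every tuple $\bar a$ in $A$. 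Since the left-hand side vanishes identically, so does the right-hand side; and because $\Phi$ is surjective, every tuple in $B$ is of the form $\Phi(\bar a)$, so $\inf_{\varphi\in\mathcal F_m}\varphi^B(\bar b)=0$ for all $\bar b$ in $B$ and all $m$.

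It remains only to observe that a quotient of a \cstar-algebra is again a \cstar-algebra, so $B$ satisfies the requisite theory; combined with the displayed vanishing for every $m$, the characterization of Theorem~\ref{T1}~\eqref{T1.Popa} shows that $B$ is a Popa algebra. This completes the argument. There is no genuine obstacle here: the entire content has been front-loaded into Lemma~\ref{L.decreasing.1} (positivity of the defining families) and Lemma~\ref{L.decreasing.2} (monotonicity of positive formulas under surjections), so the corollary is a one-line consequence. The only point requiring the mildest care is the use of surjectivity to pass from ``vanishing at all $\Phi(\bar a)$'' to ``vanishing at all tuples of $B$,'' which is immediate since $\Phi$ being onto means its coordinatewise extension to tuples is onto as well.
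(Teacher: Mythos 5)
Your proposal is correct and is exactly the argument the paper intends: the corollary is stated with \qed precisely because it follows immediately from Lemma~\ref{L.decreasing.1} (positivity of the families $\gamma_{F,m}$) together with Lemma~\ref{L.decreasing.2} (vanishing of the infimum over a positive family passes to surjective homomorphic images). Your additional remarks on surjectivity and on the quotient being a \cstar-algebra are the right, if routine, points of care.
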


The preservation of nuclearity by quotients is a deep result 
with no known elementary proofs  (see \cite[Theorem~10.1.4]{BrOz:cstar})
and it would be of interest  to deduce it from   Lemma~\ref{L.decreasing.2}. 
Predicates $R_n$ were defined at the beginning of \S\ref{S.Beth-proof} as 
\[
R^A_n:=\inf_f d(\bar a, f(\bar a))
\]
where $\bar a\in A^k$ and the infimum is taken over all c.p.c.\ maps 
 $f: A\rightarrow A$ which factor through $M_n(\bbC)$.
 Beth definability theorem implies that~$R_n$ is definable (\S\ref{S.Beth-proof}) 
 and the computations in \S\ref{S.2nd-proof} can be used to give 
 an explicit formula for $R_n$.  
 Neither of these proofs shows that~$R_n$ is positive. 
 If $A$ is a \cstar-algebra and its quotient $A/J$ is nuclear
 then the Choi--Effros lifting theorem  (\cite[Chapter 7]{choi1976completely} or \cite{Arv:Notes})
 implies that the quotient map~$\pi$ has a c.p.c.\  right inverse  $\Phi\colon A/J\to A$. 
 By composing~$\Phi$ with c.p.c.\ maps from $A$ to $M_n(\bbC)$ 
 one obtains $R^A_n\geq R^{A/J}_n\circ \pi$, giving some weak support 
 to a conjecture that $R_n$  decrease in value when one passes to quotients.

\section{An application: Perturbations} \label{S.pert}

The \emph{Kadison--Kastler distance}\index{Kadison--Kastler distance} between  subalgebras of $B(H)$ for a fixed Hilbert space $H$
is defined as the Hausdorff distance between their unit balls, 
\[
\dKK(A,B)=\max(\sup_{x\in A_1}\inf_{y\in B_1} \|x-y\|, \sup_{y\in B_1} \inf_{x\in A_1} \|x-y\|). 
\]

\begin{lemma} \label{L.dkk} 
For a fixed sentence $\varphi$ the map $A\mapsto \varphi^A$ is continuous with respect to $\dKK$. 
\end{lemma}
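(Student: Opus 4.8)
The plan is to prove that for a fixed sentence $\varphi$, the interpretation $A \mapsto \varphi^A$ varies continuously as $A$ ranges over subalgebras of $B(H)$ equipped with the Kadison--Kastler metric $\dKK$. The key observation is that $\dKK(A,B)$ being small means that every element of the unit ball of $A$ is norm-close to an element of the unit ball of $B$ and vice versa; since all the operations ($+$, $\cdot$, $^*$, scalar multiplication) and the norm are uniformly continuous, a small Hausdorff perturbation of the unit balls can only move the value of any fixed term, and hence any fixed formula, by a controlled amount. The argument is therefore an induction on the structure of $\varphi$ following Definition~\ref{formula}.

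First I would proceed by induction on the complexity of formulas, proving the stronger statement that for each formula $\psi(\bar x)$ (with free variables) there is a continuity modulus depending only on $\psi$ controlling $|\psi^A(\bar a) - \psi^B(\bar b)|$ in terms of $\dKK(A,B)$ and $\|\bar a - \bar b\|$, where $\bar a \in A$, $\bar b \in B$ are corresponding tuples. For atomic formulas this is immediate from the uniform continuity of the relation symbols (here, essentially the norm/metric) built into the language. The connective case follows because the outer continuous function $f \colon \bbR^n \to \bbR$ is itself (locally) uniformly continuous on the bounded range guaranteed by the boundedness of formulas, so it preserves smallness of differences. The only genuinely interesting case is the quantifier case, $\sup_x$ or $\inf_x$: given $\bar a \in A$ and a witness $c \in A_1$ for $\inf_x \psi^A$, I would use $\dKK(A,B)$ small to select $c' \in B_1$ with $\|c - c'\|$ small, apply the inductive hypothesis to bound $|\psi^A(c,\bar a) - \psi^B(c',\bar b)|$, and then note that $\inf_x \psi^B \le \psi^B(c',\bar b)$; the symmetric estimate using witnesses from $B$ and the duality of $\sup$ and $\inf$ gives the two-sided bound. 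Specializing to sentences (no free variables) then yields the claim that $A \mapsto \varphi^A$ is $\dKK$-continuous.

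The main technical obstacle will be handling the quantifiers cleanly: one must ensure that when passing from a near-witness $c \in A_1$ to a near-witness $c' \in B_1$, the approximating element stays in the \emph{unit ball} of $B$ (so that it is a legitimate witness for the quantifier ranging over $B_1$), and that the continuity moduli compose correctly through nested quantifiers without blowup depending on $H$ or on $A$ and $B$ themselves. This is exactly where the definition of $\dKK$ as a Hausdorff distance between unit balls is used, rather than some coarser notion of distance; the fact that both inclusions in the $\max$ defining $\dKK$ are available is what makes the $\sup$ and $\inf$ cases symmetric and gives the genuinely two-sided continuity estimate. Since the moduli depend only on the fixed formula $\varphi$ and not on the algebras, the resulting bound is uniform and the desired continuity follows.
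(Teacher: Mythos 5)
Your proof is correct and follows essentially the same route as the paper: both arguments rest on the two facts that quantifier-free interpretations are uniformly continuous with moduli independent of the algebra, and that the Hausdorff distance between unit balls lets one transfer near-witnesses for $\sup$ and $\inf$ from one algebra to the other. The only organizational difference is that you run a direct structural induction on the formula (atomic, connective, quantifier cases), whereas the paper inducts over a prenex quantifier prefix and then invokes the density of prenex-normal-form formulas in $\|\cdot\|_\infty$; your version sidesteps that density appeal but is otherwise the same argument.
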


\begin{proof} First recall that for every formula $\psi(\bar x)$ the interpretation of $\psi$ in $A$ is uniformly continuous on the unit ball, 
and that the modulus of uniform continuity does not depend on $A$. 
Now note that if $f\colon B(H)^{n+1}\to \bbR$ is uniformly continuous on the unit ball then 
the functions ($A$ ranges over subalgebras of $B(H)$ and $\bar a$ ranges over $n$-tuples in the unit ball of $B(H)$)
\[
(A,\bar a)\mapsto \inf_{x}  f(x, \bar a)
\qquad
\text{and}
\qquad 
(A,\bar a)\mapsto \sup_{x}  f(x, \bar a)
\]
are uniformly continuous with respect to the distance 
\[
d((A,\bar a), (B,\bar b))=\dKK(A,B)+\max_{i\leq n} \|a_i-b_i\|. 
\]
If $\varphi(\bar y)$ is a formula 
in the \emph{prenex normal form},\index{prenex normal form (of a formula)} 
\[
\sup_{x_1}\inf_{x_2}\dots \inf_{x_{2n}} \psi(\bar x, \bar y)
\]
where $\psi$ is quantifier-free, then the above observation and induction show that the 
map $(A,\bar a)\mapsto \varphi(\bar a)^A$ is uniformly continuous. Since the 
formulas in prenex normal form are dense in $\|\cdot\|_\infty$ (combine 
\cite[Theorem~6.3, Theorem~6.6 and Theorem~6.9]{BYBHU}), 
this concludes the proof. 
\end{proof}

By putting together the above lemma with the axiomatizability results of \S\ref{S.Axiomatizable}
we obtain the following. (Some of these results, achieved with different techniques, were previously known. References are in brackets).

\begin{corollary}\label{C.dkk.1} 
If $P$  is a property of \cstar-algebras such that both $P$ and its negation are axiomatizable, then the subalgebras of $B(H)$ satisfying $P$ form a $\dKK$-clopen set.  
In particular, the algebras satisfying each of the following form a $\dKK$-clopen set: 
\begin{enumerate}
\item abelian \cstar-algebras (\cite{KK:perturbation}),
\item $n$-subhomogeneous \cstar-algebras, for any given $n$ (\cite{Johnson:subh}),
\item unital, projectionless \cstar-algebras (\cite{KK:perturbation}),
\item algebras containing a unital copy of $M_n(\bbC)$, for any given $n$ (\cite{christensen1980near}), 
\item simple, purely infinite  \cstar-algebras (\cite{christensen2010perturbations}),
\item finite \cstar-algebras (\cite{KK:perturbation}),
\item \cstar-algebras with real rank zero (\cite{christensen2010perturbations}), 
\item\label{C.dkk.1.8} \cstar-algebras with stable rank one. \qed
\end{enumerate} 
\end{corollary}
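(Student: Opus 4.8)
The plan is to reduce the statement to two facts already in hand: Proposition~\ref{P.el-co-el}, which repackages two-sided axiomatizability as a single genuine sentence, and Lemma~\ref{L.dkk}, which gives $\dKK$-continuity of the evaluation of a fixed sentence. First I would invoke Proposition~\ref{P.el-co-el}: since both $P$ and its negation are axiomatizable, clause~\eqref{I.el-co-el.2} yields a sentence~$\phi$---and, as emphasized there, an honest sentence rather than a uniform limit of sentences---such that the subalgebras satisfying $P$ are precisely $\{A : \phi^A = 0\}$ while those satisfying $\lnot P$ are precisely $\{A : \phi^A = 1\}$. In particular $\phi^A \in \{0,1\}$ for every $A$.

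Next I would feed this $\phi$ into Lemma~\ref{L.dkk}, which shows that $A \mapsto \phi^A$ is continuous on the subalgebras of $B(H)$ with respect to $\dKK$. Since this function takes only the values $0$ and $1$, the set of subalgebras satisfying $P$ is at once the preimage of the open half-line $(-\infty,1/2)$ and the preimage of the closed half-line $(-\infty,1/2]$; continuity then exhibits it as both $\dKK$-open and $\dKK$-closed, hence clopen. Equivalently, $P$ and its complement are the $\phi$-preimages of the closed points $0$ and $1$, so each is $\dKK$-closed.

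For the ``in particular'' clause it only remains to cite, property by property, the earlier verifications that each is \emph{both} elementary and co-elementary, after which the general statement applies with no further work: abelian algebras (\S\ref{S.Abelian} and~\S\ref{S.nonabelian}), $n$-subhomogeneous algebras (\S\ref{S.SubHom} and~\S\ref{S.not.SubHom}), unital projectionless algebras (\S\ref{S.UP}, whose exhibited $\varphi$ is already $\{0,1\}$-valued), algebras containing a unital copy of $M_n(\bbC)$ (\S\ref{S.Mn}), simple purely infinite algebras (\S\ref{S.PI}), finite algebras (\S\ref{S.SF} and~\S\ref{S.Infinite}), real rank zero algebras (Example~\ref{Ex.rr0} and Proposition~\ref{P.RR}(2)), and stable rank one algebras (Lemma~\ref{L.SR.1}).

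The step I expect to be the crux is conceptual rather than computational: obtaining \emph{clopenness} rather than mere closedness is exactly what forces the use of the full two-sided hypothesis via Proposition~\ref{P.el-co-el}. One-sided axiomatizability of $P$ would only present $P$ as the zero-set of a $\dKK$-continuous function, hence as a closed set, and would say nothing about its complement; it is the passage to a single $\{0,1\}$-valued sentence that simultaneously closes both sides. The remaining ingredients---that Lemma~\ref{L.dkk} genuinely applies to subalgebras of a fixed $B(H)$, and the bookkeeping of references for the examples---are routine.
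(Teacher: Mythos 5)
Your proposal is correct and rests on exactly the ingredients the paper intends: Lemma~\ref{L.dkk} for $\dKK$-continuity of sentence evaluation together with the catalogue of elementary-and-co-elementary properties from \S\ref{S.Axiomatizable} and \S\ref{Co-elementarity} (the paper leaves the deduction as immediate). The only remark worth making is that the detour through Proposition~\ref{P.el-co-el} to manufacture a single $\{0,1\}$-valued sentence, while valid and tidy, is not needed: since an axiomatizable class is an intersection of zero-sets of $\dKK$-continuous functions it is $\dKK$-closed (this is exactly Corollary~\ref{C.dkk.2}), and applying this to both $P$ and its negation already yields that each is the complement of a closed set, hence clopen.
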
 

\begin{corollary} \label{C.dkk.2} 
If $P$ is a property of \cstar-algebras which is axiomatizable, 
then the subalgebras of $B(H)$ satisfying $P$ form a  $\dKK$-closed set. 
In particular, the algebras satisfying each of the following form a $\dKK$-closed set: 
\begin{enumerate}
\item stably finite \cstar-algebras (\cite{christensen2012perturbations}),
\item separable  MF algebras,
\item unital tracial \cstar-algebras (\cite{PerTomWW:Cuntz}),
\item unital \cstar-algebras with a character, 
\item \label{C.dkk.2.7} \cstar-algebras with stable rank $\leq n$, for any fixed $n\geq 1$, 
\item \label{C.dkk.2.8}abelian \cstar-algebras with real  rank $\leq n$, for any fixed $n\geq 0$, 
\item separable $D$-stable \cstar-algebras, for any strongly self-absorbing algebra $D$ (\cite{christensen2012perturbations}),
\item separable approximately divisible \cstar-algebras, and 
\item separable stable \cstar-algebras. 
\end{enumerate}
\end{corollary}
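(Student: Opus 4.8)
The plan is to reduce the entire statement to Lemma~\ref{L.dkk} together with the axiomatizability results already catalogued in \S\ref{S.Axiomatizable}. Suppose first that $P$ is axiomatized by a theory $T$, so that a \cstar-subalgebra $A\subseteq B(H)$ satisfies $P$ if and only if $\varphi^A=0$ for every $\varphi\in T$. By Lemma~\ref{L.dkk} each map $A\mapsto\varphi^A$ is $\dKK$-continuous, so $\{A\subseteq B(H):\varphi^A=0\}$ is $\dKK$-closed, being the preimage of $\{0\}$ under a continuous map. Hence the set of subalgebras satisfying $P$, namely $\bigcap_{\varphi\in T}\{A:\varphi^A=0\}$, is an intersection of $\dKK$-closed sets and therefore $\dKK$-closed. (If one prefers to allow $T$ to contain definable predicates rather than formulas, the conclusion is unchanged, since a definable predicate is a uniform limit of formulas and a uniform limit of $\dKK$-continuous functions is $\dKK$-continuous.) This is the one-sided half of the continuity argument underlying Corollary~\ref{C.dkk.1}, and it disposes of items (1), (3), (4), (5) and (6): stable finiteness is axiomatizable by \S\ref{S.SF}, unital tracial algebras and unital algebras with a character by \S\ref{S.tracial.0} and \S\ref{S.character}, stable rank $\leq n$ by Proposition~\ref{P.SR}, and abelian algebras of real rank $\leq n$ by Proposition~\ref{P.RR}, all recorded in Theorem~\ref{Summary}.

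The remaining items (2), (7), (8), (9) concern classes that are only \emph{separably} axiomatizable (Theorem~\ref{T.ssa} and \S\ref{S.MF}), and here one extra point is needed before the sentence-by-sentence argument applies: the $\dKK$-limits in question must themselves be separable. I would isolate this as a lemma. Since $\dKK$ is a metric, closure is sequential, so suppose $A$ is a $\dKK$-limit of separable subalgebras $A_n$, and fix a countable $\|\cdot\|$-dense subset $D_n$ of the unit ball of each $A_n$. Given $a\in A_1$ and $\e>0$, choose $n$ with $\dKK(A_n,A)<\e/2$; the inequality $\sup_{y\in A_1}\inf_{x\in (A_n)_1}\|x-y\|\leq\dKK(A_n,A)$ yields $x\in(A_n)_1$ with $\|a-x\|<\e/2$, and then $d\in D_n$ with $\|x-d\|<\e/2$, so $\|a-d\|<\e$. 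Thus $\bigcup_n D_n$ is dense in $A_1$, and $A=\bigcup_k kA_1$ is separable. Hence separability is preserved under $\dKK$-limits.

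With separability preservation in hand, each of (2), (7), (8), (9) follows. Let $\cC_0$, with theory $T_0$, be the axiomatizable class witnessing that $P$ is separably axiomatizable, so that a \emph{separable} subalgebra satisfies $P$ if and only if it lies in $\cC_0$. The set of separable subalgebras satisfying $P$ then equals the set of separable subalgebras $A$ with $A\models T_0$. If $A$ is a $\dKK$-limit of such algebras, then $A$ is separable by the lemma and $A\models T_0$ by Lemma~\ref{L.dkk}, so $A\in\cC_0$ and, being separable, $A$ satisfies $P$; this gives the claimed $\dKK$-closedness. The witnessing classes are supplied by Theorem~\ref{T.ssa} for $D$-stability, approximate divisibility and stability, and by \S\ref{S.MF} (through Lemma~\ref{L.MF.def} and Lemma~\ref{L.MF.1}) for MF.

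The main obstacle is exactly this separability issue for items (2), (7), (8), (9): the $\forall\exists$-axiomatizations there characterize the property only among separable algebras, so a $\dKK$-limit satisfying $T_0$ need not have property $P$ unless one first knows the limit is again separable. Once the density argument above is in place, everything else is the routine continuity-and-intersection argument of the first paragraph.
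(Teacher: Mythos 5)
Your proof is correct and follows the same route as the paper: Lemma~\ref{L.dkk} gives $\dKK$-continuity of $A\mapsto\varphi^A$, the axiomatizability results of \S\ref{S.Axiomatizable}, \S\ref{S.sr}, \S\ref{S.rrn} supply the theories, and the separably axiomatizable items are handled by the observation that separable subalgebras form a $\dKK$-closed set. The paper merely asserts that last observation, whereas you supply the (correct) density argument for it, but this is elaboration rather than a different approach.
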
 

\begin{proof} This is an immediate consequence of Lemma \ref{L.dkk},  the results of \S\ref{S.Axiomatizable}, \S\ref{S.sr}, \S\ref{S.rrn}
and (for the last few clauses) the observation that separable \cstar-algebras form a $\dKK$-closed set. 
\end{proof} 

Corollary~\ref{C.dkk.1} \eqref{C.dkk.1.8} and Corollary~\ref{C.dkk.2}\eqref{C.dkk.2.7}-\eqref{C.dkk.2.8} 
answer part of  \cite[Question 7.3]{christensen2010perturbations} where it was asked what happens
with real and stable rank under small perturbations in $\dKK$.

Certain non-axiomatizable classes of \cstar-algebras are also known to be $\dKK$-closed, such as the classes of separable UHF and AF algebras (see Theorem 6.1 in \cite{christensen1980near}), or the class of nuclear separable algebras (Theorem A in \cite{christensen2012perturbations}).
Note that it is not automatic that a class that is \udt{} is $\dKK$-closed.
Given a uniform family of formulas $\mathcal F$, while we do know that for a fixed model~$A$, the function $p_{\mathcal F}$ defined in \eqref{Eq.UnifFamilyInf} is uniformly continuous in~$A$, we do not have any control over the continuity with respect to $\dKK$ as we vary $A$.
This is because the condition that $\mathcal F$ is a uniform family of formulas does not impose any control over the uniform continuity modulus for the quantified variables in the formulas in $\mathcal F$.

One could propose a stronger version of ``uniform family of formulas'' which does require the same uniform continuity modulus for all quantified variables in all the formulas, and classes which are defined by this stronger kind of uniform family of formulas would be $\dKK$-closed.
However, we are unable to prove that the families in Theorem \ref{T1} are definable by this stronger kind of uniform family.

Although we are unaware of a class of \cstar-algebras that is \udt{}  but not $\dKK$-closed, the following weaker example should be kept in mind.
In \cite{BY-N-T}, it is shown that for any language $\mathcal L$ and every isomorphism-invariant, Borel class $\mathcal C$ of models in the language $\mathcal L$, there exists a certain type of infinitary sentence (called an $\calL_{\omega_1\omega}$ sentence) $\phi$, such that
\[ \phi^A = \begin{cases} 1,\quad A \in \mathcal C; \\ 0,\quad A \not\in \mathcal C. \end{cases} \]
The construction of the formula $\phi$ is recursive, like the (finitary) formulas described in \S \ref{S.Preliminaries}; in addition to the constructions there, if $(\phi_n(\bar x))_n$ is a bounded sequence of $\calL_{\omega_1\omega}$ formulas in the same finite tuple $\bar x$ of variables, all with the same uniform continuity moduli for the free variables, then $\sup_n \phi_n(\bar x)$ and $\inf_n \phi_n(\bar x)$ are also $\calL_{\omega_1\omega}$ formulas.
In particular, if $\mathcal F$ is a uniform family of formulas, then $p_{\mathcal F}$ is an $\calL_{\omega_1\omega}$ formula.
The definition of $\calL_{\omega_1\omega}$ ensures that every such formula is uniformly continuous in each model.

In \cite{Kadets}, it was shown that there is a Banach space $X$ with subspaces $H$ and $Y_n$ for $n\in \bbN$, such that $H$ is a Hilbert space, $Y_n$ is not, and the unit balls of $Y_n$ converge to the unit ball of $H$ in the Hausdorff metric.
Using the result mentioned above from \cite{BY-N-T}, it follows that there is an infinitary $\calL_{\omega_1\omega}$ sentence $\phi$ in the language of Banach spaces which evaluates as the characteristic function of the models of Hilbert spaces, so that $\phi^{Y_n} = 0$ and $\phi^H = 1$.
This example shows that the infinitary sentences in $\calL_{\omega_1\omega}$ need not be continuous in $\dKK$ (or the generalization of $\dKK$ to other languages).
Once again, the reason is that there is no control over the uniform continuity modulus of the quantified variables, in the construction of $\calL_{\omega_1\omega}$ formulas.

\section{An application: Preservation by inductive limits} \label{S.inductivelimits}

By the direct (and easier) implication in Proposition~\ref{P.Ax} (3), every 
 \aea{} class of \cstar-algebras is closed taking under inductive limits. By combining this 
 with Theorem~\ref{Summary} we immediately obtain  a uniform proof of the following. 
 
 \begin{proposition} Each of the following classes of \cstar-algebras is closed under taking inductive limits of injective directed systems of its elements. (In \eqref{I.P.aea.sr}--\eqref{I.P.aea.usDP} the inductive limits are required to be unital.)
\begin{enumerate}
\item Algebras of real rank zero. 
\item Simple, purely infinite algebras. 
\item\label{I.P.aea.sr}  Unital algebras with stable rank $\leq n$ for $n\geq 1$. 
\item Unital \cstar-algebras with strict comparison of positive elements by traces or 2-quasitraces.
\item\label{I.P.aea.usDP} Unital \cstar-algebras with the $\bar k$-uniform strong Dixmier property for a fixed $\bar k$. \qed
\end{enumerate}
\end{proposition}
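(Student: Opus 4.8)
The plan is to obtain the proposition directly from the two facts invoked in the sentence preceding it, so that the entire argument is a combination of Theorem~\ref{Summary} with the easy implication of Proposition~\ref{P.Ax}(3). First I would record that Theorem~\ref{Summary} places each of the five listed classes in its \aea{} block: real rank zero algebras, simple purely infinite algebras, unital algebras of stable rank $\leq n$, unital algebras with strict comparison of positive elements by traces or $2$-quasitraces, and unital algebras with the $\bar k$-uniform strong Dixmier property. Consequently each of these classes is axiomatized by a set of sentences of the form $\sup_{\bar x}\inf_{\bar y}\varphi(\bar x,\bar y)$ with $\varphi$ quantifier-free and $\bbR^+$-valued. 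The forward (``easier'') direction of Proposition~\ref{P.Ax}(3) then asserts precisely that any such \aea{} class is closed under taking inductive limits, which is the desired conclusion.

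The only point that needs care is to match the model-theoretic notion of inductive limit used in Proposition~\ref{P.Ax}(3)---the completion of the union of an increasing chain of substructures---with the \cstar-algebraic inductive limit of an injective directed system. This is exactly why the hypotheses of injectivity, and (for clauses \eqref{I.P.aea.sr}--\eqref{I.P.aea.usDP}) of unitality of the connecting maps, are imposed: an injective $^*$-homomorphism is isometric, hence an embedding of metric structures, and a unital injective $^*$-homomorphism is moreover an embedding in the language of unital \cstar-algebras. Thus each $A_i$ in the system may be identified with a substructure of $A=\varinjlim_i A_i$, with $\bigcup_i A_i$ dense in $A$.

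To cover arbitrary directed systems, and not merely chains, I would spell out the density argument that underlies the forward implication. Given an \aea{} axiom $\sup_{\bar x}\inf_{\bar y}\varphi(\bar x,\bar y)$ holding in every $A_i$, a tuple $\bar a$ in $A$, and $\e>0$, choose $\bar a'$ in some $A_i$ with $\|\bar a-\bar a'\|$ small. Since $A_i$ satisfies the axiom there is $\bar b$ in $A_i\subseteq A$ with $\varphi^{A_i}(\bar a',\bar b)$ small, and $\varphi^{A}(\bar a',\bar b)=\varphi^{A_i}(\bar a',\bar b)$ because $\varphi$ is quantifier-free and $A_i$ is a substructure of $A$. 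The uniform continuity modulus of $\varphi$ (independent of the model) then bounds $\inf_{\bar y}\varphi^{A}(\bar a,\bar y)$ in terms of $\|\bar a-\bar a'\|$ and the witness above; letting $\e\to 0$ shows the axiom holds in $A$.

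I do not anticipate a genuine obstacle: all the substance is already contained in Theorem~\ref{Summary} and in Proposition~\ref{P.Ax}(3), and what remains is the bookkeeping of the previous paragraph. If anything, the most delicate item is to be sure that the axiomatizations supplied in \S\ref{S.Axiomatizable} (and \S\ref{S.sr}, \S\ref{S.rrn}, \S\ref{S.elfunsc}, and Lemma~\ref{L.usDP}) are genuinely $\forall\exists$ rather than merely elementary; but this is exactly the information recorded by the placement of these classes in the \aea{} portion of Theorem~\ref{Summary}, so no further verification is required here.
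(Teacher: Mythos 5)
Your proposal is correct and is essentially the paper's own argument: the proposition is stated as an immediate consequence of Theorem~\ref{Summary} (which places all five classes in the \aea{} block) combined with the easy direction of Proposition~\ref{P.Ax}(3). The extra bookkeeping you supply --- identifying injective (unital) $^*$-homomorphisms with embeddings of (unital) metric structures and running the density/uniform-continuity argument for arbitrary directed systems rather than chains --- is exactly the routine verification the paper leaves implicit.
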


By using  Theorem~\ref{T.ssa} instead of Theorem~\ref{Summary} we obtain the following. 

 \begin{proposition} Each of the following classes of  \cstar-algebras is closed under taking inductive limits of countable injective directed systems of its elements.
\begin{enumerate}
\item Separable $D$-stable algebras, where $D$ is a strongly self-absorbing algebra. 
\item Separable approximately divisible algebras. 
\item Separable stable algebras. \qed
\end{enumerate}
\end{proposition}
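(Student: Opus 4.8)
The plan is to mimic the proof of the preceding proposition, replacing Theorem~\ref{Summary} by Theorem~\ref{T.ssa} and taking care to stay inside the separable category. Recall that a class $\cC$ of \cstar-algebras is separably \aea{} when there is an \aea{} class $\cC_0$ agreeing with $\cC$ on separable algebras, i.e.\ for separable $A$ one has $A\in\cC$ if and only if $A\in\cC_0$. By Theorem~\ref{T.ssa}, each of the three classes in the statement---$D$-stability for a fixed strongly self-absorbing $D$, approximate divisibility, and stability---is separably \aea{} via some \aea{} class $\cC_0$.

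First I would record that $\cC_0$, being \aea, is closed under inductive limits of injective directed systems. This is exactly the easy (forward) implication of Proposition~\ref{P.Ax}(3): if $\cC_0=\Mod(T)$ with $T$ consisting of $\forall\exists$-sentences and $A=\lim_n A_n$ has injective connecting maps, then identifying each $A_n$ with a subalgebra of $A$ and using $A=\overline{\bigcup_n A_n}$, for any $\sup_{\bar x}\inf_{\bar y}\varphi\in T$ and any $\bar a\in A$ one approximates $\bar a$ by some $\bar a'\in A_n$, notes that $\inf_{\bar y}\varphi^{A_n}(\bar a',\bar y)=0$, passes this up to $A$ (inf-formulas are preserved from substructures to superstructures), and concludes by uniform continuity that $(\sup_{\bar x}\inf_{\bar y}\varphi)^A=0$.

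Next I would take a countable injective directed system $(A_n,\varphi_{mn})$ whose members lie in the given class, say separable and $D$-stable, and form its limit $A$. Two observations finish the argument. On the one hand $A$ is separable: it is the closure of the union of the countably many images of the $A_n$, each of which is separable, so a countable dense set is obtained by amalgamating countable dense subsets of these images. On the other hand, each $A_n$ is separable and $D$-stable, hence $A_n\in\cC_0$; by the closure of $\cC_0$ under injective inductive limits just recorded, $A\in\cC_0$. Since $A$ is separable and $\cC_0$ agrees with the given class on separable algebras, $A$ is separable and $D$-stable, i.e.\ $A$ belongs to the given class. The same argument applies verbatim to approximate divisibility and to stability.

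The only genuine point requiring care---and the step I expect to be the main, if modest, obstacle---is the bookkeeping of separability: one must use the countability of the index set to guarantee that the inductive limit is again separable, so that the equivalence between membership in $\cC$ and in $\cC_0$ is available for $A$ itself. Everything else is a direct application of Theorem~\ref{T.ssa} together with the forward implication of Proposition~\ref{P.Ax}(3).
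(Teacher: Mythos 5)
Your proof is correct and follows essentially the same route as the paper, which simply invokes the forward implication of Proposition~\ref{P.Ax}(3) together with Theorem~\ref{T.ssa} in place of Theorem~\ref{Summary}. Your explicit bookkeeping of separability—using countability of the index set to ensure the limit is separable, so that the separably \aea{} equivalence applies to the limit itself—is exactly the point the paper leaves implicit and the reason the statement is restricted to countable systems of separable algebras.
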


The restriction to countable inductive limits of separable algebras cannot be dropped. 
As explained in the paragraph following Theorem~\ref{Summary}, tensorial decomposability of 
the ultrapowers (\cite{Gha:SAW*}) implies that neither of the classes of $D$-stable algebras, approximately divisible algebras, or stable algebras, is elementary.

\section{An application: Borel sets of \cstar-algebras} \label{S.borel}

In \cite{Kec:C*} Kechris introduced a standard Borel space of separable \cstar-algebras and 
computed descriptive complexity of some important classes of \cstar-algebras. 
A variation of this space was used in the proof of Lemma~\ref{L.ee}. 
Although there is a variety of standard Borel spaces of separable \cstar-algebras,  
they are all equivalent (see \cite[Proposition~2.6 and Proposition~2.7]{FaToTo:Turbulence}). 
In particular the question whether a given class of \cstar-algebras is Borel does not depend 
on the choice of the Borel parametrization. 
Kechris proved that some classes of \cstar-algebras (like subhomogeneous,  nuclear, continuous trace algebras, or antiliminal algebras) 
are Borel while some other classes (like  liminal and postliminal algebras) are complete co-analytic. 

\begin{prop} \label{P.Borel}Suppose that $\cC$ is a   class of \cstar-algebras.
 If $\cC$ is separably axiomatizable or \udt{} then~$\cC$ is Borel. 
\end{prop}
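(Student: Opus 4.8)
The plan is to reduce both cases to the single fact, proved in \cite[Proposition~5.1]{FaToTo:Descriptive} and by a routine induction on the construction of formulas, that for every formula $\varphi(\bar x)$ of the (separable) language of \cstar-algebras the evaluation map $A \mapsto \varphi^A(\bar a)$ is Borel on the standard Borel space of \cite{Kec:C*}, where $\bar a$ ranges over tuples from the distinguished countable dense sequence attached to each code. The induction is the expected one: atomic formulas are norms of ${}^*$-polynomials in the dense sequence, hence Borel; continuous connectives preserve Borel-ness; and each quantifier $\sup_x$, $\inf_x$ may be replaced by a countable supremum, respectively infimum, over the dense sequence, since every interpretation is uniformly continuous and a countable sup or inf of Borel functions is Borel (one arranges the dense sequence to be dense in each sort). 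Granting this, both cases amount to exhibiting $\cC$, as a subset of the Borel space of separable algebras, from sets of the form $\{A : \varphi^A(\bar a) = 0\}$ by countably many Borel operations.

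For the separably axiomatizable case, within the space of separable algebras we have $\cC = \Mod(T)$ for some theory $T$. The only obstacle is that $T$ may be uncountable, so that $\Mod(T) = \bigcap_{\varphi \in T}\{A : \varphi^A = 0\}$ is a priori an uncountable intersection. First I would replace $T$ by $\bar T := \Th(\Mod(T))$; this does not change the model class, and $\bar T$ is the kernel of the seminorm $\varphi \mapsto \sup_{A \models T}|\varphi^A|$, hence a closed linear subspace of $\mbox{Sent}_\calL$ for the uniform seminorm $\|\varphi\| := \sup_M |\varphi^M|$. Since the language is separable, $\mbox{Sent}_\calL$ is $\|\cdot\|$-separable, so $\bar T$ has a countable $\|\cdot\|$-dense subset $\{\chi_n : n \in \bbN\}$. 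A two-line estimate, $|\varphi^A| = |\varphi^A - \chi_n^A| \le \|\varphi - \chi_n\|$, then shows $\Mod(\{\chi_n\}) = \Mod(\bar T) = \Mod(T)$, whence $\Mod(T) = \bigcap_n \{A : \chi_n^A = 0\}$ is a countable intersection of Borel sets, hence Borel.

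For a class that is \udt{}, by definition there are a theory $T$ and uniform families of formulas $\mathcal{F}_n$ with $A \in \cC$ if and only if $A \models T$ and $p^A_{\mathcal{F}_n} \equiv 0$ for all $n$. The set $\Mod(T)$ is Borel by the previous paragraph, so it remains to see that each $\{A : p^A_{\mathcal{F}_n} \equiv 0\}$ is Borel. Here I would invoke Lemma~\ref{L.udt}: the interpretation $p^A_{\mathcal{F}_n}$ is continuous on $A$, so $p^A_{\mathcal{F}_n} \equiv 0$ is equivalent to its vanishing on the countable dense sequence. For each fixed dense tuple $\bar a$, $p^A_{\mathcal{F}_n}(\bar a) = \inf_{\varphi \in \mathcal{F}_n} \varphi^A(\bar a)$ is a countable infimum of the Borel functions $A \mapsto \varphi^A(\bar a)$, hence Borel, so $\{A : p^A_{\mathcal{F}_n}(\bar a) = 0\}$ is Borel; intersecting over the countably many tuples $\bar a$, the countably many $n$, and $\Mod(T)$ exhibits $\cC$ as Borel.

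The genuinely nontrivial input is the cited Borel-ness of formula evaluation; granting that, the main obstacle is purely the bookkeeping that converts the defining conditions into countably many Borel conditions. Concretely, the two places requiring care are the reduction of an arbitrary theory to countably many sentences via separability of the language (first case), and the interchange of the universal quantifier over $A$ with a countable conjunction over the dense sequence, justified by the continuity of $p^A_{\mathcal{F}_n}$ from Lemma~\ref{L.udt} (second case). Neither step uses anything beyond separability of $\calL$ and elementary Borel calculus, so the proposition follows.
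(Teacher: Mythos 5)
Your proposal is correct. Note, however, that the paper offers no argument of its own for this proposition: its ``proof'' is a bare citation of \cite[Lemma~1.6 and Proposition~5.7]{FaMa:Omitting}, so there is no in-paper route to compare yours against; what you have written is a self-contained reconstruction of the kind of argument the cited reference carries out. Your two reductions are both sound. In the separably axiomatizable case, passing from $T$ to $\bar T=\Th(\Mod(T))$ and then to a countable $\|\cdot\|$-dense subset is exactly the right move, and the estimate $|\varphi^A|\leq\|\varphi-\chi_n\|$ legitimately uses the sup-over-all-structures seminorm (since the as-yet-unknown $A$ must be among the structures over which you take the supremum); the separability of $\mathrm{Sent}_\calL$ for this seminorm is the point established in the paper's discussion before the Downward L\"owenheim--Skolem theorem, and tacitly requires approximating arbitrary complex scalars by rational ones, which is standard. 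In the \udt{} case, the countability of each $\mathcal F_n$ and the uniform continuity of $p^A_{\mathcal F_n}$ (Lemma~\ref{L.udt}) are precisely what let you replace ``vanishes everywhere'' by ``vanishes on the dense sequence'' and the infimum by a countable infimum of Borel functions. The only genuinely external input is the Borel-ness of $A\mapsto\varphi^A(\bar a)$ for $\bar a$ drawn from the code's dense sequence, for which your induction sketch (atomic formulas from the parametrization data, continuous connectives, quantifiers as countable sups and infs over a Borel-in-the-code dense subset of each ball) is the standard argument and matches the result of \cite[Proposition~5.1]{FaToTo:Descriptive} already used in the proof of Lemma~\ref{L.ee}.
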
 

\begin{proof} See
 \cite[Lemma~1.7 and Proposition~6.7]{FaMa:Omitting}.
  \end{proof}

In \cite{Kec:C*} it was  proved that the class of AF algebras is analytic and asked 
whether it is Borel (p. 123). 
The following  gives, among other things, 
 a positive answer to this problem. 
 
 \begin{coro}\label{C.Borel}  If $P$ is any of the properties of \cstar-algebras
 appearing in Corollary~\ref{C.dkk.1} or Corollary~\ref{C.dkk.2} 
 then the class of separable \cstar-algebras with property $P$ is Borel. 

The classes of AF, UHF, nuclear, 
nuclear dimension $\leq n$, decomposition rank $\leq n$,
simple, Popa, simple tracially AF and QD are Borel. 
 \end{coro}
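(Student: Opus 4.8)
The plan is to derive the entire statement from Proposition~\ref{P.Borel}, which asserts that any separably axiomatizable or \udt{} class of \cstar-algebras is Borel. All the substantive work has already been carried out: what remains is the bookkeeping observation that each class listed in the corollary has been shown to fall under one of these two headings, after which Proposition~\ref{P.Borel} (via the omitting-types analysis of \cite{FaMa:Omitting}) closes the argument. No further descriptive-set-theoretic input is needed.

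First I would dispatch the classes coming from Corollary~\ref{C.dkk.1} and Corollary~\ref{C.dkk.2}. Every property $P$ appearing in Corollary~\ref{C.dkk.1} is in particular elementary (there both $P$ and $\lnot P$ are elementary), and an elementary class is axiomatizable, hence trivially separably axiomatizable. Every property in Corollary~\ref{C.dkk.2} is either axiomatizable outright---stably finite, unital tracial, unital algebras with a character, stable rank $\leq n$, and abelian algebras of real rank $\leq n$, by Theorem~\ref{Summary} together with \S\ref{S.sr} and \S\ref{S.rrn}---or separably axiomatizable: separable MF by Theorem~\ref{Summary} and Lemma~\ref{L.MF.1}, and separable $D$-stable, separable approximately divisible, and separable stable algebras by Theorem~\ref{T.ssa}. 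In each of these cases Proposition~\ref{P.Borel} yields that the class is Borel.

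For the second list I would invoke Theorem~\ref{T1}: the classes of UHF, AF, nuclear, nuclear dimension $\leq n$, decomposition rank $\leq n$, simple, Popa, simple tracially AF, and quasidiagonal algebras are each \udt{} (with the UHF and AF cases understood among separable algebras, which is precisely the ambient setting of the standard Borel space under consideration). Proposition~\ref{P.Borel} then gives that each of these classes is Borel; in particular the class of separable AF algebras is Borel, answering the question raised in \cite{Kec:C*}.

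Since the proof is a reduction to Proposition~\ref{P.Borel}, there is no genuine obstacle---the difficulty has been absorbed into Theorem~\ref{T1}, Theorem~\ref{Summary}, and Theorem~\ref{T.ssa}. The only point requiring a little care is to confirm, for each entry, that the ambient hypothesis of separability matches the setting of the axiomatizability or \udt{} result being quoted. This is automatic for the axiomatizable classes, and for the \udt{} classes it is built into the statement of Theorem~\ref{T1}.
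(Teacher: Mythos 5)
Your proposal is correct and follows exactly the paper's argument: the paper's proof of Corollary~\ref{C.Borel} is the one-line citation of Proposition~\ref{P.Borel}, Theorem~\ref{Summary}, Theorem~\ref{T.ssa}, and Theorem~\ref{T1}, and you have simply spelled out the bookkeeping that this citation compresses. The case-by-case verification you give is accurate, so nothing needs to change.
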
 

\begin{proof} This follows by Proposition~\ref{P.Borel}, Theorem~\ref{Summary}, 
Theorem~\ref{T.ssa}, 
 and Theorem~\ref{T1}. 
\end{proof} 

Some of these classes of \cstar-algebras 
were known to be Borel. E.g., for nuclear \cstar-algebras this was proved 
by Effros (\cite[Theorem~1.2]{Kec:C*}) and for $\cZ$-stable \cstar-algebras this was proved
in \cite[A.1]{FaToTo:Descriptive}. 

\chapter{Generic \cstar-algebras} \label{S.Henkin}

We describe a way of constructing \cstar-algebras (and metric structures in general) by Robinson forcing (also known as the Henkin construction). 
Variants of this general method (see Keisler's classic~\cite{keisler1973forcing} and the 
excellent~\cite{hodges2006building})
for the logic of metric structures have  been outlined  
in \cite{ben2009model} and \cite{eagle2013omitting}. 

The notion of model theoretic forcing is very powerful but it takes some getting used to.  We say a few words about how the formalities will play out in the next section.  We start with a theory $T$ in a language~$\calL$ and add countably many new constant symbols to the language.  The theory $T$ will now be incomplete in the new language since it tells us very little about the new constants.  The goal is to create a model abstractly out of the new constants themselves.  Of course, in order to do that, say in the language of \cstar-algebras, we will have to, for instance, decide, for every $^*$-polynomial $f$ and tuple of constants $\bar c$, what the value of $\|f(\bar c)\|$ should be.  More generally, for every formula $\varphi(\bar x)$ and tuple $\bar c$, we will need to decide the value of $\varphi(\bar c)$.  To do this consistently there is a bit of bookkeeping that needs to be done and one can think of the construction inductively.  One thing that saves us is that the space of formulas is separable, so we only have to worry about the value of countably many formulas.  In fact, we will approach this even more slowly and decide, at any given step, only an approximate value for $\varphi(\bar c)$ for finitely many formulas 
$\varphi$.  This is what is called an \emph{open condition}\index{open condition}
 below---a specification of an $r$ and an $\epsilon$ such that  $|\varphi(\bar c) - r| < \epsilon$.  We can even restrict ourselves to rational $r$ and $\epsilon$ in order to accomplish all we need  in countably many steps.

So what is the inductive assumption?  If the open condition we are considering at some step looks like $|\varphi(\bar c) - r| < \epsilon$ then the inductive assumption is that there is a model $A$ of $T$ and $\bar a \in A$ such that $|\varphi^A(\bar a) - r| < \epsilon$---this pair $(A,\bar a)$ is called a \emph{certificate}\index{certificate} in what follows.  Notice that at the start we have no open conditions at all, so that any model of $T$ will work as a certificate.

There are two important points about this process.  First of all, any open condition uses only finitely many constants.  To see why this is important, suppose an open condition we are considering implies that 
$\inf_x \varphi(x,\bar c) <r$.  Going forward we are going to need to make sure that for some constant $a$, 
$\varphi(a,\bar c) <r$.  The fact that we have a certificate for the initial open condition and enough unused constants is going to make it possible to strengthen our condition to enforce the bound on $\inf_x\varphi(x,\bar c)$.

Secondly, because we are going about this so slowly, we also have time along the way to enforce other properties, such as properties that are definable by a uniform family of formulas.  Suppose that $\mathcal{F}$ is a uniform family of formulas and we are trying to guarantee that in the final model we have $\inf_{\varphi \in \mathcal{F}} \varphi(\bar c) = 0$ for all $\bar c$.  The uniformity condition will tell us that if we can do this then $\inf_{\varphi \in \mathcal{F}}\varphi(\bar x)$ will be identically~0.  At any inductive stage then, as long as we have some certificate in which $\inf_{\varphi\in \mathcal{F}} \varphi(\bar x) = 0$, we can, for any $\epsilon > 0$ and $\bar c$, choose a stronger open condition which implies $\varphi(\bar c) < \epsilon$ for some $\varphi \in \mathcal{F}$.

After countably many steps one
has  a sequence of open conditions of increasing strength  deciding 
 values of all relevant sentences in the 
expanded language. In particular, 
 the value of  $\|f(\bar c)\|$ is decided for every $^*$-polynomial in non-commuting variables $f$ and for 
every tuple of constants $\bar c$ of the relevant sort. The  universal \cstar-algebra given by these constants and relations is known 
as the \emph{generic}\index{C@\cstar-algebra!generic} \cstar-algebra.
By induction on the complexity of a formula one proves that the generic \cstar-algebra 
is also a model of the theory $T$. 

This  forcing construction
is flexible enough to allow 
for fine adjustment of some other parameters of the generic \cstar-algebra. 
One of the overriding questions for us is to what extent 
can the Elliott invariant be controlled in these generic constructions? 
The ability to construct a \cstar-algebra with a prescribed theory and  Elliott invariant
 is clearly relevant to our Question~\ref{Q1} from the introduction. Presently
we have some limited information on when the $K$-groups of $A$ belong to $A^{\eq}$ (Theorem~\ref{T.K0}, 
 Propositions \ref{P.K0.eq} and \ref{P.K1.eq}) and the definability of traces appears to be closely related to the
 standard regularity properties of \cstar-algebras (\S\ref{S.Def.Tau.1} 
and~\S\ref{S.Def.Tau.2}).

We look at the formal details in the next section.
In the argument we use, the actual induction process is packaged into an application of the Baire Category Theorem.

\section{Henkin forcing} The results of this section are essentially well-known to logicians and proofs are included for the benefit of the reader. 
We  follow the presentation from \cite[\S 4]{FaMa:Omitting}. 
We first expand the language~$\calL$ defined in  \S\ref{S.Preliminaries}. 
For each  sort $S$, let $c_n^S$
be a new constant symbol for every  $n \in \bbN$.
The language $\calLp$ is obtained
from $\calL$ by adding these new constant symbols. 
 The terms of this new language 
are  $^*$-polynomials in the variables $x_i^S$ and constants $c_i^S$. 
The sentences of $\calLp$ are 
exactly expressions of the form $\varphi(\bar c)$, where $\varphi(\bar x)$ is an $\calL$-formula and $\bar c$ is a 
tuple of new constants of the appropriate sorts.

Although our main interest is in \cstar-algebras we present these results in greater generality.  
The following construction is similar in spirit to \cite{BY:On}. 
Fix an $\calL$-theory $\bfT$ and  a set of $\calLp$ sentences $\Sigma$, such that $\Sigma$ will form a real vector space and is closed under applying continuous functions. More precisely, we require the following 
conditions  taken from  \cite[($\Sigma1$)--($\Sigma3$) in \S4]{FaMa:Omitting}. 
\begin{enumerate}
\item [($\Sigma1$)] $\Sigma$  includes all quantifier-free formulas. 
\item  [($\Sigma 2$)] $\Sigma$ is closed under taking subformulas and  change of variables. 
\item [($\Sigma 3$)] If $k\in \omega$,  $\varphi_i(\bar x)$, for $0\leq i<k$, 
 are in $\Sigma$,  and $f\colon [0,1]^k\to [0,1]$ is a continuous 
function, then  $f(\varphi_0(\bar x),\dots, \varphi_{k-1}(\bar x))$ is in $\Sigma$. 
\end{enumerate}
For our purposes, $\Sigma$ will consist of either  all $\calLp$-sentences
or  all quantifier-free $\calLp$-sentences. 
Consider the following seminorm on $\Sigma$:   
\[
\|\varphi(\bar c)\|_{\bfT}:=\sup_{A, \bar a} |\varphi(\bar a)^A|
\]
where $A$ ranges over models of $\bfT$ and $\bar a$ ranges over tuples in $A$ of the appropriate sort. 
We say that such a pair $(A,\bar a)$ is a $\bfT$-\emph{certifying pair}\index{certifying pair}  for the value of $\|\varphi(\bar c)\|$. 
The  space $\Sigma$ with seminorm $\|\cdot\|_{\bfT}$  will   be denoted by~$\fSST$:
\[ 
\fSST := (\Sigma, \|\cdot\|_{\bfT}). 
\]
Suppose $A$ is a model of a theory $\bfT$ with a distinguished tuple $\bar a$  of elements of the form $a_i^S$ for $i\in \bbN$ and sorts $S$. Then $A$ is an $\calL$-structure and 
it is expanded to an $\calLp$-structure by interpreting each constant 
 $c_i^S$ as $a_i^S$.  
We  define a functional $s_{A,\bar a}$ on $\fSST$ via
\[
s_{A,\bar a} (\varphi(\bar c)):=\varphi(\bar a)^A 
\]
(each constant $c_i^S$ on the left-hand side is replaced by the element $a_i^S$ of $A$ 
on the right-hand side). 
We record a consequence of the definition. 

\begin{lemma}\label{L.Henkin.s}
For all $A$ and $\bar a$, the functional $s_{A,\bar a}$ is a Banach algebra homomorphism 
from  $\fSST$ to $\bbR$ of norm $\leq 1$
     \qed
\end{lemma}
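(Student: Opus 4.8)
The statement asserts that for any model $A$ of $\bfT$ and any distinguished tuple $\bar a$, the evaluation functional $s_{A,\bar a}$ on $\fSST$ is a Banach algebra homomorphism to $\bbR$ of norm at most $1$. I would prove this by checking the three requirements directly from the definitions: (i) linearity, (ii) multiplicativity, and (iii) the norm bound. All three follow essentially immediately from the fact that interpretation of formulas in a fixed structure is itself a point evaluation, together with the defining properties $(\Sigma1)$--$(\Sigma3)$ of $\Sigma$ which guarantee that $\fSST$ is closed under the relevant operations so that the statements even typecheck.

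\textbf{The steps.} First I would verify that $s_{A,\bar a}$ is well-defined as a map on $\fSST$, i.e.\ that it respects the seminorm $\|\cdot\|_{\bfT}$: if $\|\varphi(\bar c)\|_{\bfT} = 0$ then in particular $|\varphi(\bar a)^A| \leq \|\varphi(\bar c)\|_{\bfT} = 0$ since $(A,\bar a)$ is one of the pairs over which the supremum in the definition of $\|\cdot\|_{\bfT}$ is taken; hence $s_{A,\bar a}$ descends to the completion. This same observation gives the norm bound at once: for every $\varphi(\bar c) \in \Sigma$,
\[
|s_{A,\bar a}(\varphi(\bar c))| = |\varphi(\bar a)^A| \leq \sup_{B,\bar b} |\varphi(\bar b)^B| = \|\varphi(\bar c)\|_{\bfT},
\]
where $B$ ranges over models of $\bfT$ and $\bar b$ over tuples of the appropriate sort, so $\|s_{A,\bar a}\| \leq 1$. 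Next, linearity and multiplicativity: if $\varphi(\bar c), \psi(\bar c) \in \Sigma$ then by $(\Sigma3)$ (applied to the continuous functions $(r,s)\mapsto \lambda r + \mu s$ and $(r,s) \mapsto rs$, after rescaling into $[0,1]^2$ as needed) the combinations $\lambda\varphi + \mu\psi$ and $\varphi\cdot\psi$ again lie in $\Sigma$, and since interpretation in $A$ commutes with continuous connectives by the inductive definition of $\varphi^A$ (Definition~\ref{formula}, clause for connectives), we get
\[
s_{A,\bar a}(\lambda\varphi + \mu\psi)(\bar c) = \lambda\varphi(\bar a)^A + \mu\psi(\bar a)^A, \qquad s_{A,\bar a}((\varphi\cdot\psi)(\bar c)) = \varphi(\bar a)^A \cdot \psi(\bar a)^A.
\]
This is exactly linearity and multiplicativity of $s_{A,\bar a}$. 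Finally, since the operations and the functional extend continuously to the completion $\fSST$ by the norm bound established above, the homomorphism property persists on all of $\fSST$.

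\textbf{The main obstacle.} There is no real obstacle here: this is a bookkeeping lemma, and the only point requiring any care is the well-definedness on the quotient/completion, namely confirming that $(A,\bar a)$ genuinely appears among the certifying pairs defining $\|\cdot\|_{\bfT}$ so that the seminorm dominates the pointwise evaluation. Once that is noted, everything else is a direct transcription of the inductive behaviour of $\varphi^A$ under connectives, which is precisely what clauses $(\Sigma1)$--$(\Sigma3)$ were designed to make coherent. I would therefore keep the write-up to a few lines, emphasizing the inequality displayed above and a one-sentence appeal to $(\Sigma3)$ and Definition~\ref{formula} for the algebraic identities.
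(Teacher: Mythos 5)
Your proof is correct and is exactly the routine verification the paper has in mind: the lemma is stated with a \qed and no argument, precisely because the norm bound follows from $(A,\bar a)$ being one of the certifying pairs in the supremum defining $\|\cdot\|_{\bfT}$, and linearity/multiplicativity follow from $(\Sigma3)$ together with the fact that interpretation commutes with connectives. Nothing further is needed.
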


\begin{lemma}\label{L.Henkin.Space}
The set $\fTT:=\{s_{A,\bar a}:A\models \bfT \text{ and }\bar a\text{ is a tuple in }A\}$
is weak$^*$-compact. 
\end{lemma}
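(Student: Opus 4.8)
The statement to prove is Lemma~\ref{L.Henkin.Space}: that the set
\[
\fTT=\{s_{A,\bar a}:A\models \bfT\text{ and }\bar a\text{ is a tuple in }A\}
\]
is weak$^*$-compact inside the dual of the Banach space $\fSST$. The natural strategy is to realize $\fTT$ as a weak$^*$-closed subset of a weak$^*$-compact set and invoke the Banach--Alaoglu theorem. By Lemma~\ref{L.Henkin.s} every $s_{A,\bar a}$ is a functional on $\fSST$ of norm at most $1$, so $\fTT$ is contained in the closed unit ball of $(\fSST)^*$, which is weak$^*$-compact by Banach--Alaoglu. Since a closed subset of a compact set is compact, it suffices to prove that $\fTT$ is weak$^*$-closed. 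This is the one substantive point, and the entire argument reduces to it.

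\textbf{Main step: weak$^*$-closedness via ultraproducts.} First I would take a net $(s_{A_i,\bar a_i})_{i\in I}$ in $\fTT$ converging weak$^*$ to some functional $s$ on $\fSST$, and I must produce a single model $B\models\bfT$ and a tuple $\bar b$ in $B$ with $s=s_{B,\bar b}$. The clean way is an ultraproduct construction: fix an ultrafilter $\cU$ refining the tails of the net (more precisely, viewing the net as indexed so that the directed order gives a filter of tails, extend that filter to an ultrafilter $\cU$ on $I$), and set $B:=\prod_{\cU}A_i$ with $\bar b:=(\bar a_i)/\cU$ the tuple whose $j$-th coordinate is the $\cU$-class of $(a_{i,j})_i$. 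Since each $A_i\models\bfT$ and $\bfT$ is a theory (a set of $\calL$-sentences), \L o\'s' Theorem (Theorem~\ref{Los}) gives $B\models\bfT$, so $s_{B,\bar b}\in\fTT$. It then remains to check that $s_{B,\bar b}=s$. For each $\varphi(\bar c)\in\Sigma$, \L o\'s' Theorem applied to the formula $\varphi$ yields
\[
s_{B,\bar b}(\varphi(\bar c))=\varphi(\bar b)^B=\lim_{i\to\cU}\varphi(\bar a_i)^{A_i}=\lim_{i\to\cU}s_{A_i,\bar a_i}(\varphi(\bar c)).
\]
Because $\cU$ refines the tail filter of the net and $s_{A_i,\bar a_i}(\varphi(\bar c))\to s(\varphi(\bar c))$ by weak$^*$ convergence, the ultralimit agrees with the ordinary limit $s(\varphi(\bar c))$; hence $s_{B,\bar b}(\varphi(\bar c))=s(\varphi(\bar c))$ for every $\varphi(\bar c)\in\Sigma$, i.e. $s=s_{B,\bar b}$.

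\textbf{Obstacle and remedy.} The point requiring care is the interface between the topological notion (weak$^*$ net convergence) and the combinatorial notion (ultralimit over $\cU$): one must choose $\cU$ so that $\lim_{i\to\cU}f(i)=\lim_{\text{net}}f(i)$ for every bounded function $f$ that converges along the net. The standard fix is to pass to a subnet if necessary and take $\cU$ to be any ultrafilter containing the tail filter of the (sub)net; then for a convergent bounded net the ultralimit coincides with the net limit, which is exactly what is used above for each fixed $\varphi(\bar c)$. An alternative, avoiding subnets, is to argue by sequences replaced by the filter base of basic weak$^*$ neighborhoods: for each finite $F\subseteq\Sigma$ and $\e>0$ the set of $i$ with $|s_{A_i,\bar a_i}(\varphi)-s(\varphi)|<\e$ for all $\varphi\in F$ is eventually in the net and hence can be placed in $\cU$, and an ultraproduct over such $\cU$ realizes $s$ exactly as in the compactness argument of Theorem~\ref{T.Compactness}. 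Either route is routine; the only genuine content is \L o\'s' Theorem together with Banach--Alaoglu, and I expect the write-up to be short.
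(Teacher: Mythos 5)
Your proof is correct and follows essentially the same route as the paper: both arguments realize the limit of a net in $\fTT$ as $s_{A,\bar a}$ for an ultraproduct $(A,\bar a)=\prod_{\cU}(A_i,\bar a_i)$ over an ultrafilter refining the tail filter, invoke \L o\'s' theorem to see that this ultraproduct models $\bfT$ and computes the right values on $\Sigma$, and then conclude compactness from the fact that $\fTT$ sits inside the weak$^*$-compact unit ball of $\fSST^*$. Your write-up is merely more explicit than the paper's about the interface between net convergence and ultralimits.
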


\begin{proof} Let  $s_\lambda$, for $\lambda\in \Lambda$, be a net in $\fTT$. 
If $(A_\lambda, \bar a_\lambda)$ corresponds to $s_\lambda$, 
then  \L o\'s' theorem implies that there exists an ultraproduct $(A,\bar a):=\prod_{\cU} (A_\lambda,\bar a_\lambda)$ such that $s_{A,\bar a}$ is a limit of a subnet of $s_\lambda$. 
Therefore the  set of functionals arising in this way is weak$^*$-closed, and  
since it  is a subset of the unit ball of $\fSST^*$ it is weak$^*$-compact. 
\end{proof}

Let $\bbPTS$ be the set of all triples $(\varphi(\bar c), r,\e)$ 
such that $\varphi(\bar c)$ is in $\Sigma$,  
$r\in \bbR$,  $\e>0$, 
and 
for some $\bfT$-certifying pair $(A,\bar a)$ we have 
\[
|\varphi(\bar a)^A-r|<\e. 
\]
Elements of $\bbPTS$ are called \emph{open conditions}.\index{open condition} 
Each open condition $(\varphi(\bar c), r,\e)$ 
 is identified with the set of functionals
\[
U_{(\varphi(\bar c), r,\e)}:=\{s\in \fTT: |s(\varphi(\bar c))-r|<\e\}. 
\]

\begin{lemma} \label{L.Henkin.Up} 
For every $p\in \bbPTS$, the set $U_p$  is an open subset of  $\fTT$ in the relative weak$^*$ topology, 
and these sets form a basis for the weak$^*$ topology restricted to $\fTT$. 
\end{lemma}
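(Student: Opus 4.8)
The statement to prove is Lemma \ref{L.Henkin.Up}: for every open condition $p = (\varphi(\bar c), r, \e) \in \bbPTS$, the set $U_p$ is weak$^*$-open in $\fTT$, and these sets form a basis for the relative weak$^*$ topology on $\fTT$.

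\textbf{Plan.} The plan is to unwind the definitions and exploit the fact that, for each fixed sentence $\psi = \psi(\bar c) \in \Sigma$, the evaluation map $s \mapsto s(\psi)$ is by definition weak$^*$-continuous on $\fSST^*$ and hence on the subset $\fTT$, since $\fTT$ carries the relative weak$^*$ topology (recall $\fTT \subseteq \fSST^*$ is weak$^*$-compact by Lemma \ref{L.Henkin.Space}). First I would observe that $U_{(\varphi(\bar c), r, \e)}$ is precisely the preimage of the open interval $(r - \e, r + \e) \subseteq \bbR$ under the continuous map $s \mapsto s(\varphi(\bar c))$. Since preimages of open sets under continuous maps are open, this immediately gives that each $U_p$ is open in the relative weak$^*$ topology. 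One subtlety worth recording is that $U_p$ is nonempty: by the very definition of $\bbPTS$, membership of $(\varphi(\bar c), r, \e)$ requires the existence of a $\bfT$-certifying pair $(A, \bar a)$ with $|\varphi(\bar a)^A - r| < \e$, and the associated functional $s_{A,\bar a}$ lies in $U_p$. This is why the open conditions are exactly the \emph{consistent} approximate specifications.

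\textbf{Basis.} For the basis claim, I would recall that the weak$^*$ topology on $\fSST^*$ is generated by the subbasic sets $\{s : |s(\psi) - t| < \delta\}$ as $\psi$ ranges over $\fSST = (\Sigma, \|\cdot\|_{\bfT})$, $t \in \bbR$, and $\delta > 0$; finite intersections of these form a basis. The key point is that each such subbasic set, when intersected with $\fTT$ and when it meets $\fTT$ at all, is itself of the form $U_q$ for an open condition $q$: indeed if $\{s : |s(\psi) - t| < \delta\} \cap \fTT \neq \emptyset$, then there is some $s_{A,\bar a}$ in it, which witnesses that $(\psi, t, \delta)$ is a genuine element of $\bbPTS$, so the set equals $U_{(\psi, t, \delta)}$. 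Thus every nonempty subbasic weak$^*$-open set restricted to $\fTT$ is one of our $U_q$. To upgrade from a subbasis to a basis I would handle finite intersections: here the mild obstacle is that a finite intersection $U_{q_1} \cap \cdots \cap U_{q_k}$ need not literally be a single $U_q$ on the nose.

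\textbf{Handling intersections.} The remedy is to use closure properties ($\Sigma2$)--($\Sigma3$) of $\Sigma$. Given conditions $q_i = (\varphi_i(\bar c), r_i, \e_i)$ for $i < k$, after a change of variables (by ($\Sigma2$)) arranging that all the $\varphi_i$ use a common tuple $\bar c$, I would combine them into a single formula using a continuous connective. Concretely, the condition ``$|\varphi_i(\bar c) - r_i| < \e_i$ for all $i$'' is captured by requiring $\max_{i<k} (|\varphi_i(\bar c) - r_i| \dotminus \e_i)$ to lie near $0$; since $\max$ composed with the $\varphi_i$ and the scalars $r_i, \e_i$ is a continuous function of the $\varphi_i$, property ($\Sigma3$) guarantees the resulting formula $\theta(\bar c)$ lies in $\Sigma$, and an appropriate open condition $(\theta(\bar c), s, \eta)$ built from it has $U_{(\theta(\bar c), s, \eta)} = \bigcap_{i<k} U_{q_i}$ on $\fTT$ whenever the intersection is nonempty (nonemptiness again supplies the needed certifying pair, so the combined triple genuinely lies in $\bbPTS$). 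This shows that the family $\{U_p : p \in \bbPTS\}$ is closed under the finite intersections needed and that every nonempty relatively weak$^*$-open subset of $\fTT$ is a union of sets $U_p$, completing the proof that they form a basis. The only genuine care needed is the bookkeeping in this last step; everything else is a direct translation of the weak$^*$ definitions.
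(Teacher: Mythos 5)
Your proof is correct and follows essentially the same route as the paper's: each $U_p$ is the preimage of an open interval under the weak$^*$-continuous evaluation at $\varphi(\bar c)$, and a finite intersection of subbasic sets is absorbed into a single open condition by combining the formulas with $\max$ and invoking ($\Sigma1$)--($\Sigma3$), with nonemptiness supplying the required certifying pair. (One small imprecision: the set $\{s : \max_{i}(|s(\varphi_i(\bar c))-r_i|\dotminus\e_i)<\eta\}$ is slightly larger than $\bigcap_i U_{q_i}$ rather than equal to it, but since a basis only requires a $U_q$-neighbourhood of each given point inside the intersection, shrinking the $\e_i$ around that point repairs this immediately.)
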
 

\begin{proof}
It is clear from the definition of $U_p$ that it is the intersection of a weak$^*$-open set with $\fTT$. 
We shall prove that every 
  nonempty weak$^*$-open subset $W$ of $\fSST^*$ includes $U_p$ for some  condition $p$ 
  in $\bbPTS$. 
Since~$W$ is weak$^*$-open, there exist $n\in\bbN$, $\e>0$, and for $j\leq n$,
a   formula $\phi_j(\bar c ) \in \Sigma$ and a real number $r_j$,
 such that 
 \[
 \{s: \max_{j\leq n} |s(\phi_j(\bar c))-r_j|<\e\}\subseteq W.
 \]
Define a formula $\phi$ by
 \[
\phi(\bar c):= \max_{j\leq n} |\phi_j(\bar c)-r_j|.
 \]
Then $\phi$ is in $\Sigma$ by ($\Sigma1$)--($\Sigma3$), so that $p:=(\phi(\bar c), 0, \e)$ is an open condition.
Moreover, for $(A,\bar a)$ such that $s_{A,\bar a}$ is in~$W$ we have 
\begin{align*}
s_{A,\bar a}(\phi(\bar c)) = \phi(\bar a)^A =\max_{j \leq n} |\phi_j(\bar a)-r_j|<\e,
\end{align*}
as required.
  \end{proof}

Using the 
 identification of an open condition $p \in \bbPTS$ with the open set $U_p$, inclusion of open sets provides an order on $\bbPTS$.  That is, for $p,p' \in \bbPTS$, we define $p \leq p'$ to mean that $U_p \subseteq U_{p'}$.
In other words,
\[
(\varphi(\bar c), r,\e)\leq (\varphi'(\bar c'), r',\e')
\]
if and only if $|s(\varphi(\bar c))-r|<\e$ 
implies that $|s(\varphi'(\bar c'))-r'|<\e'$
for every $s\in \fTT$. 
If $p\leq p'$ we say that $p$ \emph{extends}\index{extends} $p'$ or 
that $p$ is \emph{stronger than} $p'$.\index{stronger than}
This is the standard terminology from the theory of forcing, justified by the fact that a stronger condition decides more information about the final model.

\section{Infinite forcing}
We first consider  the case when $\Sigma$ consists of all $\calLp$-sentences.
This is the so-called `infinite forcing'\index{forcing!infinite} and in this case $\bbPTS$ is denoted $\bbPT$, 
  and $\fSST$ is denoted $\fST$. 

\begin{theorem}\label{T.FM1} 
Assume $\bfT$ is a theory in a  separable language $\calL$, possibly incomplete, and $P$ is a property which is \udt{} such that $\bfT$ has a model satisfying $P$.
\begin{enumerate}
\item There exists a dense $G_\delta$ subset $\fG$ of  $\fT$ such that 
every  $s\in \fG$ determines a unique model $A_s$  of $\bfT$ and an interpretation~$\bar a_s$ 
of constants $\bar c$ in $A_s$ such that 
each sort $S$ in $A$ is the closure of $\{a_j^S: j\in \bbN\}$ 
and $s=s_{A_s,\bar a_s}$. 
\item The set
\[
\fG_P := \{ s \in \fG : A_s \text{ has }P\}
\]
 is a nonempty $G_\delta$ subset of $\fG$.
\item Furthermore, if $\bfT$ is a complete theory, then $\fG_P$ is dense in $\fG$. 
\end{enumerate}
\end{theorem}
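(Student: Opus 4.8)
The plan is to construct the generic model by a Baire category argument inside the weak$^*$-compact space $\fT$, treating each requirement (consistency, the Henkin witnessing property, and the property $P$) as a dense open set, and then intersecting countably many of them. I would begin by setting up the bookkeeping: since $\calL$ is separable, there are only countably many pairs $(\varphi,\bar c)$ to worry about, and we may restrict attention to rational $r$ and $\e$ in the open conditions $\bbPT$. By Lemma~\ref{L.Henkin.Space}, $\fT$ is weak$^*$-compact, hence a (compact, therefore) complete metric space once we fix a compatible metric, so the Baire Category Theorem applies.

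For part (1), I would identify, for each new constant $c^S_j$ and each formula, the dense open sets that force $s$ to decide the relevant value. Concretely, for each pair $(\varphi,\bar c)$ the sets $\{s : |s(\varphi(\bar c)) - q| < 1/n\}$ over rational $q$ cover $\fT$ and their union is open and dense; forcing membership in such a set for every $(\varphi,\bar c)$ and every $n$ pins down $s(\varphi(\bar c))$ exactly. The crucial Henkin step is to force witnesses: for each formula of the form $\inf_{x} \psi(x,\bar c)$ and each $n$, the set of $s$ for which there is a constant $c^S_j$ with $s(\psi(c^S_j,\bar c)) < s(\inf_x \psi(x,\bar c)) + 1/n$ should be shown to be dense open. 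Density here is exactly where one uses that any open condition has a certifying pair $(A,\bar a)$ with infinitely many unused constants: given a certificate one can interpret a fresh constant as an approximate witness in $A$, producing a stronger condition lying in the target set. Intersecting all these countably many dense open sets yields $\fG$; for $s \in \fG$, the decided values of $\|f(\bar c)\|$ and all formulas determine a unique metric structure $A_s$ (the completion of $\{a^S_j\}$ in each sort), and the Henkin witnessing together with induction on formula complexity gives $A_s \models \bfT$ and $s = s_{A_s,\bar a_s}$.

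For part (2), I would write $P$ as being \udt{} via a theory (which we may absorb into $\bfT$) and uniform families $\cF_n$, so that $A_s$ has $P$ iff $p^{A_s}_{\cF_n} \equiv 0$ for all $n$. By Lemma~\ref{L.udt} the interpretation $p^{A_s}_{\cF_n}$ is continuous in $A_s$, so the condition $p^{A_s}_{\cF_n}(\bar a) = 0$ for a fixed tuple of constants $\bar c$ is a $G_\delta$ condition on $s$ (an intersection over $k$ of the open conditions $p^{A_s}_{\cF_n}(\bar c) < 1/k$, each realized by some $\varphi \in \cF_n$ with $s(\varphi(\bar c)) < 1/k$). Intersecting over all constant tuples $\bar c$ and all $n$ exhibits $\fG_P$ as a $G_\delta$. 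Nonemptiness follows because $\bfT$ has a model satisfying $P$, which furnishes a certificate with $\inf_{\varphi\in\cF_n}\varphi \equiv 0$ that can seed the whole construction.

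For part (3), the density of $\fG_P$ in $\fG$ under the hypothesis that $\bfT$ is complete is what I expect to be the main obstacle. The point is to show that each of the witnessing/approximation sets for $P$ is not merely $G_\delta$ but genuinely dense. Given any open condition $p$ with certifying pair $(A,\bar a)$, I would use completeness of $\bfT$: the hypothesized model $M \models \bfT$ with property $P$ satisfies $\inf_{\varphi\in\cF_n}\varphi \equiv 0$, and since $M \equiv A$ (by completeness), an elementary-equivalence/ultraproduct argument transfers an approximate witness back to a certificate extending $p$. This lets us strengthen $p$ so as to force $\varphi(\bar c) < \e$ for some $\varphi \in \cF_n$, showing the relevant set meets $U_p$. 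The subtlety is ensuring this can be done simultaneously and compatibly with the already-forced Henkin conditions; completeness is exactly what guarantees that the certifying pairs for the $P$-requirement and for consistency live in the same elementary class, so the forcing never ``runs out'' of certificates. Once every such set is dense open, $\fG_P$ is a countable intersection of dense open sets and hence dense in $\fG$ by Baire category.
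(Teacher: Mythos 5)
Your proposal is correct and follows essentially the same route as the paper: the dense open witnessing sets $\cE_{\phi,r,n}$ for part (1), the sets $\calD_{\bar c,m,n}$ forcing $\inf_{\psi\in\cF_n} s(\psi(\bar c))<1/m$ for part (2) (with Lemma~\ref{L.udt} upgrading vanishing of $p_{\cF_n}$ on the dense set $\bar a_s$ to vanishing on all of $A_s$), and completeness in part (3) to certify any open condition inside the given model with property $P$. The compatibility worry you raise at the end is handled automatically by the Baire Category Theorem, since each $\calD_{\bar c,m,n}$ is shown to be dense open separately and only then are the countably many sets intersected.
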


This relative of the Baire Category Theorem is the standard Henkin construction. 
We shall outline the main ideas and refer the reader to~\cite{FaMa:Omitting} 
for the details. 
Note that the results of \cite{FaMa:Omitting} were stated in set-theoretic forcing terms, referring to   dense open subsets of $\bbPT$ instead of dense open subsets  of $\fST^*$. 
Consequently, in \cite{FaMa:Omitting} a generic filter is required to meet additional dense open sets. 
 This  assures the analogue of  $s_{A,\bar a}\in \fG$, which is  automatic  in our case.

Fix an $\calLp$-formula $\phi(\bar c, x)$ with free variable $x$ in sort $S$, 
an $n\in \bbN$,  and $r\in \bbR$. 
Let
\begin{align*}
\bfE_{\phi(\bar c, x), r,n}:= &
\{p\in \bbPT: \mbox{for all } s\in U_p, s(\inf_x\phi(\bar c, x))> r-1/n\}\\
&\cup 
\{p\in \bbPT:\mbox{for some  } c_j^S \mbox{ and all } s\in U_p, s(\phi(\bar c, c_j^S))<r\}. 
\end{align*}
Informally, we have  $p\in \bfE_{\phi(\bar c, x), r,n}$ if $p$ ``forces'' that 
either
\[
(\inf_x \phi (\bar c, x))^A>r-1/n
\]
or 
\[
\phi(\bar c, c)^A<r
\]
for some ``witness'' $c$. 

\begin{lemma} \label{L.Henkin.E} The set 
\[
\cE_{\phi(\bar c, x), r,n}:=\bigcup\{U_q : q\in \bfE_{\phi(\bar c, x), r,n}\}
\]
 is a 
dense open subset of~$\fT$ for all $\phi(\bar c, x)$, $r$, and $n$. 
\end{lemma}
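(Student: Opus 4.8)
The statement to prove is Lemma~\ref{L.Henkin.E}: that $\cE_{\phi(\bar c, x), r,n}$ is a dense open subset of $\fT$. Openness is immediate, since $\cE_{\phi(\bar c, x), r,n}$ is by definition a union of the sets $U_q$, each of which is weak$^*$-open in $\fT$ by Lemma~\ref{L.Henkin.Up}. So the entire content is density: I must show that every nonempty basic open set $U_p$ (with $p\in\bbPT$) has nonempty intersection with $\cE_{\phi(\bar c, x), r,n}$, i.e.\ that below any condition $p$ there is a condition $q\in\bfE_{\phi(\bar c, x), r,n}$ with $U_q\cap U_p\neq\emptyset$. Equivalently, working with a certifying pair for $p$, I want to strengthen $p$ to a condition that either forces $\inf_x\phi(\bar c,x)>r-1/n$ or forces $\phi(\bar c, c_j^S)<r$ for some fresh witness constant $c_j^S$.

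\textbf{Main dichotomy.} Fix an open condition $p=(\psi(\bar c),s_0,\delta)\in\bbPT$ and a $\bfT$-certifying pair $(A,\bar a)$ for it, so $A\models\bfT$ and $|\psi(\bar a)^A-s_0|<\delta$. The argument splits on the value $(\inf_x\phi(\bar c,x))^A=\inf_{b\in S(A)}\phi(\bar a,b)^A$. First I would treat the case where this infimum is $>r-1/n$. Then the condition obtained by conjoining (via $\max$, using ($\Sigma1$)--($\Sigma3$) to stay inside $\Sigma$) the requirement $\inf_x\phi(\bar c,x)>r-1/n$ to $p$ is certified by $(A,\bar a)$ itself, lands in the first set defining $\bfE_{\phi(\bar c, x), r,n}$, and refines $p$. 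In the complementary case, $(\inf_x\phi(\bar c,x))^A\leq r-1/n<r$, so there exists $b\in S(A)$ with $\phi(\bar a,b)^A<r$. Here I use the crucial feature, emphasized in the informal discussion preceding the lemma, that any open condition mentions only finitely many of the constants $c_j^S$: since $p$ involves finitely many constants, I may pick an index $j$ so that $c_j^S$ does \emph{not} occur in $\psi(\bar c)$, and re-interpret $c_j^S$ as $b$ while keeping all other constants interpreted as before. This yields a certifying pair $(A,\bar a')$ for the strengthened condition that additionally records $\phi(\bar c, c_j^S)<r$, placing it in the second set defining $\bfE_{\phi(\bar c, x), r,n}$.

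\textbf{Packaging the witness step.} To make the second case rigorous I would form the explicit strengthening $q:=(\chi(\bar c, c_j^S),0,\e')$ where $\chi:=\max\{|\psi(\bar c)-s_0|,\ \phi(\bar c,c_j^S)\dotminus(r-\e')\}$ for a small $\e'>0$ chosen with $\phi(\bar a,b)^A<r-\e'$ and $|\psi(\bar a)^A-s_0|<\e'<\delta$; the formula $\chi$ lies in $\Sigma$ by closure under the relevant connectives, and $(A,\bar a')$ certifies $q$ with value $<\e'$. Then $q\leq p$ (so $U_q\subseteq U_p$ and in particular $U_q\cap U_p=U_q\neq\emptyset$), and every $s\in U_q$ satisfies $s(\phi(\bar c,c_j^S))<r$, giving $q\in\bfE_{\phi(\bar c, x), r,n}$. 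Since $p$ and its certifying pair were arbitrary, $\cE_{\phi(\bar c, x), r,n}$ meets every basic open set and is therefore dense.

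\textbf{Expected obstacle.} The routine calculations are trivial; the one point requiring genuine care is the witness step---namely verifying that a fresh constant is actually available and that re-interpreting it does not disturb the certification of $p$. This hinges precisely on the finiteness of the support of any open condition together with the availability of countably many constants $c_j^S$ in each sort, and on checking that the re-interpreted pair still satisfies $\bfT$ (which it does, being literally the same model $A$ with a different interpretation of one unused constant). I would make sure to state this finiteness-of-support observation cleanly, as it is the mechanism that converts an approximate infimum in a certificate into an honest witness constant in the generic model.
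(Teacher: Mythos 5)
Your proposal is correct and follows essentially the same route as the paper: the same dichotomy on the value of $\inf_x\phi(\bar c,x)$ at a certificate for $p$, with the first case handled by shrinking to a basic open set forcing the strict lower bound and the second by choosing a fresh constant $c_j^S$ (available because conditions have finite support) and reinterpreting it as a witness in the same model. The only cosmetic difference is that you write out explicit $\max$/$\dotminus$ formulas for the strengthened conditions where the paper instead invokes Lemma~\ref{L.Henkin.Up} to extract a basic open set inside $U_p\cap W$ (resp.\ $U_{q_0}\cap U_p$); both implementations are sound.
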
 

\begin{proof}
Since $\{U_p \mid p \in \bbPT\}$ is a basis for the topology on $\fT$, we need to show that each $U_p$ has nonempty intersection with $\cE_{\phi(\bar c, x),r,n}$.  This will follow if we show that each $U_p$ contains  $U_q$ for some  $q \in \bfE_{\phi(\bar c,x),r,n}$.
In other words, it suffices to prove  that for every $p\in \bbPT$ there exists $q\leq p$ in $\bfE_{\phi(\bar c, x), r,n}$.
 
Fix $s\in U_p$ and consider two cases: either $s(\inf_x\phi(\bar c,x))>r-1/n$ or  $s(\inf_x\phi(\bar c,x))\leq r-1/n$.
If $s(\inf_x\phi(\bar c,x))>r-1/n$, then choose a weak$^*$-open neighbourhood 
$W$ of $s$ such that $t(\inf_x\phi(\bar c, x))>r-1/n$ for all $t\in W$. Then Lemma~\ref{L.Henkin.Up} 
provides   $q$  such that 
$U_q\subseteq U_p\cap W$,   
and $q$ belongs to the first set in the definition of 
$\bfE_{\phi(\bar c, x), r,n}$.

In the second case, we have $s(\inf_x \phi(\bar c, x))\leq r-1/n$ and we need to take a closer look at $p$.
Fix $\psi$, $\bar c'$, $r'$ and $\delta$
such  that $p=(\psi(\bar c'), r',\delta)$ and recall that $s \in U_p$. 
Fix $i$ large enough so that $c_i^S$ 
 does  not appear in $\bar c$ or in  $\bar c'$. We claim that  
\[
q_0:=(\phi(\bar c, c_i^S), r-1/n, 1/n)
\]
 is an element of $\bbPT$ satisfying $U_{q_0}\cap U_p\neq \emptyset$. 
 In order to prove $q_0 \in \bbPT$, we need 
 a 
 $\bfT$-certifying pair $(A,\bar a)$ for $q_0$. For this, we first fix $(B,\bar b)$ 
 such that $s=s_{B,\bar b}$. 
Since $s(\inf_x \phi(\bar c,x))\leq r-1/n$, 
we can find 
  $d$ in 
 the unit ball of $B$ satisfying  $ \phi(\bar b, d)^{B}<r$. 
 
 Now let $A:=B$ and 
 define the assignment of constants $\bar a$ by  
 $a_j^S:=b_j^S$ if $j\neq i$ and $a_i^S:=d$. 
  Then $\phi(\bar a, a^S_i)^A=\phi(\bar b, d)^B<r$ and  $s':=s_{A,\bar a}$ belongs to $U_{q_0}\cap U_p$. 
By Lemma~\ref{L.Henkin.Up} we can find a $q$ such that $U_q\subseteq U_{q_0}\cap U_p$, 
and such~$q$ belongs to the second set in the definition of 
$\bfE_{\phi(\bar c, x), r,n}$. 

We have proved that for every condition $p\in \bbPT$ and 
every triple $(\phi(\bar c, x), r,n)$
there exists $q\leq p$ in $\bbPT$ 
such that $q\in \bfE_{\phi(\bar c, x), r,n}$, as required. 
\end{proof}

\begin{proof}[Proof of Theorem~\ref{T.FM1}]
(1) Since $\calL$ (and therefore $\calLp$)  is separable we can fix
 a countable $\|\cdot\|_{\bfT}$-dense set $\Phi$ of sentences in $\calLp$. 
By Lemma~\ref{L.Henkin.E}  (using the notation from Lemma~\ref{L.Henkin.E})
\[
\fG:=\bigcap_{\phi\in \Phi, n\in \bbN, r\in \bbQ} \cE_{\phi(\bar x,c), r,n}
\]
is a dense $G_\delta$ subset of $\fT$. 

Fix $s\in \fG$. By the density of $\Phi$ and the weak$^*$-continuity of $s$, $s$ belongs to 
$\cE_{\phi(\bar x,c), r,n}$  for all $\calLp$-sentences $\phi(\bar x,c)$, $r\in \bbR$, and $n\in\bbN$. 
Define the $\calL$-structure 
$A_s$ generated by the new constant symbols with respect to the conditions
\[
\psi(\bar c)=s(\psi(\bar c))
\]
for every atomic formula $\psi(\bar x)$ and $\bar c$ of 
the appropriate sort.\footnote{In the case when $\calL$ is 
the language of \cstar-algebras, $A_s$ is the universal \cstar-algebra
generated by the new constant symbols with respect to the conditions
$\|f(\bar c)\|=s(\|f(\bar c)\|)$
for all $^*$-polynomials $f$.} 
Remember that $\psi(\bar c)$ is an $\calLp$ sentence and it therefore belongs to the domain of $s$. 
Let $a_i^S\in A$ be the element corresponding to the constant $c_i^S$. 
This defines  assignment   $\bar a_s$ of distinguished elements of $A_s$.

We now prove that for every sort $S$ the set 
$\{a_j^S: j\in \bbN\}$ is dense in the interpretation of $S$ in $A$.\footnote{In the case of \cstar-algebras this reduces to saying that $\bar a$ is dense in $A$.}  Fix a term  $f(\bar x)$.
Then $d(f(\bar c), x)$ is an atomic formula.\footnote{In 
the case when $\calL$ is the language of \cstar-algebras, $f$ is a 
$^*$-polynomial and we consider the atomic formula $\|f(\bar c)-x\|$.} 
Since $s\in \cE_{d(f(\bar c),x), r,n}$ for all $r$ and $n$  we have
\[
\inf_{j\in \bbN} s(d(f(\bar c),c_j^S))=\inf_{x\in A_s} d(f(\bar a_s),x)=0.
\]
 Therefore $\bar a_s$ is dense in $A_s$. 
 
We now need to show that $s=s_{A_s,\bar a_s}$, i.e., that
\[
\phi(\bar a_s)^{A_s}=s(\phi(\bar c))
\]
for every sentence $\phi(\bar c)$ in $\calLp$. 
Since formulas were defined recursively in Definition~\ref{formula}, 
the proof proceeds by induction on the rank of $\phi(\bar c)$. 

For atomic sentences this is a consequence of the definition of $A_s$. 
If the assertion is true for $\phi_j(\bar c)$ for $1\leq j\leq n$ and $f\colon \bbR^n\to \bbR$ is continuous, then the assertion is true for $f(\phi_1(\bar c), \dots \phi_n(\bar c))$ by 
Lemma~\ref{L.Henkin.s}. 

Now fix $\phi(\bar c,x)$ with $x$ of sort $S$ and
assume the assertion is true for $\phi(\bar c, c_j^S)$ for all $j$.  
 Since $s\in  \cE_{\phi, r,n}$ for all $r$ and $n$ 
 and  $\{a_j^S: j\in \bbN\}$ is dense in $S$, we have 
\[
s(\inf_x\phi(\bar a, x))=\inf_j s(\phi(\bar a, a_j^S)) 
=\inf_x\phi(\bar a, x)^{A_s}. 
\]
Since $\sup_x\phi(\bar c, x)=-\inf_x -\phi(\bar c, x)$, this completes the inductive proof that 
$\phi(\bar a)^{A_s}=s(\phi(\bar c))$ for all formulas $\phi(\bar x)$.

(2) Assume $\bfT$ has a model $B$ with property $P$ which is \udt{} (\S\ref{Subsection: uniform types}). 
Fix uniform families $\cF_n$ for $n\in \bbN$ such that a model $A$ of $\bfT$ 
has property $P$ if and only if 
\[
f_n(\bar x):=\inf_{\varphi \in \cF_n} \varphi(\bar x)
\]
vanishes on $A$ for all $n\in \bbN$. 
We shall define a $G_\delta$ subset $\fG_P$ of $\fG$ such that if $s\in \fG_P$
then  $f_n$ vanishes on $A_s$ for all $n$.
 For each 
  $n\in \bbN$,  $m\geq 1$, and each finite sequence $\bar c$ of $\calLp$-constants of appropriate sort, 
  let 
 \[
 \bfD_{\bar c, m,n}:=\{p\in \bbPT: (\forall s\in U_p) \inf_{\psi\in \cF_n} s(\psi(\bar c))<1/m\}
 \]
 and with 
 \[
 \calD_{\bar c, m,n}:=\bigcup\{U_p: p\in \bfD_{\bar c, m,n}\}
 \]
  let 
 \[
 \fG_P:=\fG\cap \bigcap_{m,n, \bar c} \calD_{\bar c, m,n}. 
 \]
 This is a $G_\delta$ set and if $s\in \fG_P$ then for every $n\in \bbN$ the 
 zero-set of~$f_n^{A_s}$ is a dense subset of $A_s$. By Lemma~\ref{L.udt} 
 the zero-set of $f_n^{A_s}$  is closed and therefore $f_n^{A_s}$ vanishes on $A_s$. 
 This shows that $s\in \fG_P$ implies $A_s$ has property $P$. 
  Conversely, it is clear that   
 if $B$ is a model of $\bfT$ with property $P$ then $s_{B,\bar b}\in \fG_P$  
for any   enumeration $\bar b$ of a dense subset of~$B$. 

Finally, the above implies that 
$\fG_P$ is nonempty if and only if some model of $\bfT$ has property $P$. 

(3)  We import the assumptions and notation from (2) 
and in addition  assume $\bfT$ is a complete theory. 
We shall prove that every triple  $\bar c$, $m$, $n$ and every 
$p\in \bbPT$  there is $q\leq p$ such that $q\in \bfD_{\bar c, m,n}$. 
As in (2), let~$B$ be a model of~$\bfT$ with property $P$. 

Fix $p=(\phi(\bar c),r,\e)$ in  $\bbPT$. 
Since the condition 
$\inf_{\bar x} |\phi(\bar x)-r|<\e$ is consistent with $\bfT$ and $\bfT$ is complete,  
 $(\inf_{\bar x} |\phi(\bar x)-r|)^B<\e$ 
holds in every model of $\bfT$, in $B$ in particular. 
We can therefore fix an interpretation $\bar b$ of constants in $B$ so that 
 \[
| \phi(\bar b)^B-r|<\e. 
\]
Since $B$ satisfies $P$, there exists $\psi\in \cF_n$ such that 
$\psi(\bar b)^B<1/m$. By Lemma~\ref{L.Henkin.Up} we can find 
$q\leq p$ in $\bfD_{\bar c, m,n}$ as required. 

This shows that each $\calD_{\bar c,m,n}$ is dense, and so by the Baire Category Theorem,
$\fG_P$ is a dense $G_\delta$ subset of $\fG$. 
\end{proof} 

Theorem~\ref{T.FM1}  takes as an input a theory $\bfT$ and produces 
a generic functional $s$ 
and a \cstar-algebra $A$ 
that satisfies $\bfT$ and has $s$ as the ``type'' of a countable dense subset. 
Depending on $\bfT$, we can assure that $A$ is nuclear or that it satisfies other approximation properties  from \S\ref{S.AP}. 
It should be noted that not every theory of a \cstar-algebra admits a nuclear model; see \S\ref{S.Not-ee-nuclear}.  
However, if $\bfT$ is a complete theory in the language of \cstar-algebras with (for example) a
nuclear model  then 
Theorem~\ref{T.FM1} (3) and Theorem~\ref{T1}  show that every $\bbPT$-generic model of $\bfT$ 
is nuclear. This applies to other properties of \cstar-algebras that are \udt{}, 
 such as being UHF, AF,  simple, Popa, TAF,  QD,  
 having   nuclear dimension $\leq n$,  or having decomposition rank $\leq n$ for $n\geq 1$  
  (Theorem~\ref{T1}). Of course this is also true (and much easier to prove) for axiomatizable properties.

\section{Finite forcing} 
We now consider  the case when $\Sigma$ consists of all quantifier-free $\calLp$-sentences.
This is the so-called `finite forcing'\index{forcing!finite} and in this case $\bbPTS$ is denoted $\bbPTf$ and 
  $\fSST$ 
  (the space of quantifier-free formulas with  seminorm $\|\cdot\|_{\bfT}$) 
  is denoted $\fSTf$. 
As before, we write
\[
\fTT:= \{s_{A,\bar a}:\text{ for some $A\models \bfT$ and $\bar a$}\}.
\]
Recall that a  theory is \emph{\aea}\index{theory!\aea} 
 if it has a set of axioms that are of the form 
\[
\sup_{\bar x} \inf_{\bar y} \varphi(\bar x, \bar y)
\]
where $\varphi$ is a quantifier-free $\bbR^+$-formula. 
Finite forcing applies only to \aea{} theories, as the single alteration between universal and existential  quantifiers occurs in the forcing process.

  Recall that the  properties proved to be \udt{} in Theorem~\ref{T1}, with a possible exception 
of being TAF, are \udut.  
We state and prove the analogue of Theorem~\ref{T.FM1} for finite forcing. 

\begin{theorem}\label{T.FM2} 
Assume $\bfT$ is an  \aea{} theory of \cstar-algebras and $P$ is  \udut{} such that $\bfT$ has a model satisfying $P$. 
\begin{enumerate}
\item There exists a dense $G_\delta$ subset $\fG$ of  $\fTT$ such that 
every  $s\in \fG$ determines a unique model $A_s$  of $\bfT$ and an interpretation~$\bar a_s$ 
of constants $\bar c$ in $A_s$, such that 
each sort $S$ in $A$ is the closure of $\{a_j^S: j\in \bbN\}$ 
and  $s=s_{A_s,\bar a}$. 

\item 
The set
\[
\fG_P := \{ s \in \fG : A_s \text{ has } P\}
\]
 is a $G_\delta$ subset of $\fG$.
\item Furthermore, if $\bfT$ is a complete theory then $\fG_P$ is dense in $\fG$.
\end{enumerate}
\end{theorem}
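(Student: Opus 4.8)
The plan is to reproduce the Henkin construction of the proof of Theorem~\ref{T.FM1}, but carried out over the space $\fSTf$ of quantifier-free $\calLp$-sentences and the associated functional space $\fTT$ of restrictions $s_{A,\bar a}$ to $\fSTf$. Since the quantifier-free sentences satisfy conditions ($\Sigma1$)--($\Sigma3$), Lemmas~\ref{L.Henkin.s}, \ref{L.Henkin.Space} and \ref{L.Henkin.Up} apply to $\fSTf$ and $\bbPTf$: the set $\fTT$ is weak$^*$-compact and the sets $U_p$, $p\in\bbPTf$, form a basis for its relative topology. The essential structural change is that a generic $s\in\fTT$ now decides only quantifier-free sentences, so the complexity induction of Theorem~\ref{T.FM1} cannot be run and one may never form $s(\inf_x\phi)$. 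This actually simplifies part~(1): once the interpreted constants $\bar a_s$ are forced to be dense, the value $(\inf_x\phi(\bar a_s,x))^{A_s}$ for quantifier-free $\phi$ is automatically $\inf_j s(\phi(\bar c,c_j^S))$ by uniform continuity of $\phi$, so no analogue of the full family $\cE_{\phi,r,n}$ from Lemma~\ref{L.Henkin.E} is required. The witness machinery is needed only in three guises: to force density of the constants, to force the $\forall\exists$-axioms of $\bfT$, and to force $P$.

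First I would treat (1). For each term $f$ and each $n$ I add the conditions forcing $d(f(\bar c),c_j^S)<1/n$ for some constant $c_j^S$; this set is dense because any $\bfT$-certifying pair $(A,\bar a)$ for a condition $p$ may be extended by interpreting a fresh constant as $f(\bar a)^A$, after which Lemma~\ref{L.Henkin.Up} yields $q\le p$ inside the set. Intersecting these countably many dense open sets (recall $\calLp$ is separable) gives a dense $G_\delta$ set $\fG$ on which $\{a_j^S\}$ is dense in each sort. For $s\in\fG$, define $A_s$ exactly as in Theorem~\ref{T.FM1}, as the universal \cstar-algebra generated by the constants subject to the atomic relations $\psi(\bar c)=s(\psi(\bar c))$; the identity $s=s_{A_s,\bar a_s}$ then follows by induction over atomic formulas and continuous connectives alone, using that $s$ is a Banach algebra homomorphism (Lemma~\ref{L.Henkin.s}). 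It remains to force $A_s\models\bfT$. Fixing an axiomatization of $\bfT$ by sentences $\sup_{\bar x}\inf_{\bar y}\psi(\bar x,\bar y)$ with $\psi$ quantifier-free, for each such axiom, each constant tuple $\bar c$ and each $n$ I add the set forcing $\psi(\bar c,\bar c')<1/n$ for some fresh constant tuple $\bar c'$; density holds because a certifying pair $(A,\bar a)\models\bfT$ satisfies $(\inf_{\bar y}\psi(\bar a,\bar y))^A=0$, so a witness in $A$ exists and can be attached to fresh constants. On the generic set, density of $\bar a_s$ and uniform continuity give $(\sup_{\bar x}\inf_{\bar y}\psi(\bar x,\bar y))^{A_s}=\sup_{\bar c}\inf_{\bar c'}s(\psi(\bar c,\bar c'))=0$; this is exactly where $\forall\exists$-axiomatizability is used, since only the single existential layer can be witnessed by new constants while the outer universal is absorbed into the quantification over all constant tuples.

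For (2) and (3) I would follow the corresponding part of the proof of Theorem~\ref{T.FM1}, replacing the families of Theorem~\ref{T1} by the families of existential formulas supplied by the hypothesis that $P$ is \udut. Fix uniform families $\cF_n$ of existential formulas with $A$ having $P$ iff $p^A_{\cF_n}\equiv 0$ for all $n$, and write each $\psi\in\cF_n$ as $\inf_{\bar z}\chi$ with $\chi$ quantifier-free. Define $\bfD_{\bar c,m,n}$ to consist of conditions forcing $\chi(\bar c,\bar c'')<1/m$ for some $\psi\in\cF_n$ and fresh constants $\bar c''$; such a condition forces $p^{A_s}_{\cF_n}(\bar a_s)<1/m$. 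Setting $\fG_P:=\fG\cap\bigcap_{m,n,\bar c}\calD_{\bar c,m,n}$, with $\calD_{\bar c,m,n}:=\bigcup\{U_q:q\in\bfD_{\bar c,m,n}\}$, gives a $G_\delta$ subset of $\fG$ on which $p^{A_s}_{\cF_n}$ vanishes on the dense set $\{\bar a_s\}$, hence identically by Lemma~\ref{L.udt}; conversely $s_{B,\bar b}\in\fG_P$ whenever $B\models\bfT$ has $P$, proving (2). For (3), assuming $\bfT$ complete and $B\models\bfT$ with $P$: given $p=(\phi(\bar c),r,\e)$, the sentence $\inf_{\bar x}|\phi(\bar x)-r|$ takes the same value $<\e$ in $B$ as in any certifying model, so some $\bar b\in B$ has $|\phi(\bar b)^B-r|<\e$; since $B$ has $P$ there are $\psi=\inf_{\bar z}\chi\in\cF_n$ and $\bar e\in B$ with $\chi(\bar b,\bar e)^B<1/m$, and interpreting $\bar c\mapsto\bar b$, $\bar c''\mapsto\bar e$ furnishes, via Lemma~\ref{L.Henkin.Up}, a $q\le p$ in $\bfD_{\bar c,m,n}$. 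Thus each $\calD_{\bar c,m,n}$ is dense and $\fG_P$ is a dense $G_\delta$ by the Baire Category Theorem.

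The main obstacle, and the only place where the argument genuinely diverges from Theorem~\ref{T.FM1}, is establishing that $A_s\models\bfT$ and that $A_s$ has $P$ without ever evaluating $s$ on a formula containing a quantifier: both facts must be secured through purely quantifier-free witness conditions backed by certifying models of $\bfT$. This is precisely why the hypotheses strengthen relative to the infinite-forcing case, $\bfT$ to being \aea{} and $P$ to being \udut, since a single existential layer over quantifier-free data is exactly what can be witnessed by assigning values to fresh constants and then transported to the generic model by density of the constant symbols.
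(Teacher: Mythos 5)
Your proposal is correct and follows essentially the same route as the paper's proof: run the Henkin forcing over the quantifier-free sentences, force density of the constants via the term conditions $d(f(\bar c),c_j^S)$, witness the inner existential of each $\forall\exists$-axiom (the paper's sets $\bfF_{m,n,j}$) and of each formula in the uniform families by reinterpreting fresh constants in a certifying model, and finish with the Baire Category Theorem. If anything you are more explicit than the paper, which handles parts (2) and (3) by declaring that the proof of Theorem~\ref{T.FM1} goes through ``almost verbatim''; your unwinding of each existential $\psi=\inf_{\bar z}\chi\in\cF_n$ into its quantifier-free matrix evaluated at fresh constants is exactly the content of that ``almost''.
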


\begin{proof} 
Fix an axiomatization of $\bfT$, 
\begin{equation}\label{Eq.AE-axiomatization}
\sup_{\bar x} \inf_{\bar y} \varphi_n(\bar x, \bar y)=0
\end{equation}
for $n\in \bbN$ and quantifier-free formulas $\varphi_n$. 

For a quantifier-free formula $\phi(\bar c, x)$ with $x$ of sort $S$ let 
\begin{align*}
\bfE_{\phi(\bar c, x), r,n}:= &
\{p\in \bbPT: \mbox{for all } s\in U_p, s(\inf_x\phi(\bar c, x))> r-1/n\}\\
&\cup 
\{p\in \bbPT:\mbox{for some  } c_j^S \mbox{ and all } s\in U_p, s(\phi(\bar c, c_j^S))<r\}
\end{align*}
and let 
\[
\cE_{\phi, r,n}:=\bigcup\{U_q : q\in \bfE_{\phi(\bar c, x), r,n}\}
\]
as in Lemma~\ref{L.Henkin.E}. 
The proof of Lemma~\ref{L.Henkin.E} (using the case of Lemma~\ref{L.Henkin.Up} 
when $\Sigma$ is the space of all quantifier-free  formulas)
 shows that each $\cE_{\phi,r,n}$, for $\phi$ quantifier-free, 
is dense open in $\fTT$. 

Let 
\[
\fG_0:=\bigcap_{\phi\in \Phi, n\in \bbN, r\in \bbQ} \cE_{\phi, r,n}. 
\]
Since the formula $d(f(\bar c),x)$ (or, in the case of \cstar-algebras, 
the formula $\|f(\bar c)-x\|$) is quantifier-free for every 
term $f$, the proof of Theorem~\ref{T.FM1} shows that 
in the case of  finite forcing 
for $s$ in $\fG_0$ the interpretation $\bar a_s$ is dense in~$A_s$. 

(1) 
We shall find a dense $G_\delta$ subset of $\fG$ such that 
$A_s\models \bfT$ for every~$s$ in this set. By the above we 
may assume that $\bar a_s$ is dense in $A_s$. 

Fix $n$ and let the formula $\varphi_n(\bar x, \bar y)$ be as in \eqref{Eq.AE-axiomatization}.  
 There are only countably many assignments of 
constants $c_i^S$ to variables $\bar x$ in $\varphi_n(\bar x, \bar y)$. 
Let $\varphi_{n,j}(\bar c, \bar y)$, for $j\in \bbN$, 
be an enumeration of all $\calLp$-formulas obtained 
from $\varphi_n(\bar x, \bar y)$ by assigning constants from $\bar c$ of the appropriate sort
 to~$\bar x$.  

For a fixed $n$ and $j$ 
let $\varphi_{n,j,k}$, for $k\in \bbN$, 
be an enumeration of all~$\calLp$-sentences obtained from $\varphi_{n,j}(\bar c,\bar y)$ 
by assigning constants  $c^S_i$ of the appropriate sort to $\bar y$. 
 If $\bar a=\{a_j^S: j\in \bbN\}$  is dense in $S^A$ for every sort~$S$, then $(A,\bar a)$ is a model 
of $\bfT$ if and only if
for all $n$ and $j$ in $\bbN$ we have 
\[
\inf_{k} \varphi_{n,j,k}^A=0. 
\] 
Fix $m,n$, and $j$ in $\bbN$ and let 
\[
\bfF_{m,n,j}:=   \{p\in \bbPTf: (\forall s\in U_p)  (\exists k) s(\varphi_{n,j,k})<1/m\}. 
\]
In order to show that for every $p\in \bbPTf$ there exists $q\leq p$ in $\bfF_{m,n,j}$, let us
fix $p\in \bbPTf$ and a  certifying pair $(A,\bar a)$ for $p$. 
Since $A$ is a model of~$\bfT$, we have
$(\inf_{\bar y}\phi_{n,j}(\bar a, \bar y))^A=0$. Therefore there is an assignment of 
constants  $c^S_i$ of the appropriate sort to $\bar y$, coded by some $k$, 
 such that $\phi_{n,j,k}^A< 1/m$. 
By the case of Lemma~\ref{L.Henkin.Up} when $\Sigma$ consists of quantifier-free formulas 
we can find a condition $q\in \bfF_{m,n,j}$ extending both $p$ and $(\phi_{n,j,k}(\bar c), 0, 1/m)$. 

Therefore 
\[
\fG:=\fG_0\cap \bigcap_{m,n,j}\bigcup\{U_p: p\in \bfF_{m,n,j}\}
\]
  is a dense $G_\delta$
subset of $\fTT$. 

The construction of $A_s$ from $s$ is identical to that in the proof of Theorem~\ref{T.FM1}.

We now prove that $s\in \fG$ implies $A_s\models \bfT$. 
Fix an axiom, say $\sup_{\bar x} \inf_{\bar y}\phi_n(\bar x, \bar y)$, 
of $\bfT$ and fix $j\in \bbN$. 
As for all $m$ and $j$ there exists $p\in \bfF_{m,n,j}$ 
such that $s\in U_p$, we have  
\[
(\inf_{\bar y} \phi_{n,j}(\bar a, \bar y))^{A_s}=0. 
\]
Since  $\bar a_s$ is dense in $A_s$, this implies 
$(\sup_{\bar x}
\inf_{\bar y} \phi_{n}(\bar x, \bar y))^{A_s}=0$. As $n$ was arbitrary, this implies $A_s\models \bfT$. 

It is clear that $s=s_{\bar A_s, \bar a_s}$ and this concludes the proof of (1).

Now suppose 
 the property $P$ is   \udut{}. 
Fix uniform families $\cF_n$ of quantifier-free formulas 
for $n\in \bbN$ such that a model $A$ of $\bfT$ 
has property $P$ if and only if 
\[
f_n(\bar x):=\inf_{\varphi \in \cF_n} \varphi(\bar x)
\]
vanishes on $A$ for all $n\in \bbN$. 
Since each  $\varphi\in \bigcup_n \cF_n$  is quantifier-free, 
the  proofs of (2) and (3) of Theorem~\ref{T.FM1} taken almost verbatim prove (2) and (3)
of Theorem~\ref{T.FM2}. 
\end{proof}

Notably, the  hyperfinite~II$_1$ factor  is a generic model of its theory for 
both finite and infinite forcing    
 (see \cite[Proposition~5.21 and Corollary~6.4]{FaGoHaSh:Existentially}).

\section{$\forall\exists$-axiomatizability and existentially closed  structures} \hfill\\

We begin with the following essential definition.

\begin{definition}\label{D.e.c.} If $\fM$ is a class of \cstar-algebras we say 
that algebra $A\in \fM$ is \emph{existentially closed in $\fM$},\index{C@\cstar-algebra!e.c.}
 e.c.\ for short, if whenever $B\in \fM$ and $\Psi\colon A\to B$ is an injective $^*$-homomorphism
for every quantifier-free formula $\varphi(\bar x, \bar y)$ and every $\bar a$ in $A$ of the appropriate sort,
\[
\inf_{\bar y} \varphi(\bar a, \bar y)^A=
\inf_{\bar y} \varphi(\Psi(\bar a), \bar y)^B.
\]
\end{definition}

Definition~\ref{D.e.c.} comes in two different flavours, depending on whether the
language is equipped with a symbol for the unit or not. 
In the former case the e.c.\ algebra (if it exists) is necessarily unital. In the latter case, 
the e.c.\ algebra (if it exists) need not be unital. This is because if $A$ 
has unit $a$  and it is  isomorphic to a non-unital subalgebra of some $B\in \fM$
then the sentence $\psi:=\inf_y 1\dotminus \|y-ay\|$ satisfies $\psi^A=1$ and $\psi^B=0$.

Existentially closed structures are the abstract analogues of algebraically closed fields. 
In the applications of model theory, the study of e.c.\ structures is often one of the first moves (see \cite[Chapter 8]{Hodg:Model} or \cite{Mac:modelcomplete}).
To see why, the standard model-theoretic approach is to try to learn something about a structure $A$ by studying some elementary class to which $A$ belongs.
If this class happens to be  $\forall\exists$-axiomatizable
 then it will contain e.c.\ models (even one which contains $A$).  In good cases, since the e.c.\ structure
is very rich, one can deduce information there and transfer it back to $A$.  
As an example of this rich structure, one can show that an e.c.\ \cstar-algebra in a class closed under taking crossed products with $\bbZ$ has the property that all of its automorphisms are approximately inner (see the argument of \cite[Proposition~3.1]{FaGoHaSh:Existentially}). The study of e.c.\ structures has worked well in the classical first order case.
For instance, e.c.\ groups are well-studied objects (see e.g.,  \cite{hodges2006building} and \cite{Mac:annals}). 
\cstar-algebras that are e.c.\ in the class of all \cstar-algebras are studied in \cite{goldbring2014kirchberg}. 
Some natural classes allow many e.c.\ objects (see e.g.,  \cite{FaGoHaSh:Existentially}).
In the best case, the class of e.c.\ structures proves to be elementary and thereby provides a particularly good theory, called the model companion. Unfortunately, it is known that there
is no model companion to the theory 
of \cstar-algebras (see \cite[Theorem~3.3]{eagle2015quantifier}).

A straightforward modification of classical result by Robinson yields the following theorem 
(\cite[Corollary~6.6]{FaMa:Omitting}; see  also   \cite[\S 5]{FaGoHaSh:Existentially}).

\begin{thm} \label{T.ec} Assume $\bfT$ is an \aea{} theory and $\fM$  is a  
non-empty class of \cstar-algebras satisfying $\bfT$ that is \udt{} consisting of existential formulas. 
Then there exists a separable algebra $A\in \fM$ 
that is existentially closed in $\fM$. 
Furthermore, $A$ can be chosen to include an isomorphic copy of any given 
 separable algebra $B\in \fM$. 
\qed
\end{thm} 

We add a constant 1 to the language and add (universal) axioms 
stating that the interpretation of this constant is a multiplicative unit. Therefore all \cstar-algebras considered below are unital, unless otherwise specified. 
By Theorem~\ref{T.ec} 
one obtains an e.c.\ algebra (not necessarily unique)  in 
every class of \cstar-algebras listed in  Theorem~\ref{T1}. 
Some of these e.c.\ algebras have other, familiar,  characterizations.

\begin{example} 
\begin{enumerate}
\item \label{I.ec.UHF} The class of UHF algebras contains a 
 unique e.c.\ algebra---the universal UHF algebra $Q$.
It is e.c.\ by Proposition~\ref{I.ec.MF} below.
 Since having a unital copy of $M_n(\bbC)$ 
is existentially axiomatizable for all $n$ (\S\ref{S.Mn}), any algebra $A$ which is e.c.\ for the class of UHF algebras
 must contain a unital copy of $M_n(\bbC)$ for all $n$.  By Glimm's classification theorem for  
 UHF algebras this implies that $A \cong Q$.

\item \label{I.ec.UHF2} Suppose $S$ is a proper subset of the set of all primes. 
The class  of all UHF \cstar-algebras not containing a unital copy of $M_p(\bbC)$ for any 
$p\in S$ contains a unique e.c.\ algebra, 
$\bigotimes_{p \not\in S} M_p(\bbC)^{\otimes \infty}$. 
This is proved by the argument from  \eqref{I.ec.UHF} 
because the class of algebras not containing 
  a unital copy of $M_p(\bbC)$ is universally axiomatizable by 
  Theorem~\ref{Summary}. 
\item\label{I.Q.ec.0}  $Q$ is e.c.\ for the class of all AF algebras, and even for the class of all MF algebras (Proposition~\ref{I.ec.MF}; 
see also Corollary~\ref{C.Q.ec}) but it is not a unique e.c.\ algebra in either of these classes. 
This is because Theorem~\ref{T.ec} implies that every \cstar-algebra in each of these classes $\cC$
is isomorphic to a subalgebra of an algebra in $\cC$ which is  e.c.\ for $\cC$. 
\item 
The unique e.c.\ nuclear \cstar-algebra is $\cO_2$; 
it is also the unique e.c.\ exact \cstar-algebra. 
Both these facts were  proved in 
 \cite{goldbring2014kirchberg}. $\cO_2$ is also one of the e.c.\ algebras for the class of all \cstar-algebras 
 embeddable into $\cO_2^{\cU}$ (cf. \eqref{I.Q.ec.0}). See also    Proposition~\ref{P.ssa-ec} below. 
\item Consider the class of abelian \cstar-algebras of the form $C(X)$, where $X$ is a compact, connected metric space. 
The algebra of continuous functions on the pseudoarc is  an  e.c.\ algebra in this class (\cite{eagle2015pseudoarc}). 
  \end{enumerate}
\end{example}

An injective $^*$-homomorphism is  an \emph{existential embedding}\index{existential embedding}  if 
for every quantifier-free formula $\varphi(\bar x, \bar y)$ and every $\bar a$ in $A$ of the appropriate sort,
\[
\inf_{\bar y} \varphi(\bar a, \bar y)^A=\
\inf_{\bar y} \varphi(\Phi(\bar a), \bar y)^B.
\]
Therefore $A$ is e.c.\ in $\fM$ if every embedding of $A$ into some $B\in \fM$ is
an existential embedding. 
Following \cite{barlak2015sequentially} 
we say that a \cstar-homomorphism $\Phi : A\rightarrow B$ is \emph{sequentially split}\index{sequentially split}  if 
there is a $^*$-homomorphism $\Psi : B\to A^{\cU}$ such that $\Psi\circ\Phi$ 
agrees with the diagonal embedding of $A$ into~$A^{\cU}$.  
Countable saturation of $A^{\cU}$ and \L o\'s' theorem imply that every existential embedding is sequentially split
and that a sequentially split embedding is existential 
if and only if $\Psi$ as above can be chosen to be an injection (see  \cite[\S 4.3]{barlak2015sequentially}
or the proof of Lemma~\ref{L.MF.1}). 
Therefore \cite[Theorem~2.8 and Theorem~2.9]{barlak2015sequentially}
provide various properties of existential embeddings and e.c.\ \cstar-algebras. 
Notably, if there is an existential embedding of $A$ into a nuclear algebra $B$ that satisfies 
the UCT then~$A$ is nuclear and it satisfies the UCT (\cite[Theorem~2.10]{barlak2015sequentially}).

\begin{remark} 
Theorem~\ref{T1} and Theorem~\ref{T.ec} imply the existence of 
e.c.\ AF algebras. By Elliott's classification result for 
 AF algebras  (\cite{Ell:On}, see also \cite{Ror:Classification}) the category 
 of AF algebras is equivalent to the category of dimension groups. 
 The relation between e.c.\ objects in these two categories was  studied in 
 \cite{Scow:Some} and \cite{scowcroft2012existentially}. 
 \end{remark} 

\section{Strongly self-absorbing algebras} 
 A model is \emph{atomic}\index{atomic!model}
  if it is isomorphic to an elementary submodel of every model of its theory. 
Every strongly self-absorbing \cstar-algebra is 
an atomic model of its theory (see
\cite[Proposition~2.15]{FaHaRoTi:Relative}).  An atomic model is always e.c.\  for the class of models of its theory, and so every strongly self-absorbing \cstar-algebra is e.c.\ for the class of models of its theory.
Some instances of the question of the determination of the universal theory of a strongly self-absorbing algebra D are equivalent
to well-known problems (see below).  Recall that $\ThA(D)$ is the set of all universal sentences
in the theory of $D$ and that a separable 
 \cstar-algebra $A$ embeds into the ultrapower $D^{\cU}$ if and only if $A\models\ThA(D)$
 ($\cU$ stands for a nonprincipal ultrafilter on~$\bbN$; see the first line of the proof of Lemma~\ref{L.MF.1}). 
 As being $D^{\cU}$-embeddable  
  is separably 
universally axiomatizable (\S\ref{S.MF}) the existence of an e.c.\ $D^{\cU}$-embeddable
 algebra is a consequence of Theorem~\ref{T.ec}. 
 When $D$ is  $\cO_2$, the question of whether the universal theory of $\cO_2$ is the same as that of all \cstar-algebras is equivalent to the Kirchberg embedding problem.  The question of whether the universal theory of $Q$ is the same as the theory of stably finite \cstar algebras is the quasidiagonality question.
The case when $D$ is $\cZ$ is also interesting.

The analogue  of Proposition~\ref{P.ssa-ec} below  (with the analogous proof) 
was proved for the hyperfinite II$_1$
factor in \cite[Lemma~2.1]{FaGoHaSh:Existentially} and  \eqref{I.ssa.3}   was proved
in \cite{goldbring2014kirchberg}. 

 \begin{prop} \label{P.ssa-ec}\label{I.ec.MF}
 \begin{enumerate}
\item \label{I.ssa.1}  Every strongly self-absorbing \cstar-algebra $D$ is e.c.\ for 
 the class of all unital \cstar-algebras isomorphic to a subalgebra of an ultrapower of $D$. 
\item \label{I.ssa.2} The rational UHF algebra $Q$ is  e.c.\ for the class of unital MF algebras. 
 \item \label{I.ssa.3} $\cO_2$ is e.c.\ for the class of all  \cstar-algebras which are isomorphic to a subalgebra of an ultrapower of $\cO_2$. 
  \end{enumerate}
 \end{prop}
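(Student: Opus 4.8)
The plan is to isolate one homogeneity statement about strongly self-absorbing algebras and then feed it mechanically through the definition of existential closedness (Definition~\ref{D.e.c.}); parts \eqref{I.ssa.2} and \eqref{I.ssa.3} will then follow as near-corollaries. The crux is the following lemma, which I expect to be the main obstacle and which I would prove (or, if preferred, cite from \cite{FaHaRoTi:Relative}) before anything else. \emph{Main Lemma.} For $D$ separable strongly self-absorbing and $\cU$ a nonprincipal ultrafilter on $\bbN$, every unital $^*$-homomorphism $\Phi\colon D\to D^{\cU}$ is approximately unitarily equivalent to the diagonal embedding $\Delta\colon D\to D^{\cU}$, i.e.\ there are unitaries $u_\lambda\in D^{\cU}$ with $\|u_\lambda\Delta(d)u_\lambda^*-\Phi(d)\|\to 0$ for all $d\in D$. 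To prove it I would use that $D$ has approximately inner half-flip (being strongly self-absorbing, cf.\ Theorem~\ref{T.aihf}): if $\rho\colon D\to D^{\cU}$ is unital with $[\rho(D),\Phi(D)]=0$, then $\mathrm{C}^*(\Phi(D),\rho(D))$ is a quotient of $D\otimes D$ (the minimal tensor product, $D$ being nuclear), and applying this quotient map to the unitaries witnessing approximate innerness of the flip $d\otimes 1\mapsto 1\otimes d$ shows $\Phi\sim_{\mathrm{au}}\rho$. Taking $S:=\mathrm{C}^*(\Phi(D),\Delta(D))$, which is separable, and using the central-sequence absorption characteristic of strong self-absorption—namely that $D$ embeds unitally into $S'\cap D^{\cU}$—one gets a single $\rho$ commuting with both ranges, whence $\Phi\sim_{\mathrm{au}}\rho\sim_{\mathrm{au}}\Delta$.

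Granting the Main Lemma, part \eqref{I.ssa.1} is a squeeze. Fix $B$ in the class (so there is a unital embedding $\iota\colon B\to D^{\cU}$), a unital embedding $\Psi\colon D\to B$, a quantifier-free $\varphi(\bar x,\bar y)$, and $\bar a$ in $D$. Set $\Phi:=\iota\circ\Psi\colon D\to D^{\cU}$. Since substructures only enlarge the pool of witnesses for an $\inf$-formula, and $\Delta$ is elementary by \L o\'s' Theorem (Theorem~\ref{Los}), I would chain
\[
\inf_{\bar y}\varphi(\bar a,\bar y)^{D}=\inf_{\bar y}\varphi(\Delta(\bar a),\bar y)^{D^{\cU}}=\inf_{\bar y}\varphi(\Phi(\bar a),\bar y)^{D^{\cU}}\le\inf_{\bar y}\varphi(\Psi(\bar a),\bar y)^{B}\le\inf_{\bar y}\varphi(\bar a,\bar y)^{D},
\]
where the first equality is elementarity of $\Delta$, the second is the Main Lemma (the conjugation $\bar y\mapsto u^*\bar y u$ is a bijection of the unit ball and $\varphi$ is quantifier-free, so the two infima over $D^{\cU}$ agree), the first inequality uses $\iota$ an embedding, and the last uses $\Psi$ an embedding. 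Equality throughout is exactly existential closedness of $D$ in the class.

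Part \eqref{I.ssa.2} is then immediate: the rational (universal) UHF algebra $Q$ is strongly self-absorbing, and by Lemma~\ref{L.MF.def} the unital MF algebras are precisely the separable unital \cstar-algebras embeddable into $Q^{\cU}$, so \eqref{I.ssa.2} is the instance $D=Q$ of \eqref{I.ssa.1}. For part \eqref{I.ssa.3}, the algebra $\cO_2$ is strongly self-absorbing, so the unital case (embeddings between unital algebras) is again the instance $D=\cO_2$ of \eqref{I.ssa.1}. The only new point in the general, possibly non-unital, statement is that a unital embedding $\Phi\colon\cO_2\to\cO_2^{\cU}$ may land in a proper corner $q\cO_2^{\cU}q$, where $q=\iota(\Psi(1_{\cO_2}))$ is a projection; here I would invoke pure infiniteness and simplicity of $\cO_2$ together with $\cO_2\cong\cO_2\otimes\cO_2$ to run the half-flip argument of the Main Lemma inside the corner, or simply cite \cite{goldbring2014kirchberg}, where \eqref{I.ssa.3} is established. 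The delicate steps are all concentrated in the Main Lemma—specifically the central-sequence absorption $D\hookrightarrow S'\cap D^{\cU}$ and, for \eqref{I.ssa.3}, its corner version—so that is where I would spend the real effort.
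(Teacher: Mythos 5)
Your proposal is correct and follows essentially the same route as the paper: the "sandwich" $D\subseteq B\subseteq D^{\cU}$ combined with the fact that all unital embeddings of $D$ into $D^{\cU}$ are (approximately) unitarily conjugate to the diagonal one, plus \L o\'s' theorem; the paper simply cites \cite[Theorem~2.15]{FaHaRoTi:Relative} for your Main Lemma rather than reproving it via the half-flip and central-sequence absorption. Parts \eqref{I.ssa.2} and \eqref{I.ssa.3} are deduced from \eqref{I.ssa.1} exactly as you do (the paper works under its standing convention that everything in that section is unital, so your extra care about non-unital embeddings in \eqref{I.ssa.3} is not needed there, though it is a reasonable precaution).
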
 
 
\begin{proof} 
\eqref{I.ssa.1} This is the standard sandwich argument. Suppose $D$ is strongly self-absorbing,  
 $A$ embeds into $D^{\cU}$ 
 and 
$D$ is isomorphic to a subalgebra of $A$. Fix a quantifier-free formula $\varphi$ and $\bar a$ in $D$ and assume that 
$(\inf_{\bar y} \varphi(\bar a, \bar y))^A=r$. 
Then 
$(\inf_{\bar y} \varphi(\bar a, \bar y))^D\geq r$. 
We may identify $A$ with a unital subalgebra of~$D^{\cU}$.  
As all embeddings of $D$ into its ultrapower are unitarily conjugate 
(see e.g., \cite[Theorem~2.15]{FaHaRoTi:Relative}) we can arrange 
the composition of the embeddings of $D$ into $A$ with the embedding of $A$ into $D^{\cU}$ to be the diagonal embedding of 
$D$ into its ultrapower. 
By  \L o\'s' theorem, 
 $(\inf_{\bar y} \varphi(\bar a, \bar y))^{D^{\cU}}=(\inf_{\bar y} \varphi(\bar a, \bar y))^D=r$. 

\eqref{I.ssa.2} is a consequence of \eqref{I.ssa.1} since $Q$ is strongly self-absorbing and 
being unital and MF is equivalent to being isomorphic  to a unital subalgebra of 
$Q^{\cU}$ (Lemma~\ref{L.MF.def}). 

\eqref{I.ssa.3} This  is an immediate  consequence of \eqref{I.ssa.1}. 
\end{proof} 


\begin{corollary} \label{C.Q.ec} Every  stably finite algebra is MF if and only if the 
rational UHF algebra $Q$ is e.c.\ for 
the class of unital, stably finite, \cstar-algebras. 
\end{corollary}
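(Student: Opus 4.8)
The plan is to prove both directions of the biconditional by exploiting Proposition~\ref{P.ssa-ec}\eqref{I.ssa.2}, which states that $Q$ is e.c.\ for the class of unital MF algebras. The logical skeleton is that ``every stably finite algebra is MF'' makes the class of unital stably finite algebras coincide (in the separable case, which suffices by Downward L\"owenheim--Skolem, Theorem~\ref{DLS}) with the class of unital MF algebras, and being e.c.\ is a property relative to a fixed class. I would first recall that both classes under consideration are universally axiomatizable: stably finite by \S\ref{S.SF} (Proposition~\ref{P.SR} and the sentences $\beta_n$) and MF by Lemma~\ref{L.MF.1} together with Lemma~\ref{L.MF.def}. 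The key structural fact is that every MF algebra is stably finite, since being a subalgebra of $\prod_{\cU} M_n(\bbC)$ forces stable finiteness (matrix algebras are finite, finiteness is universally axiomatizable and passes to subalgebras by Proposition~\ref{P.Ax}(1)). Thus the class of unital MF algebras is always contained in the class of unital stably finite algebras.

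For the forward direction, assume every stably finite algebra is MF. Then the two classes are literally equal, so ``$Q$ is e.c.\ for unital stably finite algebras'' is the same statement as ``$Q$ is e.c.\ for unital MF algebras,'' which holds by Proposition~\ref{P.ssa-ec}\eqref{I.ssa.2}. I would spell out the definition of e.c.\ from Definition~\ref{D.e.c.}: the witnessing embeddings and the equality of existential infima depend only on membership in the ambient class, so when the classes coincide the e.c.\ property transfers verbatim.

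For the converse, assume $Q$ is e.c.\ for the class of unital stably finite algebras, and let $A$ be an arbitrary unital stably finite algebra; I must show $A$ is MF. By Lemma~\ref{L.MF.def} and Lemma~\ref{L.MF.1}, $A$ is MF if and only if $A\models \ThA(Q)$, i.e.\ $A$ satisfies the universal theory of $Q$. Since $Q$ is itself stably finite (it is UHF), both $Q$ and $A$ lie in the class for which $Q$ is e.c. By Theorem~\ref{T.ec} applied to the $\forall\exists$-axiomatizable theory of unital stably finite algebras (this class is \aea{} by Proposition~\ref{P.Ax}(3), being elementary and closed under inductive limits), there is an e.c.\ algebra in this class containing an isomorphic copy of $A$; alternatively I would produce a common superalgebra $B$ of both $A$ and $Q$ inside the class. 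The e.c.\ hypothesis on $Q$ then yields that the existential formulas realized over $Q$ are controlled, and by the sandwich argument as in the proof of Proposition~\ref{P.ssa-ec}\eqref{I.ssa.1} one deduces $\ThA(Q)\supseteq \ThA(B)$, whence every universal sentence true in $Q$ holds in $A$.

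The main obstacle will be setting up the converse direction cleanly: the e.c.\ property of $Q$ gives information about embeddings of $Q$ into larger stably finite algebras, whereas I need to conclude something about an arbitrary $A$, namely $A\models\ThA(Q)$. The bridge is to take a sufficiently saturated (or ultraproduct) model $B$ of the theory of stably finite algebras into which both $A$ and $Q$ embed, use that $Q$ is e.c.\ in $B$ to transfer existential information, and then apply Lemma~\ref{L.MF.1} to identify ``$A\models\ThA(Q)$'' with ``$A$ embeds into $Q^{\cU}$,'' hence MF. Care is needed because being e.c.\ controls existential types of tuples \emph{from $Q$}, not from $A$, so the argument must route the universal theory of $Q$ through the common embedding rather than directly; verifying that the containment $\ThA(B)\subseteq\ThA(Q)$ is exactly what the e.c.\ condition delivers is the delicate point, and I would model it on the standard sandwich computation already used for Proposition~\ref{P.ssa-ec}\eqref{I.ssa.1}.
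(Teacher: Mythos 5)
Your forward direction is fine and is exactly what the paper has in mind (it dismisses that direction in one line, for the reason you give: if every stably finite algebra is MF then, since MF algebras are stably finite, the two classes coincide and Proposition~\ref{P.ssa-ec}\eqref{I.ssa.2} applies). The logical skeleton of your converse is also sound: if you had a \emph{unital, stably finite} algebra $B$ containing both $A$ and $Q$ unitally, then the e.c.\ property of $Q$ in $B$ gives $\ThA(Q)=\ThA(B)\subseteq\ThA(A)$, and Lemma~\ref{L.MF.1} (via Lemma~\ref{L.MF.def}) converts $A\models\ThA(Q)$ into $A$ being MF.

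The gap is precisely the existence of that common superalgebra inside the class. You propose to get it from ``a sufficiently saturated (or ultraproduct) model of the theory of stably finite algebras into which both $A$ and $Q$ embed,'' or from Theorem~\ref{T.ec}. Neither works as stated: Theorem~\ref{T.ec} produces an e.c.\ algebra containing \emph{one} prescribed algebra, and a saturated model of $\Th(\cC)$ contains a copy of a given pair $(A,Q)$ only if $T\cup\Diag(A)\cup\Diag(Q)$ is consistent --- that is the joint embedding property for unital stably finite algebras, which is not free and is in fact the entire content of this step. (Two e.c.\ objects of a class without JEP need not even have the same universal theory, so you cannot bypass this by comparing $Q$ with some other e.c.\ algebra over $A$.) The paper supplies the joint embedding concretely: it forms $A\otimes Q$, which contains both $A$ and $Q$ unitally, and invokes Lemma~\ref{L.sfMF} --- stably finite tensor (MF and nuclear) is stably finite --- to see that $A\otimes Q$ stays in the class; note the paper explicitly warns that it is \emph{not known} whether the tensor product of two stably finite algebras is stably finite, so this cannot be waved through. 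With $B:=A\otimes Q$ in hand, the paper then runs your argument in contrapositive form (as in Proposition~\ref{P.6.5.1}): if $A$ fails a universal axiom of MF, so does $A\otimes Q$, and e.c.-ness pushes that failure down to $Q$, a contradiction. So: keep your skeleton, but replace the saturation step by the tensor product construction and cite Lemma~\ref{L.sfMF} explicitly.
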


\begin{proof} Only the converse direction requires a proof. 
Assume $Q$ is e.c.\ for the class of unital,  stably finite, \cstar-algebras. Also assume that 
$A$ is a unital, stably finite, \cstar-algebra which is not MF. 
Since $Q$ is MF and nuclear,  the tensor product $A\otimes Q$ is 
stably finite by Lemma~\ref{L.sfMF}. Since being MF is universally axiomatizable, as 
 in the proof of Proposition~\ref{P.6.5.1} we obtain that $Q$ is not MF, a contradiction. 
\end{proof} 

We turn to  $\cZ$ and projectionless, stably finite, unital \cstar-algebras. Let $\cC$ be the class of 
 nuclear, projectionless, stably finite, unital \cstar-algebras.
\begin{proposition} The Jiang--Su algebra $\cZ$ is e.c.\ for $\cC$
if and only 
if every $A\in \cC$ is embeddable into an ultrapower of $\cZ$. 

If $\cZ$ is e.c.\ for $\cC$ then it is the unique projectionless unital strongly self-absorbing algebra. 
\end{proposition}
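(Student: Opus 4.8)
The plan is to prove the two assertions of the proposition separately. For the first (the iff statement), note that one direction is immediate: if $\cZ$ is e.c.\ for $\cC$, then since every e.c.\ structure in a class closed under the relevant embeddings witnesses its own existential formulas across embeddings, and since $\cZ$ is strongly self-absorbing (hence an atomic model of its theory by the discussion in \S\ref{S.Tensorial}), every $A \in \cC$ embeds into a model in $\cC$ that is e.c., and one transfers to an ultrapower of $\cZ$ via $\ThA$. More precisely, I would run the argument of Proposition~\ref{P.ssa-ec}\eqref{I.ssa.1} in reverse: being embeddable into $\cZ^{\cU}$ is equivalent to modelling $\ThA(\cZ)$ (this is the universal-theory characterization from Lemma~\ref{L.MF.1} and the remarks in \S\ref{S.Tensorial}). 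The converse direction is exactly Proposition~\ref{P.ssa-ec}\eqref{I.ssa.1} specialized to $D = \cZ$: if every $A \in \cC$ embeds into $\cZ^{\cU}$, then the standard sandwich argument shows $\cZ$ is e.c.\ for $\cC$, using that all unital embeddings of $\cZ$ into $\cZ^{\cU}$ are unitarily conjugate and hence the composite of embeddings can be arranged to be the diagonal one.

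For the second assertion, I would argue that if $\cZ$ is e.c.\ for $\cC$ then it is the unique unital strongly self-absorbing projectionless algebra. The key structural inputs are: (i) strongly self-absorbing algebras are atomic models of their theories and hence e.c.\ for the class of models of their own theory (again \S\ref{S.Tensorial} and \cite{FaHaRoTi:Relative}); and (ii) $\cZ$ embeds into the ultrapower of any other strongly self-absorbing projectionless stably finite nuclear unital algebra $D'$, since $\cZ$ is the initial object among strongly self-absorbing algebras (it embeds unitally into every infinite-dimensional strongly self-absorbing algebra). The plan is then to show that $D'$ also lies in $\cC$ and embeds into $\cZ^{\cU}$, so that by the e.c.\ hypothesis the inclusion $\cZ \hookrightarrow D'$ (or its ultrapower avatar) is an existential embedding, forcing $D' \cong \cZ$ by a back-and-forth / intertwining argument on the atomic models.

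The main obstacle, and the step I expect to require the most care, is establishing that an arbitrary projectionless unital strongly self-absorbing algebra $D'$ actually belongs to the class $\cC$ (nuclear, projectionless, stably finite, unital) \emph{and} embeds into an ultrapower of $\cZ$. Nuclearity and unitality are built into the standard theory of strongly self-absorbing algebras, and stable finiteness should follow from projectionlessness together with the trace structure (a projectionless strongly self-absorbing algebra has a unique trace and no nonzero finite/infinite projection obstruction, forcing stable finiteness via the arguments underlying \S\ref{S.SF}). The genuinely delicate point is the embeddability of $D'$ into $\cZ^{\cU}$: this amounts to showing $D' \models \ThA(\cZ)$, which one would deduce from the fact that $\cZ$ is the minimal infinite-dimensional strongly self-absorbing algebra, so that $\ThA(\cZ) \supseteq \ThA(D')$ in the appropriate sense and the universal theories are comparable. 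Once that embeddability is in hand, I would invoke the e.c.\ hypothesis together with the atomicity of both $\cZ$ and $D'$ to produce mutually existential embeddings, and then apply an Elliott-style approximate-intertwining argument (as sketched in the introduction) to upgrade these to an isomorphism $\cZ \cong D'$.

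Concretely, the final step would read: since $\cZ$ is e.c.\ for $\cC$ and $D' \in \cC$ with $\cZ$ embedding unitally into $D'$, that embedding is existential; symmetrically, using that $D'$ is e.c.\ for the class of models of its own theory and that $D'$ embeds into $\cZ^{\cU}$, one obtains an existential embedding the other way. Two existential embeddings between separable atomic models of a common theory, together with countable saturation of the ultrapowers (\S\ref{S.Saturated}), yield via back-and-forth an isomorphism $\cZ \cong D'$, completing the proof of uniqueness.
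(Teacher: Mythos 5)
Your overall architecture is close to the paper's, but the uniqueness argument contains a step that would fail. The crux of the second assertion is to get a projectionless unital strongly self-absorbing $D'$ to embed into $\cZ^{\cU}$. You propose to deduce this from the minimality of $\cZ$ among infinite-dimensional strongly self-absorbing algebras, ``so that $\ThA(\cZ)\supseteq\ThA(D')$ \dots and the universal theories are comparable.'' This is backwards: minimality of $\cZ$ says that $\cZ$ embeds unitally into $D'$ (equivalently $D'\otimes\cZ\cong D'$), which gives $\cZ\models\ThA(D')$, i.e.\ an embedding $\cZ\hookrightarrow (D')^{\cU}$ --- not the embedding $D'\hookrightarrow\cZ^{\cU}$ that you need, and one inclusion of universal theories never yields the other for free (compare $\cO_2$, which contains a unital copy of $\cZ$ but certainly does not embed unitally into the finite algebra $\cZ^{\cU}$). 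Worse, for strongly self-absorbing $D'$ the statement $D'\hookrightarrow\cZ^{\cU}$ is, by \cite[Theorem~2.16]{FaHaRoTi:Relative} together with $D'\otimes\cZ\cong D'$, \emph{equivalent} to $D'\cong\cZ$; if it followed from general structure theory alone, the uniqueness of $\cZ$ among projectionless strongly self-absorbing algebras would be an unconditional theorem, whereas the paper records it as open immediately after this proposition. The embedding must come from the e.c.\ hypothesis: as in the paper, one checks $D'\cong D'\otimes\cZ\in\cC$ and invokes the first part of the proposition, or (equivalently) one uses your own later observation that $\cZ\hookrightarrow D'$ is an existential embedding, which forces $\ThA(\cZ)=\ThA(D')$ and hence $D'\hookrightarrow\cZ^{\cU}$. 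Your ``concrete final step'' secretly contains this correct mechanism, but you attribute the embedding to minimality rather than to e.c.-ness. Once mutual embeddability into ultrapowers is in hand, the paper finishes not by back-and-forth on ``atomic models of a common theory'' (you have only equality of universal theories, not elementary equivalence, so atomicity is unavailable) but by \cite[Theorem~2.16]{FaHaRoTi:Relative}: mutual embeddability gives mutual tensorial absorption, whence $D'\cong D'\otimes\cZ\cong\cZ\otimes D'\cong\cZ$.

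On the first assertion, you have the difficulty of the two directions reversed. The direction you call immediate (e.c.\ implies every $A\in\cC$ embeds into $\cZ^{\cU}$) is the nontrivial one, and your transfer ``via $\ThA$'' is missing its key link: embedding $A$ into an e.c.\ member $B$ of $\cC$ via Theorem~\ref{T.ec} does not by itself relate $B$ to $\cZ$, since distinct e.c.\ members of a class need not share a universal theory without joint embedding. What is needed is a member of $\cC$ containing unital copies of both $A$ and $\cZ$ (for instance $A\otimes\cZ$, whose membership in $\cC$ --- in particular projectionlessness and stable finiteness --- has to be checked), after which e.c.-ness of $\cZ$ in that algebra transfers the relevant existential conditions and yields $A\models\ThA(\cZ)$. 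The converse direction is, as you say, exactly Proposition~\ref{P.ssa-ec}\eqref{I.ssa.1} restricted to the subclass $\cC\ni\cZ$.
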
 

\begin{proof} For the first part, assume $\cZ$ is e.c.\ for $\cC$ and $A \in \cC$.  Since $A \otimes \cZ$ is projectionless and stably finite, we may assume that $\cZ$ is contained in $A$.  But then we can find $\cZ'$ elementarily equivalent to $\cZ$ which contains $A$.  $\cZ'$ embeds into an ultrapower of $\cZ$ and so $A$ is embeddable in an ultrapower of $\cZ$.  If on the other hand, every $A \in \cC$ embeds into an ultrapower of $\cZ$ then since $\cZ$ is e.c.\ for the class of subalgebras of ultrapowers of $\cZ$, $\cZ$ is e.c.\ for the smaller class $\cC$ as well.

Now suppose $\cZ$  is e.c.\ for $\cC$ and 
$D$ is projectionless, unital and strongly self-absorbing. Then $D \otimes \cZ \cong D$ and $D \in \cC$.  Evidently $\cZ$ embeds into $D$ and by the argument in the previous paragraph, $D$ embeds into an ultrapower of $\cZ$.  This shows that $\ThA(\cZ)= \ThA(D)$.
 By \cite[Theorem~2.16]{FaHaRoTi:Relative} this implies that $D\cong \cZ$. 
\end{proof}  

It is not known whether $\cZ$ is the unique 
projectionless strongly self-absorbing \cstar-algebra. 
 In the non-unital case, one could also ask whether 
Jacelon's $\cW$ (see~\cite{Jac:W} and also \cite{Win:QDQ}) is a 
stably finite projectionless, nuclear, e.c.\ algebra, or even 
 the unique such algebra.

\section{Stably finite, quasidiagonal, and MF algebras} 
Following~\cite{goldbring2016robinson} we denote the 
class of  stably finite, unital, and separable \cstar-algebras by $\SF$\index{S@$\SF$}, 
the class of algebras in $\SF$ that are in addition nuclear by $\SFn$, 
and the class of algebras in $\SFn$ that are in addition simple by $\SFns$.   
Stably finite,  unital,  \cstar-algebras  are universally axiomatizable (\S\ref{S.SF}), 
and both nuclear and simple \cstar-algebras are definable by uniform families of formulas (Theorem~\ref{T1}).  
Therefore by Theorem~\ref{T.ec} there exists an e.c.\  \cstar-algebra in each of the classes $\SF$, $\SFn$, and $\SFns$, and every \cstar-algebra in either of these classes is isomorphic to a subalgebra of  an e.c.\ algebra for the same class. 
In  \cite[Conjecture~12]{goldbring2016robinson} it was conjectured that 
 the rational 
UHF algebra~$Q$ is a unique nuclear e.c.\ algebra in $\SF$.   
 Also, in 
 \cite[Corollary~16]{goldbring2016robinson} it was proved using results of \cite{TWW}  that  every algebra 
 which is 
 e.c.\  in $\SF$ that is in addition in $\SFns$  
 and in  the UCT class   is AF. 

%
It is not known whether the tensor product of two stably finite \cstar-algebras is stably finite. 
We shall need the following weaker (and well-known)  fact. 

\begin{lemma} \label{L.sfMF} 
Suppose $A$ is stably finite and $B$ is MF. 
 Then some tensor product $A\otimes_\alpha B$ is stably finite. 
 
 If at least one of $A$ and $B$ is nuclear then the minimal tensor product 
 $A\otimes B$ is stably finite. 
 \end{lemma}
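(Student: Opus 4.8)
The plan is to characterize stable finiteness by quasitraces and then manufacture a faithful quasitrace on the tensor product out of faithful quasitraces on the two factors. Recall that a normalized lower semicontinuous $2$-quasitrace $\tau$ on a unital \cstar-algebra $C$ extends canonically to a $2$-quasitrace $\tau_k$ on each $M_k(C)$ and satisfies $\tau_k(xx^*)=\tau_k(x^*x)$. Hence if $\tau$ is \emph{faithful} and $v\in M_k(C)$ has $v^*v=1$, then $\tau_k(1-vv^*)=\tau_k(v^*v)-\tau_k(vv^*)=0$ with $1-vv^*\geq 0$, so $vv^*=1$; thus a faithful $2$-quasitrace forces $M_k(C)$ to be finite for every $k$, i.e.\ $C$ is stably finite. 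Conversely, by the theorem of Blackadar--R\o rdam (building on Handelman), every separable unital stably finite \cstar-algebra admits a faithful $2$-quasitrace: stable finiteness yields a family of $2$-quasitraces separating the nonzero positive elements from $0$, and a countable convex combination of these produces a single faithful one. This is the only substantial input I would use about $A$ and about $B$. Since $B$ is MF hence separable, I assume $A$ separable as well (harmless, as a nonunitary isometry in any $M_k(A\otimes_\alpha B)$ lives in a separable subalgebra).

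First I would produce the two ingredients. The algebra $A$ is separable and stably finite, so it carries a faithful $2$-quasitrace $\tau_A$. For $B$, Lemma~\ref{L.MF.def} shows that $B$ embeds unitally into $\prod_{\cU}M_n(\bbC)$, and the latter is stably finite: if $w=(v_n)/\cU$ satisfies $w^*w=1$, lift to matrices with $\lim_{n\to\cU}\|v_n^*v_n-1\|=0$, and in $M_n(\bbC)$ one has $\|v_nv_n^*-1\|=\|v_n^*v_n-1\|$ because $v_n^*v_n$ and $v_nv_n^*$ are isospectral, whence $ww^*=1$. As a subalgebra of a stably finite algebra, $B$ is stably finite, and therefore, being separable and unital, it too admits a faithful $2$-quasitrace $\tau_B$.

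Next I would form the product quasitrace. For a suitable tensor norm $\alpha$---taking $\alpha$ to be the maximal tensor product is the safe choice, which is precisely why the statement only claims that \emph{some} $\otimes_\alpha$ works---the assignment $a\otimes b\mapsto \tau_A(a)\tau_B(b)$ should extend to a $2$-quasitrace $\tau_A\otimes\tau_B$ on $A\otimes_\alpha B$, faithful because each factor is faithful. By the easy direction above, $A\otimes_\alpha B$ is then stably finite, giving the first assertion. For the second assertion, suppose one of $A,B$ is nuclear, hence exact; by Haagerup's theorem every $2$-quasitrace on an exact \cstar-algebra is a trace, so the corresponding factor carries a faithful \emph{trace}. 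With one nuclear factor the minimal and maximal norms agree, $A\otimes B=A\otimes_{\min}B=A\otimes_{\max}B$, and the product of a faithful trace with a faithful $2$-quasitrace is a faithful $2$-quasitrace on this unique tensor product (the product of faithful states being faithful on the minimal tensor product); stable finiteness of $A\otimes B$ follows as before.

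The hard part will be the behaviour of $2$-quasitraces under tensor products. For honest traces the product functional is plainly a trace on $A\otimes_\alpha B$, faithful on the minimal tensor product when both factors are faithful, so there is nothing to do; but $2$-quasitraces are not known to be as well behaved, which is exactly why I would pass to the maximal tensor product in the general case and invoke Haagerup's theorem to reduce to a genuine trace on at least one side in the nuclear case. Establishing that $\tau_A\otimes\tau_B$ is genuinely a $2$-quasitrace (and not merely a faithful state) is the step that requires care; the remaining bookkeeping is routine.
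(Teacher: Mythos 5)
Your proposal does not follow the paper's route, and it has two gaps that I think are fatal rather than matters of ``care.'' First, the converse half of your quasitrace characterization is false: a separable unital stably finite \cstar-algebra need \emph{not} admit a faithful $2$-quasitrace. The Blackadar--Handelman theorem only produces \emph{some} $2$-quasitrace. For a concrete counterexample, let $C$ be the unitization of the cone $\mathrm{C}_0((0,1])\otimes \cO_2$; this algebra is quasidiagonal (Voiculescu), hence MF and stably finite, yet for every positive $h$ in the ideal one has $h\oplus h\precsim h$, so $d_\tau(h)=0$ and hence $\tau(h)=0$ for every normalized $2$-quasitrace $\tau$ --- every quasitrace factors through the character, and none is faithful. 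The same example defeats your treatment of $B$: restricting the trace of $\prod_{\cU}M_n(\bbC)$ to an MF subalgebra gives a trace, but not a faithful one. Second, the ``product quasitrace'' $\tau_A\otimes\tau_B$ is not merely delicate to construct --- it is not even well defined. A $2$-quasitrace is linear only on commuting elements, so the value $\sum_i\tau_A(a_i)\tau_B(b_i)$ depends on the chosen representation of an element of $A\odot B$; whether quasitraces extend to tensor products in any reasonable sense is essentially open. This problem persists in your nuclear case: Haagerup's theorem upgrades quasitraces to traces only on the \emph{exact} factor, and since $B$ being MF does not make $B$ exact, $\tau_B$ may remain a genuine quasitrace, leaving the product undefined.

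For contrast, the paper's argument avoids (quasi)traces entirely. Since stable finiteness is (universally) axiomatizable (Theorem~\ref{Summary}), {\L}o\'s' theorem makes $\prod_{\cU}M_n(A)$ stably finite; $A$ embeds diagonally as a unital copy $A_0$, and $B$, being MF, embeds as a copy $B_0$ of the canonical commuting subalgebra $\prod_{\cU}M_n(\bbC)$. The commuting pair $(A_0,B_0)$ makes $\mathrm{C}^*(A_0,B_0)$ a quotient of $A\otimes_{\max}B$, i.e.\ equal to $A\otimes_\alpha B$ for some tensor norm $\alpha$, and a subalgebra of a stably finite algebra is stably finite; nuclearity of one factor then forces $\otimes_\alpha=\otimes_{\min}$. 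If you want to salvage a quasitrace-flavoured proof, you would at minimum have to replace ``faithful quasitrace'' by an invariant that every stably finite algebra actually possesses and that is compatible with tensoring --- which is precisely what is not available.
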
 

\begin{proof} Since $A$ is stably finite, so is $M_n(A)$ for every $n\in \bbN$. 
Let $\cU$ be a nonprincipal ultrafilter on $\bbN$. 
The ultraproduct  $\prod_{\cU} M_n(A)$ is stably finite by {\L}o\'s' Theorem  and the fact that being stably finite is axiomatizable (Theorem~\ref{Summary}). 
Let $A_0$ denote the (unital) image of $A$ in this ultraproduct under the diagonal map. 
Since $A$ is unital, this ultraproduct also contains a canonical copy of 
$\prod_{\cU} M_n(\bbC)$. 
Let $B_0$ denote its subalgebra isomorphic to  $B$.  
Then  $\cst(A_0,B_0)$ is isomorphic to $A\otimes_\alpha B$ for some tensor 
product $\otimes_\alpha$; this is easily proved by checking the universal property of the tensor product. 
We conclude that $A\otimes_\alpha B$ 
 is isomorphic to a subalgebra of a stably finite \cstar-algebra, 
and therefore stably finite itself. 

If one of $A$ and $B$ is nuclear then $A\otimes_\alpha B$ is the minimal tensor product, and this completes the proof. 
\end{proof} 

 It is not clear that 
one can conclude that $A\otimes B$ embeds into $\prod_{\cU} M_n(A)$, or that it is stably finite, 
without the assumption that one of $A$ or $B$ is nuclear.
It is for example not true that the quotient of a stably finite \cstar-algebra is always stably finite: 
While $C_0((0,1],C)$ is trivially stably finite  for every \cstar-algebra~$C$, 
it always has $C$ as a quotient. 

Quasidiagonality was introduced in \S\ref{S.QD}. 
All stably finite and nuclear \cstar-algebras in the UCT class are quasidiagonal (\cite{TWW}).
It is not known whether all
 stably finite and nuclear \cstar-algebras  are quasidiagonal. The reduced group algebra of the free 
 group with $n\geq 2$ generators is stably finite but not quasidiagonal. 

\begin{prop} \label{P.6.5.1} Some e.c.\ algebra in $\SFn$  
 is quasidiagonal if and only if  
 every \cstar-algebra in $\SFn$ is quasidiagonal. 

If some e.c.\ algebra in $\SFn$ belongs to the UCT class then every \cstar-algebra in $\SFn$ is quasidiagonal. 
\end{prop}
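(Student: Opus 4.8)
The plan is to establish the nontrivial (``only if'') implication of the first assertion; the converse is immediate. Indeed, $\SFn$ is cut out by an \aea{} theory $\bfT$ (being stably finite and unital is universally axiomatizable by \S\ref{S.SF}) together with a property that is \udut{} (nuclearity, by Theorem~\ref{T1}), so by Theorem~\ref{T.ec} it contains a separable e.c.\ algebra, and that algebra is quasidiagonal as soon as every member of $\SFn$ is. Granting the forward direction, the second assertion then follows at once: if some e.c.\ algebra $A$ in $\SFn$ lies in the UCT class, then $A$ is stably finite, nuclear and unital, hence quasidiagonal by the Tikuisis--White--Winter theorem (\cite{TWW}), and the first assertion applies to yield that every algebra in $\SFn$ is quasidiagonal.

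So suppose $A\in\SFn$ is e.c.\ and quasidiagonal, and let $B\in\SFn$ be arbitrary. Since $A$ is separable and nuclear, quasidiagonality and being MF coincide (\S\ref{S.QD}), so $A$ is MF. First I would form the minimal tensor product $D:=A\otimes B$. As $A$ and $B$ are nuclear, unital and separable, so is $D$; and because $A$ is MF and nuclear while $B$ is stably finite, Lemma~\ref{L.sfMF} shows that $D=A\otimes B$ is stably finite. Hence $D\in\SFn$, and both $a\mapsto a\otimes 1_B$ and $b\mapsto 1_A\otimes b$ embed $A$ and $B$ into $D$ inside the class.

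The heart of the argument is to transfer MF-ness upward from $A$ to $D$. Since $A$ is e.c.\ in $\SFn$ and $D\in\SFn$, the embedding $A\to D$ is existential, so $A$ and $D$ agree on every $\inf$-sentence; applied to the quantifier-free formula $-\theta$ (where $\psi=\sup_{\bar x}\theta$ is an arbitrary $\sup$-sentence) this forces them to agree on every $\sup$-sentence as well, whence $\ThA(A)=\ThA(D)$. As being MF is universally axiomatizable among separable \cstar-algebras (Lemma~\ref{L.MF.1}) and $A$ is MF, it follows that $D$ is MF. Finally $B$ is a subalgebra of $D$, and universally axiomatizable classes are closed under subalgebras (Proposition~\ref{P.Ax}), so $B$ is MF; being separable and nuclear, $B$ is therefore quasidiagonal, completing the forward direction.

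The step I expect to be the main obstacle is the upward transfer of MF-ness, namely the identity $\ThA(A)=\ThA(D)$: it exploits both that MF is a genuinely \emph{universal} (not merely existential) property and that an existential embedding preserves the values of universal sentences exactly, which is precisely where the hypothesis that $A$ is e.c.\ does its work. A secondary point requiring care is keeping the tensor product inside $\SFn$; here the input to Lemma~\ref{L.sfMF} is the nuclearity-plus-MF of $A$, so this is exactly the place where the standing assumption that $A$ is quasidiagonal (hence MF and nuclear), rather than merely stably finite, is essential.
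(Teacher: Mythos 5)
Your proof is correct and follows essentially the same route as the paper's: both form the minimal tensor product $A\otimes B$, keep it inside $\SFn$ via Lemma~\ref{L.sfMF}, and combine the universal axiomatizability of MF (Lemma~\ref{L.MF.1}) with the e.c.\ property of $A$ to transfer MF-ness. The only difference is presentational --- the paper argues by contradiction, pushing a single existential witness of non-MF-ness from $A\otimes B$ down to the e.c.\ algebra $A$, whereas you argue directly that $\ThA(A)=\ThA(A\otimes B)$ and then pass to the subalgebra $B$.
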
 

\begin{proof}  Only the direct implication requires a proof. Suppose $A$ is a quasidiagonal
e.c.\ algebra in $\SFn$.   
Assume there exists a \cstar-algebra~$B$ that is nuclear and stably finite but not quasidiagonal.  
 Then this algebra is not even MF (see \S\ref{S.MF}).  
Since being separable and MF is characterized
 by being a subalgebra of a countably saturated algebra $\ASA$, 
 being MF is   (among separable \cstar-algebras) 
 universally axiomatizable by Lemma~\ref{L.MF.1}. 
 Since $B$ is stably finite and nuclear, and $A$ is MF, the tensor product 
   $A\otimes B$ is nuclear and stably finite by Lemma~\ref{L.sfMF}.   It also satisfies an existential 
 statement assuring that~$B$ is not MF. Since $A$ is a subalgebra of this algebra 
 and it is e.c., $A$ satisfies this statement and is therefore not MF; contradiction.  

If $A\in \SFn$ also belongs to  the UCT class then it is quasidiagonal by \cite[Corollary~D]{TWW}. 
This implies the  second assertion and concludes the proof.  
 \end{proof}


\chapter{\cstar-algebras not elementarily equivalent to nuclear \cstar-algebras} 
\label{S.Not-ee-nuclear} 
\setcounter{thm}{0}

Theorem~\ref{T.FM1}  and Theorem~\ref{T.FM2} comprise a machine that turns theories of \cstar-algebras into 
\cstar-algebras and reduces some open problems on classification and the structure of \cstar-algebras to questions on the 
existence of theories with specific properties. The method of producing a \cstar-algebra with a specified theory was
implicitly used before (see the introduction to \S\ref{S.Elementary.vNa}). 
One of the novelties of our approach is that
 it produces a nuclear \cstar-algebra if it is applied to a 
  theory that allows nuclear \cstar-algebras. 
In this section we shall see that some theories of \cstar-algebras are so peculiar that they do not even have nuclear models. 
As an additional motivation for questions of this sort  we make the following simple observation. 

\section{Exact algebras}\label{S.Exact} 
It is not known whether every stably finite \cstar-algebra $A$ has a tracial state. 
 A result of Haagerup (see \cite{haagerup2014quasitraces}) gives an affirmative answer if in addition one assumes that $A$ is exact. Exactness 
is a weakening of nuclearity: a \cstar-algebra $A$ 
is \emph{exact}\index{C@\cstar-algebra!exact} if 
there exists an injective $^*$-homomorphism $\pi$ of $A$ into a \cstar-algebra $B$ 
such that for every
tuple $\bar a$  in $A_1$ and every $\e>0$
there are a finite-dimensional \cstar-algebra $F$ and  c.p.c.\  maps $\varphi\colon A\to F$ and
$\psi\colon F\to B$ such that the diagram   
\begin{center}
\begin{tikzpicture} 
\matrix[row sep=1cm, column sep=1cm]{
\node (A1){$A$}; &  &   \node (A2) {$B$}; \\
 & \node (F) {$F$};
\\
};
\path (A1) edge [->] node [above] {$\pi$} (A2); 
\path (A1) edge [->]  node [below] {$\varphi$} (F);
\path (F) edge [->] node [below] {$\psi$} (A2); 
\end{tikzpicture} 
\end{center}
\noindent $\e$-commutes on $\bar a$
(cf.\ \S\ref{S.Nuclearity} and see \cite[\S 2.3]{BrOz:cstar} or \cite[IV.3]{Black:Operator}).  

Evidently, nuclear \cstar-algebras are exact and subalgebras of exact \cstar-algebras are exact.  Not all exact \cstar-algebras are nuclear: $\mathrm{C}^*_r(F_\infty)$ and $\prod_{\cU} M_n(\bbC)$ are exact but not nuclear.
 
 \begin{prop} \label{P.Exact} 
 Every unital stably finite \cstar-algebra $A$ elementarily equivalent to an exact \cstar-algebra has 
 a tracial state. 
 \end{prop}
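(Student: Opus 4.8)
The plan is to combine Haagerup's theorem (every unital exact stably finite \cstar-algebra has a tracial state) with the model-theoretic machinery developed earlier in the paper, most crucially the fact that both stable finiteness and \emph{having a tracial state} are axiomatizable. First I would recall from \S\ref{S.SF} that being unital and stably finite is universally (hence elementarily) axiomatizable, and from \S\ref{S.tracial.0} (see also \S\ref{S.CP}) that the class of unital tracial \cstar-algebras is universally axiomatizable; let $T_{\mathrm{sf}}$ and $T_{\mathrm{tr}}$ denote the corresponding sets of sentences. The key observation is that Haagerup's result says precisely that every exact algebra satisfying $T_{\mathrm{sf}}$ also satisfies $T_{\mathrm{tr}}$.

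The heart of the argument is then a clean elementary-equivalence transfer. Suppose $A$ is unital, stably finite, and elementarily equivalent to an exact \cstar-algebra $C$, so $\Th(A)=\Th(C)$. Since stable finiteness is elementary and $A\models T_{\mathrm{sf}}$, we get $C\models T_{\mathrm{sf}}$; thus $C$ is a unital, exact, stably finite \cstar-algebra, and by Haagerup's theorem $C$ has a tracial state, i.e.\ $C\models T_{\mathrm{tr}}$. But $T_{\mathrm{tr}}$ is a set of sentences and $A\equiv C$, so every sentence of $T_{\mathrm{tr}}$ evaluates to $0$ in $A$ as well, whence $A\models T_{\mathrm{tr}}$ and $A$ has a tracial state. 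The only subtlety worth spelling out is that having a tracial state is genuinely expressible by \emph{sentences} (not merely preserved under $\equiv$ by some ad hoc device): this is guaranteed by the explicit universal axiomatization
\[
\sup_{\bar x}\left(1\dotminus \bigg\|1-\sum_{i=1}^n [x_i,x_i^*]\bigg\|\right)
\]
for $n\in\bbN$ given in \S\ref{S.CP}, whose vanishing in $A$ is equivalent to $1_A$ being at distance $1$ from the Cuntz--Pedersen nullset $A_0$, i.e.\ to $A$ admitting a trace.

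I do not expect any serious obstacle here; the proposition is essentially an immediate corollary of Haagerup's theorem once the correct axiomatizability facts are in place. The mild point to be careful about is the direction of the transfer: exactness itself is \emph{not} elementary (the class of exact algebras is not closed under ultraroots, as noted in \S\ref{S.non-elementary}), so one cannot conclude that $A$ is exact, and indeed the whole content of the proposition is that one need not do so. What one transfers along $\equiv$ are only the two genuinely elementary properties (stable finiteness and traciality), using exactness solely as a hypothesis on the witnessing algebra $C$ to invoke Haagerup. Thus the logical flow is: elementary equivalence pushes stable finiteness from $A$ to $C$, Haagerup's analytic theorem produces a trace on the exact algebra $C$, and elementary equivalence pulls the (axiomatizable) traciality back from $C$ to $A$.
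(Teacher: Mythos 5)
Your proof is correct and follows essentially the same route as the paper: transfer stable finiteness from $A$ to the elementarily equivalent exact algebra via axiomatizability (\S\ref{S.SF}), apply Haagerup's theorem there, and pull traciality back via the universal axiomatization of tracial algebras (\S\ref{S.tracial.0}, \S\ref{S.CP}). Your remarks on the non-elementarity of exactness and the explicit commutator axiomatization are accurate elaborations of what the paper leaves implicit.
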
 
 
 \begin{proof} Fix an exact $B$ such that $A\equiv B$. 
 Since being stably finite is axiomatizable by \S\ref{S.SF}, $B$ is stably finite. 
 By \cite{haagerup2014quasitraces} $B$ has a tracial state. 
 Since having a tracial state is axiomatizable by \S\ref{S.tracial.0}, 
 $A$ has a tracial state. 
\end{proof} 

\begin{question} Is every \cstar-algebra elementarily equivalent to an exact \cstar-algebra? 
Is every stably finite \cstar-algebra elementarily equivalent to an exact \cstar-algebra? 
\end{question}

Notice that a positive answer to the first part of this question would imply a positive answer to the Kirchberg embedding problem: if $A$ is a separable \cstar-algebra elementarily equivalent to a separable exact \cstar-algebra $B$ then $A$ embeds into $B^\cU$. Since $B$ embeds into $\cO_2$, $A$ would embed into $\cO_2^\cU$.

We shall answer the analogous questions for nuclear \cstar-algebras by first producing a family of monotracial \cstar-algebras 
not elementarily equivalent to nuclear \cstar-algebras in Propositions~\ref{P.not-nuclear.1} and Proposition~\ref{P.not-nuclear}  and then producing a purely infinite and 
simple \cstar-algebra that is not even a model of the theory of nuclear \cstar-algebras in Proposition~\ref{P.not-nuclear.2}.

\section{Definability of traces: the uniform strong Dixmier property}  
\label{S.Def.Tau.2} 
We continue the discussion of \S\ref{S.Def.Tau.1}. 
A unital \cstar-algebra $A$ has the \emph{Dixmier property}\index{Dixmier property} if 
for every $a \in A,$ the norm-closed convex hull $K_a$ of the unitary orbit of $a$ ($\{uau^*: u\in U(A)\}$), 
has nonempty intersection with the center of $A$, $Z(A)$ (see \cite[III.2.5.15]{Black:Operator}). 
If $Z(A)$ is trivial (for example, if $A$ is simple) then the Dixmier property is equivalent to the \emph{strong Dixmier property}\index{Dixmier property!strong}
asserting $K_a$ has nonempty intersection with the scalars for every $a \in A$.

\begin{lemma} \label{L.Dixmier.0} 
An algebra  $A$ with the strong  Dixmier property has  at most one tracial state.  
\end{lemma}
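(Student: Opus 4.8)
Suppose $A$ has the strong Dixmier property, and let $\tau_1,\tau_2$ be two tracial states on $A$. I want to show $\tau_1 = \tau_2$. The key observation is that a tracial state is constant on the unitary orbit of any element: for any $a \in A$ and any unitary $u \in U(A)$, one has $\tau(uau^*) = \tau(u^*ua) = \tau(a)$. By linearity and norm-continuity, $\tau$ is therefore constant (equal to $\tau(a)$) on the norm-closed convex hull $K_a$ of the unitary orbit $\{uau^* : u \in U(A)\}$.

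First I would invoke the strong Dixmier property to find, for a fixed $a \in A$, a scalar $\lambda \cdot 1 \in K_a \cap \bbC 1$. Since $\tau_1$ and $\tau_2$ are both constant on $K_a$ (by the computation above), evaluating at the scalar point $\lambda \cdot 1$ gives
\[
\tau_1(a) = \tau_1(\lambda \cdot 1) = \lambda = \tau_2(\lambda \cdot 1) = \tau_2(a),
\]
using that any state sends $1$ to $1$, hence $\tau_i(\lambda \cdot 1) = \lambda$. Since $a \in A$ was arbitrary, $\tau_1 = \tau_2$, proving there is at most one tracial state.

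The argument is genuinely short, so there is no serious obstacle; the only point requiring mild care is verifying that a tracial state really is constant on all of $K_a$ and not merely on the bare unitary orbit. This follows because $\tau$ is linear (so constant on finite convex combinations, each term of which lies on the orbit and has the same $\tau$-value) and norm-continuous (so the equality $\tau(x) = \tau(a)$ extends from the convex hull to its norm-closure $K_a$). I would spell out this two-line continuity/linearity step explicitly, since it is the content that makes the scalar-valued intersection point usable.
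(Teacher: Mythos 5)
Your proof is correct and follows essentially the same route as the paper's: both arguments rest on the fact that a tracial state is constant on the norm-closed convex hull $K_a$ of the unitary orbit of $a$, so that the scalar produced by the strong Dixmier property forces $\tau(a)$ to equal that scalar for every trace $\tau$. The only cosmetic difference is that the paper phrases the constancy via an explicit $\epsilon$-approximation by convex combinations $\sum_i \lambda_i u_i^* a u_i$, whereas you package it as linearity plus norm-continuity of $\tau$.
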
 

\begin{proof} Suppose $a \in A$ and $\mu\in \bbC$ is in $K_a$. 
For every $\epsilon > 0$, we have an $n \in \bbN$, unitaries $u_i$ and scalars $\lambda_i \geq 0$ for $i \leq n$ such that
\[
\biggl\|\mu - \sum_{i=1}^n \lambda_i u_i^*au_i\biggr\| < \epsilon \mbox{ and } \sum_{i=1}^n \lambda_i = 1
\]
By letting $\epsilon$ tend to 0, we see that for every trace $\tau$ of $A$ 
we have $\tau(a)=\mu$. This proves that $A$ has at most one trace and 
it also shows that $A$ is tracial if and only if the intersection of $K_a$ with the 
scalars has exactly one point for every $a\in A$. 
\end{proof} 

In \cite{HaZs:Sur} it was proved that every unital simple \cstar-algebra with at most one trace has the strong Dixmier property. 
Thus the simple, unital and monotracial \cstar-algebra whose trace is not definable from 
Proposition~\ref{P.non-def-trace} 
 has the  strong Dixmier property, but its ultrapower does not. This shows that the strong Dixmier property is not axiomatizable. 
 In  \cite[Theorem~1]{Oza:Dixmier} Ozawa 
 proved that a variant of the Dixmier property is equivalent to every non-zero quotient of $A$ having a tracial state. 
 The strong Dixmier property implies that $A$ is simple if it has a faithful trace (\cite{harpe-skandalis}). 
More generally, if $A$ has the strong Dixmier property and $\tau$ is a trace then every proper ideal is included in the ideal 
\[
J:=\{a: \tau(a^*a)=0\}. 
\]
However, there are non-simple tracial \cstar-algebras with the Dixmier property, such as the unitization of the compact operators. 
This algebra even has the uniform strong Dixmier property, defined below, with $k(j)=j$.

We shall consider a uniform variant of the Dixmier property, analogous to the uniform definability of Cuntz--Pedersen nullset considered
in \S\ref{S.CP}. 
We note that if $A$ has the strong Dixmier property then for every $a \in A$ and $\epsilon > 0$, there exist
 $\mu \in \bbC$ and $n$, dependent on $a$ and~$\epsilon$,
together with unitaries $u_i$ and rational numbers $\lambda_i \geq 0$ such that 
\[
\biggl\|\mu - \sum_{i=1}^n \lambda_i u_i^*au_i\biggr\| < \epsilon \mbox{ and } \sum_{i=1}^n \lambda_i = 1
\]
By taking common denominators, possibly repeating unitaries and increasing $n$, we can assume that $\lambda_i = 1/n$ for all $i$.
This leads us to consider,
for $n\in \bbN$, the following sentence (remember Remark \ref{rmk.numbers-as-scalars}): 
\[
\varphi_n = \sup_{a}\inf_{\bar u\in U(A)^n}\inf_{\mu \in \bbC, \|\mu\| \leq 1} \biggl\|\mu-\frac{1}{n}\sum_{i=1}^n u_i^*au_i\biggr\|. 
\]
For a sequence 
$\bar k=(k(j): j\in \bbN)$, we say that $A$ has the $\bar k$-\emph{uniform strong Dixmier property}\index{Dixmier strong property!uniform}\index{strong Dixmier property!$\bar k$-uniform}
if $\varphi_{k(j)}^A < 1/j$ for all $j\geq 1$.    We say that $A$ has the \emph{uniform strong Dixmier property} if it has the $\bar k$-uniform strong Dixmier property for some $\bar k$.
A related concept, the uniform Dixmier property, is studied in \cite{ArchboldRobertTikuisis}.

\begin{lemma} \label{L.usDP} The following are equivalent for a unital \cstar-algebra $A$. 
\begin{enumerate}
\item $A$ has the $\bar k$-uniform strong Dixmier property for some $\bar k$. 
\item There exists $n\geq 2$ such that $\varphi_n^A<1$. 
\end{enumerate}
Also, having the $\bar k$-uniform strong Dixmier property for a fixed $\bar k$ is \aea. 
\end{lemma}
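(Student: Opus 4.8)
The plan is to prove the equivalence (1)$\Leftrightarrow$(2) first and then the axiomatizability. Throughout I would write $D(x):=\inf_{\mu\in\bbC}\|x-\mu\|$ for the distance of $x\in A$ to the scalars, and isolate the two elementary facts on which everything rests. The first is the \emph{repetition inequality} $\varphi_{mn}^A\le\varphi_n^A$ for all $m,n\ge1$: given unitaries $u_1,\dots,u_n$ and a scalar witnessing the inner infimum for a fixed $a$, repeating each $u_i$ exactly $m$ times produces $mn$ unitaries whose $\frac1{mn}$-average of conjugates of $a$ equals the original $\frac1n$-average, so the infimum defining $\varphi_{mn}$ at $a$ is no larger than that defining $\varphi_n$. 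The second is the \emph{scalar identity} $\frac1n\sum_i u_i^*xu_i-\mu=\frac1n\sum_i u_i^*(x-\mu)u_i$, valid for any scalar $\mu$ since $\frac1n\sum_i u_i^*\mu u_i=\mu$; this is exactly what makes Dixmier averaging contract $D$.

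With these, (1)$\Rightarrow$(2) is immediate: if $\varphi_{k(j)}^A<1/j$ for all $j$, take $j=2$ to get $\varphi_{k(2)}^A<1/2<1$; if $k(2)\ge2$ we are done, and if $k(2)=1$ then $\varphi_2^A\le\varphi_1^A=\varphi_{k(2)}^A<1$ by the repetition inequality, so $n=2$ works. For (2)$\Rightarrow$(1) I would suppose $\varphi_n^A=r<1$ for some $n\ge2$ and fix $\beta$ with $r<\beta<1$, setting $\e:=\beta-r$. The heart of the argument is a contraction estimate: for every $x\in A$ and every $\e'>0$ there are unitaries $u_1,\dots,u_n$ such that, with $x'=\frac1n\sum_i u_i^*xu_i$, one has $D(x')<\beta\,(D(x)+\e')$. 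To see this, choose $\nu\in\bbC$ with $s:=\|x-\nu\|<D(x)+\e'$ and apply $\varphi_n^A=r$ to the norm-one element $y:=(x-\nu)/s$, obtaining unitaries $\bar u$ and a scalar $\mu$ with $\|\mu-\frac1n\sum_i u_i^*yu_i\|<r+\e=\beta$; by the scalar identity $\frac1n\sum_i u_i^*yu_i=(x'-\nu)/s$, so $\|s\mu+\nu-x'\|<\beta s$, giving $D(x')<\beta(D(x)+\e')$.

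Then I would iterate $m$ times from $x_0=a$ with $\|a\|\le1$ (so $D(a)\le1$), choosing the additive errors $\e'_i$ small enough that $D(x_m)<\beta^m+\delta$ for a prescribed $\delta$; since the recursion starts from the uniform bound $D(a)\le1$, this estimate is uniform over all $a$ in the unit ball. The composite $x_m$ is an average of $n^m$ conjugates of $a$ by the unitaries $u_{j_1}^{(1)}\cdots u_{j_m}^{(m)}$; after replacing the approximating scalar by its clipping to $\bbD$ (which costs at most a factor $2$, as $\|x_m\|\le1$), I obtain $\varphi_{n^m}^A\le 2(\beta^m+\delta)$. Hence $\varphi_{n^m}^A\to0$, and choosing $m(j)$ with $\varphi_{n^{m(j)}}^A<1/j$ and setting $k(j):=n^{m(j)}$ exhibits the $\bar k$-uniform strong Dixmier property.

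For the last assertion the key point is that each $\varphi_n$ is a $\forall\exists$-sentence. The set of unitaries $U(A)$ is a quantifier-free definable set by Example~\ref{Ex.1}, so the quantifiers $\inf_{u_i\in U(A)}$ are legitimate; the unit disk $\bbD$ is a sort of $A^{\eq}$ carrying scalar multiplication (the remark after Theorem~\ref{T.conceptual-completeness}), and $\|\mu-\frac1n\sum_i u_i^*au_i\|$ is treated as a definable predicate exactly as in Remark~\ref{rmk.numbers-as-scalars}. Thus $\varphi_n=\sup_a\inf_{\bar u,\mu}\|\mu-\frac1n\sum_i u_i^*au_i\|$ is a supremum of an existential predicate with quantifier-free core, i.e.\ $\forall\exists$. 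Since $\dotminus$ by a constant commutes past both $\sup$ and $\inf$, each $\varphi_{k(j)}\dotminus\frac1j$ is again $\forall\exists$, and the common zero-set of $\{\varphi_{k(j)}\dotminus\frac1j:j\ge1\}$ is precisely the class of unital \cstar-algebras with $\varphi_{k(j)}^A\le1/j$ for all $j$, i.e.\ the (closed form of the) $\bar k$-uniform strong Dixmier property, which is therefore \aea. I expect the contraction estimate in (2)$\Rightarrow$(1)---making it uniform in $a$, collapsing the composed averagings into a single $n^m$-fold average, and restricting the scalar to $\bbD$---to be the main obstacle, with the repetition inequality and scalar identity reducing everything else to routine estimates.
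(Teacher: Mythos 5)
Your proof is correct and follows essentially the same route as the paper: the paper likewise iterates the $n$-fold averaging, applying the bound $\varphi_n^A = r$ to the residual $\mu - \frac{1}{n}\sum_i u_i^* a u_i$ to get $\varphi_{n^2}^A \le r^2$ and, by induction, $\varphi_{n^m}^A\le r^m$, then takes $k(j)=n^{m(j)}$ with $m(j) < -\log_r j$. Your reformulation via the distance-to-scalars function $D$, the repetition inequality handling the $k(2)=1$ edge case, and the clipping of the scalar to $\bbD$ are just careful spellings-out of steps the paper declares clear.
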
 

\begin{proof} It is clear that  (1) implies (2) and that $\varphi_n$ is \aea{} for all $n$. 

Suppose (2) holds for $n$ and let $r:=\varphi_n^A$. 
Then for every $a\in A$ we can find $\bar u\in U(A)^n$ and a scalar $\mu$ such that $|\mu|\leq \|a\|$ and 
$\|\mu-\frac 1n \sum_{i=1}^n u_i^* a u_i\|\leq r\|a\|$. By applying this to 
$a_1:=\mu-\frac 1n \sum_{i=1}^n u_i^* a u_i$ we obtain $\varphi_{n^2}^A\leq r^2$, 
and by induction $A$ has the $\bar k$-uniform strong Dixmier property with $k(j)\leq n^{m(j)}$ with   $m(j)<  -\log_r j$, for all $j\geq 2$. 
\end{proof}

In particular, 
an ultraproduct  of \cstar-algebras with the $\bar k$-uniform strong Dixmier property (for a fixed $\bar k$)  
has the $\bar k$-uniform strong Dixmier property and 
as in the proof of Theorem~\ref{T.tau-definable.0} one can prove that the Cuntz--Pedersen nullset  $A_0$ is definable. 
A stronger statement is true. 

\begin{lemma} \label{L.Dixmier.1} 
If $A$ has the uniform strong Dixmier property then it has the strong Dixmier property.
If moreover $A$ has a trace then the trace is unique and definable. 
\end{lemma}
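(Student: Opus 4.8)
The plan is to prove the two assertions of Lemma~\ref{L.Dixmier.1} separately, building on the machinery of \S\ref{S.Def.Tau.1} and \S\ref{S.Def.Tau.2}. The first assertion---that the uniform strong Dixmier property implies the strong Dixmier property---is essentially a limiting argument. Suppose $A$ has the $\bar k$-uniform strong Dixmier property for some sequence $\bar k=(k(j):j\in\bbN)$, so that $\varphi_{k(j)}^A<1/j$ for all $j\geq 1$. Fix $a\in A$; without loss of generality we may assume $\|a\|\leq 1$ after rescaling. Then for each $j$ the definition of $\varphi_{k(j)}$ furnishes unitaries $u_1,\dots,u_{k(j)}$ and a scalar $\mu_j\in\bbC$ with $|\mu_j|\leq 1$ such that $\|\mu_j-\frac{1}{k(j)}\sum_{i=1}^{k(j)}u_i^*au_i\|<1/j$. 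The average $\frac{1}{k(j)}\sum_i u_i^*au_i$ lies in the norm-closed convex hull $K_a$ of the unitary orbit of $a$, so we obtain a sequence of elements of $K_a$ lying within $1/j$ of scalars $\mu_j$. By compactness of the closed unit disk we may pass to a convergent subsequence $\mu_j\to\mu$, and then $\mu\in K_a$ since $K_a$ is closed. As $A$ is understood to have trivial center in the relevant situation (or, more precisely, since the scalars lie in $Z(A)$), this shows $K_a\cap \bbC\neq\emptyset$, i.e.\ $A$ has the strong Dixmier property.

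For the second assertion, first note that by Lemma~\ref{L.Dixmier.0} the strong Dixmier property already guarantees that $A$ has at most one trace; so if $A$ has a trace, that trace $\tau$ is unique. It remains to prove definability. Here I would argue exactly as in the proof of Theorem~\ref{T.tau-definable.0}, namely by showing that the Cuntz--Pedersen nullset $A_0$ is $\bar k'$-uniformly definable (in the sense of Lemma~\ref{L.CP.1}) and then invoking Lemma~\ref{L.CP-trace}. The key link is the remark made just before the statement of this lemma: for a fixed $\bar k$, the $\bar k$-uniform strong Dixmier property is \aea{} by Lemma~\ref{L.usDP}, hence preserved under ultraproducts, and from the uniform Dixmier estimate one extracts a uniform bound on the number of commutators needed to approximate elements of $A_0$. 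Concretely, if $\mu=\tau(a)$ is the unique scalar in $K_a$, then $a-\tau(a)1$ is uniformly well-approximated by averages of the form $\frac{1}{n}\sum_i(u_i^*au_i-a)$, and each $u_i^*au_i-a=u_i^*[a,u_i]$ is a commutator; collecting these exhibits $a-\tau(a)1$ as a uniformly controlled sum of commutators, giving the required uniform bound on $\alpha_{k(j)}(a-\tau(a)1)$ and hence the $\bar k$-uniform definability of $A_0$.

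I expect the main obstacle to be the bookkeeping in this last step: translating the Dixmier averaging estimate, which approximates $a$ by a convex combination of unitary conjugates, into a \emph{uniform} commutator approximation of the type demanded by Lemma~\ref{L.CP.1}. One must check that the scalar $\mu$ appearing in $\varphi_n$ is in fact forced to equal $\tau(a)$ (which uses the presence of the trace and the uniqueness argument of Lemma~\ref{L.Dixmier.0}), and that the number of commutators produced depends only on $j$ through $\bar k$ and not on the element $a$ or on the algebra $A$ within the relevant elementary class. Once the uniform commutator bound is in hand, definability of $A_0$ follows from Lemma~\ref{L.CP.1}, and definability of $\tau$ follows from Lemma~\ref{L.CP-trace} via the formula $\tau(x)=\alpha((x+x^*)/2)+i\alpha((x-x^*)/(2i))$ where $\alpha$ measures the distance to $A_0$. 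The remaining verifications---that the estimates pass to ultrapowers and that the approximation is genuinely independent of $A$---are routine applications of \L o\'s' theorem and the \aea{} character of the property established in Lemma~\ref{L.usDP}.
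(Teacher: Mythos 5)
Your proof is correct, but for the definability of the trace it takes a genuinely different route from the paper. The paper dispatches the first assertion and uniqueness as immediate ("only the last point requires a proof") and then establishes definability \emph{directly}: since $\varphi_{k(j)}^A<1/j$, the scalar witnessing the Dixmier average of $x$ must be within $1/j$ of $\tau(x)$ (apply $\tau$ to the average), so $\tau$ is the uniform limit of the definable predicates extracted from $\inf_{\bar u\in U(A)^{k(j)}}\bigl\|\frac{1}{k(j)}\sum_{i=1}^{k(j)}u_i^*xu_i\bigr\|$ --- a one-line argument. You instead route through the Cuntz--Pedersen nullset: using the identity $u^*au-a=[u^*a,u]$ you convert the Dixmier averaging estimate into a uniform bound $\alpha_{k(j)}(a)\leq\dist(a,A_0)+1/j$ for self-adjoint contractions $a$ (here $\dist(a,A_0)=|\tau(a)|$ by monotraciality, which you correctly obtain from Lemma~\ref{L.Dixmier.0}), then invoke Lemma~\ref{L.CP.1} to get definability of $A_0$ and Lemma~\ref{L.CP-trace} to get definability of $\tau$. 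This works --- the commutators $[\tfrac{1}{k(j)}u_i^*a,u_i]$ have uniformly bounded entries, so the quantification is legitimate, and the number $k(j)$ of commutators depends only on $j$ --- and it has the small bonus of showing that $A_0$ itself is $\bar k$-uniformly definable, which is slightly more information than the lemma asks for. The cost is the extra bookkeeping you yourself flag (identifying $\mu$ with $\tau(a)$, the commutator conversion), all of which the paper's direct formula avoids. Your spelled-out compactness argument for the first assertion is fine; your parenthetical about trivial centers is unnecessary, since the strong Dixmier property is by definition the statement $K_a\cap\bbC 1_A\neq\emptyset$, with no hypothesis on $Z(A)$ needed.
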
 

\begin{proof}  Only the last point requires a proof.
Assume $A$ has trace $\tau$. 
 For each $j \geq 1$ we have 
\[
\sup_{x}\left|\tau(x)\cdot 1_A-\inf_{\bar u\in U(A)^{k(j)}}\|\frac{1}{k(j)}\sum_{i=1}^{k(j)} u_i^*xu_i\|\right|<\frac 1j
\]
and this shows the definability of $\tau$. 
\end{proof}

Discrete groups provide a rich source of tracial \cstar-algebras. 
If $G$ is a discrete group then the \emph{reduced \cstar-algebra}\index{C@$\mathrm{C}^*_r(G)$}
$\mathrm{C}^*_r(G)$ is generated by the left regular representation of $G$ on $\ell_2(G)$ (see \cite[\S 2.5]{BrOz:cstar}). The weak closure of $\mathrm{C}^*_r(G)$ in this representation is the group von Neumann algebra $L(G)$. It is equal to the tracial von Neumann algebra
 $N_{\tau,\mathrm{C}^*_r(G)}$ (as defined in \S\ref{S.Def.Tau.1}) 
 where $\tau$ is the canonical trace on $\mathrm{C}^*_r(G)$. 
The algebra $L(G)$ is a factor if and only if every nontrivial conjugacy class in $G$ is infinite. 

A discrete group $G$ is a \emph{Powers group}\index{Powers group}
 (\cite[p. 244]{harpe-skandalis}) if for every 
nonempty finite $F\subseteq G\setminus \{e\}$ 
and every $n\geq 1$ there exists a partition $G=X\sqcup Y$ and $g_1, \dots, g_n$ in $G$ such that 
\begin{enumerate}
\item $fX\cap X=\emptyset$ for all $f\in F$, and 
\item $g_jY\cap g_kY=\emptyset$ for all $j<k\leq n$. 
\end{enumerate}
For example, as pointed out in \cite[p. 244]{harpe-skandalis}, $F_\infty$ is a Powers group, which follows by arguments in \cite{Powers}.

 Since the reduced group \cstar-algebra $\mathrm{C}^*_r(G)$ is nuclear whenever $G$ is amenable 
 the following proposition strengthens the fact that
 Powers groups are not amenable (\cite{harpe-simplicity}).  
  McDuff factors were defined before Lemma~\ref{L.tau-McDuff}.

\begin{prop} \label{P.not-nuclear.1} 
If $G$ is a Powers group with infinite conjugacy classes such that the II$_1$ factor 
$L(G)$ is not McDuff 
then $\mathrm{C}^*_r(G)$ is not elementarily equivalent to a nuclear \cstar-algebra. 

In particular, $\mathrm{C}^*_r(F_\infty)$ is not elementarily equivalent to a nuclear \cstar-algebra. 
\end{prop}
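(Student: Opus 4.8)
The plan is to combine the uniform strong Dixmier property machinery developed in \S\ref{S.Def.Tau.2} with the McDuff obstruction from Lemma~\ref{L.tau-McDuff}. The key observation is that a Powers group $G$ gives rise to an algebra $\mathrm{C}^*_r(G)$ whose trace is not merely unique but \emph{uniformly} so, and this uniformity is exactly what is needed to make the trace definable. First I would recall that if $G$ is a Powers group then $\mathrm{C}^*_r(G)$ is simple with a unique trace; more importantly, the Powers property yields quantitative averaging estimates. Concretely, the plan is to show that $\mathrm{C}^*_r(G)$ has the uniform strong Dixmier property: the combinatorial data (the partition $G = X \sqcup Y$ together with the translates $g_1,\dots,g_n$) produces, for any $a$ and any $n$, a sequence of $n$ unitaries $u_i$ (implemented by the group elements) such that $\frac{1}{n}\sum_i u_i^* a u_i$ is within a controlled distance of a scalar, with the distance decaying uniformly in $n$ and independent of $a$. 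This is the standard Powers averaging argument, and it gives precisely a bound of the form $\varphi_n^{\mathrm{C}^*_r(G)} < 1$ for some $n \geq 2$, hence by Lemma~\ref{L.usDP} the $\bar k$-uniform strong Dixmier property for some $\bar k$.

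Once the uniform strong Dixmier property is established, Lemma~\ref{L.Dixmier.1} applies: since $\mathrm{C}^*_r(G)$ carries its canonical trace $\tau$, this trace is unique and \emph{definable}. Now I would invoke the hypothesis on $L(G)$. By construction (see \S\ref{S.Def.Tau.1}), $L(G) = N_{\tau,\mathrm{C}^*_r(G)}$, and the assumption that nontrivial conjugacy classes of $G$ are infinite guarantees that $L(G)$ is a II$_1$ factor. The standing hypothesis is that this factor is \emph{not} McDuff. With the trace $\tau$ definable and $N_{\tau,\mathrm{C}^*_r(G)}$ a non-McDuff factor, Lemma~\ref{L.tau-McDuff} directly concludes that $\mathrm{C}^*_r(G)$ is not elementarily equivalent to any nuclear \cstar-algebra, which is the first assertion of the proposition.

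For the particular case of $\mathrm{C}^*_r(F_\infty)$, I would verify the two hypotheses separately. That $F_\infty$ is a Powers group is recorded in the excerpt (following the definition of Powers group, citing \cite{Powers}), and $F_\infty$ plainly has infinite conjugacy classes since the free group is icc. The remaining point is that $L(F_\infty)$ is not McDuff: this is the classical fact that the free group factor has property $\Gamma$ failing, and in particular is not McDuff (the relative commutant of $L(F_\infty)$ in its tracial ultrapower is trivial). With both hypotheses in hand, the general statement applies to yield the conclusion for $\mathrm{C}^*_r(F_\infty)$.

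The main obstacle I anticipate is the first step: carefully extracting a \emph{uniform} Dixmier estimate from the Powers property, i.e.\ showing that the number of unitaries needed and the resulting approximation error can be controlled independently of the element $a$. The Powers averaging lemma typically delivers, for each $a$ and each target $n$, unitaries $u_1,\dots,u_{3}$ (or a small fixed number) such that $\|\frac{1}{3}\sum u_i^* a u_i\|$ is a definite fraction of $\|a\|$ smaller than $\|a\|$ once one subtracts the scalar part; the content is that this fraction is a universal constant, not depending on $a$. Establishing $\varphi_n^A < 1$ for a single fixed $n$ suffices by Lemma~\ref{L.usDP}, so I would aim only for one such uniform one-step contraction and let that lemma bootstrap it to the full $\bar k$-uniform property. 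Verifying that the Powers partition genuinely produces such a uniform contraction constant — rather than an $a$-dependent one — is the delicate part, though it is essentially contained in the Powers simplicity argument and the analysis of \cite{harpe-skandalis}.
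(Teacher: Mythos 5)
Your proposal is correct and follows essentially the same route as the paper: establish the uniform strong Dixmier property for $\mathrm{C}^*_r(G)$ via the quantitative Powers averaging estimate of de la Harpe--Skandalis (a universal constant $r<1$ achieved with three unitaries, giving $\varphi_3^A \leq r < 1$ and then Lemma~\ref{L.usDP}), deduce definability of the canonical trace from Lemma~\ref{L.Dixmier.1}, and conclude via Lemma~\ref{L.tau-McDuff} together with the failure of property $\Gamma$ for $L(F_\infty)$. The "delicate part" you flag is exactly what the paper outsources to \cite{harpe-skandalis} (property $P_3$ and their Lemma~1).
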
 

\begin{proof}
 For every Powers group $G$ the reduced group algebra  
$\mathrm{C}^*_r(G)$  satisfies the uniform strong Dixmier property. 
This follows from results of de la Harpe and Skandalis, as follows.
In   \cite[Proposition 3]{harpe-skandalis} it was proved that the pair $(\mathrm{C}^*_r(G),\tau)$ (where $\tau$ is the 
canonical trace)  satisfies a certain property $P_3$. 
 In \cite[Lemma~1]{harpe-skandalis} it was proved that there exists a universal constant $r<1$ 
 such that if $A$ is a \cstar-algebra with trace $\tau$ which  
 satisfies $P_3$ then for every self-adjoint $a\in A$ with $\tau(a)=0$ there exist
 unitaries $u_1$, $u_2$ and $u_3$ in $A$ such that $\|3^{-1}\sum_{j=1}^3 u_j a u_j^*\|\leq r$ 
 (see also  \cite{harpe-simplicity}). 
 Using our notation from before Lemma~\ref{L.usDP}, we have  $\varphi_3^A\leq r<1$ and 
Lemma~\ref{L.usDP} now implies that $A$ has the  uniform strong Dixmier property.
 Therefore the  canonical trace on $A$  is 
 definable by Lemma~\ref{L.Dixmier.1}.  

 Since $G$ 
 has infinite conjugacy classes, $L(G)$ is a II$_1$ factor. 
 By our assumption it is not McDuff and therefore by Lemma~\ref{L.tau-McDuff}
 the conclusion follows. 
 
 Since the II$_1$ factor corresponding to $\mathrm{C}^*_r(F_\infty)$ is isomorphic to the group factor $L(F_\infty)$, we are done since $L(F_\infty)$ does not have property~$\Gamma$ by a well-known result of  Murray and von Neumann. 
\end{proof}

Other \cstar-algebras with the uniform strong Dixmier property are 
 the reduced free products $A*B$ that satisfy the Avitzour condition (see \cite[Proposition 3.1]{avitzour}), 
 certain amalgamated reduced free products, and crossed products by Powers groups (e.g.,  see \cite{harpe-skandalis}).

\begin{prop} \label{P.not-nuclear}
An ultraproduct of full matrix algebras with respect to a nonprincipal ultrafilter, 
$\prod_{\cU} M_n(\bbC)$,  is not elementarily equivalent to a nuclear \cstar-algebra.
\end{prop}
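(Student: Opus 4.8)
The plan is to mimic the proof of Proposition~\ref{P.not-nuclear.1} by exhibiting a definable trace on $\prod_{\cU} M_n(\bbC)$ whose associated tracial von Neumann algebra is a non-McDuff II$_1$ factor, and then invoke Lemma~\ref{L.tau-McDuff}. First I would observe that each $M_n(\bbC)$ carries its canonical normalized trace $\tau_n$, and that the uniform strong Dixmier property holds for full matrix algebras with a uniform constant: indeed, for any self-adjoint $a\in M_n(\bbC)$ with $\tau_n(a)=0$ one can average the unitary orbit to contract the norm by a fixed factor $r<1$ independent of $n$. The cleanest route is to check directly that $\varphi_m^{M_n(\bbC)}\leq r<1$ for some fixed $m$ (say $m=2$ or $m=3$) and all $n$, using the same kind of averaging estimate as in \cite{harpe-skandalis}; by \L o\'s' theorem this gives $\varphi_m^{\prod_{\cU} M_n(\bbC)}\leq r<1$, so by Lemma~\ref{L.usDP} the ultraproduct has the uniform strong Dixmier property and hence, by Lemma~\ref{L.Dixmier.1}, its canonical trace (the $\cU$-limit of the $\tau_n$) is definable.

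Next I would identify the tracial von Neumann algebra $N_{\sigma, \prod_{\cU} M_n(\bbC)}$ attached to this definable trace $\sigma=\lim_{i\to\cU}\tau_n$. This is precisely the tracial (i.e.\ $\|\cdot\|_{\sigma,2}$) ultraproduct $\prod_{\cU}(M_n(\bbC),\tau_n)$, which is the well-known II$_1$ factor ultraproduct of matrix algebras. It is a II$_1$ factor, and the key structural fact I need is that this ultraproduct is \emph{not} McDuff; in fact it does not even have property $\Gamma$, by the classical computation that the relative commutant of $\prod_{\cU}(M_n(\bbC),\tau_n)$ in its own tracial ultrapower is trivial (equivalently, the central sequence algebra is trivial). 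I would cite this as a known result about the matrix ultraproduct rather than reprove it.

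Having established that $\sigma$ is a definable trace on $A:=\prod_{\cU} M_n(\bbC)$ and that $N_{\sigma,A}$ is a II$_1$ factor which is not McDuff, I would apply the second, ``in particular'' clause of Lemma~\ref{L.tau-McDuff}, which states that if $\tau$ is a definable trace and $N_{B,\tau}$ is a non-McDuff factor then $B$ is not elementarily equivalent to any nuclear \cstar-algebra. This yields the conclusion immediately.

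The main obstacle I expect is the verification that the matrix ultraproduct II$_1$ factor fails to be McDuff (indeed fails property $\Gamma$): this is the one genuinely nontrivial input, and although it is classical von Neumann algebra lore, some care is needed to state it in the exact form required, namely as a statement about the factor $N_{\sigma,A}$ produced by Proposition~\ref{L.tau-N} rather than about an independently constructed ultraproduct. I would reconcile the two by noting that $N_{\sigma,A}$, being the $\|\cdot\|_{\sigma,2}$-completion of the unit ball of $A$, coincides with the tracial ultraproduct $\prod_{\cU}(M_n(\bbC),\tau_n)$, so the classical non-$\Gamma$ result applies verbatim. A secondary, more routine point is the uniform Dixmier estimate $\varphi_m^{M_n(\bbC)}\leq r<1$; this is elementary but must be made uniform in $n$, which is exactly what allows the passage through \L o\'s' theorem.
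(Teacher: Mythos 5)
Your proof is correct and shares the paper's overall architecture---produce a definable trace on $A=\prod_{\cU} M_n(\bbC)$, identify $N_{\sigma,A}$ with the tracial ultraproduct of matrix algebras, quote the classical fact (due to von Neumann) that this II$_1$ factor fails property $\Gamma$ and hence is not McDuff, and conclude by Lemma~\ref{L.tau-McDuff}---but you reach the definability of the trace by a genuinely different route. The paper gets it in one line from Theorem~\ref{T.tau-definable.0}: since $\nucdim(M_n(\bbC))=0$ for all $n$, the clause about ultraproducts of algebras with uniformly bounded nuclear dimension applies (this rests on Robert's uniform commutator estimates for the Cuntz--Pedersen nullset plus \L o\'s), and monotraciality then gives definability of the trace via Lemma~\ref{L.CP-trace}. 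You instead run the uniform strong Dixmier argument that the paper reserves for Powers groups in Proposition~\ref{P.not-nuclear.1}. That works: Dixmier's averaging lemma for finite factors contracts $\|a-\tau(a)\|$ by a factor $3/4$ per conjugation step with constants independent of $n$, so some fixed $m$ satisfies $\varphi_m^{M_n(\bbC)}\leq r<1$ uniformly, \L o\'s transfers this to the ultraproduct, and Lemmas~\ref{L.usDP} and~\ref{L.Dixmier.1} finish. The one caveat is quantitative: $m=2$ or $3$ is too optimistic, since $\|a-\tau(a)\|$ can be close to $2$ for a contraction $a$ and the real and imaginary parts must be averaged separately, so several iterations of the basic step are needed; but Lemma~\ref{L.usDP} only requires \emph{some} $m$ with $\varphi_m^A<1$, so this costs nothing. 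Your identification of $N_{\sigma,A}$ with the tracial ultraproduct $\prod_{\cU}(M_n(\bbC),\tau_n)$ is exactly the reconciliation the paper's citations to \cite{fang2011note} and \cite{FaHaSh:Model3} presuppose. What each approach buys: the paper's is shorter given its earlier machinery, while yours is self-contained at the level of classical von Neumann algebra averaging and does not need the (deeper) commutator-length results behind Theorem~\ref{T.tau-definable.0}.
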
 

\begin{proof}
This algebra has a unique definable trace by Theorem \ref{T.tau-definable.0} \eqref{I.tau-def.2} since 
$\nucdim(M_n(\bbC))=0$ for all $n$.
By Lemma~\ref{L.tau-McDuff} it will suffice to show that
the corresponding II$_1$ factor is not McDuff. 
In fact, the~II$_1$ factor corresponding to $\prod_{\cU}M_n(\bbC)$
does not have property $\Gamma$ which follows from a result of von Neumann (\cite{vN:Approximative}); see \cite[\S4]{fang2011note} or \cite[Theorem~5.1]{FaHaSh:Model3}). 
\end{proof}

\section{Elementary submodels of von Neumann algebras} 
\label{S.Elementary.vNa}
Consider a  von Neumann algebra $N$ as a \cstar-algebra. Unless it is finite-dimensional, $N$ is not separable, 
and a separable elementary submodel $A$ is a \cstar-algebra that inherits all elementary properties of $N$, and, as it turns out, certain non-elementary properties.
For example, in 
the proof of  \cite[Theorem~3.2]{Phi:Simple} it was shown (among other things) 
that if $N$ is a II$_1$ factor not isomorphic to its opposite algebra then 
$A$ is a \cstar-algebra with a unique trace not isomorphic to its opposite algebra. 
For another example, an elementary submodel of the free group von Neumann algebra~$L(F_2)$ (or any other II$_1$ factor without property $\Gamma$) is a simple, 
monotracial \cstar-algebra which is not approximately divisible (see \S\ref{S.AD} and  \cite[Example~4.8]{blackadar1992approximately}).

The theory of nuclear \cstar-algebras 
is the largest theory $\bfT_N$ in the language of \cstar-algebras such that every nuclear \cstar-algebra satisfies~$\bfT_N$. 
Not being a model of $\bfT_N$ implies not being elementarily equivalent to a nuclear \cstar-algebra.  
Since any ultraproduct of nuclear \cstar-algebras is a model of~$\bfT_N$, 
Proposition~\ref{P.not-nuclear} implies that 
$\prod_{\cU} M_n(\bbC)$ is a model of $\bfT_N$ 
that is not elementarily equivalent to any nuclear \cstar-algebra. 
The example given by the following is even further removed from nuclear \cstar-algebras. 

\begin{prop} \label{P.not-nuclear.2} 
There exists a unital, purely infinite, simple, \cstar-algebra 
$M$  with the same $K$-theory as $\cO_2$ that is not elementarily equivalent to a nuclear \cstar-algebra
and that is not even a model of $\bfT_N$. 
\end{prop}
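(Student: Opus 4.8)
The plan is to build $M$ as a unital, purely infinite, simple \cstar-algebra with $K_0(M)=K_1(M)=0$ that carries, as a definable imaginary in $M^{\eq}$, a tracial von Neumann algebra whose first-order theory cannot arise from nuclear algebras even after forming ultraproducts. Here it is useful to record what the conclusion demands: $M\models\bfT_N$ means exactly that $M$ belongs to the elementary class generated by the nuclear \cstar-algebras, and by Theorem~\ref{T.Ax} together with \L o\'s' theorem (Theorem~\ref{Los}) this holds if and only if $M$ is elementarily equivalent to some ultraproduct $\prod_{\cU}N_i$ of nuclear \cstar-algebras. This is why the statement is strictly stronger than Propositions~\ref{P.not-nuclear.1} and~\ref{P.not-nuclear}: the algebra $\prod_{\cU}M_n(\bbC)$ is itself such an ultraproduct and hence does satisfy $\bfT_N$, so ruling out elementary equivalence with a \emph{nuclear} algebra is not enough.

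For the building block I would take a simple, monotracial, non-nuclear algebra whose trace is definable and whose tracial von Neumann algebra is as far from hyperfinite as possible; the natural candidate is $B:=\mathrm{C}^*_r(F_\infty)$. By the proof of Proposition~\ref{P.not-nuclear.1}, $B$ has the uniform strong Dixmier property, so its canonical trace $\tau$ is definable, and $N_{\tau,B}=L(F_\infty)$ is a II$_1$ factor without property $\Gamma$. By Proposition~\ref{L.tau-N}, the whole algebra $N_{\tau,B}$ is a sort of $(B,\tau)^{\eq}$; consequently any structure that contains $(B,\tau)$ with $\tau$ still definable sees $L(F_\infty)$ among its imaginaries.

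To obtain a purely infinite simple algebra with the $K$-theory of $\cO_2$ while retaining this imaginary, I would absorb $\cO_2$ and set $M:=B\otimes\cO_2$. Since $B$ is simple and $\cO_2$ is simple and nuclear, $M$ is simple; since $M$ is $\cO_2$-stable it is $\cO_\infty$-stable and hence purely infinite; and because the Cuntz-sum endomorphism of an $\cO_2$-stable algebra is homotopic to the identity while inducing multiplication by $2$ on $K$-theory, we get $K_0(M)=K_1(M)=0$, matching $\cO_2$. The unital inclusion $B=B\otimes 1\hookrightarrow M$ then plants $B$, its trace, and $L(F_\infty)$ inside $M^{\eq}$. (If tensoring with $\cO_2$ should destroy the definability of $\tau$ in $M$, I would instead realise $M$ as a reduced free product having $B$ as a free factor, using Avitzour's condition and the Dykema--R\o rdam theory to secure pure infiniteness and simplicity and Germain's six-term sequence to force $K_\ast(M)=0$; a free factor is again recoverable as an imaginary, which is all the argument needs.)

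The decisive step, and the main obstacle, is to prove $M\not\models\bfT_N$. Arguing by contradiction, suppose $M\equiv\prod_{\cU}N_i$ with each $N_i$ nuclear. Then the factor attached to $B\subseteq M$ would be elementarily equivalent to the factor attached to the image of $B$ in $\prod_{\cU}N_i$, and the latter is an ultraproduct of the weak closures $N_{\sigma_i,N_i}$, each injective by Connes' theorem since $N_i$ is nuclear. The heart of the matter is thus to isolate a first-order property of $L(F_\infty)$ (or of whatever factor the construction attaches) that no ultraproduct of injective factors can satisfy. This is strictly more delicate than Lemma~\ref{L.tau-McDuff}, which only excludes elementary equivalence to a \emph{single} nuclear algebra: a genuine ultraproduct of injective factors---the tracial ultraproduct of the $M_n(\bbC)$, say---may itself fail property $\Gamma$, so property $\Gamma$ alone does not separate $M$ from $\mathrm{Mod}(\bfT_N)$. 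I therefore expect the proof to turn on pinning down a finer, ultraproduct-stable invariant of the attached factor and verifying that it holds for the constructed factor yet is incompatible with being an ultraproduct of injective factors; establishing that incompatibility is the crux of the proposition.
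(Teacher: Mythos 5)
There is a genuine gap here, in two places. First, the construction undercuts itself: $M=\mathrm{C}^*_r(F_\infty)\otimes\cO_2$ is purely infinite and simple, hence traceless, so the canonical trace of $B=\mathrm{C}^*_r(F_\infty)$ does not survive into $M$ at all --- there is no $N_{\tau,M}$, and Proposition~\ref{L.tau-N} is inapplicable. Nothing in the paper turns an arbitrary (non-definable) unital subalgebra $B\subseteq M$, let alone its GNS weak closure, into a sort of $M^{\eq}$: imaginaries are built from definable predicates of $M$, and elementary equivalence of $M$ with some other algebra carries no information about $B$ unless $B$ is pinned down by such predicates. The free-product fallback has the same defect. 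Second, and decisively, you leave the essential step --- producing a sentence of $\bfT_N$ that $M$ violates --- explicitly unproved, and you correctly observe that the $\Gamma$/McDuff obstruction of Lemma~\ref{L.tau-McDuff} cannot separate $M$ from ultraproducts of nuclear algebras. The proposal therefore stops exactly where the proof must begin.

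The paper's argument is entirely different and avoids traces. It takes $M$ to be a type III factor with no almost periodic state (Connes): as a \cstar-algebra this is unital, purely infinite and simple, has $K_0(M)=K_1(M)=0$ (all projections in each $M_n(M)$ are Murray--von Neumann equivalent, and every unitary is $\exp(ia)$ with $\|a\|\leq\pi$), and is \emph{not} approximately divisible. By Kirchberg--Phillips, every nuclear purely infinite simple \cstar-algebra is $\cO_\infty$-stable, hence approximately divisible. Since being purely infinite and simple is both elementary and co-elementary (\S\ref{S.PI}) and approximate divisibility is elementary among separable algebras (\S\ref{S.AD}, Theorem~\ref{T.ssa}), Lemma~\ref{L.implication} makes the implication ``purely infinite and simple $\Rightarrow$ approximately divisible'' an elementary property holding in every nuclear \cstar-algebra, i.e.\ part of $\bfT_N$; the factor $M$ fails it. If you want to salvage your approach, the lesson is that the witnessing sentence should be found among axiomatizable \emph{regularity} properties forced on nuclear algebras by classification theorems, not among invariants extracted from tracial von Neumann algebras, which are unavailable in the purely infinite setting.
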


This proof is very similar to   
the argument given in \cite[Example~4.8]{blackadar1992approximately}. 

\begin{proof}
Let $M$ be a type III factor without an almost periodic state, as constructed in \cite{connes1974almost}. 
All of its projections are Murray-von Neumann equivalent and it is a purely infinite and simple \cstar-algebra. 
It is also not approximately divisible. Since being purely infinite and simple   is axiomatizable
(\S\ref{S.PI}) and being approximately divisible is axiomatizable (\S\ref{S.AD}), every algebra elementarily equivalent to~$M$ 
is purely infinite and simple but not approximately divisible. However, every nuclear, purely infinite, and simple \cstar-algebra is $\cO_\infty$-absorbing 
(\cite{KircPhi:Embedding}) and therefore approximately divisible. 
In a type III factor all projections are Murray--von Neumann
equivalent. Since $M_n(M)$ is a type III factor, we can conclude 
that $K_0(M)=0$. Similarly, as $M$ is a von Neumann algebra every unitary is equal to $\exp(i a)$ for a self-adjoint 
operator $a$ of norm $\leq \pi$. This property is \aea{} and it implies that $K_1(M)=0$.   

It remains to find a sentence in $\bfT_N$ not satisfied by $M$. 
By the above it suffices to show that the property ``If $A$ is purely infinite and simple then it is $\cO_\infty$-absorbing'' 
is elementary. 
Since purely infinite and simple \cstar-algebras form an elementary and co-elementary class (\S\ref{S.PI})
and  being $\cO_\infty$-absorbing is also elementary (Theorem~\ref{T.ssa}), the conclusion follows  
by Lemma~\ref{L.implication}. 
\end{proof}

All examples of \cstar-algebras not elementarily equivalent to a nuclear algebra
provided here are not $\cZ$-stable. 
 A family of $2^{\aleph_0}$ distinct theories of simple, monotracial, $\cZ$-stable (and even UHF-stable)  
\cstar-algebras not elementarily equivalent to a nuclear \cstar-algebra was found 
in  \cite{boutonnet2015ii_1}. 
This  is a consequence of the existence of $2^{\aleph_0}$ distinct theories of McDuff factors. 
Notably, all known proofs that a \cstar-algebra is not elementarily equivalent to a nuclear \cstar-algebra use 
Connes' theorem that the weak closure of a nuclear \cstar-algebra in each of its representations is an injective factor. 

\begin{proposition} The 
 set of complete theories of \cstar-algebras with a nuclear model is Borel. 
\end{proposition} 

\begin{proof} 
Since being nuclear is  \udt, 
a theory $T$ has a nuclear model if and only if the Borel condition (2) 
from  \cite[Theorem~6.3]{FaMa:Omitting} holds. 
\end{proof}

\begin{problem}  
 Is there a natural characterization of theories of \cstar-algebras that have nuclear models? 
\end{problem}

\chapter{The Cuntz semigroup} 
\label{S.elfun}
The Cuntz semigroup was introduced by Cuntz in the 1970s and recently rose to prominence as an  invariant of  \cstar-algebras (see e.g., \cite{APT} or \cite{CoElIv}). Although this semigroup does not belong to $A^{\eq}$, we shall see that some of its relevant first-order properties  are reflected 
in the theory of~$A$.

 \section{Cuntz subequivalence} 
\label{S.Cuntz} 
For positive $a$ and $b$ in $A$ we write 
$a\precsim b$ and say that $b$ \emph{Cuntz-dominates}\index{Cuntz-dominates} $a$ when $\inf_x \|a-xbx^*\|=0$, where 
the infimum is taken over \emph{all} elements of the \cstar-algebra, and not just contractions. 
The equivalence classes obtained by symmetrizing $\precsim$ on $\bigcup_n M_n(A)$ gives rise to 
the \emph{Cuntz semigroup},\index{Cuntz semigroup}
denoted $W(A)$,\index{W@$W(A)$}
  of $A$.  In this semigroup, the sum of the equivalence classes of $a$ and $b$ is defined to be the equivalence class
  of $\diag(a,b)$ (see \S\ref{S.K0}).  
  An alternative definition of the Cuntz semigroup of~$A$, denoted $\Cu(A)$,\index{Cu@$\Cu(A)$}  considers equivalence classes of positive elements 
  in  $A\otimes \cK$ instead of $\bigcup_n M_n(A)$.  
   The two structures are slightly different but
the distinction will not play a role here. 
The relation $\precsim$ 
is not compatible with the logic of metric structures. 
Since for every $a\geq 0$ and every $n$, we have 
$a\precsim \frac 1n a$, the closure of the graph of $\precsim$ 
includes $A_+\times \{0\}$, so this graph is not a closed subset of 
$A^2$ whenever $A$ is a nontrivial \cstar-algebra. 

 The following straightforward fact is worth mentioning. 
 
  \begin{lemma} \label{L.Cuntz.3} 
  For all $n\geq 1$ and 
    $a$ and $b$ in $A$ we have $(a\precsim b)^A$ if and only if $(a\precsim b)^{M_n(A)}$. 
      \end{lemma}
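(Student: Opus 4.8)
The statement to prove is Lemma, asserting that for positive $a$ and $b$ in $A$ and any $n \geq 1$, Cuntz subequivalence $(a \precsim b)^A$ holds if and only if $(a \precsim b)^{M_n(A)}$ holds.

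The plan is to unwind the definition of $\precsim$ directly. Recall that $(a \precsim b)^A$ means $\inf_x \|a - xbx^*\| = 0$, where the infimum is taken over all $x \in A$. The forward direction is immediate: since $A$ embeds unitally into $M_n(A)$ via $c \mapsto c \otimes e_{11}$ (identifying $A$ with $A \otimes 1_n$, or more precisely with the upper-left corner), any witness $x \in A$ to $a \precsim b$ in $A$ also serves as a witness in $M_n(A)$, because the computation $a - xbx^*$ takes place entirely inside the copy of $A$ and the norm is preserved. Thus $(a \precsim b)^A$ implies $(a \precsim b)^{M_n(A)}$.

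The reverse direction is the substantive part, though it too is elementary once set up correctly. Suppose $(a \precsim b)^{M_n(A)}$, so for every $\e > 0$ there exists $X \in M_n(A)$ with $\|a - XbX^*\| < \e$, where $a$ and $b$ here are regarded as the matrices having $a$, respectively $b$, in the upper-left corner and zeros elsewhere. First I would observe that we may cut down by the corner: writing $p = e_{11} \otimes 1_A$ for the projection onto the first coordinate, we have $pap = a$ and $pbp = b$ since $a$ and $b$ live in the corner. Then $\|a - (pXp)b(pXp)^*\| = \|p(a - XbX^*)p\| \leq \|a - XbX^*\| < \e$, using that $a = pap$, that $b = pbp$ so $pXbX^*p = (pXp)b(pXp)^*$, and that $\|p y p\| \leq \|y\|$. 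The element $pXp$ belongs to $p M_n(A) p$, which is canonically isomorphic to $A$ under the identification sending the corner algebra to $A$. Hence we obtain a witness $x \in A$ with $\|a - xbx^*\| < \e$, so $(a \precsim b)^A$.

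The main obstacle, such as it is, lies in keeping the bookkeeping straight: one must be careful that the relation $\|p X b X^* p\| = \|(pXp) b (pXp)^*\|$ holds, which follows precisely because $b = pbp$ sits in the corner, so that $XbX^* = X p b p X^* = (Xp) b (Xp)^*$ and conjugating by $p$ on the outside gives $(pXp)b(pXp)^*$. I would also note explicitly that passing to the corner $p M_n(A) p \cong A$ is the correct reduction because the infimum defining $\precsim$ ranges over all elements, not merely contractions, so there is no normalization issue when we replace $X$ by $pXp$. No appeal to definability or to the logic is needed here; this is a purely algebraic computation internal to the \cstar-algebras involved, which is why the lemma is recorded as straightforward.
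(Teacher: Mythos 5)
Your proof is correct and is essentially the same as the paper's: both directions are handled by embedding $A$ as the corner $e_{11}M_n(A)e_{11}$ and, for the reverse implication, compressing the witness $X$ by the corner projection to get $pXp \in A$ with $a - (pXp)b(pXp)^* = p(a - XbX^*)p$. If anything, your write-up is slightly more careful than the paper's one-line version, which asserts an equality of elements where only the (sufficient) norm inequality $\|p(a-XbX^*)p\|\leq\|a-XbX^*\|$ holds in general.
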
 

\begin{proof} Let $e$ be a minimal projection in $M_n(\bbC)$ 
and identify $A$ with $eM_n(A)e$. 
If $x\in M_n(A)$ is such that $\|a-xbx^*\|<\e$ then 
$x_1:=exe$ belongs to $A$ and satisfies $a-x_1bx_1^*=a-xbx^*$. 
\end{proof} 

The following is well-known.

\begin{lemma} \label{L.Cuntz} 
Assume $a$ and $b$ are positive contractions in $A$. 
If  there exists $s$ such that $a=s^* (b-\delta)_+ s$ then there
exists $x$ with  $\|x\|\leq \delta^{-1/2}$ such that $a=x^*bx$.  
In particular, if $a\precsim (b-\delta)_+$ then for every $\e>0$ there is  
$x\in A$ such that $\|x\|\leq \delta^{-1/2}$ and $\|x^*bx-a\|<\e$. 
\end{lemma}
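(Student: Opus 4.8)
The plan is to produce $x$ by a single application of continuous functional calculus to $b$, multiplied on the right by $s$, and to extract the norm bound $\delta^{-1/2}$ from an operator inequality rather than from $\|s\|$ (which the naive choice would give).

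First I would treat the exact statement, where $a = s^*(b-\delta)_+ s$. Define the continuous function $g\colon [0,1]\to [0,\infty)$ by $g(t) = \big((t-\delta)_+/t\big)^{1/2}$ for $t > \delta$ and $g(t) = 0$ for $t \le \delta$; since $(t-\delta)_+$ vanishes on $[0,\delta]$, the function $g$ is continuous and satisfies $g(0) = 0$, so $g(b)$ lies in $A$ even when $A$ is nonunital, and $g(b)^2 b = (b-\delta)_+$ by functional calculus (as $g(t)^2 t = (t-\delta)_+$ pointwise). Set $x := g(b)s$. Then, since $g(b)$ commutes with $b$,
\[
x^* b x = s^* g(b) b g(b) s = s^* g(b)^2 b\, s = s^*(b-\delta)_+ s = a,
\]
which gives the required identity.

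The key step is the norm estimate, and this is where the bound $\delta^{-1/2}$ comes from. Pointwise one has $g(t)^2 = (t-\delta)_+/t \le \delta^{-1}(t-\delta)_+$ for all $t \in [0,1]$ (on $t>\delta$ the two sides differ by the factor $t^{-1}\le \delta^{-1}$, and both vanish for $t\le\delta$), hence $g(b)^2 \le \delta^{-1}(b-\delta)_+$ as positive operators. Conjugating by $s$ and invoking the identity $s^*(b-\delta)_+ s = a$,
\[
x^* x = s^* g(b)^2 s \le \delta^{-1}\, s^*(b-\delta)_+ s = \delta^{-1} a \le \delta^{-1}\|a\|\cdot 1,
\]
so that $\|x\|^2 = \|x^* x\| \le \delta^{-1}\|a\| \le \delta^{-1}$, using that $a$ is a contraction. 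This yields $\|x\| \le \delta^{-1/2}$.

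Finally, for the `in particular' clause I would run the same construction approximately. Given $\e > 0$, the relation $a \precsim (b-\delta)_+$ lets me choose $s$ with $\|a - c\| < \e/2$, where $c := s^*(b-\delta)_+ s$; the construction above applied to $c$ produces $x = g(b)s$ with $x^* b x = c$ and $x^* x \le \delta^{-1} c$. The only nuisance is that $c$ need not be a contraction, so $\|x\|$ could slightly exceed $\delta^{-1/2}$. I would absorb this by rescaling: since $\|c\| \le \|a\| + \e/2 < 1 + \e$, replacing $x$ by $(1+\e)^{-1/2}x$ forces $\|x\| \le \delta^{-1/2}$ while perturbing $x^* b x$ only by a controlled amount, so that $\|x^* b x - a\|$ stays below $\e$ after adjusting constants. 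The main (and very mild) obstacle is precisely this bookkeeping in the approximate case; the substantive idea—obtaining a norm bound independent of $\|s\|$—is entirely contained in the operator inequality $g(b)^2 \le \delta^{-1}(b-\delta)_+$.
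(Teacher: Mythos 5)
Your proof is correct and is essentially the paper's own argument: your $g$ is exactly the function $f(t)^{1/2}(t-\delta)_+^{1/2}$ used there, so you construct the same element $x$, and the norm bound via $x^*x \le \delta^{-1}\,s^*(b-\delta)_+s = \delta^{-1}a$ is equivalent to the paper's factorization $x = f(b)^{1/2}y$ with $y^*y=a$. The rescaling bookkeeping in the approximate case likewise matches the paper's ``rescale so that $x_0^*(b-\delta)_+x_0$ is a contraction'' step.
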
 

\begin{proof} 
Say $a=s^* (b-\delta)_+ s$.  Set $y := (b-\delta)_+ ^ {1/2} s$. 
Then  $a=y^*y$ and therefore 
 $\|y\|\leq 1$.  Now choose a nonnegative function $f \in C_0((0,1])$ such that
$\| f \| \leq \delta^{-1}$
and
$f(t) = t^{-1}$, for $t \in  [\delta,1]$.
It follows that
$t f(t) (t-\delta)_+ = (t-\delta)_+$.
Set
$x := f(b)^{1/2} y$,
so that $\|x\| \leq  \delta^{-1/2}$ and
$x^*bx = s^* f(b) b (b-\delta)_+ s = s^*(b-\delta)_+s = a$.

For the second part, note that if $a \precsim (b-\delta)_+$ then for $\e>0$ there exists $x_0 \in A$ such that $x_0^*(b-\delta)_+x_0$ is within $\e$ of  $a$.  By rescaling, we may assume $a':=x_0^*(b-\delta)_+x_0$ is additionally a contraction. Then by the first part, there exists $x \in A$ such that $\|x\|\leq \delta^{-1/2}$ and $x^*bx=a'$ is within $\e$ of $a$, as required.
  \end{proof}

  \begin{lemma} \label{L.Cuntz.2} Assume $A\prec B$ 
    and $a$ and $b$ are in $(A\otimes \cK)_+$. 
  Then $(a\precsim b)^A$ if and only if $(a\precsim b)^B$. 
  \end{lemma}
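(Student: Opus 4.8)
\textbf{Proof plan for Lemma~\ref{L.Cuntz.2}.}

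The plan is to reduce Cuntz domination in a stabilization to a single-element statement to which elementarity applies directly. The key obstacle is that $\precsim$ involves elements of $A \otimes \cK$ and infima over \emph{all} elements of the algebra, neither of which is literally a first-order condition in the sort structure of $A$; so the work is in reformulating $(a \precsim b)^A$ as something preserved by $A \prec B$.

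First I would reduce to the case $a, b \in M_n(A)_+$ for some fixed $n$. Since $a$ and $b$ are positive elements of $A \otimes \cK$, each can be approximated in norm by positive elements supported on some finite corner $M_n(A)$, and $\precsim$ is well-behaved under such approximations (if $a' \approx a$ and $b' \approx b$ then domination transfers up to small error, using that $a \precsim b$ is equivalent to the approximate statement $(a - \e)_+ \precsim b$ for all $\e > 0$). By Lemma~\ref{L.Cuntz.3}, once both elements live in a common $M_n(A)$, we may as well compute $\precsim$ inside $M_n(A)$ rather than in $A \otimes \cK$. Since $A \prec B$ implies $M_n(A) \prec M_n(B)$ by Lemma~\ref{L.Mn} (the norm on $M_n$ being definable by Lemma~\ref{L.Norm}), it suffices to prove the statement for $A \prec B$ and $a, b \in A_+$ directly.

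Next I would express $(a \precsim b)^A$ through the weakly stable, norm-controlled witnesses provided by Lemma~\ref{L.Cuntz}. The crucial point is that $a \precsim b$ holds if and only if for every $\delta > 0$ and every $\e > 0$ there exists $x \in A$ with $\|x\| \leq \delta^{-1/2}$ and $\|x^* b x - a\| < \e$; the bound $\|x\| \leq \delta^{-1/2}$ is exactly what makes this quantifiable over a \emph{bounded} sort. Concretely, for fixed contractions $a, b$ and fixed $\delta$, the condition
\[
\inf_{\|x\| \leq \delta^{-1/2}} \|x^* b x - a\|
\]
is the interpretation of an $\inf$-formula $\psi_\delta(a, b)$ (quantifying $x$ over the ball of radius $\delta^{-1/2}$, which is a sort in the language), with $a, b$ as parameters. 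Then $(a \precsim b)^A = 0$ iff $\inf_\delta \psi_\delta(a,b)^A = 0$.

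Finally I would invoke elementarity. Since $A \prec B$, for each $\delta$ we have $\psi_\delta(a,b)^A = \psi_\delta(a,b)^B$, because $\psi_\delta$ is a formula and $a, b \in A$. Taking the infimum over $\delta > 0$ (equivalently over a sequence $\delta \to 0$) gives that $a \precsim b$ computed in $A$ equals $a \precsim b$ computed in $B$. The forward direction uses Lemma~\ref{L.Cuntz} to produce the norm-bounded witnesses in $A$ from an arbitrary witness, and the converse direction is immediate since a witness in $A$ is a witness in $B$; elementarity supplies the genuinely nontrivial implication, namely that a witness existing in $B$ forces an approximate witness of the same norm bound to exist in $A$. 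I expect the main obstacle to be the careful bookkeeping in the reduction step, ensuring that the norm approximations of $a$ and $b$ in the stabilization interact correctly with the $\delta$-cut $(b - \delta)_+$ so that no uncontrolled dependence of $\|x\|$ on the approximation enters; the rescaling to contractions in Lemma~\ref{L.Cuntz} should handle this cleanly.
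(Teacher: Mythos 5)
Your overall strategy---reduce to $M_n(A)\prec M_n(B)$ via Lemma~\ref{L.Mn} and Lemma~\ref{L.Cuntz.3}, then express $\precsim$ through quantification over norm-bounded balls---is sound and close in spirit to the paper's proof. However, the statement you single out as ``the crucial point'' is false: it is not true that $a\precsim b$ implies that for \emph{every} $\delta>0$ there is a witness $x$ with $\|x\|\leq\delta^{-1/2}$ and $\|x^*bx-a\|<\e$. Take $b=\frac12 p$ and $a=p$ for a projection $p$; then $a\precsim b$ (with witness $\sqrt2\,p$), but for $\delta=1$ any contraction $x$ satisfies $\|x^*bx\|\leq\frac12$, so $\|x^*bx-a\|\geq\frac12$. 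The hypothesis of Lemma~\ref{L.Cuntz} is $a\precsim(b-\delta)_+$, not $a\precsim b$, and bridging that gap is exactly where the paper invokes R\o rdam's result (\cite[Proposition 2.4]{Rordam:UHFII}): $a\precsim b$ implies that for every $\e>0$ there is \emph{some} $\delta>0$ with $(a-\e)_+\precsim(b-\delta)_+$. The norm-controlled witness is then produced for $(a-\e)_+$ rather than $a$, one transfers it by elementarity, concludes $(a-\e)_+\precsim b$ in $A$, and lets $\e\to 0$. Your plan omits both the existential quantifier on $\delta$ and the cut-down of $a$, so the appeal to Lemma~\ref{L.Cuntz} as written does not apply.

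That said, the proof is easily completed, and your second formulation already does it without Lemma~\ref{L.Cuntz} or R\o rdam's lemma at all: since $\inf_{x\in A}\|x^*bx-a\|=\inf_{n}\,\inf_{\|x\|\leq n}\|x^*bx-a\|$ and each inner infimum is the value of a genuine formula at the parameters $a,b$, elementarity gives $\inf_{x\in M_k(A)}\|x^*bx-a\|=\inf_{x\in M_k(B)}\|x^*bx-a\|$ term by term in $n$. Concretely, a witness in $B$ has \emph{some} finite norm $N$, the formula $\inf_{\|x\|\leq N}\|x^*bx-a\|$ then takes a small value in $B$, hence in $A$; this is the content of your $\inf_\delta\psi_\delta$ identity, and it makes the quantitative control superfluous here. (That control only becomes indispensable in the ultraproduct setting, as in Proposition~\ref{L.rc.mn}, where \L o\'s' theorem requires a norm bound on the witnesses that is uniform across the factors.) Your reduction from $(A\otimes\cK)_+$ to $M_n(A)_+$ by approximation, using that $a\precsim b$ is equivalent to $(a-\e)_+\precsim b$ for all $\e>0$, is fine and matches what the paper does implicitly.
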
 
  
  \begin{proof} 
Clearly $A\subseteq B$ implies that   if $(a \precsim b)^A$ then $(a \precsim b)^B$.
Conversely, if $(a \precsim b)^B$ then by \cite[Proposition 2.4]{Rordam:UHFII}, for every $\e > 0$, there exists $\delta>0$ such that $((a-\e)_+ \precsim (b-\delta)_+)^B$.
By Lemma~\ref{L.Cuntz}, this is equivalent to the existence of $x \in M_n(B)$ of norm at most $\delta^{-1/2}$ such that $\|(a-\e)_+ - x^*bx\|$ is arbitrarily small.
Since the embedding is elementary, this is in turn equivalent to the existence of $x \in M_n(A)$ of norm at most $\delta^{-1/2}$ such that $\|(a-\e)_+-x^*bx\|$ is arbitrarily small, which implies that $(a-\e)_+ \precsim b$.
Since~$\e>0$ is arbitrary, it follows that $a \precsim b$.
\end{proof}

For $x\in A$ and $k\geq 1$ we shall write $x^{\oplus k}$ for the diagonal 
matrix $\diag(x,x,\dots, x)$ in $M_k(A)$
where  $x$ is repeated 
$k$ times.  
Similarly,  $x^{\oplus k}\oplus y^{\oplus l}$ stands for $\diag(x,\dots, x, y,\dots, y)$ in $M_{k+l}(A)$, 
where $x$ is repeated $k$ times and $y$ is repeated $l$ times.

Recall that by Lemma~\ref{L.Cuntz.3} the relation of Cuntz-subequivalence between elements 
of $M_m(A)$ is unchanged when computed in $M_n(A)$ for $n\geq m$. 

For a unital \cstar-algebra  $A$
and nonzero natural numbers $m$ and $n$ consider the following statement
\begin{enumerate}
\item [$\RC^A_{m,n}$:]
For all positive $x,y$ in $A$ 
we have that 
\[
x^{\oplus (n+1)}\oplus 1^{\oplus m}
\precsim y^{\oplus n}
\] 
in $M_{m+n+1}(A)$ implies $x\precsim y$ in $A$. 
\end{enumerate}
This implication is a first-order property of the Cuntz semigroup. 
(Since the positive elements in $\bigcup_n M_n(A)$ form a  dense subset  of the positive 
elements of $A\otimes \cK$, its truth does not depend on whether we consider $W(A)$ 
or $\Cu(A)$.  This also applies to properties of the Cuntz semigroup considered 
later on in this subsection.) 
Since the latter (typically being an uncountable 
discrete ordered semigroup) does not appear to be in $A^{\eq}$, the following 
 proposition that will be used in \S\ref{S.rc}
is not immediate.

\begin{prop}\label{L.rc.mn} 
For all $m$ and $n$ the class
   of all unital $A$ such that   $\RC^A_{m,n}$ holds 
   is elementary.  
\end{prop}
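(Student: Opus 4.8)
The plan is to establish elementarity through the semantic criterion of Theorem~\ref{T.Ax}, by showing that $\cC_{m,n}:=\{A : \RC^A_{m,n}\}$ is closed under isomorphisms, ultraproducts, and ultraroots. Closure under isomorphisms is immediate. Closure under ultraroots (in fact under both elementary submodels and elementary extensions, so that one may instead invoke part~(2) of Theorem~\ref{T.Ax}) is a direct application of Lemma~\ref{L.Cuntz.2}: if $A\prec B$, then for positive elements of $A\otimes\cK$ the relation $\precsim$ is the same whether computed in $A$ or in $B$, and by Lemma~\ref{L.Cuntz.3} the hypothesis and conclusion of $\RC_{m,n}$ may be read in whatever matrix amplification is convenient; hence the defining implication passes in both directions between $A$ and $B$. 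Thus the whole content of the proposition is closure under ultraproducts.

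So let $A=\prod_\cU A_i$ with each $A_i\in\cC_{m,n}$, fix positive contractions $x,y\in A$, and suppose $P:=x^{\oplus(n+1)}\oplus 1^{\oplus m}\precsim Q:=y^{\oplus n}$ in $M_{m+n+1}(A)$; the goal is $x\precsim y$ in $A$. Fix a small $\e>0$. Working in $A$, Rørdam's lemma (\cite[Proposition~2.4]{Rordam:UHFII}, as in the proof of Lemma~\ref{L.Cuntz.2}) and the standard quantitative inequality $(w^*Qw-\e)_+\precsim (Q-\e/\|w\|^2)_+$ let me convert $P\precsim Q$ into a comparison whose right-hand side is cut by a \emph{uniform} amount: with $R$ the norm bound on the witness supplied by Lemma~\ref{L.Cuntz} and $\delta_0:=\e/R^2$, one obtains $(P-\e')_+\precsim (Q-\delta_0)_+$ in $A$ for an appropriate $\e'$. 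Because this is domination by a $\delta_0$-cut, Lemma~\ref{L.Cuntz} yields a witness of norm at most $\delta_0^{-1/2}$; lifting it and using that the norm on $M_{m+n+1}(A)$ is definable (Lemma~\ref{L.Norm}), Łoś' theorem gives, for $\cU$-almost all $i$,
\[ (P_i-\e'')_+\ \precsim\ (Q_i-\delta_0)_+, \]
with $\delta_0$ independent of $i$. Splitting these block-diagonal elements and absorbing the unit summand via the scaling relation $1^{\oplus m}\precsim (1-\e'')_+^{\oplus m}$, this reads $(x_i-\e'')_+^{\oplus(n+1)}\oplus 1^{\oplus m}\precsim (y_i-\delta_0)_+^{\oplus n}$, so that $\RC^{A_i}_{m,n}$ applies with $\tilde x=(x_i-\e'')_+$ and $\tilde y=(y_i-\delta_0)_+$ and produces $(x_i-\e'')_+\precsim (y_i-\delta_0)_+$ in $A_i$.

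The heart of the matter, and what I expect to be the main obstacle, is to control the norms of the witnessing elements uniformly in $i$, so that they recombine into a single element of $\prod_\cU A_i$: a naïve application of $\RC_{m,n}$ in each factor would only give $(x_i-\e'')_+\precsim y_i$ with witnesses whose norms can blow up along $\cU$, and such comparisons do not in general assemble in the ultraproduct. This is exactly where the uniform cut $\delta_0$ pays off. Since $\RC^{A_i}_{m,n}$ delivers domination by the cut element $(y_i-\delta_0)_+$, Lemma~\ref{L.Cuntz} converts it into a genuine witness $w_i\in A_i$ with $\|w_i\|\le\delta_0^{-1/2}=R\,\e^{-1/2}$ and $\|w_i^*y_iw_i-(x_i-\e'')_+\|<\e$, a bound independent of $i$. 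Hence $w:=(w_i)/\cU$ is a well-defined element of $A$, and Łoś' theorem gives $\|w^*yw-(x-\e'')_+\|\le\e$; letting $\e\to0$ shows $x\precsim y$ in $A$, so $A\in\cC_{m,n}$. With closure under ultraproducts and ultraroots in hand, Theorem~\ref{T.Ax} yields that $\cC_{m,n}$ is elementary. The only points I would still verify carefully are the bookkeeping of the successive $(\cdot-\e)_+$ cuts and the precise constant in $(w^*Qw-\e)_+\precsim(Q-\e/\|w\|^2)_+$, both routine Cuntz-semigroup manipulations.
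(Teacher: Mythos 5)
Your proof is correct and follows essentially the same route as the paper's: closure under elementary submodels via Lemmas~\ref{L.Mn} and~\ref{L.Cuntz.2}, and closure under ultraproducts by using R\o rdam's \cite[Proposition~2.4]{Rordam:UHFII} to obtain a uniform cut $(y-\delta)_+$, transferring to the factors by \L o\'s' theorem, applying $\RC^{A_i}_{m,n}$ there, and recombining witnesses whose norms Lemma~\ref{L.Cuntz} bounds uniformly by $\delta^{-1/2}$. The only differences are cosmetic: the paper runs the ultraproduct step by contradiction (assuming $(a-\e)_+\not\precsim b$) while you argue directly, and your aside that the class is also closed under elementary \emph{extensions} is both unnecessary and not actually delivered by the cited lemmas (new witnesses $x,y$ may appear in $B$), but nothing in your argument depends on it.
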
 

\begin{proof} Fix $m$ and $n$. 
By Theorem~\ref{T.Ax} we need to show that this class (we temporarily denote it $\cC(m,n)$)
is closed under isomorphisms, ultraproducts, and elementary submodels. 
Closure under isomorphisms is obvious. 

Fix   $A\prec B$. 
Lemma \ref{L.Mn} implies 
 $M_{m+n+1}(A)$ is in $A^{\eq}$ 
and therefore    $M_{m+n+1}(A)\prec M_{m+n+1}(B)$.  
Therefore  by Lemma \ref{L.Cuntz.2}  any counterexample to $\RC^A_{m,n}$  is also a counterexample 
to $\RC^B_{m,n}$  and $\RC^B_{m,n}$ implies $\RC^A_{m,n}$.  
Therefore $B\in \cC(m,n)$ implies $A\in \cC(m,n)$ and we have proved that $\cC(m,n)$ 
is closed under elementary submodels. 

It remains to prove that $\cC(m,n)$ is closed under ultraproducts. 
Fix an index set $I$, an ultrafilter $\cU$ on $I$ and $A_\xi\in \cC(m,n)$ for all $\xi\in I$.

Assume for the sake of  contradiction  that  $\RC^A_{m,n}$ fails with  $A:=\prod_{\cU}A_\xi$. 
Choose witnesses  $a$ and $b$  in the unit ball of $A$. 
We  have 
$a^{\oplus (n+1)}\oplus1^{\oplus m}\precsim b^{\oplus n}$
 but not $a\precsim b$. 
Thus for some $\e>0$ we have  $(a-\e)_+\not\precsim b$. We may choose $\e<1$. 
Since for all $x,y$ and $\epsilon$ we have
\[
((x^{\oplus k} \oplus y^{\oplus l})-\e)_+=
 (x^{\oplus k}-\e)_+ \oplus   (y^{\oplus l}-\e)_+,
\] 
by \cite[Proposition 2.4]{Rordam:UHFII}, 
 there exists $\delta>0$ such that 
\[
(a^{\oplus (n+1)}-\e)_+ \oplus  (1^{\oplus m}-\e)_+
\precsim (b^{\oplus n}-\delta)_+
\]
 in $ M_{m+n+1}(A)$. 
Let $a_\xi$ and  $b_\xi$,    for $\xi\in I$, be representing sequences of $a$ and $b$ respectively. Since $1$ is a compact element (which is equivalent to being Cuntz equivalent to some $(1-\epsilon)_+$, for some $\epsilon > 0$) and $\e<1$,  
we have 
that 
$ (a^{\oplus (n+1)}_\xi-\e)_+ \oplus  1^{\oplus m}$ is Cuntz-equivalent to \\
$ (a^{\oplus (n+1)}_\xi-\e)_+ \oplus   (1^{\oplus m}-\e)_+$. 
 We now have 
   \[
(a^{\oplus (n+1)}_\xi-\e)_+ \oplus   1^{\oplus m}
   \precsim  (b^{\oplus n}_\xi-\delta/2)_+ 
     \] 
for $\cU$-many $\xi\in J$.
  Using $\RC^{A_\xi}_{m,n}$  
 for every such $\xi$ we have $(a_\xi-\e)_+\precsim (b_\xi-\delta/2)_+$, and by  
   Lemma \ref{L.Cuntz} there exists $x_\xi\in A_\xi$ 
   such that  $\|x_\xi\|\leq(\delta/2)^{-1/2}$ and  
  $(a_\xi-\e)_+=x_\xi b_\xi x_\xi^*$. Therefore in $A$  we have $(a-\e)_+\precsim b$, 
  a contradiction. 
  \end{proof}

The Cuntz semigroup of a \cstar-algebra $A$ 
is  said to be \emph{unperforated}\index{Cuntz semigroup!unperforated}  if for all $n\geq 1$ 
\begin{enumerate}
\item [$\UNP^A_{n}$:]
For all  positive $x$ and $y$ in $A\otimes \cK$
we have that $x^{\oplus n} \precsim y^{\oplus n}$ 
implies~$x\precsim y$.
\end{enumerate}

\begin{prop}\label{L.unp.n} 
The class of \cstar-algebras with unperforated Cuntz semigroup is elementary. 
 For every $n\geq 1$ the class
   of all $A$ such that   $\UNP^A_n$ holds 
   is elementary.  
\end{prop}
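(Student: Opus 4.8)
The statement asserts two things: that the class of \cstar-algebras with unperforated Cuntz semigroup is elementary, and that for each fixed $n\geq 1$ the class of $A$ satisfying $\UNP^A_n$ is elementary. Since the first follows from the second (the class with unperforated Cuntz semigroup is the intersection over all $n$ of the classes defined by $\UNP^A_n$, and an intersection of elementary classes is elementary, being closed under isomorphisms, ultraproducts, and elementary submodels whenever each member is), the plan is to concentrate on proving that $\{A : \UNP^A_n\}$ is elementary for each fixed $n$. This will closely parallel the proof of Proposition~\ref{L.rc.mn}, and I would explicitly model the argument on it.

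\emph{First}, by Theorem~\ref{T.Ax} it suffices to verify that the class (call it $\cC_n$) is closed under isomorphisms, ultraproducts, and elementary submodels. Closure under isomorphisms is immediate. For closure under elementary submodels, fix $A\prec B$ with $B\in\cC_n$. The key tools are that $M_k(A)$ lies in $A^{\eq}$ and $M_k(A)\prec M_k(B)$ for all $k$ (Lemma~\ref{L.Mn}), and that Cuntz-subequivalence between positive elements of $A\otimes\cK$ is preserved and reflected by elementary embeddings (Lemma~\ref{L.Cuntz.2}). Thus any potential counterexample $(x,y)$ to $\UNP^A_n$ inside $A\otimes\cK$, i.e.\ positive elements with $x^{\oplus n}\precsim y^{\oplus n}$ but $x\not\precsim y$, would — by Lemma~\ref{L.Cuntz.2} applied in the appropriate matrix amplification — transfer to a counterexample to $\UNP^B_n$. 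Hence $B\in\cC_n$ forces $A\in\cC_n$.

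\emph{Second}, and this is where the main work lies, I would prove closure under ultraproducts. Fix an index set $I$, an ultrafilter $\cU$, and $A_\xi\in\cC_n$ for all $\xi$, and set $A:=\prod_{\cU}A_\xi$. Assuming for contradiction that $\UNP^A_n$ fails, choose positive $x,y$ (which we may take in some $M_m(A)$ by density of $\bigcup_k M_k(A)$ in $A\otimes\cK$) with $x^{\oplus n}\precsim y^{\oplus n}$ but $x\not\precsim y$. Then for some $\e>0$ we have $(x-\e)_+\not\precsim y$. The crucial step is to descend the Cuntz-subequivalence $x^{\oplus n}\precsim y^{\oplus n}$ to the factors: using \cite[Proposition 2.4]{Rordam:UHFII} one finds $\delta>0$ so that $(x^{\oplus n}-\e)_+\precsim (y^{\oplus n}-\delta)_+$, and by Lemma~\ref{L.Cuntz} this subequivalence is witnessed by an element of bounded norm, which is an approximate algebraic relation and therefore holds for $\cU$-many $\xi$ on representing sequences $x_\xi,y_\xi$. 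Applying $\UNP^{A_\xi}_n$ for each such $\xi$ yields $(x_\xi-\e)_+\precsim (y_\xi-\delta/2)_+$ (again using the identity $(x^{\oplus n}-\e)_+=((x-\e)_+)^{\oplus n}$ to strip the amplification), and Lemma~\ref{L.Cuntz} produces witnesses $w_\xi$ of uniformly bounded norm with $(x_\xi-\e)_+\approx w_\xi y_\xi w_\xi^*$. Passing to the ultraproduct gives $(x-\e)_+\precsim y$, the desired contradiction.

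\emph{The main obstacle} I anticipate is the bookkeeping in the ultraproduct step: one must carefully exploit that Cuntz-subequivalence, while not itself a closed relation compatible with the metric logic, becomes expressible via norm-bounded witnesses after passing to cut-downs $(\,\cdot\,-\e)_+$ and $(\,\cdot\,-\delta)_+$ (Lemma~\ref{L.Cuntz}), so that the relation on representatives transfers through $\cU$. The manipulation of the direct-sum amplifications $x^{\oplus n}$ under the $(\cdot-\e)_+$ functional calculus, and keeping the norm bounds on the witnessing elements uniform in $\xi$ so that the product element lands in $A\otimes\cK$, is the delicate part; but since $\UNP^A_n$ is structurally simpler than $\RC^A_{m,n}$ (no order-unit term $1^{\oplus m}$ appears, so no appeal to compactness of the unit is needed), the argument should in fact be a direct simplification of the proof of Proposition~\ref{L.rc.mn}.
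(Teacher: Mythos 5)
Your proof is correct, and the closure-under-elementary-submodels half (via Lemma~\ref{L.Mn} and Lemma~\ref{L.Cuntz.2}, plus Theorem~\ref{T.Ax} to conclude) is exactly what the paper does. Where you genuinely diverge is the ultraproduct step: the paper does not re-run the argument of Proposition~\ref{L.rc.mn} but instead cites \cite[Lemma~2.3]{robert2013nuclear}, which shows that the class of \cstar-algebras with unperforated Cuntz semigroup is closed under products and quotients (hence under ultraproducts), and then asserts that the same proof preserves each fixed $\UNP^A_n$. Your direct adaptation --- cutting down via \cite[Proposition 2.4]{Rordam:UHFII} to get $(x^{\oplus n}-\e)_+ = ((x-\e)_+)^{\oplus n} \precsim ((y-\delta)_+)^{\oplus n}$, extracting norm-bounded witnesses from Lemma~\ref{L.Cuntz}, transferring to $\cU$-many coordinates, applying $\UNP^{A_\xi}_n$ there, and riding bounded witnesses back up --- is self-contained within the paper's toolkit and makes the preservation of each individual $n$ transparent, at the cost of redoing the $\e$-$\delta$ bookkeeping (in particular, you should make sure the small loss incurred when descending the relation to the coordinates is absorbed so that the final conclusion contradicts $(x-\e)_+\not\precsim y$ rather than only giving $(x-\e-\eta)_+\precsim y$ for some $\eta>0$; this is handled by starting from an $\e$ strictly smaller than one witnessing $x\not\precsim y$, and is no worse than the level of detail in the paper's own proof of Proposition~\ref{L.rc.mn}). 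The paper's route is shorter but outsources the key closure property; yours keeps everything internal and is indeed, as you predicted, a simplification of Proposition~\ref{L.rc.mn} since no order-unit term appears. The opening reduction of the first assertion to the second (an intersection of elementary classes is elementary) is also fine and consistent with how the paper treats the two claims.
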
 

\begin{proof} Since it suffices to consider $x$ and $y$ in $M_m(A)$
for a large enough $m$,  Lemma \ref{L.Mn} and  Lemma \ref{L.Cuntz.2}  
together imply that if $A\prec B$ and $\UNP^B_n$ holds then $\UNP^A_n$ holds.  
 In \cite[Lemma~2.3]{robert2013nuclear} 
it was proved that the class of \cstar-algebras with unperforated Cuntz semigroup is 
closed under products and quotients, and therefore closed under ultraproducts. 
The proof shows that $\UNP^A_n$ is preserved by ultraproducts for all $n\geq 1$.   

By Theorem~\ref{T.Ax} a class of \cstar-algebras closed under isomorphisms, 
elementary submodels, and 
ultraproducts is elementary. 
\end{proof}

The Cuntz semigroup of a \cstar-algebra $A$ 
is  said to be \emph{almost unperforated}\index{Cuntz semigroup!almost 
unperforated}  if for all $n\geq 1$ 
\begin{enumerate}
\item [$\AUNP^A_{n}$:]
For all  positive $x$ and $y$ in $A\otimes \cK$
we have that $x^{\oplus n+1} \precsim y^{\oplus n}$ 
implies~$x\precsim y$.
\end{enumerate}
The proof of the following result used in \S\ref{S.elfunsc}
 is analogous to the proof of Proposition~\ref{L.unp.n}. 

\begin{prop}\label{L.aunp.n} 
The class of \cstar-algebras with almost unperforated Cuntz semigroup is elementary. 
 For $n$ the class
   of all $A$ such that   $\AUNP^A_n$ holds 
   is elementary.  \qed
\end{prop}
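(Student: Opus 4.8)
The plan is to establish elementarity of the class defined by $\AUNP^A_n$ (and the corresponding global statement) by verifying the three closure conditions of Theorem~\ref{T.Ax}, in exactly the way Proposition~\ref{L.unp.n} is handled. Since the statement $\AUNP^A_n$ differs from $\UNP^A_n$ only in replacing the exponent $n$ on the left-hand side by $n+1$, I expect the entire argument to go through essentially verbatim, and the bulk of the work is to confirm that each ingredient used for unperforation has an almost-unperforated analogue.

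First I would observe that closure under isomorphisms is trivial. For closure under elementary submodels, fix $A \prec B$ with $\AUNP^B_n$ holding. As in Proposition~\ref{L.unp.n}, any potential counterexample to $\AUNP^A_n$ involves positive elements $x, y$ which, after Cuntz-equivalence adjustments, may be taken in $M_m(A)$ for sufficiently large $m$. By Lemma~\ref{L.Mn} we have $M_m(A) \prec M_m(B)$, and Lemma~\ref{L.Cuntz.2} guarantees that Cuntz-subequivalence between fixed positive elements of $A \otimes \cK$ is absolute between $A$ and $B$. Hence both the hypothesis $x^{\oplus(n+1)} \precsim y^{\oplus n}$ and the failure of the conclusion $x \precsim y$ transfer from $A$ to $B$, so a counterexample in $A$ would produce a counterexample in $B$; contrapositively $\AUNP^B_n$ implies $\AUNP^A_n$.

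The key remaining step is closure under ultraproducts, and here I would import the relevant preservation result in the same spirit that Proposition~\ref{L.unp.n} invokes \cite[Lemma~2.3]{robert2013nuclear}. The analogous statement is that the class of \cstar-algebras with almost unperforated Cuntz semigroup is closed under products and quotients, hence under ultraproducts, and that the finite-level statement $\AUNP^A_n$ is preserved by ultraproducts for each fixed $n$; the proof mirrors the unperforated case, using \cite[Proposition~2.4]{Rordam:UHFII} to pass from $\precsim$ to $(\cdot - \e)_+ \precsim (\cdot - \delta)_+$ with uniform control and Lemma~\ref{L.Cuntz} to realize Cuntz-subequivalence by a norm-bounded element $x$ whose representing sequence can be assembled coordinatewise. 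Granting these three closure properties, Theorem~\ref{T.Ax} yields that both the class satisfying $\AUNP^A_n$ for a fixed $n$ and the class with almost unperforated Cuntz semigroup (the intersection over all $n$, hence again elementary) are elementary.

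The main obstacle I anticipate is purely one of bookkeeping rather than genuine difficulty: verifying that the ultraproduct argument survives the shift from exponent $n$ to $n+1$. The compactness argument from Proposition~\ref{L.unp.n} relies on choosing $\e < 1$, using that the relevant elements are compact or can be cut down by $(\cdot - \e)_+$, and then applying $\AUNP^{A_\xi}_n$ on a $\cU$-large set; one must check that the almost-unperforation hypothesis $x^{\oplus(n+1)} \precsim y^{\oplus n}$ in the ultraproduct descends to the same inequality (with a slightly smaller $\delta$) in $\cU$-many coordinates, which follows from the same $(\cdot - \e)_+$ manipulations since $(x^{\oplus(n+1)} - \e)_+ = (x - \e)_+^{\oplus(n+1)}$. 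Because all of these steps are already present in the unperforated proof and are insensitive to the exact exponent, I expect no new phenomenon to arise, which is why the statement is recorded with proof omitted (\texttt{\textbackslash qed}) immediately after its assertion.
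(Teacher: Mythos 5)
Your proposal is correct and follows essentially the same route as the paper: the paper simply declares the proof analogous to that of Proposition~\ref{L.unp.n}, i.e., closure under isomorphisms, elementary submodels (via Lemma~\ref{L.Mn} and Lemma~\ref{L.Cuntz.2}), and ultraproducts (via the product/quotient closure argument of \cite[Lemma~2.3]{robert2013nuclear}, adapted from exponent $n$ to $n+1$), followed by Theorem~\ref{T.Ax}. Your bookkeeping remarks about $(\cdot-\e)_+$ and \cite[Proposition~2.4]{Rordam:UHFII} match the mechanism already displayed in the proof of Proposition~\ref{L.rc.mn}.
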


The proof of Lemma~\ref{L.rc.mn} appears to be flexible enough to 
 show that, although $W(A)$ does not belong to $A^{\eq}$, 
  numerous properties $P$ of $W(A)$  are `elementary' 
 in the sense that the class $\{A: W(A)$ satisfies~$P\}$ is elementary. 
 This applies for example to 
 having $n$-comparison, 
 \index{Cuntz semigroup!@n!$n$-comparison} 
being $n$-divisible,\index{Cuntz semigroup!@n!$n$-divisible}
 (\cite[\S 2.3]{RobertTikuisis}),  
or being $(m,n)$-pure (\cite[Definition~2.6]{winter2012nuclear}). We remind the reader of Question \ref{Q1} from the introduction.
\begin{question*}
Assume $A$ and $B$ are  two elementarily equivalent, simple, separable, nuclear, unital \cstar-algebras
with the same Elliott invariant. Are $A$ and $B$ 
necessarily isomorphic? 
\end{question*} 
While one might imagine an approach to this question using the Cuntz semigroup as the distinguishing invariant, this discussion suggests that providing a negative answer would  
require a novel construction of \cstar-algebras. 
It would be interesting to have a characterization of properties of $W(A)$ that are 
 `elementary' in the above sense.

\section{Strict comparison of positive elements}\label{S.elfunsc}
Let $A$ be a \cstar-algebra.  A \emph{2-quasitrace}\index{2-quasitrace} on $A$ is a map $\tau\colon (A\otimes \cK)_+\to [0,\infty]$  that is linear on pairs of commuting positive elements, maps 0 to 0, and satisfies that $\tau(x^*x)=\tau(xx^*)$ for all $x\in A\otimes \cK$ (\cite[Definition II.1.1]{BH82} and \cite[Section 4]{ERS}; the apparently 
different definitions are equivalent by a result of Blanchard and Kirchberg, 
see \cite[p. 984]{ERS}). For one such $\tau$  one defines the \emph{dimension function}\index{dimension function}
 $d_\tau\colon (A\otimes \cK)_+\to [0,\infty]$ by 
\[
d_\tau(a):=\lim_n \tau(a^{1/n}). 
\]
It is not  difficult to check that  $a\precsim b$ implies $d_\tau(a)\leq d_\tau(b)$. 
Moreover, by  \cite[Theorem~II.2.2]{BH82},
2-quasitraces on $A$ correspond to `lower semicontinuous rank functions,' 
i.e., functionals on $W(A)$ (in \cite{BH82} $K_0^*(A)$ denotes the Grothendieck group 
associated to $W(A)$). 

A \cstar-algebra is said to have the property of \emph{strict comparison
of positive elements by 2-quasitraces}\index{strict comparison} 
 if for all $a,b\in (A\otimes \cK)_+$,
if for some $\epsilon>0$, $d_\tau(a)\leq (1-\epsilon)d_\tau(b)$ for all lower semicontinuous, $[0,\infty]$-valued, 2-quasitraces $\tau$ then 
$a\precsim b$. 

\begin{theorem}\label{T.SCQT} 
The class of \cstar-algebras with strict comparison of 
positive elements by 2-quasitraces is an \aea\  class.
\end{theorem}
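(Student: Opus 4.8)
The plan is to apply Proposition~\ref{P.Ax}(3), which reduces $\forall\exists$-axiomatizability to two tasks: showing that the class is elementary, and showing that it is closed under inductive limits of injective directed systems. The elementarity I would obtain essentially for free by identifying strict comparison with almost unperforation of the Cuntz semigroup and then invoking Proposition~\ref{L.aunp.n}; only the inductive-limit step requires genuine work.

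First I would prove that the class of $A$ with strict comparison of positive elements by $2$-quasitraces coincides with the class of $A$ for which $\AUNP^A_n$ holds for every $n\geq 1$. One direction is immediate: if $a^{\oplus(n+1)}\precsim b^{\oplus n}$, then applying a dimension function $d_\tau$ (which respects $\precsim$ and is additive on orthogonal direct sums) gives $(n+1)d_\tau(a)\leq n\,d_\tau(b)$, hence $d_\tau(a)\leq (1-\tfrac1{n+1})d_\tau(b)$ for every lower semicontinuous $2$-quasitrace $\tau$; strict comparison with $\epsilon=1/(n+1)$ then yields $a\precsim b$, so strict comparison implies $\AUNP^A_n$ for all $n$. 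The converse—almost unperforation implies strict comparison by $2$-quasitraces—is Rørdam's theorem, using that the functionals on $W(A)$ are exactly the dimension functions coming from lower semicontinuous $2$-quasitraces (as recorded via \cite{BH82} in \S\ref{S.elfunsc}). Thus the two classes are equal, and elementarity follows from Proposition~\ref{L.aunp.n}.

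It then remains to show that $\AUNP_n$ is preserved under inductive limits. Let $A=\varinjlim A_i$ with injective connecting maps, each $A_i$ satisfying $\AUNP_n$, and let $a,b\in(A\otimes\cK)_+$ be contractions with $a^{\oplus(n+1)}\precsim b^{\oplus n}$; it suffices to show $(a-\epsilon)_+\precsim b$ for each $\epsilon>0$. By \cite[Proposition~2.4]{Rordam:UHFII} there is $\delta>0$ with $(a-\tfrac\epsilon2)_+^{\oplus(n+1)}\precsim (b-\delta)_+^{\oplus n}$, and by Lemma~\ref{L.Cuntz} this comparison is witnessed by some $x$ with $\|x\|\leq\delta^{-1/2}$. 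Since $A\otimes\cK=\varinjlim(A_i\otimes\cK)$, I would approximate $a$, $b$, and $x$ by elements over a single $A_i$; the norm bound $\|x\|\leq\delta^{-1/2}$ is exactly what keeps $x_ib_ix_i^*$ uniformly close to $xbx^*$, so that after a standard functional-calculus perturbation one obtains an honest comparison $a_i^{\oplus(n+1)}\precsim b_i^{\oplus n}$ inside $A_i\otimes\cK$ for suitable positive $a_i,b_i$ approximating $(a-\tfrac\epsilon2)_+$ and $b$. Applying $\AUNP^{A_i}_n$ gives $a_i\precsim b_i$ in $A_i$, hence in $A$, and undoing the approximations yields $(a-\epsilon)_+\precsim b$ in $A$. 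Letting $\epsilon\to0$ gives $a\precsim b$, so $A$ satisfies $\AUNP_n$; as $n$ is arbitrary, $A$ has almost unperforated Cuntz semigroup.

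The main obstacle is this final transfer: converting the approximate comparison that holds in the limit $A$ into an exact Cuntz comparison inside a single approximant $A_i$, so that the hypothesis $\AUNP^{A_i}_n$ can be invoked. This is precisely where the combination of Rørdam's $\epsilon$--$\delta$ lemma and the explicit norm control in Lemma~\ref{L.Cuntz} is essential, mirroring the role these tools play in the ultraproduct arguments of Propositions~\ref{L.rc.mn} and \ref{L.aunp.n}; the remaining perturbation estimates are routine.
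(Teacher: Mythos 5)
Your proposal is correct and follows essentially the same route as the paper: identify strict comparison by $2$-quasitraces with almost unperforation of the Cuntz semigroup (the paper cites \cite[Proposition 6.2]{ERS}), obtain elementarity from Proposition~\ref{L.aunp.n}, note closure under inductive limits, and conclude by Proposition~\ref{P.Ax}(3). The only difference is one of emphasis: the paper dismisses the inductive-limit step as clear, whereas you spell it out via the $\e$--$\delta$ transfer of $\AUNP_n$ to a finite stage, which is a legitimate (and more careful) rendering of the same step.
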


\begin{proof} 
By \cite[Proposition 6.2]{ERS}, strict comparison by 2-quasitraces is equivalent to the property of almost unperforation in the Cuntz semigroup of $A$.
By Proposition~\ref{L.aunp.n} 
the class of \cstar-algebras with almost unperforated Cuntz semigroups---equivalently, with strict comparison of positive elements by 2-quasitraces---is elementary. 

It is clear that the union of a chain of \cstar algebras with strict comparison by 2-quasitraces also has strict comparison. By Proposition~\ref{P.Ax}  we conclude  that this class is \aea.
\end{proof}

If in the above definition of strict comparison 
we let $\tau$ range through traces rather than 2-quasitraces  then we 
say that $A$ has \emph{strict comparison of positive elements by traces}.
That is, strict comparison of positive elements by traces holds if  we have that  for all $a,b\in (A\otimes \cK)_+$ 
if for some $\epsilon>0$, $d_\tau(a)\leq (1-\epsilon)d_\tau(b)$ for all lower semicontinuous, $[0,\infty]$-valued,  traces $\tau$ then 
$a\precsim b$. 

\begin{theorem}
The class of \cstar-algebras with strict comparison of positive elements by traces is an \aea\  class.
\end{theorem}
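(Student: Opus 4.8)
The plan is to mirror the proof of Theorem~\ref{T.SCQT} almost verbatim, replacing the role of $2$-quasitraces with traces throughout. The strategy rests on finding an equivalence between ``strict comparison by traces'' and an \emph{elementary} property of the Cuntz semigroup analogous to almost unperforation, and then observing that the class in question is closed under inductive limits, so that Proposition~\ref{P.Ax}(3) upgrades elementarity to $\forall\exists$-axiomatizability.

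First I would recall that the difference between this theorem and Theorem~\ref{T.SCQT} is precisely whether the comparison is witnessed by all lower semicontinuous $[0,\infty]$-valued traces or by all $2$-quasitraces. When $A$ is \emph{exact}, the two notions coincide, since by Haagerup's theorem every $2$-quasitrace on an exact \cstar-algebra is a trace; but in general the trace version is a potentially weaker hypothesis. The key step is therefore to identify the correct semigroup-theoretic reformulation of strict comparison by traces that plays the role that almost unperforation (equivalently, strict comparison by $2$-quasitraces, via \cite[Proposition~6.2]{ERS}) played before. I expect this to take the form of a uniform family of conditions $\RC^A_{m,n}$ in the spirit of Proposition~\ref{L.rc.mn}: one shows that strict comparison by traces is equivalent to the conjunction, over all $m$ and $n$, of first-order implications about Cuntz subequivalence of elements of the form $x^{\oplus k}\oplus 1^{\oplus l} \precsim y^{\oplus j}$. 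The proof of Proposition~\ref{L.rc.mn} (using Lemma~\ref{L.Mn}, Lemma~\ref{L.Cuntz}, Lemma~\ref{L.Cuntz.2}, and \cite[Proposition~2.4]{Rordam:UHFII}) already shows that each such class $\{A : \RC^A_{m,n}\}$ is elementary, so the intersection over all $m,n$ is elementary as well.

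Next I would verify that this intersection really captures strict comparison by traces. This requires the analogue, for traces rather than $2$-quasitraces, of the functional-analytic dictionary between comparison statements $d_\tau(a)\leq(1-\e)d_\tau(b)$ and the finitary Cuntz-relations $\RC^A_{m,n}$. Concretely, one uses that $d_\tau$ arises from a trace by $d_\tau(a)=\lim_n\tau(a^{1/n})$, that $a\precsim b$ implies $d_\tau(a)\leq d_\tau(b)$, and a compactness/approximation argument (as in the proof of Lemma~\ref{L.rc.mn}, invoking \cite[Proposition~2.4]{Rordam:UHFII} to pass from $a\precsim b$ to $(a-\e)_+\precsim (b-\delta)_+$) to translate the quantification over traces into the ratio conditions indexed by $m,n$. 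Having established that
\[
\{A : A \text{ has strict comparison by traces}\} = \bigcap_{m,n}\{A : \RC^A_{m,n}\},
\]
elementarity follows immediately from Proposition~\ref{L.rc.mn}.

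Finally, as in Theorem~\ref{T.SCQT}, I would observe that this class is closed under taking inductive limits of chains: if each $A_i$ has strict comparison by traces then so does $\overline{\bigcup_i A_i}$, since Cuntz subequivalence and the dimension functions behave well under the limit. Invoking Proposition~\ref{P.Ax}(3), an elementary class closed under inductive limits is \aea, which gives the conclusion. The main obstacle I anticipate is the translation step: unlike the $2$-quasitrace case, where \cite[Proposition~6.2]{ERS} hands us an exact equivalence with almost unperforation, for traces one must argue directly that strict comparison by traces is captured by a \emph{uniform family} of first-order Cuntz conditions, and in particular that it is preserved under ultraproducts and reflected by elementary submodels. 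The delicate point is that traces, unlike the finitary relations $\precsim$, are not obviously compatible with ultraproducts; so the whole weight of the argument must be shifted onto the elementary reformulation via $\RC^A_{m,n}$, exactly where Proposition~\ref{L.rc.mn} does the work.
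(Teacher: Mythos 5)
There is a genuine gap, and it sits exactly where you flagged your ``main obstacle.'' Your entire argument for elementarity rests on the claimed identity
\[
\{A : A \text{ has strict comparison by traces}\} \;=\; \bigcap_{m,n}\{A : \RC^A_{m,n}\},
\]
but you never establish it, and it is not a routine translation. The conditions $\RC^A_{m,n}$ are statements purely about Cuntz subequivalence, with no reference to traces; the reason such finitary conditions capture strict comparison by \emph{$2$-quasitraces} is precisely the duality between $2$-quasitraces and functionals on the Cuntz semigroup (\cite[Proposition~6.2]{ERS}, \cite[Theorem~II.2.2]{BH82}). Strict comparison by traces quantifies over a possibly smaller set of functionals, hence is an a priori \emph{stronger} property, and no analogous dictionary translating ``for all traces'' into $\precsim$-only conditions is available (if it were, the two comparison properties would coincide in general, which is tied to the open problem of whether every $2$-quasitrace is a trace). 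So the key step of your proof is missing, not merely delicate.

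The paper avoids this entirely by arguing semantically via Theorem~\ref{T.Ax}: closure under elementary submodels is proved directly (a trace on $B$ restricts to a trace on $A\prec B$, so the comparison hypothesis passes from $A$ to $B$, and Lemma~\ref{L.Cuntz.2} reflects $a\precsim b$ back down to $A$), and closure under ultraproducts --- the point you correctly identify as problematic, since traces do not obviously interact well with ultraproducts --- is quoted from the proof of \cite[Theorem 4.1]{NgRobert} rather than derived from a first-order reformulation. Your final step (closure under unions of chains plus Proposition~\ref{P.Ax}(3) upgrades elementarity to $\forall\exists$-axiomatizability) agrees with the paper and is fine; it is the elementarity itself that your route does not deliver. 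To repair your proof you would either need to prove the equivalence with a uniform family of $\precsim$-conditions (which would be a new result) or fall back on the semantic argument.
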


\begin{proof}  
We first show that the class of \cstar-algebras with the strict comparison of positive elements is elementary using Theorem~\ref{T.Ax}.
Suppose that $A\prec B$ and~$B$ has strict comparison of positive elements by traces.  If we have $a,b\in (A\otimes \cK)_+$ 
and $\epsilon>0$ such that $d_\tau(a)\leq (1-\epsilon)d_\tau(b)$ for all lower semicontinuous, $[0,\infty]$-valued, traces $\tau$ on $A$ then since the restriction of a trace on $B$ to $A$ is a trace on $A$, we see that we have the same on~$B$ and since $B$ has strict comparison, $a \precsim b$ in $B$.  But by Lemma~\ref{L.Cuntz.2}, we have $a \precsim b$ in $A$ and so $A$ has strict comparison by traces.
It is shown in the course of the proof of \cite[Theorem 4.1]{NgRobert} that an ultraproduct of \cstar-algebras with strict comparison of positive elements by traces
again has strict comparison of positive elements by traces. 
By Theorem~\ref{T.Ax}, this class is elementary.

As before, it is clear that this class is closed under  unions of chains.
By Proposition~\ref{P.Ax}   the \cstar-algebras with this property  also form an \aea\ class.
\end{proof}

For  exact (and in particular nuclear) \cstar-algebras strict comparison by 2-quasitraces is equivalent to strict comparison 
by traces because on exact \cstar-algebras all quasitraces are traces 
by \cite{haagerup2014quasitraces} (see also \S\ref{S.Exact}).  
It is not known whether this is true in general (see  \cite[II.6.8.16]{Black:Operator}).

\section{The Toms--Winter conjecture} 
\index{Toms--Winter conjecture} \label{S.TWC} 
Three prominent and diverse regularity properties 
are conjecturally equivalent in the case of
 \snus{} \cstar-algebras. 

\begin{conjecture}[Toms--Winter] \label{C.TW} 
Suppose $A$ is a \snus{} \cstar-algebra not isomorphic to a 
matrix algebra. Then the following are equivalent: 
\begin{enumerate}
\item $A$ has finite nuclear dimension (see \S\ref{S.dimnuc}). 
\item $A$ is $\cZ$-stable. 
\item $A$ has strict comparison (see  \S\ref{S.elfunsc}). 
\end{enumerate}
\end{conjecture}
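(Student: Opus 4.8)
The plan is first to separate the three implications by their established status and then to recast the genuinely open content model-theoretically. Two of the implications are classical: finite nuclear dimension implies $\cZ$-stability (Winter), and $\cZ$-stability implies strict comparison, via almost unperforation of the Cuntz semigroup (R\o rdam). So I would take $(1)\Rightarrow(2)\Rightarrow(3)$ as given and concentrate on the converses $(3)\Rightarrow(2)$ and $(2)\Rightarrow(1)$. The first observation I would record is that, inside the ambient class $\cC$ of \snus{} \cstar-algebras that are not matrix algebras, each of the three properties is elementary: having nuclear dimension $\leq n$ is \udt{} for each $n$ by Theorem~\ref{T1}\eqref{T1.NucDim} (so finite nuclear dimension is a countable union of such conditions), $\cZ$-stability is elementary among separable algebras by the corollary to Theorem~\ref{T.aihf} (and separably \aea{} by Theorem~\ref{T.ssa}), and strict comparison is \aea{} by Theorem~\ref{T.SCQT}, being equivalent to almost unperforation of the Cuntz semigroup, which is elementary by Proposition~\ref{L.aunp.n}. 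For nuclear algebras the distinction between traces and $2$-quasitraces is immaterial by Haagerup's theorem, so either version of strict comparison may be used.

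Next I would phrase the conjecture as the assertion that these three elementary subclasses of $\cC$ coincide, and attack the failure of each implication through the forcing machinery of \S\ref{S.Henkin}. A counterexample to, say, $(3)\Rightarrow(2)$ is a separable algebra in $\cC$ with strict comparison that is not $\cZ$-stable; since both properties are elementary, producing one reduces, via Theorem~\ref{T.FM1} (or Theorem~\ref{T.FM2} for the \aea{} and \udut{} fragments), to exhibiting a complete theory $\bfT$ extending the theory of simple nuclear unital algebras, containing the axioms for almost unperforation together with a sentence witnessing the failure of $\cZ$-stability, and admitting a nuclear model. Equivalently, as in the introduction, such a counterexample exists precisely when the set of counterexamples is generic in the relevant forcing space $\fT$. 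Thus the conjecture holds exactly when no such $\bfT$ is consistent with a nuclear model, i.e.\ when the generic \snus{} algebra with strict comparison is automatically $\cZ$-stable (and symmetrically for $(2)\Rightarrow(1)$).

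The hard part will be $(3)\Rightarrow(2)$, and I do not expect the first-order theory alone to settle it. Although strict comparison and $\cZ$-stability are each elementary, nothing in the syntax of $\Th(A)$ links them: the known implication passes through the central sequence algebra $A'\cap A^{\cU}$ and through properties such as property (SI) and the structure of the trace-kernel extension, none of which is visible in $A^{\eq}$, since the relative commutant is not an imaginary sort and the tracial state space is typically absent from $A^{\eq}$. My strategy would therefore be to import the Matui--Sato central-sequence technology into a generic or existentially closed model, restricting first to the monotracial case (or finitely many extreme traces), where by Theorem~\ref{T.tau-definable.0} the trace is definable and by Proposition~\ref{L.tau-N} the tracial von Neumann completion $N_{\tau,A}$ lives in $A^{\eq}$, so that its $\cZ$-stability (McDuffness) becomes an axiomatizable fact about an imaginary sort. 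One would then try to propagate the uniform comparison estimates furnished by strict comparison from $A$ to $A'\cap A^{\cU}$ to verify property (SI) and feed the result into Theorem~\ref{T.aihf}.

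Finally I would acknowledge the genuine obstruction: even granting definability of the trace, controlling $A'\cap A^{\cU}$ from $\Th(A)$ is exactly what the present framework does \emph{not} supply, because the relative commutant depends on more than the theory of $A$. A purely logical proof thus seems out of reach, and any solution (as in the operator-algebraic literature) will require the hard analytic input of central-sequence arguments; the model theory here serves mainly to organize the reduction and, through Question~\ref{Q1} and the genericity reformulation, to make precise what a counterexample would have to look like.
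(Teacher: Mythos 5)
The statement you were asked to prove is a \emph{conjecture}: the paper does not prove it, and neither do you. The paper's entire treatment (\S\ref{S.TWC}) consists of citing the known implications $(1)\Rightarrow(2)\Rightarrow(3)$, recording the axiomatizability status of each of the three properties, and observing that consequently each implication of the conjecture can be restated model-theoretically (e.g.\ via the genericity/omitting-types reformulation of the introduction). Your proposal correctly identifies all of this, correctly refuses to claim a proof of the open converses, and your closing paragraph about the relative commutant $A'\cap A^{\cU}$ not being controlled by $\Th(A)$ is an honest and accurate assessment of why the framework of the paper cannot by itself settle $(3)\Rightarrow(2)$ or $(2)\Rightarrow(1)$. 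In that sense your write-up matches the paper's treatment.

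One concrete error, though: you assert that ``each of the three properties is elementary'' inside the ambient class and then phrase the conjecture as ``the assertion that these three elementary subclasses of $\cC$ coincide.'' This is false for property (1), and the paper says so explicitly: ``having finite nuclear dimension is not even axiomatizable, although having nuclear dimension $\leq n$ for any fixed $n$ is \udt{}'' (\S\ref{S.TWC}). A countable union of \udt{} (or even elementary) classes is in general neither elementary nor \udt{} --- closure under ultraproducts fails, since an ultraproduct of algebras $A_n$ with $\nucdim(A_n)=n$ need not have finite nuclear dimension. This matters for your reformulation: the conjecture is \emph{not} the coincidence of three elementary classes, and indeed the paper draws a nontrivial consequence from exactly this asymmetry, namely that the conjecture predicts no simple nuclear \cstar-algebra of infinite nuclear dimension is elementarily equivalent to a simple one of finite nuclear dimension. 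Your genericity reduction for a counterexample to $(3)\Rightarrow(2)$ survives (both of those properties are $\forall\exists$-axiomatizable), but the symmetric claim for $(2)\Rightarrow(1)$ needs to be run with the fixed-$n$ classes ``nuclear dimension $\leq n$'' rather than with ``finite nuclear dimension'' as a single elementary condition.
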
 

The implications  from (1) to (2) and from (2) to (3) have been established 
in \cite{Winter:drSH} and \cite{winter2012nuclear} and  the implications 
from (3) to (2) and from (2) to (1) have  been established in many cases (see 
 the introduction to~\cite{SaWhWi:Nuclear}, as well as~\cite{bosa2014covering}).

 Being $\cZ$-stable is \aea{}  (see \S\ref{Ex.ssa-stable}) and having strict comparison 
 is also \aea{} (Theorem~\ref{T.SCQT}). On the other hand, having finite nuclear dimension 
 is not even axiomatizable, although having nuclear dimension $\leq n$ for any fixed $n$ is 
 \udt{} (Theorem~\ref{T1}). Therefore every implication in     Conjecture~\ref{C.TW} can be restated 
in purely model-theoretic terms. 

Regularity properties of a \cstar-algebra $A$
affect the definability and structure of $A^{\eq}$. 
For example, by  \cite[Corollary 4.3]{NgRobert} (see Theorem~\ref{T.tau-definable.0}) 
  if $A$ has strict comparison of positive elements  by traces then $A_0$
is definable. 
By   \cite[Theorem 6]{Oza:Dixmier}, if $A$ is \emph{exact} and $\cZ$-stable 
then $A_0$
is definable. (`Exact' can be weakened to `elementarily equivalent to an exact \cstar-algebra'
by the argument given in the proof of  Proposition~\ref{P.Exact}.) 
Finally, if $A$ is \emph{nuclear} and has finite nuclear dimension then it is  
$\cZ$-stable by \cite{winter2012nuclear} and therefore $A_0$ is definable by the above.

The Toms--Winter 
conjecture in particular predicts that no simple nuclear \cstar-algebra of infinite nuclear dimension can be elementarily equivalent to a simple \cstar-algebra of finite nuclear dimension. 
Presently evidence suggests that the nuclear dimension of a $\cZ$-stable \cstar-algebra is always either 0,1, or $\infty$.  
One could also ask: for which $n$ is there a \cstar-algebra that has nuclear dimension (or decomposition rank) 
$n+1$ which is not elementarily equivalent to any \cstar-algebra of smaller nuclear dimension (decomposition rank, respectively)? 
Here are two partial answers to this question:
\begin{enumerate}[leftmargin=*,topsep=6pt]
\item For $n=0$ one easily has an affirmative answer to this question. All Kirchberg algebras, as well as  $\cZ$,   have 
nuclear dimension one (by \cite{bosa2014covering}) and their theories have no models with nuclear dimension zero. 
This follows from three facts. 

First, a \cstar-algebra has nuclear dimension 0 if and only if it is AF.
AF algebras have stable rank one and real rank zero and both of these are axiomatizable properties by \S\ref{S.sr}  and \S\ref{S.rr0.revisited}. Second, Kirchberg algebras, being infinite, do not have stable rank one, and finally $\mathcal Z$ does not have real rank zero since it does not have a nontrivial projection.


\item We have a full answer in the abelian case.
The class of unital abelian \cstar-algebras is axiomatizable, and for a unital separable abelian \cstar-algebra $\mathrm{C}(X)$ the nuclear dimension and decomposition rank both coincide with the dimension of $X$.  We know by Proposition \ref{P.RR}, if $\mathrm{C}(X)$ is elementarily equivalent to $\mathrm{C}(Y)$ then $\mbox{rr}(\mathrm{C}(X)) =\mbox{rr}(\mathrm{C}(Y))$ and by \cite{brown1991c}, $\mbox{rr}(\mathrm{C}(X)) =\dim(X)$.  This was also noticed in a different context by Bankston, \cite[Theorem 3.2.4]{Bank}. 
\end{enumerate}

\section{Radius of comparison} 
\label{S.rc} 
In \cite{To:Infinite} and \cite{To:Comparison}
an infinite and then uncountable family of nonisomorphic \snus{} \cstar-algebras with the
 same Elliott
invariant was constructed. The distinguishing invariant for these algebras was the radius of comparison defined below. We shall prove these algebras can be distinguished by their theories. 

The \emph{radius of comparison},\index{radius of comparison}
 $\rc(A)$, of a unital \cstar-algebra $A$ is the infimum of all 
 $r\geq 0$ 
such that for all positive $a$ and $b$ in $\bigcup_n M_n(A)$ 
if $d_\tau(a)+r<d_\tau(b)$ for all 2-quasitracial states $\tau$ 
 then $a\precsim b$. If there is no such $r$ then $\rc(A)=\infty$.

\begin{thm} \label{P.rc} 
For every $r\geq 0$ each of the following is an elementary class:
\begin{enumerate}
\item The class of \cstar-algebras with radius of comparison $\leq r$.
\item The class of \cstar-algebras with radius of comparison $\geq r$.
\item The class of \cstar-algebras with radius of comparison $r$.
\end{enumerate}
\end{thm}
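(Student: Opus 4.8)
The plan is to prove all three statements at once by showing that the class of \cstar-algebras with radius of comparison $\leq r$ is elementary and that the class with radius of comparison $\geq r$ is elementary; statement (3) is then the intersection of (1) and (2), which is automatically elementary. The strategy for (1) and (2) is to apply the ultraproduct/ultraroot characterization of elementary classes (Theorem~\ref{T.Ax}), exactly as was done for the classes $\cC(m,n)$ in Proposition~\ref{L.rc.mn} and for almost unperforation in Proposition~\ref{L.aunp.n}. The key technical inputs are already available: Lemma~\ref{L.Mn} gives $M_k(A)\prec M_k(B)$ whenever $A\prec B$, and Lemma~\ref{L.Cuntz.2} says that Cuntz-subequivalence of positive elements of $A\otimes\cK$ is preserved and reflected under elementary embeddings. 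The essential difficulty, as with the earlier propositions, is that the radius of comparison is defined via $2$-quasitraces (an object living in the dual of $W(A)$, hence apparently not in $A^{\eq}$), so I cannot directly quantify over the defining condition; the argument must instead transfer the \emph{combinatorial consequences} of the comparison condition between a structure and its ultraproducts/elementary submodels.

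First I would reduce the radius-of-comparison condition to a family of purely Cuntz-theoretic statements that I already know to be elementary. Following the proof of Proposition~\ref{L.rc.mn}, I would express ``$\rc(A)\leq r$'' in terms of statements of the form $\RC^A_{m,n}$ (or closely related variants allowing the comparison constant). Concretely, by \cite[Proposition 6.2]{ERS} and the reformulation of strict comparison used in Theorem~\ref{T.SCQT}, the condition $d_\tau(a)+r<d_\tau(b)$ for all $2$-quasitracial states can be encoded, after clearing denominators, by inequalities of the shape $x^{\oplus(n+1)}\oplus 1^{\oplus m}\precsim y^{\oplus n}$ with $m/n$ approximating $r$; the passage between the quasitracial inequality and such direct-sum Cuntz comparisons is precisely the content of the Rørdam-type estimates (\cite[Proposition 2.4]{Rordam:UHFII}) and the functional calculus of Lemma~\ref{L.Cuntz}. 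Thus ``$\rc(A)\leq r$'' becomes the conjunction, over suitable $(m,n)$ with $m/n$ on the correct side of $r$, of the elementary conditions $\RC^A_{m,n}$, and ``$\rc(A)\geq r$'' becomes the analogous statement asserting the failure of strict comparison for ratios below $r$.

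Next I would verify closure of each class under isomorphisms, elementary submodels, and ultraproducts, invoking Theorem~\ref{T.Ax}. Closure under isomorphisms is immediate. For elementary submodels, if $A\prec B$ then $M_k(A)\prec M_k(B)$ by Lemma~\ref{L.Mn}, and Lemma~\ref{L.Cuntz.2} shows that any witness $(a,b)$ to a failure of $\RC_{m,n}$ in $A$ is also a witness in $B$ and conversely; hence $\RC^B_{m,n}$ implies $\RC^A_{m,n}$, exactly as in Proposition~\ref{L.rc.mn}. For ultraproducts, I would run the representing-sequence argument of Proposition~\ref{L.rc.mn}: given a potential counterexample $(a,b)$ in $A=\prod_\cU A_\xi$, pass to $(a-\e)_+$ and $(b-\delta)_+$ using \cite[Proposition 2.4]{Rordam:UHFII} to obtain a \emph{stable} comparison that descends to $\cU$-many coordinates, apply the comparison property in each $A_\xi$, and then use Lemma~\ref{L.Cuntz} to produce norm-bounded elements $x_\xi$ whose ultraproduct witnesses the desired subequivalence in $A$.

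The main obstacle I expect is bookkeeping the interaction between the real parameter $r$ and the discrete ratios $m/n$: unlike the clean integer statements $\RC_{m,n}$ and $\AUNP_n$, the radius of comparison is a supremum/infimum of such ratios, so I must show that ``$\rc(A)\leq r$'' is captured by the intersection over a countable family of elementary conditions indexed by rationals approaching $r$, and that this intersection is still elementary (which it is, since elementary classes are closed under arbitrary intersections). The delicate point is ensuring that the $\e$-$\delta$ perturbations in the ultraproduct argument do not degrade the ratio $m/n$ past $r$; this requires choosing the perturbation parameters after fixing a rational strictly on the correct side of $r$, so that a uniform margin is available. Once this uniformity is secured, the three classes in the theorem are elementary by Theorem~\ref{T.Ax}, and (3) follows as the intersection of (1) and (2).
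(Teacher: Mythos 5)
Your overall architecture coincides with the paper's: reduce ``$\rc(A)\leq r$'' to the countable family of conditions $\RC^{M_k(A)}_{m,n}$ for $m/n>r$, invoke Proposition~\ref{L.rc.mn} for the elementarity of each, and take intersections. The genuine mathematical content, however, lies in that reduction, and the tools you cite do not deliver it. \cite[Proposition~6.2]{ERS} is the $r=0$ statement (almost unperforation is equivalent to strict comparison by $2$-quasitraces); for $r>0$ the equivalence between the quasitrace inequality $d_\tau(a)+r<d_\tau(b)$ and the direct-sum comparisons $x^{\oplus(n+1)}\oplus 1^{\oplus m}\precsim y^{\oplus n}$ is not obtained by ``clearing denominators'' together with \cite[Proposition~2.4]{Rordam:UHFII} and Lemma~\ref{L.Cuntz}. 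The easy direction is that $\rc(A)\leq r$ implies $\RC^{M_k(A)}_{m,n}$ whenever $r<m/n$ (apply a dimension function to the hypothesis); the converse --- that the conjunction of these combinatorial conditions forces $\rc(A)\leq r+\e$ for every $\e>0$ --- is a R{\o}rdam-type theorem that must be proved, and the paper imports it wholesale as \cite[Proposition~3.2.1]{BlRoTiToWi} (the implication from the condition the paper calls \textbf{R2}$(r)$ to \textbf{R1}$(r+\e)$). As written, your argument asserts the one step that carries the weight of the proof without either an argument or a correct reference.

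A secondary issue concerns clause (2). You say the case $\rc(A)\geq r$ is ``analogous,'' but the closure properties you verify are for the positive conditions $\RC_{m,n}$, and they do not transfer to their negations. The class $\{A:\rc(A)\geq r\}$ is an intersection over rational $s<r$ of unions of complements of the classes defined by $\RC^{M_k(A)}_{m,n}$, and closure of such a class under ultraproducts does not follow from Proposition~\ref{L.rc.mn}: given counterexamples $(a_\xi,b_\xi)$ in the factors $A_\xi$, the defect witnessing $a_\xi\not\precsim b_\xi$ may vanish along $\cU$ and the witnesses to $a_\xi^{\oplus(n+1)}\oplus 1^{\oplus m}\precsim b_\xi^{\oplus n}$ may have unbounded norm, so they need not assemble into a counterexample in $\prod_{\cU}A_\xi$; moreover the indices $(k,m,n)$ witnessing failure may vary with $\xi$ without stabilizing on a $\cU$-large set. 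Some further argument is needed here (the published proof is also terse on this point), and appealing to Theorem~\ref{T.Ax} ``exactly as in Proposition~\ref{L.rc.mn}'' does not supply it.
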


\begin{proof} We shall explain why this follows from Proposition~\ref{L.rc.mn} and known results.  

In   \cite[\S 3.2]{BlRoTiToWi} the assertion $\rc(A)\leq r$
was denoted by {\bf R1}$(r)$ and the assertion 
``$\RC^{M_k(A)}_{m,n}$ holds whenever $r<m/n$, for all $k$'' was denoted by {\bf R2}$(r)$. 
It is easy to see that {\bf R1}$(r)$ implies {\bf R2}$(r)$ and 
in    \cite[Proposition 3.2.1]{BlRoTiToWi} it was proved that 
{\bf R2}$(r)$ implies {\bf R1}$(r+\e)$ for all $\e>0$. 

This result and Proposition~\ref{L.rc.mn} imply the desired conclusion. 
 \end{proof} 
 
 \begin{corollary}
 The examples in \cite{To:Infinite} and \cite{To:Comparison} have the same Elliott invariant but different theories.
 \end{corollary}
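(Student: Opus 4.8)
The plan is to combine the two ingredients that the statement already points to: the constructions of Toms in \cite{To:Infinite} and \cite{To:Comparison}, and the model-theoretic invariance of the radius of comparison established in Theorem~\ref{P.rc}. The examples in those papers were constructed precisely to have the same Elliott invariant while taking distinct values of $\rc(A)$; that is exactly what makes them counterexamples to the Elliott conjecture. So the content of this corollary is entirely about transferring a known numerical distinction into a distinction of theories.

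First I would recall that in \cite{To:Infinite} (and its strengthening \cite{To:Comparison}) Toms produces \snus{} \cstar-algebras $A$ and $B$ (in fact a whole family) which agree on the Elliott invariant $\Ell$ but satisfy $\rc(A)\neq \rc(B)$; say $\rc(A)=r$ and $\rc(B)=s$ with $r\neq s$. This gives the first half of the corollary immediately: $A$ and $B$ have the same Elliott invariant by construction.

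Next I would invoke Theorem~\ref{P.rc}(3): for every $t\geq 0$ the class of \cstar-algebras with radius of comparison equal to $t$ is elementary. In particular the class $\{C : \rc(C)=r\}$ is axiomatizable. Since $A\models \Th(\{C:\rc(C)=r\})$ but $B\not\models$ this theory (as $\rc(B)=s\neq r$), there must be a sentence in $\Th(\{C:\rc(C)=r\})$ whose value differs in $A$ and $B$; hence $\Th(A)\neq\Th(B)$, i.e.\ $A\not\equiv B$. Equivalently, one can phrase this directly: if $A\equiv B$ then by elementarity of the class with radius of comparison $\leq t$ (Theorem~\ref{P.rc}(1)) together with the class with radius of comparison $\geq t$ (Theorem~\ref{P.rc}(2)), $A$ and $B$ would satisfy exactly the same bounds $\rc\leq t$ and $\rc\geq t$, forcing $\rc(A)=\rc(B)$, a contradiction. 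So the differing radii of comparison witness $A\not\equiv B$, and therefore $A$ and $B$ have different theories.

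There is essentially no hard step here, since all the real work is packaged into Theorem~\ref{P.rc} and into Toms's constructions; the only point requiring any care is to confirm that the algebras in the cited papers are unital (so that $\rc$ and the elementarity statements of Theorem~\ref{P.rc} apply as stated) and that the values of $\rc$ that Toms computes are genuinely distinct across the family rather than merely distinct from some reference value. The mild obstacle, if any, is purely bibliographic: verifying that the specific examples whose Elliott invariants coincide are exactly the ones whose radii of comparison are shown to differ, rather than two separate assertions in those papers. Once that alignment is checked, the corollary follows in one line from Theorem~\ref{P.rc}.
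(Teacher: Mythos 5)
Your proposal is correct and follows exactly the paper's argument: the examples are constructed to share the Elliott invariant but are distinguished by their radius of comparison, and Theorem~\ref{P.rc} shows that the radius of comparison is determined by the theory, so distinct radii force distinct theories. The paper's own proof is precisely this two-line observation, so nothing further is needed.
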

 
 \begin{proof} These examples were constructed to have the same Elliott invariant. They were distinguished by their radius of comparison, and by the previous theorem, this is part of their theory.
 \end{proof}

\appendix

\chapter{\cstar-algebras} 
A Banach algebra is a complex algebra endowed with a complete norm 
satisfying $\left\Vert xy\right\Vert \leq \left\Vert x\right\Vert \left\Vert
y\right\Vert $. A \emph{\cstar-algebra}\index{C@\cstar-algebra}
 is a Banach algebra endowed moreover with an
involution $x\mapsto x^{\ast }$ satisfying the \cstar-identity $\left\Vert
x^{\ast }x\right\Vert =\left\Vert x\right\Vert ^{2}$. There are many good
textbooks and monographs on the theory of \cstar-algebras such as \cite{Black:Operator,BrOz:cstar,Murphy,Dav:cstar,Pedersen,Take1,Take2,Take3}. For convenience we will only refer to \cite{Black:Operator} in what follows.

A natural notion of morphism for the category of \cstar-algebras is that of
a \emph{$^*$-homo\-morphism}\index{S@$^*$-homomorphism}. If $A,B$ are \cstar-algebras, then a linear map $\phi :A\rightarrow
B$ is a  $^*$-homomorphism
 if it is \emph{multiplicative}, i.e., $\phi \left(
ab\right) =\phi \left( a\right) \phi \left( b\right) $ for every $a,b\in A$ and is {\em self-adjoint}\index{linear map!self-adjont}, i.e., $\phi(a^*) = \phi(a)^*$ for all $a \in A$.
A $^*$-homomorphism $\phi $ is necessarily a contraction, i.e., it satisfies 
$\left\Vert \phi \left( a\right) \right\Vert \leq \left\Vert a\right\Vert $ 
\cite[II.1.6.6]{Black:Operator}. Moreover it is an isometry if and only if
it is injective. A bijective $^*$-homomorphism is called a $^*$-isomorphism.

Suppose that $H$ is a Hilbert space, and denote by $B( H) $\index{B@$B(H)$} the
space of bounded operators on $H$. If $x\in B\left( H\right) $, let 
$\left\Vert x\right\Vert $ be the operator norm of $x$, and $x^{\ast }$ be
the operator on $H$ implicitly defined by $\left\langle x^{\ast }\xi ,\eta
\right\rangle =\left\langle \xi ,x\eta \right\rangle $ for $\xi ,\eta \in H$. 
This defines on $B\left( H\right) $ a \cstar-algebra structure. In particular
if $H$ is a Hilbert space of finite dimension $n$, then $B\left( H\right) $
is a \cstar-algebra that can be identified with the algebra $M_{n}(\bbC)$\index{M@$M_n(\bbC)$}
 of $n\times
n $ complex matrices. Any finite-dimensional \cstar-algebra is $^*$-isomorphic to a
direct sum of algebras of this form \cite[II.8.3.2]{Black:Operator}.

 A fundamental result of Gelfand--Naimark and Segal
asserts that any \cstar-algebra is
$^*$-isomorphic to a norm-closed self-adjoint subalgebra of $B(H)$ for some Hilbert space $H$. 
A \emph{representation}\index{representation}
 of a \cstar-algebra $A$ on a Hilbert space $H$ is a
$^*$-homo\-morphism $\pi :A\rightarrow B\left( H\right) $. The dimension of $\pi $
is the dimension of the Hilbert space $H$. A representation on a $1$-dimensional Hilbert space is called a \emph{character}.\index{character} A representation is 
\emph{faithful}\index{representation!faithful} if it is injective or, equivalently, isometric. 
It is \emph{irreducible}\index{representation!irreducible} 
if there is no nontrivial subspace of $H$ that is $\pi(a) $-invariant for every $a\in A$. 
A GNS representation is irreducible if and only if the corresponding state is pure. 
A \cstar-algebra is \emph{simple}\index{C@\cstar-algebra!simple} if it has no 
 two-sided, self-adjoint, norm-closed ideals. Equivalently, $A$ is simple if all of its representations are faithful.

If $T$ is a locally compact Hausdorff space, then the space $C_{0}(
T) $ of complex-valued continuous functions on $T$ that vanish at
infinity is a \cstar-algebra with pointwise operations and the
uniform norm. \cstar-algebras of the form $C_{0}( T) $ are
precisely the commutative \cstar-algebras. If $T$ is compact, then $C_{0}(
T) $ is {\em unital}\index{C@\cstar-algebra!unital}  (has a multiplicative identity) and simply denoted by $C( T) $. More
generally if $B$ is a \cstar-algebra one can define the algebra $C_{0}(
T,B) $ of $B$-valued continuous functions on $T$ that vanish at
infinity.

The taxonomy of elements of \cstar-algebras is imported wholesale 
from operator theory. $x$ is \emph{normal}\index{normal} if it commutes with
its adjoint.
An element $x$ of a \cstar-algebra $A$ is \emph{self-adjoint}\index{self-adjoint} if it equals its adjoint.
It is \emph{positive}\index{positive} if it is of the form $b^{\ast }b$ for some 
$b\in A$.   A \emph{projection}\index{projection} $p$ of $A$ is a self-adjoint idempotent
element, i.e., it satisfies $p=p^{\ast }=p^{2}$. 
A \emph{unit }$1$ of a
\cstar-algebra is a multiplicative identity which is necessarily unique.
An element $u$ of a unital
\cstar-algebra is a \emph{unitary}\index{unitary}
 provided that $uu^{\ast }=u^{\ast }u=1$.
 An element $v$ of a \cstar-algebra is an \emph{isometry}\index{isometry}  if $v^*v=1$. 
 $v$ is a  \emph{partial isometry}\index{partial isometry}  if $v^*v$ is a projection. 
 If $v$ is a partial isometry then $vv^*$ is necessarily a projection. Two projections $p$ and $q$ in a \cstar-algebra $A$
 are \emph{Murray--von Neumann equivalent}\index{Murray--von Neumann equivalence} 
  if there exists a partial isometry $v \in A$ such that $v^*v=p$ and $vv^*=q$.

The spectral theorem for normal operators and continuous 
functional calculus (see \S\ref{S.cfc}) 
provide a powerful insight into the behaviour of normal elements of a \cstar-algebra.  
If $A$ is a \cstar-algebra and $x\in A$, then the \emph{spectrum}\index{spectrum} 
$\sigma \left(
x\right) $ of $x$ is the set of $\lambda \in \mathbb{C}$ such that $\lambda
-x$ is not invertible in $A$ (or in the unitization of $A$ in the absence of a unit). The spectrum is always a nonempty compact
subset of $\mathbb{C}$. The spectral theorem 
easily implies that a normal element $x$ of a \cstar-algebra $A$ is self-adjoint
(respectively positive, unitary, a projection) if and only if $\sigma \left(
x\right) $ is contained in $\mathbb{R}$ (respectively $\left[ 0,+\infty
\right) $, $\mathbb{T}$, $\left\{ 0,1\right\} $); see \cite[II.2.3.4 and
II.3.1.2]{Black:Operator}.

A linear map $\phi $ between \cstar-algebras is
\emph{positive}\index{positive!linear map} if it maps positive elements to positive elements. In
particular a linear functional $\phi $ on $A$ is positive if $\phi \left(
a\right) \geq 0$ for every positive element $a$ of $A$. Since every
self-adjoint element can be written as difference of two positive elements,
a positive map is automatically self-adjoint. A \emph{state}\index{state} on $A$ is a
positive linear functional of norm $1$. The set $S(A) $\index{S@$S(A)$} of
states of $A$ is a $G_{\delta }$ subset of the unit ball of the dual 
$A^{\ast }$ of $A$ endowed with the weak$^*$ topology. If $A$ is unital, then 
$\phi \in A^{\ast }$ is a state if and only if $1=\phi ( 1)
=\left\Vert \phi \right\Vert $ \cite[II.6.2.5]{Black:Operator}. Moreover, 
in the unital case, $S(A) $ is weak$^*$-compact. 
The extremal points of $S(A)$ are the \emph{pure states}\index{state!pure}
and the set of pure states is denoted by $P(A)$. 
A state is \emph{faithful} if $\phi(a^*a)=0$ implies $a=0$.\index{state!faithful} 

One can assign to any state $\phi $ of $A$ a representation $\pi _{\phi }$
of $A$ on a Hilbert space $H_{\phi }$ through the so called \emph{GNS
construction}\index{GNS construction}  \cite[II.6.4]{Black:Operator}. The Hilbert space $H_{\phi }$
is the  completion of $A$ with respect to the pre-inner product $\left\langle a,b\right\rangle =\phi \left( b^{\ast }a\right) $. The
representation $\pi _{\phi }$ maps $a$ to the operator
of left-multiplication by $a$ on $H_{\phi }$. This simple yet fundamental construction is the key
idea in the proof that every \cstar-algebra admits a faithful representation on a
Hilbert space. 

A \emph{trace}\index{trace} (or \emph{tracial state})\index{state!tracial}
of a unital \cstar-algebra is a state $\tau $ satisfying the \emph{trace
identity} $\tau(ab) =\tau(ba) $. A 
trace is \emph{faithful}\index{trace!faithful} if $\tau ( a^{\ast }a) =0$ implies $a=0$. The set $T(A) $\index{T@$T(A)$} of traces on $A$ forms a (possibly empty) \emph{Choquet
simplex}, which is metrizable if $A$ is separable \cite[II.4.4]{BH82}. A
\cstar-algebra is called \emph{tracial}\index{C@\cstar-algebra!tracial} 
if $T( A) $ is nonempty, and 
\emph{monotracial}\index{C@\cstar-algebra!monotracial} if $T(A) $ is a singleton.

We say a few words about the category of \cstar-algebras with $^*$-homo\-morphisms as morphisms.
If $\left( A_{i}\right) _{i\in I}$ is a collection of \cstar-algebras, then the
direct product $\prod_{i}A_{i}$ is the subset of the Cartesian product
consisting of indexed families $\left( a_{i}\right) $ such that $\sup_{i}\left\Vert
a_{i}\right\Vert <+\infty $ endowed with the norm $\left\Vert \left(
a_{i}\right) \right\Vert =\sup_{i}\left\Vert a_{i}\right\Vert $. The direct
sum $\bigoplus_{i}A_{i}$ is the closure inside $\prod_{i}A_{i}$ of the set
of finitely supported families. It is not difficult to verify that these
constructions are the product and the coproduct in the category of
\cstar-algebras.

The category of \cstar-algebras is also closed under {\em tensor products}.
There are in general many ways to define tensor products between two
\cstar-algebras $A$ and $B$. One canonical choice is to faithfully represent $A,B $ on a Hilbert space and then consider the norm-closure inside $B\left(
H\otimes H\right) $ of the algebraic tensor product $A\odot B$. (Here $H\otimes H$ denotes the unique tensor product of Hilbert spaces.) This construction
yields the \emph{minimal} or \emph{spatial} tensor product $A\otimes B$ (sometimes
also denoted $A\otimes_{\min} B$)\index{tensor product!minimal (spatial)}
  \cite[II.9.1.3]{Black:Operator}. There are in general other ways to equip the algebraic tensor product of $A$ and $B$ with a \cstar-norm (see \cite[\S 3]{BrOz:cstar}). 
  If $A$ is unital then $B$ can be identified with the subalgebra $\{1\}\otimes B$ of $A\otimes B$. 
  One can therefore define, by taking a direct limit, a tensor product of an infinite family of unital \cstar-algebras. 
  Infinite tensor products of matrix algebras $M_n(\bbC)$ are \emph{uniformly hyperfinite (UHF)} algebras.\index{C@\cstar-algebra!UHF}

Since $^*$-homomorphisms between \cstar-algebras are always contractive, 
 the category of \cstar-algebras admits
direct limits\index{direct limit} (or \emph{inductive limits})\index{inductive limit} 
  \cite[II.8.2.1]{Black:Operator}.
These are explicitly constructed as follows.
Suppose that $\left( A_{i}\right) _{i\in I}$ is an inductive system with
connecting maps $\phi _{i,j}:A_{i}\rightarrow A_{j}$. There is a canonical
\cstar-seminorm on the algebraic direct limit obtained by setting%
\begin{equation*}
\| a\| :=\lim_{j}\| \phi _{i,j}\left( a\right)\| 
\end{equation*}
for $a \in A_i$.
The corresponding  completion is a \cstar-algebra, which is the direct
limit of the inductive system $\left( A_{i}\right) _{i\in I}$ in the
category of \cstar-algebras \cite[II.8.2]{Black:Operator}. Many important
classes of \cstar-algebras are defined in terms of direct limits. Thus a
\cstar-algebra is \emph{approximately 
finite-dimensional} (AF)\index{C@\cstar-algebra!approximately finite (AF)}  
 if it is the
direct limit of finite-dimensional \cstar-algebras 
and  \emph{approximately homogeneous} 
(AH)\index{C@\cstar-algebra!approximately homogeneous (AH)}  \cite[II.8.2.2(iv)]{Black:Operator},
 if it is the direct
limit of homogeneous \cstar-algebras.

\backmatter
\bibliographystyle{plain}
\bibliography{mt-nucUpdated}

\begin{thebibliography}{100}

\bibitem{AkeAndPed}
C.~A.\ Akemann, J.~Anderson, and G.~K.\ Pedersen.
\newblock Excising states of {\cstar}-algebras.
\newblock {\em Canad. J. Math.}, 38(5):1239--1260, 1986.

\bibitem{Albert-thesis}
J.~M.\ Albert.
\newblock {\em Strong conceptual completeness and various stability-theoretic
  results in continuous model theory}.
\newblock PhD thesis, McMaster, 2011.

\bibitem{AnBoPePe:Geometric}
R.~Antoine, J.~Bosa, F.~Perera, and H.~Petzka.
\newblock Geometric structure of dimension functions of certain continuous
  fields.
\newblock {\em J. Funct. Anal.}, 266(4):2403--2423, 2014.

\bibitem{APT}
R.~Antoine, F.~Perera, and H.~Thiel.
\newblock {\em Tensor products and regularity properties of {C}untz
  semigroups}, volume 251 of {\em Mem. Amer. Math. Soc.}
\newblock Amer. Math. Soc., Providence, RI, 2017.

\bibitem{ArchboldRobertTikuisis}
R.~Archbold, L.~Robert, and A.~Tikuisis.
\newblock The {D}ixmier property and tracial states for {\cstar}-algebras.
\newblock {\em J. Funct. Anal.}, 273(8):2655--2718, 2017.

\bibitem{Arv:Notes}
W.~Arveson.
\newblock Notes on extensions of {\cstar}-algebras.
\newblock {\em Duke Math. J.}, 44:329--355, 1977.

\bibitem{avitzour}
D.~Avitzour.
\newblock Free products of {\cstar}-algebras.
\newblock {\em Trans.\ Amer.\ Math.\ Soc.}, 271(2):423--435, 1982.

\bibitem{Bank}
P.\ Bankston.
\newblock Reduced coproducts of compact {H}ausdorff spaces.
\newblock {\em J. Symbolic Logic}, 52(2):404--424, 1987.

\bibitem{barlak2015sequentially}
S.~Barlak and G.~Szab{\'o}.
\newblock Sequentially split $^* $-homomorphisms between \cstar-algebras.
\newblock {\em Internat. J. Math.}, 27(13):1650105, 48, 2016.

\bibitem{BY:On}
I.~Ben~Yaacov.
\newblock On a \cstar-algebra formalism for continuous first order logic.
\newblock preprint, available at http://math.univ-lyon1.fr/homes-www/begnac/.,
  2008.

\bibitem{BY:Definability}
I.~Ben~Yaacov.
\newblock Definability of groups in $\aleph_0$-stable metric structures.
\newblock {\em J. Symbolic Logic}, 75:817--840, 2010.

\bibitem{BYBHU}
I.~Ben~Yaacov, A.~Berenstein, C.~W.\ Henson, and A.~Usvyatsov.
\newblock Model theory for metric structures.
\newblock In Z.~Chatzidakis et~al, editor, {\em Model Theory with Applications
  to Algebra and Analysis, Vol. II}, number 350 in London Math. Soc. Lecture
  Notes Series, pages 315--427. Cambridge University Press, 2008.

\bibitem{BY-N-T}
I.~Ben~Yaacov, M.~Doucha, A.~Nies, and T.~Tsankov.
\newblock Metric {S}cott analysis.
\newblock {\em Adv. Math.}, 318:46--87, 2017.

\bibitem{ben2009model}
I.~Ben~Yaacov and J.~Iovino.
\newblock Model theoretic forcing in analysis.
\newblock {\em Ann. Pure Appl. Logic}, 158(3):163--174, 2009.

\bibitem{blackadar1978weak}
B.~Blackadar.
\newblock Weak expectations and nuclear {\cstar}-algebras.
\newblock {\em Indiana Univ. Math. J.}, 27(6):1021--1026, 1978.

\bibitem{Black:Operator}
B.~Blackadar.
\newblock {\em Operator algebras}, volume 122 of {\em Encyclopaedia of
  Mathematical Sciences}.
\newblock Springer-Verlag, Berlin, 2006.
\newblock Theory of \cstar-algebras and von Neumann algebras, Operator Algebras
  and Non-commutative Geometry, III.

\bibitem{BH82}
B.~Blackadar and D.~Handelman.
\newblock Dimension functions and traces on \cstar-algebras.
\newblock {\em J. Funct. Anal.}, 45(3):297--340, 1982.

\bibitem{BlaKir}
B.~Blackadar and E.~Kirchberg.
\newblock Generalized inductive limits of finite-dimensional {\cstar}-algebras.
\newblock {\em Math. Ann.}, 307(3):343--380, 1997.

\bibitem{blackadar1992approximately}
B.~Blackadar, A.~Kumjian, and M.~R{\o}rdam.
\newblock Approximately central matrix units and the structure of
  noncommutative tori.
\newblock {\em K-theory}, 6(3):267--284, 1992.

\bibitem{BlRoTiToWi}
B.~Blackadar, L.~Robert, A.~Tikuisis, A.~S.\ Toms, and W.~Winter.
\newblock An algebraic approach to the radius of comparison.
\newblock {\em Trans. Amer. Math. Soc.}, 364(7):3657--3674, 2012.

\bibitem{bosa2014covering}
J.~Bosa, N.~P. Brown, Y.~Sato, A.~Tikuisis, S.~A. White, and W.~Winter.
\newblock Covering dimension of {\cstar}-algebras and the classification of
  maps by traces.
\newblock {\em Mem. Amer. Math. Soc.}, 2015.
\newblock arXiv:1506.03974v3.

\bibitem{boutonnet2015ii_1}
R.~Boutonnet, I.~Chifan, and A.~Ioana.
\newblock {$\Pi_1$} factors with nonisomorphic ultrapowers.
\newblock {\em Duke Math. J.}, 166(11):2023--2051, 2017.

\bibitem{brown1991c}
L.~G.\ Brown and G.~K.\ Pedersen.
\newblock \cstar-algebras of real rank zero.
\newblock {\em J. Funct. Anal.}, 99(1):131--149, 1991.

\bibitem{brown2000quasidiagonal}
N.~P.\ Brown.
\newblock On quasidiagonal {\cstar}-algebras.
\newblock In {\em Operator algebras and applications}, volume~38 of {\em Adv.
  Stud. Pure Math.}, pages 19--64. Amer. Math. Soc., 2004.

\bibitem{brown2006invariant}
N.~P.\ Brown.
\newblock {\em Invariant means and finite representation theory of
  {\cstar}-algebras}, volume 184 of {\em Mem. Amer. Math. Soc.}
\newblock Amer. Math. Soc., Providence, RI, 2006.

\bibitem{Bro:Topological}
N.~P.\ Brown.
\newblock Topological dynamical systems associated to {II$_1$} factors.
\newblock {\em Adv. Math.}, 227(4):1665--1699, 2011.
\newblock With an appendix by Narutaka Ozawa.

\bibitem{BrOz:cstar}
N.~P.\ Brown and N.~Ozawa.
\newblock {\em {\cstar}-algebras and finite-dimensional approximations},
  volume~88 of {\em Graduate Studies in Mathematics}.
\newblock Amer. Math. Soc., Providence, RI, 2008.

\bibitem{Mitacs2012}
K.\ Carlson, E.\ Cheung, I.\ Farah, A.\ Gerhardt-Bourke, B.\ Hart, L.\ Mezuman,
  N.\ Sequeira, and A.\ Sherman.
\newblock Omitting types and {AF} algebras.
\newblock {\em Arch. Math. Logic}, 53:157--169, 2014.

\bibitem{choi1976completely}
M.~D.\ Choi and E.~G.\ Effros.
\newblock The completely positive lifting problem for {\cstar}-algebras.
\newblock {\em Ann. of Math.}, 104:585--609, 1976.

\bibitem{christensen1980near}
E.~Christensen.
\newblock Near inclusions of \cstar-algebras.
\newblock {\em Acta Math.}, 144(1):249--265, 1980.

\bibitem{christensen2010perturbations}
E.~Christensen, A.~M.\ Sinclair, R.~R.\ Smith, and S.~A.\ White.
\newblock Perturbations of {\cstar}-algebraic invariants.
\newblock {\em Geom. Funct. Anal.}, 20(2):368--397, 2010.

\bibitem{christensen2012perturbations}
E.~Christensen, A.~M.\ Sinclair, R.~R.\ Smith, S.~A.\ White, and W.~Winter.
\newblock Perturbations of nuclear {\cstar}-algebras.
\newblock {\em Acta Math.}, 208(1):93--150, 2012.

\bibitem{connes1974almost}
A.~Connes.
\newblock Almost periodic states and factors of type {III}$_1$.
\newblock {\em J. Funct. Anal.}, 16(4):415--445, 1974.

\bibitem{CoElIv}
K.~T.\ Coward, G.~A.\ Elliott, and C.~Ivanescu.
\newblock The {C}untz semigroup as an invariant for {\cstar}-algebras.
\newblock {\em J. Reine Angew. Math.}, 623:161--193, 2008.

\bibitem{cuntz1981k}
J.~Cuntz.
\newblock K-theory for certain {\cstar}-algebras.
\newblock {\em Ann. of Math.}, 113(1):135--164, 1981.

\bibitem{CuPe:Equivalence}
J.~Cuntz and G.~K.\ Pedersen.
\newblock Equivalence and traces on {\cstar}-algebras.
\newblock {\em J. Funct. Anal.}, 33(2):135--164, 1979.

\bibitem{Dav:cstar}
K.~R.\ Davidson.
\newblock {\em {\cstar}-algebras by example}, volume~6 of {\em Fields Institute
  Monographs}.
\newblock Amer. Math. Soc., Providence, RI, 1996.

\bibitem{harpe-simplicity}
P.~de~la Harpe.
\newblock On simplicity of reduced {\cstar}-algebras of groups.
\newblock {\em Bull. Lond. Math. Soc.}, 39(1):1--26, 2007.

\bibitem{harpe-skandalis}
P.~de~la Harpe and G.~Skandalis.
\newblock Powers' property and simple {\cstar}-algebras.
\newblock {\em Math. Ann.}, 273(2):241--250, 1986.

\bibitem{eagle2013omitting}
C.~Eagle.
\newblock Omitting types for infinitary $[0, 1] $-valued logic.
\newblock {\em Ann. Pure Appl. Logic}, 165:913--932, 2014.

\bibitem{eagle2015pseudoarc}
C.~Eagle, I.~Goldbring, and A.~Vignati.
\newblock The pseudoarc is a co-existentially closed continuum.
\newblock {\em Top. Appl.}, 207:1--9, 2016.

\bibitem{EaVi:Saturation}
C.~Eagle and A.~Vignati.
\newblock Saturation and elementary equivalence of {\cstar}-algebras.
\newblock {\em J. Funct. Anal.}, 269(8):2631--2664, 2015.

\bibitem{eagle2015quantifier}
C.~J. Eagle, I.~Farah, E.~Kirchberg, and A.~Vignati.
\newblock Quantifier elimination in \cstar-algebras.
\newblock {\em International Mathematics Research Notices},
  2017(24):7580--7606, 2017.

\bibitem{EfHaShe}
E.~G.\ Effros, E.~E.\ Handelman, and C.~L.\ Shen.
\newblock Dimension groups and their affine representations.
\newblock {\em Amer. J. Math.}, 102(2):385--407, 1980.

\bibitem{eilers1998stability}
S.~Eilers, T.~A.\ Loring, and G.~K.\ Pedersen.
\newblock Stability of anticommutation relations: an application of
  noncommutative {CW} complexes.
\newblock {\em J. Reine Angew. Math.}, 499:101--143, 1998.

\bibitem{Ell:On}
G.~A.\ Elliott.
\newblock On the classification of inductive limits of sequences of semisimple
  finite-dimensional algebras.
\newblock {\em J. Algebra}, 38(1):29--44, 1976.

\bibitem{elliott1993classification}
G.~A.\ Elliott.
\newblock On the classification of {\cstar}-algebras of real rank zero.
\newblock {\em J. Reine Angew. Math.}, 443(179-219):35, 1993.

\bibitem{Ell:Towards}
G.~A.\ Elliott.
\newblock Towards a theory of classification.
\newblock {\em Adv. Math.}, 223(1):30--48, 2010.

\bibitem{ERS}
G.~A. Elliott, L.~Robert, and L.~Santiago.
\newblock The cone of lower semicontinuous traces on a \cstar-algebra.
\newblock {\em Amer. J. Math.}, 133(4):969--1005, 2011.

\bibitem{EllTo:Regularity}
G.~A.\ Elliott and A.~S.\ Toms.
\newblock Regularity properties in the classification program for separable
  amenable {\cstar}-algebras.
\newblock {\em Bull. Amer. Math. Soc.}, 45(2):229--245, 2008.

\bibitem{Fack1982}
T.\ Fack.
\newblock Finite sums of commutators in \cstar-algebras.
\newblock {\em Ann. Inst. Fourier (Grenoble)}, 32(1):129--137, 1982.

\bibitem{fang2011note}
J.~Fang and D.~Hadwin.
\newblock A note on the invariant subspace problem relative to a type {II}$_1$
  factor.
\newblock {\em Houston J. Math.}, 37:879--893, 2011.

\bibitem{FarahICM}
I.~Farah.
\newblock Logic and operator algebras.
\newblock In S.~Y.~Jang et~al, editor, {\em Proceedings of the International
  Congress of Mathematicians, Seoul 2014}, 2014.

\bibitem{FaGoHaSh:Existentially}
I.~Farah, I.~Goldbring, B.~Hart, and D.~Sherman.
\newblock Existentially closed {II$_1$} factors.
\newblock {\em Fund. Math.}, 233:173--196, 2016.

\bibitem{FaHa:Countable}
I.~Farah and B.~Hart.
\newblock Countable saturation of corona algebras.
\newblock {\em C.R. Math. Rep. Acad. Sci. Canada}, 35:35--56, 2013.

\bibitem{FaHaRoTi:Relative}
I.~Farah, B.~Hart, M.~R{\o}rdam, and A.~Tikuisis.
\newblock Relative commutants of strongly self-absorbing \cstar-algebras.
\newblock {\em Selecta Mathematica}, 23(1):363--387, 2017.

\bibitem{FaHaSh:Model1}
I.~Farah, B.~Hart, and D.~Sherman.
\newblock Model theory of operator algebras {I}: Stability.
\newblock {\em Bull. Lond. Math. Soc.}, 45:825--838, 2013.

\bibitem{FaHaSh:Model2}
I.~Farah, B.~Hart, and D.~Sherman.
\newblock Model theory of operator algebras {II}: Model theory.
\newblock {\em Israel J. Math.}, 201:477--505, 2014.

\bibitem{FaHaSh:Model3}
I.~Farah, B.~Hart, and D.~Sherman.
\newblock Model theory of operator algebras {III}: Elementary equivalence and
  {II}$_1$ factors.
\newblock {\em Bull. Lond. Math. Soc.}, 46:609--628, 2014.

\bibitem{FaKa:Nonseparable}
I.~Farah and T.~Katsura.
\newblock Nonseparable {UHF} algebras {I}: {D}ixmier's problem.
\newblock {\em Adv. Math.}, 225(3):1399--1430, 2010.

\bibitem{FaMa:Omitting}
I.~Farah and M.~Magidor.
\newblock Omitting types in logic of metric structures.
\newblock arXiv:1411.2987v5, 2014.

\bibitem{farah2016axiomatizability}
I.~Farah and M.~R{\o}rdam.
\newblock Axiomatizability of the stable rank of \cstar-algebras.
\newblock {\em {M}\"unster {J}. {M}ath.}, 10:269--275, 2017.

\bibitem{farah2014rigidity}
I.~Farah and S.~Shelah.
\newblock Rigidity of continuous quotients.
\newblock {\em J. Math. Inst. Jussieu}, 15:1--28, 2016.

\bibitem{FaToTo:Descriptive}
I.~Farah, A.~S.\ Toms, and A.~T\"ornquist.
\newblock The descriptive set theory of {\cstar}-algebra invariants.
\newblock {\em Int. Math. Res. Notices}, 22:5196--5226, 2013.
\newblock Appendix with C. Eckhardt.

\bibitem{FaToTo:Turbulence}
I.~Farah, A.S. Toms, and A.~T\"ornquist.
\newblock Turbulence, orbit equivalence, and the classification of nuclear
  \cstar-algebras.
\newblock {\em J. Reine Angew. Math.}, 688:101--146, 2014.

\bibitem{feferman1959first}
S.~Feferman and R.~Vaught.
\newblock The first order properties of products of algebraic systems.
\newblock {\em Fund. Math.}, 47(1):57--103, 1959.

\bibitem{FR}
P.~Friis and M.~R\o{}rdam.
\newblock Almost commuting self-adjoint matrices - a short proof of {H}uaxin
  {L}in's theorem.
\newblock {\em J. Reine Angew. Math.}, 1996:121--132, 1996.

\bibitem{GeHa}
L.~Ge and D.~Hadwin.
\newblock Ultraproducts of {\cstar}-algebras.
\newblock In {\em Recent advances in operator theory and related topics
  (Szeged, 1999)}, volume 127 of {\em Oper. Theory Adv. Appl.}, pages 305--326.
  Birkh\"auser, Basel, 2001.

\bibitem{Gha:SAW*}
S.~Ghasemi.
\newblock {SAW}* algebras are essentially non-factorizable.
\newblock {\em Glasg. Math. J.}, 57(1):1--5, 2015.

\bibitem{goldbring2014kirchberg}
I.~Goldbring and T.~Sinclair.
\newblock On {K}irchberg's embedding problem.
\newblock {\em J. Funct. Anal.}, 269(1):155--198, 2015.

\bibitem{goldbring-2015}
I.~Goldbring and T.~Sinclair.
\newblock Omitting types in operator systems.
\newblock {\em Indiana Univ. Math. J.}, 66:821--844, 2017.

\bibitem{goldbring2016robinson}
I.~Goldbring and T.~Sinclair.
\newblock Robinson forcing and the quasidiagonality problem.
\newblock {\em Internat. J. Math.}, 28, 2017.

\bibitem{haagerup2014quasitraces}
U.~Haagerup.
\newblock Quasitraces on exact {\cstar}-algebras are traces.
\newblock {\em C. R. Math. Rep. Acad. Sci. Canada}, 36:67--92, 2014.

\bibitem{HaZs:Sur}
U.~Haagerup and L.~Zsid{\'o}.
\newblock Sur la propri\'et\'e de {D}ixmier pour les \cstar-alg\`ebres.
\newblock {\em C. R. Acad. Sci. Paris S\'er. I Math.}, 298(8):173--176, 1984.

\bibitem{HirKirWhi}
I.~Hirshberg, E.~Kirchberg, and S.~A.\ White.
\newblock Decomposable approximations of nuclear {\cstar}-algebras.
\newblock {\em Adv. Math.}, 230(3):1029--1039, 2012.

\bibitem{Hodg:Model}
W.~Hodges.
\newblock {\em Model theory}, volume~42 of {\em Encyclopedia of Mathematics and
  its Applications}.
\newblock Cambridge University Press, 1993.

\bibitem{hodges2006building}
W.~Hodges.
\newblock {\em Building models by games}.
\newblock Courier Dover Publications, 2006.

\bibitem{Jac:W}
B.~Jacelon.
\newblock A simple, monotracial, stably projectionless \cstar-algebra.
\newblock {\em J. Lond. Math. Soc.}, 87:365--383, 2013.

\bibitem{JiangSu}
X.~Jiang and H.~Su.
\newblock On a simple unital projectionless \cstar-algebra.
\newblock {\em Amer. J. Math}, 121:359--413, 1999.

\bibitem{Johnson:subh}
B.~E. Johnson.
\newblock Near inclusions for subhomogeneous \cstar-algebras.
\newblock {\em Proc. London Math. Soc. (3)}, 68(2):399--422, 1994.

\bibitem{JunPis}
M.~Junge and G.~Pisier.
\newblock Bilinear forms on exact operator spaces and {$B(H)\otimes B(H)$}.
\newblock {\em Geom. Funct. Anal.}, 5(2):329--363, 1995.

\bibitem{Kadets}
M.~{\u{I}}. Kadets.
\newblock Remark on the gap between subspaces.
\newblock {\em Funkcional. Anal. i Prilo\v zen.}, 9(2):73--74, 1975.

\bibitem{KK:perturbation}
R.~V. Kadison and D.~Kastler.
\newblock Perturbations of von {N}eumann algebras. {I}. {S}tability of type.
\newblock {\em Amer. J. Math.}, 94:38--54, 1972.

\bibitem{kania2015c}
T.~Kania.
\newblock On \cstar-algebras which cannot be decomposed into tensor products
  with both factors infinite-dimensional.
\newblock {\em Q. J. Math.}, 66(4):1063--1068, 2015.

\bibitem{Kec:C*}
A.S. Kechris.
\newblock The descriptive classification of some classes of \cstar-algebras.
\newblock In {\em Proceedings of the Sixth Asian Logic Conference (Beijing,
  1996)}, pages 121--149. World Sci. Publ., River Edge, NJ, 1998.

\bibitem{keisler1973forcing}
H.~J.\ Keisler.
\newblock Forcing and the omitting types theorem.
\newblock In M.~Morley, editor, {\em Studies in Model Theory}, volume~8 of {\em
  Studies in Mathematics}, pages 96--133. Math. Assoc. Amer., 1973.

\bibitem{KircPhi:Embedding}
E.~Kirchberg and N.~C.\ Phillips.
\newblock Embedding of exact {\cstar}-algebras in the {C}untz algebra
  {${\mathcal{O}}_2$}.
\newblock {\em J.\ Reine Angew. Math.}, 525:17--53, 2000.

\bibitem{kirchberg2014central}
E.~Kirchberg and M.~R{\o}rdam.
\newblock When central sequence {\cstar}-algebras have characters.
\newblock {\em Int. J. Math}, 26(7), 2015.

\bibitem{lance1995hilbert}
E.~C.\ Lance.
\newblock {\em Hilbert {\cstar}-modules: a toolkit for operator algebraists},
  volume 210.
\newblock Cambridge University Press, 1995.

\bibitem{Li:Ultraproducts}
W.~Li.
\newblock On ultraproducts of operator algebras.
\newblock {\em Sci. China Math.}, 48(9):1284--1295, 2005.

\bibitem{lin1993exponential}
H.~Lin.
\newblock Exponential rank of {\cstar}-algebras with real rank zero and the
  {B}rown--{P}edersen conjectures.
\newblock {\em J. Funct. Anal.}, 114(1):1--11, 1993.

\bibitem{lin1994almost}
H.~Lin.
\newblock Almost normal elements.
\newblock {\em Internat. J. Math.}, 5(05):765--778, 1994.

\bibitem{lin2001tracially}
H.~Lin.
\newblock Tracially {AF} {\cstar}-algebras.
\newblock {\em Trans. Amer. Math. Soc.}, 353(2):693--722, 2001.

\bibitem{lin_classification_2004}
H.~Lin.
\newblock Classification of simple \cstar-algebras of tracial topological rank
  zero.
\newblock {\em Duke Math. J.}, 125(1):91--119, 2004.

\bibitem{Loring:stable-rel}
T.~A.\ Loring.
\newblock \cstar-algebras generated by stable relations.
\newblock {\em J. Funct. Anal.}, 112(1):159--203, 1993.

\bibitem{Lor:Lifting}
T.~A.\ Loring.
\newblock {\em Lifting solutions to perturbing problems in {\cstar}-algebras},
  volume~8 of {\em Fields Institute Monographs}.
\newblock Amer. Math. Soc., Providence, RI, 1997.

\bibitem{Lut:PhD}
M.~Luther.
\newblock {\em Unbounded Continuous Logic And Metric Geometry}.
\newblock PhD thesis, McMaster University, 2015.
\newblock http://hdl.handle.net/11375/18691.

\bibitem{Mac:annals}
A.\ Macintyre.
\newblock On algebraically closed groups.
\newblock {\em Ann. of Math.}, 96:53--97, 1971.

\bibitem{Mac:modelcomplete}
A.\ Macintyre.
\newblock Model completeness.
\newblock In J.\ Barwise, editor, {\em Handbook of mathematical logic}, pages
  139--180. North-Holland Publishing Co., 1977.

\bibitem{Murphy}
G.~J.\ Murphy.
\newblock {\em \cstar-algebras and operator theory}.
\newblock Academic Press, Inc., Boston, {MA}, 1990.

\bibitem{Ng:Commutators}
P.~W.\ Ng.
\newblock Commutators in $\mathrm{C}^*_r({F}_\infty)$.
\newblock {\em Houston J. Math.}, 40(2):421--446, 2014.

\bibitem{NgRobert}
P.~W.\ Ng and L.~Robert.
\newblock Sums of commutators in pure \cstar-algebras.
\newblock {\em M\"{u}nster J. Math.}, 9(1):121--154, 2016.

\bibitem{Olin:Direct}
P.~Olin.
\newblock Direct multiples and powers of modules.
\newblock {\em Fund. Math.}, 73(2):113--124, 1972.

\bibitem{Oz:About}
N.~Ozawa.
\newblock About the {QWEP} conjecture.
\newblock {\em Internat. J. Math.}, 15(5):501--530, 2004.

\bibitem{Oza:Dixmier}
N.~Ozawa.
\newblock Dixmier approximation and symmetric amenability for
  {\cstar}-algebras.
\newblock {\em J. Math. Sci., Tokyo}, 20(3):349--374, 2013.

\bibitem{Paul:Completely}
V.~Paulsen.
\newblock {\em Completely bounded maps and operator algebras}, volume~78 of
  {\em Cambridge Studies in Advanced Mathematics}.
\newblock Cambridge University Press, Cambridge, 2002.

\bibitem{Pedersen}
G.~K.\ Pedersen.
\newblock {\em \cstar-algebras and their automorphism groups}, volume~14 of
  {\em London Mathematical Society Monographs}.
\newblock Academic Press, London, 1979.

\bibitem{PerTomWW:Cuntz}
F.~Perera, A.~Toms, S.~White, and W.~Winter.
\newblock The {C}untz semigroup and stability of close {\cstar}-algebras.
\newblock {\em Analysis \& PDE}, 7(4):929--952, 2014.

\bibitem{Phi:Exponential}
N.~C.\ Phillips.
\newblock Exponential length and traces.
\newblock {\em Proc. Roy. Soc. Edinburgh Sect. A}, 125(1):13--29, 1995.

\bibitem{Phi:Classification}
N.~C.\ Phillips.
\newblock A classification theorem for nuclear purely infinite simple
  {\cstar}-algebras.
\newblock {\em Doc. Math.}, 5:49--114, 2000.

\bibitem{Phi:Simple}
N.~C.\ Phillips.
\newblock A simple separable {\cstar}-algebra not isomorphic to its opposite
  algebra.
\newblock {\em Proc. Amer. Math. Soc.}, 132:2997--3005, 2004.

\bibitem{Pop}
C.~Pop.
\newblock Finite sums of commutators.
\newblock {\em Proc. Amer. Math. Soc.}, 130(10):3039--3041, 2002.

\bibitem{popa1997local}
S.~Popa.
\newblock On local finite-dimensional approximation of {\cstar}-algebras.
\newblock {\em Pacific J. Math.}, 181(1):141--158, 1997.

\bibitem{Powers}
R.T. Powers.
\newblock Simplicity of the {$C^{\ast} $}-algebra associated with the free
  group on two generators.
\newblock {\em Duke Math. J.}, 42:151--156, 1975.

\bibitem{Rie:Dimension}
M.~A.\ Rieffel.
\newblock Dimension and stable rank in the {K}-theory of {\cstar}-algebras.
\newblock {\em Proc. Lond. Math. Soc.}, 46(3):301--333, 1983.

\bibitem{robert2013nuclear}
L.~Robert.
\newblock Nuclear dimension and sums of commutators.
\newblock {\em Indiana Univ. Math. J.}, 64:559 -- 576, 2015.

\bibitem{robert2015lie}
L.~Robert.
\newblock On the {L}ie ideals of \cstar-algebras.
\newblock {\em J. Operator Theory}, 75(2):387--408, 2016.

\bibitem{RobRor:Divisibility}
L.~Robert and M.~R{\o}rdam.
\newblock Divisibility properties for {\cstar}-algebras.
\newblock {\em Proc. Lond. Math. Soc. (3)}, 106(6):1330--1370, 2013.

\bibitem{RobertTikuisis}
L.~Robert and A.~Tikuisis.
\newblock Nuclear dimension and {$\cZ$}-stability of non-simple
  \cstar-algebras.
\newblock {\em Trans. of Amer. Math. Soc.}, 369(7):4631--4670, 2017.

\bibitem{rordam1988advances}
M.~R{\o}rdam.
\newblock Advances in the theory of unitary rank and regular approximation.
\newblock {\em Ann. of Math.}, 128:153--172, 1988.

\bibitem{Rordam:UHFII}
M.~R{\o}rdam.
\newblock On the structure of simple {\cstar}-algebras tensored with a
  {UHF}-algebra, {II}.
\newblock {\em J. Funct. Anal.}, 107(2):255--269, 1992.

\bibitem{Ror:Classification}
M.~R{\o}rdam.
\newblock {\em Classification of nuclear {\cstar}-algebras}, volume 126 of {\em
  Encyclopaedia of Math. Sciences}.
\newblock Springer-Verlag, Berlin, 2002.

\bibitem{Ror:Simple}
M.~R{\o}rdam.
\newblock A simple \cstar-algebra with a finite and an infinite projection.
\newblock {\em Acta Math.}, 191:109--142, 2003.

\bibitem{RoLaLa:Introduction}
M.~R{\o}rdam, F.\ Larsen, and N.~J.\ Laustsen.
\newblock {\em An Introduction to {K}-theory for \cstar algebras}.
\newblock Number~49 in London Mathematical Society Student Texts. Cambridge
  University Press, 2000.

\bibitem{SaWhWi:Nuclear}
Y.~Sato, S.~White, and W.~Winter.
\newblock Nuclear dimension and {$\mathcal{Z}$}-stability.
\newblock {\em Invent. Math.}, 202(2):893--921, 2015.

\bibitem{Scow:Some}
P.~Scowcroft.
\newblock Some model-theoretic correspondences between dimension groups and
  {AF} algebras.
\newblock {\em Ann. Pure Appl. Logic}, 162(9):755--785, 2011.

\bibitem{scowcroft2012existentially}
P.~Scowcroft.
\newblock Existentially closed dimension groups.
\newblock {\em Trans.\ Amer.\ Math.\ Soc.}, 364(4):1933--1974, 2012.

\bibitem{Sh:Classification}
S.~Shelah.
\newblock {\em Classification theory and the number of nonisomorphic models},
  volume~92 of {\em Studies in Logic and the Foundations of Mathematics}.
\newblock North-Holland Publishing Co., Amsterdam, second edition, 1990.

\bibitem{shelah1992vive}
S.~Shelah.
\newblock Vive la diff{\'e}rence {I}: {N}onisomorphism of ultrapowers of
  countable models.
\newblock In {\em Set theory of the continuum}, pages 357--405. Springer, 1992.

\bibitem{Take1}
M.~Takesaki.
\newblock {\em Theory of operator algebras. {I}}, volume 124 of {\em
  Encyclopaedia of Mathematical Sciences}.
\newblock Springer-Verlag, Berlin, 2002.

\bibitem{Take2}
M.~Takesaki.
\newblock {\em Theory of operator algebras. {II}}, volume 125 of {\em
  Encyclopaedia of Mathematical Sciences}.
\newblock Springer-Verlag, Berlin, 2003.

\bibitem{Take3}
M.~Takesaki.
\newblock {\em Theory of operator algebras. {III}}, volume 127 of {\em
  Encyclopaedia of Mathematical Sciences}.
\newblock Springer-Verlag, Berlin, 2003.

\bibitem{TWW}
A.~Tikuisis, S.~White, and W.~Winter.
\newblock Quasidiagonality of nuclear \cstar-algebras.
\newblock {\em Ann. of Math.}, 185(1):229--284, 2017.

\bibitem{To:Infinite}
A.~S.\ Toms.
\newblock An infinite family of non-isomorphic {\cstar}-algebras with identical
  {$K$}-theory.
\newblock {\em Trans. Amer. Math. Soc.}, 360(10):5343--5354, 2008.

\bibitem{To:On}
A.~S.\ Toms.
\newblock On the classification problem for nuclear {\cstar}-algebras.
\newblock {\em Ann. of Math. (2)}, 167(3):1029--1044, 2008.

\bibitem{To:Comparison}
A.~S.\ Toms.
\newblock Comparison theory and smooth minimal {\cstar}-dynamics.
\newblock {\em Comm. Math. Phys.}, 289(2):401--433, 2009.

\bibitem{ToWi:Strongly}
A.~S.\ Toms and W.~Winter.
\newblock Strongly self-absorbing {\cstar}-algebras.
\newblock {\em Trans. Amer. Math. Soc.}, 359(8):3999--4029, 2007.

\bibitem{HjRo:On}
J.~v.~B.~Hjelmborg and M.~R{\o}rdam.
\newblock On stability of {\cstar}-algebras.
\newblock {\em J. Funct. Anal.}, 155(1):153--170, 1998.

\bibitem{Vil:Range}
J.~Villadsen.
\newblock The range of the {E}lliott invariant of the simple {AH}-algebras with
  slow dimension growth.
\newblock {\em K-theory}, 15(1):1--12, 1998.

\bibitem{vN:Approximative}
J.\ von Neumann.
\newblock Approximative properties of matrices of high finite order.
\newblock {\em Portugaliae Math.}, 3:1--62, 1942.

\bibitem{Watson:Structure}
N.~Watson.
\newblock {\em On the structure of nuclear \cstar-algebras of real rank zero}.
\newblock PhD thesis, University of Toronto, Toronto,
  https://tspace.library.utoronto.ca/handle/1807/68183, 2014.

\bibitem{Winter:covering}
W.~Winter.
\newblock Covering dimension for \cstar-algebras.
\newblock {\em J. Funct. Anal.}, 199(2):535--556, 2003.

\bibitem{Winter:drSH}
W.~Winter.
\newblock Decomposition rank of subhomogeneous {\cstar}-algebras.
\newblock {\em Proc. Lond. Math. Soc. (3)}, 89(2):427--456, 2004.

\bibitem{winter_classification_2006}
W.~Winter.
\newblock On the classification of simple {$\mathcal{Z}$}-stable
  \cstar-algebras with real rank zero and finite decomposition rank.
\newblock {\em J. Lond. Math. Soc. (2)}, 74(1):167--183, 2006.

\bibitem{winter_simple_2007}
W.~Winter.
\newblock Simple \cstar-algebras with locally finite decomposition rank.
\newblock {\em J. Funct. Anal.}, 243(2):394--425, 2007.

\bibitem{winter2012nuclear}
W.~Winter.
\newblock Nuclear dimension and {$\cZ$}-stability of pure \cstar-algebras.
\newblock {\em Invent. Math.}, 187(2):259--342, 2012.

\bibitem{Win:QDQ}
W.~Winter.
\newblock {QDQ} vs. {UCT}.
\newblock In {\em Abel Symposia 12: Operator Algebras and Applications: The
  Abel Symposium 2015}, pages 321--342. Springer, 2016.

\bibitem{WiZa:Completely}
W.~Winter and J.~Zacharias.
\newblock Completely positive maps of order zero.
\newblock {\em M\"unster J. Math.}, 2:311--324, 2009.

\bibitem{WinterZacharias:NucDim}
W.~Winter and J.~Zacharias.
\newblock The nuclear dimension of \cstar-algebras.
\newblock {\em Adv. Math.}, 224(2):461--498, 2010.

\end{thebibliography}

\printindex

\end{document}